\documentclass[11pt,a4paper]{article}

\usepackage[utf8]{inputenc}
\usepackage[T1]{fontenc}
\usepackage{geometry}
\usepackage{lmodern}
\usepackage{amsmath,amssymb,amsthm,amsfonts}
\usepackage{tikz}
\usetikzlibrary{arrows.meta,automata,positioning,calc,fit,backgrounds,bending}
\usepackage{graphicx}
\usepackage{enumitem}
\usepackage{xcolor}
\usepackage{hyperref}

\hypersetup{colorlinks=true,linkcolor=blue,citecolor=blue,urlcolor=blue}

\newtheorem{theorem}{Theorem}[section]
\newtheorem{lemma}[theorem]{Lemma}
\newtheorem{proposition}[theorem]{Proposition}
\newtheorem{corollary}[theorem]{Corollary}
\newtheorem{definition}[theorem]{Definition}
\newtheorem{example}[theorem]{Example}
\newtheorem{remark}[theorem]{Remark}
\newtheorem{notation}[theorem]{Notation}
\newtheorem{problem}[theorem]{Problem}

\newcommand{\C}{\mathcal C}
\newcommand{\W}{\mathcal W}
\newcommand{\D}{\mathcal D}

\newcommand{\Fraw}{\mathcal F^{\mathrm{raw}}}
\newcommand{\Ffor}{\mathcal F}
\newcommand{\N}{\mathbb N}
\newcommand{\Z}{\mathbb Z}
\newcommand{\Q}{\mathbb Q}

\newcommand{\Homeo}{\operatorname{Homeo}}

\newcommand{\Root}{\operatorname{Root}}
\newcommand{\im}{\operatorname{im}}
\newcommand{\Cl}{\operatorname{Cl}}
\newcommand{\supp}{\operatorname{supp}}

\newcommand{\id}{\operatorname{id}}
\newcommand{\ab}{\operatorname{ab}}
\newcommand{\LA}{\mathcal{LA}}
\newcommand{\eps}{\varepsilon}
\newcommand{\Stab}{\operatorname{Stab}}

\title{Higman--Thompson groups $F_n$ all the way down}
\author{Gili Golan}
\date{}

\begin{document}
\maketitle
\begin{abstract}
We prove that for every $n\ge 2$ the Higman--Thompson group $F_n$ has a maximal subgroup of infinite index isomorphic to $F_n$ itself.  In fact, we prove that $F_n$ contains a descending chain of subgroups
\[
        F_n=H_0>H_1>H_2>\cdots,
\]
all isomorphic to $F_n$, such that for every $i$ the only subgroups of $F_n$ containing $H_i$ are $H_i,H_{i-1},\ldots,H_0=F_n$; in particular, each $H_{i+1}$ is maximal in $H_i$.  Moreover, the chain can be chosen to have trivial intersection.  Informally, ``it is $F_n$ all the way down''.

The proof rests on a systematic study of isomorphic copies of Higman--Thompson groups inside one another.  We prove that for all $n\ge m\ge2$, every closed maximal subgroup of $F_m$ isomorphic to $F_n$ is conjugate to the standard copy of $F_n$ by a homeomorphism of the unit interval, which lifts to a homeomorphism from the $n$-ary Cantor space to the $m$-ary Cantor space given by a finite \emph{semi-synchronizing} transducer---a variation of the synchronizing transducers arising in the characterizations of the automorphism groups of the Higman--Thompson groups $G_{n,r}$ and $T_{n,r}$ \cite{AutG,AutT}.  We characterize the homeomorphisms of Cantor spaces conjugating $F_n$ into $F_m$ as precisely the order-preserving or order-reversing rational homeomorphisms whose minimal transducer is semi-synchronizing.  At the heart of the paper is a machinery bridging between the theory of transducers and the theory of Stallings $2$-cores of subgroups of $F_n$: given a closed subgroup $H\le F_n$ and a homeomorphism $\psi$ as above, it produces a tree-automaton defining the conjugate subgroup $H^\psi\cap F_m$; in particular, it reduces the conjugation of finitely generated closed subgroups of $F_n$ by transducer homeomorphisms to an algorithmic procedure.  Among the applications, we prove that Jones' ternary oriented subgroup $\vec F_3\le F_3$ is isomorphic to $F_4$, answering questions of Aiello; in particular, the maximal subgroup of $F_3$ constructed by Aiello and Nagnibeda is isomorphic to $F_4$.

Similarly, using the results and methods of this paper, we prove that all known maximal subgroups of infinite index of Thompson's group $F$ which act minimally on the unit interval are isomorphic to Higman--Thompson groups.  We pose the problem of whether this holds for all maximal subgroups of infinite index of $F$ acting minimally on the unit interval, and describe several results, to appear in forthcoming papers, proved as part of an attempt to solve it.  In particular, a large class of closed subgroups of $F$ are isomorphic to Higman--Thompson groups; this class includes the diagram groups associated with all indecomposable groups generated by geometrically fast sets of $n$ one-bump functions, so that every such group is isomorphic to $F_n$.  This answers, in a strong form, a problem of Brin and Zaremsky \cite{BurilloBuxNucinkis}.  The class also yields, for all $n>m\ge2$, maximal subgroups of $F_m$ isomorphic to $F_n$.
\end{abstract}

\section{Introduction}
\label{sec:introduction}

Recall that R.~Thompson's group $F$ is the group of all piecewise-linear homeomorphisms of the interval $[0,1]$, where all breakpoints are dyadic fractions (i.e., elements of the set $\Z[\frac12]\cap(0,1)$) and all slopes are integer powers of $2$.  The group $F$ is finitely presented, does not contain free non-abelian subgroups, and satisfies many other remarkable properties; see \cite{CannonFloydParry} for a survey.  It was generalized by Brown \cite{Brown} to a family of groups known as the Higman--Thompson groups $F_n$, $n\ge2$: the group $F_n$ is the group of all piecewise-linear homeomorphisms of $[0,1]$ with breakpoints in $\Z[\frac1n]$ and slopes integer powers of $n$, so that $F=F_2$.

Savchuk initiated the study of maximal subgroups of Thompson's group $F$ by proving that for every number $\alpha\in(0,1)$, the stabilizer of $\alpha$ in $F$ (i.e., the subgroup of $F$ of all functions which fix $\alpha$) is a maximal subgroup of $F$ \cite{SavchukGraphs,SavchukSchreier}.  He asked whether these are all the maximal subgroups of infinite index in $F$ (maximal subgroups of $F$ of finite index are in one-to-one correspondence with maximal subgroups of its abelianization $\Z^2$ and are well understood).

In answer to this problem, M.~Sapir and the author constructed an explicit maximal subgroup of infinite index in $F$ which does not fix any number in $(0,1)$ \cite{GolanSapirSubgroupsF}.  Recall that Jones showed that elements of $F$ encode in a natural way all links and knots, and that elements of a subgroup of $F$, denoted $\vec F$ and called the oriented Jones subgroup, encode all oriented links and knots \cite{JonesUnitary,JonesKnotsLinks}.  In \cite{GolanSapirJones} we proved that $\vec F$ is isomorphic to the Higman--Thompson group $F_3$.  The explicit maximal subgroup of $F$ constructed in \cite{GolanSapirSubgroupsF} is the preimage of $\vec F$ under an injective endomorphism from $F$ onto an index-two subgroup of $F$; in particular, it is isomorphic to $F_3$.

In addition to this explicit maximal subgroup, \cite{GolanSapirSubgroupsF} provided a general method for proving the existence of many other maximal subgroups of $F$.  Improving on that method, the author gave in \cite{GolanGeneration} a method for constructing explicit examples and used it to construct three new maximal subgroups of $F$; later, Aiello and Nagnibeda constructed three more explicit examples \cite{AielloNagnibedaThreeColorable}.  In \cite{GolanMaximalF}, the author proved that $F$ has infinitely many pairwise non-isomorphic maximal subgroups of infinite index.  The idea was as follows: for each $n>2$, one can embed $F_n$ into $F$ by replacing, in each tree-diagram of an element of $F_n$, every $n$-ary caret by a binary right vine with $n$ leaves; the image of this embedding is the generalized Jones subgroup $\vec F_{n-1}\cong F_n$ from \cite{GolanSapirJones}.  In \cite{GolanMaximalF}, the author proved that when $n-1$ is prime, $\vec F_{n-1}$ is maximal in a finite-index subgroup of $F$ isomorphic to $F$, which yields a maximal subgroup of $F$ isomorphic to $F_n$.  Later, the author observed (unpublished) that while when $n-1$ is not prime this method does not yield a maximal subgroup, it can be ``tweaked'': replacing each caret of a tree-diagram by a suitable binary tree with $n$ leaves (where not all carets are replaced by the same tree) produces, for every $n>2$, a maximal subgroup of $F$ isomorphic to $F_n$.  Note that this construction produces maximal subgroups of $F$ isomorphic to ``higher'' Higman--Thompson groups.  Thus, a natural problem is whether $F$ has a maximal subgroup isomorphic to $F$ itself.

The search for maximal subgroups naturally extends to the Higman--Thompson groups $F_n$.  Savchuk's proof readily adapts to the general case, showing that for every $\alpha\in(0,1)$, the stabilizer of $\alpha$ in $F_n$ is a maximal subgroup of infinite index.  Aiello and Nagnibeda provided the first example of a maximal subgroup of infinite index of $F_3$ which does not fix any number in $(0,1)$, utilizing the ternary oriented Jones subgroup $\vec F_3\le F_3$, which encodes all oriented links and knots, similarly to $\vec F$.\footnote{This subgroup is denoted $\vec F_3$ in \cite{AielloNagnibedaFThree}; it should not be confused with the subgroup $\vec F_3$ of $F$, defined in \cite{GolanSapirJones} as part of the family $\vec F_n\le F$ generalizing Jones' subgroup $\vec F=\vec F_2$.}  Indeed, they proved that $\vec F_3$ is maximal in an index-two subgroup $G_3\le F_3$ isomorphic to $F_3$ \cite{AielloNagnibedaFThree}, and asked whether for each $n$, the group $F_n$ has a maximal subgroup of infinite index which does not fix any number in $(0,1)$.  Recently, the author and E.~Sapir answered this in the affirmative, by constructing for each $n\ge2$ a maximal subgroup of infinite index of $F_n$ isomorphic to $F_{2n-1}$ \cite{GolanSapirGenerationFn}.  Here too, the maximal subgroups constructed are copies of a ``larger'' Higman--Thompson group inside a ``smaller'' one, and there is a reason for this asymmetry: it is easy to see  that if $F_m$ has a maximal subgroup isomorphic to $F_n$, then necessarily $n\ge m$ (Lemma~\ref{lem:rank-obstruction-maximal-copy}).  The boundary case $n=m$, that is, the problem of constructing a maximal subgroup of $F_n$ isomorphic to $F_n$ itself, and in particular, a maximal subgroup of $F$ isomorphic to $F$, is the subject of the present paper.

The constructions of all known maximal subgroups of $F$ of infinite index, which are not point stabilizers, rely on the Stallings $2$-core of subgroups of $F_n$, a Stallings-type technique for subgroups of diagram groups \cite{GubaSapirDiagramGroups,GubaSapirSubgroups} defined by Guba and Sapir in the 1990s, which first appeared in print in \cite{GolanSapirSubgroupsF}.  In the case of the groups $F_n$, the core of a subgroup $H\le F_n$, denoted $\C(H)$, can be viewed as a \emph{tree-automaton}: a rooted automaton in which every state has $n$ outgoing edges, labeled $0,\dots,n-1$ (see Section~\ref{sec:preliminaries}).  The core of $H$ accepts or rejects elements of $F_n$, and the set of all elements accepted by $\C(H)$ is a subgroup called the \emph{closure} of $H$, denoted $\Cl(H)$.  A subgroup $H$ is \emph{closed} if $H=\Cl(H)$; equivalently, $H\le F_n$ is closed if and only if every piecewise-$H$ function in $F_n$ belongs to $H$.  In \cite{GolanMaximalF}, the author proved that every maximal subgroup of $F$ of infinite index is closed.  It is an open problem whether the same holds for the groups $F_n$, $n>2$ (see \cite{GolanSapirGenerationFn}); note that all known maximal subgroups of infinite index of $F_n$ are closed.

The relation between the core of a subgroup and maximality goes through the generation problem.  In \cite{GolanGeneration,GolanMaximalF}, the author proved that a subgroup $H\le F$ coincides with $F$ if and only if $H[F,F]=F$ and $[F,F]\le\Cl(H)$.  Since a maximal subgroup $M<F$ of infinite index must satisfy $M[F,F]=F$, such $M$ is maximal if and only if whenever an element of $F\setminus M$ is added to $M$, the closure of the resulting subgroup contains $[F,F]$.  For $F_n$ with $n>2$ no such if-and-only-if criterion is currently available, but a sufficient generation criterion was proved in \cite{GolanSapirGenerationFn} (see Theorem~\ref{thm:generation-Fn-prelim}).

The main result of this paper is the following.

\begin{theorem}[see Theorem~\ref{thm:Fn-all-the-way-down-final}]
\label{thm:main-intro}
For every $n\ge2$, there exists a descending chain
\[
        F_n=H_0>H_1>H_2>\cdots
\]
such that:
\begin{enumerate}
    \item $H_i\cong F_n$ for every $i$;
    \item every subgroup of $F_n$ containing $H_i$ is one of
    $H_i,H_{i-1},\ldots,H_0=F_n$;
    \item for each $i$, $H_{i+1}$ is a subgroup of infinite index of $H_i$;
    \item $\bigcap_{i\ge0}H_i=\{1\}$.
\end{enumerate}
\end{theorem}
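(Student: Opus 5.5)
The plan is to build the chain by iterating a single construction: find one maximal subgroup $M<F_n$ of infinite index with $M\cong F_n$, together with an explicit isomorphism $\varphi\colon F_n\to M$. Then set $H_i=\varphi^i(F_n)$. Since $\varphi$ is an isomorphism $F_n\to M$, it carries $M$ onto $\varphi(M)$ and preserves maximality and infinite index. So $H_{i+1}=\varphi^{i}(M)$ is maximal of infinite index in $H_i=\varphi^i(F_n)$, and clearly $H_i\cong F_n$, which gives (1) and (3).

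The construction of $M$ is the main obstacle, and I would follow the paper's programme. I would take $\varphi$ to be conjugation $f\mapsto \psi^{-1}f\psi$ by a homeomorphism $\psi$ of $[0,1]$. This $\psi$ should lift to a semi-synchronizing transducer homeomorphism from $n$-ary Cantor space to itself, so that $\psi^{-1}F_n\psi\le F_n$. Using the tree-automaton machinery, I would compute the core $\C(M)$ of $M=F_n^\psi\cap F_n=F_n^\psi$ and check that $M$ is closed. For maximality I would apply the generation criterion, Theorem~\ref{thm:generation-Fn-prelim}: for each $g\in F_n\setminus M$, the closure of $\langle M,g\rangle$ contains the derived subgroup and the abelianization condition holds. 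This requires a case analysis on the core after adding $g$ (folding gives a small number of quotient automata). I would first try $\psi$ given by a small transducer that replaces carets by a fixed pattern differing on the left and right, in the spirit of the "tweaked" vine construction. The goal is that $\C(M)$ has only a couple of states and the $M$-orbits are dense, so no point stabilizer contains $M$. Infinite index then follows because $M$ is closed and proper with a core that is not finite-index type, or directly because $M$ acts with an infinite orbit of distinct cosets.

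For (2) I would argue by induction that every $K$ with $H_i\le K\le F_n$ equals some $H_j$. Let $j$ be maximal with $K\le H_j$; then $j\le i$. If $j=i$ we are done. Otherwise $H_i\le K\le H_j$ with $K\not\le H_{j+1}$, and it suffices to show $\langle H_{j+1},K\rangle\ldots$. More cleanly, I need the statement that every subgroup of $F_n$ containing $\varphi^k(F_n)$ lies in the chain. For this I would prove the stronger property for $M$: every subgroup containing $\varphi(M)=H_2$ is $H_2$, $H_1$ or $F_n$. Pulling back by $\varphi$ and inducting then gives (2). The key extra step is that a subgroup $K\ge H_2$ with $K\not\le H_1$ must be all of $F_n$. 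I would prove this with the same generation criterion, since $\Cl(H_2)$ already has controlled core and adding an element outside $H_1$ should collapse it; this is the second delicate computation.

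Finally, for (4), $\bigcap H_i$ consists of elements $f$ with $\psi^{i}f\psi^{-i}\in F_n$ for all $i$. Choose $\psi$ so that its iterates have unbounded "distortion" on every open interval: breakpoints of $\psi^i$ become dense, or slopes of $\psi^i$ are non-constant on every subinterval. Then any nontrivial $f$, which moves some interval, conjugates to something with non-$n$-adic breakpoints or slopes for large $i$. I would verify this via the transducer, showing that the minimal transducer of $\psi^i$ grows, which forbids a fixed nontrivial $f$ from surviving.
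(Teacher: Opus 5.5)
Your argument for (1) and (3), given $M$ and $\varphi$, is fine. The reduction of (2) to a two-level statement, however, does not close. From ``$H_1$ is maximal'' and ``the only subgroups containing $H_2$ are $H_2,H_1,F_n$'' you can get, via powers of $\varphi$, only that each interval $[H_{k+2},H_k]$ is a chain. Now take $K\ge H_3$ with $K\not\le H_1$. All you learn is $K\cap H_1\in\{H_3,H_2,H_1\}$, and nothing you have excludes $K\cap H_1=H_3$. Abstractly, adjoin to the chain elements $K_k$ with $H_{k+3}<K_k<H_k$, incomparable to $H_{k+1}$ and $H_{k+2}$. This gives a lattice with the self-similarity $H_k\mapsto H_{k+1}$, $K_k\mapsto K_{k+1}$ that satisfies your two-level statement but not (2).

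What the induction actually needs is: for every $i$, every proper subgroup of $F_n$ containing $H_i$ lies in $H_1$. That concerns all levels at once, and it is where the paper's work goes:
\begin{enumerate}
\item It computes $\C(S_i)$ for every $i$. These are the residue inflations $\mathcal B_i$ of the binary cores $\mathcal A_i$, each with a $0$-cycle of $2^i$ states.
\item It shows that every quotient of $\mathcal B_i$ mapping onto $\C(F_n)$ is some $\mathcal B_j$ with $j\le i$. Hence the closed overgroups of $S_i$ are exactly $S_0,\dots,S_i$.
\item It proves that any $K\ge S_i$ with $\Cl(K)=F_n$ equals $F_n$.
\end{enumerate}
For $n>2$ the last step needs all four hypotheses of Theorem~\ref{thm:generation-Fn-prelim}, not just closure and abelianization. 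The one-sided slope condition comes from the $m$-loops at the residue states $c_s$ of $\mathcal B_i$. The semi-core condition holds because $L_{\mathrm{sem}}(K)$ is a quotient of $L_{\mathrm{sem}}(S_i)=\mathcal B_i$, hence some $\mathcal B_j$, hence $\mathcal B_0$.

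Your argument for (4) is a heuristic, and with a single conjugator it can genuinely fail. Neither ``distortion'' nor growth of the minimal transducer of $\psi^i$ stops a fixed $f\ne1$ from satisfying $\psi^i f\psi^{-i}\in F_n$ for all $i$; any $f\in F_n$ commuting with $\psi$ does so. For the paper's $n$-ary conjugator $\zeta$, the residue inflation of the binary transducer, Lemma~\ref{lem:adjacent-differences-in-Si} gives $x_jx_{j+1}^{-1}\in F_n^{\zeta^i}$ for all $i$ and all $0\le j\le n-3$. So $\bigcap_i F_n^{\zeta^i}\ne\{1\}$ when $n>2$. Even in the binary case, where $\bigcap_i F^{\varphi^i}=\{1\}$ does hold, the proof needs the inverse-limit automaton $\mathcal A_\infty$ and a rigidity lemma showing that leaf-state lists determine trees.

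The missing idea is to abandon $H_i=\varphi^i(F_n)$ and twist the standard chain $S_i$ by inner conjugations. Enumerate $F_n\setminus\{1\}$ as $g_0,g_1,\dots$. Given $H_j=S_j^{k_i}$ for all $j\le i$, set $H_{i+1}=S_{i+1}^{k_ih_i}$, with $h_i\in H_i$ chosen so that $g_i\notin H_{i+1}$. Such $h_i$ exists because $S_{i+1}$ contains no nontrivial normal subgroup of $S_i$; otherwise $S_1$ would contain $[F_n,F_n]$ and, having full abelianization, would equal $F_n$. Since $h_i\in H_i\le H_j$ for $j\le i$, the earlier terms are unchanged. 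Each initial segment is therefore a single common conjugate of $S_0,\dots,S_{i+1}$, so property (2) transfers from the standard chain and the intersection becomes trivial.
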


We now outline the proof, together with several results of independent interest obtained along the way.

The starting point is a study of closed maximal subgroups of $F_m$ isomorphic to some $F_n$.  We first prove that every maximal subgroup of $F_m$ isomorphic to $F_n$ must act minimally on the interval $(0,1)$ (Lemma~\ref{lem:maximal-copy-minimal}).  Since the standard action of $F_n$ is locally moving, a Rubin-type reconstruction theorem of Brum--Matte Bon--Rivas--Triestino \cite{BMBRT} then implies that such a subgroup is conjugate to the standard copy of $F_n$ in $\Homeo_+(0,1)$.  Moreover, a homeomorphism $\phi\in\Homeo_+(0,1)$ conjugating $F_n$ into $F_m$ has an image which is closed in $F_m$ if and only if $\phi$ lifts to a homeomorphism $\psi:\C_n\to\C_m$ of the corresponding Cantor spaces (Proposition~\ref{prop:closed-maximal-copy-realized-by-conjugacy}).  This reduces the study of closed maximal copies of $F_n$ in $F_m$ to the study of Cantor-space homeomorphisms $\psi:\C_n\to\C_m$ satisfying $F_n^\psi\le F_m$.

Such lifts allow us to utilize the rich theory of transducers inducing automorphisms of Thompson groups.  The study of the automorphism groups of the Thompson family has a long history: Brin characterized the automorphism groups of $F$ and $T$ \cite{BrinChameleon}, and with Guzm\'an explored the automorphism groups of the generalized groups $F_{n,r}$ and $T_{n,r}$ \cite{BrinGuzman}, building on the reconstruction theorem of McCleary and Rubin \cite{McClearyRubin}.  Later, Bleak, Cameron, Maissel, Navas and Olukoya characterized the automorphism group of $G_{n,r}$ as the group of rational homeomorphisms of the Cantor space (in the sense of Grigorchuk--Nekrashevych--Sushchanski\u{\i} \cite{GNS}) whose minimal transducer is bi-synchronizing \cite{AutG}, and Olukoya extended this transducer framework to $T_{n,r}$ \cite{AutT}.  Building on these foundations, we characterize the homeomorphisms $\psi:\C_n\to\C_m$ that conjugate $F_n$ into $F_m$.

\begin{theorem}[see Theorem~\ref{thm:conjugator-characterization}]
\label{thm:conjugator-intro}
Let $\psi:\C_n\to\C_m$ be a homeomorphism.  Then $F_n^\psi\le F_m$ if and only if $\psi$ is order-preserving or order-reversing, and $\psi$ is a rational homeomorphism whose minimal transducer is semi-synchronizing.
\end{theorem}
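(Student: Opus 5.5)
We will prove the two implications separately, working throughout with $F_n$ acting on $\C_n$ by prefix replacement: each $f\in F_n$ is given by a pair of $n$-ary trees, i.e. by finitely many order-preserving prefix substitutions $u_i\xi\mapsto v_i\xi$. For $\psi$ rational we write $\psi$ via its minimal transducer $T_\psi$, with states $q$ and local maps $\psi_q$, and we let $\psi(w\xi)=\lambda(w)\,\psi_{q_w}(\xi)$ for $w$ a finite word. Semi-synchronizing will be used in the form fixed in the definition: there is $k$ such that, after reading any word of length at least $k$ from the initial state, the state reached depends only on the last $k$ letters, up to a finite-prefix adjustment of the output; this is the one-sided analogue of the bi-synchronizing condition of \cite{AutG,AutT}.

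For the direction ``$\Leftarrow$'', assume $\psi$ is monotone, rational, and has semi-synchronizing minimal transducer. Take $f\in F_n$ given by substitutions $u_i\xi\mapsto v_i\xi$. We first refine the tree pair so that every $u_i$ and $v_i$ has length at least $k$ and they end in the same length-$k$ suffix. This is possible by expanding leaves: along a common expansion both sides receive identical suffixes. By synchronization, $u_i$ and $v_i$ then lead to the same state $q$, so
\[
f^\psi\bigl(\lambda(u_i)\psi_q(\xi)\bigr)=\lambda(v_i)\psi_q(\xi).
\]
On $\psi(u_i\C_n)$ the map $f^\psi$ is therefore a prefix replacement $\lambda(u_i)\eta\mapsto\lambda(v_i)\eta$, possibly after subdividing the image cone of $\psi_q$ into finitely many $m$-ary cones. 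Rationality guarantees that only finitely many states occur, so the subdivision is finite. Monotonicity of $\psi$ ensures that the pieces are glued in order, hence $f^\psi$ is order-preserving. It is thus a finite piecewise prefix replacement, i.e. an element of $F_m$; in the order-reversing case we first compose with the reflection, which normalizes $F_m$.

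For the direction ``$\Rightarrow$'', assume $F_n^\psi\le F_m$. Order preservation or reversal comes first. $F_n$ preserves the cyclic and linear order induced on $\C_n$ modulo the identification of $n$-ary rational endpoints. Every $F_m$-element is monotone, and the orbit structure must be respected: the $F_n$-orbit of the endpoints is mapped onto an $F_m$-invariant set. A Rubin-type argument then shows that $\psi$ sends cones to finite unions of cones and respects the order up to a global flip. Concretely, the ordering is recovered from the supports of elements, as intervals being ``to the left'' of one another, and supports are preserved by conjugation.

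Rationality and synchronization require a further argument. For each finite word $w$, consider the local map $\psi_w$ with $\psi(w\xi)=\lambda(w)\psi_w(\xi)$. Elements of $F_n$ mapping cone $w$ to cone $w'$ by pure prefix replacement are conjugated into $F_m$, which is piecewise prefix replacement with finitely many pieces. This yields $\psi_w=\psi_{w'}$ up to finite prefix corrections. Since any two words $w,w'$ can be exchanged this way by an element of $F_n$ (when not at the boundary), all interior local maps agree up to finitely many prefix adjustments, of which there are only finitely many shapes. This gives finitely many states, hence rationality. It also shows that the state after a long word depends only on a bounded suffix, which is exactly semi-synchronization. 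The boundary behaviour, near words in $0^*$ and $(n-1)^*$, has to be handled separately using the elements fixing endpoints.

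The main obstacle I expect is exactly this boundary-uniformity step: the finite piecewise structure of each single conjugate does not by itself give a \emph{uniform} bound $k$. To obtain uniformity I would use finite generation of $F_n$: apply the argument to the standard generators $x_0,\dots,x_{n-1}$. Their conjugates have finitely many breakpoints, and every cone can be reached from a fixed finite set of cones by words in the generators. This should propagate a single synchronizing length to all states.
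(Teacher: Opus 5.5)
There is a genuine gap, and it starts with the form of semi-synchronization you use. Definition~\ref{def:semi-synchronizing} does not say that the state reached by a long word depends only on its last $k$ letters. In a minimal transducer a state is a local action, so there is no room for a prefix adjustment either. The definition requires two things: (i) the boundary rays $0^r$ and $(n-1)^r$ eventually stabilize; (ii) $t(s_0,uw)=t(s_0,vw)$ for $w\in X_n^k$, but only when $u,v$ are \emph{inner} words with $\sigma_n(u)=\sigma_n(v)$.

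Your condition is strictly stronger and fails for the very maps the theorem is about. In $T^\varphi$, for every $k\ge1$, the words $0^{k+1}$ and $10^k$ end at $\mathsf L$ and $\mathsf A$. In $T_J$, for which $F_4^{\psi_J}\le F_3$, the inner words $10^k$ and $20^k$ end at $a$ and $c$: the state records $\sigma_4$ modulo $3$, which no bounded suffix determines.

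The source of the error is your step ``any two words $w,w'$ can be exchanged by an element of $F_n$ (when not at the boundary)''. By Lemma~\ref{lem:branch-pair-criterion-prelim}, an element with inner branch pair $u\to v$ exists if and only if $\sigma_n(u)=\sigma_n(v)$; the $F_n$-orbits of $n$-adic points are the classes $D_{n,i}$. So local actions can only be compared within a residue class, and for $n\ge3$ your ``$\Rightarrow$'' argument would prove a false statement.

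Symmetrically, your ``$\Leftarrow$'' argument uses the wrong hypothesis. With the actual one you must add two facts. First, boundary pairs refined past the stabilization depth both reach $s_L$ (resp.\ $s_R$). Second, an inner branch pair $u_i\to v_i$ of an element of $F_n$ automatically has $\sigma_n(u_i)=\sigma_n(v_i)$, which is what lets the residue-restricted synchronization apply after attaching height-$k$ trees. With these additions that direction is the paper's argument.

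For ``$\Rightarrow$'' the finiteness step is also missing structure. Conjugating one element with branch pair $u\to v$ does not give $\psi_u=\psi_v$ up to prefix corrections, since $\psi(U_u)$ may meet several domain cones of the conjugate. It gives only $\psi_{uw}=\psi_{vw}$ for $|w|\ge k(u,v)$.

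The paper makes this uniform without using generators:
\begin{itemize}
\item Boundary words are collapsed using the element with branch pair $0\to00$. Its conjugate acts as $a^A\eta\mapsto a^B\eta$ near $\psi(0^\infty)=a^\infty$, $a\in\{0,m-1\}$, which gives $\psi_{0^{d+1}}=\psi_{0^d}$ for large $d$, and similarly on the right.
\item A finite set $S$ of inner words meeting every residue class is fixed. Then $k$ is taken as the maximum of $k(x,y)$ over the finitely many pairs $(s,t)$ and $(sa,t)$ with $s,t\in S$, $a\in X_n$, of equal residue.
\item A boundary-reduced inner word is shortened letter by letter: $\psi_{s_0a_1\cdots a_rw}=\psi_{s_1a_2\cdots a_rw}=\cdots=\psi_{s_rw}$. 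This yields both finitely many local actions and the uniform inner level.
\end{itemize}
Your finite-generation idea could replace this only if each reduction step were a single generator branch pair applied to a whole cone, and it would still have to respect residues.

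Finally, the monotonicity step is only gestured at. The paper proves it by showing that $\psi^{-1}$ maps $\rho_m$-equivalent pairs to $\rho_n$-equivalent pairs. If not, an element of $F_n$ with residue-matched branch pairs $u\to p$, $v\to q$ moves the two preimages into cones whose images project to disjoint intervals. Its conjugate in $F_m$ would then separate an adjacent pair, which no element of $F_m$ does. Lemma~\ref{lem:lift-order-equivalence-prelim} then gives monotonicity.
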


Semi-synchronizing transducers (Definition~\ref{def:semi-synchronizing}) are a variation of the synchronizing transducers of \cite{AutG,AutT}, adapted to the endpoint-preserving groups $F_n$: the two boundary rays are required to eventually stabilize, and synchronization is required only among inner words sharing the same value of the residue invariant $\sigma_n$, the invariant governing branch pairs of elements of $F_n$.  As a corollary, we obtain a transducer description of the automorphism group of $F_n$ (Corollary~\ref{cor:aut-Fn-semisync}), analogous to the descriptions of $\operatorname{Aut}(G_{n,r})$ and $\operatorname{Aut}(T_{n,r})$ in \cite{AutG,AutT}.

The heart of the paper is a machinery bridging between the theory of transducers and the theory of cores of subgroups of $F_n$, which computes how the core of a closed subgroup changes under conjugation.  Given a closed subgroup $H\le F_n$ defined by a full tree-automaton $\mathcal A$, and a homeomorphism $\psi:\C_n\to\C_m$ as above, we construct an $m$-ary tree-automaton defining the closed subgroup $H^\psi\cap F_m$ (see Section~\ref{sec:conjugating-closed-subgroups}).  In particular, when $H^\psi\le F_m$ and $\mathcal A$ is the core of $H$, the construction yields the core of $H^\psi$; since finitely generated closed subgroups of $F_n$ are determined by their cores, this reduces the conjugation of such subgroups by transducer homeomorphisms to an algorithmic procedure.  For the transducers arising in our examples, we develop a geometric form of the construction (Subsection~\ref{subsec:geometric-determinization}) which makes the computations easier to follow.

The machinery has several applications beyond the main theorem.  First, we prove the following.

\begin{theorem}[see Corollary~\ref{thm:jones-F3-is-F4}]
\label{thm:jones-intro}
The ternary oriented Jones subgroup $\vec F_3\le F_3$ is isomorphic to $F_4$.
\end{theorem}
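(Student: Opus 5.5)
The plan is to exhibit an explicit homeomorphism $\psi:\C_4\to\C_3$ with $F_4^\psi=\vec F_3$. Since the conjugated copy is then isomorphic to $F_4$, the statement follows. This follows the template of the binary case in \cite{GolanSapirJones}, where $\vec F\cong F_3$. There, each ternary caret is replaced by a binary gadget, and the oriented subgroup turns out to be exactly the image of this replacement map.

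For $\vec F_3\le F_3$, recall that Aiello and Nagnibeda describe $\vec F_3$ by a colouring or orientation condition on tree-diagrams. A diagram lies in $\vec F_3$ when the induced "parity" labelling of the leaves agrees on the top and bottom trees. My first step is therefore to find a finite $4$-ary-to-ternary substitution. Concretely, I would replace each $4$-ary caret by a ternary tree with $4$ leaves, which is a caret with one child expanded. The choice of which child is expanded depends on a finite state, giving a transducer $\psi$. The state records the label of the current vertex with respect to the orientation invariant, so different carets are replaced by different trees, as in the tweaked construction mentioned in the introduction.

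Next I would verify the hypotheses of Theorem~\ref{thm:conjugator-characterization}. The map $\psi$ must be order-preserving, the boundary rays $0^\infty$ and $(n-1)^\infty$ must eventually stabilize, and the minimal transducer must be semi-synchronizing. The last condition means that inner words with the same $\sigma_4$-residue eventually land in the same state. Checking this is a finite computation on the transducer, and it yields $F_4^\psi\le F_3$. I would then show that this image lies in $\vec F_3$ by checking that the substitution maps every $4$-ary caret to a piece that is locally consistent with the colouring. It follows that every image diagram has matching top and bottom labellings.

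For the reverse inclusion, I would use the core machinery of Section~\ref{sec:conjugating-closed-subgroups}. $F_4$ is closed in itself with a one-state core, and its conjugate $F_4^\psi$ is closed in $F_3$. The construction produces a tree-automaton for it, which is just the transducer's state graph read as a ternary tree-automaton. Separately, $\vec F_3$ is closed: membership is a local condition on branches, so any piecewise-$\vec F_3$ element preserves the labelling. I would compute its core from the definition, where the states are the colours and the edges come from how the colour propagates to children. Since finitely generated closed subgroups are determined by their cores, it suffices to check that the two tree-automata coincide up to isomorphism. This requires knowing that $\vec F_3$ is finitely generated, which Aiello--Nagnibeda establish, and the same equality of cores then gives $\vec F_3=F_4^\psi$ directly.

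I expect the main obstacle to be guessing the correct state-dependent substitution, so that the resulting core has exactly the colour-propagation structure of $\vec F_3$. I would first try two states, corresponding to the two orientation colours. In each state, the $4$-ary caret becomes a ternary caret whose left or right child is expanded, chosen so that the colour of each of the four leaves matches the $4$-ary child position modulo $2$.
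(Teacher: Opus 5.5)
Your overall strategy is the paper's: find an explicit order-preserving, semi-synchronizing $\psi:\C_4\to\C_3$, then identify $F_4^\psi$ with $\vec F_3$. However, the mechanism you propose for building $\psi$ cannot work. A full ternary tree with $k$ carets has $2k+1$ leaves, so there is no ternary tree with $4$ leaves; a ternary caret with one child expanded has $5$. Consequently no caret-by-caret substitution, state-dependent or not, turns $4$-ary tree diagrams into ternary ones. The template behind $\vec F\cong F_3$ works only because binary trees can have any number of leaves, and caret-substitution embeddings into $F_3$ require the source arity to be odd.

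The missing idea is that a $4$-ary cylinder must be allowed to land on a clopen set that is not a single ternary cylinder. In the paper's transducer $T_J$, the states $c$ and $d$ have images $U_0\sqcup U_1$ and $U_1\sqcup U_2$. So $\psi_J$ maps $U_0,U_1,U_2,U_3$ onto $U_0$, $U_1$, $U_{20}\sqcup U_{21}$ and, via the output $3\mid 22$, onto $U_{22}$. Moreover, its states track $\sigma_4$ modulo $3$ (inner residues $0,1,2$ lead to $E$, $a$, $\{c,d\}$) rather than two orientation colours. Without such a transducer in hand, the semi-synchronization check and everything after it has nothing to act on.

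The identification step also needs correcting, in three places.
\begin{itemize}
\item The core of $F_4$ is the six-state automaton $\mathcal K_4$, not a one-state automaton. $\mathbf 1_4$ accepts all of $F_4$ but lacks the existence property, so feeding it to the forward construction gives an automaton defining $F_4^\psi$ that need not be a pre-core.
\item The forward automaton is not the transducer's state graph read as a ternary automaton; that graph has $4$-ary input edges and word outputs. You must subdivide the output words, determinize by subsets and fold. For $T_J$ with $\mathcal K_4$ this yields the eight-state core $\mathcal J$.
\item The colour automaton $\mathcal A_{\mathrm{AN}}$ encoding the Aiello--Nagnibeda parity criterion is not the core of $\vec F_3$, since it lacks the existence property. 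So testing it for isomorphism against a core would fail even though the groups agree.
\end{itemize}
The paper compares accepted groups instead. A morphism $\mathcal J\to\mathcal A_{\mathrm{AN}}$ gives $\D(\mathcal J)\subseteq\vec F_3$. The reverse inclusion comes from a direct check of the left-boundary, right-boundary and inner branch pairs; the right-boundary case uses the parity of the exponent in $2^a$.
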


This answers questions of Aiello from \cite{AielloIntro}, asking whether $\vec F_3$ is finitely presented, and whether it is isomorphic to a Higman--Thompson group or to some other known group.  The proof exhibits an explicit semi-synchronizing transducer representing a homeomorphism $\psi_J:\C_4\to\C_3$ with $F_4^{\psi_J}=\vec F_3$.  In particular, the maximal subgroup of $F_3$ constructed by Aiello and Nagnibeda is isomorphic to $F_4$.  Second, we realize the known maximal subgroups of $F$ of infinite index which act minimally on $(0,1)$ as transducer conjugates of Higman--Thompson groups (Subsection~\ref{subsec:known-minimal-maximal-subgroups}).  Consequently, all currently known maximal subgroups of infinite index of Thompson's group $F$ which act minimally on $(0,1)$ are isomorphic to Higman--Thompson groups.  We also realize a copy of the Brin--Navas group $B$ \cite{BrinElementaryAmenable,NavasQuelquesGroupes}, inside $F$, in the form $F^\psi\cap F$ for an explicit transducer homeomorphism $\psi:\C_2\to\C_2$ (Subsection~\ref{subsec:brin-navas-subgroup}), illustrating the constructions in the case where $F^\psi$ is not contained in $F$.

With this machinery in place, the chains are constructed as follows.  For $F=F_2$, we exhibit an explicit order-preserving semi-synchronizing binary transducer with four states, representing a homeomorphism $\varphi:\C_2\to\C_2$ (Example~\ref{ex:binary-semisync-transducer}), and consider the subgroups $H_i=F^{\varphi^i}$, $i\ge0$.  Using our techniques, we compute the cores $\C(H_i)$ and classify their quotient tree-automata; the correspondence between closed overgroups and quotients of the core then implies that the only closed subgroups of $F$ containing $H_i$ are $H_i,H_{i-1},\ldots,H_0=F$.  The generation theorem for $F$ is then used to remove the closedness assumption, and a study of the inverse limit of the cores implies that the chain $(H_i)$ has trivial intersection (Theorem~\ref{thm:intersection-trivial}).

In Section~\ref{sec:standard-chain-and-twisting} we extend the result from $F$ to the general case of $F_n$, $n>2$.  We introduce an operation of \emph{residue inflation}, which turns binary transducers into $n$-ary ones.  Inflating the binary transducer of Section~\ref{sec:binary-descending-chain} yields an $n$-ary semi-synchronizing transducer, representing a homeomorphism $\zeta:\C_n\to\C_n$, and we consider the chain $S_i=F_n^{\zeta^i}$, $i\ge0$.  We prove that every subgroup of $F_n$ containing $S_i$ is one of $S_i,S_{i-1},\ldots,S_0=F_n$ (Theorem~\ref{thm:standard-only-predecessors}).  This  chain, however, does not have trivial intersection.  In the final step we modify it by a \emph{compatible twisting} argument: inner conjugations $H_i=S_i^{k_i}$, with the elements $k_i$ chosen coherently along the chain, in the spirit of stabilizers of rays in coset trees (compare \cite{GK14}).  The twisting preserves the overgroup structure of the chain and forces the intersection to be trivial (Proposition~\ref{prop:compatible-twisting-kills-intersection-general}), which completes the proof of Theorem~\ref{thm:main-intro}.

\subsection*{Towards a classification of maximal subgroups}

By the results of this paper, all known maximal subgroups of infinite index of Thompson's group $F$ which act minimally on $(0,1)$ are isomorphic to Higman--Thompson groups: the members of the family from \cite{GolanMaximalF} are isomorphic to the groups $F_{p+1}$, $p$ prime, by construction; the five known explicit examples from \cite{GolanGeneration,AielloNagnibedaThreeColorable} which act minimally are shown in Subsection~\ref{subsec:known-minimal-maximal-subgroups} to be transducer conjugates of Higman--Thompson groups; and the chains constructed in this paper consist of copies of $F_n$.  The same pattern occurs in the known examples for $F_n$: the maximal subgroups of $F_n$ from \cite{GolanSapirGenerationFn} are isomorphic to $F_{2n-1}$, and the Aiello--Nagnibeda maximal subgroup of $F_3$ is isomorphic to $F_4$ by Theorem~\ref{thm:jones-intro}.  This motivates the following problem.

\begin{problem}
\label{prob:minimal-maximal}
Is every maximal subgroup of infinite index of Thompson's group $F$ which acts minimally on the interval $(0,1)$ isomorphic to a Higman--Thompson group $F_m$, for some $m\ge2$?  More generally, characterize the isomorphism types of the maximal subgroups of infinite index of the Higman--Thompson groups $F_n$.
\end{problem}

The point stabilizers are excluded by the minimality hypothesis.  Problem~\ref{prob:minimal-maximal} is consistent with all examples of maximal subgroups of $F$ which appear in the literature.  The same problem is relevant for the groups $F_n$, $n>2$, possibly under the additional assumption that the maximal subgroup is closed; recall that for $F$, closedness of maximal subgroups of infinite index is automatic \cite{GolanMaximalF}, while for $F_n$ this is an open problem.

Several results obtained by the author as part of an attempt to resolve Problem~\ref{prob:minimal-maximal} will appear in forthcoming papers.  The author has proved that a wide class of closed subgroups of $F$ are isomorphic to Higman--Thompson groups.  These results apply, in particular, to the diagram groups associated with the classes $\mathcal C_n$ of indecomposable geometrically fast groups generated by $n$ one-bump functions; for the terminology of geometrically fast sets, dynamical diagrams, and the classes $\mathcal C_n$, see \cite{BBKMZ,BelkStott}.  Belk and Stott proved that fast one-bump groups are isomorphic to the diagram groups determined by their dynamical diagrams, and used this description to prove that pseudo-$F_4$ is isomorphic to $F_4$ \cite{BelkStott}.  Although the original fast one-bump groups are not, in general, closed subgroups of $F$, the corresponding diagram groups can be realized as closed subgroups of $F$ using methods from \cite{GolanSapirClosedSubgroupsF}.  The author has proved that, for every $n\ge2$, every group in the class $\mathcal C_n$ is isomorphic to $F_n$; the cases $n=2,3$ were previously known, and the case $n=4$ is the theorem of Belk and Stott.  Thus, for the classes $\mathcal C_n$, this answers the strong form of a question of Brin and Zaremsky from \cite{BurilloBuxNucinkis}.  The author has also constructed, for every pair $m,n$ with $n>m\ge2$, a maximal subgroup of $F_m$ isomorphic to $F_n$; in particular, for every $m\geq 2$, Thompson's group $F_m$ has countably many non-isomorphic maximal subgroups of infinite index. 

\subsection*{Organization of the paper}

Section~\ref{sec:preliminaries} contains detailed preliminaries on words and Cantor spaces, tree diagrams and the groups $F_n$, abelianization, tree-automata, cores and closed subgroups, generation criteria, lifts between the interval and Cantor models, transducers, and determinization.  Section~\ref{sec:closed-maximal-copies} proves that closed maximal copies of $F_n$ in $F_m$ are realized by liftable interval conjugacies.  Section~\ref{sec:semi-sync-characterization} proves the characterization of Cantor-space conjugators by semi-synchronization (Theorem~\ref{thm:conjugator-intro}).  Section~\ref{sec:conjugating-closed-subgroups} develops the pullback and forward constructions for conjugating closed subgroups, together with the geometric form of the determinization; it also contains the proof that $\vec F_3\cong F_4$, the realization of the Brin--Navas group, and the transducer realizations of the known minimally-acting maximal subgroups of $F$.  Section~\ref{sec:binary-descending-chain} constructs the descending chain for $F$ and proves that it has trivial intersection.  Section~\ref{sec:standard-chain-and-twisting} introduces residue inflation, constructs the standard chain for $F_n$, and proves the main theorem via the compatible twisting argument.

\section{Preliminaries}
\label{sec:preliminaries}

Throughout the paper, $n,m\ge 2$.  We use left-to-right composition: if $f$ and $g$ are maps, then
\((fg)(x)=g(f(x)).\)
If $H$ is a subgroup of a homeomorphism group and $\psi$ is a homeomorphism, we write
\(H^\psi=\psi^{-1}H\psi .\)

\subsection{Words, prefix codes, Cantor spaces, and intervals}

Let
\[
        X_n=\{0,1,\ldots,n-1\},\qquad
        \W_n=X_n^*,\qquad
        \C_n=X_n^{\N}.
\]
Thus $\W_n$ is the set of all finite words over $X_n$, including the empty word $\eps$, and $\C_n$ is the Cantor space of one-sided infinite words over $X_n$.  If $u,v\in\W_n$ and $\omega\in\C_n$, their concatenations are denoted $uv$ and $u\omega$.  The length of $u$ is denoted $|u|$.

For $u\in\W_n$, the corresponding cylinder is
\[
        U_u=u\C_n=\{u\omega\mid \omega\in\C_n\}.
\]
The cylinder sets are clopen and form a basis for the topology on $\C_n$.

A subset $P\subseteq \W_n$ is a \emph{prefix code} if no element of $P$ is a proper prefix of another element of $P$.  A finite prefix code $P$ is \emph{complete} if the cylinders $\{U_u\mid u\in P\}$ form a partition of $\C_n$.  Equivalently, every infinite word in $\C_n$ has a unique prefix belonging to $P$.  Complete finite prefix codes are exactly the sets of branches of finite full $n$-ary trees, as described below.

We order $X_n$ by $0<1<\cdots<n-1$ and give $\C_n$ the induced lexicographic order.  We use the usual coding map
\[
        \rho_n:\C_n\to[0,1],
        \qquad
        \rho_n(\omega_1\omega_2\cdots)=\sum_{j\ge1}\frac{\omega_j}{n^j}.
\]
The map $\rho_n$ identifies exactly the two base-$n$ expansions of each $n$-adic rational in $(0,1)$.  We write $\omega\sim_{\rho_n}\eta$ if $\rho_n(\omega)=\rho_n(\eta)$.  The nontrivial equivalence classes are precisely
\[
        u i (n-1)^\infty \sim_{\rho_n} u(i+1)0^\infty
        \qquad (u\in\W_n,\ 0\le i<n-1).
\]
All other equivalence classes are singletons.  The points of $\Z[1/n]\cap(0,1)$ will be called the \emph{$n$-adic rationals} in $(0,1)$.

For $u\in\W_n$, we write
\([u]=\rho_n(U_u)\)
for the closed $n$-adic interval associated with $u$.  Thus $[u]$ has endpoints $\rho_n(u0^\infty)$ and $\rho_n(u(n-1)^\infty)$.

If $A\subseteq \C_n$ is nonempty, we let $\Root(A)$ be the longest word $u\in\W_n$ such that $A\subseteq U_u$.  If no nonempty word has this property, then $\Root(A)=\eps$.  Equivalently, $\Root(A)$ is the greatest common prefix of the elements of $A$.

A nonempty word $u\in\W_n$ is called \emph{inner} if it is not of the form $0^k$ and is not of the form $(n-1)^k$.  The empty word will not be called inner.

\subsection{Finite $n$-ary trees and tree diagrams}

A \emph{finite full $n$-ary tree} is a finite rooted planar tree in which every vertex has either no children or exactly $n$ children, ordered from left to right and labelled by $0,1,\ldots,n-1$.  A vertex with no children is a \emph{leaf}.  A vertex with $n$ children is a \emph{father vertex}.  The tree consisting of one father vertex and its $n$ children is an \emph{$n$-caret}.

Every vertex of a finite full $n$-ary tree determines a unique path from the root.  Reading the labels of the edges along this path gives a word in $\W_n$.  If the vertex is a leaf, this word is called a \emph{branch} of the tree.  We usually identify a leaf with its branch label.  The set of branches of a finite full $n$-ary tree is a finite complete prefix code, and every finite complete prefix code arises in this way.

An \emph{$n$-ary tree diagram} is a pair $(T_+,T_-)$ of finite full $n$-ary trees with the same number of leaves.  The tree $T_+$ is the domain tree and $T_-$ is the range tree.  If the branches of $T_+$ are
\(u_1<\cdots<u_k\)
and the branches of $T_-$ are
\(v_1<\cdots<v_k\)
in lexicographic order, then the diagram has \emph{branch pairs}
\[
        u_i\to v_i \qquad (1\le i\le k).
\]
The diagram defines a finite prefix replacement homeomorphism of $\C_n$ by
\[
        u_i\omega\mapsto v_i\omega \qquad (\omega\in\C_n),
\]
and, after applying the quotient map $\rho_n$, a piecewise-linear homeomorphism of $[0,1]$ which maps $[u_i]$ linearly onto $[v_i]$ for each $i$.

If an $n$-caret is attached to the same leaf of both $T_+$ and $T_-$, the represented prefix replacement does not change.  This operation is called an \emph{expansion}.  The inverse operation, removing a common $n$-caret from both trees, is called a \emph{reduction}.  Two tree diagrams are \emph{equivalent} if one can pass from one to the other by a finite sequence of expansions and reductions.  Every tree diagram is equivalent to a unique reduced tree diagram; see Brown's treatment of the groups $F_n$ and the standard binary discussion of Cannon--Floyd--Parry \cite{Brown,CannonFloydParry}. 

One can define a product of two tree diagrams by the common-expansion procedure.  If $(T_+,T_-)$ and $(R_+,R_-)$ are tree diagrams, choose a finite full $n$-ary tree $S$ which is a common expansion of $T_-$ and $R_+$.  Expanding the first diagram gives an equivalent diagram $(T'_+,S)$, and expanding the second gives an equivalent diagram $(S,R'_-)$.  The product is represented by $(T'_+,R'_-)$.  This operation is well defined on equivalence classes and corresponds to composition of the associated prefix replacement maps.

\subsection{The Higman--Thompson groups $F_n$}

The Higman--Thompson group $F_n$ is the group of all increasing piecewise-linear homeomorphisms of $[0,1]$ whose breakpoints lie in $\Z[1/n]$ and whose slopes are integral powers of $n$ \cite{Brown}.  Equivalently, $F_n$ is the group of all order-preserving homeomorphisms of $\C_n$ represented by finite prefix replacements.  Thus $f\in F_n$ if and only if there are finite complete prefix codes
\[
        P=\{u_1<\cdots<u_k\},\qquad Q=\{v_1<\cdots<v_k\},
\]
such that
\[
        f(u_i\omega)=v_i\omega
        \qquad(1\le i\le k,\ \omega\in\C_n).
\]
In this case the induced interval map sends $[u_i]$ linearly onto $[v_i]$ with slope $n^{|u_i|-|v_i|}$.

Equivalently again, $F_n$ is the group of equivalence classes of $n$-ary tree diagrams under expansions and reductions, with multiplication as described in the previous subsection.  If $(T_+,T_-)$ is a tree diagram whose branch pairs are $u_i\to v_i$, then it represents the element of $F_n$ acting by $u_i\omega\mapsto v_i\omega$ on $\C_n$, or linearly from $[u_i]$ to $[v_i]$ in the interval model.  These three models are naturally identified; see \cite{Brown,CannonFloydParry}.

We shall often say that an element $f\in F_n$ \emph{has the branch pair} $u\to v$.  This means that some, not necessarily reduced, tree diagram representing $f$ has $u\to v$ as a branch pair.  Equivalently, $f$ maps the interval $[u]$ linearly onto the interval $[v]$.
  
For a word $u=u_1\cdots u_r\in\W_n$, let
\[
        \sigma_n(u)=\sum_{j=1}^r u_j \pmod {n-1}.
\]
If \(\alpha\in \Z[1/n]\cap(0,1)\) and \(\alpha=\rho_n(u0^\infty)\) for some \(u\in\W_n\), then \(\sigma_n(\alpha):=\sigma_n(u)\) is well defined.  The orbits of the action of \(F_n\) on \(\Z[1/n]\cap(0,1)\) are precisely the sets
\[
        D_{n,i}=\{\alpha\in \Z[1/n]\cap(0,1)\mid \sigma_n(\alpha)=i\},
        \qquad 0\le i\le n-2.
\]
The following lemma records the standard orbit criterion for the action of \(F_n\) on \(n\)-adic intervals; see \cite[Lemma~2.8 and Remark~2.9]{GolanSapirGenerationFn}.

\begin{lemma}[Branch-pair criterion for inner words]\label{lem:branch-pair-criterion-prelim}
Let \(u,v\in\W_n\) be inner words.  There exists an element \(f\in F_n\) having the branch pair \(u\to v\) if and only if
\[
        \sigma_n(u)=\sigma_n(v).
\]

More generally, let
\[
        u_1,\ldots,u_k,\ v_1,\ldots,v_k
\]
be inner words such that the words \(u_1,\ldots,u_k\) are pairwise incomparable, the words \(v_1,\ldots,v_k\) are pairwise incomparable, and the corresponding intervals occur from left to right:
\[
        \max [u_i]\le \min [u_{i+1}]
        \quad\text{and}\quad
        \max [v_i]\le \min [v_{i+1}]
        \qquad(1\le i<k).
\]
Assume further that
\[
        \sigma_n(u_i)=\sigma_n(v_i)
        \qquad(1\le i\le k),
\]
and that adjacency is preserved, that is,
\[
        [u_i]\cap [u_{i+1}]\neq\emptyset
        \quad\Longleftrightarrow\quad
        [v_i]\cap [v_{i+1}]\neq\emptyset
        \qquad(1\le i<k).
\]
Then there exists an element \(f\in F_n\) having the branch pairs
\[
        u_i\to v_i
        \qquad(1\le i\le k).
\]
\end{lemma}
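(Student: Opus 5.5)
The plan is to prove the general statement directly, since the first statement is the case $k=1$ (its "only if" direction needs separate treatment). The strategy is to complete the given families of words to two complete prefix codes of equal size. The two codes should match the $u_i$ with the $v_i$ in order and match leftover pieces in a compatible way.

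\emph{Necessity.} Suppose $f$ has the branch pair $u\to v$ with $u,v$ inner. Then $f$ maps the left endpoint $\rho_n(u0^\infty)$ to $\rho_n(v0^\infty)$. Since $u$ is inner, $u$ is not of the form $0^k$, so this endpoint is an $n$-adic rational in $(0,1)$ rather than $0$; the same holds for $v$. Elements of $F_n$ preserve the orbits $D_{n,i}$, so $\sigma_n(u)=\sigma_n(v)$. The one thing to check is that $\sigma_n(\rho_n(u0^\infty))=\sigma_n(u)$ is compatible with the other expansion of the same point. This holds because $i(n-1)^\infty$ versus $(i+1)0^\infty$ differ by a multiple of $n-1$ in digit sum modulo $n-1$. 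Here I am using the well-definedness already recorded before the lemma.

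\emph{Sufficiency.} Consider the complement of $\bigcup_i[u_i]$ in $[0,1]$. It is a finite union of gaps $J_0,\dots,J_k$: $J_0$ lies left of $[u_1]$, $J_k$ lies right of $[u_k]$, and $J_i$ lies between $[u_i]$ and $[u_{i+1}]$. Each gap is either empty or an interval with $n$-adic endpoints, and likewise there are gaps $J'_i$ for the $v$'s. By the adjacency hypothesis, $J_i=\emptyset$ iff $J'_i=\emptyset$ for $0<i<k$. Since the words are inner, $J_0,J_k,J'_0,J'_k$ are all nonempty.

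The next step is to subdivide each nonempty gap into finitely many $n$-adic intervals $[w]$ and pair them up so that the numbers of pieces agree. Inside an $n$-adic interval of given length, one can split off carets to change the number of pieces by any multiple of $n-1$. So the problem reduces to a congruence: the numbers of pieces in $J_i$ and $J'_i$ must be congruent modulo $n-1$. The piece count of a gap $[\alpha,\beta]$ modulo $n-1$ should equal $\sigma_n(\beta)-\sigma_n(\alpha)$, with the conventions $\sigma_n(0)=0$ and, for the point $1$, a value consistent with $1=\rho_n((n-1)^\infty)$. Now the endpoints of interior gaps are endpoints of some $[u_i]$. The left endpoint of $[u]$ has residue $\sigma_n(u)$. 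The right endpoint is $\rho_n(u(n-1)^\infty)$; rewriting it carrying over gives residue $\sigma_n(u)+1$, which one checks from the carry rule. So the hypothesis $\sigma_n(u_i)=\sigma_n(v_i)$ makes every gap's endpoint residues agree for the $u$- and $v$-families. The boundary gaps $J_0,J'_0$ both start at $0$ and $J_k,J'_k$ both end at $1$, so these match as well. This gives complete prefix codes $P\supseteq\{u_i\}$ and $Q\supseteq\{v_i\}$ of the same cardinality, in which the $u_i$ and $v_i$ occupy the same positions in lexicographic order. The tree diagram $(P,Q)$ then has the branch pairs $u_i\to v_i$.

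\emph{Main obstacle.} The delicate step is the piece-count congruence. One must show that a subdivision of a gap $[\alpha,\beta]$ into $n$-adic intervals can achieve any count $N\ge N_0$ with $N\equiv\sigma_n(\beta)-\sigma_n(\alpha)\pmod{n-1}$, and only such counts. I would prove this by induction, using the fact that each caret expansion adds $n-1$ pieces. For the base count, I would use the greedy canonical decomposition of $[\alpha,\beta]$ into maximal $n$-adic intervals and read off its digit-sum formula. An alternative is to cite \cite[Lemma~2.8 and Remark~2.9]{GolanSapirGenerationFn} for this count.
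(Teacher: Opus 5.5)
Your argument is correct. Note that the paper does not prove this lemma at all: it records it as a standard fact and cites \cite[Lemma~2.8 and Remark~2.9]{GolanSapirGenerationFn}. So your proposal supplies a self-contained proof rather than competing with one in the text. The structure works as follows. You fill the complementary gaps of the $[u_i]$ and of the $[v_i]$ with standard $n$-adic intervals. You equalize the piece counts in corresponding gaps by caret expansions. You then read the diagram off the two resulting complete prefix codes. The endpoint bookkeeping is exactly what makes corresponding gaps fall into the same count class: the left endpoint of $[u]$ has residue $\sigma_n(u)$, the right endpoint has residue $\sigma_n(u)+1$ for inner $u$, and the conventions are $0\mapsto 0$ and $1\mapsto 1$.

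Four remarks tighten the step you flagged as the main obstacle and the surrounding details.

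\begin{itemize}
\item For sufficiency you only need that \emph{some} subdivision of each gap lies in the right count class. The claim that \emph{only} such counts occur is never used.
\item That existence is easier to get without the greedy decomposition. Choose $K$ with $n^K\alpha,n^K\beta\in\mathbb Z$, and cut $[\alpha,\beta]$ into the $n^K(\beta-\alpha)$ intervals $[w]$ with $|w|=K$. Since $n\equiv 1\pmod{n-1}$, the integer $n^K\gamma$ is congruent modulo $n-1$ to its base-$n$ digit sum, which is $\sigma_n(\gamma)$; also $n^K\cdot 1\equiv 1$. Hence the count is congruent to $\sigma_n(\beta)-\sigma_n(\alpha)$.
\item You should say explicitly why the $u_i$ together with the gap pieces form a complete prefix code listed in left-to-right order. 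Distinct words whose intervals have disjoint interiors are incomparable, so their cylinders are disjoint. Their lengths $n^{-|w|}$ sum to $1$, so the cylinders cover $\C_n$. Lexicographic order of incomparable words agrees with the left-to-right order of their intervals.
\item In the necessity paragraph, the remark about the two expansions $i(n-1)^\infty\sim(i+1)0^\infty$ is beside the point. You evaluate $\sigma_n$ at $\rho_n(u0^\infty)$, where only trailing zeros matter. There is also an alternative that avoids the orbit description entirely and works for arbitrary, not only inner, branch pairs. The $j$-th leaf $w_j$ of any finite full $n$-ary tree satisfies $\sigma_n(w_j)\equiv j-1\pmod{n-1}$, which follows by induction on carets. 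Matched leaves of a tree diagram therefore have equal $\sigma_n$.
\end{itemize}
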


For $F=F_2$ we use the standard generators $x_0,x_1$ of Cannon--Floyd--Parry \cite{CannonFloydParry}, given on the Cantor space by
\[
    x_0:\begin{cases}
        00\eta\mapsto 0\eta,\\
        01\eta\mapsto 10\eta,\\
        1\eta\mapsto 11\eta,
    \end{cases}
    \qquad
    x_1:\begin{cases}
        0\eta\mapsto 0\eta,\\
        100\eta\mapsto 10\eta,\\
        101\eta\mapsto 110\eta,\\
        11\eta\mapsto 111\eta.
    \end{cases}
\]

If $u\in\W_n$, we denote by $F_n[u]$ the subgroup of $F_n$ supported in the interval $[u]$.  It is naturally isomorphic to $F_n$: if $g\in F_n$ has branch pairs $a_j\to b_j$, then the copy $g[u]\in F_n[u]$ has branch pairs $ua_j\to ub_j$, together with identity branch pairs outside $[u]$.

\subsection{Abelianization and finite-index subgroups}
\label{subsec:abelianization}

We use the abelianization of \(F_n\) coming from Brown's infinite presentation
\[
        F_n=
        \left\langle x_0,x_1,x_2,\ldots\ \middle|\ 
        x_k^{-1}x_i x_k=x_{i+n-1}\ \text{for }k<i
        \right\rangle .
\]
      Abelianizing gives
\(F_n/[F_n,F_n]\cong\Z^n.\)
Indeed, in the abelianization,
\[
        [x_i]=[x_{i+n-1}]
        \qquad(i\ge1),
\]
and
\[
        [x_0],[x_1],\ldots,[x_{n-1}]
\]
form a free basis.  See Brown \cite{Brown}; see also the finite
presentation in \cite{GubaSapirDiagramGroups}.

We denote the abelianization map by
\[
        \ab_n:F_n\longrightarrow F_n/[F_n,F_n]\cong\Z^n.
\]
Thus, if
\(e_0,e_1,\ldots,e_{n-1}\)
is the standard basis of \(\Z^n\), then
\(\ab_n(x_0)=e_0,\)
and, for \(i\ge1\),
\(\ab_n(x_i)=e_j,\)
where
\[
        j\in\{1,\ldots,n-1\}
        \qquad\text{and}\qquad
        i\equiv j\pmod{n-1}.
\]

We recall also that for $F=F_2$, one often considers another map as the ``standard'' abelianization map.

Let
\[
        \lambda_0,\lambda_1:F_n\to\Z,
\]
defined by
\[
        \lambda_0(f)=\log_n f'(0^+),
        \qquad
        \lambda_1(f)=\log_n f'(1^-).
\]
Consider the map $$\pi_n\colon F_n\to\mathbb{Z}^2$$ mapping 
$$f\mapsto (\lambda_0(f),\lambda_1(f)).$$
The chain rule  implies that the derived subgroup $[F_n,F_n]$ is contained in the kernel of $\pi_n$. The map $\pi_n$ is onto. Since for $F$, the abelianization is isomorphic to $\mathbb{Z}^2$ and $\mathbb{Z}^2$ is Hopfian, it follows that $\pi_2\colon F\to\mathbb{Z}^2$ has kernel $[F,F]$, and is often used as the abelianization map. 
                        
The derived subgroup \([F_n,F_n]\) is infinite and simple, and every nontrivial normal
subgroup of \(F_n\) contains \([F_n,F_n]\); see Brown \cite{Brown} and the standard
references on the Higman--Thompson groups \cite{CannonFloydParry}.  Consequently every
finite-index subgroup of \(F_n\) contains \([F_n,F_n]\).  Indeed, the normal core (in the
group-theoretic sense, i.e., the intersection of all conjugates) of a finite-index
subgroup is a nontrivial finite-index normal subgroup, and every such normal
subgroup contains \([F_n,F_n]\).

Thus finite-index subgroups of \(F_n\) correspond to finite-index subgroups of the free
abelian group
\(F_n/[F_n,F_n]\cong\Z^n.\)
In particular, a subgroup \(H\le F_n\) is not contained in any proper finite-index subgroup of $F_n$ if and only if
\(H[F_n,F_n]=F_n\); equivalently, if and only if
\(\ab_n(H)=F_n/[F_n,F_n]\).

\subsection{Minimality and locally-moving}
\label{subsec:interval-dynamics-prelim}

Let $I$ be an open interval and let $G\le \Homeo_+(I)$.  The action of $G$ on $I$ is called \emph{minimal} if every $G$-orbit is dense in $I$.  Equivalently, there is no nonempty proper closed subset of $I$ which is invariant under $G$.

For a nonempty open subinterval $J\subseteq I$, let
\[
        G_J=\{g\in G\mid \supp(g)\subseteq J\}.
\]
The group $G$ is called \emph{locally moving} if, for every nonempty open subinterval $J\subseteq I$, the subgroup $G_J$ has no global fixed point in $J$.  The standard action of $F_n$ on $(0,1)$ is locally moving: every point $x$ of a nonempty open subinterval $J\subseteq(0,1)$ lies in the interior of some interval with $n$-adic endpoints $[a,b]$, and some element of $F_n$ supported in that interval moves $x$.
              
\subsection{Tree-automata and accepted diagram groups}
\label{subsec:tree-automata}

We now define the tree-automata used to encode closed subgroups.  The definitions are the $n$-ary analogues of the binary definitions used for subgroups of $F$ in \cite{GolanGeneration,GolanMaximalF}; see also the $F_n$ version in \cite{GolanSapirGenerationFn}.  Some of the cited results are stated in the literature for $F=F_2$.  In the places where we use them below, the proofs depend only on the tree-diagram formalism and pass verbatim from binary carets to $n$-ary carets.

\begin{definition}\label{def:tree-automaton}
A \emph{rooted deterministic $n$-ary automaton} is a triple
\[
        \mathcal A=(Q,\tau,q_0)
\]
where $Q$ is a set, whose elements are called \emph{states}; $q_0\in Q$ is a distinguished state, called the \emph{root} or \emph{initial state}; and
\[
        \tau:Q\times X_n\dashrightarrow Q
\]
is a partially defined transition function.  The word ``deterministic'' refers to the fact that, for fixed $q\in Q$ and $i\in X_n$, there is at most one state $p\in Q$ with $\tau(q,i)=p$.

The transition function extends to finite words in the usual way: $\tau(q,\eps)=q$, and
\[
        \tau(q,ui)=\tau(\tau(q,u),i)
\]
whenever the right-hand side is defined.  Thus $\tau(q,u)$ is defined precisely when the word $u$ can be read from the state $q$. We often use the notation $q\cdot u$ for $\tau(q,u)$, when $\tau(q,u)$ is defined.

An \emph{$n$-ary tree-automaton} is a rooted deterministic $n$-ary automaton satisfying the following two conditions.
\begin{enumerate}[label=(\roman*)]
    \item Every state is reachable from the root: for every $q\in Q$ there is a word $u\in\W_n$ such that $\tau(q_0,u)=q$.
    \item For every $q\in Q$, either $\tau(q,i)$ is undefined for all $i\in X_n$, or $\tau(q,i)$ is defined for every $i\in X_n$.
\end{enumerate}
A state with no outgoing transitions is called a \emph{leaf}.  A state with $n$ outgoing transitions is called a \emph{father state}.  The automaton is \emph{full} if it has no leaves.
\end{definition}

The definition above is completely equivalent to the following graph description.  We draw a directed edge labelled $i$ from $q$ to $p$ whenever $\tau(q,i)=p$.  Since $\tau$ is a function, from a state $q$ there is at most one outgoing edge with a given label.  The two defining conditions for a tree-automaton say that every state is reached by a directed labelled path starting at the root, and that each state has either no outgoing edges or exactly one outgoing edge labelled $i$ for each $i\in X_n$.

\begin{definition}
An $n$-ary tree-automaton $\mathcal A=(Q,\tau,q_0)$ is \emph{folded} if no two distinct father states have exactly the same ordered list of children.  Equivalently, whenever $p,q\in Q$ are father states and
\[
        \tau(p,i)=\tau(q,i)\quad\text{for every }i\in X_n,
\]
then $p=q$.
\end{definition}

\begin{remark}
In \cite{GolanMaximalF}, in the binary case, what we call here a folded tree-automaton is called a rooted tree-automaton.  In the present paper we use the term tree-automaton for the possibly non-folded deterministic object of Definition~\ref{def:tree-automaton}, and folded tree-automaton when the additional foldedness condition holds.  This convention is convenient because non-folded tree-automata will occur several times as intermediate objects before we pass to a folded quotient.  Using the word folded explicitly emphasizes the operation being performed.  Related non-folded objects appear in the $F_n$ generation paper under the terminology of rooted semi tree-automata and semi-cores \cite{GolanSapirGenerationFn}.
\end{remark}

A word $u\in\W_n$ is \emph{readable} in an $n$-ary tree-automaton $\mathcal A=(Q,\tau,q_0)$ if $\tau(q_0,u)$ is defined.  In that case we also write
\(u^+=\tau(q_0,u).\)
A finite full $n$-ary tree is \emph{readable} in $\mathcal A$ if each of its branches is readable.

Let $(T_+,T_-)$ be an $n$-ary tree diagram.  We say that $(T_+,T_-)$ is \emph{readable} in $\mathcal A$ if both $T_+$ and $T_-$ are readable in $\mathcal A$.  If the branch pairs of the diagram are $u_1\to v_1,\ldots,u_k\to v_k$, then the diagram is \emph{accepted} by $\mathcal A$ if it is readable and
\[
        u_i^+=v_i^+
        \qquad\text{for every }i.
\]
We let $\D(\mathcal A)$ be the set of elements of $F_n$ which admit at least one tree diagram accepted by $\mathcal A$.

\begin{lemma}\label{lem:accepted-diagrams-form-subgroup}
Let $\mathcal A$ be a folded $n$-ary tree-automaton.  Then the following hold.
\begin{enumerate}[label=(\roman*)]
    \item If a tree diagram is accepted by $\mathcal A$, then the reduced tree diagram equivalent to it is also accepted by $\mathcal A$.
    \item If a tree diagram is accepted by $\mathcal A$, then an equivalent tree diagram is accepted by $\mathcal A$ if and only if its two trees are readable in $\mathcal A$.
    \item If two elements of $F_n$ admit accepted tree diagrams, then their product admits an accepted tree diagram.
\end{enumerate}
Consequently $\D(\mathcal A)$ is a subgroup of $F_n$.
\end{lemma}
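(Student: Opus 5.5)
The proof works directly with the combinatorics of expansions and reductions, and uses foldedness exactly once, at the step where a caret is removed. Everything rests on one basic observation. Let $(T_+,T_-)$ be a diagram accepted by $\mathcal A$, so $u_i^+=v_i^+$ for every branch pair $u_i\to v_i$. If we expand at the $i$th leaf pair, the new branch pairs are $u_ij\to v_ij$ for $j\in X_n$. Both words $u_ij$ and $v_ij$ are readable if and only if the common state $u_i^+=v_i^+$ is a father state, and in that case $(u_ij)^+=u_i^+\cdot j=v_i^+\cdot j=(v_ij)^+$. So an expansion of an accepted diagram is accepted as soon as it is readable.

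\emph{Part (i).} Suppose an accepted diagram admits a reduction. Then there is a common caret whose $n$ leaves are consecutive branch pairs $uj\to vj$, $j\in X_n$. Acceptance gives $(uj)^+=(vj)^+$ for every $j$. Readability of $uj$ and $vj$ implies $u^+$ and $v^+$ are defined, and by condition (ii) of Definition~\ref{def:tree-automaton} both are father states. Their ordered lists of children coincide, since $u^+\cdot j=(uj)^+=(vj)^+=v^+\cdot j$. Foldedness then forces $u^+=v^+$. Hence the reduced diagram is readable, because $u$ and $v$ are prefixes of readable words, and it satisfies the acceptance condition at the new pair $u\to v$. All other pairs are unchanged. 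Iterating reductions, which terminates, gives the reduced diagram.

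\emph{Part (ii).} One direction is trivial, since an accepted diagram is readable by definition. For the other, let $D'$ be equivalent to an accepted diagram $D$, with both trees of $D'$ readable. By (i), the reduced diagram $R$ is accepted, and $D'$ is obtained from $R$ by a sequence of expansions alone, by uniqueness of reduced forms. Each intermediate diagram in this sequence has trees contained in those of $D'$, so it is readable. The basic observation, applied inductively, then shows each intermediate diagram is accepted, and in particular $D'$ is.

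\emph{Part (iii).} Take accepted diagrams $(T_+,T_-)$ and $(R_+,R_-)$. The natural common expansion is $S=T_-\cup R_+$, the union of the two trees. Its branches are readable, since every branch of $S$ is a branch of $T_-$ or of $R_+$. Expand $(T_+,T_-)$ to $(T'_+,S)$. The branches of $T'_+$ have the form $u_iw$ where $v_iw$ is readable and $u_i^+=v_i^+$. Since $u_i^+\cdot w=v_i^+\cdot w$, the word $u_iw$ is readable with the same state as $v_iw$. By (ii), $(T'_+,S)$ is accepted, and symmetrically $(S,R'_-)$ is accepted. The product $(T'_+,R'_-)$ has branch pairs $a\to c$ obtained by composing $a\to s$ and $s\to c$, and $a^+=s^+=c^+$, so it is accepted.

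For the subgroup claim, note the following. The identity is represented by $(T,T)$ with $T$ the trivial tree, which is readable at the root. Inverses are handled by swapping the two trees, which preserves acceptance. Closure under products is (iii). The only delicate point is the reduction step in (i), which is where foldedness is genuinely needed; the remaining bookkeeping in (iii) about readability of $T'_+$ is the main technical step.
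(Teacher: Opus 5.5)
Your proof is correct. The paper gives no argument of its own here; it only cites the binary case \cite[Lemmas~2.7--2.10]{GolanMaximalF} and its $n$-ary version \cite[Lemma~2.12]{GolanSapirGenerationFn}. Your direct verification is the standard one, with foldedness entering only when a common caret is removed in (i).

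A minor remark: in (iii) the appeal to (ii) is unnecessary, since you check the equality of terminal states directly. In fact (iii), and hence the subgroup conclusion, holds without foldedness.
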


\begin{proof}
For $n=2$, these are \cite[Lemmas~2.7--2.10]{GolanMaximalF}.  The same argument for $n$-ary carets appears in the $F_n$ setting in \cite[Lemma~2.12]{GolanSapirGenerationFn}.
\end{proof}

We next record the basic language of morphisms and quotients, since it will be used in the construction of folded quotients and later for closed overgroups.

\begin{definition}\label{def:morphisms-quotients}
Let $\mathcal A=(Q,\tau,q_0)$ and $\mathcal B=(P,\sigma,p_0)$ be $n$-ary tree-automata.  A \emph{morphism} $\varphi:\mathcal A\to\mathcal B$ is a map $\varphi:Q\to P$ such that $\varphi(q_0)=p_0$ and, whenever $\tau(q,i)$ is defined,
\[
        \sigma(\varphi(q),i) \text{ is defined and }
        \varphi(\tau(q,i))=\sigma(\varphi(q),i).
\]
Equivalently, in the graph model, $\varphi$ sends the root to the root and preserves directed labelled edges.

A morphism is \emph{surjective} if it is onto on states and on directed labelled edges.  That is, every state of $\mathcal B$ is $\varphi(q)$ for some $q\in Q$, and every edge
\[
        p\xrightarrow{\ i\ }p'
        \quad\text{in }\mathcal B
\]
is the image of an edge $q\xrightarrow{\ i\ }q'$ in $\mathcal A$.  A \emph{quotient} of $\mathcal A$ is a tree-automaton $\mathcal B$ equipped with a surjective morphism $\mathcal A\to\mathcal B$.
\end{definition}

Since every state of a tree-automaton is reachable from the root and transitions are deterministic, a morphism between two tree-automata is unique if it exists.

\begin{remark}\label{rem:quotients-congruences}
Let $\mathcal A=(Q,\tau,q_0)$ be an $n$-ary tree-automaton.  Quotients of $\mathcal A$ are equivalently described by equivalence relations on $Q$ compatible with the labelled transitions as follows.

An equivalence relation $\sim$ on $Q$ is an \emph{automaton congruence} if, whenever $q\sim r$ and both $\tau(q,i)$ and $\tau(r,i)$ are defined, we have
\[
        \tau(q,i)\sim\tau(r,i).
\]
Given such a congruence, the quotient $\mathcal A/{\sim}$ has state set $Q/{\sim}$, root $[q_0]$, and a transition
\[
        [q]\xrightarrow{\,i\,}[p]
\]
whenever there exists a representative $q'\in[q]$ such that $\tau(q',i)$ is defined and $p=\tau(q',i)$.  This transition is independent of the chosen representative.  Conversely, if $\varphi:\mathcal A\to\mathcal B$ is a surjective morphism, then the relation $q\sim r$ if and only if $\varphi(q)=\varphi(r)$ is an automaton congruence, and $\mathcal B$ is naturally isomorphic to $\mathcal A/{\sim}$.
\end{remark}
        
\begin{lemma}\label{lem:morphism-implies-inclusion}
If $\varphi:\mathcal A\to\mathcal B$ is a morphism of $n$-ary tree-automata, then
\[
        \D(\mathcal A)\subseteq \D(\mathcal B).
\]
\end{lemma}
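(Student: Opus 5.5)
The plan is to show directly that every tree diagram accepted by $\mathcal A$ is also accepted by $\mathcal B$. Then every element of $\D(\mathcal A)$, which by definition has some diagram accepted by $\mathcal A$, has a diagram accepted by $\mathcal B$, and so lies in $\D(\mathcal B)$. No foldedness or group structure is needed; the argument only uses the morphism axioms.

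The first step is to prove, by induction on $|u|$, that $\varphi$ commutes with reading words. Precisely: if $\tau(q,u)$ is defined, then $\sigma(\varphi(q),u)$ is defined and
\[
        \varphi(\tau(q,u))=\sigma(\varphi(q),u).
\]
The case $u=\eps$ is trivial. For $u=wi$, the induction hypothesis handles $w$. Since $\tau(\tau(q,w),i)$ is defined, the morphism condition applied to the state $\tau(q,w)$ and the letter $i$ gives the claim for $wi$. Specializing to $q=q_0$ and using $\varphi(q_0)=p_0$, every word readable in $\mathcal A$ is readable in $\mathcal B$, and its $\mathcal B$-endpoint is the image of its $\mathcal A$-endpoint: $u^{+}_{\mathcal B}=\varphi(u^{+}_{\mathcal A})$.

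Next, let $(T_+,T_-)$ be accepted by $\mathcal A$, with branch pairs $u_i\to v_i$. All branches of $T_\pm$ are readable in $\mathcal A$, so by the first step they are readable in $\mathcal B$; hence the diagram is readable in $\mathcal B$. Moreover, $u_i^{+}=v_i^{+}$ in $\mathcal A$, and applying $\varphi$ gives equal endpoints in $\mathcal B$. So the same diagram is accepted by $\mathcal B$, which finishes the argument.

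There is no real obstacle here. The only point needing care is that the morphism definition requires $\sigma(\varphi(q),i)$ to be defined whenever $\tau(q,i)$ is. This is exactly what makes readability transfer in the induction, and it is where one must check that the definition used is the one stated, not merely edge-preservation on existing edges.
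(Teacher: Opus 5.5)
Your proof is correct and is the same argument as the paper's: an accepted diagram's branch pairs end at equal states in $\mathcal A$, so applying $\varphi$ they end at equal states in $\mathcal B$. The only difference is that you spell out, by induction on word length, the transfer of readability from $\mathcal A$ to $\mathcal B$, which the paper leaves implicit.
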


\begin{proof}
If a diagram is accepted by $\mathcal A$, then for each branch pair $u\to v$ the words $u$ and $v$ end at the same state of $\mathcal A$.  Applying the morphism, they end at the same state of $\mathcal B$.  Thus the same diagram is accepted by $\mathcal B$.
\end{proof}

\begin{definition}\label{def:folded-quotient}
Let $\mathcal A$ be an $n$-ary tree-automaton.  The \emph{folded quotient} of
$\mathcal A$, denoted $\overline{\mathcal A}$, is the quotient of
$\mathcal A$ by the smallest automaton congruence for which the quotient
automaton is folded.  Equivalently, it is obtained by repeatedly identifying
two father states whose ordered lists of children agree, until no such pair
remains.
\end{definition}

\begin{lemma}
\label{lem:folded-quotient-finite-depth}
Let $\mathcal A=(Q,\tau,q_0)$ be an $n$-ary tree-automaton.  For
$p,q\in Q$, their images in $\overline{\mathcal A}$ are equal if and only if
there exists a finite complete prefix code $P\subseteq\W_n$, readable from
both $p$ and $q$, such that
\[
        p\cdot w=q\cdot w
        \qquad\text{for every }w\in P .
\]
Moreover, folding does not change readability: a word is readable from a state
$p$ in $\mathcal A$ if and only if it is readable from the image of $p$ in
$\overline{\mathcal A}$.

In particular, if $\mathcal A$ is full, then the first condition is equivalent
to the existence of $k\ge0$ such that
\[
        p\cdot w=q\cdot w
        \qquad\text{for every }w\in X_n^k .
\]
\end{lemma}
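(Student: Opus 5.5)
We prove Lemma~\ref{lem:folded-quotient-finite-depth} by describing the smallest folding congruence explicitly.

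\textbf{The candidate relation.} Define $p\approx q$ if there is a finite complete prefix code $P$, readable from both $p$ and $q$, with $p\cdot w=q\cdot w$ for all $w\in P$. The code $P=\{\eps\}$ is complete and readable from every state, so $\approx$ is reflexive. It is clearly symmetric. For transitivity, take codes $P$ (for $p,q$) and $P'$ (for $q,r$). Their common refinement is obtained by extending each element of $P$ along $P'$ or vice versa; concretely, it is the set of maximal elements of $P\cup P'$ under the prefix order. It is again a finite complete prefix code. Each word of the refinement is readable from $q$, since it is $w$ or $w'$ and lies below a readable word.

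We must check that an extension $wx$ of some $w\in P$, readable from $q$, is also readable from $p$. This holds because $p\cdot w=q\cdot w$, so the states $p\cdot wx$ and $q\cdot wx$ coincide. Hence $p$, $q$ and $r$ all agree on the refinement, and $\approx$ is an equivalence relation.

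\textbf{$\approx$ is a congruence.} Suppose $p\approx q$ via $P$. If $P=\{\eps\}$ then $p=q$ and there is nothing to show. Otherwise both $p$ and $q$ are father states, and for each $i$ the code $P_i=\{w:iw\in P\}$ is a finite complete prefix code witnessing $p\cdot i\approx q\cdot i$.

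\textbf{The quotient by $\approx$ is folded.} Suppose $[p]$ and $[q]$ have the same children. Then $p\cdot i\approx q\cdot i$ via some code $P_i$ for each $i$, and $P=\bigcup_i iP_i$ witnesses $p\approx q$. Children of classes are well defined by the congruence property. A leaf class and a father class cannot be identified, since $\approx$ only relates leaves to themselves.

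\textbf{$\approx$ is contained in every folding congruence.} Let $\sim$ be a congruence whose quotient is folded, and argue by induction on the depth of $P$. If $p\approx q$ via $P$ of depth at least one, then $p\cdot i\approx q\cdot i$ via the shallower code $P_i$. By induction $p\cdot i\sim q\cdot i$ for all $i$. Thus $[p]$ and $[q]$ are father states with equal children, so foldedness gives $p\sim q$.

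Hence $\approx$ is the smallest folding congruence. This makes "smallest" well defined, and it also matches the "repeated identification" description, since each identification step preserves the inclusion in $\approx$.

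\textbf{Readability and the full case.} By the explicit description, a leaf is $\approx$-equivalent only to itself, and father classes consist of father states. So a word is readable from $[p]$ iff it is readable from $p$, by induction on its length. In the full case, any finite complete code $P$ of depth $k$ can be replaced by $X_n^k$: every word of $X_n^k$ extends some element of $P$, and equality of states at $w$ persists under extension.

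The only delicate point is the transitivity step, where readability must be checked carefully.
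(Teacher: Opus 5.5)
Your proof is correct and follows the paper's route: the same explicit prefix-code relation, the $i$-parts of a witnessing code for the congruence property, the code $\bigcup_i iP_i$ for foldedness, induction on the tree of $P$ for minimality, and refinement to a level $X_n^k$ in the full case. The only difference is that you spell out the transitivity step, which the paper just calls a standard common-refinement argument; you take the maximal elements of $P\cup P'$ as the refinement and check readability from all three states.
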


\begin{proof}
Write \(p\sim q\) if the displayed finite-prefix-code condition holds.  A
standard common-refinement argument shows that \(\sim\) is an equivalence
relation.  It is a congruence: if \(p\sim q\) and both have \(i\)-children,
then either \(p=q\), or the \(i\)-part of a witnessing prefix code witnesses
\(p\cdot i\sim q\cdot i .\)
Also, a leaf is \(\sim\)-equivalent only to itself, since the only complete
prefix code readable from a leaf is \(\{\eps\}\).  Thus the quotient by
\(\sim\) does not create or destroy readable words.

The quotient \(\mathcal A/{\sim}\) is folded.  Indeed, if \(p,q\) are father
states and
\[
        p\cdot i\sim q\cdot i
        \qquad(i\in X_n),
\]
choose witnessing prefix codes \(P_i\) for these \(n\) equivalences.  Then
\[
        \bigcup_{i\in X_n} iP_i
\]
is a finite complete prefix code witnessing \(p\sim q\).

Conversely, let \(\theta\) be any automaton congruence such that
\(\mathcal A/{\theta}\) is folded.  If \(P\) witnesses \(p\sim q\), induction
on the finite full tree with branch set \(P\) shows that
\(p\equiv q\pmod\theta\).  The case \(P=\{\eps\}\) is immediate.  Otherwise
write
\[
        P=\bigcup_{i\in X_n} iP_i .
\]
By induction,
\[
        p\cdot i\equiv q\cdot i\pmod\theta
        \qquad(i\in X_n),
\]
and foldedness of \(\mathcal A/{\theta}\) gives
\(p\equiv q\pmod\theta\).  Hence \(\sim\) is contained in every congruence
whose quotient is folded.  Since \(\mathcal A/{\sim}\) is folded, \(\sim\) is
exactly the folding congruence.

Finally, if \(\mathcal A\) is full, any witnessing finite complete prefix code
can be refined to a level \(X_n^k\), and equality persists under this
refinement.
\end{proof}

\begin{lemma}\label{lem:folding-preserves-D}
For every $n$-ary tree-automaton $\mathcal A$,
\[
        \D(\mathcal A)=\D(\overline{\mathcal A}).
\]
\end{lemma}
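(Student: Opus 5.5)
We prove the two inclusions separately. The inclusion $\D(\mathcal A)\subseteq\D(\overline{\mathcal A})$ is immediate from Lemma~\ref{lem:morphism-implies-inclusion}, applied to the quotient morphism $\mathcal A\to\overline{\mathcal A}$.

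For the reverse inclusion, let $f\in\D(\overline{\mathcal A})$ and fix a tree diagram $(T_+,T_-)$ for $f$ accepted by $\overline{\mathcal A}$, with branch pairs $u_i\to v_i$. By Lemma~\ref{lem:folded-quotient-finite-depth}, readability is unchanged by folding, so $T_+$ and $T_-$ are readable in $\mathcal A$. For each $i$, the states $u_i^+$ and $v_i^+$ of $\mathcal A$ have the same image in $\overline{\mathcal A}$. The same lemma then provides a finite complete prefix code $P_i$, readable from both states, such that $u_i^+\cdot w=v_i^+\cdot w$ for every $w\in P_i$. (If $u_i^+=v_i^+$ already, take $P_i=\{\eps\}$.)

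Now expand the diagram. Attach to the leaf $u_i$ of $T_+$, and to the leaf $v_i$ of $T_-$, the finite full $n$-ary tree whose branch set is $P_i$. This is a sequence of expansions, so the new diagram $(T_+',T_-')$ still represents $f$. Its branch pairs are $u_iw\to v_iw$ for $w\in P_i$. These are correctly paired because each expansion attaches the same tree to corresponding leaves, and lexicographic order is compatible with concatenation.

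It remains to check acceptance by $\mathcal A$. Every branch $u_iw$ is readable in $\mathcal A$, since $u_i$ is readable and $w$ is readable from $u_i^+$; the same holds for $v_iw$. Moreover, $(u_iw)^+=u_i^+\cdot w=v_i^+\cdot w=(v_iw)^+$. Hence $(T_+',T_-')$ is accepted by $\mathcal A$, and $f\in\D(\mathcal A)$.

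The only delicate point is that the same code $P_i$ must work for both $u_i^+$ and $v_i^+$. This is exactly what the characterization in Lemma~\ref{lem:folded-quotient-finite-depth} guarantees, so no further obstacle arises. Foldedness of $\mathcal A$ is never used; we only need the set-theoretic definition of $\D$ for arbitrary tree-automata.
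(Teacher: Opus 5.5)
Your proposal is correct and follows the paper's proof essentially step for step. You get the forward inclusion from the quotient morphism, and the reverse inclusion by expanding each accepted branch pair $u_i\to v_i$ along the common prefix code $P_i$ supplied by Lemma~\ref{lem:folded-quotient-finite-depth}.
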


\begin{proof}
The inclusion
\[
        \D(\mathcal A)\subseteq\D(\overline{\mathcal A})
\]
follows from Lemma~\ref{lem:morphism-implies-inclusion}.

Conversely, let $f\in\D(\overline{\mathcal A})$, and choose a tree diagram
for $f$ accepted by $\overline{\mathcal A}$, with branch pairs
\[
        u_1\to v_1,\ldots,u_r\to v_r .
\]
By Lemma~\ref{lem:folded-quotient-finite-depth}, folding does not change
readability, so all the words $u_j$ and $v_j$ are readable in $\mathcal A$.
Moreover, since $u_j^+$ and $v_j^+$ have the same image in
$\overline{\mathcal A}$, the same lemma gives a finite complete prefix code
$P_j$ such that
\[
        (u_jw)^+=(v_jw)^+
        \qquad\text{in }\mathcal A
        \qquad(w\in P_j).
\]
Now expand the branch pair $u_j\to v_j$ by attaching the finite full tree with
branch set $P_j$ to both leaves.  Doing this for every $j$ gives an expansion
of the original tree diagram, hence a diagram representing the same element
$f$, whose branch pairs are
\[
        u_jw\to v_jw
        \qquad(1\le j\le r,\ w\in P_j).
\]
All these branch pairs are accepted by $\mathcal A$.  Therefore
$f\in\D(\mathcal A)$, and so
\[
        \D(\overline{\mathcal A})\subseteq\D(\mathcal A).
\]
\end{proof}
\begin{definition}
A subgroup $H\le F_n$ is \emph{closed} if there exists an $n$-ary
tree-automaton $\mathcal A$ such that
\[
        H=\D(\mathcal A).
\]
By Lemma~\ref{lem:folding-preserves-D}, one may equivalently require
$\mathcal A$ to be folded. This agrees with the standard
definition of closed subgroups as diagram groups over rooted tree-automata
\cite{GolanMaximalF,GolanSapirGenerationFn}.
\end{definition}

There is an equivalent interval formulation.  A function $f\in F_n$ is a
\emph{piecewise-$H$ function} if there is a finite subdivision
\(0=a_0<a_1<\cdots<a_k=1\)
of $[0,1]$ into intervals such that, for every $j$, the restriction of $f$ to
$[a_{j-1},a_j]$ agrees with the restriction of some element of $H$.  Since
$f$ and the elements of $H$ have only finitely many $n$-adic breakpoints, this
condition is equivalently witnessed by a finite subdivision whose endpoints
are $n$-adic.  In tree-diagram language, this is equivalent to saying that
$f$ has a not necessarily reduced tree diagram all of whose branch pairs occur
as branch pairs of elements of $H$.

\begin{theorem}
\label{thm:piecewise-closed}
A subgroup $H\le F_n$ is closed if and only if every piecewise-$H$ function
in $F_n$ belongs to $H$.
\end{theorem}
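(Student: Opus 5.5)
We prove the two implications separately. Throughout we use the reformulation given just before the statement: $f$ is piecewise-$H$ if and only if $f$ has a (not necessarily reduced) tree diagram all of whose branch pairs occur as branch pairs of elements of $H$.

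\emph{Closed implies piecewise-closed.} Suppose $H=\D(\mathcal A)$ for a folded tree-automaton $\mathcal A$, and let $f$ be piecewise-$H$. Fix a diagram of $f$ with branch pairs $u_j\to v_j$, where each pair is a branch pair of some $h_j\in H$. Choose a diagram of $h_j$ accepted by $\mathcal A$ and, using Lemma~\ref{lem:accepted-diagrams-form-subgroup}(ii), expand it so that $u_j\to v_j$ is a branch pair. The expansion is legitimate because of the following inductive observation. If $u$ is readable and $u\to v$ is accepted, then $v$ is readable, and adding a common caret below an accepted pair $u\to v$ keeps acceptance, since $u^+=v^+$ implies $(ui)^+=(vi)^+$. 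The remaining point is that the reading may stop at a leaf state. To handle this, we expand the accepted diagram of $h_j$ only as far as the tree $T_+$ for $f$ requires, and then prefer the reverse operation: reduce the diagram of $f$, using that $u_j$ must extend an accepted branch pair $u'\to v'$ of $h_j$ with $u_j=u'w$, $v_j=v'w$. After this, $u_j$ and $v_j$ are readable and $u_j^+=v_j^+$, since both equal $u'^+\cdot w=v'^+\cdot w$. So every branch pair of the diagram of $f$ is accepted, and $f\in\D(\mathcal A)=H$.

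We expect the main obstacle to be readability: $u_j$ might go deeper than the automaton allows. We would resolve this by replacing the diagram of $f$ with a common coarsening, taking for each $j$ the accepted pair $u'\to v'$ of $h_j$ that covers $[u_j]$. To choose these pairs, take the accepted diagram of $h_j$ whose domain tree leaf containing $u_j$ is a prefix of $u_j$. This requires the reduced accepted diagram of $h_j$ to have leaves that are prefixes or extensions of $u_j$, and expansion handles the extension case. In the prefix case, the domain leaf $u'$ satisfies $u'\le u_j$, and since $h_j$ is linear on $[u']$ with the same slope behaviour, $v_j=v'w$. Distinct $j$ might then share an ancestor $u'$. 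We would handle this by gathering all pairs into a diagram whose leaves are the minimal such $u'$; on $[u']$, $f$ agrees with $h_j$, so $f$ has the branch pair $u'\to v'$.

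\emph{Piecewise-closed implies closed.} Assume $H$ contains every piecewise-$H$ function. Build the full, possibly infinite, automaton $\mathcal A_H$ whose states are the classes of $\W_n$ under $u\approx v$ if and only if $u=v$ or some $h\in H$ has the branch pair $u\to v$. This is an equivalence relation: symmetry follows by inverting $h$, and transitivity follows by composing, after expanding, with the branch pairs $u\to v$ and $v\to w$. It is a congruence, because expanding $u\to v$ gives $ui\to vi$. Hence $\mathcal A_H$ is a full tree-automaton with root $[\eps]$. By construction $\D(\mathcal A_H)$ is exactly the set of functions having a diagram all of whose branch pairs are branch pairs of elements of $H$, that is, the set of piecewise-$H$ functions. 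This set contains $H$, since each $h\in H$ is trivially piecewise-$H$, and by hypothesis it is contained in $H$. Therefore $H=\D(\mathcal A_H)$ is closed, which proves Theorem~\ref{thm:piecewise-closed}.
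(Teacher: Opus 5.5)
Your second direction is correct. The relation $\approx$ is an automaton congruence on $\W_n$, the resulting full (possibly infinite) automaton accepts exactly the piecewise-$H$ functions, and the paper's definition of a closed subgroup allows infinite automata.

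The gap is in the first direction, exactly at the readability issue you flag. When a leaf $u_j$ of the diagram of $f$ is not readable in $\mathcal A$, your first paragraph still uses $u'^+\cdot w$, which is then undefined. Your repair is to coarsen to the leaves $u'$ of accepted diagrams of the $h_j$, asserting that ``on $[u']$, $f$ agrees with $h_j$''. This is false in general, since $f$ agrees with $h_j$ only on $[u_j]$. Here is a concrete instance. Let $n=2$ and let $\mathcal A$ have states $\rho,\ell,c,s$ with $\rho\cdot0=\ell$, $\rho\cdot1=s$, $\ell\cdot0=\ell$, $\ell\cdot1=c$, $c\cdot0=c\cdot1=c$, and $s$ a leaf. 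Then $\D(\mathcal A)=F[0]$, the subgroup of $F$ supported in $[0]$. Let $f=x_0[0]$, written with branch pairs $000\to00$, $001\to010$, $01\to011$, $10\to10$, $11\to11$. Use $h_j=1$ on the last two pieces, which are not readable. The reduced accepted diagram of $1$ is trivial, so $u'=\eps$, and your step would conclude $f=1$.

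What is missing is an argument for \emph{where} coarsening is legitimate and why. The right place is the longest readable prefix $x$ of a non-readable leaf $u_j$; then $x^+$ is a leaf state. Take any leaf $u_k$ of the diagram of $f$ below $x$. The domain leaf $y$ of an accepted diagram of $h_k$ comparable with $u_k$ must be a prefix of $x$, because $y$ is readable and no proper extension of $x$ is. If $y\to z$ is that accepted pair and $x=yw$, then $h_k$ maps $[x]$ linearly onto $[x'_k]$ with $x'_k=zw$ and $(x'_k)^+=x^+$. Since $x^+$ is a leaf, no readable word reaching it is a proper prefix of another, so distinct words $x'_k$ are incomparable. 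Adjacent pieces below $x$ share a point interior to $[x]$, where both $h_k$ and $h_{k+1}$ equal $f$. Hence $[x'_k]$ and $[x'_{k+1}]$ have overlapping interiors and so coincide. Therefore all these $h_k$ agree on $[x]$, $f$ is linear on $[x]$, and after reducing the diagram of $f$ below $x$ you get the accepted branch pair $x\to x'$.

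A simpler fix is to replace $\mathcal A$ by its full completion, attaching a full hanging tree at every leaf. This has the same accepted group by Remark~\ref{rem:full-automata-assumption} and Lemma~\ref{lem:existence-property-folding}. Then every word is readable and your first paragraph goes through as written.
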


\begin{proof}
For $F=F_2$ this is \cite[Theorem~5.6]{GolanGeneration}; see also
\cite[Section~2.4]{GolanMaximalF}.  The $n$-ary statement is the same
tree-diagram argument; compare the $F_n$ formulation of the closure in
\cite[Lemma~2.17]{GolanSapirGenerationFn}.
\end{proof}

\subsection{The core and closure of a subgroup}
\label{subsec:core-closure}

Let $H\le F_n$ be generated by a set $S$ of reduced tree diagrams.  We
construct the core $\C(H)$ in two stages.

First form a rooted directed edge-labelled graph $\Gamma(S)$ as follows.  For
each reduced diagram $(T_+^s,T_-^s)$ in $S$, view the two trees as directed
edge-labelled graphs, with all edges directed away from the root and labelled
by $0,\ldots,n-1$ from left to right.  Identify the root of $T_+^s$ with the
root of $T_-^s$, and identify each leaf of $T_+^s$ with the corresponding leaf
of $T_-^s$; here corresponding means that the leaves have the same position
in the left-to-right order.  Finally, identify the roots of all the resulting
graphs, over all $s\in S$, to one root.

The graph $\Gamma(S)$ may fail to be deterministic: a vertex may have several
outgoing edges with the same label.  The first step is \emph{determinization}.
Whenever a vertex has two outgoing edges with the same label, identify those
two edges and identify their terminal vertices.  Repeat this operation until
no such pair remains.  Equivalently, take the quotient of the vertex set by
the smallest equivalence relation which makes the outgoing edge with a fixed
label, when it exists, unique.  The resulting rooted labelled graph is an
$n$-ary tree-automaton: every state is reachable from the root, and every
state has either no outgoing edges or exactly one outgoing edge labelled $i$
for each $i\in X_n$.

The second step is \emph{folding}.  Starting from the tree-automaton obtained
after determinization, pass to its folded quotient in the sense of
Definition~\ref{def:folded-quotient}.  Equivalently, identify father states
with identical ordered lists of children, and continue until no such pair
remains.

The folded tree-automaton obtained after these two steps is the \emph{core} of
$H$ and is denoted $\C(H)$.  If the generating set $S$ is infinite, the same
construction can be described as the quotient by the smallest equivalence
relation generated by the determinization and folding requirements above.
The core is independent of the chosen generating set and of the order in which
the identifications are performed, as in the binary case
\cite{GolanSapirSubgroupsF,GolanGeneration,GolanMaximalF}; see also the
$F_n$ construction in \cite{GolanSapirGenerationFn}.

By construction, $\C(H)$ accepts every element of $H$.  The \emph{closure} of
$H$ is
\(\Cl(H)=\D(\C(H)).\)
The closure is the smallest closed subgroup of $F_n$ containing $H$;
equivalently, it is the subgroup of all piecewise-$H$ functions.  In
particular $H\le\Cl(H)$, and $H$ is closed if and only if $H=\Cl(H)$
\cite{GolanGeneration,GolanMaximalF,GolanSapirGenerationFn}.

\begin{definition}\label{def:existence-property}
Let $\mathcal A$ be an $n$-ary tree-automaton.  We say that $\mathcal A$ has
the \emph{existence property} if whenever $u,v\in\W_n$ are readable in
$\mathcal A$ and
\[
        u^+=v^+,
\]
there exists an element of $\D(\mathcal A)$ having the branch pair
\[
        u\to v .
\]
\end{definition}

\begin{lemma}\label{lem:core-existence-property}
Let $H\le F_n$.  If $u,v\in\W_n$ are readable in $\C(H)$, then
\[
        u^+=v^+ \text{ in }\C(H)
\]
if and only if there exists an element of $\Cl(H)$ having the branch pair
$u\to v$.  In particular, $\C(H)$ has the existence property.
\end{lemma}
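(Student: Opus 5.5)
The equivalence has two directions of quite different difficulty, and I would treat them separately before deducing the existence property.

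\emph{The ``if'' direction.} Suppose $f\in\Cl(H)=\D(\C(H))$ has the branch pair $u\to v$, with $u,v$ readable in $\C(H)$. Choose a tree diagram $(T_+,T_-)$ for $f$ accepted by $\C(H)$. I would pass to a common expansion $D'$ of this diagram and of a diagram for $f$ in which $u\to v$ literally appears as a branch pair. Since $f$ maps $[u]$ linearly onto $[v]$, in $D'$ every branch pair below $u$ has the form $uw\to vw$, with $w$ ranging over a finite complete prefix code $P$.

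The words $uw$ and $vw$ are readable in $\C(H)$. Indeed, every branch of $D'$ is either a prefix of, or an extension of, a branch of $(T_+,T_-)$. A branch that is a prefix of a readable word is readable. For an extension, the relevant subtree is readable because carets are attached to a common leaf on both sides, and an accepted pair lands at the same state. So $D'$ is readable and, by Lemma~\ref{lem:accepted-diagrams-form-subgroup}(ii), accepted. Hence $(uw)^+=(vw)^+$ for all $w\in P$.

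Now $u^+$ and $v^+$ both have $P$ readable (since $u,v$ are readable and the tree-automaton is deterministic), and they agree on $P$. Lemma~\ref{lem:folded-quotient-finite-depth}, applied to the already folded automaton $\C(H)$, then gives $u^+=v^+$. The one point needing care is readability of $u$ itself, but it is assumed.

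\emph{The ``only if'' direction.} This is the main obstacle. Suppose $u^+=v^+$. I would go back to the construction of $\C(H)$ from generators and use induction on the identifications performed. Define a relation on pairs (readable word, readable word): $u\approx v$ if some element of $H$, or a piecewise-$H$ function, has the branch pair $u\to v$. I would show that $\approx$ contains all identifications made by determinization and by folding.

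First, the base identifications come from the generators themselves: leaf $i$ of $T_+^s$ is identified with leaf $i$ of $T_-^s$, and these are branch pairs of $s$.

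Second, determinization is handled by words. Two paths from the root with the same label sequence are identified. If a vertex reached by $a$ is identified with one reached by $b$ via elements with branch pairs $a\to b$, then the children satisfy $ai\to bi$ by expansion, and composition handles transitivity. This is the usual Stallings argument: states of $\C(H)$ before folding correspond to classes of readable words under $u\sim v$ iff $u$ is mapped to $v$ by an element of $H$ linearly.

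Third, folding identifies $p,q$ when their children agree. Here piecewise-$H$ elements are needed. If $ui\approx vi$ for all $i$, witnessed by $h_i$, then the function that agrees with $h_i$ on $[ui]$ and is extended appropriately is piecewise-$H$. It is extended via Lemma~\ref{lem:branch-pair-criterion-prelim} and closure under the $\Cl(H)$ piecewise gluing, using Theorem~\ref{thm:piecewise-closed}. This gives the branch pair $u\to v$ in $\Cl(H)$.

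The delicate issue is producing a global element of $F_n$ with the given local branch pairs, which needs compatible complements. I would resolve it by taking the single piecewise-$H$ function given by $h_0$ outside $[u]$, and observing that the $h_i$ patched on $[ui]$ into $[vi]$ agree at the junctions. This yields a homeomorphism, which lies in $\Cl(H)$ by Theorem~\ref{thm:piecewise-closed}. Hence $u\to v$ is a branch pair of an element of $\Cl(H)$.

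\emph{The existence property.} The final claim is immediate: $\D(\C(H))=\Cl(H)$, so the element produced by the ``only if'' direction lies in $\D(\C(H))$, which is exactly the existence property for $\C(H)$.
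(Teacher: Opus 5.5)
The paper gives no proof of its own for this lemma; it defers to \cite[Lemma~2.19]{GolanMaximalF}. Your outline therefore has to stand by itself. The strategy is sound: a Stallings-type invariant through determinization, and patching of sibling branch pairs at each folding. However, the step you single out as delicate is resolved incorrectly.

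The map that equals $h_0$ on $[0,1]\setminus[u]$ and is linear from $[u]$ onto $[v]$ is in general not a homeomorphism. All you know about $h_0$ is that it sends $[u0]$ linearly onto $[v0]$. Its values on $[u1],\dots,[u(n-1)]$ are unconstrained, so in general $h_0(\max[u])\neq\max[v]$, and the map jumps at $\max[u]$. The correct patch uses $h_0$ on $[0,\min[u]]$, $h_i$ on $[ui]$, and $h_{n-1}$ on $[\max[u],1]$. Then $h_0$ carries $[0,\min[u]]$ onto $[0,\min[v]]$, $h_{n-1}$ carries $[\max[u],1]$ onto $[\max[v],1]$, and adjacent pieces agree at every junction. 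The result is a piecewise-$\Cl(H)$ element of $F_n$ with branch pair $u\to v$, hence in $\Cl(H)$ by Theorem~\ref{thm:piecewise-closed}. This is exactly the patch in the proof of Lemma~\ref{lem:existence-property-folding}. Drop the appeal to Lemma~\ref{lem:branch-pair-criterion-prelim}: it produces elements of $F_n$ unrelated to $H$, so a map completed that way is not piecewise-$H$.

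Two further points must be made precise.

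\emph{The invariant.} The intermediate graphs are not deterministic, so the invariant should read: for every vertex $p$, any two labels of root-paths ending at $p$ are $\approx$-related.
\begin{itemize}
\item It holds in $\Gamma(S)$. The root is reached only by $\eps$, an internal tree vertex by a single word, and an identified leaf pair by $u_i$ and $v_i$, which form a branch pair of the generator $s$.
\item Merging two $i$-children of one vertex relates their labels via a common word $ci$. Merging two father states with the same children relates them via the patch above.
\item New root-paths through the merged vertex are handled by induction on the number of passes through it, using $a\approx b\Rightarrow ax\approx bx$ and transitivity.
\end{itemize}
Alternatively, after determinization you can do all the folding at once. By Lemma~\ref{lem:folded-quotient-finite-depth}, $u^+=v^+$ in $\C(H)$ iff $(uw)^+=(vw)^+$ in the semi-core for all $w$ in some finite complete prefix code $P$. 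A single patch over $P$, with outer pieces taken from the leftmost and rightmost codewords, then finishes.

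\emph{The ``if'' direction.} An expansion of an accepted diagram need not be readable. If $u_i^+=v_i^+$ is a leaf of $\C(H)$, attaching a caret at $u_i\to v_i$ leaves the automaton. So you cannot apply Lemma~\ref{lem:accepted-diagrams-form-subgroup}(ii) to an arbitrary common expansion. Split into cases instead:
\begin{itemize}
\item If $u$ is a vertex of $T_+$, linearity of $f$ on $[u]$ shows that the leaves below $u$ are paired as $uw\to vw$ for $w\in P$. Foldedness of $\C(H)$ together with Lemma~\ref{lem:folded-quotient-finite-depth} then gives $u^+=v^+$.
\item If $u=u_iw_0$ lies strictly below a leaf $u_i\to v_i$, then $v=v_iw_0$, and $u^+=u_i^+\cdot w_0=v_i^+\cdot w_0=v^+$.
\end{itemize}
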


\begin{proof}
For $F=F_2$ this is \cite[Lemma~2.19]{GolanMaximalF}.  The proof is unchanged
for $n$-ary carets; compare the corresponding $F_n$ core construction in
\cite{GolanSapirGenerationFn}.
\end{proof}

\begin{definition}
Let $\mathcal A$ be an $n$-ary tree-automaton.  An \emph{extension} of
$\mathcal A$ is obtained by attaching a finite or infinite full $n$-ary tree
at each leaf of $\mathcal A$.  A \emph{reduction} is the reverse operation.
The automaton $\mathcal A$ is \emph{reduced} if it is not a nontrivial
extension of another $n$-ary tree-automaton.

A \emph{core automaton} is a folded, reduced $n$-ary tree-automaton with the
existence property.
\end{definition}

A \emph{pre-core} for a closed subgroup $K\le F_n$ is an $n$-ary
tree-automaton $\mathcal A$ such that
$\D(\mathcal A)=K$
and such that $\mathcal A$ has the existence property.  Thus a pre-core need
not be folded or reduced.  By the following lemma, after folding equal father states and deleting hanging trees one obtains
a core automaton for the same closed subgroup.

\begin{lemma}
\label{lem:existence-property-folding}
Let $K\le F_n$ be closed, and let $\mathcal A$ be an $n$-ary tree-automaton
such that
\[
        \D(\mathcal A)=K .
\]
Then $\mathcal A$ has the existence property if and only if its folded quotient
$\overline{\mathcal A}$ has the existence property.

Moreover, the existence property is preserved under extensions and reductions
of hanging trees.  Consequently, if $\mathcal A$ is a pre-core for $K$, then
the reduced automaton obtained from $\overline{\mathcal A}$ by deleting
hanging trees is a core automaton for $K$.
\end{lemma}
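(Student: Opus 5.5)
We argue the three assertions in order, using Lemma~\ref{lem:folded-quotient-finite-depth}, Lemma~\ref{lem:folding-preserves-D} and Lemma~\ref{lem:morphism-implies-inclusion}; all accepted diagram groups involved coincide with $K$.

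\emph{Folding.} Let $\pi:\mathcal A\to\overline{\mathcal A}$ be the quotient map. By Lemma~\ref{lem:folding-preserves-D}, $\D(\overline{\mathcal A})=\D(\mathcal A)=K$. By Lemma~\ref{lem:folded-quotient-finite-depth}, a word is readable in $\mathcal A$ if and only if it is readable in $\overline{\mathcal A}$.

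Suppose first that $\mathcal A$ has the existence property, and let $u,v$ be readable with $\pi(u^+)=\pi(v^+)$. Lemma~\ref{lem:folded-quotient-finite-depth} gives a finite complete prefix code $P$, readable from both $u^+$ and $v^+$, with $u^+\cdot w=v^+\cdot w$ for all $w\in P$. By the existence property of $\mathcal A$, for each $w\in P$ there is $f_w\in K$ with branch pair $uw\to vw$. Define $f$ to agree with $f_w$ on $[uw]$ for each $w\in P$, and with some fixed element $g\in K$ outside $[u]$. The trouble is that $g$ need not map the complement of $[u]$ onto the complement of $[v]$, so this gluing need not be a homeomorphism. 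To fix this, take $g=f_{w_0}$ for a single $w_0\in P$. Since $f_{w_0}$ sends $[uw_0]$ linearly onto $[vw_0]$, it sends $[u]$ linearly onto $[v]$, so it maps the complement of $[u]$ onto the complement of $[v]$. The glued map $f$ is then a homeomorphism in $F_n$, it is piecewise-$K$, and it has the branch pair $u\to v$. Since $K$ is closed, Theorem~\ref{thm:piecewise-closed} gives $f\in K$, so $\overline{\mathcal A}$ has the existence property.

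Conversely, suppose $\overline{\mathcal A}$ has the existence property, and let $u^+=v^+$ in $\mathcal A$. Then $\pi(u^+)=\pi(v^+)$, and $u,v$ are readable in $\overline{\mathcal A}$, so some element of $\D(\overline{\mathcal A})=K=\D(\mathcal A)$ has the branch pair $u\to v$. Hence $\mathcal A$ has the existence property.

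\emph{Hanging trees.} Let $\mathcal A'$ be obtained from $\mathcal A$ by attaching full trees $T_\ell$ at leaves $\ell$. First we show $\D(\mathcal A')=\D(\mathcal A)$. A diagram accepted by $\mathcal A$ is also accepted by $\mathcal A'$. Conversely, suppose a diagram is accepted by $\mathcal A'$, and consider a branch pair $u\to v$ in which $u$ reads into a hanging tree $T_\ell$. Since $T_\ell$ is a tree, the only word from the root reaching $u^+$ is $u$ itself, so $u^+=v^+$ forces $u=v$. Any branch pair passing through hanging-tree states is therefore the identity on some interval $[u]$. Grouping such pairs under the leaf $\ell$, and reducing common carets inside $T_\ell$ (these occur identically in both trees), yields a diagram accepted by $\mathcal A$ for the same element.

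Next, the existence property transfers. In $\mathcal A'$, pairs $u^+=v^+$ either lie in $\mathcal A$, or satisfy $u=v$; in the latter case the identity witnesses the property. In the other direction, readable pairs in $\mathcal A$ remain readable in $\mathcal A'$ with the same equalities, and the accepted groups coincide, so the property passes from $\mathcal A'$ back to $\mathcal A$.

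\emph{Conclusion.} Fold a pre-core $\mathcal A$, then delete hanging trees until the automaton is reduced. The result still has the existence property and accepts $K$. Deleting hanging trees preserves foldedness: they are trees, so removing them cannot create two father states with equal ordered lists of children. The resulting automaton is therefore folded, reduced and has the existence property, i.e.\ it is a core automaton for $K$.

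The main obstacle is the forward direction of the folding step. It requires assembling a single element of $K$ from the local witnesses $f_w$, and it uses closedness of $K$ through Theorem~\ref{thm:piecewise-closed}.
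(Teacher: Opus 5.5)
Two steps fail as written: the gluing in the folding step, and the claim about words reaching hanging-tree states.

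\emph{Folding.} Using the finite-depth characterization of Lemma~\ref{lem:folded-quotient-finite-depth}, instead of one elementary fold at a time as the paper does, is fine. You are also right that the glued element must be specified outside $[u]$; the paper's one-line patching leaves this implicit. But your repair is false. The branch pair $uw_0\to vw_0$ only says that $f_{w_0}$ maps the subinterval $[uw_0]$ linearly onto $[vw_0]$. It says nothing about $f_{w_0}$ on the rest of $[u]$: an element of $F$ can be the identity on $[00]$ and still move $1/2$. So $f_{w_0}$ need not carry the complement of $[u]$ onto the complement of $[v]$, and your glued map need not be a homeomorphism.

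Use the two extreme codewords instead. Let $0^a$ and $(n-1)^b$ be the first and last elements of $P$. Then $f_{0^a}(\min[u])=\min[v]$ and $f_{(n-1)^b}(\max[u])=\max[v]$. Hence $f_{0^a}$ maps $[0,\min[u]]$ onto $[0,\min[v]]$, and $f_{(n-1)^b}$ maps $[\max[u],1]$ onto $[\max[v],1]$. Gluing these two outer pieces with the pieces of $f_w$ on $[uw]$, $w\in P$, gives a continuous increasing piecewise-$K$ element of $F_n$ with branch pair $u\to v$. Closedness of $K$ then finishes the argument as you intended.

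\emph{Hanging trees.} The claim that the only word reaching a state $u^+$ of $T_\ell$ is $u$ itself is false. $T_\ell$ is a tree below $\ell$, but the leaf $\ell$ of $\mathcal A$ can be reached by many words. The state at position $r$ below $\ell$ is reached by every word $u_0r$ with $u_0^+=\ell$. So $u^+=v^+$ there only gives $u=u_0r$ and $v=v_0r$ with $u_0^+=v_0^+=\ell$, and in general $u_0\ne v_0$. Hence the identity does not witness the existence property of $\mathcal A'$. You need the paper's argument: apply the existence property of $\mathcal A$ to $u_0\to v_0$ to get an element of $\D(\mathcal A)$ with that branch pair, then expand it to $u_0r\to v_0r$.

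The same false claim breaks your proof of $\D(\mathcal A')=\D(\mathcal A)$. In a diagram accepted by $\mathcal A'$, the branches of $T_+$ below a vertex $u_0$ with $u_0^+=\ell$ are matched with the branches of $T_-$ below some vertex $v_0$ with $v_0^+=\ell$, carrying the same suffix code. This holds because a state of $T_\ell$ determines both $\ell$ and the suffix, and a list of words splits into consecutive complete prefix codes in at most one way. These blocks must then be collapsed to the pair $u_0\to v_0$. That pair is accepted by $\mathcal A$, but it is not an identity pair. The paper simply asserts this equality.
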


\begin{proof}
It is enough to check one elementary folding and one elementary reduction.

First let $\mathcal B$ be obtained from $\mathcal A$ by identifying two father
states $p,q$ with the same ordered list of children.  By
Lemma~\ref{lem:folding-preserves-D},
\[
        \D(\mathcal A)=\D(\mathcal B)=K .
\]
Suppose that $\mathcal A$ has the existence property, and let $u,v$ be
readable words with
\[
        u^+=v^+
        \qquad\text{in }\mathcal B .
\]
If this equality already holds in $\mathcal A$, there is nothing to prove.
Otherwise, in $\mathcal A$ the words $u$ and $v$ end at the two folded states
$p$ and $q$.  Since $p$ and $q$ have the same children,
\[
        (ui)^+=(vi)^+
        \qquad\text{in }\mathcal A
        \qquad(i\in X_n).
\]
For each $i\in X_n$, the existence property of $\mathcal A$ gives an element
of $K$ with branch pair
\(ui\to vi .\)
Patching these elements on the sibling intervals $[ui]$ gives a piecewise-$K$ element
$f$ mapping each $[ui]$ linearly onto $[vi]$.  All these linear pieces have the common
slope $n^{|u|-|v|}$ and agree at the shared endpoints, so $f$ maps $[u]$ linearly onto
$[v]$; that is, $f$ has the branch pair
\(u\to v .\)
Since $K$ is closed, this element belongs to $K=\D(\mathcal B)$.  Thus
$\mathcal B$ has the existence property.

Conversely, if $\mathcal B$ has the existence property and
$u^+=v^+$ in $\mathcal A$, then the same equality holds in $\mathcal B$.
Hence there is an element of
\[
        \D(\mathcal B)=\D(\mathcal A)
\]
with branch pair $u\to v$.  Thus $\mathcal A$ has the existence property.
Iterating over elementary foldings proves the assertion for the folded
quotient.

Now consider an extension $\mathcal A'$ of $\mathcal A$ obtained by attaching a
hanging full tree at a leaf.  Such an extension does not change the accepted
diagram group.  If $\mathcal A$ has the existence property and two words end at
the same state of the added hanging tree, then they have the form
\(ur,\qquad vr,\)
where $u$ and $v$ end at the attaching leaf of $\mathcal A$ and $r$ is the
common suffix inside the hanging tree.  The existence property in
$\mathcal A$ gives an element of $\D(\mathcal A)$ with branch pair $u\to v$;
expanding that branch pair gives the branch pair
\(ur\to vr .\)
Equalities at old states are handled directly in $\mathcal A$.  Hence
$\mathcal A'$ has the existence property.  The converse is immediate, since an
equality in $\mathcal A$ is also an equality in $\mathcal A'$ and
\[
        \D(\mathcal A')=\D(\mathcal A).
\]
Thus extensions and reductions of hanging trees preserve the existence
property.

Finally, if $\mathcal A$ is a pre-core for $K$, then folding preserves both
$\D(\mathcal A)=K$ and the existence property.  Deleting hanging trees also
preserves both.  The resulting automaton is folded, reduced, and has the
existence property; hence it is a core automaton for $K$.
\end{proof}
                                                                         
\begin{proposition}\label{prop:core-automata-characterization}
For every subgroup $H\le F_n$, the core $\C(H)$ is a core automaton.  Conversely, if $\mathcal A$ is a core automaton, then $\mathcal A$ is isomorphic to the core of the closed subgroup $\D(\mathcal A)$.
\end{proposition}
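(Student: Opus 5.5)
The proof splits into the two directions of the statement.

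\emph{First direction: $\C(H)$ is a core automaton.} Foldedness holds by construction, since the last step of building $\C(H)$ is passing to the folded quotient. The existence property is Lemma~\ref{lem:core-existence-property}. For reducedness, I will show that $\C(H)$ has no nontrivial hanging tree. Every state of $\Gamma(S)$ comes from a vertex of some tree $T_\pm^s$. In $\Gamma(S)$, each leaf of $T_+^s$ is glued to the corresponding leaf of $T_-^s$, and the diagrams in $S$ are reduced. Suppose a state $q$ of $\C(H)$ were the root of an attached full subtree, so that $q$ is reached by a unique path and every descendant is reached only through $q$. Then the images of the $n$ children of a lowest caret in that subtree would be leaves reached by unique paths. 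Consequently they come from leaves of a single caret occurring in both $T_+^s$ and $T_-^s$ at corresponding positions, which contradicts reducedness. I expect this bookkeeping through determinization and folding to be the fiddliest step. Following the binary case \cite[Lemma~2.19 and surrounding]{GolanMaximalF}, I would argue instead by minimality: delete a maximal hanging tree from $\C(H)$ to obtain $\mathcal B$. Then $\D(\mathcal B)=\D(\C(H))\supseteq H$, and every generator in $S$, being reduced, remains readable in $\mathcal B$ in reduced form. The universal property of $\C(H)$ then gives a morphism $\C(H)\to\mathcal B$; composing with the inclusion shows the hanging tree was trivial.

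\emph{Second direction: a core automaton is isomorphic to $\C(\D(\mathcal A))$.} Let $\mathcal A$ be a core automaton and $K=\D(\mathcal A)$. Since every element of $K$ has a reduced diagram accepted by $\mathcal A$ (Lemma~\ref{lem:accepted-diagrams-form-subgroup}(i)), the map sending each generator's diagram into $\mathcal A$ factors through determinization. Because $\mathcal A$ is folded, it also factors through folding, which yields a morphism $\Phi:\C(K)\to\mathcal A$. I will then show that $\Phi$ is an isomorphism.

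\emph{Surjectivity of $\Phi$.} Let $q$ be a state of $\mathcal A$ reached by a word $u$. Because $\mathcal A$ is reduced, $q$ is not inside a hanging tree, so $q$ must be witnessed by a nontrivial equality. Concretely, some descendant $p$ of $q$ (possibly $q$ itself) satisfies $w^+=w'^+=p$ for distinct words $w,w'$, one of which passes through $q$. The existence property then gives an element of $K$ with branch pair $w\to w'$. A reduced diagram of this element has a tree containing the vertex $u$, which is therefore read in $\C(K)$ and maps to $q$. If no such equality existed below $q$, the subtree rooted at $q$ would be a hanging tree, contradicting reducedness; this is the same point as in the first direction.

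\emph{Injectivity of $\Phi$.} Suppose $\Phi(u^+)=\Phi(v^+)$ for words $u,v$ readable in $\C(K)$. Then $u^+=v^+$ in $\mathcal A$, so the existence property yields $f\in K$ with branch pair $u\to v$. By Lemma~\ref{lem:core-existence-property} applied to $\C(K)$, whose closure is $K$ because $K$ is closed, we get $u^+=v^+$ in $\C(K)$. Hence $\Phi$ is a bijection on states. It preserves edges, and edges are determined by their source and label, so $\Phi$ is an isomorphism.
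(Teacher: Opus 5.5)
Your first direction is sound via the minimality argument; the ``lowest caret'' sketch can be dropped. Deleting a hanging tree leaves a folded automaton $\mathcal B$, which is what lets you apply Lemma~\ref{lem:accepted-diagrams-form-subgroup}(i) to $\mathcal B$. Your injectivity argument for $\Phi$ is also correct.

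\textbf{The gap is in surjectivity.}
\begin{itemize}
\item The step ``a reduced diagram of this element has a tree containing the vertex $u$'' does not follow. An element with branch pair $w\to w'$ usually has it only after expansion: its reduced diagram contains a pair $a\to b$ with $w=as$, $w'=bs$. If $a$ is a proper prefix of $u$, then $u$ is not a vertex of that diagram. For example, in $\C(F)$ take $u=w=0110$ and $w'=1010$. The existence property may return $x_0$, whose reduced diagram contains $01\to10$ but not the vertex $0110$. Requiring only that one of $w,w'$ passes through $q$ cannot rule this out.
\item Even a correct state-surjectivity would not finish the proof. A bijective morphism of tree-automata need not be an isomorphism, since a leaf of $\C(K)$ could map to a father state of $\mathcal A$. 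You must show that every \emph{edge} of $\mathcal A$ is hit.
\item Your claim that reducedness forces a nontrivial equality below every state is false for leaves. The core of the subgroup of $F$ supported on $[1/2,1]$ has a leaf reached only by the word $0$.
\end{itemize}

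\textbf{The missing idea} is to apply reducedness at a father state $q=x^+$ in the precise form it gives.
\begin{itemize}
\item Suppose every word reaching a state $(xy)^+$ with $y\neq\eps$ had the form $x'y$ with $x'^+=q$. Then the descendants of $q$ would form a full tree attached at $q$, and $\mathcal A$ would be a nontrivial extension. So there exist $y\neq\eps$ and a word $w'$ with $w'^+=(xy)^+$ that is \emph{not} of the form $x'y$ with $x'^+=q$.
\item Take $f\in K$ with branch pair $xy\to w'$. Let $a\to b$ be the pair of its reduced diagram with $xy=as$ and $w'=bs$. Such a pair exists: otherwise $xy$ would be a proper prefix of domain branches, the diagram would carry identical carets below $xy$ and below $w'$, and it would not be reduced.
\item Suppose $|s|\ge|y|$. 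Then $x=as'$ and $w'=(bs')y$. Since the reduced diagram is accepted by the folded automaton $\mathcal A$, we get $(bs')^+=(as')^+=q$. This contradicts the choice of $w'$.
\item Hence $a$ properly extends $x$, so $x$ is a father vertex of the reduced domain tree of $f$. Every $xi$ is then readable in $\C(K)$, because the reduced diagram of $f\in K=\D(\C(K))$ is readable there by Lemma~\ref{lem:accepted-diagrams-form-subgroup}(i).
\end{itemize}
Thus every word readable in $\mathcal A$ is readable in $\C(K)$. This gives surjectivity of $\Phi$ on states and on edges, and together with your injectivity it yields the isomorphism.
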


\begin{proof}
For $F=F_2$, this is the characterization of core automata in \cite[Section~6]{GolanMaximalF}.  The proof is the same for $n$-ary tree diagrams.
\end{proof}

\begin{remark}
The idea behind Proposition~\ref{prop:core-automata-characterization} is that the construction of $\C(H)$ starts from reduced tree diagrams, so it does not create irrelevant hanging trees; the folding step makes the automaton folded; and Lemma~\ref{lem:core-existence-property} gives the existence property.  Conversely, for a folded, reduced automaton with the existence property, the equalities between readable path endpoints are exactly the equalities forced by branch pairs of elements accepted by the automaton, so applying the core construction, starting with the set of all reduced tree-diagrams in $\mathcal D(\mathcal A)$, recovers the same automaton.
\end{remark}

For later use we record the standard terminology for states in a core automaton.  Let $\mathcal A$ be a core automaton.  The root is the initial state $q_0$.  A state is a \emph{left state} if it is reached by a word $0^k$ with $k\ge 1$, and a \emph{right state} if it is reached by a word $(n-1)^k$ with $k\ge 1$.  A state is an \emph{inner state} if it is reached by an inner word.  For core automata these classes have the expected disjointness properties, by the existence property: for instance, a left state cannot coincide with an inner state, since no element of $F_n$ has a branch pair $u\to 0^k$ with $u$ inner (such an element would map the left endpoint of $[u]$, which is positive, to $0$).  In the binary case this is the root/left/right/middle terminology of \cite{GolanMaximalF}.

\subsection{Closed overgroups and quotients of cores}
\label{subsec:closed-overgroups-quotients}

Morphisms of tree-automata give inclusions of the accepted diagram groups by Lemma~\ref{lem:morphism-implies-inclusion}.  For cores of closed subgroups, inclusions give morphisms between  the cores.

\begin{lemma}\label{lem:closed-overgroups-quotients-prelim}
Let $K\le L\le F_n$ be closed subgroups.  Then there is a morphism
\[
        \C(K)\longrightarrow \C(L).
\]
If $\C(K)$ is full, this morphism is surjective.
\end{lemma}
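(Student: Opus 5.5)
The plan is to define the map on states directly via readable words, and to use Lemma~\ref{lem:core-existence-property} for both cores to show it is well defined. Let $\C(K)=(Q,\tau,q_0)$ and $\C(L)=(P,\sigma,p_0)$. Every state of $\C(K)$ is $u^+$ for some word $u$ readable in $\C(K)$. We would like to set $\varphi(u^+)=u^+$, the latter computed in $\C(L)$. This requires two checks: that every word readable in $\C(K)$ is readable in $\C(L)$, and that the result does not depend on the choice of $u$.

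\emph{Readability.} We would argue that the core of $K$ is built from reduced diagrams of elements of $K$, so every readable word of $\C(K)$ is a prefix of a branch of some tree of a reduced diagram of an element of $K$ (after determinization and folding). The cleanest route is to use the core construction with generating set all reduced diagrams of $K$: readable words of $\C(K)$ are exactly prefixes of branches of trees of reduced diagrams of elements of $K$. For $\C(L)$ we generate with all reduced diagrams of $L$, a superset, so its readable words include those of $\C(K)$. This is the step where the independence of the core from the generating set is used. There is a subtle point: determinization can create readable words that are not prefixes of branches. However, determinization only identifies vertices, and a path in the quotient lifts piecewise. We would handle this by noting that the same identifications in the graph for $K$ also occur in the graph for $L$. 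Indeed, the graph $\Gamma(S_K)$ embeds in $\Gamma(S_L)$ compatibly with roots, and the smallest deterministic-and-folded congruence on $\Gamma(S_L)$ restricts to a congruence on $\Gamma(S_K)$ containing the minimal one. This yields a label-preserving root-preserving map $\C(K)\to\C(L)$ directly, which is a morphism.

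\emph{Well-definedness.} Equivalently, and more concretely: if $u^+=v^+$ in $\C(K)$, then by Lemma~\ref{lem:core-existence-property} some element of $\Cl(K)=K\le L=\Cl(L)$ has branch pair $u\to v$. The same lemma applied to $\C(L)$ then gives $u^+=v^+$ in $\C(L)$. Edges are preserved because $(ui)^+$ maps to $(ui)^+$. The map sends the root to the root, so $\varphi$ is a morphism.

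\emph{Surjectivity when $\C(K)$ is full.} If $\C(K)$ is full, every word is readable in it, so every state $u^+$ of $\C(L)$ is the image of $u^+\in\C(K)$. Likewise every edge $u^+\xrightarrow{i}(ui)^+$ of $\C(L)$ is the image of the corresponding edge of $\C(K)$.

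The main obstacle is making the readability inclusion rigorous. I expect the graph-embedding argument (the congruence comparison for $\Gamma(S_K)\subseteq\Gamma(S_L)$) to be the cleanest way through it.
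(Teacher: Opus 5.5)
Your proposal is correct and follows essentially the same route as the paper: the inclusion $\Gamma(K)\subseteq\Gamma(L)$, composed with the quotient onto the deterministic folded automaton $\C(L)$, factors through $\C(K)$. This gives readability, and in fact already a root- and label-preserving map. Well-definedness is then checked via Lemma~\ref{lem:core-existence-property} applied to both cores using $K\le L$, and surjectivity follows from fullness of $\C(K)$, exactly as you describe.

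Your tentative claim that readable words of $\C(K)$ are exactly prefixes of branches of reduced diagrams is false in general, because determinization creates new readable paths. You rightly abandon it in favour of the graph-embedding argument, which is what the paper uses.
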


\begin{proof}
For $F=F_2$, this is the morphism construction used in \cite[Section~4]{GolanMaximalF}.  We recall the short argument, since it will be used later.  We first note that every word readable in $\C(K)$ is readable in $\C(L)$.  Indeed, the labelled graph $\Gamma(K)$ built, as in Subsection~\ref{subsec:core-closure}, from the reduced diagrams of all elements of $K$ is contained in the corresponding graph $\Gamma(L)$ for $L$, and the composite map $\Gamma(K)\subseteq\Gamma(L)\to\C(L)$ takes values in a deterministic folded automaton; hence it factors through the quotient $\C(K)$ of $\Gamma(K)$, so that every word readable in $\C(K)$ is readable in $\C(L)$.  Now, for a state $q$ of $\C(K)$, choose a readable word $u$ with $u^+=q$ and define
\(\varphi(q)=u^+\)
in $\C(L)$.  This is well-defined.  Indeed, if $u^+=v^+$ in $\C(K)$, then by Lemma~\ref{lem:core-existence-property} there is an element of $\Cl(K)=K$ with branch pair $u\to v$.  Since $K\le L=\Cl(L)$, the same element lies in $\Cl(L)$, and Lemma~\ref{lem:core-existence-property} gives $u^+=v^+$ in $\C(L)$.  The map clearly sends the root to the root and preserves transitions, so it is a morphism.

If $\C(K)$ is full, every word in $\W_n$ is readable in $\C(K)$.  Every state of $\C(L)$ is reached by some word $u$, and then it is the image of the state reached by $u$ in $\C(K)$.  Thus the morphism is onto on states.  It is also onto on edges: if
\[
        p\xrightarrow{\ i\ }p'
\]
is an edge of $\C(L)$, choose a word $u$ reaching $p$ in $\C(L)$.  Let $q$ be the state reached by $u$ in $\C(K)$.  Since $\C(K)$ is full, the edge $q\xrightarrow{\ i\ }\tau(q,i)$ exists, and the morphism sends this edge to $p\xrightarrow{\ i\ }p'$.  Hence the morphism is surjective.
\end{proof}

When $\C(K)$ is full, Lemma~\ref{lem:closed-overgroups-quotients-prelim} allows closed overgroups of $K$ to be studied through quotients of the core $\C(K)$.  This is the point of view used later when the closed overgroups in the descending chain are computed by finite automata.

\subsection{The core of $F_n$}

The following lemma gives the structure of the core of $F_n$, see \cite{GolanSapirGenerationFn}.

\begin{lemma}
\label{lem:explicit-core-of-Fn}
Let \(\mathcal K_n=(Q_n,\tau_n,q_0)\) be the full \(n\)-ary tree-automaton
with state set
\[
        Q_n
        =
        \{q_0,q_L,q_R\}
        \sqcup
        \{q^{\mathrm{in}}_j\mid j\in\Z/(n-1)\Z\}.
\]
Its transitions are
\[
        q_0\cdot 0=q_L,\qquad
        q_0\cdot(n-1)=q_R,\qquad
        q_0\cdot i=q^{\mathrm{in}}_i
        \quad(1\le i\le n-2),
\]
\[
        q_L\cdot 0=q_L,\qquad
        q_L\cdot i=q^{\mathrm{in}}_i
        \quad(1\le i\le n-1),
\]
\[
        q_R\cdot(n-1)=q_R,\qquad
        q_R\cdot i=q^{\mathrm{in}}_i
        \quad(0\le i\le n-2),
\]
and
\[
        q^{\mathrm{in}}_j\cdot i
        =
        q^{\mathrm{in}}_{j+i}
        \qquad
        (j\in\Z/(n-1)\Z,\ i\in X_n),
\]
where the subscripts are taken modulo \(n-1\).  Then
\[
        \C(F_n)\cong \mathcal K_n .
\]

Equivalently, for \(u\in\W_n\),
\[
        u^+=
        \begin{cases}
        q_0, & u=\eps,\\
        q_L, & u=0^k,\ k\ge1,\\
        q_R, & u=(n-1)^k,\ k\ge1,\\
        q^{\mathrm{in}}_{\sigma_n(u)}, & u \text{ is inner.}
        \end{cases}
\]
\end{lemma}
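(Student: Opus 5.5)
Since $F_n$ is closed, Proposition~\ref{prop:core-automata-characterization} shows that $\C(F_n)$ is the unique core automaton $\mathcal A$ with $\D(\mathcal A)=F_n$. So it suffices to prove three things: $\D(\mathcal K_n)=F_n$, $\mathcal K_n$ is folded and reduced, and $\mathcal K_n$ has the existence property. Before that, I verify the displayed formula for $u^+$ by induction on $|u|$. Reading $0$ from $q_0$ or $q_L$ stays at $q_L$, and reading $n-1$ from $q_0$ or $q_R$ stays at $q_R$. Any other letter leads to an inner state indexed by the digit sum mod $n-1$, and the inner states then simply accumulate the digit sum. These are exactly the transitions listed, using $q_L\cdot(n-1)=q^{\mathrm{in}}_{n-1}=q^{\mathrm{in}}_0$ and $q_R\cdot 0=q^{\mathrm{in}}_0$.

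\emph{Acceptance.} I take any $f\in F_n$ with a tree diagram whose branch pairs are $u_1\to v_1,\ldots,u_k\to v_k$, and expand it if necessary so that $k\ge 3$. Then $u_1=0^a$ and $v_1=0^b$ with $a,b\ge1$, so both end at $q_L$. Likewise $u_k$ and $v_k$ end at $q_R$. For $1<i<k$ the words $u_i,v_i$ are inner, and $f$ maps the $n$-adic point $\rho_n(u_i0^\infty)$ to $\rho_n(v_i0^\infty)$. Since the sets $D_{n,j}$ are $F_n$-orbits, $\sigma_n(u_i)=\sigma_n(v_i)$, so $u_i^+=v_i^+$. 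Hence $F_n\le\D(\mathcal K_n)$, and the reverse inclusion is trivial.

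\emph{Existence property.} Suppose $u^+=v^+$. If the common state is $q_0$, then $u=v=\eps$. If it is $q_L$, then $u=0^a$ and $v=0^b$, and a suitable power of an element acting as $0^a\omega\mapsto 0^b\omega$ near $0$ exists in $F_n$ (for instance a generalization of $x_0$). The case $q_R$ is symmetric. If the state is inner, then $\sigma_n(u)=\sigma_n(v)$ with both words inner, and Lemma~\ref{lem:branch-pair-criterion-prelim} supplies an element with branch pair $u\to v$.

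\emph{Folded and reduced.} Reducedness is immediate because $\mathcal K_n$ is full, so it is not a nontrivial extension of anything. For foldedness I compare the ordered child lists. The root's $0$-child is $q_L$, while every other state except $q_L$ has $0$-child inner or $q_L$, and $q_L$'s own $0$-child is $q_L$. The root and $q_L$ are then separated by their $1$-children when $n\ge3$: $q_0\cdot 1=q^{\mathrm{in}}_1=q_L\cdot1$ agree, so I instead look at the last child, where $q_0\cdot(n-1)=q_R\ne q^{\mathrm{in}}_0=q_L\cdot(n-1)$. Among the inner states, distinct indices $j$ give distinct $0$-children $q^{\mathrm{in}}_j$. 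The inner states also differ from $q_L$, $q_R$, $q_0$ because they have no child equal to $q_L$ or $q_R$. All such distinctions are straightforward case checks.

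The main obstacle is only the bookkeeping of the degenerate cases. When $n=2$ there is a single inner state $q^{\mathrm{in}}_0$ with $q^{\mathrm{in}}_0\cdot i=q^{\mathrm{in}}_0$, and the index arithmetic must still be consistent there. The accept step also needs the boundary branch pairs treated separately, since the orbit criterion for the sets $D_{n,j}$ applies only to interior points. With these checks in place, Proposition~\ref{prop:core-automata-characterization} gives $\mathcal K_n\cong\C(\D(\mathcal K_n))=\C(F_n)$.
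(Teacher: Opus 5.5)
The paper gives no proof of this lemma; it only cites \cite{GolanSapirGenerationFn}. Your argument is a correct, self-contained alternative route. Instead of running the core construction (determinization and folding) on a generating set of $F_n$, you take the candidate automaton $\mathcal K_n$ and check four things:
\begin{itemize}
\item $\D(\mathcal K_n)=F_n$;
\item $\mathcal K_n$ is folded;
\item $\mathcal K_n$ is reduced;
\item $\mathcal K_n$ has the existence property.
\end{itemize}
You then apply the converse half of Proposition~\ref{prop:core-automata-characterization}. This is exactly the template the paper later packages as Lemma~\ref{lem:finite-full-folded-precore-is-core}: a finite, full, folded pre-core is the core. Your route avoids any computation with generators, but it rests on the characterization of core automata and on Lemma~\ref{lem:branch-pair-criterion-prelim}.

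Two small repairs are needed.

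First, reducedness does not follow from fullness alone. The infinite complete $n$-ary tree is a full tree-automaton, yet it is a nontrivial extension of the one-state automaton consisting of a single leaf. What you need is that $\mathcal K_n$ is finite \emph{and} full. Then attaching a nontrivial hanging tree would create either a leaf or infinitely many states, exactly as in the proof of Lemma~\ref{lem:finite-full-folded-precore-is-core}.

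Second, your witness in the $q_L$ case of the existence property is garbled. A clean witness is the following tree diagram: the domain tree is a left vine of depth $a$, the range tree is a left vine of depth $b$, and the shallower tree is padded by a right vine so that the leaf counts $1+a(n-1)$ and $1+b(n-1)$ agree. Its leftmost branch pair is $0^a\to 0^b$. Equivalently, take the $(a-b)$-th power of any element with branch pair $00\to 0$, which by expansion has branch pair $0^{j+1}\to 0^j$ for every $j\ge 1$.

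The acceptance and foldedness checks are correct in substance.
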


\subsection{Generation criteria}
\label{subsec:generation-criteria}

We shall use the following generation criterion for Thompson's group \(F\) from
\cite{GolanMaximalF}.

\begin{theorem}[Generation theorem for \(F\)]
\label{thm:generation-F-prelim}
Let \(H\le F\).  Then \(H=F\) if and only if
\[
        H[F,F]=F
        \qquad\text{and}\qquad
        [F,F]\le \Cl(H).
\]
Equivalently, if the image of \(H\) in the abelianization of \(F\) is all of \(\Z^2\) and
\[
        [F,F]\le \Cl(H),
\]
then \(H=F\).
\end{theorem}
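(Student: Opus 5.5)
The direction ``only if'' is immediate: if $H=F$ then $H[F,F]=F$, and $[F,F]\le F=\Cl(F)$. The second formulation is just a restatement, since $H[F,F]=F$ holds if and only if $\pi_2(H)=\Z^2$, because $\ker\pi_2=[F,F]$. So the content is the implication: if $H[F,F]=F$ and $[F,F]\le\Cl(H)$, then $H=F$.

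The plan is to pass from the closure to $H$ itself through the core. By Lemma~\ref{lem:core-existence-property} and Lemma~\ref{lem:closed-overgroups-quotients-prelim}, the hypothesis $[F,F]\le\Cl(H)$ means that $\C(H)$ maps onto the core of the closed subgroup $\Cl([F,F])$. That core is obtained from $\mathcal K_2$ of Lemma~\ref{lem:explicit-core-of-Fn} by keeping track only of the distinction between inner words and boundary words. In particular, any two inner words readable in $\C(H)$ end at the same state of $\C(H)$, up to the finite ``boundary'' part coming from long prefixes $0^k$ and $1^k$.

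Since $\C(H)$ is built by determinization and folding from reduced diagrams of elements of $H$, each identification of two inner states is witnessed by a finite chain of branch pairs of actual elements of $H$. I would use this to produce, for every pair of inner dyadic intervals $[u],[v]$, an element of $H$ with branch pair $u\to v$, or at least an element of $H$ mapping some subinterval of $[u]$ linearly onto a subinterval of $[v]$ with a controlled slope.

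Next I would show that $H$ contains a copy $F[w]$ for some inner word $w$. The idea is to take the piecewise-$H$ representative of $x_0[w]$ given by $[F,F]\le\Cl(H)$, with pieces $h_1,\dots,h_k\in H$. I would then multiply by the inverses of elements of $H$ that agree with these pieces on neighbouring intervals, using commutators $[h,h']$ with controlled supports, so as to reduce the number of pieces one at a time. Once $F[w]\le H$ for one inner $w$, the transitivity obtained above (conjugating by elements of $H$ with branch pairs $w\to w'$) gives $F[w']\le H$ for every inner $w'$. Since $[F,F]$ is generated by the subgroups $F[w']$ with $w'$ inner, this yields $[F,F]\le H$, and hence $H=H[F,F]=F$.

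The main obstacle is the piece-reduction step, that is, upgrading ``piecewise-$H$'' to ``in $H$''. A piecewise-$H$ function need not lie in $H$ in general, so the argument must use the specific structure here. Two facts make it work: all inner states are identified, and $\pi_2(H)=\Z^2$. The latter supplies elements of $H$ with arbitrary slopes at $0$ and $1$, and these are what let the pieces near the endpoints be absorbed into elements of $H$. I would attempt this first by induction on the number of breakpoints of a piecewise-$H$ decomposition. At each stage I would correct the leftmost piece by an element of $H$ supported away from the remaining pieces, and I expect controlling these supports to be the delicate point.
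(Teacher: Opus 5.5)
Your handling of the trivial direction and of the reformulation through $\pi_2$ (using $\ker\pi_2=[F,F]$) is fine. The implication that carries the content, however, is not proved. The step you flag as the ``main obstacle'', upgrading $[F,F]\le\Cl(H)$ to $[F,F]\le H$, \emph{is} the theorem, and the proposal gives no mechanism for it. (The paper does not prove the statement either; it quotes it from \cite{GolanMaximalF}, which builds on \cite{GolanGeneration}.)

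The induction on pieces is circular as described. To absorb a piece you need an element of $H$ that agrees with a given element of $H$ on one interval and with the identity, or with the next piece, on the adjacent interval. Producing such elements is exactly the passage from piecewise-$H$ to $H$ that is in question. Both hypotheses are genuinely needed: a proper subgroup containing $[F,F]$ satisfies the closure condition, and a maximal subgroup of infinite index satisfies $H[F,F]=F$. So a valid argument must combine them in an essential way, and the remark that $\pi_2(H)=\Z^2$ supplies arbitrary slopes at $0$ and $1$ does not indicate how.

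The supporting steps also fail as stated.

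First, the transitivity claim conflates $\C(H)$ with $H$. By Lemma~\ref{lem:core-existence-property}, $u^+=v^+$ in $\C(H)$ yields an element of $\Cl(H)$, not of $H$, with branch pair $u\to v$. A folding identification (merging father states with the same children) is realized by patching elements of $H$ on sibling intervals, as in the proof of Lemma~\ref{lem:existence-property-folding}, that is, by a piecewise-$H$ element. Only the identifications made by determinization, recorded in the semi-core $L_{\mathrm{sem}}$, reflect elements of $H$ itself. For $F_n$ the paper's criterion, Theorem~\ref{thm:generation-Fn-prelim}, has to \emph{assume} $H$-level information of this kind (its third and fourth hypotheses). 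For $F$, deriving such information from the two stated conditions is the substance of the cited work.

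Second, $[F,F]$ is not generated by the subgroups $F[w']$ with $w'$ inner. The point $1/2$ (and likewise every $2^{-k}$ and $1-2^{-k}$) is never an interior point of $[w']$ for an inner word $w'$, so all these subgroups fix $1/2$. You would need subgroups supported on arbitrary dyadic intervals $[a,b]\subset(0,1)$, for instance conjugates by elements of $H$ moving these points, and that requires a further argument.

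Finally, the morphism in your first step goes the other way. Since $[F,F]$ is closed, Lemma~\ref{lem:closed-overgroups-quotients-prelim} gives a surjection $\C([F,F])\to\C(H)$. Moreover, $\C([F,F])$ refines $\mathcal K_2$ along the boundary rays (the states $(0^k)^+$ are pairwise distinct) rather than coarsening it. The conclusion you draw, that $\C(H)$ is full with a single inner state, is nevertheless correct.
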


We shall also use the sufficient generation criterion for \(F_n\) from
\cite[Theorem~3.19]{GolanSapirGenerationFn}.  Since the statement uses the semi-core, we
briefly recall the relevant notation.  If \(X\subseteq F_n\), the semi-core
\(L_{\mathrm{sem}}(X)\)
is obtained from the reduced tree diagrams of the elements of \(X\) by identifying all roots
and identifying the leaves in each branch pair, and then applying the determinization
procedure of Subsection~\ref{subsec:core-closure} only, without folding (in the
terminology of \cite{GolanSapirGenerationFn}, only foldings of type \(1\) are applied).
Thus \(L_{\mathrm{sem}}(X)\) is a tree-automaton, but it need not be folded.  Passing to
its folded quotient gives the core
\(\C(\langle X\rangle).\)

\begin{theorem}[Generation theorem for \(F_n\), \cite{GolanSapirGenerationFn}]
\label{thm:generation-Fn-prelim}
Let \(X\subseteq F_n\), and let
$
        H=\langle X\rangle.
$
Assume that the following conditions hold:
\begin{enumerate}
    \item
    $
            [F_n,F_n]\le \Cl(H).
    $
    
    \item
    $
                H[F_n,F_n]=F_n.
    $
    
    \item For every \(s\in\Z/(n-1)\Z\), there exist
    $
                h_s\in H
            $, $
            \alpha_s\in D_{n,s},
        $
    such that
    \[
            h_s(\alpha_s)=\alpha_s,
            \qquad
            h_s'(\alpha_s^-)=n,
            \qquad
            h_s'(\alpha_s^+)=1.
    \]
    \item There exists an inner word \(u\in\W_n\) such that, for all
    \(v_1,v_2\in\W_n\),
    \[
            \sigma_n(v_1)=\sigma_n(v_2)
            \quad\Longrightarrow\quad
            (uv_1)^+=(uv_2)^+
            \quad
            \text{in }L_{\mathrm{sem}}(X).
    \]
\end{enumerate}
Then
\[
        H=F_n.
\]
\end{theorem}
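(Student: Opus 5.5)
The approach is to prove that $[F_n,F_n]\le H$. Together with condition~(2), $H[F_n,F_n]=F_n$, this gives $H=F_n$ at once. Condition~(1) only says that every element of $[F_n,F_n]$ is a piecewise-$H$ function. So the work is to turn ``locally in $H$'' into ``in $H$''. Conditions~(3) and~(4) are the tools for repairing the discrepancies at the finitely many cut points of a piecewise-$H$ decomposition.

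\textbf{Step 1: semi-core equalities are realized in $H$ itself.} The first step is a Stallings-type lemma for the semi-core. If $u_1,u_2$ are readable in $L_{\mathrm{sem}}(X)$ and $u_1^+=u_2^+$ there, then some element of $H$ (not merely of $\Cl(H)$) has the branch pair $u_1\to u_2$. I would prove this by induction on the sequence of determinization identifications. A path in $\Gamma(X)$ through an identified leaf pair corresponds to applying a generator or its inverse. Merging two equally-labelled edges out of a common vertex composes the corresponding elements, and expanding branch pairs along a common suffix is harmless. This is the point where omitting the folding step matters: folding would only give closure elements. Applying the lemma to condition~(4), for every pair $v_1,v_2$ with $\sigma_n(v_1)=\sigma_n(v_2)$ we get $h\in H$ with branch pair $uv_1\to uv_2$. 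By Lemma~\ref{lem:branch-pair-criterion-prelim}, these are exactly the branch pairs of $F_n[u]$ between inner words of $[u]$.

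\textbf{Step 2: $H$ contains the commutator subgroup of $F_n[u]$.} Take $g\in[F_n,F_n][u]$, which is compactly supported in the interior of $[u]$, with branch pairs $uv_i\to uw_i$. Step~1 gives $h_i\in H$ with $h_i$ linear from $[uv_i]$ onto $[uw_i]$, but each $h_i$ may act arbitrarily outside $[uv_i]$. I would remove the outside behaviour with commutators. If $a,b\in H$ agree with the desired local maps on small subintervals and have suitably disjoint displacement, then $[a,b]$ is supported near prescribed intervals. Since $[F_n,F_n]$ is generated by elements supported in small $n$-adic intervals with matching residues, it should suffice to produce, for each generator, $H$-elements realizing it on its support and fixing a neighbourhood of the complement. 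At the endpoints of these small intervals, two $H$-pieces meet with possibly different one-sided slopes. The elements $h_s$ of condition~(3) are placed at a point $\alpha_s$ of the relevant residue class, moved there by the transitivity of Step~1. They have a germ of slope $n$ on one side and $1$ on the other, so they correct the left and right slopes independently. This makes the glued map agree with an $H$-element in a neighbourhood of every breakpoint.

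\textbf{Step 3: spread to all of $[F_n,F_n]$.} Any $f\in[F_n,F_n]$ is a product of elements with small supports in the interior of $(0,1)$. Each such support can be conjugated into $[u]$ by an element of $H$, using condition~(1) and Step~1 to move inner intervals of the same residue. Hence every factor lies in $H$, so $[F_n,F_n]\le H$, and $H=F_n$ follows from condition~(2).

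The main obstacle I expect is the gluing in Step~2. The difficulty is keeping exact control of what the $H$-elements from Step~1 do outside the target intervals, so that the commutator and slope-correction constructions genuinely produce elements of $F_n[u]$. The first thing I would try is to work entirely inside the universal region $[u]$, where condition~(4) makes all residue-compatible moves available. There I would build the generators of $[F_n,F_n][u]$ as commutators of two such elements whose supports overlap only in the prescribed interval.
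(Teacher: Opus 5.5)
The paper does not prove this theorem; it quotes it from \cite[Theorem~3.19]{GolanSapirGenerationFn}. Judged on its own, your skeleton (prove $[F_n,F_n]\le H$, then apply condition~(2)) is sensible, and Step~1 is correct. The cleanest proof of Step~1 uses the relation ``some element of $H$ has the branch pair $w\to w'$''. Send each vertex of $\Gamma(X)$, with path-word $w$, to the class of $w$ under this relation. This is a label-preserving map into a deterministic automaton, so it factors through $L_{\mathrm{sem}}(X)$. However, Step~2 carries the whole content of the theorem, and it is not proved. The mechanism you sketch for it cannot work as stated.

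Step~1 gives, for each single pair $uv_1\to uv_2$, \emph{some} element of $H$ realizing it. That element is controlled only on the cylinder $[uv_1]$, and different pairs are realized by different elements. What you need is one element of $H$ with prescribed behaviour everywhere, and Step~2 never produces one:
\begin{itemize}
\item Your reduction to ``$H$-elements realizing the generator on its support and fixing a neighbourhood of the complement'' is circular. Such an element is the generator.
\item Correcting one-sided slopes at breakpoints with the $h_s$ only shows that the glued map agrees near every point with some element of $H$. That is, it shows the map is piecewise-$H$, which is membership in $\Cl(H)$. Condition~(1) already gives this for all of $[F_n,F_n]$. Moreover, $h_s$ is itself uncontrolled away from $\alpha_s$, so using it at one breakpoint spoils control everywhere else.
\item The commutator $[a,b]$ is supported near a prescribed interval only if $a$ and $b$ are controlled (for example, commute) off that interval. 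Step~1 does not provide this control.
\end{itemize}

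To see that per-pair realizability plus local matching is genuinely weaker than membership, take $H=\langle f\rangle$ with $f=x_0[01]\,x_0[10]\in F$. Then $f$ agrees with $x_0[01]$ on $[0,\tfrac12]$, so every branch pair of $x_0[01]$ is a branch pair of an element of $H$. The diagram of $x_0[01]$ is even accepted by $L_{\mathrm{sem}}(\{f\})$. Yet $x_0[01]\notin H$. So conditions~(3)--(4) must be converted into elements of $H$ with \emph{global} control. One would want something like two elements of $H$ agreeing off $[u]$, whose quotient then lies in $H\cap F_n[u]$, together with enough elements preserving $[u]$ to invoke simplicity of the derived subgroup. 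Your proposal contains no such step.

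There is also a smaller issue in Step~3. Step~1 says nothing about intervals outside $[u]$, and condition~(1) yields elements of $\Cl(H)$, not of $H$. This is repairable by a germ argument. A closure element with branch pair $w\to uw'$ agrees, near each interior point of $[w]$, with some $h\in H$, and that $h$ maps a small neighbourhood into $[u]$. But this repair helps only once Step~2 has actually been established.
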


Finally, we shall use the core criterion for the closure condition from
\cite[Corollary~3.2]{GolanSapirGenerationFn}: for \(H\le F_n\),
\([F_n,F_n]\le \Cl(H)\)
if and only if \(\C(H)\) is full and has exactly \(n-1\) inner states.

\subsection{Lifts between interval and Cantor models}
\label{subsec:lifts-prelim}

The following lemma records precisely when a homeomorphism of Cantor spaces descends to a homeomorphism of the interval.  Related order and equivalence-relation considerations appear in transducer descriptions of automorphism groups of Thompson-like groups; see for instance \cite{AutT}.

\begin{lemma}\label{lem:lift-order-equivalence-prelim}
Let $\psi:\C_n\to\C_m$ be a homeomorphism.  The following are equivalent.
\begin{enumerate}[label=(\arabic*)]
    \item If $\omega\sim_{\rho_n}\eta$, then $\psi(\omega)\sim_{\rho_m}\psi(\eta)$.
    \item If $\alpha\sim_{\rho_m}\beta$, then $\psi^{-1}(\alpha)\sim_{\rho_n}\psi^{-1}(\beta)$.
    \item The map $\psi$ strictly preserves or strictly reverses the lexicographic order.
    \item The map $\psi$ induces a homeomorphism $\bar\psi:[0,1]\to[0,1]$ satisfying, in left-to-right notation,
    \[
        \psi\rho_m=\rho_n\bar\psi.
    \]
\end{enumerate}
If these conditions hold, then $\psi$ carries the two-point $\rho_n$-fibers bijectively to the two-point $\rho_m$-fibers.  Equivalently, $\bar\psi$ maps $n$-adic rationals in $(0,1)$ bijectively to $m$-adic rationals in $(0,1)$.
\end{lemma}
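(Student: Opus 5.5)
The plan is to prove the cycle (1)$\Leftrightarrow$(3), (2)$\Leftrightarrow$(3), and then (3)$\Leftrightarrow$(4), using throughout that $\rho_n$ is a continuous, monotone (non-decreasing) surjection $\C_n\to[0,1]$. Its only nontrivial fibers are the pairs $ui(n-1)^\infty\sim ui{+}1\,0^\infty$, and these are exactly the pairs of lexicographically adjacent points: $\omega<\eta$ with no point of $\C_n$ strictly between them. The same holds for $\rho_m$. So the equivalence relation $\sim_{\rho_n}$ is ``equal or adjacent in the order.''

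\textbf{(3)$\Rightarrow$(1),(2).} A strictly monotone bijection preserves adjacency, so it preserves $\sim$. The same applies to $\psi^{-1}$, which is again strictly monotone.

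\textbf{(1)$\Rightarrow$(3).} This is the main step. Since $\psi$ is injective, it maps nontrivial $\rho_n$-fibers into nontrivial $\rho_m$-fibers, so $\psi$ induces a well-defined map $\bar\psi$ with $\psi\rho_m=\rho_n\bar\psi$.
\begin{enumerate}
\item $\bar\psi$ is continuous, because $\rho_n$ is a closed quotient map from a compact space.
\item $\bar\psi$ is surjective, since $\psi$ is.
\item $\bar\psi$ is injective. This needs an argument. If $\bar\psi(x)=\bar\psi(y)$ with $x\ne y$, then $\psi$ maps the fiber over $x$ and the fiber over $y$ into a single fiber of size at most $2$. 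Injectivity of $\psi$ then forces both fibers to be singletons mapping onto a two-point $\rho_m$-fiber $\{\alpha,\beta\}$.
\item To rule this out I count fibers. $\psi$ is a bijection, and the number of nontrivial fibers is countable on both sides. Instead of counting, use a connectedness argument: $\bar\psi$ is a continuous surjection $[0,1]\to[0,1]$. Removing the point $z=\bar\psi(x)$ disconnects the target, while its preimage is $\{x,y\}$. Alternatively, consider a clopen set $U\ni\psi^{-1}(\alpha)$ in $\C_n$ not containing $\psi^{-1}(\beta)$.
\end{enumerate}
I expect item 4 to be the delicate part. The cleanest route is to prove (1)$\Leftrightarrow$(2) first, after which the inverse argument shows $\psi^{-1}$ also descends, so $\bar\psi$ has a continuous inverse. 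One way to get (1)$\Rightarrow$(2): the set of points lying in nontrivial $\rho_m$-fibers is the set $E_m$ of eventually $0$ or eventually $(m-1)$ sequences, excluding the constant endpoint rays. If $\psi$ maps $E_n$ pairs into pairs, I need surjectivity onto $E_m$. For this I would use that $\psi$ maps a fiber pair to the two ``ends'' of complementary clopen sets. Concretely, $\omega\in\C_n$ is in a nontrivial fiber iff there is a clopen partition $\C_n=A\sqcup B$ with $A$ a lexicographic initial segment having maximum $\omega$. Since order is not a priori preserved, this has to be phrased through (1) itself. This is the step I would work out carefully, possibly by a compactness argument on cylinders.

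Once $\bar\psi$ is a homeomorphism of $[0,1]$, it is monotone. Monotonicity of $\bar\psi$ lifts to $\psi$: for $\omega<\eta$ in different fibers, $\rho_n(\omega)<\rho_n(\eta)$, so their images compare in the direction of $\bar\psi$. For an adjacent pair, continuity of $\psi$ together with approximation by nearby points in other fibers gives the same direction. This yields (3), and it also gives (4).

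\textbf{(4)$\Rightarrow$(1).} This is immediate: $\rho_n(\omega)=\rho_n(\eta)$ implies $\rho_m(\psi\omega)=\bar\psi\rho_n(\omega)=\rho_m(\psi\eta)$.

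\textbf{Final assertion.} Under these conditions $\psi$ and $\psi^{-1}$ both preserve $\sim$, so $\psi$ restricts to a bijection between two-point fibers. The two-point fibers lie exactly over the $n$-adic (respectively $m$-adic) rationals in $(0,1)$, so $\bar\psi$ restricts to a bijection between these sets.
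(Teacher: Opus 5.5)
You have the right overall structure, and once repaired it matches the paper's proof: use (1) to descend $\psi$ to $\bar\psi$, show $\bar\psi$ is a homeomorphism, deduce monotonicity and lift it to get (3), and obtain (2) by symmetry. But the one nontrivial step, injectivity of $\bar\psi$, is never proved. Your item~3 correctly shows that any value attained by $\bar\psi$ at two distinct points must carry a two-point $\rho_m$-fiber. The connectedness argument in item~4, however, yields no contradiction. A continuous surjection $[0,1]\to[0,1]$ can have a fiber $\{x,y\}$: the piecewise linear map $f$ through $(0,0),(\tfrac13,\tfrac12),(\tfrac12,\tfrac14),(\tfrac23,\tfrac12),(1,1)$ has $f^{-1}(\tfrac12)=\{\tfrac13,\tfrac23\}$. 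So removing $z$ and $\{x,y\}$ proves nothing. Your fallback route through (1)$\Rightarrow$(2) is explicitly left unworked, and it is not easier. Given (1), condition (2) is equivalent to injectivity of $\bar\psi$.

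The missing idea is to combine your item~3 with the intermediate value theorem and countability, which is what the paper does. Suppose $\bar\psi(x)=\bar\psi(y)=z$ with $x<y$.
\begin{itemize}
\item $\bar\psi$ cannot be constant on $[x,y]$, since then infinitely many disjoint $\rho_n$-fibers would map into $\rho_m^{-1}(z)$, which has at most two points.
\item So some $x_3\in(x,y)$ has $\bar\psi(x_3)=z'\ne z$. Every value strictly between $z$ and $z'$ is then attained both in $(x,x_3)$ and in $(x_3,y)$.
\item Pick such a value that is not an $m$-adic rational, which is possible since those are countable. Its $\rho_m$-fiber is a singleton, so $\psi$ would map two disjoint nonempty $\rho_n$-fibers to one point, contradicting injectivity.
\end{itemize}
Hence $\bar\psi$ is a continuous bijection of $[0,1]$, and therefore a homeomorphism. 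Condition (2) then follows directly from the identity $\rho_m(\psi(\omega))=\bar\psi(\rho_n(\omega))$, applied to $\omega=\psi^{-1}(\alpha)$ and $\omega=\psi^{-1}(\beta)$, together with injectivity of $\bar\psi$. No characterization of fiber points through clopen partitions is needed.
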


\begin{proof}
Assume (1).  Since $\psi$ is constant on $\rho_n$-fibers after applying $\rho_m$, it descends to a continuous surjection $\bar\psi:[0,1]\to[0,1]$ satisfying $\psi\rho_m=\rho_n\bar\psi$.  We prove that $\bar\psi$ is injective.  Suppose $x_1<x_2$ and $\bar\psi(x_1)=\bar\psi(x_2)=y$.  If $\bar\psi$ is constant on $[x_1,x_2]$, then $\psi$ maps the infinite set $\rho_n^{-1}([x_1,x_2])$ into the finite set $\rho_m^{-1}(y)$, contradicting injectivity of $\psi$.  If $\bar\psi$ is not constant on $[x_1,x_2]$, then by the intermediate value theorem some non-$m$-adic value between $y$ and another value of $\bar\psi$ is attained at least twice.  Its $\rho_m$-fiber is a singleton, again contradicting injectivity of $\psi$.  Hence $\bar\psi$ is injective, so it is a homeomorphism.  This proves (4).  Then (2) follows immediately: if $\rho_m(\alpha)=\rho_m(\beta)$, apply the identity $\psi\rho_m=\rho_n\bar\psi$ to $\psi^{-1}(\alpha)$ and $\psi^{-1}(\beta)$, and use injectivity of $\bar\psi$.

If (4) holds, then $\bar\psi$ is either increasing or decreasing.  The map $\rho_n$ is order-preserving except that it identifies adjacent pairs of words, and the same is true for $\rho_m$.  Thus the lift $\psi$ must respectively preserve or reverse lexicographic order; otherwise two ordered cylinder intervals in $\C_n$ would have images whose projections to $[0,1]$ are ordered incompatibly with the monotonicity of $\bar\psi$.  Hence (4) implies (3).  Finally, an order-preserving or order-reversing bijection of linearly ordered Cantor sets preserves adjacent pairs, and the adjacent pairs are exactly the nontrivial $\rho$-classes.  Hence (3) implies both (1) and (2).

It remains to show that (2) implies the other conditions.  Condition (2) for $\psi$ is condition (1) for $\psi^{-1}$.  By the implications already proved, it yields condition (3) for $\psi^{-1}$; since $\psi$ strictly preserves or strictly reverses the lexicographic order if and only if $\psi^{-1}$ does, condition (3) holds for $\psi$ as well.
\end{proof}

We shall use the same notation for an interval homeomorphism and its lift when no confusion is possible.  If $\phi\in\Homeo_+(0,1)$ maps $n$-adic rationals bijectively to $m$-adic rationals, then it has a unique order-preserving lift $\widetilde\phi:\C_n\to\C_m$ satisfying
\[
        \widetilde\phi\rho_m=\rho_n\phi
\]
in left-to-right notation.

\begin{lemma}\label{lem:closure-conjugacy-prelim}
Let $\psi:\C_n\to\C_m$ be a homeomorphism satisfying the equivalent conditions of Lemma~\ref{lem:lift-order-equivalence-prelim}.  Let $H\le F_n$ and suppose $H^\psi\le F_m$.  Then
\[
        \Cl_{F_m}(H^\psi)=\Cl_{F_n}(H)^\psi.
\]
In particular, if $H$ is closed, then $H^\psi$ is closed in $F_m$.
\end{lemma}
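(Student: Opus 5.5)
The plan is to use the interval characterization of closure from Theorem~\ref{thm:piecewise-closed} and the subsection on cores: $\Cl_{F_n}(H)$ is exactly the set of piecewise-$H$ functions in $F_n$, and $\Cl_{F_m}(H^\psi)$ is exactly the set of piecewise-$H^\psi$ functions in $F_m$. By Lemma~\ref{lem:lift-order-equivalence-prelim}, $\psi$ descends to an interval homeomorphism $\bar\psi$, and we may work in the interval model throughout. In that model $\bar\psi$ is monotone and maps $n$-adic rationals bijectively onto $m$-adic rationals, and conjugation by $\psi$ on Cantor space corresponds to conjugation by $\bar\psi$ on $[0,1]$.

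\emph{Step 1.} Conjugation by $\bar\psi$ transports piecewise-$H$ structure. If $f$ agrees with $h_j\in H$ on $[a_{j-1},a_j]$, then $f^{\bar\psi}$ agrees with $h_j^{\bar\psi}$ on the image intervals $\bar\psi([a_{j-1},a_j])$. These images form a finite subdivision of $[0,1]$, with order reversed if $\bar\psi$ is decreasing. Conversely, a piecewise-$H^\psi$ function $g$ pulls back to $g^{\bar\psi^{-1}}$, which is piecewise-$H$ on the preimage subdivision.

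\emph{Step 2 (inclusion $\supseteq$).} Let $f\in\Cl_{F_n}(H)$. We first need $f^\psi\in F_m$, which is not part of the hypothesis, since only $H^\psi\le F_m$ is assumed. Take a subdivision witnessing that $f$ is piecewise-$H$; by the remark following Theorem~\ref{thm:piecewise-closed}, its endpoints may be taken $n$-adic. Then $f^{\bar\psi}$ is a homeomorphism that agrees, on finitely many intervals with $m$-adic endpoints, with elements of $F_m$. Hence it is piecewise linear with finitely many breakpoints, all $m$-adic, and every slope is a power of $m$. So $f^{\bar\psi}\in F_m$ directly from the definition of $F_m$. By Step 1 it is piecewise-$H^\psi$, so it lies in $\Cl_{F_m}(H^\psi)$.

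\emph{Step 3 (inclusion $\subseteq$).} Let $g\in\Cl_{F_m}(H^\psi)$. Then $g$ is piecewise-$H^\psi$, with a subdivision whose endpoints are $m$-adic. The same patching argument, applied with $\bar\psi^{-1}$, shows that $g^{\bar\psi^{-1}}$ is a homeomorphism agreeing piecewise, on intervals with $n$-adic endpoints, with elements of $H\le F_n$. Hence it lies in $F_n$, and it is piecewise-$H$, so it lies in $\Cl_{F_n}(H)$. Therefore $g\in\Cl_{F_n}(H)^\psi$.

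The "in particular" follows at once: if $H=\Cl(H)$, then $\Cl_{F_m}(H^\psi)=H^\psi$.

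The main point needing care is the membership claim in Steps 2 and 3: a piecewise-conjugated function really lies in $F_m$ (respectively $F_n$), even though $\bar\psi$ itself is not piecewise linear. This is exactly where the $n$-adic choice of subdivision endpoints and the bijection of $n$-adic with $m$-adic rationals from Lemma~\ref{lem:lift-order-equivalence-prelim} are used. The remaining concern is the order-reversing case, which only reverses the order of the subdivision and changes nothing else.
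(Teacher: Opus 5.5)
Your proof is correct and is essentially the paper's argument. Both use the piecewise characterization of closure (Theorem~\ref{thm:piecewise-closed}), transport an $n$-adic subdivision by $\bar\psi$ to an $m$-adic one (and back by $\bar\psi^{-1}$), and conjugate the pieces; your explicit check that the patched conjugate $f^{\bar\psi}$ lies in $F_m$ (piecewise linear, $m$-adic breakpoints, slopes powers of $m$) fills in a step the paper only asserts.
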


\begin{proof}
Because $\psi$ descends to an interval homeomorphism sending $n$-adic rationals to $m$-adic rationals, the image of a finite $n$-adic subdivision has finitely many $m$-adic endpoints and therefore is refined by a finite $m$-adic subdivision.  The same statement holds for $\psi^{-1}$.

If $f\in\Cl_{F_n}(H)$, then $f$ is piecewise equal to elements of $H$ on a finite $n$-adic partition.  Conjugating by $\psi$ gives a function $f^\psi\in F_m$ which is piecewise equal to elements of $H^\psi$ on a finite $m$-adic refinement of the image partition.  Hence $f^\psi\in\Cl_{F_m}(H^\psi)$.

Conversely, if $g\in\Cl_{F_m}(H^\psi)$, then $g^{\psi^{-1}}$ is piecewise equal to elements of $H$ on a finite $n$-adic refinement of the preimage partition, so $g^{\psi^{-1}}\in\Cl_{F_n}(H)$.  Thus $g\in\Cl_{F_n}(H)^\psi$.
\end{proof}

The next lemma is the form needed later to pass from closed overgroups to arbitrary overgroups in the descending chain.

\begin{lemma}\label{lem:generation-conjugated-copy}
Let $\xi:\C_2\to\C_2$ satisfy the equivalent conditions of Lemma~\ref{lem:lift-order-equivalence-prelim}, and suppose $F^\xi\le F$.  Let $L\le F^\xi$.  If
\[
        \Cl_F(L)=F^\xi
\]
and $L^{\xi^{-1}}\le F$ has full image in the abelianization of $F$, then
\[
        L=F^\xi.
\]
\end{lemma}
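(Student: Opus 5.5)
The idea is to conjugate back by $\xi^{-1}$ and apply the generation theorem for $F$ (Theorem~\ref{thm:generation-F-prelim}) to the subgroup $L'=L^{\xi^{-1}}\le F$. Since $L\le F^\xi$, we have $L'\le F$; this is exactly the hypothesis that makes ``full image in the abelianization'' meaningful. That hypothesis gives $L'[F,F]=F$ immediately. So everything reduces to proving $[F,F]\le\Cl_F(L')$. The theorem will then give $L'=F$, and conjugating by $\xi$ yields $L=F^\xi$.

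To control $\Cl_F(L')$ I will use Lemma~\ref{lem:closure-conjugacy-prelim}, applied with $\psi=\xi^{-1}$ and $H=L$. I first check its hypotheses.
\begin{enumerate}
\item The map $\xi^{-1}:\C_2\to\C_2$ satisfies the equivalent conditions of Lemma~\ref{lem:lift-order-equivalence-prelim}. Indeed, condition (3) of that lemma (strict order preservation or reversal) holds for $\xi$ if and only if it holds for $\xi^{-1}$.
\item $L\le F$: this holds because $L\le F^\xi\le F$.
\item $L^{\xi^{-1}}\le F$: this is given.
\end{enumerate}
The lemma then gives
\[
\Cl_F(L^{\xi^{-1}})=\Cl_F(L)^{\xi^{-1}}=(F^\xi)^{\xi^{-1}}=F .
\]
Hence $\Cl_F(L')=F\supseteq[F,F]$. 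Both conditions of Theorem~\ref{thm:generation-F-prelim} now hold for $L'$, so $L'=F$, and therefore $L=(L')^{\xi}=F^\xi$.

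The only delicate point is the first item: I need to be sure that Lemma~\ref{lem:closure-conjugacy-prelim} may be applied in the direction $\xi^{-1}$. In particular, the closure on the right-hand side is taken in $F$ with $L\le F$. This needs $F^\xi\le F$, which is assumed, and the inverse map to descend to the interval, which follows from the symmetry noted in Lemma~\ref{lem:lift-order-equivalence-prelim}. With that verified, the rest is formal.
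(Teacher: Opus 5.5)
Your proposal is correct and follows the paper's proof exactly: apply Lemma~\ref{lem:closure-conjugacy-prelim} to $L\le F^\xi\le F$ with the homeomorphism $\xi^{-1}$ to get $\Cl_F(L^{\xi^{-1}})=(F^\xi)^{\xi^{-1}}=F$, then invoke Theorem~\ref{thm:generation-F-prelim} and conjugate back by $\xi$. Your explicit check that $\xi^{-1}$ also satisfies the conditions of Lemma~\ref{lem:lift-order-equivalence-prelim} is a detail the paper leaves implicit.
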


\begin{proof}
Apply Lemma~\ref{lem:closure-conjugacy-prelim} to the subgroup $L\le F^{\xi}\le F$ and to the homeomorphism $\xi^{-1}$.  We get
\[
        \Cl_F(L^{\xi^{-1}})=\Cl_F(L)^{\xi^{-1}}=(F^\xi)^{\xi^{-1}}=F.
\]
By assumption, $L^{\xi^{-1}}[F,F]=F$.  Theorem~\ref{thm:generation-F-prelim} gives $L^{\xi^{-1}}=F$, and conjugating by $\xi$ gives $L=F^\xi$.
\end{proof}

\subsection{Germs and type preservation}
\label{subsec:germs-type-prelim}

Let $G\le \Homeo(0,1)$ and let $x\in(0,1)$.  The \emph{stabilizer} of $x$ in $G$ is
\[
        \Stab_G(x)=\{g\in G\mid g(x)=x\}.
\]
Two elements $g,h\in \Stab_G(x)$ have the same \emph{germ at $x$} if they agree on some neighborhood of $x$.  This is an equivalence relation.  The equivalence class of $g$ is denoted $[g]_x$ and is called the germ of $g$ at $x$.  The set of all germs at $x$ forms a group under $[g]_x[h]_x=[gh]_x$ and is denoted $\mathcal G(G,x)$.

For $F_n$, the germ group detects the type of the point:
\[
\mathcal G(F_n,x)\cong
\begin{cases}
\Z^2, & x\in \Z[1/n]\cap(0,1),\\
\Z, & x\in \Q\setminus \Z[1/n],\\
\{1\}, & x\notin \Q.
\end{cases}
\]
Indeed, at an $n$-adic point there are independent left and right slope germs, at a rational non-$n$-adic point there is one periodic slope germ, and at an irrational fixed point every element fixing the point is locally the identity.  The isomorphism type of the germ group is preserved under conjugacy of actions; compare \cite{BleakLanoue,GSstabilizers}.

\begin{lemma}\label{lem:type-preservation-prelim}
Let $\phi\in\Homeo_+(0,1)$ and suppose $H=F_n^\phi\le F_m$ is closed.  Then $\phi$ maps $n$-adic rationals to $m$-adic rationals, rational non-$n$-adic points to rational non-$m$-adic points, and irrational points to irrational points.  Consequently, $\phi$ maps the set of $n$-adic rationals bijectively onto the set of $m$-adic rationals and has an order-preserving lift $\widetilde\phi:\C_n\to\C_m$.
\end{lemma}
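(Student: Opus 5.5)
The plan is to compare germ groups at corresponding points. In the paper's notation $H=F_n^\phi$, so $\phi$ conjugates the action of $F_n$ on $(0,1)$ to the action of $H$. For every $x\in(0,1)$ it therefore carries $\Stab_{F_n}(x)$ onto $\Stab_H(\phi(x))$ and induces an isomorphism $\mathcal G(F_n,x)\cong\mathcal G(H,\phi(x))$. Since $H\le F_m$, and two elements of $H$ have the same germ at $y$ exactly when they do as elements of $F_m$, we get an injection $\mathcal G(H,y)\hookrightarrow\mathcal G(F_m,y)$.

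If $x$ is $n$-adic, then $\Z^2$ embeds in $\mathcal G(F_m,\phi(x))$. This is impossible for $\Z$ or $\{1\}$, so $\phi(x)$ is $m$-adic. If $x$ is rational and not $n$-adic, then $\Z$ embeds, so $\phi(x)$ is rational. It remains to rule out that $\phi(x)=y$ is $m$-adic, and here closedness enters.

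Choose $h\in H$ fixing $y$ with nontrivial germ; it exists because $\mathcal G(H,y)\cong\Z$. Define $g$ to be the identity on $[0,y]$ and $h$ on $[y,1]$. Since $y$ is an $m$-adic breakpoint, $g\in F_m$, and $g$ is piecewise-$H$, so $g\in H$ by Theorem~\ref{thm:piecewise-closed}. Then $g^{\phi^{-1}}\in F_n$ fixes $x$, is the identity on the left of $x$, and is nontrivial on every right neighbourhood of $x$. But near a non-$n$-adic $x$ an element of $F_n$ fixing $x$ is linear, since $x$ is not a breakpoint. Its slope on the left of $x$ is $1$, so it is the identity near $x$, which is a contradiction. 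The same cutting trick, applied with $x$ irrational, shows that an $m$-adic $y$ with $\mathcal G(H,y)$ nontrivial cannot be the image of an irrational point.

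The remaining and main obstacle is the irrational case. We must show that if $x\notin\Q$ then $y=\phi(x)\notin\Q$, which amounts to producing a nontrivial germ of $H$ at any rational $y$. My first attempt uses the core. Write the $m$-ary expansion of $y$ as $uw^\infty$, choosing the expansion readable in $\C(H)$ (by the closedness correspondence, $H=\D(\C(H))$). Consider the states $(uw^k)^+$ for $k\ge0$. If two of them coincide, say for $k<l$, then the existence property (Lemma~\ref{lem:core-existence-property}) gives an element of $H$ with branch pair $uw^k\to uw^l$. That element fixes $y$ with slope $m^{(l-k)|w|}\ne1$, contradicting triviality of $\mathcal G(H,y)\cong\mathcal G(F_n,x)$. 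If instead the states are pairwise distinct, I would try to exploit the fact that $H\cong F_n$ is finitely generated and minimal. The idea is to show that along a ray only finitely many state types can occur up to the action of $H$, using conjugacy to $F_n$ and the finitely many $F_n$-orbits of $n$-adic intervals. A fallback is a counting argument: once $n$-adic points are shown to map bijectively to $m$-adic points, apply the same germ comparison to the dense orbits.

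Once all three type statements are established, I finish the bijectivity claim. We already have $\phi(\Z[1/n])\subseteq\Z[1/m]$, and we have shown that non-$n$-adic points map to non-$m$-adic points, so $\phi$ restricts to a bijection between the $n$-adic and the $m$-adic rationals of $(0,1)$. The remark following Lemma~\ref{lem:lift-order-equivalence-prelim} then yields the order-preserving lift $\widetilde\phi:\C_n\to\C_m$.
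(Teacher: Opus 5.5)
Your germ comparison is fine, and so is your exclusion of a rational non-$n$-adic $x$ landing on an $m$-adic $y$; just keep $h$ on whichever side of $y$ its germ is nontrivial. That step is a mild variant of the paper's. The paper builds nontrivial left and right germs at $y$ from the core and cuts them to get $\Z^2\le\mathcal G(H,y)$. You use only the single nontrivial germ guaranteed by $\mathcal G(H,y)\cong\Z$, plus the fact that elements of $F_n$ are affine near non-$n$-adic points.

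The genuine gap is the irrational case, which you leave open. The branch ``the states $(uw^k)^+$ are pairwise distinct'' is never resolved, and the suggested alternatives (finitely many state types up to the $H$-action, a counting argument on dense orbits) are not arguments. Your remark about applying the cutting trick to irrational $x$ adds nothing, since $\mathcal G(H,y)\cong\mathcal G(F_n,x)$ is trivial there anyway. What is needed is the opposite statement: $H$ has a nontrivial germ at \emph{every} rational $y$. Surjectivity onto the $m$-adic rationals requires that no irrational point lands on an $m$-adic point, so your final bijectivity and lifting claims inherit this gap.

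The missing idea is that $\C(H)$ is finite and full.
\begin{itemize}
\item It is finite because $H\cong F_n$ is finitely generated, and the core is obtained from the finitely many finite reduced diagrams of a generating set by identifications only.
\item It is full because the action of $H$ is minimal, being conjugate to that of $F_n$. Suppose a word $p$ reached a leaf. For each $f\in H$, an accepted diagram has a domain branch $u$ with $p=ur$, since no branch can properly extend $p$. So $f$ maps $[p]$ affinely onto $[vr]$, where $(vr)^+=p^+$. Every $p'\ne p$ with $p'^+=p^+$ is incomparable with $p$. Hence the $H$-orbit of an interior point of $[p]$ meets the interior of $[p]$ only in that point, contradicting density.
\end{itemize}
With these two facts, every $m$-ary expansion $uw^\infty$ of a rational $y$ is readable. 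Pigeonhole on the finitely many states forces $(uw^k)^+=(uw^l)^+$ for some $k<l$. Your existence-property argument then produces an element of $H$ fixing $y$ with slope $m^{(k-l)|w|}\ne1$, and the ``pairwise distinct'' alternative never arises.
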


\begin{proof}
Conjugation by $\phi$ gives an isomorphism
\[
        \mathcal G(F_n,x)\cong \mathcal G(H,\phi(x))
\]
for every $x\in(0,1)$.  Since $H\le F_m$, the group $\mathcal G(H,\phi(x))$ embeds in $\mathcal G(F_m,\phi(x))$.  Thus an $n$-adic point, whose germ group is $\Z^2$, must map to an $m$-adic point, since only $m$-adic points in $F_m$ have germ group containing $\Z^2$.  Similarly, a rational non-$n$-adic point can map only to a rational point.

It remains to rule out the possibility that an irrational maps to a rational, and that a rational non-$n$-adic point maps to an $m$-adic point.  The group $H\cong F_n$ is finitely generated, so its core is finite.  Also, the standard action of $F_n$ on $(0,1)$ is minimal, and hence the conjugate action of $H$ is minimal.  Therefore the core of $H$ has no leaves: a leaf would determine a nonempty clopen union of $m$-adic cylinders which is invariant under the closed action and would give an interval in which no orbit is dense.

Let $\beta\in(0,1)$ be rational and write one of its eventually periodic $m$-ary expansions as $pv^\infty$.  Reading the prefixes $pv^k$ in the finite full core of $H$, two of them end at the same state.  By the existence property of the core, there is an element of $H$ with a branch pair $pv^{k_1}\to pv^{k_2}$, giving a nontrivial germ at $\beta$.  Thus every rational point has nontrivial germ in $H$, and an irrational point of $F_n$, whose germ group is trivial, cannot map to a rational point.

Now suppose that $x$ is rational but not $n$-adic and that $\phi(x)=\beta$ is $m$-adic.  The point $\beta$ has two $m$-ary expansions, one ending in $0^\infty$ and one ending in $(m-1)^\infty$.  Applying the preceding paragraph to the two one-sided expansions gives elements of $H$ with nontrivial left and right germs at $\beta$.  Since $H$ is closed, we may cut these elements at the $m$-adic point $\beta$: one obtains an element whose germ is nontrivial only from the left and another whose germ is nontrivial only from the right.  These two germs generate a copy of $\Z^2$ in $\mathcal G(H,\beta)$, contradicting $\mathcal G(F_n,x)\cong\Z$.  Hence rational non-$n$-adic points map to rational non-$m$-adic points.  Since $\phi$ is a bijection and the three types partition $(0,1)$, the asserted bijection on $n$-adic rationals follows.  The lift follows from Lemma~\ref{lem:lift-order-equivalence-prelim}.
\end{proof}

\begin{corollary}\label{cor:closed-iff-liftable-prelim}
Let $\phi\in\Homeo_+(0,1)$ and suppose $F_n^\phi\le F_m$.  Then $F_n^\phi$ is closed in $F_m$ if and only if $\phi$ maps $n$-adic rationals bijectively onto $m$-adic rationals.  Equivalently, $\phi$ has an order-preserving lift $\widetilde\phi:\C_n\to\C_m$.
\end{corollary}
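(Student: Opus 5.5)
The corollary has two directions, plus a final restatement. The forward direction (closed implies $\phi$ maps $n$-adic rationals bijectively onto $m$-adic rationals) is exactly Lemma~\ref{lem:type-preservation-prelim}. The equivalence with having an order-preserving lift follows from Lemma~\ref{lem:lift-order-equivalence-prelim}. If $\phi$ is such a bijection, define $\widetilde\phi$ on $\C_n$ as follows. At non-$n$-adic points it is forced by $\rho_n$ and $\rho_m$, since those fibers are singletons. At a two-point fiber $\{ui(n-1)^\infty, u(i+1)0^\infty\}$, send the lexicographically smaller point to the smaller element of the two-point $\rho_m$-fiber over $\phi(\rho_n(\cdot))$, and the larger to the larger. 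This map is an order-preserving bijection between compact linearly ordered Cantor spaces, hence a homeomorphism, and it satisfies $\widetilde\phi\rho_m=\rho_n\phi$. Conversely, any lift satisfying condition (4) sends $n$-adics bijectively to $m$-adics by the last sentence of Lemma~\ref{lem:lift-order-equivalence-prelim}.

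For the reverse direction, assume $\phi$ has such a lift $\psi=\widetilde\phi$. The hypothesis $F_n^\phi\le F_m$ translates into $F_n^\psi\le F_m$ in the Cantor model, because both groups act through $\rho$ and the lifted actions of elements of $F_n$, $F_m$ are the prefix-replacement actions. We then apply Lemma~\ref{lem:closure-conjugacy-prelim} with $H=F_n$, which is trivially closed in itself (it equals $\D(\mathcal K_n)$, or equally every piecewise-$F_n$ function lies in $F_n$). This gives $\Cl_{F_m}(F_n^\psi)=\Cl_{F_n}(F_n)^\psi=F_n^\psi$, so $F_n^\phi$ is closed.

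The only point needing care is the reverse direction's use of Lemma~\ref{lem:closure-conjugacy-prelim}. That lemma's proof uses that the image under $\phi$ of a finite $n$-adic partition is a finite $m$-adic partition, and this holds precisely by the bijectivity assumption. I expect no real obstacle beyond checking that the interval and Cantor formulations of conjugation agree, which is immediate from $\widetilde\phi\rho_m=\rho_n\phi$.
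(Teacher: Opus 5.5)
Your proposal is correct and follows the paper's proof. The forward direction is Lemma~\ref{lem:type-preservation-prelim}. The converse passes to the order-preserving lift and applies Lemma~\ref{lem:closure-conjugacy-prelim} with $H=F_n$. Your explicit construction of the lift, and your check that conjugation in the interval and Cantor models agree, fill in details the paper leaves implicit.
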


\begin{proof}
The forward implication is Lemma~\ref{lem:type-preservation-prelim}.  Conversely, if $\phi$ maps $n$-adic rationals bijectively to $m$-adic rationals, it has an order-preserving lift $\widetilde\phi$.  Since $F_n$ is closed in itself and $F_n^{\widetilde\phi}\le F_m$, Lemma~\ref{lem:closure-conjugacy-prelim} implies that $F_n^{\widetilde\phi}$ is closed in $F_m$.
\end{proof}

\subsection{Local actions}
\label{subsec:local-actions-prelim}

The following definitions are standard in the transducer approach to
Cantor-space dynamics; see Grigorchuk--Nekrashevych--Sushchanski\u{\i} and
later work on automorphisms of Thompson-like groups
\cite{GNS,AutG,AutT}.  We follow the terminology of \cite{AutG}, with minor
changes of notation.

We shall use local actions both for maps defined on all of a Cantor space and
for maps defined on a clopen subset.  Let
\(A\subseteq \C_n\)
be a nonempty clopen subset, and let
\(f:A\to \C_m\)
be a continuous injective map.  If \(u\in\W_n\) satisfies
\(U_u\subseteq A,\)
define
\[
        \theta_f(u)=\Root(f(U_u)).
\]
Since \(f\) is injective, the set \(f(U_u)\) is not a singleton, and hence this
greatest common prefix is a finite word.

The \emph{local action} of \(f\) at \(u\) is the unique continuous injective
map
\(f_u:\C_n\to\C_m\)
such that
\[
        f(u\omega)=\theta_f(u)f_u(\omega)
        \qquad(\omega\in\C_n).
\]
Equivalently, \(f_u\) is obtained by restricting \(f\) to the cylinder \(U_u\)
and then deleting the common prefix \(\theta_f(u)\) from every image sequence.
By construction,
\(\Root(f_u(\C_n))=\eps.\)
If the image \(f(A)\) is clopen in \(\C_m\), then \(f_u(\C_n)\) is clopen in
\(\C_m\).

When \(A=\C_n\), the local action \(f_u\) is defined for every
\(u\in\W_n\).  In this case the set of local actions of \(f\) is
\[
        \LA_f=\{f_u\mid u\in\W_n\}.
\]

The local actions satisfy the cocycle identity.  Namely, if
\(U_u\subseteq A\) and \(v\in\W_n\), then
\(f_{uv}=(f_u)_v .\)
Indeed, put
\[
        a=\theta_f(u),
        \qquad
        b=\theta_{f_u}(v).
\]
Then
\[
        f(uv\omega)
        =
        a f_u(v\omega)
        =
        ab (f_u)_v(\omega).
\]
Since \(b\) is the greatest common prefix of \(f_u(U_v)\), the word \(ab\) is
the greatest common prefix of \(f(U_{uv})\).  Thus
\(\theta_f(uv)=ab,\)
and cancellation of the common prefix gives
\(f_{uv}=(f_u)_v.\)
In particular, if \(f:\C_n\to\C_m\) is continuous and injective and
\(f_u=f_v\), then
\[
        f_{uw}=f_{vw}
        \qquad(w\in\W_n).
\]
                               
\subsection{Transducers}
\label{subsec:transducers-prelim}

An \emph{$(n,m)$-transducer} is a triple
\(T=(S,t,o)\)
where $S$ is a set of \emph{states},
\(t:S\times X_n\to S\)
is the \emph{transition function}, and
\(o:S\times X_n\to\W_m\)
is the \emph{output function}.  If the machine is in state $s$ and reads the input letter $i\in X_n$, it outputs the word $o(s,i)$ and moves to the state $t(s,i)$.

The functions $t$ and $o$ extend to finite words recursively.  We set
\[
        t(s,\eps)=s,\qquad o(s,\eps)=\eps,
\]
and
\[
        t(s,ui)=t(t(s,u),i),
        \qquad
        o(s,ui)=o(s,u)o(t(s,u),i).
\]
For an infinite input $\omega=i_1i_2\cdots\in\C_n$, the output is the infinite concatenation
\[
        o(s,\omega)=o(s,i_1)o(t(s,i_1),i_2)o(t(s,i_1i_2),i_3)\cdots .
\]
We assume throughout that transducers are \emph{nondegenerate}, meaning that this output is an infinite word in $\C_m$ for every state and every infinite input.

An \emph{initial transducer} is a quadruple
\(T_{s_0}=(S,t,o,s_0),\)
where $s_0\in S$ is the initial state.  Each state $s\in S$ induces a continuous map
\[
        h_{T_s}:\C_n\to\C_m,
        \qquad
        h_{T_s}(\omega)=o(s,\omega).
\]
The initial transducer represents the map $h_{T_{s_0}}$.

A state $s$ is called an \emph{injective state} if $h_{T_s}$ is injective, and a \emph{homeomorphism state} if $h_{T_s}:\C_n\to\C_m$ is a homeomorphism.  An initial transducer $T_{s_0}$ is called \emph{injective}, respectively a \emph{homeomorphism transducer}, if its initial state is an injective state, respectively a homeomorphism state.  In this paper we will mostly use homeomorphism transducers.  If $T_{s_0}$ is injective and $s=t(s_0,u)$ for some word $u$, then $s$ is also injective.

Two states $s_1,s_2$ are \emph{$\omega$-equivalent} if $h_{T_{s_1}}=h_{T_{s_2}}$.  For an injective state $s$, we say that $s$ has \emph{incomplete response} if, for some $i\in X_n$, the output $o(s,i)$ is a proper prefix of the actual greatest common prefix of $h_{T_s}(U_i)$.  Equivalently,
\[
        \Root\bigl(h_{T_{t(s,i)}}(\C_n)\bigr)\neq \eps.
\]
Indeed, from
\[
        h_{T_s}(i\omega)=o(s,i)h_{T_{t(s,i)}}(\omega)
\]
it follows that $o(s,i)$ is always a common prefix of $h_{T_s}(U_i)$; incomplete response means that after reading $i$, the machine has not yet output the whole common prefix of that image.

A state $s$ is \emph{accessible} from an initial state $s_0$ if $t(s_0,u)=s$ for some $u\in\W_n$.  An initial transducer is \emph{minimal} if every state is accessible, no state has incomplete response, and no two distinct states are $\omega$-equivalent.

Two initial transducers are called \emph{equivalent} if they represent the same map.  We shall use the standard minimization theorem for homeomorphism transducers: every homeomorphism transducer is equivalent to a unique minimal homeomorphism transducer, up to isomorphism \cite{GNS}.   In such a minimal representative every state is accessible; hence, if the represented map is a homeomorphism, every state is injective by the observation above, and has a clopen image in $\C_m$. 

\begin{definition}
A homeomorphism $\psi:\C_n\to\C_m$ is \emph{rational} if it is represented by a finite initial $(n,m)$-transducer.  If $\psi$ is rational, its unique finite minimal transducer is denoted
\[
        T^\psi=(S_\psi,t_\psi,o_\psi,s_\psi),
\]
where $s_\psi$ is the initial state.
\end{definition}

The following fundamental theorem was proved in \cite{GNS}.

\begin{theorem}[\cite{GNS}]
\label{thm:rational-iff-finite-local-actions-prelim}
Let $\psi:\C_n\to\C_m$ be a homeomorphism.  Then $\psi$ is rational if and only if $\LA_\psi$ is finite.
\end{theorem}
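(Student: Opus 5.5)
The proof is the standard Grigorchuk--Nekrashevych--Sushchanski\u{\i} argument, and both directions rest on the cocycle identity \(f_{uv}=(f_u)_v\) from Subsection~\ref{subsec:local-actions-prelim}. In the forward direction, each local action is determined by a transducer state, so a finite transducer yields finitely many local actions. In the converse direction, the set \(\LA_\psi\) itself serves as the state set of a transducer representing \(\psi\).

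\emph{Rational implies finite local actions.} Suppose \(\psi=h_{T_{s_0}}\) for a finite initial transducer \(T_{s_0}=(S,t,o,s_0)\). For \(u\in\W_n\), put \(s=t(s_0,u)\). By the recursive definition of the output,
\[
        \psi(u\omega)=o(s_0,u)\,h_{T_s}(\omega)\qquad(\omega\in\C_n).
\]
Hence \(\psi(U_u)=o(s_0,u)\,h_{T_s}(\C_n)\). Writing \(r_s=\Root(h_{T_s}(\C_n))\), we get
\[
        \theta_\psi(u)=o(s_0,u)\,r_s .
\]
Therefore \(\psi_u\) is the map \(h_{T_s}\) with the prefix \(r_s\) deleted from every image sequence. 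This map depends only on the state \(s\), not on \(u\). The assignment \(s\mapsto\psi_u\) is thus a surjection from a subset of \(S\) onto \(\LA_\psi\), so \(|\LA_\psi|\le|S|<\infty\).

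\emph{Finite local actions implies rational.} Assume \(\LA_\psi\) is finite, and define an \((n,m)\)-transducer with state set \(\LA_\psi\) by
\[
        t(\psi_u,i)=\psi_{ui}=(\psi_u)_i,\qquad o(\psi_u,i)=\theta_{\psi_u}(i).
\]
Both maps depend only on the function \(\psi_u\), since \((\psi_u)_i\) and \(\theta_{\psi_u}(i)\) are defined intrinsically from \(\psi_u\). So the transducer is well defined even when \(\psi_u=\psi_v\) for distinct words \(u\ne v\); this is exactly the remark following the cocycle identity. The initial state is \(\psi_\eps\). Since \(\psi\) is onto \(\C_m\), we have \(\Root(\psi(\C_n))=\eps\), so \(\psi_\eps=\psi\).

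By induction on \(|u|\), using the cocycle computation \(\theta_f(uv)=\theta_f(u)\theta_{f_u}(v)\), we get
\[
        t(\psi,u)=\psi_u\qquad\text{and}\qquad o(\psi,u)=\theta_\psi(u).
\]

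\emph{Main obstacle: the infinite output is correct and nondegenerate.} For \(\omega=i_1i_2\cdots\), the partial outputs are the words \(\theta_\psi(i_1\cdots i_k)\). Each is a prefix of \(\psi(\omega)\), because \(\psi(\omega)\in\psi(U_{i_1\cdots i_k})\). It remains to show their lengths tend to infinity. Since \(\psi\) is continuous on the compact space \(\C_n\), it is uniformly continuous. Hence for every \(N\) there is \(k\) such that all points of \(\psi(U_{i_1\cdots i_k})\) share a common prefix of length \(N\), so \(|\theta_\psi(i_1\cdots i_k)|\ge N\). Consequently the output is an infinite word, which shows nondegeneracy, and it equals \(\psi(\omega)\). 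Thus this finite transducer represents \(\psi\), and \(\psi\) is rational.
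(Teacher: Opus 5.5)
Your proof is correct and follows essentially the same route as the paper: the paper only cites \cite{GNS}, but its state--local-action dictionary uses exactly your transducer, with state set $\LA_\psi$, $t(f,i)=f_i$ and $o(f,i)=\theta_f(i)$. Deleting $r_s=\Root(h_{T_s}(\C_n))$ is a clean way to handle non-minimal transducers without invoking minimization. One small addition is needed: nondegeneracy must hold at every state, and this follows because each state $\psi_u$ is accessible and $o(\psi_u,\omega)$ is the tail of the infinite word $o(\psi,u\omega)$ after the finite prefix $\theta_\psi(u)$.
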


The theorem is best used together with the following explicit correspondence between local actions and states.

\begin{notation}[The state--local-action dictionary]
\label{not:state-local-action-dictionary}
Let $T_{s_0}=(S,t,o,s_0)$ be a minimal homeomorphism transducer, and let $h=h_{T_{s_0}}$.  For $u\in\W_n$, write
\[
        s_u=t(s_0,u),
        \qquad
        O_T(u)=o(s_0,u).
\]
Since the transducer has no incomplete response, an induction on $|u|$ gives
\[
        O_T(u)=\Root(h(U_u))=\theta_h(u).
\]
Moreover,
\[
        h(u\omega)=O_T(u)h_{T_{s_u}}(\omega)
        \qquad(\omega\in\C_n),
\]
and therefore the state $s_u$ induces the local action $h_u$:
\[
        h_u=h_{T_{s_u}}.
\]
Thus, in a minimal homeomorphism transducer, the state reached after reading $u$ induces exactly the local action at $u$.

Conversely, let $\psi:\C_n\to\C_m$ be a homeomorphism, and consider the set of local actions $\LA_\psi$.  One obtains a transducer from the local actions as follows.  The states are the distinct local actions,
\[
        S_\psi=\LA_\psi,
\]
the initial state is $s_\psi=\psi$, and for $f\in\LA_\psi$ and $i\in X_n$ we define
\[
        t_\psi(f,i)=f_i,
        \qquad
        o_\psi(f,i)=\theta_f(i).
\]
The identity
\[
        f(i\omega)=\theta_f(i)f_i(\omega)
\]
shows how the transducer acts from the state $f$.  Iterating this identity along an input word gives exactly the image of that word under $\psi$, so the resulting initial transducer represents $\psi$.  Every state is accessible because every state is of the form $\psi_u$ for some $u\in\W_n$; there is no incomplete response because the output on each letter is defined to be the full common prefix; and distinct states are distinct functions, hence are not $\omega$-equivalent.  Thus this transducer is isomorphic to the minimal transducer $T^\psi$.

For $T^\psi=(S_\psi,t_\psi,o_\psi,s_\psi)$ and $u\in\W_n$ we write
\[
        s^\psi_u=t_\psi(s_\psi,u),
        \qquad
        O^\psi(u)=o_\psi(s_\psi,u).
\]
Then
\[
        O^\psi(u)=\theta_\psi(u),
        \qquad
        h_{T_{s^\psi_u}}=\psi_u,
\]
and hence
\[
        \psi(u\omega)=O^\psi(u)\psi_u(\omega)
        \qquad(\omega\in\C_n).
\]
\end{notation}

\begin{definition}
\label{def:synchronizing-prelim}
Let $T=(S,t,o)$ be a finite $(n,m)$-transducer.  We say that $T$ is \emph{synchronizing at level $\ell$} if there is a map
\[
        \mathfrak s:X_n^\ell\to S
\]
such that, for every word $u\in X_n^\ell$ and every state $s\in S$,
\[
        t(s,u)=\mathfrak s(u).
\]
Equivalently, after reading any input word of length $\ell$, the resulting state depends only on that input word and not on the starting state.  A transducer is \emph{synchronizing} if it is synchronizing at some level.

A rational homeomorphism $\psi:\C_n\to\C_m$ is called \emph{bi-synchronizing} if the minimal transducer $T^\psi$ representing $\psi$ is synchronizing and the minimal transducer $T^{\psi^{-1}}$ representing $\psi^{-1}$ is synchronizing.
\end{definition}

\begin{remark}\label{rem:sync-terminology-prelim}
In \cite{AutG}, the property in Definition~\ref{def:synchronizing-prelim} is called \emph{strong synchronization}.  We use the shorter term \emph{synchronizing}.  This should not be confused with a \emph{synchronous} transducer, where each transition outputs exactly one letter.
\end{remark}

\begin{remark}
\label{rem:AutG-AutT-sync-prelim}
The synchronizing condition was isolated by Bleak--Cameron--Maissel--Navas--Olukoya in their transducer description of automorphisms of the Higman--Thompson groups $G_{n,r}$.  They prove that if $h\in\Homeo(\C_{n,r})$ satisfies
\[
        h^{-1}G_{n,r}h\le G_{n,r},
\]
then $h$ has finitely many local actions \cite[Corollary~6.17]{AutG}; the later synchronization part of their argument shows that the relevant minimal transducers are synchronizing, and their main theorem identifies $\operatorname{Aut}(G_{n,r})$ with the group of rational homeomorphisms of $\C_{n,r}$ represented by finite bi-synchronizing transducers \cite[Theorem~1.1]{AutG}.

Olukoya adapted this framework to the groups $T_{n,r}$.  In particular, he proves the one-sided statement that if $h\in\Homeo(\C_{n,r})$ satisfies
\[
        h^{-1}T_{n,r}h\le T_{n,r},
\]
then $h$ has finitely many local actions and its minimal transducer is synchronizing \cite[Corollary~4.16 and Corollary~5.2]{AutT}.  He then characterizes the normalizer, and hence the automorphism group of $T_{n,r}$, by bi-synchronizing transducers whose induced homeomorphisms respect the cyclic ordering \cite[Theorem~1.1 and Theorem~5.3]{AutT}.

The corresponding endpoint-preserving case for Brown's groups $F_{n,r}$ is not carried out in those papers.  Since, for fixed $n$, the groups $F_{n,r}$ are abstractly isomorphic to the group denoted $F_n$ here, the results of Section~\ref{sec:semi-sync-characterization} give the analogous description for this family as a consequence.
\end{remark}

\subsection{Product and inverse transducers}
\label{subsec:product-inverse-transducers-prelim}

Let $A=(S_A,t_A,o_A)$ be an $(n,m)$-transducer and let $B=(S_B,t_B,o_B)$ be an $(m,\ell)$-transducer.  The \emph{product transducer} $A*B$ is the $(n,\ell)$-transducer with state set $S_A\times S_B$ and with transition and output functions
\[
        t_{A*B}((s_A,s_B),i)=\bigl(t_A(s_A,i),\ t_B(s_B,o_A(s_A,i))\bigr),
\]
\[
        o_{A*B}((s_A,s_B),i)=o_B(s_B,o_A(s_A,i)).
\]
Here $t_B$ and $o_B$ are used in their extended sense on the word $o_A(s_A,i)\in\W_m$.  Thus the output produced by $A$ is immediately fed as input into $B$.  For initial transducers, the initial state is the pair of initial states.  The product represents the composition of the represented maps, in the left-to-right convention.  The product transducer need not be minimal, even if the two factors are minimal.

We shall also use a standard inverse construction.  Let
\(T_{s_0}=(S,t,o,s_0)\)
be a minimal initial $(n,m)$-transducer representing a homeomorphism $h_{T_{s_0}}:\C_n\to\C_m$.  For $s\in S$ and $w\in\W_m$, set
\[
        L_s(w)=\Root\bigl(h_{T_s}^{-1}(U_w)\bigr).
\]
The inverse transducer has states
\[
        S'=
        \{(s,w)\in S\times\W_m\mid L_s(w)=\eps,\ U_w\subseteq \im(h_{T_s})\},
\]
initial state $(s_0,\eps)$, and transition and output defined as follows.  For $(s,w)\in S'$ and $i\in X_m$, let
\(u=L_s(wi).\)
Then
\[
        t'((s,w),i)=\bigl(t(s,u),\ wi-o(s,u)\bigr),
        \qquad
        o'((s,w),i)=u,
\]
where $wi-o(s,u)$ denotes the suffix remaining after deleting the prefix $o(s,u)$ from $wi$.  The cited inverse construction guarantees that $o(s,u)$ is indeed a prefix of $wi$, so this suffix is well defined.

The intuition behind the construction is as follows.  The inverse machine reads letters from the output alphabet $X_m$.  Usually a single output letter is not enough to determine the next input letter of the original machine.  The second coordinate $w$ is a buffer: it records output already read by the inverse machine but not yet matched by a complete input prefix of the original machine.  The condition $L_s(w)=\eps$ says that the current buffer still does not determine a nonempty input prefix.  Once the enlarged buffer $wi$ determines a nonempty input prefix $u$, the inverse machine outputs $u$, moves the original transducer state from $s$ to $t(s,u)$, and removes the matched output $o(s,u)$ from the buffer.

\begin{proposition}[see \cite{GNS}]\label{prop:inverse-transducer-prelim}
The construction above is well defined.  The resulting initial $(m,n)$-transducer represents $h_{T_{s_0}}^{-1}$.  It need not be minimal, but every state is accessible and it has no states of incomplete response.  Moreover, if the original transducer is finite, then the inverse transducer is finite; equivalently, the inverse of a rational homeomorphism is rational.
\end{proposition}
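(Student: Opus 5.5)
The statement is the inverse-transducer proposition (cited from \cite{GNS}). I will verify directly the four assertions: well-definedness, correctness, accessibility with no incomplete response, and finiteness.

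\textbf{Well-definedness.} Fix a state $(s,w)\in S'$ and a letter $i\in X_m$, and put $h=h_{T_s}$.
\begin{itemize}
\item[--] Since $U_w\subseteq\im(h)$ and $\im(h)$ is clopen, write $U_{wi}\cap\im(h)=h(V)$ for a nonempty clopen $V$. (I expect $U_{wi}\subseteq\im(h)$, but I only need nonemptiness.) Set $u=L_s(wi)=\Root(V)$.
\item[--] Because $h(U_u)\supseteq h(V)$ and $V$ is not a singleton, the word $o(s,u)=\Root(h(U_u))$ (no incomplete response) is a common prefix of $h(V)\subseteq U_{wi}$.
\item[--] I must show $o(s,u)$ is a prefix of $wi$, not a proper extension of it. The key point is that $V$ contains a cylinder of the form $U_{uj}$ or splits across several $U_{uj}$, by maximality of the root. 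In either case $h(U_u)$ meets $U_{wi}$ and already contains image points whose common prefix is at most $wi$.
\item[--] More cleanly: $h(U_u)$ contains $h(V)$, and $V$ meets at least two children $U_{uj}$. If $o(s,u)$ strictly extended $wi$, then $h(U_u)\subseteq U_{wi}$, so $U_u\subseteq V$, which forces $V=U_u$ (since $V\subseteq U_u$).
\item[--] In that subcase I adjust: either $o(s,u)$ has length at most $|wi|$ and is then a prefix of $wi$, or $V=U_u$. In the latter case, if $u$ is nonempty, I argue that $h(U_u)$ is a cylinder-compatible clopen set inside $U_{wi}$. This needs care; I would adopt the convention that the output then is the prefix of $wi$.
\end{itemize}
I expect this buffer bookkeeping to be the main fiddly step. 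The cleanest route is to define the next state's buffer as the unique $w'$ with $wi=o(s,u)w'$, and to check $L_{t(s,u)}(w')=\eps$ using the cocycle identity $h_{uv}=(h_u)_v$ from the local-action subsection.

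\textbf{Correctness.} I prove by induction on the input length $|y|$ that from state $(s_0,\eps)$, reading $y\in\W_m$ produces an output $x$ and lands in the state $(t(s_0,x),z)$, where
\[
y=o(s_0,x)\,z,\qquad x=\text{the longest prefix determined by } h^{-1}(U_y).
\]
Letting $y$ run through the prefixes of an infinite word $\alpha$ then shows that the output of the inverse transducer is $h^{-1}(\alpha)$. For the output to be infinite, I need the buffer $z$ to stay bounded along every $\alpha$, i.e. the inverse transducer must be nondegenerate.

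\textbf{Accessibility and complete response.} Accessibility holds by construction once $S'$ is restricted to the states reachable from $(s_0,\eps)$. Complete response follows because each output $u$ is the full root $L_s(wi)$.

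\textbf{Finiteness.} When $S$ is finite, I need a uniform bound on the length of a buffer $w$ with $L_s(w)=\eps$. For each $s$, the image $h_{T_s}(\C_n)$ is clopen and the sets $h_{T_s}(U_j)$ are clopen and disjoint. Hence there is a uniform $N$ such that every word of length $N$ lying in the image determines its first input letter. Taking $N$ as the maximum over the finitely many states, every buffer has length less than $N$, so $S'$ is finite. Equivalently, the inverse of a rational homeomorphism is rational, since its local actions are finite in number by Theorem~\ref{thm:rational-iff-finite-local-actions-prelim}.
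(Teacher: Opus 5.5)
The paper itself only cites \cite{GNS} here. Your plan is the standard argument: you use the invariant that, after reading $y$ from $(s_0,\eps)$, the output is $x=\Root(h^{-1}(U_y))$ and the state is $(t(s_0,x),z)$ with $y=o(s_0,x)z$, where $h=h_{T_{s_0}}$, and you bound buffer lengths uniformly using the clopen partitions $\{h_{T_s}(U_j)\}_{j\in X_n}$ of $\im(h_{T_s})$. Your finiteness paragraph is correct. The gaps are in checking the construction itself, and the first is in well-definedness.

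You say you only need $U_{wi}\cap\im(h_{T_s})\neq\emptyset$. That is false: if $h_{T_s}(V)$ were a proper part of $U_{wi}$, e.g.\ $V=U_u$ with $h_{T_s}(U_u)=U_{wi0}$, then $o(s,u)=wi0$ strictly extends $wi$ and $wi-o(s,u)$ is undefined. Adopting a ``convention'' in that subcase changes the construction rather than verifying it.

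The missing observation is that the defining condition of $S'$ gives $U_{wi}\subseteq U_w\subseteq\im(h_{T_s})$. Hence $V=h_{T_s}^{-1}(U_{wi})$ satisfies $h_{T_s}(V)=U_{wi}$. With $u=\Root(V)$ this gives $U_{wi}\subseteq h_{T_s}(U_u)$, so every common prefix of $h_{T_s}(U_u)$ is a prefix of $wi$. Since $T$ has no incomplete response, $o(s,u)=\Root(h_{T_s}(U_u))$, which is therefore a prefix of $wi$.

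You also need both membership conditions for the new pair, not only $L_{t(s,u)}(w')=\eps$. Write $wi=o(s,u)w'$ and $s'=t(s,u)$. The identity $h_{T_s}(u\omega)=o(s,u)h_{T_{s'}}(\omega)$, together with $U_{wi}\subseteq h_{T_s}(U_u)$, gives $U_{w'}\subseteq\im(h_{T_{s'}})$ and $h_{T_{s'}}^{-1}(U_{w'})=\{\omega\mid u\omega\in V\}$. The root of the latter set is $\eps$ because $u=\Root(V)$.

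In the correctness part, you make infinite output depend on the buffer staying bounded, which you establish only for finite $T$. The proposition, however, asserts that the construction represents $h^{-1}$ in general, and for non-rational $h$ the buffer can be unbounded along a single ray. No bound is needed. By your invariant, the output after $\alpha_1\cdots\alpha_k$ is $\Root(h^{-1}(U_{\alpha_1\cdots\alpha_k}))$. These clopen sets decrease to the point $h^{-1}(\alpha)$, so by compactness their roots are prefixes of $h^{-1}(\alpha)$ of unbounded length.

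The same argument from an arbitrary state shows that $(s,w)$ represents $\beta\mapsto h_{T_s}^{-1}(w\beta)$. This gives nondegeneracy from every state, and it is what your one-line complete-response claim actually rests on.

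Finally, accessibility is a claim about $S'$ as defined, so restricting to the reachable part does not prove it. Given $(s,w)\in S'$, choose $x$ with $t(s_0,x)=s$, using minimality of $T$, and read $y=o(s_0,x)w$. Since $U_y\subseteq o(s_0,x)\im(h_{T_s})=h(U_x)$, one has $h^{-1}(U_y)=x\,h_{T_s}^{-1}(U_w)$. Its root is $x\,L_s(w)=x$, so your invariant lands exactly at $(s,w)$.
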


\subsection{Nondeterministic automata and subset determinization}
\label{subsec:subset-construction-prelim}

We shall use the standard subset construction for nondeterministic automata
with $\eps$-edges.  We recall it explicitly because, in the sequel, we shall
apply it to automata whose edges are labelled by finite words rather than by
single letters.  Standard references for finite automata and the subset
construction include \cite{HopcroftMotwaniUllman,Sipser}.

Let $X$ be a finite alphabet.  An \emph{$\eps$-nondeterministic automaton over
$X$} consists of a set of states $P$, an initial state $p_0\in P$, and a
transition function
\[
        \Delta:P\times (X\cup\{\eps\})\to \mathcal P(P),
\]
where $\mathcal P(P)$ denotes the power set of $P$.  Thus, if the automaton is
in state $p$ and reads the symbol $a\in X\cup\{\eps\}$, then $\Delta(p,a)$ is
the set of all states to which the automaton is allowed to move.  If
$\Delta(p,a)=\emptyset$, then there is no such move.  If $q\in\Delta(p,a)$, we
draw an edge
\[
        p \xrightarrow{\ a\ } q .
\]
When $a\in X$, traversing this edge consumes the input letter $a$.  When
$a=\eps$, the edge may be traversed without consuming any input letter.  We do
not include accepting states, since we shall only use the underlying transition
structure.  When $P$ is finite, this is the usual notion of an $\eps$-NFA.

A path in such an automaton is a sequence of edges
\[
        p_0' \xrightarrow{\ a_1\ } p_1'
        \xrightarrow{\ a_2\ } \cdots
        \xrightarrow{\ a_k\ } p_k',
        \qquad a_j\in X\cup\{\eps\}.
\]
The \emph{label} of this path is the word in $X^*$ obtained by concatenating
the labels $a_1,\ldots,a_k$ and then deleting all occurrences of $\eps$.
Thus a path labelled by a word $u\in X^*$ may contain $\eps$-edges before the
first letter of $u$, after the last letter of $u$, or between two consecutive
letters of $u$.

For $A\subseteq P$ and $a\in X$, define
\[
        \Delta(A,a)=\bigcup_{p\in A}\Delta(p,a).
\]
Thus $\Delta(A,a)$ is the set of all states which can be reached from some
state in $A$ by reading the single letter $a$.

For $A\subseteq P$, its \emph{$\eps$-closure} is
\[
        E(A)=
        \{q\in P\mid q\text{ is reachable from some }p\in A
        \text{ by a path consisting only of }\eps\text{-edges}\}.
\]
The path of length zero is allowed, so $A\subseteq E(A)$.

For a word $u\in X^*$, let $R_u$ be the set of all states reachable from the
initial state by a path labelled by $u$:
\[
        R_u=
        \{q\in P\mid
        \text{there exists a path from }p_0\text{ to }q
        \text{ with label }u\}.
\]
Equivalently, the sets $R_u$ are defined recursively by
\(R_\eps=E(\{p_0\}),\)
and, for $a\in X$,
\[
        R_{ua}=E\bigl(\Delta(R_u,a)\bigr).
\]
In words, after reading $u$ one is in the $\eps$-closed set $R_u$; to read one
more letter $a$, one follows all possible $a$-edges out of states in $R_u$ and
then closes again under $\eps$-edges.

The \emph{determinized automaton} obtained by the subset construction has as
states the subsets $R_u\subseteq P$ which arise in this way.  Its initial
state is $R_\eps$, and its transition on the letter $a\in X$ is
\[
        R_u \xrightarrow{\ a\ } R_{ua}.
\]
The empty subset may occur; if it does, it is treated as an ordinary sink state.  This automaton is deterministic: the next state is uniquely determined by the
current subset $R_u$ and the input letter $a$.

We shall also use automata whose edges are labelled by finite words rather
than by single letters.  A \emph{word-labelled automaton over $X$} consists of
a set of states $P$, an initial state $p_0$, and a set of directed edges
\[
        e=(p,w,q),
        \qquad p,q\in P,\quad w\in X^*.
\]
We draw such an edge as
\[
        p \xrightarrow{\ w\ } q .
\]
To apply the subset construction to a word-labelled automaton, we first
replace it by an $\eps$-nondeterministic automaton over $X$.  If $w=\eps$,
then the edge $e=(p,\eps,q)$ is replaced by an $\eps$-edge from $p$ to $q$.
If
\[
        w=b_1b_2\cdots b_k
        \qquad (b_j\in X,\ k\ge 1),
\]
then $e$ is replaced by a path whose edge labels are the individual letters
$b_1,\ldots,b_k$.  More explicitly, if $k=1$, we replace $e$ by the single
edge
\[
        p \xrightarrow{\ b_1\ } q.
\]
If $k\ge 2$, we insert new subdivision states and replace $e$ by
\[
        p \xrightarrow{\ b_1\ } (e,b_2\cdots b_k)
        \xrightarrow{\ b_2\ } (e,b_3\cdots b_k)
        \longrightarrow \cdots \longrightarrow
        (e,b_k) \xrightarrow{\ b_k\ } q .
\]
The notation $(e,\rho)$ means that we are partway through the original edge
$e$, and that the unread suffix of the label of $e$ is $\rho$.  For example,
an edge
\(e=(p,011,q)\)
is replaced by
\[
        p \xrightarrow{\ 0\ } (e,11)
        \xrightarrow{\ 1\ } (e,1)
        \xrightarrow{\ 1\ } q .
\]

After replacing every word-labelled edge in this way, we apply the
$\eps$-closure and subset construction described above.  Thus the states of
the determinized automaton are the reachable $\eps$-closed subsets of the
subdivided automaton.
  
\section{Closed maximal copies and Cantor-space conjugators}
\label{sec:closed-maximal-copies}

In this section we record the general conjugacy framework which motivates the later constructions.  We study closed maximal subgroups of $F_m$ which are abstractly isomorphic to some $F_n$, and show that such subgroups are realized by conjugating the standard action of $F_n$ on the interval.  The results of this section are not needed for the explicit automaton computations later in the paper, but they explain why rational Cantor-space conjugators are the natural objects to study.

\begin{lemma}\label{lem:rank-obstruction-maximal-copy}
Let $m,n\ge 2$.  If $F_m$ has a maximal subgroup isomorphic to $F_n$, then $n\ge m$.
\end{lemma}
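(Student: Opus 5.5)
The plan is to use the abelianization and compare ranks. Let $M<F_m$ be a maximal subgroup with $M\cong F_n$. There are two cases, according to whether $M$ contains $[F_m,F_m]$.

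\emph{First case: $M\not\supseteq[F_m,F_m]$.}
\begin{enumerate}
\item Since $M$ is maximal, the subgroup $M[F_m,F_m]$ lies between $M$ and $F_m$, and it properly contains $M$. Hence $M[F_m,F_m]=F_m$.
\item Consequently the composite $M\hookrightarrow F_m\xrightarrow{\ab_m}\Z^m$ is surjective.
\item By Subsection~\ref{subsec:abelianization}, this map factors through $M/[M,M]\cong F_n/[F_n,F_n]\cong\Z^n$, because $[M,M]\le[F_m,F_m]$.
\item So we obtain a surjection $\Z^n\twoheadrightarrow\Z^m$. Tensoring with $\Q$ gives $n\ge m$.
\end{enumerate}

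\emph{Second case: $[F_m,F_m]\le M$.}
\begin{enumerate}
\item Then $M$ is the preimage of a maximal subgroup of $\Z^m$, so $M$ has prime index $p$ in $F_m$.
\item We have $[F_m,F_m]\le M\cong F_n$. I would try to show that $M/[M,M]$ still has rank at least $m$.
\item Concretely, $M\supseteq[F_m,F_m]$ and $M/[F_m,F_m]\cong L$, where $L$ is an index-$p$ subgroup of $\Z^m$; thus $L\cong\Z^m$.
\item The quotient map $M\to M/[F_m,F_m]\cong\Z^m$ is a surjection from $M$ onto an abelian group. It therefore factors through $M^{\ab}\cong\Z^n$.
\item This again gives a surjection $\Z^n\twoheadrightarrow\Z^m$, hence $n\ge m$.
\end{enumerate}

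The key step in both cases is therefore one observation: any surjection from $M\cong F_n$ onto a free abelian group of rank $m$ forces $n\ge m$, since $F_n^{\ab}\cong\Z^n$. Each case supplies such a surjection.

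The first case uses only maximality together with the fact that $[F_m,F_m]$ is normal. The second case uses that $M/[F_m,F_m]$ is a finite-index subgroup of $\Z^m$, hence free abelian of rank $m$.

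I do not expect any real obstacle. The only points to check are that $[M,M]\le[F_m,F_m]$ in the first case, and the identification $F_n^{\ab}\cong\Z^n$ recalled in Subsection~\ref{subsec:abelianization}. In fact the second case subsumes the first in spirit, so the write-up can be unified: whenever $M[F_m,F_m]$ has finite index in $F_m$, the image of $M$ in $\Z^m$ has rank $m$.
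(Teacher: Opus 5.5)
Your proof is correct and is essentially the paper's argument: maximality forces the image of $M$ in $F_m/[F_m,F_m]\cong\Z^m$ to have rank $m$, and $M\cong F_n$ cannot surject onto $\Z^m$ unless $n\ge m$. The differences are cosmetic: you split on whether $M\supseteq[F_m,F_m]$ rather than on finite versus infinite index, and you conclude via $F_n/[F_n,F_n]\cong\Z^n$ and tensoring with $\Q$, whereas the paper uses that $F_n$ is generated by $n$ elements.
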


\begin{proof}
Let $H\cong F_n$ be a maximal subgroup of $F_m$, and let
\(\pi_m:F_m\to (F_m)_{\ab}\)
be the abelianization map.  By Subsection~\ref{subsec:abelianization}, the abelianization of $F_m$ is free abelian of rank $m$.

If $H$ has finite index in $F_m$, then $\pi_m(H)$ has finite index in $(F_m)_{\ab}$, and hence has rank $m$.  Suppose, then, that $H$ has infinite index.  By maximality, $H$ is not contained in any proper finite-index subgroup of $F_m$.  Therefore $\pi_m(H)$ is not contained in any proper finite-index subgroup of $(F_m)_{\ab}$.  A proper subgroup of a finitely generated free abelian group is always contained in a proper finite-index subgroup, so it follows that $\pi_m(H)=(F_m)_{\ab}$, again of rank $m$.

Thus in all cases $\pi_m(H)$ has rank $m$.  Since $H\cong F_n$ is generated by $n$ elements (namely, by $x_0,\ldots,x_{n-1}$; see Subsection~\ref{subsec:abelianization}), every quotient of $H$ is generated by at most $n$ elements.  The free abelian group of rank $m$ cannot be generated by fewer than $m$ elements, and hence $n\ge m$.
\end{proof}

We shall use the following Rubin-type conjugacy theorem for locally moving groups of homeomorphisms, due to Brum--Matte Bon--Rivas--Triestino.

\begin{theorem}[{\cite[Corollary~4.1.2]{BMBRT}}]\label{thm:rubin-locally-moving-body}
Let $I=(a,b)$ and let $G\le \Homeo_+(I)$ be locally moving.  Let
\[
        \alpha:G\hookrightarrow \Homeo_+(I)
\]
be an injective homomorphism, and assume that the subgroup $\alpha(G)$ acts minimally on $I$.  Assume also that some nontrivial element of $\alpha(G)$ has support bounded away from at least one endpoint of $I$.  Then there exists a homeomorphism $\phi:I\to I$ such that
\[
        \alpha(g)=\phi^{-1}g\phi
        \qquad(g\in G),
\]
where $G$ on the right-hand side is acting on $I$ by the given action.
\end{theorem}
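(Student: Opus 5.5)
The plan is to follow the Rubin-type strategy: recover the points of $I$ from the algebraic structure of $G$, namely from the subgroups of elements supported on initial segments. Since this statement is quoted from \cite{BMBRT}, I only outline the argument. For $x\in I$ write $G_{(a,x)}$ and $G_{(x,b)}$ for the rigid stabilizers of the two half-intervals, and set $A_x=\alpha(G_{(a,x)})$ and $B_x=\alpha(G_{(x,b)})$. If $x<y$, then every element of $G_{(a,x)}$ commutes with every element of $G_{(y,b)}$, since their supports are disjoint. Hence $A_x$ and $B_y$ are commuting subgroups of $\Homeo_+(I)$. The goal is to show that $A_x$ is supported on an interval $(a,\phi(x)]$ and $B_y$ on $[\phi(y),b)$. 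We then take $\phi$ as the conjugating homeomorphism. Because the paper composes left to right, the identity to check is $\alpha(g)=\phi^{-1}g\phi$.

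First I use the hypothesis that some nontrivial element of $\alpha(G)$ has support bounded away from an endpoint, say from $b$. Combined with minimality of $\alpha(G)$, this lets me conjugate that element within $\alpha(G)$ so that its support lies in an arbitrarily small neighbourhood of $a$. The structural tool is the standard interval-dynamics lemma. If $H,K\le\Homeo_+(I)$ commute, then $K$ permutes the components of $\supp(H)$ and preserves the closed set $\operatorname{Fix}(H)$. So if $H$ has a component of support $C$ that accumulates at $a$, then $K$ must preserve $C$.

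Using local movingness of $G$ at the source, $G_{(a,x)}$ contains elements moving points arbitrarily close to $x$ and close to $a$. From this I aim to prove a dichotomy: $\supp(A_x)$ and $\supp(B_y)$ are separated, with $A_x$ on the left and $B_y$ on the right. Concretely, I define
\[
\phi(x)=\sup\supp(A_x).
\]
I would show that $\phi(x)<b$, that $\phi$ is nondecreasing in $x$, and that $\inf\supp(B_x)=\phi(x)$. For $\phi(x)<b$, I expect the argument to go by contradiction. If $A_x$ had support reaching $b$, then commutation with $B_y$ for all $y>x$ would force $B_y$ to preserve the components of $\supp(A_x)$. Since $G$ is generated by $G_{(a,x)}$ and $G_{(y,b)}$ for $y$ close to $x$ (the usual fragmentation), the whole group $\alpha(G)$ would then preserve a proper closed set. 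That contradicts minimality.

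Next, equivariance comes out of the definition. Since $g^{-1}G_{(a,x)}g=G_{(a,g(x))}$ (up to the left-to-right convention), we get $\phi(g(x))=\alpha(g)(\phi(x))$. It remains to see that $\phi$ is a homeomorphism. The image of $\phi$ is an $\alpha(G)$-invariant set, and its closure is therefore all of $I$ by minimality; from this I would deduce that $\phi$ is continuous and surjective. For strict monotonicity: if $\phi(x)=\phi(y)$ with $x<y$, then $G_{(x,y)}$ would act trivially under $\alpha$, because its image is squeezed between the support of $A_x$ and the support of $B_y$. This contradicts injectivity of $\alpha$.

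The main obstacle is the separation step, namely showing that $A_x$ has support bounded away from $b$ rather than having support components interleaved with those of $B_y$. This is exactly where the hypothesis that some element of $\alpha(G)$ has support bounded away from an endpoint is needed. Without it, exotic actions such as non-semiconjugate minimal actions can occur, so here I would follow the argument of \cite{BMBRT} closely instead of improvising.
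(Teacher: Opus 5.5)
Your outline has a genuine gap at its only nontrivial step, and the argument you sketch for that step is wrong. The paper does not prove this statement (it imports it from \cite{BMBRT}), so deferring to that source is legitimate, but the sketch should not contain a false step. Everything hinges on the separation $\phi(x)=\sup\supp\alpha(G_{(a,x)})<b$, with $\supp\alpha(G_{(x,b)})$ lying to the right of $\phi(x)$.

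Your fragmentation argument for this fails. For $y>x$ the subgroups $G_{(a,x)}$ and $G_{(y,b)}$ commute because their supports are disjoint. They generate only $G_{(a,x)}\times G_{(y,b)}$, every element of which fixes $[x,y]$ pointwise, so they never generate $G$. Fragmentation needs overlapping intervals ($y<x$), but then you lose the commutation that made $\alpha(G_{(y,b)})$ permute the components of $\supp\alpha(G_{(a,x)})$. Even granting invariance, the closed set that $\alpha(G)$ would preserve is $\operatorname{Fix}\alpha(G_{(a,x)})$, and nothing prevents it from being empty. Most tellingly, the hypothesis on the bounded-support element is never used: the small-support conjugates from your first step play no further role. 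If the argument worked, it would show that \emph{every} faithful minimal action of a locally moving group is conjugate to the standard one. That contradicts the existence of the exotic ($\mathbb R$-focal) actions you yourself mention.

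The missing idea is how the hypothesis produces fixed points. Suppose $K\le\Homeo_+(I)$ commutes with some $h\ne 1$ whose support is bounded away from $b$. Then for $t\in\supp h$ the orbit $K\cdot t$ stays inside $\supp h$, so $\sup K\cdot t<b$ is a fixed point of $K$. Let $\alpha(g_0)$ be the given element and $N$ the normal closure of $g_0$ in $G$. Every element of $\alpha(N)$ is a product of conjugates of $\alpha(g_0)^{\pm1}$, so it still has support bounded away from $b$. Commutators $[g_0,f]$ with suitable $f\in G_{(y,b)}$, resp.\ $f\in G_{(a,y)}$, give nontrivial such elements inside some $G_{(x',b)}$, resp.\ $G_{(a,y')}$. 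Combine this with the equivariance $\operatorname{Fix}\alpha(G_{(a,g(x))})=\alpha(g)\bigl(\operatorname{Fix}\alpha(G_{(a,x)})\bigr)$ and its analogue for $G_{(x,b)}$. It follows that $\alpha(G_{(a,x)})$ and $\alpha(G_{(x,b)})$ have fixed points for every $x$.

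What then remains, and what your sketch does not address, is to show that one of these fixed-point sets is bounded away from an end of $I$. Then, for example, $x\mapsto\inf\operatorname{Fix}\alpha(G_{(a,x)})$ is a genuine monotone equivariant map. Once that is known, arguments like those in your last paragraph (density of the image by minimality, injectivity by faithfulness) finish the proof. If instead these fixed-point sets accumulate at both ends of $I$ for every $x$, the components of the sets $\supp\alpha(G_{(a,x)})$ form an $\alpha(G)$-invariant cross-free cover of $I$ by bounded intervals. That is the structure underlying $\mathbb R$-focal actions, and excluding it is where the machinery of \cite{BMBRT} is needed.
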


We now prove that the minimality hypothesis in Theorem~\ref{thm:rubin-locally-moving-body} is automatic for maximal copies of $F_n$ in $F_m$.

\begin{lemma}\label{lem:maximal-copy-minimal}
Let $n\ge m\ge 2$, and let $H\le F_m$ be a maximal subgroup of $F_m$ isomorphic to $F_n$. Then the action of $H$ on $(0,1)$ is minimal.
\end{lemma}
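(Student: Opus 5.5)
The plan is to argue by contradiction, using the rigid normal-subgroup structure of $F_n$. Suppose the action of $H$ on $(0,1)$ is not minimal, and fix a nonempty proper closed $H$-invariant subset $K\subsetneq(0,1)$. The basic group-theoretic fact I will use repeatedly follows from Subsection~\ref{subsec:abelianization}: every nontrivial normal subgroup of $F_n$ contains the infinite simple group $[F_n,F_n]$. Hence $F_n$ has no two nontrivial normal subgroups that commute, or that intersect trivially; otherwise $[F_n,F_n]$ would be abelian, or trivial.

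\emph{Step 1: $H$ equals the setwise stabilizer of $K$.} Let $M=\{g\in F_m\mid g(K)=K\}$. Then $H\le M$. Since the standard action of $F_m$ is minimal, $M\ne F_m$. By maximality, $H=M$. In particular, $H$ contains every element of $F_m$ whose support has closure inside a component $J$ of $(0,1)\setminus K$ (a ``gap''). Such elements exist and form a copy of a nontrivial group $F_m[J']$, because $J$ is a nonempty open interval and so contains an $m$-adic interval.

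\emph{Step 2: the kernel of the action on $K$.} Let $N=\{h\in H\mid h|_K=\id\}$, which is normal in $H$. By Step 1, $N$ contains nontrivial elements supported in gaps. Since $H\cong F_n$, we get $[H,H]\le N$, so $[H,H]$ fixes $K$ pointwise.

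\emph{Step 3: the contradiction.} Choose a gap $J$ and put
\[
        N_J=\{h\in N\mid \supp(h)\subseteq J\},
        \qquad
        N_J^{c}=\{h\in N\mid \supp(h)\cap J=\emptyset\}.
\]
Every element of $N$ fixes $K$ pointwise, hence preserves each gap. So $N$ splits as a subgroup of $N_J\times N_J^c$, and these two subgroups commute. I will pass to a normal-in-$H$ version. For the $H$-orbit $\mathcal O$ of $J$ among the gaps, let $A$ be the subgroup of $[H,H]$ of elements supported in $\bigcup_{J'\in\mathcal O}J'$, and let $B$ be the subgroup of $[H,H]$ of elements supported off that union. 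Both are normal in $H$, because $H$ permutes the gaps in $\mathcal O$ and $[H,H]\trianglelefteq H$. They commute, and $A\cap B=1$. Both are nontrivial: $A\supseteq[F_m[J'],F_m[J']]\neq1$ for a suitable $m$-adic $J'\subseteq J$, and $B$ is nontrivial as soon as there is a gap outside $\mathcal O$. This contradicts the fact above.

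The remaining and hardest case is when every gap lies in a single $H$-orbit $\mathcal O$. If $\mathcal O$ is a single gap $J$, then $J$ is $H$-invariant, so its endpoint in $(0,1)$ is a global fixed point $\alpha$. Then $H\le\Stab_{F_m}(\alpha)\ne F_m$, and maximality gives $H=\Stab_{F_m}(\alpha)$. This group contains the commuting normal subgroups consisting of elements supported in $(0,\alpha)$ and in $(\alpha,1)$, intersecting trivially and both nontrivial; this is impossible in $F_n$. If $\mathcal O$ has more than one gap, I would first try to replace the single gap $J$ by a proper $H$-invariant family of gaps: for instance, the gaps to the left versus the right of a point of $K$ that $H$ is forced to fix. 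Failing that, I would use the Cantor-type structure of $K$ together with the fact that $H$ acts on $K$ through the abelian quotient $H/[H,H]\cong\Z^n$. The aim is to produce a point of $K$ with finite $H$-orbit, and then apply the stabilizer argument to a finite-index subgroup. Making this last reduction rigorous is where I expect the main difficulty to lie.
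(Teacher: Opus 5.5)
Your Steps 1 and 2 are correct and coincide with the paper's argument. In Step 3, the case of a gap outside the orbit $\mathcal O$ and the case of exactly one gap are also handled correctly.

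\textbf{The gap.} What is missing is the case you flag yourself: infinitely many gaps forming a single $H$-orbit. This is not an exotic case; for a Cantor-type invariant set it is the typical one. Your proposed remedy does not work as stated. That $H$ acts on $K$ through an abelian quotient does not by itself force a finite orbit. Lifting a Denjoy counterexample together with the deck translation gives a $\Z^2$-action on the line with an exceptional minimal set and no fixed point. So some further input would be needed, and you have not identified it.

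\textbf{The missing idea.} Look for normal subgroups of $[H,H]$ rather than of $H$. Since $H\cong F_n$, the group $[H,H]\cong[F_n,F_n]$ is simple. By your Step 2 it fixes $K$ pointwise, so it preserves every gap individually, not just every $H$-orbit of gaps.

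Take two distinct gaps $I\neq J$, and let $N_I$, $N_J$ be the subgroups of $[H,H]$ consisting of elements supported in $I$, respectively in $J$. Then:
\begin{itemize}
\item both are normal in $[H,H]$, since $[H,H]$ preserves $I$ and $J$;
\item both are nontrivial, since they contain $[F_m[u],F_m[u]]$ for $m$-adic intervals $[u]$ inside the respective gaps (using $F_m[u]\le H$ from your Step 1);
\item $N_I\cap N_J=\{1\}$, because the supports are disjoint.
\end{itemize}
This contradicts the simplicity of $[H,H]$, with no orbit case distinction at all. This is exactly how the paper argues. The only case this does not cover is a single gap, which gives an $H$-fixed point in $(0,1)$; you already dispose of it with your point-stabilizer argument.
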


\begin{proof}
Assume, toward a contradiction, that the action is not minimal.  Since $H\cong F_n$ is finitely generated, there is a nonempty minimal $H$-invariant closed set
\(\Lambda\subseteq (0,1)\)
for the action of $H$; see \cite[Proposition~2.1.12]{NavasCircle}.  By assumption, $\Lambda\ne(0,1)$.

Let
\[
        K=\{g\in F_m\mid g(\Lambda)=\Lambda\}
\]
be the setwise stabilizer of $\Lambda$ in $F_m$.  Then $H\le K$.  Since the standard action of $F_m$ on $(0,1)$ is minimal, the set $\Lambda$ is not invariant under all of $F_m$, and hence $K<F_m$.  By maximality of $H$, we have $K=H$.

If $\Lambda$ is finite, then all elements of $H$ fix each point of $\Lambda$, because they preserve orientation.  Thus $H$ is contained in the stabilizer in $F_m$ of a point of $(0,1)$, and by maximality it would equal that point stabilizer.  This is impossible: point stabilizers in $F_m$ are not isomorphic to $F_n$; for instance, they contain nontrivial commuting normal subgroups, see also \cite{GSstabilizers} for the binary case.  Hence $\Lambda$ is infinite.

The complement $(0,1)\setminus\Lambda$ has infinitely many connected components.  Indeed, if it had only finitely many components, then their boundary points would form a nonempty finite $H$-invariant subset of $(0,1)$, reducing again to the point-stabilizer contradiction above.

Let
\[
        G=\{h\in H\mid h(x)=x\text{ for every }x\in\Lambda\}
\]
be the pointwise stabilizer of $\Lambda$ in $H$.  Then $G\triangleleft H$.  Let $I$ be a component of $(0,1)\setminus\Lambda$.  Then $I$ contains an $m$-adic interval $[u]$, for some word $u\in\W_m$.  Every element of $F_m[u]$ is supported in $I$ and hence fixes $\Lambda$ pointwise; in particular, it preserves $\Lambda$ setwise, so that $F_m[u]\le K=H$, and therefore $F_m[u]\le G$.  In particular, $G$ is nontrivial.  Since $H\cong F_n$ and every nontrivial normal subgroup of $F_n$ contains the derived subgroup, we have $[H,H]\leq G$.

Now choose two distinct components $I,J$ of $(0,1)\setminus\Lambda$.  Since $[H,H]\le G$ fixes $\Lambda$ pointwise, it fixes the endpoints of each component of $(0,1)\setminus\Lambda$ and hence preserves each component setwise.  It follows that
\[
        N_I=\{g\in [H,H]\mid \supp(g)\subseteq I\}
\]
is a normal subgroup of $[H,H]$, and similarly for $N_J$.  Both are nontrivial: with $[u]\subseteq I$ as above, the subgroup $[F_m[u],F_m[u]]$ is nontrivial, contained in $[H,H]$, and supported in $I$.  The subgroups $N_I$ and $N_J$ have disjoint supports and therefore intersect trivially, contradicting the simplicity of $[H,H]$.
\end{proof}

\begin{proposition}\label{prop:closed-maximal-copy-realized-by-conjugacy}
Let $n\ge m\ge 2$, and let $H\le F_m$ be a closed maximal subgroup isomorphic to $F_n$.  Then there exists a homeomorphism
\[
        \phi\in \Homeo_+(0,1)
\]
such that
\[
        H=F_n^\phi .
\]
Moreover, $\phi$ maps $n$-adic rationals bijectively to $m$-adic rationals and has an order-preserving lift
\[
        \widetilde\phi:\C_n\to\C_m .
\]
\end{proposition}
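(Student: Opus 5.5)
The plan is to apply the Rubin-type conjugacy theorem (Theorem~\ref{thm:rubin-locally-moving-body}) and then read off the lift from Lemma~\ref{lem:type-preservation-prelim}.

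\emph{Setting up the embedding.} Fix an isomorphism $F_n\to H$. This gives an injective homomorphism
\[
        \alpha:F_n\hookrightarrow \Homeo_+(0,1),
\]
with image $H\le F_m\le\Homeo_+(0,1)$. Here $G=F_n$ acts on $I=(0,1)$ by its standard action, which is locally moving by Subsection~\ref{subsec:interval-dynamics-prelim}.

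\emph{Checking the hypotheses.} Minimality of $\alpha(G)=H$ is exactly Lemma~\ref{lem:maximal-copy-minimal}, which uses that $H$ is maximal and $n\ge m$. It remains to find a nontrivial element of $H$ whose support is bounded away from an endpoint. I would take any nontrivial element $h\in[H,H]$. Since $H\le F_m$, the chain rule gives $\lambda_0(h)=\lambda_1(h)=0$. Elements of $F_m$ are piecewise linear with finitely many breakpoints, so $h$ is the identity near $0$ and near $1$. Thus $\supp(h)$ is bounded away from both endpoints. Such an $h$ exists because $[H,H]\cong[F_n,F_n]$ is nontrivial.

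\emph{Applying the theorem.} Theorem~\ref{thm:rubin-locally-moving-body} now yields a homeomorphism $\phi$ of $(0,1)$ with $\alpha(g)=\phi^{-1}g\phi$ for all $g\in F_n$, so $H=F_n^\phi$. I must make sure $\phi\in\Homeo_+(0,1)$. The cited theorem is stated for $\Homeo_+$ and should already produce an increasing $\phi$. If it only gave a homeomorphism, I would argue as follows. Suppose $\phi$ is decreasing, and let $r(x)=1-x$. Then $F_n^{r}=F_n$, because $r$ conjugates $F_n$ to itself: breakpoints in $\Z[1/n]$ and slopes that are powers of $n$ are preserved. Replacing $\phi$ by $r\phi$, which is increasing, gives the same conjugate $F_n^{r\phi}=(F_n^r)^\phi=F_n^\phi=H$.

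\emph{The lift.} With $\phi\in\Homeo_+(0,1)$, $F_n^\phi=H\le F_m$, and $H$ closed, Lemma~\ref{lem:type-preservation-prelim} (equivalently Corollary~\ref{cor:closed-iff-liftable-prelim}) shows that $\phi$ maps $n$-adic rationals bijectively onto $m$-adic rationals. It also gives the order-preserving lift $\widetilde\phi:\C_n\to\C_m$.

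The main obstacle is confirming that the hypotheses of the BMBRT theorem are met: minimality, which is handled by the earlier lemma, and the bounded-support element, which comes from the commutator subgroup. Everything else is direct citation.
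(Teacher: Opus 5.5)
Your proposal is correct and follows the paper's proof essentially step by step. It takes minimality from Lemma~\ref{lem:maximal-copy-minimal} and a nontrivial element of $[H,H]\le[F_m,F_m]$ for the bounded-support hypothesis of Theorem~\ref{thm:rubin-locally-moving-body}, precomposes with the reflection $x\mapsto 1-x$ to make $\phi$ increasing, and uses Lemma~\ref{lem:type-preservation-prelim} together with Corollary~\ref{cor:closed-iff-liftable-prelim} for the lift, exactly as the paper does.
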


\begin{proof}
Choose an abstract isomorphism
\[
        \alpha:F_n\to H\le \Homeo_+(0,1).
\]
The homomorphism $\alpha$ is injective, and by Lemma~\ref{lem:maximal-copy-minimal} the subgroup $\alpha(F_n)=H$ acts minimally on $(0,1)$.  By Subsection~\ref{subsec:interval-dynamics-prelim}, the standard action of $F_n$ is locally moving.

It remains only to verify the support hypothesis in Theorem~\ref{thm:rubin-locally-moving-body}.  The group $[H,H]$ is nontrivial, and since $H\le F_m$ we have
\([H,H]\le [F_m,F_m].\)
By the chain rule, every element of $[F_m,F_m]$ fixes a neighborhood of both endpoints $0$ and $1$.  Thus any nontrivial element of $[H,H]$ has support bounded away from the endpoints.
Theorem~\ref{thm:rubin-locally-moving-body} gives a homeomorphism $\phi\in\Homeo(0,1)$ such that $H=F_n^\phi$.

If $\phi$ is orientation-reversing, let
\[
        \iota:[0,1]\to[0,1],
        \qquad
        \iota(x)=1-x.
\]
The map $\iota$ normalizes $F_n$.  Replacing $\phi$ by $\iota\phi$ does not change the subgroup $F_n^\phi$, and the new conjugating homeomorphism is orientation-preserving.  Hence we may assume $\phi\in\Homeo_+(0,1)$.

Since $H=F_n^\phi$ is closed in $F_m$, Lemma~\ref{lem:type-preservation-prelim} and Corollary~\ref{cor:closed-iff-liftable-prelim} imply that $\phi$ maps the $n$-adic rationals bijectively onto the $m$-adic rationals and has an order-preserving lift $\widetilde\phi:\C_n\to\C_m$.
\end{proof}

\section{Cantor-space conjugators and semi-synchronization}
\label{sec:semi-sync-characterization}

In this section we characterize the homeomorphisms $\psi:\C_n\to\C_m$ for which
\(F_n^\psi\le F_m .\)
The first step is to prove that such a $\psi$ induces a homeomorphism of the interval quotients, and hence is either order-preserving or order-reversing on the Cantor spaces.  Once this is known, Lemma~\ref{lem:closure-conjugacy-prelim} implies that $F_n^\psi$ is closed in $F_m$.  Thus the subgroups studied in this section are precisely the lifted Cantor-space versions of interval conjugates of $F_n$ which land as closed subgroups of $F_m$.

The second step is to prove that $\psi$ has only finitely many local actions.  The proof follows the strategy of Bleak--Cameron--Maissel--Navas--Olukoya for $G_{n,r}$ and of Olukoya for $T_{n,r}$; see \cite[Section~6]{AutG} and \cite[Corollary~4.16]{AutT}.  We include the details because the endpoint-preserving groups $F_n$ require two additional modifications: the boundary rays $0^\infty$ and $(n-1)^\infty$ must be treated separately, and branch-pair existence for inner intervals is governed by the congruence invariant $\sigma_n$.

Recall that, for a word $u=u_1\cdots u_r\in\W_n$, we write
\[
        \sigma_n(u)=\sum_{j=1}^r u_j \pmod {n-1}.
\]
We also write
\[
        \W_n^{\mathrm{in}}
        =
        \{u\in\W_n\setminus\{\eps\}\mid u\ne 0^k\text{ and }u\ne (n-1)^k\text{ for all }k\ge1\}
\]
for the set of inner words.  If $u,v\in\W_n$, we write
\([u]\prec [v]\)
when the interval $[u]$ lies strictly to the left of $[v]$, that is, when $\max [u]<\min [v]$.

\begin{lemma}\label{lem:cantor-conjugator-descends}
Let $\psi:\C_n\to\C_m$ be a homeomorphism such that $F_n^\psi\le F_m$.  Then $\psi$ satisfies the equivalent conditions of Lemma~\ref{lem:lift-order-equivalence-prelim}.  In particular, $\psi$ either preserves or reverses the lexicographic order, it descends to a homeomorphism of $[0,1]$, and $F_n^\psi$ is closed in $F_m$.
\end{lemma}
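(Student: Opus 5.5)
The plan is to verify condition (1) of Lemma~\ref{lem:lift-order-equivalence-prelim}: if $\omega\sim_{\rho_n}\eta$ then $\psi(\omega)\sim_{\rho_m}\psi(\eta)$. The other conclusions follow from that lemma and from Lemma~\ref{lem:closure-conjugacy-prelim} applied to $H=F_n$. The strategy is dynamical. We characterize nontrivial $\rho$-fibers, and the points in them, intrinsically through the action of $F_n$ (resp.\ $F_m$) on the Cantor space. Then we transport this characterization through $\psi$, using $F_n^\psi\le F_m$.

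First we identify what is intrinsic. Fix an $n$-adic rational $\alpha\in(0,1)$ with expansions $\omega=ui(n-1)^\infty$ and $\eta=u(i+1)0^\infty$. Every $f\in F_n$, viewed on $\C_n$, maps the pair $\{\omega,\eta\}$ onto a nontrivial $\rho_n$-fiber. We would like a property of an ordered pair of points that holds for $(\omega,\eta)$ and forces its image to be a $\rho_m$-fiber.

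The natural candidate uses germs of one-sided elements. There is $g\in F_n$ supported on $[u i]$ (in the Cantor model, on $U_{ui}$) that fixes $\omega$ and has nontrivial germ at $\omega$, for instance with a branch pair $ui(n-1)^k\to ui(n-1)^{k+1}$ near $\omega$. This $g$ is the identity on the complementary clopen set, which contains $\eta$. Conjugating, $g^\psi\in F_m$ is a prefix replacement fixing $\psi(\omega)$, acting nontrivially in every neighborhood of $\psi(\omega)$, and equal to the identity on the clopen set $\psi(\C_n\setminus U_{ui})\ni\psi(\eta)$.

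The key point is a contracting/expanding property. An element of $F_m$ that fixes a point $\zeta$ of $\C_m$ and is nontrivial near $\zeta$ acts there as $pw\xi\mapsto pw'\xi$. Now use the symmetric element $h$ supported on $U_{u(i+1)}$ with nontrivial germ at $\eta$; the elements $g$ and $h$ have disjoint supports. In the interval model, the image $\bar g$ of $g^\psi$ is a PL map in $F_m$ whose support is a union of intervals with $\rho_m(\psi(\omega))$ as an endpoint, while $\bar h$ has support with $\rho_m(\psi(\eta))$ as an endpoint.

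The cleaner route is to argue with closed supports in $[0,1]$. Project the clopen sets $\psi(U_{ui})$ and $\psi(U_{u(i+1)})$ to $[0,1]$ via $\rho_m$; each projects to a finite union of closed $m$-adic intervals. Their union is all of $\rho_m(\psi(U_u))$. Take a tree diagram in $F_n$ that cyclically moves points across $\alpha$, for example an element with branch pair $u i \to u$-type pieces straddling $\alpha$. Because every element of $F_n$ is continuous on $[0,1]$, while elements of $F_m$ acting on $\C_m$ are continuous on $[0,1]$ only across $\rho_m$-fibers, a sequence $\omega_k\to\omega$ from the left and $\eta_k\to\eta$ from the right in the same $F_n$-orbit must have images behaving compatibly. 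Concretely, suppose $\psi(\omega)\not\sim_{\rho_m}\psi(\eta)$. Then these two points are separated in $[0,1]$ by an open interval $J$. By surjectivity of $\psi$, some cylinder $U_v$ with $\rho_m(U_v)\subseteq J$ equals $\psi(A)$ for a clopen $A\subseteq\C_n$. Take $f\in F_n$ with germ at $\alpha$ that translates a left neighbourhood of $\alpha$ across $\alpha$ to a right neighbourhood, i.e.\ $f(\omega)=\omega'$ near $\eta$'s side. Then $f^\psi\in F_m$ is order-preserving on $[0,1]$, yet it moves points just left of $J$ to points just right of $J$ while fixing $J$'s preimage region $A$ pointwise, if we choose $f$ supported away from $A$. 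That contradicts monotonicity of $f^\psi$ as an interval homeomorphism.

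The main obstacle is making this last step rigorous. We must choose $A$ disjoint from a neighborhood of $\{\omega,\eta\}$, which is possible since $A$ is clopen and avoids both points. We must also choose $f\in F_n$ supported in a small $n$-adic interval around $\alpha$ missing $\rho_n(A)$, moving some $\omega_k$ (left of $\alpha$) to some $\eta_k$ (right of $\alpha$). Then the images $\psi(\omega_k)$ and $\psi(\eta_k)$ lie on opposite sides of $J$ in $[0,1]$, while $f^\psi$ is the identity on $U_v$ between them, contradicting that $f^\psi$ is an increasing homeomorphism of $[0,1]$. This yields condition (1), and Lemma~\ref{lem:lift-order-equivalence-prelim} finishes the proof.
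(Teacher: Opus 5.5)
Your argument is correct once the exploratory middle part is discarded: the germ discussion, the ``contracting/expanding'' remark and the ``$ui\to u$-type'' element play no role. It takes a genuinely different route from the paper.

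You verify condition~(1) of Lemma~\ref{lem:lift-order-equivalence-prelim} directly, by monotonicity. Suppose the adjacent pair $\omega\sim_{\rho_n}\eta$ over $\alpha$ had $x=\rho_m(\psi(\omega))\neq y=\rho_m(\psi(\eta))$. An $m$-adic interval $[v]$ strictly between $x$ and $y$ pulls back to a clopen set $A=\psi^{-1}(U_v)$ missing $\omega$ and $\eta$. An element $f\in F_n$ with $f(\alpha)\neq\alpha$, supported in $I=[ui(n-1)^k]\cup[u(i+1)0^k]$, then conjugates to an element of $F_m$ that fixes $[v]$ pointwise. Yet it carries a point from one side of $[v]$ to the other, which is impossible for an increasing homeomorphism.

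To make this airtight, state the choice of $k$ explicitly:
\begin{itemize}
\item $U_{ui(n-1)^k}$ and $U_{u(i+1)0^k}$ must miss $A$, so that $f$, which on $\C_n$ is the identity off these two cylinders, fixes $A$ pointwise.
\item By continuity of $\psi$ at $\omega$ and $\eta$, the images of these cylinders must project to the $x$-side and the $y$-side of $[v]$, respectively.
\end{itemize}
With this choice you can simply take $\omega_k=\omega$: when $f(\alpha)>\alpha$, the point $f(\omega)$ lies in $U_{u(i+1)0^k}$.

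The paper instead proves condition~(2) for $\psi^{-1}$, using that elements of $F_m$ preserve $\sim_{\rho_m}$. It applies the branch-pair criterion, with the $\sigma_n$-residue bookkeeping, to build $f\in F_n$ sending the two preimages into cylinders whose images have disjoint projections. It also needs separate arguments when a preimage is $0^\infty$ or $(n-1)^\infty$.

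Your route is shorter and more elementary. Nontrivial $\rho_n$-fibers always lie over interior points, so there are no endpoint cases, and you need only the local-moving property of $F_n$ and the monotonicity of elements of $F_m$. The paper's route uses heavier input, but it exercises the branch-pair technique that it reuses right afterwards in Proposition~\ref{prop:uniform-tail-agreement}.
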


\begin{proof}
The proof is analogous to the quotient-preservation argument for homeomorphisms normalizing the groups $T_{n,r}$; compare the construction of lifts from circle homeomorphisms in \cite[Section~3]{AutT}.  By Lemma~\ref{lem:lift-order-equivalence-prelim}, it is enough to prove that $\psi^{-1}$ maps $\rho_m$-equivalent points to $\rho_n$-equivalent points.

Suppose not.  Then there exist $\alpha,\beta\in\C_m$ such that
\[
        \alpha\sim_{\rho_m}\beta
        \qquad\text{but}\qquad
        \psi^{-1}(\alpha)\not\sim_{\rho_n}\psi^{-1}(\beta).
\]
We first dispose of the cases in which one of the preimages is an endpoint of
$\C_n$, since endpoints are not contained in any inner cylinder.

Suppose first that
\[
        \{\psi^{-1}(\alpha),\psi^{-1}(\beta)\}=\{0^\infty,(n-1)^\infty\}.
\]
Since $\alpha\ne\beta$ and $\alpha\sim_{\rho_m}\beta$, the points $\alpha$ and
$\beta$ are the two $m$-ary expansions of the $m$-adic point
$\rho_m(\alpha)\in(0,1)$; in particular $0^\infty\notin\{\alpha,\beta\}$,
because the $\rho_m$-fiber of $0$ is the singleton $\{0^\infty\}$.  Hence
$x=\psi^{-1}(0^\infty)$ is not an endpoint of $\C_n$, so
$\rho_n(x)\in(0,1)$.  Every $g\in F_m$ fixes $0^\infty$; applying this to
$g=\psi^{-1}f\psi$ for $f\in F_n$ gives
\[
        \psi(f(x))=g(0^\infty)=0^\infty=\psi(x),
\]
so $f(x)=x$ for every $f\in F_n$.  This is impossible, since some element of
$F_n$ moves the point $\rho_n(x)\in(0,1)$, and hence moves $x$.

Suppose next that exactly one of the two preimages is an endpoint; after
interchanging $\alpha$ and $\beta$ if necessary, say
$\psi^{-1}(\alpha)\in\{0^\infty,(n-1)^\infty\}$ while $\psi^{-1}(\beta)$ is
not an endpoint.  Choose an inner word $v\in\W_n^{\mathrm{in}}$ with
$\psi^{-1}(\beta)\in U_v$.  Choose $\gamma\in\C_m$ with
$\rho_m(\gamma)\ne\rho_m(\alpha)$ such that $y=\psi^{-1}(\gamma)$ is not an
endpoint of $\C_n$, and choose a prefix $w$ of $\gamma$ long enough that
$\rho_m(\alpha)\notin[w]$.  By continuity, there is an inner word
$q'\in\W_n^{\mathrm{in}}$ with $y\in U_{q'}$ and $\psi(U_{q'})\subseteq U_w$.
Replacing $q'$ by an inner extension in the appropriate congruence class, we
obtain an inner word $q\in\W_n^{\mathrm{in}}$ with $U_q\subseteq U_{q'}$, so
that $\psi(U_q)\subseteq U_w$, and with $\sigma_n(q)=\sigma_n(v)$.  By
Lemma~\ref{lem:branch-pair-criterion-prelim}, there is $f\in F_n$ with the
branch pair $v\to q$.  Put $g=\psi^{-1}f\psi\in F_m$.  Since $f$ fixes the
endpoints of $\C_n$, we get $g(\alpha)=\alpha$, while
$g(\beta)=\psi(f(\psi^{-1}(\beta)))\in\psi(U_q)\subseteq U_w$, so
\[
        \rho_m(g(\beta))\in[w]
        \qquad\text{while}\qquad
        \rho_m(g(\alpha))=\rho_m(\alpha)\notin[w].
\]
Thus $g(\alpha)\not\sim_{\rho_m}g(\beta)$, contradicting the fact that $g$
preserves $\sim_{\rho_m}$.

We may therefore assume that neither preimage is an endpoint of $\C_n$.
After interchanging $\alpha$ and $\beta$ if necessary, choose inner words $u,v\in\W_n^{\mathrm{in}}$ with $[u]\prec[v]$ such that
\[
        \psi^{-1}(\alpha)\in U_u,
        \qquad
        \psi^{-1}(\beta)\in U_v.
\]

We now choose target branch words for a contradiction.  Since cylinders form a basis and extensions of an inner word can be chosen in any prescribed congruence class modulo $n-1$, we may choose inner words $p,q\in\W_n^{\mathrm{in}}$ such that
\[
        [p]\prec[q],
        \qquad
        \sigma_n(p)=\sigma_n(u),
        \qquad
        \sigma_n(q)=\sigma_n(v),
\]
and such that $\psi(U_p)$ and $\psi(U_q)$ are contained in two $m$-ary cylinders whose projections to $[0,1]$ are disjoint.  Indeed, choose two non-endpoint points of $\C_n$ in increasing order whose images are not $\rho_m$-equivalent, take small source cylinders around them whose images lie in disjoint $m$-ary cylinders, and then refine inside these cylinders to achieve the required congruence classes.

By Lemma~\ref{lem:branch-pair-criterion-prelim}, there exists $f\in F_n$ with branch pairs
\(u\to p, \qquad v\to q.\)
Set
\(g=\psi^{-1}f\psi .\)
By assumption $g\in F_m$.  Since $\psi^{-1}(\alpha)\in U_u$ and $\psi^{-1}(\beta)\in U_v$, we have
\[
        g(\alpha)=\psi(f(\psi^{-1}(\alpha)))\in \psi(U_p),
\]
and similarly
\[
        g(\beta)=\psi(f(\psi^{-1}(\beta)))\in \psi(U_q).
\]
The sets $\psi(U_p)$ and $\psi(U_q)$ are contained in $m$-ary cylinders whose projections to $[0,1]$ are disjoint, so $g(\alpha)$ and $g(\beta)$ are not $\rho_m$-equivalent.  This contradicts the fact that every element of $F_m$ preserves the equivalence relation $\sim_{\rho_m}$.

Therefore $\psi^{-1}$ maps $\rho_m$-equivalent points to $\rho_n$-equivalent points.  Lemma~\ref{lem:lift-order-equivalence-prelim} implies that $\psi$ descends to a homeomorphism of the interval and is either order-preserving or order-reversing.  Since $F_n$ is closed in itself and $F_n^\psi\le F_m$, Lemma~\ref{lem:closure-conjugacy-prelim} implies that $F_n^\psi$ is closed in $F_m$.
\end{proof}

The next two lemmas analyze the local actions of $\psi$ along the two boundary rays $0^\infty$ and $(n-1)^\infty$.  They show that, after sufficiently many repetitions of the initial boundary letter, reading one more boundary letter changes only the finite output prefix and not the induced local action.

\begin{lemma}\label{lem:zero-ray-prefix}
Let $\psi:\C_n\to\C_m$ be a homeomorphism such that $F_n^\psi\le F_m$.  Then there exist $D_0\ge0$ and $k>0$ such that, for all $d\ge D_0$ and all $\omega\in\C_n$, the following hold.
\begin{enumerate}[label=(\arabic*)]
    \item If $\psi$ is order-preserving, then
    \[
        \psi(0^{d+1}\omega)=0^k\psi(0^d\omega).
    \]
    \item If $\psi$ is order-reversing, then
    \[
        \psi(0^{d+1}\omega)=(m-1)^k\psi(0^d\omega).
    \]
\end{enumerate}
Consequently,
\[
        \psi_{0^{d+1}}=\psi_{0^d}
        \qquad(d\ge D_0),
\]
and hence, for all $r\ge D_0$ and all $w\in\W_n$,
\[
        \psi_{0^r w}=\psi_{0^{D_0}w}.
\]
\end{lemma}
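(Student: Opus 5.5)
The plan is to use the element $x_0$-type generator of $F_n$ that acts near $0$ by the prefix replacement $0\eta\mapsto 00\eta$ (or its inverse), conjugate it by $\psi$, and exploit the fact that the conjugate lies in $F_m$. By Lemma~\ref{lem:cantor-conjugator-descends}, $\psi$ preserves or reverses order. In the order-preserving case $\psi(0^\infty)=0^\infty$, and in the order-reversing case $\psi(0^\infty)=(m-1)^\infty$, since endpoints must go to endpoints.

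First I fix $f\in F_n$ with a branch pair $0\to 00$, i.e.\ $f(0\omega)=00\omega$ for all $\omega$. Such an $f$ exists, for instance $x_0^{-1}$ suitably generalized to $n$-ary carets. Then $g=\psi^{-1}f\psi\in F_m$ satisfies $g(\psi(0\omega))=\psi(00\omega)$. In the order-preserving case $g$ fixes $0^\infty$, and being in $F_m$ it is a prefix replacement. Near $0^\infty$, its reduced diagram has a leftmost branch pair $0^a\to 0^b$, so that $g(0^a\xi)=0^b\xi$ for all $\xi$. Because $f$ contracts toward $0$ (its interval slope at $0^+$ is $1/n$), $g$ also contracts toward $0$, which gives $b>a$; put $k=b-a>0$.

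Next I choose $D_0$ so that $\psi(U_{0^{D_0}})\subseteq U_{0^a}$, which is possible by continuity of $\psi$ at $0^\infty$. For $d\ge D_0$ and any $\omega$, the point $\psi(0^d\omega)$ lies in $U_{0^a}$, so we can write $\psi(0^d\omega)=0^a\xi$. Then
\[
\psi(0^{d+1}\omega)=\psi(f(0^d\omega))=g(\psi(0^d\omega))=0^b\xi=0^k\psi(0^d\omega).
\]
Here $f(0^d\omega)=0^{d+1}\omega$ because $d\ge1$; to guarantee this I take $D_0\ge1$. The order-reversing case is identical with $0^a\to 0^b$ replaced by $(m-1)^a\to(m-1)^b$, giving the prefix $(m-1)^k$.

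For the consequences: the identity shows $\psi(U_{0^{d+1}})=0^k\psi(U_{0^d})$, so $\theta_\psi(0^{d+1})=0^k\theta_\psi(0^d)$, and cancelling common prefixes yields $\psi_{0^{d+1}}=\psi_{0^d}$. Induction then gives $\psi_{0^r}=\psi_{0^{D_0}}$ for $r\ge D_0$, and the cocycle identity $\psi_{0^rw}=(\psi_{0^r})_w$ finishes the proof.

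The main obstacle is the sign of $k$, i.e.\ ruling out $b\le a$. If $b=a$, then $g$ is the identity near $0^\infty$. But then $f=\psi g\psi^{-1}$ would be the identity on the neighborhood $\psi^{-1}(U_{0^a})$ of $0^\infty$, which is false. If $b<a$, then $g$ expands near $0$, whereas $f$ contracts. Since $\psi$ fixes $0^\infty$ and preserves order, $g^j(\psi(\omega))=\psi(f^j(\omega))\to 0^\infty$ for $\omega\in U_0$. Yet $g^j$ pushes points of $U_{0^a}\setminus\{0^\infty\}$ out of $U_{0^a}$, a contradiction. I expect this comparison of attracting behavior to be the only step requiring care.
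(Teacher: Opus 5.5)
Your proposal is correct and follows the paper's proof essentially step by step. Both conjugate an element $f$ with branch pair $0\to 00$, read off the endpoint germ $0^a\to 0^b$ of $g=\psi^{-1}f\psi$, get $b>a$ from the fact that $f^j$ pushes points toward $0^\infty$, choose $D_0$ by continuity, and cancel roots to conclude that the local actions agree. Your explicit choice $D_0\ge1$ and your separate treatment of the case $b=a$ are small pieces of extra care that the paper leaves implicit.
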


\begin{proof}
Let $f\in F_n$ be an element with the branch pair $0\to 00$, so that
\[
        f(0\omega)=00\omega
        \qquad(\omega\in\C_n).
\]
Set $g=\psi^{-1}f\psi\in F_m$.

Assume first that $\psi$ is order-preserving.  Then $\psi(0^\infty)=0^\infty$.  Since $g\in F_m$ fixes $0^\infty$, there exist integers $A,B\ge0$ such that
\[
        g(0^A\eta)=0^B\eta
        \qquad(\eta\in\C_m).
\]
For every $\eta\in U_0\setminus\{0^\infty\}$, the sequence $f^j(\eta)$ converges to $0^\infty$.  Hence $g^j(\psi(\eta))=\psi(f^j(\eta))$ converges to $0^\infty$.  It follows that $B>A$; otherwise the local prefix replacement of $g$ near $0^\infty$ would not move all sufficiently near points toward $0^\infty$.  Let $k=B-A$.

Choose $D_0$ such that $\psi(U_{0^{D_0}})\subseteq U_{0^A}$.  For every $d\ge D_0$ and every $\omega\in\C_n$, we have $\psi(0^d\omega)\in U_{0^A}$, and therefore
\[
        \psi(0^{d+1}\omega)
        =\psi(f(0^d\omega))
        =g(\psi(0^d\omega))
        =0^k\psi(0^d\omega).
\]

The order-reversing case is identical, except that $\psi(0^\infty)=(m-1)^\infty$ and the local prefix replacement of $g$ is taken at the right endpoint.  Thus, for some $A<B$,
\[
        g((m-1)^A\eta)=(m-1)^B\eta,
\]
and the same conjugacy calculation gives
\[
        \psi(0^{d+1}\omega)=(m-1)^{B-A}\psi(0^d\omega)
\]
for all sufficiently large $d$.

It remains to pass from the displayed identities to local actions.  In either case the identity has the form
\[
        \psi(0^{d+1}\omega)=a^k\psi(0^d\omega)
        \qquad(\omega\in\C_n),
\]
where $a$ is either $0$ or $m-1$.  Writing both sides in terms of roots and local actions gives
\[
        \theta_\psi(0^{d+1})\psi_{0^{d+1}}(\omega)
        =a^k\theta_\psi(0^d)\psi_{0^d}(\omega).
\]
Since $\theta_\psi(0^{d+1})$ is the greatest common prefix of the set on the right as $\omega$ varies, and since $\Root(\psi_{0^d}(\C_n))=\eps$, we have
\[
        \theta_\psi(0^{d+1})=a^k\theta_\psi(0^d).
\]
Cancelling this common prefix gives $\psi_{0^{d+1}}=\psi_{0^d}$.  The final assertion follows by repeated use of the cocycle identity for local actions from Subsection~\ref{subsec:local-actions-prelim}.
\end{proof}

\begin{lemma}\label{lem:one-ray-prefix}
Let $\psi:\C_n\to\C_m$ be a homeomorphism such that $F_n^\psi\le F_m$.  Then there exist $D_1\ge0$ and $k>0$ such that, for all $d\ge D_1$ and all $\omega\in\C_n$, the following hold.
\begin{enumerate}[label=(\arabic*)]
    \item If $\psi$ is order-preserving, then
    \[
        \psi((n-1)^{d+1}\omega)=(m-1)^k\psi((n-1)^d\omega).
    \]
    \item If $\psi$ is order-reversing, then
    \[
        \psi((n-1)^{d+1}\omega)=0^k\psi((n-1)^d\omega).
    \]
\end{enumerate}
Consequently,
\[
        \psi_{(n-1)^{d+1}}=\psi_{(n-1)^d}
        \qquad(d\ge D_1),
\]
and hence, for all $r\ge D_1$ and all $w\in\W_n$,
\[
        \psi_{(n-1)^r w}=\psi_{(n-1)^{D_1}w}.
\]
\end{lemma}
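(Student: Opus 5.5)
The plan is to mirror the proof of Lemma~\ref{lem:zero-ray-prefix}, with the left boundary ray replaced by the right one. First fix $f\in F_n$ with the branch pair $(n-1)\to(n-1)(n-1)$, so that $f((n-1)\omega)=(n-1)(n-1)\omega$ for all $\omega\in\C_n$. Then set $g=\psi^{-1}f\psi$. By hypothesis $g\in F_m$, and by Lemma~\ref{lem:cantor-conjugator-descends} the map $\psi$ either preserves or reverses the lexicographic order.

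Suppose first that $\psi$ is order-preserving. Then $\psi((n-1)^\infty)=(m-1)^\infty$, and since $g\in F_m$ fixes the right endpoint, there are $A,B\ge0$ with $g((m-1)^A\eta)=(m-1)^B\eta$ for all $\eta\in\C_m$. Every $\eta\in U_{n-1}\setminus\{(n-1)^\infty\}$ satisfies $f^j(\eta)\to(n-1)^\infty$. By continuity of $\psi$, the $g$-orbits of points $\psi(\eta)$ near $(m-1)^\infty$ therefore converge to $(m-1)^\infty$. This rules out $B\le A$: if $B=A$, $g$ is the identity near the endpoint, and if $B<A$, $g$ pushes points away from it. Hence $k=B-A>0$. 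Next choose $D_1$ with $\psi(U_{(n-1)^{D_1}})\subseteq U_{(m-1)^A}$. For $d\ge D_1$ we then get $\psi((n-1)^{d+1}\omega)=g(\psi((n-1)^d\omega))=(m-1)^k\psi((n-1)^d\omega)$.

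In the order-reversing case, $\psi((n-1)^\infty)=0^\infty$, and $g$ near $0^\infty$ has the form $0^A\eta\mapsto0^B\eta$. The same attraction argument gives $B>A$ and yields (2) with $k=B-A$.

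Finally, pass to local actions exactly as in Lemma~\ref{lem:zero-ray-prefix}. Write $\theta_\psi((n-1)^{d+1})\psi_{(n-1)^{d+1}}(\omega)=a^k\theta_\psi((n-1)^d)\psi_{(n-1)^d}(\omega)$, with $a\in\{0,m-1\}$. Since $\Root(\psi_{(n-1)^d}(\C_n))=\eps$, the greatest common prefix of the right-hand side is $a^k\theta_\psi((n-1)^d)$. Cancelling it gives $\psi_{(n-1)^{d+1}}=\psi_{(n-1)^d}$, and the cocycle identity then gives $\psi_{(n-1)^rw}=\psi_{(n-1)^{D_1}w}$.

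The only step needing care is showing $B>A$. I would do it precisely: if $B\le A$, pick $\eta=(m-1)^{A}\zeta$ with $\zeta\ne(m-1)^\infty$. Then $g^j(\eta)$ stays outside a fixed neighbourhood of $(m-1)^\infty$ (or stays constant when $B=A$), contradicting convergence. Alternatively, one could deduce the lemma from Lemma~\ref{lem:zero-ray-prefix} by conjugating by the order-reversing symmetries $i\mapsto n-1-i$ and $i\mapsto m-1-i$, which normalize $F_n$ and $F_m$.
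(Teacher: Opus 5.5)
Your proposal is correct and follows the paper's route: the paper proves this lemma as the right-endpoint analogue of Lemma~\ref{lem:zero-ray-prefix}, using $f\in F_n$ with branch pair $(n-1)\to(n-1)(n-1)$ and running the same endpoint-prefix calculation and root-cancellation on $g=\psi^{-1}f\psi\in F_m$. Your explicit monotonicity argument for $B>A$, and the alternative reduction via the digit-reversal symmetries, fill in detail that the paper leaves implicit but do not change the approach.
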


\begin{proof}
This is the right-endpoint analogue of Lemma~\ref{lem:zero-ray-prefix}.  One uses an element $f\in F_n$ with branch pair
\((n-1)\to (n-1)(n-1),\)
so that $f((n-1)\omega)=(n-1)(n-1)\omega$, and applies the same endpoint-prefix calculation to $g=\psi^{-1}f\psi\in F_m$.
\end{proof}

To prove that $\psi$ has finitely many local actions, it remains to control local actions at words which do not lie entirely on one of the two boundary rays.

\begin{proposition}\label{prop:uniform-tail-agreement}
Let $\psi:\C_n\to\C_m$ be a homeomorphism such that $F_n^\psi\le F_m$.  For every pair $u,v\in\W_n^{\mathrm{in}}$ with
\[
        \sigma_n(u)=\sigma_n(v),
\]
there exists $k=k(u,v)$ such that
\[
        \psi_{uw}=\psi_{vw}
\]
for every $w\in\W_n$ with $|w|\ge k$.
\end{proposition}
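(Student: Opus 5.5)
The plan is to conjugate a single element of $F_n$ with the branch pair $u\to v$ into $F_m$, and then compare local actions on the two sides of the resulting identity.

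By Lemma~\ref{lem:branch-pair-criterion-prelim}, since $u,v$ are inner and $\sigma_n(u)=\sigma_n(v)$, there is $f\in F_n$ with $f(u\omega)=v\omega$ for all $\omega\in\C_n$. Put $g=\psi^{-1}f\psi\in F_m$. Then for every $\omega$,
\[
        g(\psi(u\omega))=\psi(v\omega),
\]
that is,
\[
        g\bigl(\theta_\psi(u)\psi_u(\omega)\bigr)=\theta_\psi(v)\psi_v(\omega).
\]
Since $g\in F_m$, it is a finite prefix replacement. Fix a tree diagram for $g$ with branch pairs $a_j\to b_j$; its domain branches $a_j$ form a complete prefix code $P$. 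The set $\psi(U_u)$ is clopen and $\psi$ is continuous, so there is $k$ such that for every $w$ with $|w|\ge k$, the cylinder $\psi(U_{uw})$ lies inside a single cylinder $U_{a_j}$. To get this, cover the compact set $\psi(U_u)$ by the finitely many clopen pieces $\psi(U_u)\cap U_{a_j}$ and use uniform continuity.

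Next I use the fact that $\theta_\psi(uw)=\Root(\psi(U_{uw}))$. The length of this root tends to infinity with $|w|$, uniformly in $w$. This holds because the cylinders $U_{uw}$ shrink uniformly and $\psi$ is uniformly continuous, so their images have diameter tending to $0$. Hence, enlarging $k$ if needed, for $|w|\ge k$ the word $a_j$ is a prefix of $\theta_\psi(uw)$; write $\theta_\psi(uw)=a_j r$. Then
\[
        \psi(vw\omega)=g(\psi(uw\omega))=b_j r\,\psi_{uw}(\omega).
\]
The greatest common prefix of the right-hand side over $\omega$ is exactly $b_jr$, because $\Root(\psi_{uw}(\C_n))=\eps$. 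So $\theta_\psi(vw)=b_jr$, and cancelling this prefix gives $\psi_{vw}=\psi_{uw}$. This is the same cancellation already used at the end of Lemma~\ref{lem:zero-ray-prefix}.

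The only step needing care is the uniformity: one $k$ must work for all $w$ of length at least $k$ at once. This comes from compactness of $U_u$, together with finiteness of $P$ and uniform continuity of $\psi$. Note also that the argument does not need $\psi$ to be order-preserving, since $g$ acts by prefix replacement on all of $\C_m$. It does not need the lemma to be restricted to a nonendpoint region either, since the replacement $a_j\to b_j$ holds on the whole cylinder $U_{a_j}$. Finally, once $\psi_{uw}=\psi_{vw}$ holds for every $w$ of length exactly $k$, the cocycle identity $f_{uv}=(f_u)_v$ propagates it to all longer $w$, which is another way to get the full conclusion.
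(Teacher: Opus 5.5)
Your proposal is correct and follows essentially the same argument as the paper: conjugate an element with branch pair $u\to v$ into $F_m$, use compactness and finiteness of the domain code of $g$ to place every $\psi(U_{uw})$, $|w|\ge k$, inside a single domain cylinder $U_{a_j}$, and then cancel the common prefix $b_jr$. The separate step about root lengths tending to infinity is not needed. The inclusion $\psi(U_{uw})\subseteq U_{a_j}$ already makes $a_j$ a prefix of $\theta_\psi(uw)$.
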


\begin{proof}
This proposition is the analogue, in the endpoint-preserving setting of $F_n$, of the tail-agreement phenomenon used for $G_{n,r}$ in \cite[Proposition~6.6--Corollary~6.16]{AutG}.  We give a direct proof adapted to $F_n$; the branch-pair criterion lets us replace the more elaborate ``almost the same fashion'' machinery by a single element with a prescribed inner branch pair.

By Lemma~\ref{lem:branch-pair-criterion-prelim}, there exists an element $f\in F_n$ with branch pair
\(u\to v.\)
Let
\(g=\psi^{-1}f\psi\in F_m.\)
Choose a finite $m$-ary tree diagram for $g$, and let $P$ be the finite complete prefix code of its domain branches.  The cylinders $\{U_a\mid a\in P\}$ form a finite clopen partition of $\C_m$.  Pulling this partition back by $\psi$ gives a finite clopen partition of $\C_n$.  Intersecting with $U_u$, we obtain a finite clopen partition of $U_u$.

Every finite clopen partition of $U_u$ is refined by a sufficiently deep cylinder partition of $U_u$.  Hence there exists $k\ge0$ such that, for every $w\in\W_n$ with $|w|\ge k$, the set $\psi(U_{uw})$ is contained in a single domain cylinder $U_a$ of the chosen diagram for $g$.

Fix such a word $w$, and let $a\in P$ be such that
\[
        \psi(U_{uw})\subseteq U_a.
\]
Let $a\to b$ be the corresponding branch pair of the diagram of $g$.  Since $\psi(U_{uw})\subseteq U_a$, the root $\theta_\psi(uw)=\Root(\psi(U_{uw}))$ extends $a$; write
\(\theta_\psi(uw)=ar.\)
For every $\omega\in\C_n$, using $\psi g=f\psi$ in left-to-right notation and the fact that $f(uw\omega)=vw\omega$, we get
\[
        g(\psi(uw\omega))=\psi(vw\omega).
\]
On the cylinder $U_a$, the element $g$ acts by replacing the prefix $a$ with $b$.  Therefore
\[
        \psi(vw\omega)
        =g(ar\,\psi_{uw}(\omega))
        =br\,\psi_{uw}(\omega).
\]
The image of the local action $\psi_{uw}$ has empty root, so the root of the set on the right, as $\omega$ varies, is $br$.  Thus
\(\theta_\psi(vw)=br,\)
and after cancelling this common prefix we obtain
\(\psi_{vw}=\psi_{uw}.\)
This proves the proposition.
\end{proof}

\begin{definition}\label{def:boundary-reduced-word}
Let
\[
        D=\max\{D_0,D_1\},
\]
where $D_0$ and $D_1$ are constants satisfying the conclusions of Lemmas~\ref{lem:zero-ray-prefix} and \ref{lem:one-ray-prefix}.  For $u\in\W_n$, the \emph{$D$-boundary-reduced form} of $u$ is obtained as follows.  If $u$ begins with a maximal block $0^r$ with $r>D$, replace this initial block by $0^D$.  If $u$ begins with a maximal block $(n-1)^r$ with $r>D$, replace this initial block by $(n-1)^D$.  If neither case occurs, leave $u$ unchanged.  We denote the resulting word by $\operatorname{red}_D(u)$.
\end{definition}

\begin{remark}\label{rem:boundary-reduction-local-action}
By Lemmas~\ref{lem:zero-ray-prefix} and \ref{lem:one-ray-prefix},
\[
        \psi_u=\psi_{\operatorname{red}_D(u)}
        \qquad(u\in\W_n).
\]
Moreover,
\[
        \sigma_n(u)=\sigma_n(\operatorname{red}_D(u)),
\]
because the digits removed from the initial block are either $0$ or $n-1$, both of which are congruent to $0$ modulo $n-1$.
\end{remark}

\begin{lemma}\label{lem:inner-tail-finite-representatives}
Let $\psi:\C_n\to\C_m$ be a homeomorphism such that $F_n^\psi\le F_m$.  There are a finite set $S\subseteq\W_n^{\mathrm{in}}$ and an integer $k\ge0$ with the following property: for every $u\in\W_n^{\mathrm{in}}$, every $s\in S$ satisfying
\[
        \sigma_n(s)=\sigma_n(u),
\]
and every word $w\in\W_n$ with $|w|\ge k$, one has
\[
        \psi_{uw}=\psi_{sw}.
\]
\end{lemma}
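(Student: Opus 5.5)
The plan is to make Proposition~\ref{prop:uniform-tail-agreement} uniform. That proposition gives tail agreement $\psi_{uw}=\psi_{vw}$ for $|w|\ge k(u,v)$, but only one pair at a time, so it yields a uniform constant only over a finite set of pairs. I would take that finite set to be the inner words of bounded length, and reach arbitrary inner words by induction on length, peeling off one letter at a time and using the cocycle identity. Boundary reduction (Remark~\ref{rem:boundary-reduction-local-action}) keeps the induction from being obstructed by long initial blocks $0^r$ or $(n-1)^r$.

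Concretely, let $D=\max\{D_0,D_1\}$ as in Definition~\ref{def:boundary-reduced-word} and put $N=D+1$. Let $S$ be the set of inner words of length at most $N$, and let $S'$ be the set of inner words of length at most $N+1$. Both are finite. Define
\[
k=\max\{k(a,b)\mid a,b\in S',\ \sigma_n(a)=\sigma_n(b)\},
\]
with $k(a,b)$ from Proposition~\ref{prop:uniform-tail-agreement}. I then prove the claim for every inner $u$ by induction on $|\operatorname{red}_D(u)|$.

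First, a reduction step. If $u$ is inner, then $\operatorname{red}_D(uw)=\operatorname{red}_D(u)w$, because the maximal initial boundary block of $uw$ is the one of $u$. Hence $\psi_{uw}=\psi_{\operatorname{red}_D(u)w}$. Moreover $\operatorname{red}_D(u)$ is again inner with the same $\sigma_n$. So we may assume $u=\operatorname{red}_D(u)$, i.e. its initial boundary block has length at most $D$.

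Base case $|u|\le N$. Then $u\in S\subseteq S'$, and for $s\in S$ with $\sigma_n(s)=\sigma_n(u)$ the definition of $k$ gives $\psi_{uw}=\psi_{sw}$ for $|w|\ge k$.

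Inductive step $|u|>N$. Write $u=u'c$ with $c\in X_n$. Since the initial boundary block of $u$ has length at most $D$, some letter in the first $D+1=N$ positions differs from the first letter. As $|u'|\ge N$, this letter lies in $u'$, so $u'$ is inner. Also $|\operatorname{red}_D(u')|\le |u'|<|u|$.
\begin{enumerate}
\item Choose $s'\in S$ with $\sigma_n(s')=\sigma_n(u')$; such $s'$ exists because every residue class modulo $n-1$ contains inner words of length at most $2\le N$.
\item Since $|cw|\ge k$, the induction hypothesis applied to $u'$ with tail $cw$ gives $\psi_{uw}=\psi_{u'cw}=\psi_{s'cw}$.
\item The word $s'c$ is inner, lies in $S'$, and $\sigma_n(s'c)=\sigma_n(u)=\sigma_n(s)$. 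The choice of $k$ therefore gives $\psi_{s'cw}=\psi_{sw}$ for $|w|\ge k$.
\end{enumerate}
Combining these equalities proves the claim for $u$.

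The main obstacle is uniformity. A naive attempt (decompose $u=px$ with $p$ in a finite set and apply the proposition to $p$) leaves an uncontrolled middle word $x$. The induction that removes one final letter at a time avoids this, since $x$ is absorbed into the tail. The points to check carefully are that $u'$ stays inner, which is exactly where boundary reduction and the choice $N=D+1$ are needed, and that boundary reduction commutes with appending $w$ to an inner word.
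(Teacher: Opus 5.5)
Your proof is correct and is essentially the paper's argument. Both reduce to $D$-boundary-reduced words, apply Proposition~\ref{prop:uniform-tail-agreement} to a finite set of pairs of the form (representative followed by one letter, representative), and replace one letter at a time. You organize this as an induction removing the last letter and take $S$ to be all short inner words, whereas the paper runs the chain $s_{i-1}a_i\mapsto s_i$ from left to right with $S$ the minimal prefixes leaving the boundary rays.

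One point to make explicit: first enlarge $D$ so that $D\ge1$, as the paper does. For $D=0$, boundary reduction can destroy innerness, e.g.\ $\operatorname{red}_0(0(n-1))=(n-1)$, and your bound $2\le N$ fails.
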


\begin{proof}
The proof mirrors the final bounded-representative step in the proofs of \cite[Corollary~6.17]{AutG} and \cite[Corollary~4.16]{AutT}.  In those settings, arbitrary sufficiently deep local actions are compared with local actions based at a fixed finite antichain.  Here the boundary rays have already been dealt with by Lemmas~\ref{lem:zero-ray-prefix} and \ref{lem:one-ray-prefix}, and Proposition~\ref{prop:uniform-tail-agreement} supplies the corresponding comparison for inner words with the same $\sigma_n$-value.

Increasing $D$ if necessary, assume that $D\ge1$.  Define
\[
\begin{aligned}
S={}&\{a\mid 1\le a\le n-2\} \\
&\cup\{0^r a\mid 1\le r\le D,\ 1\le a\le n-1\} \\
&\cup\{(n-1)^r b\mid 1\le r\le D,\ 0\le b\le n-2\}.
\end{aligned}
\]
Thus $S$ records the possible initial segments at which a boundary-reduced inner word first leaves one of the two boundary rays.  Every $D$-boundary-reduced inner word has a unique prefix in $S$, and $S$ contains a representative of every residue class modulo $n-1$.

Consider the following finite set of ordered pairs of inner words:
\[
\begin{aligned}
P={}&\{(s,t)\in S\times S\mid \sigma_n(s)=\sigma_n(t)\} \\
&\cup\{(sa,t)\mid s,t\in S,\ a\in X_n,\ \sigma_n(sa)=\sigma_n(t)\}.
\end{aligned}
\]
Here $sa$ denotes the concatenation of the word $s$ with the one-letter word $a$.  Since $s$ is inner, the word $sa$ is also inner.  For every pair $(x,y)\in P$, Proposition~\ref{prop:uniform-tail-agreement} gives an integer $k(x,y)$ such that
\[
        \psi_{xz}=\psi_{yz}
        \qquad\text{whenever } |z|\ge k(x,y).
\]
Let $k$ be the maximum of these finitely many integers.

Now let $u\in\W_n^{\mathrm{in}}$ and set $\widetilde u=\operatorname{red}_D(u)$.  By Remark~\ref{rem:boundary-reduction-local-action}, it is enough to prove the assertion for $\widetilde u$.  Write
\[
        \widetilde u=s_0a_1a_2\cdots a_r,
\]
where $s_0\in S$ is the unique prefix of $\widetilde u$ belonging to $S$ and $a_1,\ldots,a_r\in X_n$.  Inductively choose $s_i\in S$ so that
\[
        \sigma_n(s_i)=\sigma_n(s_{i-1}a_i)
        \qquad(1\le i\le r).
\]
This is possible because $S$ contains a representative of every residue class.

Let $w\in\W_n$ with $|w|\ge k$.  Successively replacing $s_{i-1}a_i$ by $s_i$, and applying the definition of $k$ to the remaining suffix $a_{i+1}\cdots a_r w$, gives
\[
\begin{aligned}
        \psi_{\widetilde u w}
        &=\psi_{s_0a_1a_2\cdots a_r w}  \\
        &=\psi_{s_1a_2\cdots a_r w}      \\
        &=\cdots                         \\
        &=\psi_{s_r w}.
\end{aligned}
\]
The word $s_r$ belongs to $S$ and satisfies
\[
        \sigma_n(s_r)=\sigma_n(\widetilde u)=\sigma_n(u).
\]
Let now $s\in S$ be any element with $\sigma_n(s)=\sigma_n(u)$.  Since $(s_r,s)\in P$, the definition of $k$ gives
\[
        \psi_{s_rw}=\psi_{sw}
        \qquad(|w|\ge k).
\]
Combining this with the displayed chain of equalities above gives
\[
        \psi_{uw}=\psi_{\widetilde u w}=\psi_{s_rw}=\psi_{sw}
        \qquad(|w|\ge k),
\]
where the first equality follows from Remark~\ref{rem:boundary-reduction-local-action}.  This proves the strengthened assertion.
\end{proof}

\begin{theorem}\label{thm:finite-local-actions-body}
Let $\psi:\C_n\to\C_m$ be a homeomorphism such that $F_n^\psi\le F_m$.  Then $\LA_\psi$ is finite.  Equivalently, by Theorem~\ref{thm:rational-iff-finite-local-actions-prelim}, $\psi$ is rational.
\end{theorem}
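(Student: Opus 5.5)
The plan is to show that every local action $\psi_u$ with $u\in\W_n$ equals one of finitely many explicitly listed local actions. The ingredients are Remark~\ref{rem:boundary-reduction-local-action}, which handles the two boundary rays, and Lemma~\ref{lem:inner-tail-finite-representatives}, which handles deep inner words. Fix $D=\max\{D_0,D_1\}\ge1$, together with the finite set $S$ and the integer $k$ from Lemma~\ref{lem:inner-tail-finite-representatives}. The goal is to prove that $\LA_\psi$ is contained in
\[
        \{\psi_x\mid x\in\W_n,\ |x|\le L\}
\]
for a suitable bound $L$ depending only on $D$, $k$ and $\max_{s\in S}|s|$. This set is finite, so $\LA_\psi$ is finite. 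Rationality then follows from Theorem~\ref{thm:rational-iff-finite-local-actions-prelim}.

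The argument splits into cases according to the shape of $u$.

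\textbf{Case 1: $u=\eps$ or $u$ is a boundary word.} If $u=\eps$, or $u=0^r$, or $u=(n-1)^r$, then Remark~\ref{rem:boundary-reduction-local-action} gives $\psi_u=\psi_{\operatorname{red}_D(u)}$. The reduced word has length at most $D$.

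\textbf{Case 2: $u$ is inner.} Replace $u$ by $\widetilde u=\operatorname{red}_D(u)$; this does not change the local action, and $\widetilde u$ is still inner. Write $\widetilde u=s_0z$, where $s_0\in S$ is its unique prefix in $S$.

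If $|z|<k$, then $|\widetilde u|\le \max_{s\in S}|s|+k-1$, so $\widetilde u$ is already short.

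If $|z|\ge k$, split $z=yw$ with $|w|=k$. Let $s\in S$ be any element with $\sigma_n(s)=\sigma_n(s_0y)$; such an $s$ exists because $S$ meets every residue class. The word $s_0y$ is inner. Lemma~\ref{lem:inner-tail-finite-representatives}, applied to the inner word $s_0y$ and the suffix $w$, then gives
\[
        \psi_{\widetilde u}=\psi_{s_0yw}=\psi_{sw},
\]
and $|sw|\le \max_{s\in S}|s|+k$.

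Combining the cases, one may take $L=\max\{D,\ \max_{s\in S}|s|+k\}$.

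There is little obstacle left at this point. The real difficulty was already handled earlier: it is the uniform tail agreement of Proposition~\ref{prop:uniform-tail-agreement} and its bounded-representative form, Lemma~\ref{lem:inner-tail-finite-representatives}. The only care needed here is to confirm that the case split is exhaustive. Every nonempty word is either of the form $0^r$, of the form $(n-1)^r$, or inner, and boundary reduction preserves each of these types.
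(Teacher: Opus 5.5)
Your proof is correct and follows the paper's argument. Pure boundary words are handled by boundary reduction. For inner words, you apply Lemma~\ref{lem:inner-tail-finite-representatives} to an inner prefix followed by a suffix of length $k$. The only difference is how you certify that the prefix is inner: you use the decomposition $\widetilde u=s_0yw$ with $s_0\in S$, whereas the paper uses the length threshold $|\widetilde u|>D+k+1$, so that the prefix has length $>D+1$. This is why your bound $L$ can drop the paper's extra term $D+k+1$.
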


\begin{proof}
Let $D$ be as in Definition~\ref{def:boundary-reduced-word}, and let $S$ and $k$ be as in Lemma~\ref{lem:inner-tail-finite-representatives}.  Put
\[
        L=\max\{D,\ \max\{|s|\mid s\in S\}+k,\ D+k+1\}.
\]
We prove that every local action $\psi_u$ is equal to $\psi_v$ for some word $v\in\W_n$ with $|v|\le L$.

Let $u\in\W_n$ and let $\widetilde u=\operatorname{red}_D(u)$.  By Remark~\ref{rem:boundary-reduction-local-action}, $\psi_u=\psi_{\widetilde u}$.  If $\widetilde u$ is a pure boundary word, meaning that it is of the form $0^r$ or $(n-1)^r$, then boundary reduction gives $r\le D$, and we are done.

Assume that $\widetilde u$ is inner.  If $|\widetilde u|\le D+k+1$, there is nothing to prove.  Otherwise, write
\(\widetilde u=pw\)
where $|w|=k$.  Since $|p|>D+1$ and $\widetilde u$ is boundary-reduced and inner, the prefix $p$ is inner.  Lemma~\ref{lem:inner-tail-finite-representatives} gives an element $s\in S$ such that
\(\psi_{pw}=\psi_{sw}.\)
The word $sw$ has length at most $\max\{|s|\mid s\in S\}+k\le L$.  Thus every local action is represented by a word of length at most $L$.  Since there are only finitely many such words, $\LA_\psi$ is finite.

\end{proof}

\begin{corollary}\label{cor:uniform-inner-synchronization}
Let $\psi:\C_n\to\C_m$ be a homeomorphism such that $F_n^\psi\le F_m$.  Then there exists $k\ge0$ such that, for all $u,v\in\W_n^{\mathrm{in}}$ with $\sigma_n(u)=\sigma_n(v)$ and all $w\in\W_n$ with $|w|\ge k$,
\[
        \psi_{uw}=\psi_{vw}.
\]
\end{corollary}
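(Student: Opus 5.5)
The corollary is a uniform version of Lemma~\ref{lem:inner-tail-finite-representatives}, and I would derive it from that lemma together with the boundary reduction of Remark~\ref{rem:boundary-reduction-local-action}. Let $S$ and $k$ be the finite set and the integer given by Lemma~\ref{lem:inner-tail-finite-representatives}. I claim that this same $k$ works for the corollary.

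Fix $u,v\in\W_n^{\mathrm{in}}$ with $\sigma_n(u)=\sigma_n(v)$, and fix $w\in\W_n$ with $|w|\ge k$. In the proof of Lemma~\ref{lem:inner-tail-finite-representatives}, the set $S$ contains a representative of every residue class modulo $n-1$. So we can choose $s\in S$ with
\[
        \sigma_n(s)=\sigma_n(u)=\sigma_n(v).
\]
Applying the lemma to the inner word $u$ and this $s$ gives $\psi_{uw}=\psi_{sw}$. Applying it to the inner word $v$ and the same $s$ gives $\psi_{vw}=\psi_{sw}$. Combining the two, $\psi_{uw}=\psi_{vw}$, as required.

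The only points that need care are bookkeeping. First, the lemma is stated for every $s\in S$ in the right residue class, not merely for some representative depending on $u$. This is what allows a single $s$ to serve as a common comparison point for both $u$ and $v$. Second, nonemptiness of each residue class in $S$ is recorded in the lemma's proof, though not in its statement. If one prefers not to rely on the construction, this can be checked directly: the words $a$ with $1\le a\le n-2$ together with $01$ (when $n=2$, the word $01$ alone) cover all residues, since $\Z/(n-1)\Z$ has $n-1$ classes. There is no genuine obstacle here. The substantive work, namely the tail agreement of Proposition~\ref{prop:uniform-tail-agreement} and its uniformization over the finite set $P$ of pairs, has already been done in the lemma, and the corollary just transfers through a common representative.
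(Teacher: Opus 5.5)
Your argument is correct and is exactly the paper's proof: choose $s\in S$ in the common residue class and compare both $\psi_{uw}$ and $\psi_{vw}$ with $\psi_{sw}$ via Lemma~\ref{lem:inner-tail-finite-representatives}. One small slip in your optional aside: $\sigma_n(01)=1$, so for $n\ge3$ the words $1,\dots,n-2,01$ miss residue $0$; use instead the word $0(n-1)$, which lies in $S$ because $D\ge1$ and has residue $0$ for every $n\ge2$.
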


\begin{proof}
Let $S$ and $k$ be as in Lemma~\ref{lem:inner-tail-finite-representatives}.  Since $S$ contains a representative of every residue class modulo $n-1$, choose $s\in S$ such that
\[
        \sigma_n(s)=\sigma_n(u)=\sigma_n(v).
\]
By Lemma~\ref{lem:inner-tail-finite-representatives}, for every $w\in\W_n$ with $|w|\ge k$ we have
\[
        \psi_{uw}=\psi_{sw}
        \qquad\text{and}\qquad
        \psi_{vw}=\psi_{sw}.
\]
Hence $\psi_{uw}=\psi_{vw}$.
\end{proof}

\begin{definition}\label{def:semi-synchronizing}
Let
\[
        T_{s_0}=(S,t,o,s_0)
\]
be an initial $(n,m)$-transducer.  A state of the form $t(s_0,u)$ is called:
\begin{itemize}
    \item a \emph{root state} if $u=\eps$;
    \item a \emph{left boundary state} if $u=0^r$ for some $r\ge0$;
    \item a \emph{right boundary state} if $u=(n-1)^r$ for some $r\ge0$;
    \item an \emph{inner state} if $u$ is inner.
\end{itemize}
A state may have more than one type.  For instance, the initial state is both a left and a right boundary state.  The set of inner states is closed under transitions: if $s=t(s_0,u)$ with $u$ inner and $a\in X_n$, then $t(s,a)=t(s_0,ua)$, and $ua$ is again inner.

We say that $T_{s_0}$ is \emph{semi-synchronizing} if the following two conditions hold.
\begin{enumerate}[label=(\roman*)]
    \item \emph{Boundary-ray synchronization.}  The two boundary rays eventually stabilize: there are states $s_L,s_R\in S$ such that
    \[
        t(s_L,0)=s_L,
        \qquad
        t(s_R,n-1)=s_R,
    \]
    and $s_L$ and $s_R$ are reached from $s_0$ by words $0^r$ and $(n-1)^r$, respectively.
    \item \emph{Inner synchronization.}  The inner subtransducer synchronizes within each congruence class: there exists $k\ge0$ such that for all inner words $u,v\in\W_n$ with $\sigma_n(u)=\sigma_n(v)$ and all words $w\in X_n^k$,
    \[
        t(s_0,uw)=t(s_0,vw).
    \]
\end{enumerate}
\end{definition}

The characterization below should be compared with the transducer descriptions recalled in Remark~\ref{rem:AutG-AutT-sync-prelim}.  In the $G_{n,r}$ and $T_{n,r}$ settings, it is proved in \cite{AutG} and \cite{AutT} that a homeomorphism whose conjugation sends the Thompson-like group into itself must have finitely many local actions, and its minimal transducer must be synchronizing.  The reverse implication (in the $T_{n,r}$ case, with the assumption that the homeomorphism respects the cyclic order) is not stated in those papers, but it can be obtained by adapting the proof for the $T_{n,r}$ case.  We give instead a combinatorial direct proof for $F_n$ (which can be easily modified to the $T_{n,r}$ and $G_{n,r}$ setting). 

\begin{theorem}[Cantor-space conjugator criterion]\label{thm:conjugator-characterization}
Let $\psi:\C_n\to\C_m$ be a homeomorphism.  Then $F_n^\psi\le F_m$ if and only if the following conditions hold:
\begin{enumerate}[label=(\arabic*)]
    \item $\psi$ is order-preserving or order-reversing;
    \item $\psi$ is rational and its minimal transducer $T^\psi$ is semi-synchronizing.
\end{enumerate}
\end{theorem}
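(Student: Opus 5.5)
The forward implication is essentially assembled from the results already proved in this section, so the real work is the converse.

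\emph{Forward direction.} Assume $F_n^\psi\le F_m$. Condition (1) is Lemma~\ref{lem:cantor-conjugator-descends}. Rationality is Theorem~\ref{thm:finite-local-actions-body}.

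For semi-synchronization we use the state--local-action dictionary (Notation~\ref{not:state-local-action-dictionary}). In $T^\psi$ the state reached by $u$ is exactly $\psi_u$, and distinct states are distinct local actions.

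\begin{itemize}
\item \emph{Boundary-ray synchronization.} Lemmas~\ref{lem:zero-ray-prefix} and \ref{lem:one-ray-prefix} give $\psi_{0^{D_0+1}}=\psi_{0^{D_0}}$ and $\psi_{(n-1)^{D_1+1}}=\psi_{(n-1)^{D_1}}$. We take $s_L=s^\psi_{0^{D_0}}$ and $s_R=s^\psi_{(n-1)^{D_1}}$.
\item \emph{Inner synchronization.} This is Corollary~\ref{cor:uniform-inner-synchronization}, translated from equality of local actions into equality of states.
\end{itemize}

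\emph{Converse direction.} Assume (1) and (2), and let $f\in F_n$ have a tree diagram with branch pairs $u_i\to v_i$. We must show that $g=\psi^{-1}f\psi$ is a finite prefix replacement that preserves order.

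Order preservation is immediate: $\psi$ either preserves or reverses the lexicographic order, so $g$ preserves it in both cases. By Lemma~\ref{lem:lift-order-equivalence-prelim}, $g$ then descends to an increasing homeomorphism of $[0,1]$.

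For the prefix-replacement property, we expand the diagram so that every branch pair has one of three forms:
\begin{itemize}
\item a boundary pair $0^a\to 0^b$ with $a,b\ge r_L$, where $r_L$ is such that $t(s_0,0^{r_L})=s_L$;
\item the analogous right-boundary pair $(n-1)^a\to(n-1)^b$;
\item an inner pair $u\to v$, which has $\sigma_n(u)=\sigma_n(v)$ by Lemma~\ref{lem:branch-pair-criterion-prelim}.
\end{itemize}
For inner pairs we further expand by all words $w\in X_n^k$, with $k$ the synchronization level. This gives $s^\psi_{uw}=s^\psi_{vw}$, hence $\psi_{uw}=\psi_{vw}$. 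Therefore
\[
\psi(uw\omega)=O^\psi(uw)\,\psi_{uw}(\omega)
\quad\text{and}\quad
\psi(vw\omega)=O^\psi(vw)\,\psi_{uw}(\omega),
\]
so on the clopen set $\psi(U_{uw})$ the map $g$ replaces the prefix $O^\psi(uw)$ by $O^\psi(vw)$.

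For boundary pairs we use $s_L$ and cancel the common local action in the same way. This gives $\psi_{0^a}=\psi_{0^b}$, so $g$ again acts on $\psi(U_{0^a})$ as a prefix replacement $O^\psi(0^a)\to O^\psi(0^b)$.

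\emph{Main obstacle.} The sets $\psi(U_{uw})$ are clopen but need not be cylinders. The image of a state is $\psi_{uw}(\C_m)$, which is clopen with empty root, not all of $\C_m$. To finish, we write $\psi_{uw}(\C_n)$ as a finite disjoint union of cylinders $U_c$; this is possible since, by minimality, the image is clopen. Then $g$ maps $U_{O^\psi(uw)c}$ to $U_{O^\psi(vw)c}$ by prefix replacement.

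Collecting these finitely many cylinder pairs over all expanded branch pairs yields two finite complete prefix codes, because the sets $\psi(U_{uw})$ partition $\C_m$. The induced map is order-preserving, so the pairing respects lexicographic order, and $g$ is a finite prefix replacement, i.e.\ $g\in F_m$. The care needed here is that the decomposition into cylinders $U_c$ must be the same for $uw$ and $vw$. It is, because it depends only on the common local action $\psi_{uw}=\psi_{vw}$.
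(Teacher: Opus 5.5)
Your proposal is correct and follows the paper's proof essentially step for step. The forward direction assembles Lemma~\ref{lem:cantor-conjugator-descends}, Theorem~\ref{thm:finite-local-actions-body}, the two boundary-ray lemmas and Corollary~\ref{cor:uniform-inner-synchronization} through the state--local-action dictionary. The converse refines a diagram for $f$ past the boundary stabilization depth and the inner synchronization level, then splits the clopen image of the common local action into cylinders $U_p$, which yields the branch pairs $O^\psi(u)p\to O^\psi(v)p$ of $f^\psi$.
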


\begin{proof}
Assume first that $F_n^\psi\le F_m$.  By Lemma~\ref{lem:cantor-conjugator-descends}, the map $\psi$ is order-preserving or order-reversing.  By Theorem~\ref{thm:finite-local-actions-body}, it is rational.  Boundary stabilization gives condition (i) in Definition~\ref{def:semi-synchronizing}.  Corollary~\ref{cor:uniform-inner-synchronization} gives a uniform level for condition (ii).  Using the state--local-action dictionary of Notation~\ref{not:state-local-action-dictionary}, this says exactly that the states of $T^\psi$ satisfy the inner synchronization condition.  Thus $T^\psi$ is semi-synchronizing.

Conversely, assume that $\psi$ is order-preserving or order-reversing and that $T^\psi$ is semi-synchronizing.  Let $f\in F_n$ and choose a tree diagram for $f$.  We refine it in two steps.  First refine the boundary branch pairs, if present, so that they extend beyond the boundary-stabilization depth.  Then refine every inner branch pair by attaching a complete $n$-ary tree of height equal to the inner synchronization level.  Let
\[
        u_1\to v_1,\ldots,u_r\to v_r
\]
be the branch pairs of the resulting diagram.

For each $j$, the branch-pair criterion for $F_n$ gives $\sigma_n(u_j)=\sigma_n(v_j)$ whenever $u_j$ and $v_j$ are inner.  Semi-synchronization gives
\(\psi_{u_j}=\psi_{v_j}\)
for inner pairs, and boundary stabilization gives the same equality for the two boundary pairs.  Write
\[
        \psi(u_j\omega)=O^\psi(u_j)\psi_{u_j}(\omega),
        \qquad
        \psi(v_j\omega)=O^\psi(v_j)\psi_{v_j}(\omega).
\]
Let $\theta_j=\psi_{u_j}=\psi_{v_j}$.  The image $\theta_j(\C_n)$ is clopen in $\C_m$, so choose a finite prefix code $P_j\subseteq\W_m$ such that
\[
        \theta_j(\C_n)=\bigsqcup_{p\in P_j}U_p.
\]
For each $p\in P_j$, choose the finite prefix code $Q_{j,p}\subseteq\W_n$ satisfying
\[
        \theta_j^{-1}(U_p)=\bigsqcup_{q\in Q_{j,p}}U_q.
\]
Then on the cylinder $U_{O^\psi(u_j)p}$ the conjugate $f^\psi=\psi^{-1}f\psi$ acts by replacing the prefix $O^\psi(u_j)p$ with $O^\psi(v_j)p$.  Indeed, for $q\in Q_{j,p}$ and $\omega\in\C_n$, if
\[
        \psi(u_jq\omega)=O^\psi(u_j)p\omega'
\]
for some $\omega'\in\C_m$, then
\[
        \psi(v_jq\omega)=O^\psi(v_j)p\omega'.
\]
Thus the finitely many branch pairs
\[
        O^\psi(u_j)p\to O^\psi(v_j)p
        \qquad (1\le j\le r,\ p\in P_j)
\]
represent $f^\psi$, after the finite refinements just described.  Since $\psi$ either preserves or reverses the lexicographic order, and $f$ preserves order, the conjugate $f^\psi$ is order-preserving.  Therefore $f^\psi\in F_m$.  Since $f$ was arbitrary, $F_n^\psi\le F_m$.
\end{proof}

\begin{corollary}\label{cor:aut-Fn-semisync}
The automorphism group $\operatorname{Aut}(F_n)$ can be naturally identified with the normalizer of the standard action of $F_n$ in $\Homeo(0,1)$.  Equivalently, after lifting to the Cantor space, it is identified with the group of homeomorphisms $\psi:\C_n\to\C_n$ which preserve the relation $\sim_{\rho_n}$ and normalize the lifted copy of $F_n$.

Under this identification, $\operatorname{Aut}(F_n)$ consists precisely of those homeomorphisms $\psi:\C_n\to\C_n$ such that both $\psi$ and $\psi^{-1}$ are order-preserving or order-reversing rational homeomorphisms whose minimal transducers are semi-synchronizing.
\end{corollary}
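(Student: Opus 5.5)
The plan is to prove the two assertions of Corollary~\ref{cor:aut-Fn-semisync} separately: first the identification of $\operatorname{Aut}(F_n)$ with a normalizer, then the transducer description via Theorem~\ref{thm:conjugator-characterization} applied in both directions.

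\textbf{Step 1: automorphisms are conjugations.} Let $\alpha\in\operatorname{Aut}(F_n)$ and regard $\alpha$ as an injective homomorphism $F_n\hookrightarrow\Homeo_+(0,1)$ whose image is the standard copy of $F_n$. The hypotheses of Theorem~\ref{thm:rubin-locally-moving-body} hold: the standard action is locally moving (Subsection~\ref{subsec:interval-dynamics-prelim}) and minimal, and nontrivial elements of $[F_n,F_n]$ have support bounded away from both endpoints. Hence $\alpha(g)=\phi^{-1}g\phi$ for some $\phi\in\Homeo(0,1)$, so $\phi$ normalizes $F_n$. Conversely, every element of the normalizer induces an automorphism by conjugation. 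The map from the normalizer to $\operatorname{Aut}(F_n)$ is injective, because the centralizer of $F_n$ in $\Homeo(0,1)$ is trivial. Indeed, a homeomorphism commuting with $F_n$ preserves the fixed-point set of every element. Points of $(0,1)$ are determined by the collection of elements of $F_n$ fixing them (for instance, via elements supported in small $n$-adic intervals), so such a homeomorphism fixes every point. If orientation-reversing conjugators are allowed, they are still handled by composing with $\iota$, which normalizes $F_n$.

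\textbf{Step 2: passing to the Cantor space.} If $\phi$ normalizes $F_n$, then $F_n^\phi=F_n$ is closed. After composing with $\iota$ when necessary, Lemma~\ref{lem:type-preservation-prelim} shows that $\phi$ maps $n$-adic rationals bijectively to $n$-adic rationals. It therefore has a lift $\psi:\C_n\to\C_n$, which is order-preserving or order-reversing, and this $\psi$ preserves $\sim_{\rho_n}$ by Lemma~\ref{lem:lift-order-equivalence-prelim}. Conversely, a homeomorphism of $\C_n$ preserving $\sim_{\rho_n}$ descends to $\bar\psi$ (again by Lemma~\ref{lem:lift-order-equivalence-prelim}), and normalizing on $\C_n$ is equivalent to normalizing on $[0,1]$. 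The lift is unique, since $\rho_n$ is injective off the countable set of $n$-adic points and these are dense. This gives the claimed identification of groups.

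\textbf{Step 3: the transducer criterion.} Normalizing means exactly that $F_n^\psi\le F_n$ and $F_n^{\psi^{-1}}\le F_n$. Applying Theorem~\ref{thm:conjugator-characterization} to $\psi$ and to $\psi^{-1}$, with $m=n$, shows that both conditions together are equivalent to both $\psi$ and $\psi^{-1}$ being order-preserving or order-reversing rational homeomorphisms with semi-synchronizing minimal transducers. The order condition on $\psi^{-1}$ is automatic, but it is harmless to include it.

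The main obstacle is Step 1, specifically checking the hypotheses of Theorem~\ref{thm:rubin-locally-moving-body} and the triviality of the centralizer, so that the identification is canonical. Once that is settled, Steps 2 and 3 follow directly from the earlier results.
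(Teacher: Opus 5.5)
Your proposal is correct and follows the same route as the paper. Both identify $\operatorname{Aut}(F_n)$ with the interval normalizer via a Rubin-type reconstruction theorem, pass to the Cantor space through Lemma~\ref{lem:lift-order-equivalence-prelim}, and apply Theorem~\ref{thm:conjugator-characterization} to both $\psi$ and $\psi^{-1}$. The differences are minor:
\begin{itemize}
\item You get the first step from Theorem~\ref{thm:rubin-locally-moving-body} together with an explicit trivial-centralizer argument, instead of citing McCleary--Rubin as used by Brin--Guzm\'an.
\item You check explicitly, via Lemma~\ref{lem:type-preservation-prelim}, that a normalizing homeomorphism preserves the $n$-adic rationals; the paper leaves this step implicit.
\end{itemize}
One caution on Step~1: keep an orientation-reversing conjugator $\phi$ as it is, because $\phi$ and $\iota\phi$ induce different automorphisms. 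Composing with $\iota$ should only serve, as you do in Step~2, to bring Lemma~\ref{lem:type-preservation-prelim} to bear on $\phi$.
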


\begin{proof}
The identification of automorphisms with the interval normalizer follows from the reconstruction theorem of McCleary--Rubin, as used by Brin--Guzm\'an in their study of automorphisms of generalized Thompson groups \cite{McClearyRubin,BrinGuzman}.  The equivalence with the Cantor-space normalizer is the liftability statement in Lemma~\ref{lem:lift-order-equivalence-prelim}.  If $\psi$ normalizes $F_n$, then both $F_n^\psi\le F_n$ and $F_n^{\psi^{-1}}\le F_n$, so Theorem~\ref{thm:conjugator-characterization} applies to both $\psi$ and $\psi^{-1}$.  Conversely, applying the theorem to both maps gives the two inclusions $F_n^\psi\le F_n$ and $F_n^{\psi^{-1}}\le F_n$, and hence equality.
\end{proof}

\begin{example}
\label{ex:binary-semisync-transducer}
Let \(\varphi:\C_2\to\C_2\) be the homeomorphism represented by the minimal
initial transducer in Figure~\ref{fig:transducer_semi_sync_corrected}.  This
is the transducer used in the final construction.

\begin{figure}[h]
\centering
\begin{tikzpicture}[shorten >=1pt, auto, thick, >=Stealth]
    \node[state, initial, initial text=] (L) at (0, 0) {$\mathsf L$};
    \node[state] (R) at (3.2, 0) {$\mathsf R$};
    \node[state] (A) at (3.2, -2.4) {$\mathsf A$};
    \node[state] (B) at (0, -2.4) {$\mathsf B$};

    \path[->]
        (L) edge [loop above] node {0 $|$ 0} (L)
            edge node[above] {1 $|$ 1} (R)
        (R) edge [loop above] node {1 $|$ 1} (R)
            edge node[right] {0 $|$ 0} (A)
        (A) edge [loop right] node {0 $|$ 00} (A)
            edge [bend left=12] node[below] {1 $|$ $\eps$} (B)
        (B) edge [bend left=12] node[above] {0 $|$ 01} (A)
            edge [bend left=20] node[left] {1 $|$ 1} (R);
\end{tikzpicture}
\caption{The minimal binary semi-synchronizing transducer defining
\(\varphi\).}
\label{fig:transducer_semi_sync_corrected}
\end{figure}
\end{example}

\begin{example}\label{ex:conjugating-generators}
For the homeomorphism \(\varphi\) of
Figure~\ref{fig:transducer_semi_sync_corrected}, one computes
\[
    x_0^\varphi:
    \begin{cases}
        00\eta\mapsto 0\eta,\\
        010\eta\mapsto 1000\eta,\\
        0110\eta\mapsto 1001\eta,\\
        0111\eta\mapsto 101\eta,\\
        1\eta\mapsto 11\eta,
    \end{cases}
\]
and
\[
    x_1^\varphi:
    \begin{cases}
        0\eta\mapsto 0\eta,\\
        1000\eta\mapsto 10\eta,\\
        1001\eta\mapsto 11000\eta,\\
        1010\eta\mapsto 11001\eta,\\
        1011\eta\mapsto 1101\eta,\\
        11\eta\mapsto 111\eta.
    \end{cases}
\]
The displayed branch pairs are the reduced tree diagrams for these conjugates.
The direct transducer computation first gives tree diagrams on suitable common
refinements of the standard diagrams for \(x_0\) and \(x_1\).  On each refined
branch pair \(u\to v\), the local actions \(\varphi_u\) and \(\varphi_v\) agree.
Applying the prefix-output formula from
Notation~\ref{not:state-local-action-dictionary}, and then making the finite
refinements coming from the image of this common local action, gives a tree
diagram for the conjugate.  Reducing common carets gives precisely the branch
pairs displayed above.
\end{example}

\section{Conjugating closed subgroups by transducers}
\label{sec:conjugating-closed-subgroups}

In this section we describe how a closed subgroup represented by a full
tree-automaton changes under conjugation by a  homeomorphism of Cantor
spaces which preserves or reverses the lexicographic order.  Throughout this
section
\(\mathcal A=(Q,\tau,q_0)\)
is a full \(n\)-ary tree-automaton,
\(H=\D(\mathcal A)\le F_n,\)
and
\(\psi:\C_n\to\C_m\)
is a  homeomorphism satisfying the equivalent conditions of
Lemma~\ref{lem:lift-order-equivalence-prelim}.  We write
\(\phi=\psi^{-1}.\)
The goal is to construct an \(m\)-ary tree-automaton defining
\(H^\psi\cap F_m .\)
Since \(\psi\) satisfies Lemma~\ref{lem:lift-order-equivalence-prelim}, it maps
\(n\)-adic points bijectively to \(m\)-adic points.  It follows directly from
the piecewise description of closed subgroups that \(H^\psi\cap F_m\) is a
closed subgroup of \(F_m\).  The constructions below realize this intersection as a diagram group (which also proves that it is closed).  If in
addition \(H^\psi\le F_m\), then the same automata define \(H^\psi\) itself.

We give two constructions.  The pullback construction uses the inverse
transducer and gives a direct proof of correctness.  The forward construction
uses the transducer for \(\psi\) itself and is the construction used in the
explicit computations later in the paper.
                                                   
\begin{remark}
\label{rem:full-automata-assumption}
The constructions are stated for full automata.  This is harmless for the main
applications, where the automata are full cores.  In general, one can make an
automaton full by attaching a full rooted \(n\)-ary tree at each leaf; this
does not change the accepted diagram group, since the added trees encode only
identity refinements below leaves.  However, this completion can be infinite
even when the original automaton is finite.
   See also Remark~\ref{rem:finite-stopped-forward-nonfull-core} below for the case where $\mathcal A$ is a finite core automaton and $H^\psi$ is a subgroup of $F_m$. 
\end{remark}

We use the notation of the preliminaries for minimal transducers:
\[
        T^\psi=(S_\psi,t_\psi,o_\psi,s_\psi),
        \qquad
        T^\phi=(S_\phi,t_\phi,o_\phi,s_\phi).
\]
For \(u\in\W_n\) and \(x\in\W_m\), write
\[
        s^\psi_u=t_\psi(s_\psi,u),
        \qquad
        O^\psi(u)=o_\psi(s_\psi,u),
\]
and
\[
        s^\phi_x=t_\phi(s_\phi,x),
        \qquad
        O^\phi(x)=o_\phi(s_\phi,x).
\]
Thus
\[
        \psi(u\omega)=O^\psi(u)\psi_u(\omega),
        \qquad
        \phi(x\omega)=O^\phi(x)\phi_x(\omega).
\]

\subsection{The pullback automaton}
\label{subsec:pullback-automaton}

The pullback construction is the direct one.  Given an \(m\)-ary branch pair
\(x\to y\), we compare the two pulled-back cylinders under
\(\phi=\psi^{-1}.\)
The inverse transducer gives the local descriptions
\[
        \phi(x\omega)=O^\phi(x)\phi_x(\omega),
        \qquad
        \phi(y\omega)=O^\phi(y)\phi_y(\omega).
\]
The condition imposed by the pullback automaton is that the same local action
of \(\phi\) remains on the two sides, and that the two source prefixes
\(O^\phi(x)\) and \(O^\phi(y)\) end at the same state of \(\mathcal A\); when
this condition holds for every branch pair of a tree diagram, the proof below
shows that the diagram represents an element of \(H^\psi\cap F_m\), while the
converse is obtained after allowing refinements of diagrams.  Thus, after
reading a word \(x\in\W_m\), the pullback automaton stores the pair
\[
        \bigl(s^\phi_x,\tau(q_0,O^\phi(x))\bigr):
\]
the state \(s^\phi_x\) of \(T^\phi\), which determines the remaining local
action \(\phi_x\), and the state of \(\mathcal A\) reached by the source prefix
\(O^\phi(x)\).
          
\begin{definition}
\label{def:pullback-automaton}
Let \(\psi:\C_n\to\C_m\) be a  homeomorphism preserving or reversing
lexicographic order, let \(\phi=\psi^{-1}\), and let
\(\mathcal A=(Q,\tau,q_0)\) be a full \(n\)-ary tree-automaton.  The
\emph{pullback automaton} of \(\mathcal A\) by \(\psi\), denoted
\[
        \mathcal P_\psi(\mathcal A),
\]
is the accessible part of the following full \(m\)-ary automaton.  The ambient
state set is \(S_\phi\times Q\), the initial state is \((s_\phi,q_0)\), and
the transition labelled \(a\in X_m\) is
\[
        (s,q)\cdot a
        =
        \bigl(t_\phi(s,a),\tau(q,o_\phi(s,a))\bigr).
\]
Equivalently, after reading \(x\in\W_m\), the reached state is
\[
        \bigl(s^\phi_x,\tau(q_0,O^\phi(x))\bigr).
\]
\end{definition}

\begin{theorem}[The pullback theorem]
\label{thm:pullback-theorem}
Let \(\psi:\C_n\to\C_m\) be a  homeomorphism preserving or reversing
lexicographic order, let \(\phi=\psi^{-1}\), and let
\(\mathcal A=(Q,\tau,q_0)\) be a full \(n\)-ary tree-automaton.  Put
\(H=\D(\mathcal A)\le F_n\).  Then
\[
        \D(\mathcal P_\psi(\mathcal A))
        =
        H^\psi\cap F_m.
\]
\end{theorem}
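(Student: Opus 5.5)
We prove the two inclusions separately, working branch pair by branch pair.  The key identity is the conjugation formula: if $g\in F_m$ has branch pair $x\to y$ and $g=\psi^{-1}f\psi$, i.e.\ $f=\phi g\psi$ in left-to-right notation, then for $\omega\in\C_n$ with $\phi$-images in the appropriate cylinders,
\[
        f\bigl(O^\phi(x)\phi_x(\omega')\bigr)=O^\phi(y)\phi_y(\omega')
        \qquad(\omega'\in\C_m).
\]
Here $f=g^{\psi^{-1}}=g^\phi$.

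\textbf{Inclusion $\D(\mathcal P_\psi(\mathcal A))\subseteq H^\psi\cap F_m$.}  Let $g\in F_m$ have a diagram accepted by $\mathcal P_\psi(\mathcal A)$, with branch pairs $x_j\to y_j$.  Acceptance gives $s^\phi_{x_j}=s^\phi_{y_j}$, so $\phi_{x_j}=\phi_{y_j}=:\theta_j$, and $\tau(q_0,O^\phi(x_j))=\tau(q_0,O^\phi(y_j))$.

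On $\phi(U_{x_j})=O^\phi(x_j)\theta_j(\C_m)$, the map $g^\phi$ replaces the prefix $O^\phi(x_j)$ by $O^\phi(y_j)$.  The image $\theta_j(\C_m)$ is clopen, since states of a minimal homeomorphism transducer have clopen image, so we can write it as a finite union of cylinders $U_p$.  Then $g^\phi$ has branch pairs $O^\phi(x_j)p\to O^\phi(y_j)p$.  Because $O^\phi(x_j)$ and $O^\phi(y_j)$ reach the same state of $\mathcal A$, the words $O^\phi(x_j)p$ and $O^\phi(y_j)p$ do as well, using fullness of $\mathcal A$.

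These pieces cover $\C_n$, since $\phi$ is a bijection.  The map $g^\phi$ is order preserving because $\psi$ preserves or reverses order, so the finite collection of pairs forms a tree diagram of an element of $F_n$ accepted by $\mathcal A$.  Hence $g^\phi\in H$, i.e.\ $g\in H^\psi$.

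\textbf{Inclusion $H^\psi\cap F_m\subseteq\D(\mathcal P_\psi(\mathcal A))$.}  This is the main obstacle, since we must produce an accepted $m$-ary diagram.  Let $g\in F_m$ with $f=g^\phi\in H$, and fix a diagram of $f$ accepted by $\mathcal A$, with branch pairs $u_i\to v_i$.  Take any diagram of $g$, and refine it so that each domain cylinder $U_x$ satisfies $\phi(U_x)\subseteq U_{u_i}$ for some $i$.  This is possible by continuity and compactness, since a finite clopen partition is refined by a deep uniform level.

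For such a pair $x\to y$ we have $O^\phi(x)=u_i r$ for some $r$, and
\[
        \phi(y\omega')=f(\phi(x\omega'))=v_i r\,\phi_x(\omega').
\]
Taking roots as in the proof of Proposition~\ref{prop:uniform-tail-agreement} gives $O^\phi(y)=v_i r$ and $\phi_y=\phi_x$.  Minimality of $T^\phi$ (Notation~\ref{not:state-local-action-dictionary}) then gives $s^\phi_x=s^\phi_y$.  Since $u_i^+=v_i^+$ in $\mathcal A$, we also get $\tau(q_0,u_ir)=\tau(q_0,v_ir)$.  Thus every branch pair of the refined diagram ends at the same state of the pullback automaton, and $\mathcal P_\psi(\mathcal A)$ is full, so the diagram is accepted.

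\textbf{Checks.}  The delicate points are the root-cancellation step, which uses that $\Root(\phi_x(\C_m))=\eps$, and in the first inclusion the claim that the collected pairs form a genuine complete prefix code on both sides.  The second claim follows because $g^\phi$ is a bijection of $\C_n$ and the sets $\phi(U_{x_j})$, respectively $\phi(U_{y_j})$, partition $\C_n$.
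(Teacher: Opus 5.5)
Your proof is correct and follows essentially the same route as the paper. In the first inclusion you use equality of the local actions $\phi_{x_j}=\phi_{y_j}$ to write $g^\phi$ as the prefix replacement $O^\phi(x_j)p\to O^\phi(y_j)p$ over a cylinder decomposition of $\phi_{x_j}(\C_m)$. In the second you refine the diagram of $g$ until each $\phi(U_x)$ lies in a domain cylinder of an accepted diagram for $g^\phi$, then cancel roots to get $O^\phi(y)=v_ir$ and $\phi_y=\phi_x$.

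The only slip is notational. In the paper's left-to-right convention $g^\phi=\psi g\phi$, not $\phi g\psi$. Your subsequent formulas nevertheless use the correct map $\eta\mapsto\phi(g(\psi(\eta)))$.
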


\begin{proof}
Write \(\mathcal P=\mathcal P_\psi(\mathcal A)\).

First let \(g\in\D(\mathcal P)\), and choose an \(m\)-ary tree diagram for
\(g\) accepted by \(\mathcal P\), with branch pairs
\[
        x_1\to y_1,
        \ldots,
        x_r\to y_r.
\]
Since this is an \(m\)-ary tree diagram, \(g\in F_m\).  We prove that
\(g\in H^\psi\).  Let
\(h=\psi g\phi .\)
Because \(\psi\) either preserves or reverses the lexicographic order and
\(g\) preserves the lexicographic order, the conjugate \(h\) also preserves the
lexicographic order.

For a branch pair \(x_i\to y_i\), acceptance by \(\mathcal P\) gives
\(s^\phi_{x_i}=s^\phi_{y_i}\)
and
\[
        \tau(q_0,O^\phi(x_i))=\tau(q_0,O^\phi(y_i)).
\]
Put
\[
        u_i=O^\phi(x_i),
        \qquad
        v_i=O^\phi(y_i).
\]
For every \(\omega\in\C_m\),
\[
        \phi(x_i\omega)=u_i\phi_{x_i}(\omega),
        \qquad
        \phi(y_i\omega)=v_i\phi_{y_i}(\omega).
\]
Since \(s^\phi_{x_i}=s^\phi_{y_i}\), the state--local-action dictionary gives
\(\phi_{x_i}=\phi_{y_i}\).  Hence, for every
\(\eta\in\phi_{x_i}(\C_m)\),
\(h(u_i\eta)=v_i\eta.\)
Indeed, if \(\eta=\phi_{x_i}(\omega)\), then
\[
\begin{aligned}
        h(u_i\eta)
        &=h(\phi(x_i\omega))        \\
        &=\phi(g(x_i\omega))        \\
        &=\phi(y_i\omega)           \\
        &=v_i\phi_{y_i}(\omega)     \\
        &=v_i\eta .
\end{aligned}
\]
The clopen set \(\phi_{x_i}(\C_m)\) is a finite disjoint union of \(n\)-ary
cylinders.  Refining over those cylinders, \(h\) has an \(n\)-ary tree diagram
all of whose branch pairs over this piece are of the form
\(u_i r\to v_i r.\)
Since \(\tau(q_0,u_i)=\tau(q_0,v_i)\) and \(\mathcal A\) is full, each such
refined pair is accepted by \(\mathcal A\).  Therefore this refined diagram for
\(h\) is accepted by \(\mathcal A\).  Hence \(h\in H\), and so
\(g\in H^\psi\).  Thus
\(g\in H^\psi\cap F_m.\)

Conversely, let \(g\in H^\psi\cap F_m\).  Then
\(h=\psi g\phi\in H.\)
Choose an \(m\)-ary tree diagram for \(g\), and choose an \(n\)-ary tree
diagram for \(h\) accepted by \(\mathcal A\).  Refine the diagram for \(g\) so
that, for every domain branch \(x\), the clopen set \(\phi(U_x)\) lies inside
a single domain cylinder of the accepted diagram for \(h\).  This is possible
because the domain cylinders of the diagram for \(h\) form a finite clopen
partition of \(\C_n\), and their preimages under \(\phi\) form a finite clopen
partition of \(\C_m\).

Let \(x\to y\) be a branch pair of the refined diagram of \(g\).  Let
\(a\to b\) be the branch pair of the accepted diagram for \(h\) whose domain
cylinder contains \(\phi(U_x)\).  Since
\(\phi(U_x)\subseteq U_a,\)
the word \(O^\phi(x)=\Root(\phi(U_x))\) extends \(a\); write
\(O^\phi(x)=ar.\)
On \(U_{ar}\), the element \(h\) acts by
\(ar\eta\mapsto br\eta.\)
Using \(g\phi=\phi h\), for every \(\omega\in\C_m\) we have
\[
        \phi(y\omega)=\phi(g(x\omega))=h(\phi(x\omega)).
\]
Since
\[
        \phi(x\omega)=ar\,\phi_x(\omega),
\]
and \(h\) acts on \(U_{ar}\) by replacing the prefix \(ar\) by \(br\), this
gives the pointwise equality
\[
        \phi(y\omega)=br\,\phi_x(\omega)
        \qquad(\omega\in\C_m).
\]
On the other hand,
\[
        \phi(y\omega)=O^\phi(y)\phi_y(\omega).
\]
The set \(\phi_x(\C_m)\) has empty root, so the root of
\(\{br\,\phi_x(\omega)\mid \omega\in\C_m\}\) is \(br\).  Hence
\(O^\phi(y)=br\), and cancelling this prefix in the displayed pointwise equality
gives
\(\phi_y=\phi_x.\)
Minimality of \(T^\phi\) gives \(s^\phi_x=s^\phi_y\).  Also, the accepted
diagram for \(h\) can be refined at \(a\to b\) to contain the branch pair
\(ar\to br,\)
that is,
\(O^\phi(x)\to O^\phi(y).\)
Since acceptedness is preserved under refinements,
\[
        \tau(q_0,O^\phi(x))=\tau(q_0,O^\phi(y)).
\]
Thus \(x\) and \(y\) reach the same state in \(\mathcal P\).  This holds for
every branch pair of the refined diagram for \(g\), so that diagram is
accepted by \(\mathcal P\).  Hence \(g\in\D(\mathcal P)\).
\end{proof}

\begin{corollary}
\label{cor:pullback-subgroup-case}
Under the hypotheses of Theorem~\ref{thm:pullback-theorem}, if in addition
\(H^\psi\le F_m\), then
\[
        \D(\mathcal P_\psi(\mathcal A))=H^\psi.
\]
\end{corollary}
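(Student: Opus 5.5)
This is immediate from Theorem~\ref{thm:pullback-theorem}. The plan is simply to intersect both sides of the pullback identity with the extra hypothesis. By Theorem~\ref{thm:pullback-theorem}, applied to the same full automaton $\mathcal A$ and the same order-preserving or order-reversing homeomorphism $\psi$, we have
\[
        \D(\mathcal P_\psi(\mathcal A))=H^\psi\cap F_m .
\]
Under the additional assumption $H^\psi\le F_m$, the intersection $H^\psi\cap F_m$ equals $H^\psi$. Substituting this gives $\D(\mathcal P_\psi(\mathcal A))=H^\psi$, as claimed.

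No step here is a genuine obstacle: all the work (the two inclusions, the refinement argument, and the use of minimality of $T^\phi$ to identify equal local actions with equal states) was already done in the proof of Theorem~\ref{thm:pullback-theorem}.

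The only point worth stating explicitly is that the hypotheses of the theorem are unchanged by the extra assumption. In particular, $\mathcal A$ is still required to be full, and $\psi$ is still required to satisfy Lemma~\ref{lem:lift-order-equivalence-prelim}. Hence the theorem applies verbatim.

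It may also be worth noting how this fits with the earlier results. By Lemma~\ref{lem:closure-conjugacy-prelim}, the subgroup $H^\psi$ is then closed in $F_m$, consistent with its being realized as the diagram group $\D(\mathcal P_\psi(\mathcal A))$. This is especially relevant when $H=F_n$ and Theorem~\ref{thm:conjugator-characterization} guarantees $F_n^\psi\le F_m$.
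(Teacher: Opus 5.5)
Your proof is correct and is the intended argument: the paper records this as an immediate corollary of Theorem~\ref{thm:pullback-theorem}, since $H^\psi\cap F_m=H^\psi$ once $H^\psi\le F_m$. Your side remark about closedness is also sound, because $H=\D(\mathcal A)$ is closed by definition, so Lemma~\ref{lem:closure-conjugacy-prelim} applies.
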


\begin{lemma}
\label{lem:finite-full-folded-precore-is-core}
Let \(\mathcal B\) be a finite, full, folded pre-core for a closed subgroup
\(K\le F_m\).  Then \(\mathcal B\) is the core \(\C(K)\).
\end{lemma}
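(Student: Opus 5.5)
The plan is to check that $\mathcal B$ is a core automaton and then invoke Proposition~\ref{prop:core-automata-characterization}, which says that any core automaton $\mathcal A$ is isomorphic to $\C(\D(\mathcal A))$. Since $\mathcal B$ is a pre-core for $K$, we have $\D(\mathcal B)=K$, and $\mathcal B$ has the existence property by definition. It is folded by hypothesis. So the only missing condition in the definition of a core automaton (folded, reduced, existence property) is that $\mathcal B$ is \emph{reduced}.

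Reducedness should follow from fullness together with finiteness. Suppose $\mathcal B$ were a nontrivial extension of some tree-automaton $\mathcal B_0$. Then $\mathcal B$ contains a hanging full $n$-ary tree $R$, attached at a leaf $\ell$ of $\mathcal B_0$, such that:
\begin{enumerate}
\item the states of $R$ other than $\ell$ are not states of $\mathcal B_0$;
\item inside $\mathcal B$, the states of $R$ are reached only through $\ell$, and distinct suffixes read from $\ell$ reach distinct states, since $R$ is a tree.
\end{enumerate}
Because $\mathcal B$ is full, $\ell$ is a father state of $\mathcal B$. Hence $R$ is nontrivial. Also, every state of $R$ is a father state in $\mathcal B$, while its outgoing transitions stay inside $R$. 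So $R$ is an infinite full $n$-ary tree, and its states are pairwise distinct. This contradicts the finiteness of $\mathcal B$. Therefore $\mathcal B$ is reduced.

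The one delicate point is the precise meaning of ``extension'': one must confirm that the attached trees are genuinely trees of new states, not quotients glued back into $\mathcal B_0$. If instead one allowed identifications of hanging states, they would occur only through folding. Here $\mathcal B$ is folded, and within a tree no two father states share the same children, since their children are distinct new vertices. With reducedness in hand, $\mathcal B$ is a core automaton for $\D(\mathcal B)=K$, and Proposition~\ref{prop:core-automata-characterization} gives $\mathcal B\cong\C(K)$.
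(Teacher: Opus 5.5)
Your proposal is correct and follows essentially the same route as the paper. Fullness rules out a nontrivial finite hanging tree, since it would create leaves, and finiteness rules out an infinite one, so $\mathcal B$ is reduced; together with foldedness and the existence property, $\mathcal B$ is then a core automaton, and Proposition~\ref{prop:core-automata-characterization} identifies it with $\C(\D(\mathcal B))=\C(K)$.
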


\begin{proof}
A finite full tree-automaton has no leaves.  Hence it cannot contain a finite
hanging tree attached at a leaf.  Since it is finite, it also cannot be obtained
from a smaller automaton by attaching an infinite full tree at a leaf.  Thus it
is reduced.  Being folded and having the existence property by the definition
of pre-core, \(\mathcal B\) is a core automaton.  Proposition~\ref{prop:core-automata-characterization}
therefore identifies it with \(\C(\D(\mathcal B))=\C(K)\).
\end{proof}

\begin{proposition}
\label{prop:pullback-precore}
Let \(\psi:\C_n\to\C_m\) be a homeomorphism preserving or reversing
lexicographic order.  Let \(\mathcal A\) be a full \(n\)-ary tree-automaton
with the existence property, and set \(H=\D(\mathcal A)\).  If
\(H^\psi\le F_m\), then \(\mathcal P_\psi(\mathcal A)\) has the existence
property.  Hence it is a pre-core for \(H^\psi\).  In particular, if $\psi$ is rational and
\(\mathcal A=\C(H)\) is finite and full, then folding
\(\mathcal P_\psi(\mathcal A)\) gives \(\C(H^\psi)\); no hanging trees have to
be deleted.
\end{proposition}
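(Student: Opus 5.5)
The plan is to verify the existence property directly from the definition of the pullback automaton, reusing the computation in the proof of Theorem~\ref{thm:pullback-theorem}, and then to read off the remaining assertions from Corollary~\ref{cor:pullback-subgroup-case} and Lemmas~\ref{lem:existence-property-folding} and~\ref{lem:finite-full-folded-precore-is-core}.

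\textbf{Existence property.} Write \(\mathcal P=\mathcal P_\psi(\mathcal A)\) and \(\phi=\psi^{-1}\). Let \(x,y\in\W_m\) with \(x^+=y^+\) in \(\mathcal P\); every word is readable, since \(\mathcal P\) is full. Then \(s^\phi_x=s^\phi_y\), so \(\phi_x=\phi_y\), and the source prefixes satisfy \(\tau(q_0,u)=\tau(q_0,v)\), where \(u=O^\phi(x)\) and \(v=O^\phi(y)\). The words \(u\) and \(v\) are readable in \(\mathcal A\) and end at the same state. The existence property of \(\mathcal A\) therefore gives \(h\in H\) with the branch pair \(u\to v\).

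Put \(g=\phi h\psi=h^\psi\); by hypothesis \(g\in H^\psi\le F_m\). For \(\omega\in\C_m\),
\[
g(x\omega)=\psi\bigl(h(u\,\phi_x(\omega))\bigr)=\psi\bigl(v\,\phi_y(\omega)\bigr)=\psi(\phi(y\omega))=y\omega .
\]
So \(g\) maps \(U_x\) onto \(U_y\) by prefix replacement, that is, \(g\) has the branch pair \(x\to y\) in the sense of Lemma~\ref{lem:branch-pair-criterion-prelim}'s conventions. Since \(g\in F_m\) acts as a prefix replacement on \(U_x\), some tree diagram of \(g\) contains \(x\to y\) as a branch pair. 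Finally \(g\in H^\psi=\D(\mathcal P)\) by Corollary~\ref{cor:pullback-subgroup-case}, which establishes the existence property. Together with \(\D(\mathcal P)=H^\psi\), this says \(\mathcal P\) is a pre-core for \(H^\psi\).

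\textbf{Final assertion.} Now assume \(\psi\) is rational and \(\mathcal A=\C(H)\) is finite and full. Then \(\mathcal A\) has the existence property by Lemma~\ref{lem:core-existence-property}. The transducer \(T^\phi\) is finite, because the inverse of a rational homeomorphism is rational (Proposition~\ref{prop:inverse-transducer-prelim}). Hence \(\mathcal P\subseteq S_\phi\times Q\) is finite, and it is full by construction.

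By Lemma~\ref{lem:existence-property-folding}, the folded quotient \(\overline{\mathcal P}\) is a pre-core for \(H^\psi\). It is still finite and full, since folding preserves readability by Lemma~\ref{lem:folded-quotient-finite-depth}. Lemma~\ref{lem:finite-full-folded-precore-is-core} then identifies \(\overline{\mathcal P}\) with \(\C(H^\psi)\), and no hanging trees need to be deleted.

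\textbf{Expected obstacle.} The only delicate step is confirming that the pointwise identity \(g(x\omega)=y\omega\) yields a genuine branch pair of an element of \(\D(\mathcal P)\). This is handled by citing Corollary~\ref{cor:pullback-subgroup-case} rather than constructing an accepted diagram by hand.
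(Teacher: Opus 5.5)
Your proposal is correct and follows the paper's proof essentially step for step. You pull the equality \(x^+=y^+\) in \(\mathcal P_\psi(\mathcal A)\) back to \(\phi_x=\phi_y\) together with equal \(\mathcal A\)-states for \(O^\phi(x)\) and \(O^\phi(y)\). You then conjugate a witness \(h\in H\) to \(g=\phi h\psi\), check that \(g(x\omega)=y\omega\), and conclude using finiteness, fullness and Lemma~\ref{lem:finite-full-folded-precore-is-core}.

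The only difference is that you make explicit two points the paper leaves implicit. First, \(g\in\D(\mathcal P_\psi(\mathcal A))\) by Corollary~\ref{cor:pullback-subgroup-case}. Second, \(T^\phi\) is finite because inverses of rational homeomorphisms are rational (Proposition~\ref{prop:inverse-transducer-prelim}).
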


\begin{proof}
Suppose \(x,y\in\W_m\) reach the same state in
\(\mathcal P_\psi(\mathcal A)\).  Then
\[
        s^\phi_x=s^\phi_y,
        \qquad
        \tau(q_0,O^\phi(x))=\tau(q_0,O^\phi(y)).
\]
By the existence property of \(\mathcal A\), there is \(h\in H\) with branch
pair
\(O^\phi(x)\to O^\phi(y).\)
Let
\(g=\phi h\psi.\)
Since \(H^\psi\le F_m\), we have \(g\in F_m\).  For every
\(\omega\in\C_m\),
\[
\begin{aligned}
        \phi(g(x\omega))
        &=h(\phi(x\omega))                                      \\
        &=h\bigl(O^\phi(x)\phi_x(\omega)\bigr)                  \\
        &=O^\phi(y)\phi_x(\omega)                               \\
        &=O^\phi(y)\phi_y(\omega)                               \\
        &=\phi(y\omega).
\end{aligned}
\]
Since \(\phi\) is injective, \(g(x\omega)=y\omega\) for all \(\omega\).  Thus
\(g\) has branch pair \(x\to y\), proving the existence property.  The pullback automaton is finite and full when \(\mathcal A\) and \(T^\phi\)
are finite and \(\mathcal A\) is full, and folding preserves fullness.  Hence,
in the finite full case, the folded pullback automaton is a finite full folded
pre-core for \(H^\psi\).  Lemma~\ref{lem:finite-full-folded-precore-is-core}
therefore identifies it with \(\C(H^\psi)\).
\end{proof}

\subsection{The forward automaton}
\label{subsec:forward-automaton}

The forward construction gives the same subgroup while avoiding the inverse
transducer.  Instead of pulling \(m\)-ary cylinders back by
\(\phi=\psi^{-1}\), we run the original transducer \(T^\psi\) forward on
\(n\)-ary input words.  An input prefix \(u\in\W_n\) gives two pieces of
information: the state \(\tau(q_0,u)\) reached in \(\mathcal A\), and the
\(m\)-ary output word \(O^\psi(u)\), which tells us where the cylinder \(U_u\)
begins to land in \(\C_m\).  Thus the output of the transducer is what produces
the \(m\)-ary branches, while the input word is what is tested in
\(\mathcal A\).

The difficulty is that an output prefix \(x\in\W_m\) need not come from a
unique input prefix.  Different input paths, or different stopping points
inside word-labelled edges, may have produced exactly the same output prefix
\(x\).  The raw forward automaton records these forward computations before
determinization.  After subdividing word-labelled edges, the subset \(R_x\)
records all possible positions in the raw computation after the output prefix
\(x\) has been produced.  The forward automaton is the deterministic
\(m\)-ary automaton obtained by remembering these finite sets of possible
forward states.
           
\begin{definition}
\label{def:raw-forward-automaton}
Let
\[
        \psi:\C_n\to\C_m
\]
be a  homeomorphism, and let
\[
        \mathcal A=(Q,\tau,q_0)
\]
be a full \(n\)-ary tree-automaton.

First form the ambient word-labelled automaton with state set
\[
        S_\psi\times Q,
\]
initial state
\[
        (s_\psi,q_0),
\]
and, for every \(s\in S_\psi\), \(q\in Q\), and \(i\in X_n\), an edge
\[
        (s,q)
        \xrightarrow{\ o_\psi(s,i)\ }
        \bigl(t_\psi(s,i),\tau(q,i)\bigr).
\]
Thus reading the input letter \(i\) moves the transducer coordinate from
\(s\) to \(t_\psi(s,i)\), moves the \(\mathcal A\)-coordinate from \(q\) to
\(\tau(q,i)\), and records as edge label the output word produced by
\(T^\psi\) on that input letter.

The \emph{raw forward automaton}
\[
        \Fraw_\psi(\mathcal A)
\]
is the accessible part of this ambient word-labelled automaton: its states are
the states reachable from \((s_\psi,q_0)\), and its edges are the ambient
edges whose source is reachable.  The states of \(\Fraw_\psi(\mathcal A)\)
will be called \emph{principal states}.  This terminology is used to
distinguish them from the subdivision states introduced below.
\end{definition}

\begin{definition}[The subdivided raw automaton]
\label{def:subdivided-raw-forward-automaton}
Let
\[
        \widehat{\Fraw}_\psi(\mathcal A)
\]
be the \(\eps\)-NFA over \(X_m\) obtained from
\(\Fraw_\psi(\mathcal A)\) by subdividing word-labelled edges as in
Subsection~\ref{subsec:subset-construction-prelim}.  Thus an edge labelled
\(\eps\) becomes an \(\eps\)-edge, an edge labelled by a single letter of
\(X_m\) is left unchanged, and an edge
\[
        e:p\xrightarrow{\ b_1\cdots b_k\ }q
        \qquad(k\ge2)
\]
is replaced by the path
\[
        p\xrightarrow{\ b_1\ }(e,b_2\cdots b_k)
        \xrightarrow{\ b_2\ }(e,b_3\cdots b_k)
        \longrightarrow\cdots\longrightarrow
        (e,b_k)\xrightarrow{\ b_k\ }q .
\]
Recall that the notation \((e,\rho)\) means that we are partway through the original raw
edge \(e\), and that \(\rho\) is the unread suffix of the word label of \(e\).
For example, after reading the first letter \(b_1\) of the label
\(b_1\cdots b_k\), the unread suffix is \(b_2\cdots b_k\), so the reached
subdivision state is \((e,b_2\cdots b_k)\).

The original states of \(\Fraw_\psi(\mathcal A)\) are the principal states of
\(\widehat{\Fraw}_\psi(\mathcal A)\), and the newly inserted vertices are its
subdivision states.
\end{definition}

\begin{lemma}
\label{lem:raw-paths}
Let \((s,q)\) be a principal state of \(\Fraw_\psi(\mathcal A)\), and let
\[
        u=i_1\cdots i_r\in\W_n.
\]
The word \(u\) determines a unique raw path in
\(\Fraw_\psi(\mathcal A)\) starting at \((s,q)\), namely the path obtained by
successively following the raw edges corresponding to
\[
        i_1,\ldots,i_r.
\]
This path ends at
\[
        \bigl(t_\psi(s,u),\tau(q,u)\bigr)
\]
and has word label
\[
        o_\psi(s,u).
\]

In particular, from the initial state \((s_\psi,q_0)\), the word \(u\)
determines a raw path ending at
\[
        \bigl(s^\psi_u,\tau(q_0,u)\bigr)
\]
with label
\[
        O^\psi(u).
\]

After subdivision, the same input word \(u\) determines a unique path in
\[
        \widehat{\Fraw}_\psi(\mathcal A)
\]
with the same label \(o_\psi(s,u)\), obtained by replacing each raw edge of
the preceding path by its subdivided path.
\end{lemma}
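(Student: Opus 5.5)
The proof is an induction on the length $r$ of $u$. The recursive definitions of the extended transition and output functions of $T^\psi$ and $\mathcal A$ do all the work.

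For $r=0$ we take the empty path at $(s,q)$. It ends at $(s,q)=(t_\psi(s,\eps),\tau(q,\eps))$ and has label $\eps=o_\psi(s,\eps)$. For the inductive step, write $u=u'i$ and assume the claim for $u'$. Then the path for $u'$ ends at the principal state $(t_\psi(s,u'),\tau(q,u'))$ and has label $o_\psi(s,u')$.

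This end state is reachable from $(s,q)$, and hence from the initial state whenever $(s,q)$ is. So by the definition of the accessible part, every ambient edge out of it belongs to $\Fraw_\psi(\mathcal A)$. In particular, it has exactly one outgoing raw edge \emph{attached to the input letter} $i$. Here we use that $\mathcal A$ is full, so $\tau(\cdot,i)$ is always defined, and that $t_\psi,o_\psi$ are total. That edge goes to
\[
\bigl(t_\psi(t_\psi(s,u'),i),\tau(\tau(q,u'),i)\bigr)=\bigl(t_\psi(s,u),\tau(q,u)\bigr)
\]
and has label $o_\psi(t_\psi(s,u'),i)$. Appending it gives total label
\[
o_\psi(s,u')\,o_\psi(t_\psi(s,u'),i)=o_\psi(s,u),
\]
by the recursive formula for outputs.

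Uniqueness needs one point of care. We read it as uniqueness of the sequence of raw edges indexed by input letters: the edges are defined per triple $(s,q,i)$. Two distinct letters could give edges with equal endpoints and equal labels, so the path is determined by the input word, not merely by its endpoints and label. Since the construction at each step is forced, uniqueness follows by the same induction. The special case is $(s,q)=(s_\psi,q_0)$, where by Notation~\ref{not:state-local-action-dictionary} we have $t_\psi(s_\psi,u)=s^\psi_u$ and $o_\psi(s_\psi,u)=O^\psi(u)$.

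For the subdivided statement, replace each raw edge of the path by its subdivided path, as in Definition~\ref{def:subdivided-raw-forward-automaton}. An edge labelled $\eps$ becomes one $\eps$-edge, and an edge labelled $b_1\cdots b_k$ becomes a chain with labels $b_1,\dots,b_k$. So the label of the subdivided path equals the concatenation of the original word labels, namely $o_\psi(s,u)$. Each raw edge has a unique subdivided replacement, so this path is uniquely determined by $u$.

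No step is a real obstacle. The only subtle point is making the sense of ``unique'' precise (indexed by input letters), which is what the construction supports.
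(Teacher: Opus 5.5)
Your proof is correct and follows the paper's argument. Both argue by induction on $|u|$ using the recursive formulas for $t_\psi$, $o_\psi$ and $\tau$, get accessibility of the continued path from accessibility of $(s,q)$, and read the subdivided statement directly off Definition~\ref{def:subdivided-raw-forward-automaton}. (The coincidence you guard against, two letters giving edges with the same target and label, cannot actually occur: $t_\psi(s,i)=t_\psi(s,j)$ together with $o_\psi(s,i)=o_\psi(s,j)$ would contradict injectivity of $h_{T^\psi_s}$, compare Lemma~\ref{lem:no-epsilon-into-homeomorphism-state}(ii). Reading uniqueness as indexed by input letters is in any case what the statement intends.)
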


\begin{proof}
The raw statement follows by induction on \(|u|\).  For one letter it is the
definition of the raw forward automaton.  If
\(u=vi,\)
then by the induction hypothesis the path determined by \(v\) ends at
\[
        \bigl(t_\psi(s,v),\tau(q,v)\bigr)
\]
and has label \(o_\psi(s,v)\).  Following the raw edge corresponding to \(i\)
then gives the terminal state
\[
        \bigl(t_\psi(s,vi),\tau(q,vi)\bigr)
\]
and appends the output word
\(o_\psi(t_\psi(s,v),i).\)
Thus the total label is
\[
        o_\psi(s,v)o_\psi(t_\psi(s,v),i)=o_\psi(s,vi).
\]

Since \((s,q)\) is reachable, every state reached by continuing this path is
also reachable; hence the path lies in the accessible raw automaton.  The
subdivided statement is immediate from the definition of
\(\widehat{\Fraw}_\psi(\mathcal A)\).
\end{proof}

\begin{definition}
\label{def:forward-automaton}
Let \(P\) be the state set of the subdivided raw automaton
\[
        \widehat{\Fraw}_\psi(\mathcal A).
\]
For \(x\in\W_m\), define
\[
        R_x=R_x(\psi,\mathcal A)
\]
to be the set of all states \(p\in P\) such that there exists a path in
\[
        \widehat{\Fraw}_\psi(\mathcal A)
\]
from the initial state \((s_\psi,q_0)\) to \(p\) whose label is \(x\).  Here,
as in Subsection~\ref{subsec:subset-construction-prelim}, the label of a path
is obtained by concatenating the edge labels and deleting all occurrences of
\(\eps\).  Thus paths contributing to \(R_x\) may use \(\eps\)-edges before the
first letter of \(x\), after the last letter of \(x\), or between two
consecutive letters of \(x\).

Equivalently, the subsets \(R_x\) are computed recursively as follows.  For
\(B\subseteq P\), let \(E(B)\) be its \(\eps\)-closure.  Then
\[
        R_\eps=E(\{(s_\psi,q_0)\}),
\]
and, for \(b\in X_m\),
\[
        R_{xb}
        =
        E\left(
        \bigcup_{p\in R_x}\Delta(p,b)
        \right),
\]
where \(\Delta(p,b)\) is the set of \(b\)-labelled successors of \(p\) in
\[
        \widehat{\Fraw}_\psi(\mathcal A).
\]

The \emph{forward automaton}
\[
        \Ffor_\psi(\mathcal A)
\]
is the deterministic \(m\)-ary automaton whose states are the subsets
\[
        R_x\qquad (x\in\W_m),
\]
with initial state \(R_\eps\), and whose transition on \(b\in X_m\) is
\[
        R_x\xrightarrow{\ b\ }R_{xb}.
\]
It is accessible by construction.
\end{definition}

Note that \(R_x\ne\emptyset\) for every \(x\in\W_m\), so that the transitions
above do not leave the displayed state set and \(\Ffor_\psi(\mathcal A)\) is a
full \(m\)-ary tree-automaton.  Indeed, by the final assertion of
Lemma~\ref{lem:forward-state-carried-data} below, applied to any point
\(\eta\in\phi(U_x)\) (a nonempty set, since \(\phi\) is onto), there is a path
labelled \(x\) from the initial state of
\(\widehat{\Fraw}_\psi(\mathcal A)\), and its terminal state belongs to
\(R_x\).

Since \(\Ffor_\psi(\mathcal A)\) is an \(m\)-ary tree-automaton, the accepted
diagram group \(\D(\Ffor_\psi(\mathcal A))\) is defined by the usual
acceptance convention of Subsection~\ref{subsec:tree-automata}.
                                                                                                                                                                         
The pullback and forward constructions encode the same closed subgroup, but
they record the information in different ways.  The pullback automaton records
a single local action of \(\phi=\psi^{-1}\), while the forward automaton
records a finite set of partial forward computations.  The next result says
that these two descriptions become equivalent after passing to a sufficiently
fine common refinement.

We first isolate a compactness argument which will be used twice.

\begin{lemma}
\label{lem:eventual-equality-compactness}
Let \(\mathcal B\) be a deterministic \(m\)-ary automaton.  Let
\(x,y\in\W_m\).  Assume that, for every \(\alpha\in\C_m\), there exists
\(j\ge0\) such that
\[
        x\alpha_1\cdots\alpha_j
        \quad\text{and}\quad
        y\alpha_1\cdots\alpha_j
\]
reach the same state in \(\mathcal B\).  Then there exists \(k\ge0\) such
that
\[
        xw
        \quad\text{and}\quad
        yw
\]
reach the same state in \(\mathcal B\) for every \(w\in X_m^k\).
\end{lemma}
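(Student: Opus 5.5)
The plan is a standard compactness argument on $\C_m$, combined with the fact that in a deterministic automaton, once two words reach a common state, all their common extensions also reach a common state.

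First I will record this monotonicity. Suppose $xz$ and $yz$ reach the same state $p$ of $\mathcal B$, and $v\in\W_m$ is such that $xzv$ is readable. Determinism gives $\tau(q_0,xzv)=p\cdot v=\tau(q_0,yzv)$, so $yzv$ is readable as well and reaches the same state. I also need readability of the relevant words. For any $w\in\W_m$, apply the hypothesis to some $\alpha\in\C_m$ beginning with $w$. This gives $j$ such that $x\alpha_1\cdots\alpha_j$ and $y\alpha_1\cdots\alpha_j$ reach states. If $j\ge|w|$, then $xw$ and $yw$ are readable, since prefixes of readable words are readable. If $j<|w|$, readability of $xw$ is not yet guaranteed. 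To handle this I will choose $\alpha$ so that it is also witnessed far along: the cleanest route is to observe that the hypothesis, applied to every $\alpha$ extending $w$, together with monotonicity, reduces to the case where $xw$ is readable. When $\mathcal B$ is full, as in all intended applications, this issue disappears. If necessary I will note the full case explicitly.

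Next comes the compactness step. For each $\alpha\in\C_m$, fix $j(\alpha)$ as in the hypothesis and put $z_\alpha=\alpha_1\cdots\alpha_{j(\alpha)}$. The cylinders $U_{z_\alpha}$ are open and cover $\C_m$, since $\alpha\in U_{z_\alpha}$. By compactness, finitely many of them, say $U_{z_{\alpha^{(1)}}},\dots,U_{z_{\alpha^{(N)}}}$, already cover $\C_m$. Set
\[
k=\max_{1\le i\le N}|z_{\alpha^{(i)}}|.
\]

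Now let $w\in X_m^k$. The cylinder $U_w$ is nonempty, so it meets some $U_{z_{\alpha^{(i)}}}$. Since $|w|\ge|z_{\alpha^{(i)}}|$, two cylinders that intersect must be nested, which forces $w=z_{\alpha^{(i)}}v$ for some $v\in\W_m$. By the choice of $z_{\alpha^{(i)}}$, the words $xz_{\alpha^{(i)}}$ and $yz_{\alpha^{(i)}}$ reach the same state. Monotonicity then shows that $xw$ and $yw$ reach the same state.

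The only delicate point is readability when $\mathcal B$ is not full, that is, ensuring that $p\cdot v$ is defined. Everything else is routine. I expect this to be settled by the argument sketched above, or else by the fullness of the automata to which the lemma is applied.
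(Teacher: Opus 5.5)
Your proof is correct and is essentially the paper's compactness argument. The paper works with the increasing clopen sets $E_j$ of those $\alpha$ for which $x\alpha_1\cdots\alpha_j$ and $y\alpha_1\cdots\alpha_j$ reach the same state, and extracts $E_k=\C_m$; you take a finite subcover by the cylinders $U_{z_\alpha}$ and let $k$ be the maximal length, which amounts to the same thing.

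Your readability concern is genuine, but the reduction you sketch for non-full $\mathcal B$ cannot work, because the statement itself fails there. Take $x=0$, $y=1$ in a binary tree-automaton where $00$ and $10$ end at a common leaf, while $01$ and $11$ end at distinct states with the same two leaf children. The hypothesis holds, but no $k$ works: $0,1$ and $01,11$ end at distinct states, and $000$ is unreadable. The paper's proof also uses fullness tacitly, in the monotonicity of the $E_j$. Since the lemma is only applied to the pullback and forward automata, which are full, simply assume $\mathcal B$ is full.
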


\begin{proof}
For \(j\ge0\), let \(E_j\subseteq\C_m\) be the set of all
\(\alpha\in\C_m\) such that
\[
        x\alpha_1\cdots\alpha_j
        \quad\text{and}\quad
        y\alpha_1\cdots\alpha_j
\]
reach the same state in \(\mathcal B\).  Each \(E_j\) is clopen, because
membership depends only on the first \(j\) letters of \(\alpha\).

The sets \(E_j\) are increasing.  Indeed, once two words reach the same state
in a deterministic automaton, all their equally labelled descendants reach the
same state.  By assumption,
\[
        \C_m=\bigcup_{j\ge0} E_j .
\]
Since \(\C_m\) is compact, finitely many of the \(E_j\)'s cover \(\C_m\).
Because the \(E_j\)'s are increasing, there is \(k\ge0\) such that
\(E_k=\C_m .\)
Now let \(w\in X_m^k\).  Choosing any \(\alpha\in U_w\), the equality of
states at level \(k\) says exactly that \(xw\) and \(yw\) reach the same state.
\end{proof}

We next record the bookkeeping carried by a state of the subdivided raw
forward automaton.  A state \(c\) gives us a remaining output word
\(\rho_c\), a local action \(\chi_c\) of \(\psi\), and a state \(q_c\) of
\(\mathcal A\). 

\begin{lemma}
\label{lem:forward-state-carried-data}
Let \(c\) be a state of the subdivided raw forward automaton
\[
        \widehat{\Fraw}_\psi(\mathcal A).
\]
Then \(c\) determines data
\[
        \rho_c\in\W_m,\qquad
        \chi_c:\C_n\to\C_m,\qquad
        q_c\in Q,
\]
where \(\chi_c\) is a local action of \(\psi\).  In particular,
\(\chi_c\) is continuous and injective, its image \(\chi_c(\C_n)\) is clopen
in \(\C_m\).

These data have the following property.  For every path \(\gamma\) in
\(\widehat{\Fraw}_\psi(\mathcal A)\) from the initial state
\((s_\psi,q_0)\) to \(c\), let
\[
        z=z(\gamma)\in\W_m
\]
be the label of \(\gamma\).  Then there is an associated input word
\[
        a=a(\gamma)\in\W_n
\]
such that
\[
        \psi(a\omega)=z\,\rho_c\,\chi_c(\omega)
        \qquad(\omega\in\C_n),
\]
and
\[
        \tau(q_0,a)=q_c.
\]
The data \(\rho_c,\chi_c,q_c\) depend only on the terminal state \(c\), whereas
the words \(z(\gamma)\) and \(a(\gamma)\)  depend on the chosen path
\(\gamma\).

Moreover, if \(z\in\W_m\) and \(\eta\in\phi(U_z)\), then there is a path
\(\gamma\) in \(\widehat{\Fraw}_\psi(\mathcal A)\) labelled \(z\), ending at
some state \(c\in R_z\), such that
\[
        \eta\in U_{a(\gamma)}.
\]
\end{lemma}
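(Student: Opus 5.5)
We define the data explicitly according to the type of the state \(c\), and then prove the two assertions by induction along paths. If \(c=(s,q)\) is a principal state, set \(\rho_c=\eps\), \(\chi_c=h_{T_s}\) and \(q_c=q\). Since \(T^\psi\) is minimal, every state \(s\) is accessible, say \(s=s^\psi_v\); the state--local-action dictionary (Notation~\ref{not:state-local-action-dictionary}) then gives \(h_{T_s}=\psi_v\), so \(\chi_c\) is a local action of \(\psi\). It is continuous and injective with clopen image, as recorded after the minimization theorem. If \(c=(e,\rho)\) is a subdivision state lying on a raw edge
\[
e:(s,q)\xrightarrow{\ o_\psi(s,i)\ }\bigl(t_\psi(s,i),\tau(q,i)\bigr),
\]
set \(\rho_c=\rho\), \(\chi_c=h_{T_{t_\psi(s,i)}}\) and \(q_c=\tau(q,i)\). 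In words, a subdivision state pretends that the input letter \(i\) has already been consumed, while the suffix \(\rho\) of its output has not yet been emitted. These data depend only on \(c\), because \(e\), \(s\), \(q\) and \(i\) are determined by the edge.

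To prove the displayed identity, fix a path \(\gamma\) from \((s_\psi,q_0)\) to \(c\) and decompose it into full raw edges, possibly followed by a final partial edge. Let \(a(\gamma)\) be the sequence of input letters of the raw edges traversed, where a partially traversed final edge contributes its letter \(i\). By Lemma~\ref{lem:raw-paths}, if \(\gamma\) consists of full raw edges with input word \(a\), it ends at \((s^\psi_a,\tau(q_0,a))\) and has label \(O^\psi(a)\). The identity \(\psi(a\omega)=O^\psi(a)\psi_a(\omega)\) then gives the claim with \(\rho_c=\eps\), and \(\tau(q_0,a)=q_c\) holds by definition. If \(\gamma\) ends partway along an edge \(e\) at \((e,\rho)\), write \(a=a'i\). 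The full part of the path has label \(O^\psi(a')\), and the partial part has label \(z'\) with \(z'\rho=o_\psi(s,i)\). Then
\[
\psi(a'i\omega)=O^\psi(a')\,o_\psi(s,i)\,h_{T_{t_\psi(s,i)}}(\omega)=z\,\rho\,\chi_c(\omega),
\]
and again \(\tau(q_0,a)=q_c\). A subtle point here is that principal states and subdivision states are distinct vertices, so the data attached to each vertex are unambiguous even when different paths reach it.

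For the final assertion, take \(z\in\W_m\) and \(\eta\in\phi(U_z)\), so that \(\psi(\eta)\) begins with \(z\). The plan is to read \(\eta\) letter by letter from the initial state along the raw path it determines. The labels \(O^\psi(\eta_1\cdots\eta_j)\) are prefixes of \(\psi(\eta)\), and they increase to \(\psi(\eta)\) because the outputs on an infinite input form an infinite word (nondegeneracy). Choose the least \(j\) such that \(|O^\psi(\eta_1\cdots\eta_j)|\ge|z|\). If equality holds, stop at the principal state reached, with \(a=\eta_1\cdots\eta_j\). Otherwise, the \(j\)-th raw edge is partially traversed: we stop at the subdivision state where the output read so far equals \(z\), or at the source principal state of that edge if \(z=O^\psi(\eta_1\cdots\eta_{j-1})\). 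In the first case \(a=\eta_1\cdots\eta_j\) by the convention above. If the stopping point is the source principal state, then \(a=\eta_1\cdots\eta_{j-1}\). In all cases \(a\) is a prefix of \(\eta\), so \(\eta\in U_a\). Edges with empty label need only the observation that we may stop before traversing them, since they are \(\eps\)-edges. The terminal state lies in \(R_z\) by the definition of \(R_z\).

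The main obstacle is bookkeeping rather than mathematics. The key decision is the convention that a subdivision state already consumes its input letter, because this makes \(a(\gamma)\) and the identity consistent while keeping \(\rho_c\) and \(\chi_c\) dependent only on \(c\).
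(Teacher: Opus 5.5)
Your proposal is correct and follows essentially the same route as the paper. It uses the same data for principal and subdivision states (a subdivision state counts its input letter as already consumed), the same verification via Lemma~\ref{lem:raw-paths} and the formula $\psi(a\omega)=O^\psi(a)\psi_a(\omega)$, and the same argument for the final assertion: run $T^\psi$ on $\eta$ and stop exactly when $z$ has been output. One minor remark: the clause about stopping at the source principal state never occurs, since minimality of $j$ forces $|O^\psi(\eta_1\cdots\eta_{j-1})|<|z|$.
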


\begin{proof}
We define the data carried by \(c\).

First suppose that \(c\) is a principal state, say
\[
        c=(s,q)\in S_\psi\times Q.
\]
Set
\[
        \rho_c=\eps,\qquad
        \chi_c=h_{T^\psi_s},\qquad
        q_c=q.
\]
Since \(s\) is accessible in the minimal transducer \(T^\psi\),
\(h_{T^\psi_s}\) is one of the local actions of \(\psi\).  Hence
\(\chi_c\) has the stated topological properties.

Let \(\gamma\) be a path in \(\widehat{\Fraw}_\psi(\mathcal A)\) ending at
\(c\), and let
\(z=z(\gamma)\)
be its label.  Let \(a=a(\gamma)\) be the input word read by the corresponding
path in the raw forward automaton.  Since the raw forward automaton is the
product of \(T^\psi\) with \(\mathcal A\), the raw path determined by \(a\)
ends at
\[
        (s^\psi_a,\tau(q_0,a))=(s,q),
\]
and has label
\(O^\psi(a)=z.\)
Therefore
\[
        \psi(a\omega)
        =
        O^\psi(a)\psi_a(\omega)
        =
        z\,h_{T^\psi_s}(\omega)
        =
        z\,\rho_c\,\chi_c(\omega),
\]
and
\(\tau(q_0,a)=q=q_c.\)

Now suppose that \(c\) is a subdivision state.  Thus
\(c=(e,\rho),\)
where \(e\) is a raw edge
\[
        e:(s,q)\xrightarrow{\lambda}(s',q')
\]
and \(\rho\) is the unread suffix of the word label \(\lambda\).  The edge
\(e\) comes from some input letter \(i\in X_n\), so
\[
        s'=t_\psi(s,i),
        \qquad
        q'=\tau(q,i),
        \qquad
        \lambda=o_\psi(s,i).
\]
Define
\[
        \rho_c=\rho,\qquad
        \chi_c=h_{T^\psi_{s'}},\qquad
        q_c=q'.
\]
Again, since \(s'\) is accessible, \(h_{T^\psi_{s'}}\) is a local action of
\(\psi\), and so \(\chi_c\) is a homeomorphism from \(\C_n\) onto the clopen
set \(\chi_c(\C_n)\).

Let \(\gamma\) be a path in \(\widehat{\Fraw}_\psi(\mathcal A)\) ending at
\(c\), and let
\(z=z(\gamma)\)
be its label.  The corresponding raw path reaches the source \((s,q)\) of
\(e\), then reads the input letter \(i\), and then stops partway through the
output word \(\lambda\).  Let \(a_0\) be the input word read before the edge
\(e\), and set
\(a=a_0i.\)
If the part of \(\lambda\) already read is \(\mu\), then
\[
        \lambda=\mu\rho,
        \qquad
        z=O^\psi(a_0)\mu.
\]
Hence
\[
        O^\psi(a)
        =
        O^\psi(a_0)o_\psi(s,i)
        =
        O^\psi(a_0)\mu\rho
        =
        z\rho.
\]
Also
\[
        s^\psi_a=s',
        \qquad
        \tau(q_0,a)=q'.
\]
Therefore
\[
        \psi(a\omega)
        =
        O^\psi(a)\psi_a(\omega)
        =
        z\rho\,h_{T^\psi_{s'}}(\omega)
        =
        z\,\rho_c\,\chi_c(\omega),
\]
and
\(\tau(q_0,a)=q'=q_c.\)

It remains to prove the final assertion.  Let \(\eta\in\phi(U_z)\).  Then
\(\psi(\eta)\in U_z.\)
Run the transducer \(T^\psi\) on the input \(\eta\), and follow the
corresponding path in the raw forward automaton.  After subdividing
word-labelled edges, the output letters are read one at a time.  Since
\(\psi(\eta)\) begins with \(z\), there is a point along the subdivided path at
which exactly the word \(z\) has been read.  Let \(c\) be the state reached at
that point, and let \(\gamma\) be the path just followed.  Then \(c\in R_z\).
If \(a=a(\gamma)\) is the input word consumed up to that point, then the
original input \(\eta\) has prefix \(a\).  Hence
\(\eta\in U_a.\)
\end{proof}

The next lemma is a small continuation principle for the forward automaton.  It
says that if two forward computations have reached the same principal state,
then the subset states obtained by reading a common remaining output word are
the same, provided the relevant cylinders force all computations to pass
through that principal state.

\begin{lemma}
\label{lem:common-principal-continuation}
Let \(a,b\in\W_n\), let \(z,z',\zeta\in\W_m\), and suppose that the raw paths
determined by \(a\) and \(b\) both end at the same principal state \(c_0\) of
\(\Fraw_\psi(\mathcal A)\), with labels \(z\) and \(z'\), respectively.
Assume also that
\[
        \phi(U_{z\zeta})\subseteq U_a,
        \qquad
        \phi(U_{z'\zeta})\subseteq U_b.
\]
Then
\[
        R_{z\zeta}=R_{z'\zeta}.
\]
\end{lemma}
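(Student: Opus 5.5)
By symmetry it suffices to prove $R_{z\zeta}\subseteq R_{z'\zeta}$; the reverse inclusion follows by exchanging the roles of $(a,z)$ and $(b,z')$. The plan is to show that every path realizing a state of $R_{z\zeta}$ must pass through $c_0$ at the moment exactly $z$ has been read. We then cut the path at $c_0$ and replace its initial segment by the path determined by $b$.

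Fix $c\in R_{z\zeta}$ and a path $\gamma$ in $\widehat{\Fraw}_\psi(\mathcal A)$ from $(s_\psi,q_0)$ to $c$ with label $z\zeta$. By Lemma~\ref{lem:forward-state-carried-data}, $\gamma$ has an associated input word $a(\gamma)$ with
\[
        \psi(a(\gamma)\omega)=z\zeta\,\rho_c\,\chi_c(\omega)
        \qquad(\omega\in\C_n).
\]
This gives $\psi(U_{a(\gamma)})\subseteq U_{z\zeta}$, that is, $U_{a(\gamma)}\subseteq\phi(U_{z\zeta})\subseteq U_a$. Two cylinders satisfy $U_{a(\gamma)}\subseteq U_a$ exactly when $a$ is a prefix of $a(\gamma)$, so we may write $a(\gamma)=aa'$.

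Next we use that raw paths are determined by input words (Lemma~\ref{lem:raw-paths}). The raw path underlying $\gamma$ is the one determined by $aa'$, possibly stopped partway through its last subdivided edge. Its initial segment is therefore the raw path determined by $a$, which ends at the principal state $c_0$ with label $O^\psi(a)=z$. Consequently $\gamma$ factors as a path $\gamma_1$ from the initial state to $c_0$ with label $z$, followed by a path $\gamma_2$ from $c_0$ to $c$. Since labels concatenate and the total label is $z\zeta$, the label of $\gamma_2$ is $\zeta$.

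Now let $\beta$ be the subdivided path determined by $b$; it ends at $c_0$ with label $z'$. The concatenation $\beta\gamma_2$ is a path from $(s_\psi,q_0)$ to $c$ with label $z'\zeta$, so $c\in R_{z'\zeta}$.

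The main obstacle is the factorization step: we must check carefully that, in the subdivided automaton, $\gamma$ genuinely visits the principal state $c_0$ at the point where its consumed label is exactly $z$. Two issues need attention. First, $\gamma$ may stop in the middle of a subdivided edge, and it may traverse $\eps$-edges; in particular, when $a'$ is empty, $\gamma$ could end at $c_0$ itself or just after it along $\eps$-edges. Second, the correspondence between subdivided paths and raw paths must be prefix-compatible. I would handle both by recalling how $a(\gamma)$ was defined in the proof of Lemma~\ref{lem:forward-state-carried-data}: it is the input consumed along the raw path, including the edge currently being traversed. Under that definition, the raw path for $a$ is an honest initial segment of the one for $a(\gamma)$, and the split point occurs at the principal state $c_0$.
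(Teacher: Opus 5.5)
Your proposal is correct and is essentially the paper's argument. The paper proves $R_{z\zeta}=C_\zeta=R_{z'\zeta}$, where $C_\zeta$ is the set of states reachable from $c_0$ by a path labelled $\zeta$, using exactly your observation that the input word $a(\gamma)$ extends $a$, so that $\gamma$ passes through $c_0$ once $z$ has been read; you instead splice the path for $b$ in directly, and the edge case you flag (with $a(\gamma)=a$ and $\gamma$ stopping partway through the last edge) cannot occur, since the label of $\gamma$ would then be a proper prefix of $z$ rather than $z\zeta$.
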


\begin{proof}
Let \(C_\zeta\) be the set of states of
\(\widehat{\Fraw}_\psi(\mathcal A)\) reachable from \(c_0\) by a path labelled
\(\zeta\), allowing \(\eps\)-edges as in the subset construction.

Since the raw path determined by \(a\) reaches \(c_0\) with label \(z\), every
state in \(C_\zeta\) is reachable from the initial state by a path labelled
\(z\zeta\).  Hence
\[
        C_\zeta\subseteq R_{z\zeta}.
\]
Conversely, let \(d\in R_{z\zeta}\).  Choose a path \(\gamma\) in
\(\widehat{\Fraw}_\psi(\mathcal A)\) from the initial state to \(d\), labelled
\(z\zeta\).  By Lemma~\ref{lem:forward-state-carried-data}, this path has an
associated input word \(u=a(\gamma)\) such that
\[
        \psi(U_u)\subseteq U_{z\zeta}.
\]
Equivalently,
\[
        U_u\subseteq \phi(U_{z\zeta})\subseteq U_a.
\]
Thus \(u\) extends \(a\).  The raw path determined by \(u\) therefore factors
through the principal state \(c_0\) after reading the prefix \(a\).  Since the
label accumulated up to that point is \(z\), the remaining part of the
subdivided path has label \(\zeta\).  Hence \(d\in C_\zeta\).  Therefore
\(R_{z\zeta}=C_\zeta.\)

The same argument, using \(b\) and \(z'\), gives
\(R_{z'\zeta}=C_\zeta.\)
Thus
\(R_{z\zeta}=R_{z'\zeta}.\)
\end{proof}

We can now compare the two constructions.  The first implication starts from
equality of forward states.  A common state \(c\) in the two subsets gives the
same remaining output \(\rho_c\), the same local action \(\chi_c\) of
\(\psi\), and the same \(\mathcal A\)-state.  After refining far enough along a
ray, the corresponding local actions of \(\phi=\psi^{-1}\) are therefore the
same.

\begin{theorem}
\label{thm:pullback-forward-eventual-equivalence}
Let
\[
        \psi:\C_n\to\C_m
\]
be a  homeomorphism preserving or reversing the lexicographic order,
let \(\phi=\psi^{-1}\), and let
\[
        \mathcal A=(Q,\tau,q_0)
\]
be a full \(n\)-ary tree-automaton.  Let
\[
        \mathcal P=\mathcal P_\psi(\mathcal A),
        \qquad
        \mathcal F=\Ffor_\psi(\mathcal A)
\]
be the pullback and forward automata.  For \(x\in\W_m\), let \(P_x\) be the
state reached by \(x\) in \(\mathcal P\), and let \(R_x\) be the state reached
by \(x\) in \(\mathcal F\).

Then, for all \(x,y\in\W_m\), the following hold.
\begin{enumerate}[label=(\roman*)]
    \item If \(R_x=R_y\), then there exists \(k\ge0\) such that
    \[
            P_{xw}=P_{yw}
            \qquad\text{for every }w\in X_m^k.
    \]
    \item If \(P_x=P_y\), then there exists \(k\ge0\) such that
    \[
            R_{xw}=R_{yw}
            \qquad\text{for every }w\in X_m^k.
    \]
\end{enumerate}
\end{theorem}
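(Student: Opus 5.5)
Both parts go through Lemma~\ref{lem:eventual-equality-compactness}: it suffices, for each ray \(\alpha\in\C_m\), to find \(j\) such that the relevant states after reading \(x\alpha_1\cdots\alpha_j\) and \(y\alpha_1\cdots\alpha_j\) coincide. The lemma then upgrades this pointwise statement to a uniform level \(k\). The states of \(\mathcal P\) are pairs \(P_x=(s^\phi_x,\tau(q_0,O^\phi(x)))\), and \(s^\phi_x\) determines \(\phi_x\) by the state--local-action dictionary. Hence \(P_x=P_y\) amounts to two conditions: \(\phi_x=\phi_y\), and the words \(O^\phi(x),O^\phi(y)\) reach the same state of \(\mathcal A\).

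For (i), suppose \(R_x=R_y\) and fix \(\alpha\). Pick \(\eta\in\phi(\bigcap_j U_{x\alpha_1\cdots\alpha_j})\) (a single point). By the final assertion of Lemma~\ref{lem:forward-state-carried-data}, there is a path labelled \(x\) ending at some \(c\in R_x=R_y\). Since \(c\in R_y\), there is also a path labelled \(y\) ending at \(c\). Lemma~\ref{lem:forward-state-carried-data} gives input words \(a,b\) with
\[
\psi(a\omega)=x\rho_c\chi_c(\omega),\qquad \psi(b\omega)=y\rho_c\chi_c(\omega),\qquad \tau(q_0,a)=\tau(q_0,b)=q_c .
\]
Since \(\chi_c(\C_n)\) is clopen, choose \(j\) large enough that \(U_{\rho_c\zeta}\subseteq \rho_c\chi_c(\C_n)\) for some cylinder, where \(\zeta\) is defined via \(x\alpha_1\cdots\alpha_j=x\rho_c\zeta\). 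For this we need the ray \(\alpha\) to pass through \(\rho_c\) and into the image of \(\chi_c\); this holds because \(\eta\in U_a\) and \(\psi(\eta)=x\alpha\). Then \(\phi\) restricted to \(U_{x\rho_c\zeta}\) is \(\omega'\mapsto a\,\chi_c^{-1}(\rho_c^{-1}\omega')\), and similarly with \(y,b\).

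It follows that the local actions \(\phi_{x\rho_c\zeta}\) and \(\phi_{y\rho_c\zeta}\) coincide: both are \(\chi_c^{-1}\) restricted to \(U_\zeta\), with the common root removed. The outputs are \(O^\phi(x\rho_c\zeta)=ar\) and \(O^\phi(y\rho_c\zeta)=br\) with the same \(r=\Root(\chi_c^{-1}(U_\zeta))\). Since \(\tau(q_0,a)=\tau(q_0,b)\), we get \(\tau(q_0,ar)=\tau(q_0,br)\). Thus \(P_{x\alpha_1\cdots\alpha_j}=P_{y\alpha_1\cdots\alpha_j}\), and compactness finishes (i).

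For (ii), suppose \(P_x=P_y\), so \(\phi_x=\phi_y\) and \(u=O^\phi(x)\), \(v=O^\phi(y)\) reach the same state of \(\mathcal A\). Fix \(\alpha\) and choose \(j\) large enough that \(\zeta=\alpha_1\cdots\alpha_j\) satisfies the following: the common cylinder \(\phi_x(U_\zeta)\) has root \(r\) long enough that the forward transducer, run on \(ur\) and on \(vr\), has emitted at least the outputs \(x\) and \(y\). More precisely, we take \(a=ur\) and \(b=vr\), and require \(O^\psi(a)=xz_0\) and \(O^\psi(b)=yz_0\) for a common word \(z_0\) that is a prefix of \(\zeta\). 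We also require \(\psi_a=\psi_b\); this holds since \(\psi_{ur}\) and \(\psi_{vr}\) are both \((\phi_x)^{-1}\) restricted to \(U_r\) after removing the common prefix. Since \(\psi_a=\psi_b\), minimality gives \(s^\psi_a=s^\psi_b\). Also \(\tau(q_0,a)=\tau(q_0,b)\), so the raw paths of \(a\) and \(b\) end at the same principal state \(c_0\).

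Lemma~\ref{lem:common-principal-continuation} then applies with \(z=xz_0\), \(z'=yz_0\), and remaining word \(\zeta'\) with \(\zeta=z_0\zeta'\). Its hypothesis \(\phi(U_{xz_0\zeta'})\subseteq U_a\) holds by the choice of \(r\) as the root of \(\phi_x(U_\zeta)\). This yields \(R_{x\zeta}=R_{y\zeta}\), and compactness finishes (ii).

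The main obstacle is this bookkeeping in (ii): one must ensure that the principal state \(c_0\) is reached with the labels \(xz_0\) and \(yz_0\) sharing the same suffix \(z_0\). This rests on showing that \(O^\psi(ur)\) and \(O^\psi(vr)\) differ exactly by replacing the prefix \(x\) by \(y\), which follows by reading \(\psi(ur\omega)\) through \(\phi_x=\phi_y\) and comparing roots. The matching point in (i) is the choice of \(j\) ensuring that \(\rho_c\) is consumed.
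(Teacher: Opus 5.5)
Your proposal is correct and follows the paper's proof essentially step for step. Both parts reduce, via Lemma~\ref{lem:eventual-equality-compactness}, to a pointwise statement along a ray; (i) takes a common state $c\in R_x=R_y$ and inverts its carried local action $\chi_c$ once $\rho_c$ has been consumed; and (ii) builds a common principal state from $O^\phi(x)r$ and $O^\phi(y)r$ and then applies Lemma~\ref{lem:common-principal-continuation}.

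The only difference is cosmetic, in (ii). You fix $\zeta$ first and take $r=\Root(\phi_x(U_\zeta))$, whereas the paper first fixes a prefix $r$ of $\phi_x(\alpha)$ with $U_r\subseteq\phi_x(\C_m)$. Your ``emitted at least $x$'' requirement is exactly the condition $U_r\subseteq\phi_x(\C_m)$. It holds for large $j$ because these roots are ever-longer prefixes of $\phi_x(\alpha)$ and $\phi_x(\C_m)$ is clopen. Once it holds, $z_0=\Root(\phi_x^{-1}(U_r))$ is automatically a prefix of $\zeta$.
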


\begin{proof}
We prove (i).  By Lemma~\ref{lem:eventual-equality-compactness}, applied to
the deterministic automaton \(\mathcal P\), it is enough to prove the following
pointwise statement: for every \(\alpha\in\C_m\), there exists \(j\ge0\) such
that
\[
        P_{x\alpha_1\cdots\alpha_j}
        =
        P_{y\alpha_1\cdots\alpha_j}.
\]

Fix \(\alpha\in\C_m\), and put
\(\eta=\phi(x\alpha).\)
By Lemma~\ref{lem:forward-state-carried-data}, choose a path
\(\gamma_x\) in \(\widehat{\Fraw}_\psi(\mathcal A)\), labelled \(x\), ending
at some state \(c\in R_x\), with associated input word
\(a=a(\gamma_x),\)
such that
\(\eta\in U_a.\)
Write
\(\eta=a\beta.\)
The data carried by \(c\) give
\[
        \psi(a\omega)=x\,\rho_c\,\chi_c(\omega)
        \qquad(\omega\in\C_n).
\]
Substituting \(\omega=\beta\), and using \(\psi(\eta)=x\alpha\), gives
\[
        \alpha=\rho_c\,\chi_c(\beta).
\]

Since \(R_x=R_y\), the same state \(c\) also lies in \(R_y\).  Choose a path
\(\gamma_y\) in \(\widehat{\Fraw}_\psi(\mathcal A)\), labelled \(y\), ending
at \(c\), and let
\(b=a(\gamma_y)\)
be its associated input word.  Again by
Lemma~\ref{lem:forward-state-carried-data},
\[
        \psi(b\omega)=y\,\rho_c\,\chi_c(\omega)
        \qquad(\omega\in\C_n),
\]
and
\[
        \tau(q_0,b)=q_c=\tau(q_0,a).
\]
Substituting \(\omega=\beta\), we get
\[
        \psi(b\beta)
        =
        y\,\rho_c\,\chi_c(\beta)
        =
        y\alpha.
\]
Therefore
\(\phi(y\alpha)=b\beta.\)

Choose \(j\) large enough that \(j\ge |\rho_c|\) and
\[
        \phi(U_{x\alpha_1\cdots\alpha_j})\subseteq U_a,
        \qquad
        \phi(U_{y\alpha_1\cdots\alpha_j})\subseteq U_b.
\]
This is possible because the two cylinder neighborhoods shrink to the points
\[
        \phi(x\alpha)=a\beta,
        \qquad
        \phi(y\alpha)=b\beta.
\]
Write
\[
        \alpha_1\cdots\alpha_j=\rho_c\delta.
\]
The first inclusion gives
\[
        U_{x\rho_c\delta}
        =
        U_{x\alpha_1\cdots\alpha_j}
        \subseteq
        \psi(U_a).
\]
Since
\[
        \psi(U_a)=x\rho_c\,\chi_c(\C_n),
\]
deleting the common prefix \(x\rho_c\) gives
\[
        U_\delta\subseteq \chi_c(\C_n).
\]

We now regard
\[
        \chi_c^{-1}:\chi_c(\C_n)\to\C_n
\]
as a map defined on the clopen subset \(\chi_c(\C_n)\subseteq\C_m\).  By the
definition of local actions for maps with clopen domain, the inverse map has a
root and a local action at \(\delta\).  Put
\[
        r=\theta_{\chi_c^{-1}}(\delta)
         =\Root\bigl(\chi_c^{-1}(U_\delta)\bigr),
        \qquad
        \lambda=(\chi_c^{-1})_\delta .
\]
Then
\[
        \chi_c^{-1}(\delta\omega)=r\,\lambda(\omega)
        \qquad(\omega\in\C_m),
\]
where
$
        \lambda:\C_m\to\C_n
$
is continuous and injective and satisfies
$
        \Root(\lambda(\C_m))=\eps.
$

We compute the local action of \(\phi\) on the refined cylinders.  The formula
\[
        \psi(a\xi)=x\rho_c\,\chi_c(\xi)
        \qquad(\xi\in\C_n)
\]
says that, whenever the suffix after \(x\rho_c\) lies in \(\chi_c(\C_n)\),
the inverse map \(\phi\) is obtained by applying \(\chi_c^{-1}\) to that
suffix and then adding the prefix \(a\).  Hence, for every \(\omega\in\C_m\),
\[
\begin{aligned}
        \phi(x\rho_c\delta\omega)
        &=a\,\chi_c^{-1}(\delta\omega)  \\
        &=ar\,\lambda(\omega).
\end{aligned}
\]
Similarly, the formula
\[
        \psi(b\xi)=y\rho_c\,\chi_c(\xi)
        \qquad(\xi\in\C_n)
\]
gives
\[
        \phi(y\rho_c\delta\omega)
        =
        br\,\lambda(\omega)
        \qquad(\omega\in\C_m).
\]
Since
\[
        \Root(\lambda(\C_m))=\eps,
\]
we obtain
\[
        O^\phi(x\alpha_1\cdots\alpha_j)=ar,
        \qquad
        O^\phi(y\alpha_1\cdots\alpha_j)=br,
\]
and the two resulting local actions of \(\phi\) are both \(\lambda\).  Thus
the two words
\[
        x\alpha_1\cdots\alpha_j
        \quad\text{and}\quad
        y\alpha_1\cdots\alpha_j
\]
reach the same transducer state in \(T^\phi\).

For the \(\mathcal A\)-coordinate, we have
\[
\begin{aligned}
        \tau(q_0,ar)
        &=\tau(\tau(q_0,a),r)  \\
        &=\tau(\tau(q_0,b),r)  \\
        &=\tau(q_0,br).
\end{aligned}
\]
Therefore
\[
        P_{x\alpha_1\cdots\alpha_j}
        =
        P_{y\alpha_1\cdots\alpha_j}.
\]
By Lemma~\ref{lem:eventual-equality-compactness}, there exists \(k\ge0\) such
that
\[
        P_{xw}=P_{yw}
        \qquad(w\in X_m^k).
\]
This proves (i).

We now prove (ii).  Here the starting assumption is equality of pullback
states.  Thus the map \(\phi\) has the same local action after \(x\) and after
\(y\), and the two corresponding source prefixes end at the same state of
\(\mathcal A\).  We show that, after refining along any ray, the two forward
computations pass through a common principal state.

Assume
\(P_x=P_y.\)
By Lemma~\ref{lem:eventual-equality-compactness}, applied to the deterministic
automaton \(\mathcal F\), it is enough to prove that for every
\(\alpha=\alpha_1\cdots\alpha_j\cdots\in\C_m\), there exists \(j\ge0\) such that
\[
        R_{x\alpha_1\cdots\alpha_j}
        =
        R_{y\alpha_1\cdots\alpha_j}.
\]

Write
\[
        p=O^\phi(x),
        \qquad
        p'=O^\phi(y).
\]
Since \(P_x=P_y\), we have
\(s^\phi_x=s^\phi_y\)
and
\(\tau(q_0,p)=\tau(q_0,p').\)
By the state--local-action dictionary for the minimal transducer \(T^\phi\),
there is a single local action
\(\kappa:\C_m\to\C_n\)
such that
\(\kappa=\phi_x=\phi_y.\)
Thus
\[
        \phi(x\omega)=p\,\kappa(\omega),
        \qquad
        \phi(y\omega)=p'\,\kappa(\omega)
        \qquad(\omega\in\C_m).
\]

Fix \(\alpha=\alpha_1\cdots\alpha_j\cdots\in\C_m\), and set
\(\beta=\kappa(\alpha).\)
Since \(\kappa\) is a local action of the homeomorphism \(\phi\), its image
\(\kappa(\C_m)\) is clopen in \(\C_n\).  Choose a prefix \(r\) of \(\beta\)
such that
\[
        U_r\subseteq \kappa(\C_m).
\]
We regard
\[
        \kappa^{-1}:\kappa(\C_m)\to\C_m
\]
as a map defined on the clopen subset \(\kappa(\C_m)\subseteq\C_n\).  By the
definition of local actions for maps with clopen domain, put
\[
        \delta=\theta_{\kappa^{-1}}(r)
              =\Root\bigl(\kappa^{-1}(U_r)\bigr),
        \qquad
        \lambda=(\kappa^{-1})_r .
\]
Then
\[
        \kappa^{-1}(r\nu)=\delta\,\lambda(\nu)
        \qquad(\nu\in\C_n),
\]
where
$
        \lambda:\C_n\to\C_m
$
is continuous and injective and satisfies
$
        \Root(\lambda(\C_n))=\eps.
$
Equivalently,
\[
        \kappa(\delta\,\lambda(\nu))=r\nu
        \qquad(\nu\in\C_n).
\]

Using \(\psi=\phi^{-1}\), we obtain, for every \(\nu\in\C_n\),
\[
\begin{aligned}
        \phi(x\delta\,\lambda(\nu))
        &=p\,\kappa(\delta\,\lambda(\nu)) \\
        &=pr\nu,
\end{aligned}
\]
and hence
\[
        \psi(pr\nu)=x\delta\,\lambda(\nu).
\]
Similarly,
\[
        \psi(p'r\nu)=y\delta\,\lambda(\nu).
\]
Since
$
        \Root(\lambda(\C_n))=\eps,
$
the raw paths determined by \(pr\) and \(p'r\) in
\(\Fraw_\psi(\mathcal A)\) have labels \(x\delta\) and \(y\delta\),
respectively.  Moreover, these two raw paths end at the same principal state.
Their transducer coordinates are equal because the displayed formulas show
that the local action of \(\psi\) after both \(pr\) and \(p'r\) is
\(\lambda\).  Their \(\mathcal A\)-coordinates are equal because
\[
\begin{aligned}
        \tau(q_0,pr)
        &=\tau(\tau(q_0,p),r)  \\
        &=\tau(\tau(q_0,p'),r) \\
        &=\tau(q_0,p'r).
\end{aligned}
\]
Call this common principal state \(c_0\).

Choose \(j\) large enough that
\[
        U_{\alpha_1\cdots\alpha_j}\subseteq \kappa^{-1}(U_r).
\]
Since
\[
        \kappa^{-1}(U_r)\subseteq U_\delta,
\]
we can write
\[
        \alpha_1\cdots\alpha_j=\delta\zeta
\]
for some \(\zeta\in\W_m\).  Then
\[
        \kappa(U_{\delta\zeta})\subseteq U_r,
\]
and therefore
\[
        \phi(U_{x\delta\zeta})
        =
        p\,\kappa(U_{\delta\zeta})
        \subseteq
        U_{pr}.
\]
Similarly,
\[
        \phi(U_{y\delta\zeta})
        =
        p'\,\kappa(U_{\delta\zeta})
        \subseteq
        U_{p'r}.
\]
We may therefore apply Lemma~\ref{lem:common-principal-continuation} with
\[
        a=pr,\qquad
        b=p'r,\qquad
        z=x\delta,\qquad
        z'=y\delta.
\]
It gives
\[
        R_{x\delta\zeta}=R_{y\delta\zeta},
\]
that is,
\[
        R_{x\alpha_1\cdots\alpha_j}
        =
        R_{y\alpha_1\cdots\alpha_j}.
\]
By Lemma~\ref{lem:eventual-equality-compactness}, there exists \(k\ge0\) such
that
\[
        R_{xw}=R_{yw}
        \qquad(w\in X_m^k).
\]
This proves (ii).
\end{proof}

\begin{corollary}
\label{cor:folded-pullback-forward-agree}
Under the hypotheses of
Theorem~\ref{thm:pullback-forward-eventual-equivalence}, the folded quotients
\[
        \overline{\mathcal P_\psi(\mathcal A)}
        \qquad\text{and}\qquad
        \overline{\Ffor_\psi(\mathcal A)}
\]
are canonically isomorphic.
\end{corollary}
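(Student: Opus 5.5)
The plan is to describe both folded quotients through a single equivalence relation on $\W_m$, using the finite-depth description of folding in Lemma~\ref{lem:folded-quotient-finite-depth}. Both $\mathcal P=\mathcal P_\psi(\mathcal A)$ and $\mathcal F=\Ffor_\psi(\mathcal A)$ are full $m$-ary tree-automata. $\mathcal P$ is full because $\mathcal A$ and $T^\phi$ are total. $\mathcal F$ is full because $R_x\neq\emptyset$ for every $x$, as noted after Definition~\ref{def:forward-automaton}. Hence, for $x,y\in\W_m$, the images of $P_x,P_y$ in $\overline{\mathcal P}$ coincide if and only if there is $k\ge0$ with $P_{xw}=P_{yw}$ for all $w\in X_m^k$. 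The analogous criterion holds for $R_x,R_y$ in $\overline{\mathcal F}$.

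Define $x\approx_{\mathcal P}y$ when $P_x$ and $P_y$ have the same image in $\overline{\mathcal P}$, and define $x\approx_{\mathcal F}y$ similarly. The key step is to show $\approx_{\mathcal P}=\approx_{\mathcal F}$. Suppose $x\approx_{\mathcal F}y$. Then there is $k$ with $R_{xw}=R_{yw}$ for all $w\in X_m^k$. For each such $w$, Theorem~\ref{thm:pullback-forward-eventual-equivalence}(i) applied to the pair $xw,yw$ gives $k_w$ with $P_{xww'}=P_{yww'}$ for all $w'\in X_m^{k_w}$. There are finitely many $w$, so let $k'=k+\max_w k_w$. Since equality of states propagates to common extensions in a deterministic automaton, we get $P_{xv}=P_{yv}$ for all $v\in X_m^{k'}$. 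Thus $x\approx_{\mathcal P}y$. The converse is symmetric, using part (ii) of the theorem.

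Next, define a map $\Theta:\overline{\mathcal P}\to\overline{\mathcal F}$ by sending the class of $P_x$ to the class of $R_x$. Every state of each automaton is reached by some word, since both are accessible and folding is a quotient. Therefore $\Theta$ is well defined and injective because $\approx_{\mathcal P}=\approx_{\mathcal F}$. It is surjective because every state of $\overline{\mathcal F}$ has the form $[R_x]$. It sends the root $[P_\eps]$ to the root $[R_\eps]$. It preserves the $b$-transitions, since $[P_x]\cdot b=[P_{xb}]$ and $[R_x]\cdot b=[R_{xb}]$ in the quotients. Thus $\Theta$ is an isomorphism of tree-automata. It is canonical because it is the unique morphism between these rooted accessible deterministic automata, by the uniqueness remark after Definition~\ref{def:morphisms-quotients}.

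The only point needing care is the one above: upgrading the ``eventually equal'' conclusions of Theorem~\ref{thm:pullback-forward-eventual-equivalence}, which apply to a single pair of words, to equality at a uniform depth below $x,y$. This works because only finitely many intermediate words $w$ occur and the deterministic transitions propagate equalities downward. One must also check that fullness holds, so that the level-$k$ form of Lemma~\ref{lem:folded-quotient-finite-depth} is available for both automata.
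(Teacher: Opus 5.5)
Your proof is correct and follows the paper's argument. Both use the level-$k$ characterization of folding in Lemma~\ref{lem:folded-quotient-finite-depth}, together with Theorem~\ref{thm:pullback-forward-eventual-equivalence}, to show that $x,y$ are identified in $\overline{\mathcal P}$ if and only if they are identified in $\overline{\mathcal F}$, and then take $\overline{P_x}\mapsto\overline{R_x}$ as the isomorphism. Your explicit uniformization step, taking depth $k+\max_w k_w$ over the finitely many $w\in X_m^k$ and using determinism, fills in a detail the paper leaves implicit.
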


\begin{proof}
Put
\[
        \mathcal P=\mathcal P_\psi(\mathcal A),
        \qquad
        \mathcal F=\Ffor_\psi(\mathcal A).
\]
For $x\in\W_m$, let $P_x$ and $R_x$ be the states reached by $x$ in
$\mathcal P$ and $\mathcal F$, respectively.  By
Lemma~\ref{lem:folded-quotient-finite-depth} and
Theorem~\ref{thm:pullback-forward-eventual-equivalence}, for all
$x,y\in\W_m$ we have
\[
        P_x=P_y\text{ in }\overline{\mathcal P}
        \quad\Longleftrightarrow\quad
        R_x=R_y\text{ in }\overline{\mathcal F}.
\]
Thus the rule
\[
        \overline{P_x}\longmapsto \overline{R_x}
\]
is well defined and bijective.  It sends the root to the root and commutes
with the $m$ outgoing transitions, since reading $a\in X_m$ sends $x$ to $xa$
on both sides.  Hence it is an isomorphism of folded automata.
\end{proof}

\begin{corollary}
\label{cor:forward-construction-correct}
Let $\psi:\C_n\to\C_m$ be a  homeomorphism preserving or reversing
lexicographic order, let $\mathcal A$ be a full $n$-ary tree-automaton, and
put
\[
        H=\D(\mathcal A)\le F_n .
\]
Then
\[
        \D(\Ffor_\psi(\mathcal A))
        =
        H^\psi\cap F_m .
\]
In particular, if $H^\psi\le F_m$, then
\[
        \D(\Ffor_\psi(\mathcal A))=H^\psi .
\]
\end{corollary}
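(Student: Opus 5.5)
The plan is to deduce the statement from the pullback theorem by passing through folded quotients. Theorem~\ref{thm:pullback-theorem} already gives $\D(\mathcal P_\psi(\mathcal A))=H^\psi\cap F_m$. Lemma~\ref{lem:folding-preserves-D} applies to any $m$-ary tree-automaton, so folding changes neither side:
\[
\D(\mathcal P_\psi(\mathcal A))=\D\bigl(\overline{\mathcal P_\psi(\mathcal A)}\bigr),
\qquad
\D(\Ffor_\psi(\mathcal A))=\D\bigl(\overline{\Ffor_\psi(\mathcal A)}\bigr).
\]
By Corollary~\ref{cor:folded-pullback-forward-agree}, the two folded quotients are canonically isomorphic. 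Hence the two accepted diagram groups coincide, which gives
\[
\D(\Ffor_\psi(\mathcal A))=\D(\mathcal P_\psi(\mathcal A))=H^\psi\cap F_m .
\]

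Before applying these lemmas, I will check that both automata are genuine $m$-ary tree-automata. The forward automaton is full: this is the remark following Definition~\ref{def:forward-automaton}, which shows $R_x\neq\emptyset$ for every $x$. The pullback automaton is also full, because $T^\phi$ is defined on all letters and $\mathcal A$ is full. Both are accessible by construction. The only point needing care is that the isomorphism of Corollary~\ref{cor:folded-pullback-forward-agree} must be compatible with acceptance of diagrams. This holds because it is given by $\overline{P_x}\mapsto\overline{R_x}$: two words $x,y$ reach the same state in one folded quotient if and only if they do in the other. So a tree diagram is accepted by one exactly when it is accepted by the other, readability being automatic in full automata.

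The final assertion is then immediate. If $H^\psi\le F_m$, then $H^\psi\cap F_m=H^\psi$. I do not expect any real obstacle: all of the substantive work sits in Theorem~\ref{thm:pullback-forward-eventual-equivalence} and the pullback theorem, and this corollary only assembles them.
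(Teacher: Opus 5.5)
Your proposal is correct and follows the paper's proof essentially verbatim: the paper chains $\D(\Ffor_\psi(\mathcal A))=\D(\overline{\Ffor_\psi(\mathcal A)})=\D(\overline{\mathcal P_\psi(\mathcal A)})=\D(\mathcal P_\psi(\mathcal A))=H^\psi\cap F_m$ using exactly Lemma~\ref{lem:folding-preserves-D}, Corollary~\ref{cor:folded-pullback-forward-agree}, and Theorem~\ref{thm:pullback-theorem}. Your extra checks (fullness of both automata, and that a root-preserving isomorphism of the folded quotients preserves acceptance) are accurate and merely make explicit what the paper leaves implicit.
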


\begin{proof}
By Corollary~\ref{cor:folded-pullback-forward-agree},
Lemma~\ref{lem:folding-preserves-D}, and
Theorem~\ref{thm:pullback-theorem},
\[
\begin{aligned}
        \D(\Ffor_\psi(\mathcal A))
        &=
        \D(\overline{\Ffor_\psi(\mathcal A)}) \\
        &=
        \D(\overline{\mathcal P_\psi(\mathcal A)}) \\
        &=
        \D(\mathcal P_\psi(\mathcal A)) \\
        &=
        H^\psi\cap F_m .
\end{aligned}
\]
The final assertion follows immediately if $H^\psi\le F_m$.
\end{proof}

\begin{corollary}
\label{cor:forward-precore-subgroup-case}
Assume, in addition to the hypotheses of
Corollary~\ref{cor:forward-construction-correct}, that $\mathcal A$ has the
existence property and that
\[
        H^\psi\le F_m .
\]
Then $\Ffor_\psi(\mathcal A)$ has the existence property.  Hence
$\Ffor_\psi(\mathcal A)$ is a pre-core for $H^\psi$. In particular, the core of $H^\psi$ can be obtained by folding and reducing $\Ffor_\psi(\mathcal A)$.
\end{corollary}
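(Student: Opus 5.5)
The plan is to transfer the existence property from the pullback automaton to the forward automaton, using the eventual equivalence of Theorem~\ref{thm:pullback-forward-eventual-equivalence}(i) together with the closedness of $H^\psi$. Write $\mathcal P=\mathcal P_\psi(\mathcal A)$ and $\mathcal F=\Ffor_\psi(\mathcal A)$. By Corollary~\ref{cor:forward-construction-correct} we already have $\D(\mathcal F)=H^\psi$. By Proposition~\ref{prop:pullback-precore}, $\mathcal P$ has the existence property. So only the existence property of $\mathcal F$ needs proof; the remaining assertions then follow from the definition of pre-core and Lemma~\ref{lem:existence-property-folding}.

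Since $\mathcal F$ is full, every word is readable. Suppose $x,y\in\W_m$ satisfy $R_x=R_y$. By Theorem~\ref{thm:pullback-forward-eventual-equivalence}(i) there is a $k\ge0$ with $P_{xw}=P_{yw}$ for all $w\in X_m^k$. For each such $w$, the existence property of $\mathcal P$ gives an element $g_w\in\D(\mathcal P)=H^\psi$ (Corollary~\ref{cor:pullback-subgroup-case}) with branch pair $xw\to yw$.

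Next we patch these elements. The cylinders $[xw]$, for $w\in X_m^k$, tile $[x]$, and the cylinders $[yw]$ tile $[y]$ in the same order. On each $[xw]$, the map $g_w$ is linear of slope $m^{|x|-|y|}$, and consecutive pieces agree at the shared endpoints. Define $g$ by using $g_w$ on $[xw]$, and extend it outside $[x]$ using any one of these elements; alternatively, one can apply the argument of Lemma~\ref{lem:existence-property-folding} one level at a time, by induction on $k$. The resulting $g$ is a piecewise-$H^\psi$ function in $F_m$ with branch pair $x\to y$.

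By Lemma~\ref{lem:closure-conjugacy-prelim}, $H^\psi$ is closed, and $H$ is closed because it equals $\D(\mathcal A)$. Hence Theorem~\ref{thm:piecewise-closed} gives $g\in H^\psi=\D(\mathcal F)$, which is exactly the existence property for $\mathcal F$.

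The main obstacle is making the patching step rigorous. Piecing together the $g_w$ over the whole interval requires the outside pieces to match at the endpoints of $[x]$. This is cleanest via the inductive one-level argument of Lemma~\ref{lem:existence-property-folding}. At each level $j$, the pairs $xw'$ and $yw'$, with $|w'|=j$, have children that all satisfy $P$-equality at level $j+1$, and the patching over sibling intervals is exactly the argument written there.

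Finally, since $\mathcal F$ is a pre-core for $H^\psi$, Lemma~\ref{lem:existence-property-folding} shows that folding and then deleting hanging trees yields a core automaton for $H^\psi$. Proposition~\ref{prop:core-automata-characterization} identifies this automaton with $\C(H^\psi)$.
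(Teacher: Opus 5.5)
Your proof is correct and is essentially the paper's argument with the detour through folded quotients unwound. The paper transfers the existence property along $\mathcal P\to\overline{\mathcal P}\cong\overline{\mathcal F}\to\mathcal F$ using Lemma~\ref{lem:existence-property-folding} and Corollary~\ref{cor:folded-pullback-forward-agree}; you instead apply Theorem~\ref{thm:pullback-forward-eventual-equivalence}(i) directly and redo the sibling patching by hand. The one point to tighten is the extension outside $[x]$. An arbitrary $g_w$ need not carry the endpoints of $[x]$ to those of $[y]$, so use $g_{0^k}$ to the left of $[x]$ and $g_{(m-1)^k}$ to the right, which do. With that choice the one-shot patching over the cylinders $[xw]$, $w\in X_m^k$, is already rigorous, and the level-by-level induction is unnecessary. (As sketched, that induction should propagate the existence of elements rather than $\mathcal P$-equality, which you only know at level $k$.)
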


\begin{proof}
By Proposition~\ref{prop:pullback-precore}, the pullback automaton
$\mathcal P_\psi(\mathcal A)$ has the existence property.  Hence
$\overline{\mathcal P_\psi(\mathcal A)}$ has the existence property by
Lemma~\ref{lem:existence-property-folding}.  By
Corollary~\ref{cor:folded-pullback-forward-agree}, the folded quotient
$\overline{\Ffor_\psi(\mathcal A)}$ has the existence property as well.
Applying Lemma~\ref{lem:existence-property-folding} once more, now in the
unfolding direction, shows that $\Ffor_\psi(\mathcal A)$ has the existence
property.

By Corollary~\ref{cor:forward-construction-correct},
\[
        \D(\Ffor_\psi(\mathcal A))=H^\psi .
\]
Thus $\Ffor_\psi(\mathcal A)$ is a pre-core for $H^\psi$.
\end{proof}

\begin{remark}
\label{rem:finite-stopped-forward-nonfull-core}
The forward construction was stated for full automata.  Thus, if
\(\mathcal A=\C(H)\) is finite but not full, one may first replace it by its
full completion \(\widehat{\mathcal A}\); this does not change the accepted
subgroup.  However, the full completion may be infinite.

In the semi-synchronizing case this infinite completion can be replaced by a
finite stopped construction.  Assume that
$
        \psi:\C_n\to\C_m
$
is order-preserving or order-reversing, and that \(T^\psi\) is
semi-synchronizing.  By Theorem~\ref{thm:conjugator-characterization},
\(F_n^\psi\le F_m\), and hence \(H^\psi\le F_m\).

Let \(d\) be a common stabilization depth for the two boundary rays, and let
\(k\) be an inner synchronization level.  Starting from \(\mathcal A\), perform
the following finite extension.  If the left boundary path stops before depth
\(d\), say at a leaf reached by \(0^r\) with \(r<d\), attach a left vine of
length \(d-r\): that is, attach an \(n\)-caret at that leaf, then an
\(n\)-caret at its \(0\)-child, and so on, until the word \(0^d\) is readable.
Do the analogous construction on the right boundary path, using the
\((n-1)\)-child.  Then attach a complete finite \(n\)-ary tree of height \(k\)
at every inner leaf of the automaton obtained.  Denote the resulting finite
extension by \(\mathcal A^\dagger\).

Run the raw forward construction on \(\mathcal A^\dagger\), but stop at the
leaves of \(\mathcal A^\dagger\).  Equivalently, include the usual raw edge
\[
        (s,q)
        \xrightarrow{\ o_\psi(s,i)\ }
        \bigl(t_\psi(s,i),\tau(q,i)\bigr)
\]
only when \(q\) is a father state of \(\mathcal A^\dagger\).

Now let \(c=(s,q)\) be a terminal principal state, with \(q\) a leaf of
\(\mathcal A^\dagger\).  Choose a finite prefix code
\(P_s\subseteq\W_m\) such that
\[
        h_{T^\psi_s}(\C_n)=\bigsqcup_{p\in P_s}U_p .
\]
Attach at \(c\) a finite, not necessarily full, \(m\)-ary prefix tree whose
branches are the words \(p\in P_s\).  The root of this tree is \(c\), and its
terminal vertices are denoted \(c_p\), \(p\in P_s\).  Distinct terminal
principal states receive disjoint copies of these prefix trees.

After subdividing the remaining word-labelled edges and applying the subset
construction, we obtain a finite \(m\)-ary tree-automaton, denoted
$
        \Ffor_{\psi}^{\operatorname{stop}}(\mathcal A).
$
This automaton is a pre-core for \(H^\psi\).  Hence the core
\(\C(H^\psi)\) is obtained from
\(\Ffor_{\psi}^{\operatorname{stop}}(\mathcal A)\) by folding and reducing. We will not make use of this remark in the paper and for the sake of brevity, we do not give a proof.
\end{remark}
                                                                                                                                                                                                                                          
\subsection{The Jones oriented subgroup of \(F_3\)}

Recall that Vaughan Jones introduced a method for constructing unitary representations of
Thompson's groups using planar algebras \cite{JonesPlanarAlgebras,JonesUnitary}.
This construction was later developed in a broader categorical framework which also
applies to Higman--Thompson groups \cite{JonesNoGo}.  These representations gave rise to interesting subgroups of Thompson groups. Indeed, in some of the representations, the stabilizer of a canonical vector, called the vacuum vector, gave rise to fascinating subgroups of Thompson groups. We recall here several Jones subgroups that have
received detailed group-theoretic study.
        
The first and best-known example is Jones's oriented subgroup
\(\vec F=\vec F_2\le F,\)
also called the Jones subgroup of \(F\).  Jones's construction associates knots
and links to elements of \(F\), all links and knots are encoded by elements of $F$. Similarly, elements of  \(\vec F\) can be used to encode all oriented links and knots
\cite{JonesUnitary,JonesKnotsLinks,AielloAlexander}.  In joint work with
Sapir, we proved that
\(\vec F\cong F_3,\)
described \(\vec F\) as the stabilizer of a certain subset of the dyadic
rationals, and showed that \(\vec F\) coincides with its commensurator in
\(F\) \cite{GolanSapirJones}.  We also used \(\vec F\) to construct the first
example of a maximal subgroup of \(F\) of infinite index which does not fix any
point of \((0,1)\) \cite{GolanSapirSubgroupsF}.  That maximal subgroup is the
preimage of \(\vec F\) under an injective endomorphism from \(F\) onto an
index-two subgroup of \(F\).

Jones's 3-colorable subgroup \(\mathcal F\le F\) is another example that has
been studied in detail.  It arises as the stabilizer of the vacuum vector in
one of Jones's representations and has a direct geometric description in terms
of colorings of the planar diagram associated with an element of \(F\)
\cite{JonesNoGo}.  Ren proved that
\(\mathcal F\cong F_4\)
\cite{RenSkein}.  Aiello and Nagnibeda subsequently proved that the associated
quasi-regular representation is irreducible and constructed three explicit
maximal subgroups of infinite index containing the preimage of \(\mathcal F\)
under a suitable injective endomorphism of \(F\)
\cite{AielloNagnibedaThreeColorable}.

Jones later extended his knot and link construction to the ternary group
\(F_3\) and introduced the ternary oriented subgroup
\[
        \overrightarrow{F}_3\le F_3.
\]
Thus every link can be represented by an element of \(F_3\), and every
oriented link can be represented by an element of \(\overrightarrow{F}_3\).
Aiello and Nagnibeda proved that \(\overrightarrow{F}_3\) is finitely
generated, described it as the stabilizer of a certain subset of the triadic
rationals, and showed that it coincides with its commensurator.  They also
proved that \(\overrightarrow{F}_3\) is maximal in an index-two subgroup
\(G_3\cong F_3\), thereby obtaining a maximal subgroup of \(F_3\) isomorphic to
\(\overrightarrow{F}_3\).  At the time, this was the only known maximal
subgroup of infinite index of \(F_3\) which did not fix a point of \((0,1)\)
\cite{AielloNagnibedaFThree}.

In this section we prove that
\[
        \overrightarrow{F}_3\cong F_4 .
\]
This answers questions of Aiello from \cite{AielloIntro}, asking whether
\(\overrightarrow{F}_3\) is finitely presented and whether it is isomorphic to
a Higman--Thompson group, or to some other known group.
   
The idea is to construct an explicit order-preserving homeomorphism
\(\psi_J:\C_4\to \C_3\)
represented by a semi-synchronizing transducer, compute the core of the
conjugated copy
\(F_4^{\psi_J}\le F_3\)
using the forward construction, and then identify the resulting core with the
core of \(\overrightarrow{F}_3\).

We now define the transducer.
      Let \(T_J\) be the initial \((4,3)\)-transducer with state set
\(\{E,a,c,d\},\)
initial state \(E\), and transition-output table below.  In the table, the
entry
$       w:s'
$
in the column labelled \(i\) means that, on input \(i\), the transducer
outputs \(w\in\W_3\) and moves to the state \(s'\).

\[
\begin{array}{c|cccc|c}
        &0&1&2&3&\operatorname{im}(h_s)\\ \hline
E       &0:E    &1:a  &2:c  &22:E  &\C_3\\
a       &00:a   &0:d  &1:E  &2:a   &\C_3\\
c       &0:c    &02:E &10:a &1:d   &U_0\sqcup U_1\\
d       &1:c    &12:E &20:a &2:d   &U_1\sqcup U_2 .
\end{array}
\tag{J1}
\label{eq:jones-F4-to-F3-transducer-table}
\]

Using the table,
  one can check that \(T_J\) represents an order-preserving homeomorphism
\[
        \psi_J:\C_4\longrightarrow \C_3 .
\]
The transducer \(T_J\) is minimal, that is, distinct states induce distinct
local actions: the images of \(h_c\) and \(h_d\) are distinct from each other
and from \(\C_3\), and \(h_E\ne h_a\) because \(h_E(U_1)=U_1\) while
\(h_a(U_1)\subseteq U_0\).  Thus \(T_J\) is the minimal transducer
\(T^{\psi_J}\).

The transducer is semi-synchronizing.  The boundary rays stabilize because
\[
        E\cdot0=E,
        \qquad
        E\cdot3=E,
\]
and the inner synchronization level is \(1\).  Indeed, the states of \(T_J\)
record the residue \(\sigma_4\) modulo \(3\): inner words reach \(E\), \(a\),
or one of \(c,d\) according to whether the residue is \(0\), \(1\) or \(2\),
and the two states \(c,d\) have the same transition function:
\[
\begin{array}{c|cccc}
        &0&1&2&3\\ \hline
c       &c&E&a&d\\
d       &c&E&a&d .
\end{array}
\]
Thus the minimal transducer of \(\psi_J\) is semi-synchronizing, and
Theorem~\ref{thm:conjugator-characterization} gives
\(F_4^{\psi_J}\le F_3 .\)

The transducer is drawn in Figure~\ref{fig:jones-F4-F3-transducer}.

\begin{figure}[htbp]
\centering
\begin{tikzpicture}[
    >=Stealth,
    auto,
    node distance=3.2cm,
    state/.style={
        circle,
        draw,
        thick,
        minimum size=1.1cm,
        font=\small\bfseries,
        inner sep=1pt,
        fill=white
    },
    lab/.style={
        font=\scriptsize,
        inner sep=1pt,
        fill=white
    },
    every loop/.style={looseness=5}
]
    \node[state] (E) at (-2.8, 1.7) {$E$};
    \node[state] (a) at ( 2.8, 1.7) {$a$};
    \node[state] (c) at (-2.8,-1.7) {$c$};
    \node[state] (d) at ( 2.8,-1.7) {$d$};

    \draw[->, thick] (-4.15,1.7)--(E);

    \path[->, thick]
        (E) edge[loop above] node[lab] {$0|0,\ 3|22$} (E)
        (a) edge[loop above] node[lab] {$0|00,\ 3|2$} (a)
        (c) edge[loop below] node[lab] {$0|0$} (c)
        (d) edge[loop below] node[lab] {$3|2$} (d)

        (E) edge[bend left=13] node[lab, above] {$1|1$} (a)
        (a) edge[bend left=13] node[lab, below] {$2|1$} (E)

        (c) edge[bend left=13] node[lab, below] {$3|1$} (d)
        (d) edge[bend left=13] node[lab, above] {$0|1$} (c)

        (E) edge[bend right=13] node[lab, left] {$2|2$} (c)
        (c) edge[bend right=13] node[lab, right] {$1|02$} (E)

        (a) edge[bend left=13] node[lab, right] {$1|0$} (d)
        (d) edge[bend left=13] node[lab, left] {$2|20$} (a)

        (c) edge[bend left=8] node[lab, above, sloped] {$2|10$} (a)
        (d) edge[bend left=8] node[lab, above, sloped] {$1|12$} (E);
\end{tikzpicture}
\caption{The minimal \((4,3)\)-transducer \(T_J\) representing \(\psi_J\).
Edge labels are input--output labels; the full transition-output table is
\eqref{eq:jones-F4-to-F3-transducer-table}.}
\label{fig:jones-F4-F3-transducer}
\end{figure}

Let \(\mathcal J\) be the following full ternary tree-automaton.  Its states
are
\[
        \rho,\ell,r_0,r_1,
        \zeta_{00},\zeta_{01},\zeta_{10},\zeta_{11},
\]
with initial state \(\rho\), and transition table
\[
\begin{array}{c|ccc}
        &0&1&2\\ \hline
\rho        &\ell        &\zeta_{11} &r_1\\
\ell        &\ell        &\zeta_{11} &\zeta_{01}\\
r_1         &\zeta_{01}  &\zeta_{10} &r_0\\
r_0         &\zeta_{00}  &\zeta_{11} &r_1\\
\zeta_{00}  &\zeta_{00}  &\zeta_{11} &\zeta_{01}\\
\zeta_{01}  &\zeta_{01}  &\zeta_{10} &\zeta_{00}\\
\zeta_{10}  &\zeta_{11}  &\zeta_{01} &\zeta_{10}\\
\zeta_{11}  &\zeta_{10}  &\zeta_{00} &\zeta_{11}.
\end{array}
\tag{J2}
\label{eq:jones-core-table}
\]

\begin{proposition}
\label{prop:core-of-F4psi-is-J}
The core of \(F_4^{\psi_J}\le F_3\) is the automaton \(\mathcal J\).
\end{proposition}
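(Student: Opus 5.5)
We take $\mathcal A=\C(F_4)=\mathcal K_4$ from Lemma~\ref{lem:explicit-core-of-Fn}. It is finite, full, and a core automaton, so it has the existence property. Since $T_J$ is semi-synchronizing and order-preserving, Theorem~\ref{thm:conjugator-characterization} gives $F_4^{\psi_J}\le F_3$. Corollary~\ref{cor:forward-precore-subgroup-case} then says that $\Ffor_{\psi_J}(\mathcal K_4)$ is a pre-core for $F_4^{\psi_J}$. It is finite, being a subset construction over the finite product $\{E,a,c,d\}\times Q_4$. It is also full, since every $R_x$ is nonempty. By Lemma~\ref{lem:finite-full-folded-precore-is-core}, its folded quotient is exactly $\C(F_4^{\psi_J})$. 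The plan is therefore to compute $\Ffor_{\psi_J}(\mathcal K_4)$ explicitly, fold it, and check that the result is isomorphic to $\mathcal J$.

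First we form the raw forward automaton. Its principal states are the accessible pairs $(s,q)$ with $s\in\{E,a,c,d\}$ and $q\in\{q_0,q_L,q_R,q^{\mathrm{in}}_0,q^{\mathrm{in}}_1,q^{\mathrm{in}}_2\}$. Because the states of $T_J$ record $\sigma_4$ modulo $3$ on inner words, the accessible pairs are few:
\[
(E,q_0),\ (E,q_L),\ (E,q_R),\ (E,q^{\mathrm{in}}_0),\ (a,q^{\mathrm{in}}_1),\ (c,q^{\mathrm{in}}_2),\ (d,q^{\mathrm{in}}_2).
\]
Here $(E,q_0)$ is the initial state, and $(E,q_L),(E,q_R)$ are reached on the boundary rays. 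The edges are read off from table~\eqref{eq:jones-F4-to-F3-transducer-table}. We then subdivide the output words of length $2$ (namely $22,00,02,10,12,20$) and the $\eps$-edge $a\xrightarrow{2|\eps}$. Note that $a$ on input $2$ outputs $1$, and the only empty-output edge occurs in the binary example, so in $T_J$ there may be no $\eps$-edges at all. This simplifies the subset construction: each $R_x$ is just the set of positions after exactly the output $x$ has been produced.

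Next we run the subset construction from $R_\eps=\{(E,q_0)\}$ and list the reachable subsets together with their three ternary transitions. We expect the left and right boundary rays to give the states matching $\ell$ (via $0$) and $r_1,r_0$ (via $2$, then $22$). The $2$-periodicity of $r_0,r_1$ should reflect the output $22$ on input $3$ from $E$, which leaves an intermediate subdivision state. The inner subsets should fall into finitely many classes.

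We then fold. To identify two subsets, we use Lemma~\ref{lem:folded-quotient-finite-depth}: it suffices to show that their children agree at some fixed depth. Equivalently, we guess the congruence and verify that the quotient transitions coincide with table~\eqref{eq:jones-core-table}. Finally, we must confirm that the quotient is already folded, so that no further collapse occurs. For this it suffices to exhibit an invariant separating the eight classes, for instance which of them are left, right, or inner states. Lemma~\ref{lem:core-existence-property} shows that these types cannot coincide in a core. The four inner states $\zeta_{ij}$ can be separated by their pairs of indices, which we read as an $F_3$-residue together with a $\Z/2$ parity coming from the partial outputs. A direct check then shows that no two father states of $\mathcal J$ have identical ordered children, so $\mathcal J$ is folded.

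The main obstacle is the explicit subset construction. The subdivision states carrying a pending output letter generate several subsets with identical future behaviour, and we must organize them into the eight classes. We would organize this by attaching to each subset the data $(\rho_c,\chi_c,q_c)$ of Lemma~\ref{lem:forward-state-carried-data}. The second index of each $\zeta_{ij}$ should be the parity of pending output, which determines the folding classes directly.
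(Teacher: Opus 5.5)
Your reduction is the paper's. Theorem~\ref{thm:conjugator-characterization} gives $F_4^{\psi_J}\le F_3$. Corollary~\ref{cor:forward-precore-subgroup-case} makes $\Ffor_{\psi_J}(\mathcal K_4)$ a pre-core, and its folded quotient is finite and full, hence equal to $\C(F_4^{\psi_J})$. Your seven accessible principal states are also the right ones. The gap is that for this proposition the proof \emph{is} the finite computation, and you stop before doing it. You never list the reachable subsets $R_x$, never exhibit the folding congruence, and never compare the quotient with \eqref{eq:jones-core-table}. Phrases such as ``we expect'' and ``should fall into finitely many classes'' do not show that the folded quotient is $\mathcal J$ rather than some other automaton.

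Some of your guidance for the computation is also inaccurate.
\begin{itemize}
\item There is no $\eps$-edge at $a$: on input $2$ it outputs $1$ and moves to $E$. In fact $T_J$ has no empty outputs at all, so only the length-two outputs are subdivided and no $\eps$-closures occur.
\item The second index of $\zeta_{ij}$ is not the parity of pending output. The class $\zeta_{11}=R_1=\{a\}$ has nothing pending, whereas each of the subsets $\{[a,0],d\}$, $\{[c,2],d\}$, $\{[d,2],d\}$ forming $\zeta_{10}$ carries one pending letter.
\item The pair ($\sigma_3$-residue, pending parity) does happen to separate the four inner classes. However, you give no reason why it should be invariant under folding, so it cannot replace the check.
\end{itemize}

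What closes the argument is the explicit subset construction. The reachable subsets are the fourteen sets
\[
R_\eps,R_0,R_1,R_2,R_{02},R_{10},R_{11},R_{20},R_{21},R_{22},R_{101},R_{102},R_{112},R_{222}.
\]
The only identifications are
\[
R_2\sim R_{222}\ (r_1),\qquad R_{02}\sim R_{20}\sim R_{101}\sim R_{112}\ (\zeta_{01}),\qquad R_{10}\sim R_{21}\sim R_{102}\ (\zeta_{10}).
\]
Each group already has a common ordered triple of children at depth one, namely $(R_{20},R_{21},R_{22})$, $(R_{20},R_{21},R_{11})$ and $(R_1,R_{101},R_{102})$ respectively. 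Reading off the children of the eight classes gives exactly \eqref{eq:jones-core-table}. Foldedness then follows because the eight ordered triples are pairwise distinct; no separation argument via Lemma~\ref{lem:core-existence-property} is needed.
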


\begin{proof}
We run the forward construction with the core
\(\mathcal K_4=\C(F_4)\) from Lemma~\ref{lem:explicit-core-of-Fn}, and first
identify the raw forward automaton \(\Fraw_{\psi_J}(\mathcal K_4)\).  The
principal state reached by an input word \(u\in\W_4\) is the pair consisting
of the state of \(T_J\) reached by \(u\) and the state of \(\mathcal K_4\)
reached by \(u\).  Since the states of \(T_J\) record the residue \(\sigma_4\)
modulo \(3\), the transducer coordinate is \(E\) when \(u\) is empty, a
boundary word, or an inner word of residue \(0\), and it is \(a\) or one of
\(c,d\) when \(u\) is inner of residue \(1\) or \(2\), respectively; the
\(\mathcal K_4\)-coordinate records which of these cases occurs.  Hence the
accessible principal states are exactly the seven pairs
\[
        (E,q_0),\ (E,q_L),\ (E,q_R),\ (E,q^{\mathrm{in}}_0),\
        (a,q^{\mathrm{in}}_1),\ (c,q^{\mathrm{in}}_2),\ (d,q^{\mathrm{in}}_2),
\]
which we abbreviate, in this order, by
\[
        \rho,\ \ell,\ r,\ b,\ a,\ c,\ d .
\]
(The last three abbreviations are unambiguous, since the
\(\mathcal K_4\)-coordinate of a principal state with transducer coordinate
\(a\), \(c\) or \(d\) is forced.)  Reading the transitions of \(T_J\) and of
\(\mathcal K_4\) simultaneously gives the following table for
\(\Fraw_{\psi_J}(\mathcal K_4)\); the entry \(w:s\) in the column labelled
\(i\) means that the raw edge corresponding to the input letter \(i\) has
word label \(w\) and target \(s\):
\[
\begin{array}{c|cccc}
        &0&1&2&3\\ \hline
\rho    &0:\ell &1:a  &2:c  &22:r\\
\ell    &0:\ell &1:a  &2:c  &22:b\\
r       &0:b    &1:a  &2:c  &22:r\\
b       &0:b    &1:a  &2:c  &22:b\\
a       &00:a   &0:d  &1:b  &2:a\\
c       &0:c    &02:b &10:a &1:d\\
d       &1:c    &12:b &20:a &2:d .
\end{array}
\tag{J3}
\label{eq:jones-raw-forward-table}
\]

Subdivide every raw edge whose output has length \(2\).  If
\[
        s\xrightarrow{\ i\mid uv\ }t
\]
is such an edge, write \([s,i]\), only in this computation, for the
subdivision state reached after the first output letter \(u\) has been read.
There are no \(\eps\)-outputs, so no \(\eps\)-closures occur in the subset
construction.  The reachable subset states are:
\[
\begin{array}{c|c}
x & R_x\\ \hline
\eps & \{\rho\}\\
0    & \{\ell\}\\
1    & \{a\}\\
2    & \{[\rho,3],c\}\\
02   & \{[\ell,3],c\}\\
10   & \{[a,0],d\}\\
11   & \{b\}\\
20   & \{[c,1],c\}\\
21   & \{[c,2],d\}\\
22   & \{r\}\\
101  & \{[d,1],c\}\\
102  & \{[d,2],d\}\\
112  & \{[b,3],c\}\\
222  & \{[r,3],c\}.
\end{array}
\tag{J4}
\label{eq:jones-forward-subsets}
\]
Here \(R_x\) denotes the state reached after reading the ternary word \(x\) in
the determinized forward automaton.

Folding identifies the subset states as follows:
\[
\begin{array}{c|c}
\text{state of }\mathcal J & \text{subset states in the folded class}\\ \hline
\rho       & R_\eps\\
\ell       & R_0\\
\zeta_{11} & R_1\\
r_1        & R_2,\ R_{222}\\
\zeta_{01} & R_{02},\ R_{20},\ R_{101},\ R_{112}\\
\zeta_{10} & R_{10},\ R_{21},\ R_{102}\\
\zeta_{00} & R_{11}\\
r_0        & R_{22}.
\end{array}
\tag{J5}
\label{eq:jones-folding-classes}
\]
A direct check of the three children of each displayed class gives precisely
the transition table \eqref{eq:jones-core-table}.  No two states in
\(\mathcal J\) have the same ordered triple of children, so the quotient is
folded.

Since \(F_4^{\psi_J}\le F_3\), Corollary~\ref{cor:forward-precore-subgroup-case}
shows that the forward automaton obtained from \(\mathcal K_4=\C(F_4)\) is a
pre-core for \(F_4^{\psi_J}\).  Its folded quotient is finite and full, so
there are no hanging trees to delete.  By the computation above, this folded
quotient is \(\mathcal J\).
\end{proof}

It remains to identify this core with the core of the ternary oriented Jones
subgroup $\vec F_3$.

One way to identify the core is to start with the nine generators of
\(\vec F_3\) found by Aiello and Nagnibeda in
\cite[Theorem~2]{AielloNagnibedaFThree} and then compute the core by the usual
core construction.  We use a shorter route.  Aiello and Nagnibeda prove, in
different terminology, that \(\vec F_3\) is the diagram group determined by the
following finite ternary tree-automaton.  This is simply the automaton
translation of their parity criterion for leaves of ternary tree diagrams
\cite[Proposition~1]{AielloNagnibedaFThree}.

Let \(\mathcal A_{\mathrm{AN}}\) be the full ternary tree-automaton with state
set
\[
        \{\eta_{00},\eta_{01},\eta_{10},\eta_{11}\},
\]
initial state \(\eta_{00}\), and transition table
\[
\begin{array}{c|ccc}
        &0&1&2\\ \hline
\eta_{00}  &\eta_{00}&\eta_{11}&\eta_{01}\\
\eta_{01}  &\eta_{01}&\eta_{10}&\eta_{00}\\
\eta_{10}  &\eta_{11}&\eta_{01}&\eta_{10}\\
\eta_{11}  &\eta_{10}&\eta_{00}&\eta_{11}.
\end{array}
\tag{J6}
\label{eq:AN-automaton}
\]
Then
\[
        \D(\mathcal A_{\mathrm{AN}})=\vec F_3 .
\]
The automaton \(\mathcal A_{\mathrm{AN}}\) should not be confused with the core:
it records exactly the Aiello--Nagnibeda acceptance criterion, but it does not
have the existence property.  Applying the core construction, or equivalently
the techniques used for cores in \cite{GolanGeneration,GolanMaximalF}, separates
the root, boundary and inner occurrences of the same states.  The resulting core
is the automaton \(\mathcal J\) computed above.

For completeness, we include the short verification that
\[
        \D(\mathcal J)=\D(\mathcal A_{\mathrm{AN}}).
\]

\begin{lemma}
\label{lem:J-and-AN-accept-same-diagrams}
With \(\mathcal J\) as in \eqref{eq:jones-core-table}, one has
\[
        \D(\mathcal J)=\D(\mathcal A_{\mathrm{AN}}).
\]
Consequently,
\[
        \D(\mathcal J)=\vec F_3 .
\]
\end{lemma}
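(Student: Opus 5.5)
The plan is to exhibit a morphism of tree-automata $\mathcal J\to\mathcal A_{\mathrm{AN}}$, which gives one inclusion by Lemma~\ref{lem:morphism-implies-inclusion}. Then I would prove the reverse inclusion by a refinement argument that exploits the specific shape of $\mathcal J$. Comparing the tables \eqref{eq:jones-core-table} and \eqref{eq:AN-automaton}, the natural map is
\[
\rho\mapsto\eta_{00},\quad \ell\mapsto\eta_{00},\quad r_1\mapsto\eta_{01},\quad r_0\mapsto\eta_{00},\quad \zeta_{ij}\mapsto\eta_{ij}.
\]
The rows of $\zeta_{ij}$ in $\mathcal J$ literally coincide with those of $\eta_{ij}$. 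For the remaining states I would check directly, for instance, that $\rho\cdot 2=r_1\mapsto\eta_{01}=\eta_{00}\cdot 2$ and $r_1\cdot 2=r_0\mapsto\eta_{00}=\eta_{01}\cdot2$. So this is a morphism, and $\D(\mathcal J)\subseteq\D(\mathcal A_{\mathrm{AN}})$.

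For the reverse inclusion, let $f\in\D(\mathcal A_{\mathrm{AN}})$ with an accepted diagram whose branch pairs are $u_i\to v_i$. The fibres of the morphism are only $\{\rho,\ell,r_0,\zeta_{00}\}$ over $\eta_{00}$ and $\{r_1,\zeta_{01}\}$ over $\eta_{01}$. These differ only in whether the word is empty, is a left boundary word $0^k$, is a right boundary word $2^k$, or is inner. Concretely, I would read off from \eqref{eq:jones-core-table} that in $\mathcal J$ an inner word reaches $\zeta_{\mathrm{AN}(u)}$, where $\mathrm{AN}(u)$ is the index of the $\mathcal A_{\mathrm{AN}}$-state reached by $u$. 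Likewise, $0^k$ reaches $\ell$ for $k\ge1$, and $2^k$ reaches $r_1$ or $r_0$ according to the parity of $k$, consistent with $\eta_{01},\eta_{00}$. This amounts to an induction on word length using the table; the key point is that once a word leaves a boundary ray it lands in the $\zeta$-block, which is closed and mirrors $\mathcal A_{\mathrm{AN}}$.

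Given this, in any tree diagram of $f\in F_3$ the first branch pair has the form $0^a\to0^b$ and the last has the form $2^c\to 2^d$, while all other branches are inner on both sides. For the inner pairs, acceptance in $\mathcal A_{\mathrm{AN}}$ immediately gives acceptance in $\mathcal J$. The left pair lands in $\ell$ on both sides, provided $a,b\ge1$, which holds unless the diagram is trivial. The right pair needs $2^c$ and $2^d$ to land in the same $r$-state. Acceptance in $\mathcal A_{\mathrm{AN}}$ says $\eta_{00}\cdot 2^c=\eta_{00}\cdot2^d$, which forces $c\equiv d \pmod 2$, and that is exactly the $\mathcal J$ condition. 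So every $\mathcal A_{\mathrm{AN}}$-accepted diagram is $\mathcal J$-accepted, and equality follows. The consequence $\D(\mathcal J)=\vec F_3$ then comes from the cited identity $\D(\mathcal A_{\mathrm{AN}})=\vec F_3$.

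The main obstacle is the bookkeeping claim describing $\tau_{\mathcal J}(\rho,u)$ in terms of the $\mathcal A_{\mathrm{AN}}$-state and the boundary/inner type of $u$. I would verify it by checking each boundary state's transitions on the letters that exit the ray; for example, $\ell\cdot1=\zeta_{11}$ should match $\eta_{00}\cdot1=\eta_{11}$, and $r_0\cdot0=\zeta_{00}$ should match $\eta_{00}\cdot0$. I would also dispose of the trivial-diagram edge case separately.
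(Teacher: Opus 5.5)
Your proposal is correct and matches the paper's proof essentially step for step. You use the same morphism $\mathcal J\to\mathcal A_{\mathrm{AN}}$ for the inclusion $\D(\mathcal J)\subseteq\D(\mathcal A_{\mathrm{AN}})$, and for the reverse inclusion you show that the same accepted diagram is accepted by $\mathcal J$, via the same case analysis: left boundary pair to $\ell$, right boundary pair by parity of the exponent of $2$, inner pairs through the $\zeta$-block mirroring $\mathcal A_{\mathrm{AN}}$.
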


\begin{proof}
There is a morphism of ternary tree-automata
\[
        \theta:\mathcal J\longrightarrow \mathcal A_{\mathrm{AN}}
\]
defined by
\[
        \theta(\rho)=\eta_{00},
        \qquad
        \theta(\ell)=\eta_{00},
        \qquad
        \theta(r_0)=\eta_{00},
        \qquad
        \theta(r_1)=\eta_{01},
\]
and
\[
        \theta(\zeta_{ij})=\eta_{ij}
        \qquad(i,j\in\{0,1\}).
\]
A direct check of the two transition tables shows that \(\theta\) preserves the
root and all labelled edges.  Hence
\[
        \D(\mathcal J)\subseteq\D(\mathcal A_{\mathrm{AN}}).
\]

Conversely, let \(g\in\D(\mathcal A_{\mathrm{AN}})\), and choose a ternary tree
diagram for \(g\), with branch pairs
\[
        u_0\to v_0,\ldots,u_N\to v_N,
\]
accepted by \(\mathcal A_{\mathrm{AN}}\).  If the diagram is trivial, it is
accepted by \(\mathcal J\).  Otherwise, the first branch in each tree is a left
boundary word:
\[
        u_0=0^a,\qquad v_0=0^b
        \qquad(a,b\ge1),
\]
and both words reach \(\ell\) in \(\mathcal J\).

Similarly, the last branch in each tree is a right boundary word:
\[
        u_N=2^a,\qquad v_N=2^b
        \qquad(a,b\ge1).
\]
In \(\mathcal A_{\mathrm{AN}}\), the word \(2^k\) ends at
\(\eta_{01}\) if \(k\) is odd and at \(\eta_{00}\) if \(k\) is even.  Since
the diagram is accepted by \(\mathcal A_{\mathrm{AN}}\), the exponents \(a\)
and \(b\) have the same parity.  Hence \(u_N\) and \(v_N\) end at the same one
of the two states \(r_0,r_1\) in \(\mathcal J\).

All remaining branch pairs are inner.  For every inner word \(u\), the state
reached by \(u\) in \(\mathcal J\) is one of the four states
\[
        \zeta_{00},\zeta_{01},\zeta_{10},\zeta_{11},
\]
and the restriction of the transition table of \(\mathcal J\) to these four
states is exactly the transition table of \(\mathcal A_{\mathrm{AN}}\), after
replacing \(\zeta_{ij}\) by \(\eta_{ij}\).  Therefore, if two inner words end
at the same state of \(\mathcal A_{\mathrm{AN}}\), they end at the same state
of \(\mathcal J\).

Thus every branch pair of the chosen diagram is accepted by \(\mathcal J\), so
\(g\in\D(\mathcal J).\)
Hence
\[
        \D(\mathcal A_{\mathrm{AN}})\subseteq\D(\mathcal J).
\]
The final assertion follows from
\(\D(\mathcal A_{\mathrm{AN}})=\vec F_3\), which is the automaton formulation
of \cite[Proposition~1]{AielloNagnibedaFThree}.
\end{proof}
                  
\begin{corollary}
\label{thm:jones-F3-is-F4}
The ternary oriented Jones subgroup is equal to the conjugated copy
\[
        F_4^{\psi_J}\le F_3.
\]
In particular,
\[
        \vec F_3\cong F_4 .
\]
\end{corollary}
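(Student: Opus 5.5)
The plan is to combine the three results just established.

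First, Proposition~\ref{prop:core-of-F4psi-is-J} says that the core of $F_4^{\psi_J}$ is $\mathcal J$. Since $F_4$ is closed in itself and $F_4^{\psi_J}\le F_3$, Lemma~\ref{lem:cantor-conjugator-descends} (or Lemma~\ref{lem:closure-conjugacy-prelim}) shows that $F_4^{\psi_J}$ is a closed subgroup of $F_3$. Hence
\[
        F_4^{\psi_J}=\Cl(F_4^{\psi_J})=\D(\C(F_4^{\psi_J}))=\D(\mathcal J).
\]
A more direct route is Corollary~\ref{cor:forward-construction-correct}, which gives $\D(\Ffor_{\psi_J}(\mathcal K_4))=F_4^{\psi_J}$. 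Lemma~\ref{lem:folding-preserves-D} shows that folding does not change the accepted group, and $\mathcal J$ is exactly this folded quotient.

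Second, Lemma~\ref{lem:J-and-AN-accept-same-diagrams} gives $\D(\mathcal J)=\vec F_3$. Combining the two equalities yields $\vec F_3=F_4^{\psi_J}$.

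Finally, conjugation by the homeomorphism $\psi_J$, that is $f\mapsto\psi_J^{-1}f\psi_J$, is an injective group homomorphism from $F_4$ onto $F_4^{\psi_J}$. Therefore $\vec F_3\cong F_4$.

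The only point needing care is that the equality of subgroups passes through closedness. This does not need a separate argument, because $\D$ of any automaton is by definition a closed subgroup, and the forward-construction corollary identifies it with the conjugate on the nose. So there is no real obstacle beyond invoking the already-verified computation of $\mathcal J$. That computation, namely the table (J4) and the folding classes (J5), is where all the actual work lies, and it has already been carried out in Proposition~\ref{prop:core-of-F4psi-is-J}.
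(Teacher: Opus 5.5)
Your proof is correct and is essentially the paper's argument: $F_4^{\psi_J}$ is closed by Lemma~\ref{lem:closure-conjugacy-prelim}, so $F_4^{\psi_J}=\Cl(F_4^{\psi_J})=\D(\mathcal J)$ by Proposition~\ref{prop:core-of-F4psi-is-J}, and $\D(\mathcal J)=\vec F_3$ by Lemma~\ref{lem:J-and-AN-accept-same-diagrams}. Your alternative route through Corollary~\ref{cor:forward-construction-correct} and Lemma~\ref{lem:folding-preserves-D} is also valid, and it bypasses the separate closedness step.
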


\begin{proof}
Since \(F_4\) is closed in itself and \(F_4^{\psi_J}\le F_3\),
Lemma~\ref{lem:closure-conjugacy-prelim} shows that \(F_4^{\psi_J}\) is closed
in \(F_3\).  By Proposition~\ref{prop:core-of-F4psi-is-J}, its core is
\(\mathcal J\), so
\(F_4^{\psi_J}=\Cl(F_4^{\psi_J})=\D(\mathcal J).\)
By Lemma~\ref{lem:J-and-AN-accept-same-diagrams},
\(\D(\mathcal J)=\vec F_3\).  Hence \(\vec F_3=F_4^{\psi_J}\cong F_4\).
\end{proof}
                                 
\subsection{Geometric determinization in the finite epsilon-separated case}
\label{subsec:geometric-determinization}

In certain cases, when the structure of the transducer is simple enough, the
subset determinization in the forward construction can be replaced, up to
folding, by a more geometric procedure.  The idea is to remove the output-free
part of the raw forward computation by cloning and contracting \(\eps\)-edges,
and then to remove the remaining nondeterminism by folding one-letter edges
with the same initial vertex and the same label.

Throughout this subsection, \(\psi:\C_n\to\C_m\) is a rational
order-preserving or order-reversing homeomorphism with finite minimal
transducer
\[
        T^\psi=(S_\psi,t_\psi,o_\psi,s_\psi),
\]
and
\(\mathcal A=(Q,\tau,q_0)\)
is a finite full \(n\)-ary tree-automaton.

For \(s\in S_\psi\), define
\[
        I_\eps(s)
        =
        \{(r,i)\in S_\psi\times X_n
          \mid t_\psi(r,i)=s,\ o_\psi(r,i)=\eps\},
\]
and
\[
        I_+(s)
        =
        \{(r,i)\in S_\psi\times X_n
          \mid t_\psi(r,i)=s,\ o_\psi(r,i)\ne\eps\}.
\]
Let
\[
        B_\psi=\{s\in S_\psi\mid I_\eps(s)=\emptyset\}.
\]
The states in \(B_\psi\) are called \emph{entry states} of \(T^\psi\).

\begin{definition}
\label{def:epsilon-separated-transducer}
Let \(\psi:\C_n\to\C_m\) be a rational order-preserving or order-reversing
homeomorphism with finite minimal transducer
\[
        T^\psi=(S_\psi,t_\psi,o_\psi,s_\psi).
\]
We say that \(T^\psi\) is \emph{epsilon-separated} if, for every
\(s\in S_\psi\),
\[
        I_\eps(s)\ne\emptyset
        \quad\Longrightarrow\quad
        I_+(s)=\emptyset.
\]
Thus a state with an incoming \(\eps\)-edge has no incoming non-\(\eps\) edge.

We say that \(T^\psi\) has \emph{homeomorphic entry states} if every state in
\[
        B_\psi=\{s\in S_\psi\mid I_\eps(s)=\emptyset\}
\]
is a homeomorphism state.
\end{definition}

\begin{lemma}
\label{lem:no-epsilon-into-homeomorphism-state}
Let \(\psi:\C_n\to\C_m\) be a rational order-preserving or order-reversing
homeomorphism, and let
\[
        T^\psi=(S_\psi,t_\psi,o_\psi,s_\psi)
\]
be its finite minimal transducer.  Then the following hold.
\begin{enumerate}[label=(\roman*)]
    \item If \(s\in S_\psi\) is a homeomorphism state, then
    \[
            I_\eps(s)=\emptyset.
    \]
    In particular, the initial state \(s_\psi\) belongs to \(B_\psi\).

    \item If \(r\in S_\psi\) and \(u,v\in\W_n\) satisfy
    \[
            o_\psi(r,u)=o_\psi(r,v)
            \qquad\text{and}\qquad
            t_\psi(r,u)=t_\psi(r,v),
    \]
    then
    \[
            u=v.
    \]
\end{enumerate}
\end{lemma}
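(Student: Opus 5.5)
Both parts follow from injectivity of the local actions together with the state--local-action dictionary (Notation~\ref{not:state-local-action-dictionary}); I would prove (ii) first and deduce (i) from an image computation.

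For (ii), write $w=o_\psi(r,u)=o_\psi(r,v)$ and $s=t_\psi(r,u)=t_\psi(r,v)$. Since $T^\psi$ is minimal, every state is accessible, and so $h_{T_r}$ is a local action of $\psi$; in particular it is injective. For every $\omega\in\C_n$ we have
\[
        h_{T_r}(u\omega)=w\,h_{T_s}(\omega)=h_{T_r}(v\omega).
\]
Injectivity therefore gives $u\omega=v\omega$ for every $\omega\in\C_n$. Now take $\omega$ to be any sequence that does not have the form $x\omega'$ with $|x|=\bigl||u|-|v|\bigr|>0$ making the two sides agree. More concretely, if $|u|\ne|v|$, say $|u|<|v|$ with $v=uv'$, then $\omega=v'\omega$ for all $\omega$, and in particular $\omega$ must be periodic with period word $v'$. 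Choosing a non-periodic $\omega$ gives a contradiction. Hence $|u|=|v|$, and comparing prefixes gives $u=v$.

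For (i), suppose $s$ is a homeomorphism state and $(r,i)\in I_\eps(s)$. By accessibility and the dictionary, the map $h_{T_r}$ is injective and satisfies $h_{T_r}(i\omega)=h_{T_s}(\omega)$. Its image on $U_i$ is therefore all of $\C_m$, since $h_{T_s}$ is surjective. But $h_{T_r}$ also maps the nonempty set $U_j$, for $j\ne i$, into $\C_m$, and this image is disjoint from $h_{T_r}(U_i)=\C_m$ by injectivity. That is impossible, so $I_\eps(s)=\emptyset$.

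Since $\psi$ is a homeomorphism, the initial state $s_\psi$ is a homeomorphism state, and so $s_\psi\in B_\psi$.

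The main (mild) obstacle is the periodicity step in (ii). It needs $n\ge2$ so that a non-periodic sequence exists, and here it does.
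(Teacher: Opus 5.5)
Your proposal is correct and follows the paper's proof essentially verbatim. For (i), an $\eps$-edge $r\xrightarrow{\,i\mid\eps\,}s$ into a homeomorphism state forces $h_{T_r}(U_i)=\C_m$, which contradicts injectivity of $h_{T_r}$ on a sibling cylinder $U_j$. For (ii), injectivity of $h_{T_r}$ gives $u\omega=v\omega$ for all $\omega$.

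The only difference is that you spell out the step from $u\omega=v\omega$ for all $\omega$ to $u=v$, which the paper leaves implicit. The periodicity detour there is unnecessary. If $v=uv'$ with $v'\neq\eps$ (the prefix relation comes from comparing the first $|u|$ letters), then any $\omega$ whose first letter differs from that of $v'$ already violates $\omega=v'\omega$. The garbled sentence about $x\omega'$ can simply be dropped.
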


\begin{proof}
A closely related statement to (i) appears in
\cite[Lemma~8.3(2)]{AutG}.  We recall the short argument.  Suppose that
\[
        t_\psi(r,i)=s,
        \qquad
        o_\psi(r,i)=\eps.
\]
Then, for every \(\omega\in\C_n\),
\[
        h_{T^\psi_r}(i\omega)=h_{T^\psi_s}(\omega).
\]
If \(s\) is a homeomorphism state, then \(h_{T^\psi_s}(\C_n)=\C_m\), and hence
\(h_{T^\psi_r}(U_i)=\C_m.\)
Choose \(j\in X_n\) with \(j\ne i\).  The set \(h_{T^\psi_r}(U_j)\) is
nonempty and is contained in \(\C_m=h_{T^\psi_r}(U_i)\), contradicting
injectivity of \(h_{T^\psi_r}\).  The state \(r\) is injective because it is
accessible from the initial homeomorphism state.  Thus no \(\eps\)-edge enters
\(s\).  Applying this to \(s=s_\psi\) gives \(s_\psi\in B_\psi\).

For (ii), put
\[
        t_\psi(r,u)=t_\psi(r,v)=t
        \qquad\text{and}\qquad
        o_\psi(r,u)=o_\psi(r,v)=\lambda.
\]
Then, for every \(\omega\in\C_n\),
\[
        h_{T^\psi_r}(u\omega)
        =
        \lambda\,h_{T^\psi_t}(\omega)
        =
        h_{T^\psi_r}(v\omega).
\]
Since \(h_{T^\psi_r}\) is injective, we have
\(u\omega=v\omega.\)
Thus \(u=v\).
\end{proof}

Starting from an entry state, we group together the input letters read until
the first nonempty output appears.

\begin{definition}
\label{def:first-output-blocks}
Let \(\psi:\C_n\to\C_m\) be a rational order-preserving or order-reversing
homeomorphism with finite minimal transducer \(T^\psi\), and assume that
\(T^\psi\) is epsilon-separated.  For \(s\in B_\psi\), let
\(\mathcal E(s)\) be the set of all nonempty words \(u\in\W_n\) such that
\(o_\psi(s,u)\ne\eps\), but \(o_\psi(s,v)=\eps\) for every proper prefix
\(v\) of \(u\).  For \(u\in\mathcal E(s)\), put
\[
        \lambda_s(u)=o_\psi(s,u).
\]
Thus \(\lambda_s(u)\) is the first nonempty output word produced when the
transducer starts at \(s\) and reads \(u\).

We call the ordered prefix-code bijection
\[
        \mathcal E(s)\longrightarrow \Lambda(s),
        \qquad
        u\longmapsto \lambda_s(u),
\]
the \emph{first-output cell} at \(s\).  Its individual pairs
\[
        u\mid\lambda_s(u):t_\psi(s,u)
        \qquad(u\in\mathcal E(s))
\]
are called the \emph{first-output blocks}.
\end{definition}

\begin{lemma}
\label{lem:first-output-prefix-codes}
Let \(\psi:\C_n\to\C_m\) be a rational order-preserving or order-reversing
homeomorphism with finite minimal transducer \(T^\psi\).  Assume that
\(T^\psi\) is epsilon-separated and has homeomorphic entry states.  Then, for
every \(s\in B_\psi\), the set \(\mathcal E(s)\) is a finite complete prefix
code in \(\W_n\).  Moreover, for every \(u\in\mathcal E(s)\),
\[
        t_\psi(s,u)\in B_\psi,
\]
and
\[
        \Lambda(s)=\{\lambda_s(u)\mid u\in\mathcal E(s)\}
\]
is a finite complete prefix code in \(\W_m\).
\end{lemma}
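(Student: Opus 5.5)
The plan is to analyse the \(\eps\)-part of the transducer started at an entry state \(s\in B_\psi\), using injectivity and epsilon-separation, and then read off the three assertions. The first step is an observation about states. Call a state \emph{silent-reached} if it has an incoming \(\eps\)-edge, i.e.\ it lies outside \(B_\psi\). By epsilon-separation, every edge entering a silent-reached state is an \(\eps\)-edge. Hence, if we start at \(s\in B_\psi\) and read a word \(u\), the path is \(\eps\)-labelled exactly as long as it stays among non-entry states after the first step. The first nonempty output occurs on an edge whose target has an incoming non-\(\eps\) edge, so that target lies in \(B_\psi\). This already gives \(t_\psi(s,u)\in B_\psi\) for \(u\in\mathcal E(s)\).

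The second step proves that \(\mathcal E(s)\) is a finite complete prefix code. It is a prefix code by definition, since the first nonempty output is unique along a path. For completeness, note that every infinite input \(\omega\) eventually produces a nonempty output, by nondegeneracy. So each \(\omega\) has a unique prefix in \(\mathcal E(s)\). For finiteness, use Lemma~\ref{lem:no-epsilon-into-homeomorphism-state}(ii): among \(\eps\)-output paths from \(s\), two distinct words cannot reach the same state, because they would have equal output \(\eps\) and equal target. Thus the words \(v\) with \(o_\psi(s,v)=\eps\) inject into the finite set \(S_\psi\), so there are finitely many of them. Every element of \(\mathcal E(s)\) is such a word followed by one letter, so \(\mathcal E(s)\) is finite. (Alternatively, compactness applied to the clopen cover \(\{U_u\}\) also gives finiteness.)

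The third step, which I expect to be the main obstacle, proves that \(\Lambda(s)\) is a finite complete prefix code in \(\W_m\). The key identity is
\[
h_{T^\psi_s}(u\omega)=\lambda_s(u)\,h_{T^\psi_{t_\psi(s,u)}}(\omega).
\]
Since \(t_\psi(s,u)\in B_\psi\) and the entry states are homeomorphic, each piece \(h_{T^\psi_s}(U_u)\) equals the full cylinder \(U_{\lambda_s(u)}\). Since \(s\in B_\psi\) is itself a homeomorphism state, these cylinders, taken over the finite complete prefix code \(\mathcal E(s)\), partition \(\C_m\); this uses that \(h_{T^\psi_s}\) is bijective.

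It remains to see that a finite family of cylinders partitioning \(\C_m\) has pairwise incomparable labels covering everything. Disjointness of \(U_{\lambda}\) and \(U_{\lambda'}\) forces \(\lambda\) and \(\lambda'\) to be incomparable; in particular the map \(u\mapsto\lambda_s(u)\) is injective. Covering gives completeness. The delicate point to check is that distinct \(u\) cannot share the same \(\lambda\), and this again follows from disjointness via injectivity of \(h_{T^\psi_s}\). Ordering of the bijection is not required here.
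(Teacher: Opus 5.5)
Your proof is correct and follows the same skeleton as the paper's: $\mathcal E(s)$ is a prefix code by definition, $t_\psi(s,u)\in B_\psi$ by epsilon-separation, and the cylinders $U_{\lambda_s(u)}=h_{T^\psi_s}(U_u)$ partition $\C_m$ because $h_{T^\psi_s}$ is a homeomorphism. The only variation is the finiteness step: the paper bounds the length of output-free runs uniformly, using that a finite transducer has no directed $\eps$-cycles, whereas you inject the $\eps$-output words from $s$ into $S_\psi$ via Lemma~\ref{lem:no-epsilon-into-homeomorphism-state}(ii) (or use compactness), which works equally well.
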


\begin{proof}
By definition, no element of \(\mathcal E(s)\) is a proper prefix of another.
Thus the cylinders
\[
        U_u,\qquad u\in\mathcal E(s),
\]
are pairwise disjoint.  It remains to see that they cover \(\C_n\) and that
there are only finitely many of them.

There is no directed cycle all of whose edges have output \(\eps\), since such
a cycle would give an infinite input word with finite output, contradicting
nondegeneracy.  Since \(T^\psi\) is finite, there is a uniform bound on how
long one can read input from \(s\) while producing only \(\eps\).  Hence every
\(\alpha\in\C_n\) has a unique prefix in \(\mathcal E(s)\), and only finitely
many such prefixes can occur.  Therefore \(\mathcal E(s)\) is a finite
complete prefix code.

Let \(u\in\mathcal E(s)\).  The last edge in the path labelled \(u\) has
nonempty output.  Thus \(t_\psi(s,u)\) has an incoming non-\(\eps\) edge.  By
epsilon-separatedness,
\(t_\psi(s,u)\in B_\psi.\)
Since \(s\) and \(t_\psi(s,u)\) are homeomorphism states,
\[
        h_{T^\psi_s}(U_u)
        =
        o_\psi(s,u)\,h_{T^\psi_{t_\psi(s,u)}}(\C_n)
        =
        U_{\lambda_s(u)}.
\]
The cylinders \(U_u\), \(u\in\mathcal E(s)\), partition \(\C_n\), and
\(h_{T^\psi_s}\) is a homeomorphism \(\C_n\to\C_m\).  Therefore the cylinders
\(U_{\lambda_s(u)}\), \(u\in\mathcal E(s)\), partition \(\C_m\).  Thus
\(\Lambda(s)\) is a finite complete prefix code.
\end{proof}

We now describe the clone--contract operation on the raw forward automaton.

\begin{definition}
\label{def:elementary-clone-contract-step}
Let \(\Gamma\) be a finite word-labelled automaton with edge labels in
\(\W_m\), and assume that \(\Gamma\) has no directed \(\eps\)-cycles.  An
\emph{elementary clone--contract step} is performed at a vertex \(v\)
satisfying:
\begin{enumerate}[label=(\roman*)]
    \item \(v\) has at least one incoming \(\eps\)-edge;
    \item every incoming edge of \(v\) is labelled \(\eps\).
\end{enumerate}
Let the incoming \(\eps\)-edges of \(v\) be
\[
        e_1:p_1\xrightarrow{\ \eps\ }v,
        \ldots,
        e_k:p_k\xrightarrow{\ \eps\ }v.
\]
Since \(\Gamma\) has no directed \(\eps\)-cycles and every incoming edge of
\(v\) is labelled \(\eps\), no outgoing edge of \(v\) has terminal vertex
\(v\).  Thus, for every outgoing edge
\[
        v\xrightarrow{\ \lambda\ }w,
        \qquad \lambda\in\W_m,
\]
the terminal vertex \(w\) survives after \(v\) is deleted.

For each \(j\in\{1,\ldots,k\}\), and for each outgoing edge
\[
        v\xrightarrow{\ \lambda\ }w,
\]
add a copied edge
\[
        p_j\xrightarrow{\ \lambda\ }w.
\]
Then delete the edges \(e_1,\ldots,e_k\), delete the vertex \(v\), and delete
all outgoing edges of \(v\).

Equivalently, one may first replace \(v\) by \(k\) copies, one for each
incoming \(\eps\)-edge, copy all outgoing edges of \(v\) to each copy, and
then contract the \(k\) private incoming \(\eps\)-edges.
\end{definition}

\begin{definition}[The clone--contract reduction of \(\Fraw_\psi(\mathcal A)\)]
\label{def:clone-contract-reduction}
Let \(\psi:\C_n\to\C_m\) be a rational order-preserving or order-reversing
homeomorphism with finite minimal transducer \(T^\psi\), and let
\[
        \mathcal A=(Q,\tau,q_0)
\]
be a finite full \(n\)-ary tree-automaton.  Assume that \(T^\psi\) is
epsilon-separated.  Starting with
\[
        \Gamma_0=\Fraw_\psi(\mathcal A),
\]
repeat elementary clone--contract steps as long as there is an
\(\eps\)-edge.  At each stage one may choose any vertex with an incoming
\(\eps\)-edge.

The resulting word-labelled automaton is denoted
\[
        \operatorname{Contr}_\eps\bigl(\Fraw_\psi(\mathcal A)\bigr).
\]
\end{definition}

\begin{lemma}
\label{lem:clone-contract-well-defined}
Let \(\psi:\C_n\to\C_m\) be a rational order-preserving or order-reversing
homeomorphism with finite minimal transducer \(T^\psi\), and let
\[
        \mathcal A=(Q,\tau,q_0)
\]
be a finite full \(n\)-ary tree-automaton.  Assume that \(T^\psi\) is
epsilon-separated.  The process of
Definition~\ref{def:clone-contract-reduction} is well defined and terminates.
Moreover, its final word-labelled automaton is independent, up to canonical
isomorphism, of the order in which the elementary clone--contract steps are
performed.
\end{lemma}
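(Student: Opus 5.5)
The plan has three parts: show each elementary step stays within the same class of automata, bound the number of steps by a potential, and establish order-independence through an intrinsic description of the result.

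\emph{Invariants.} In $\Gamma_0=\Fraw_\psi(\mathcal A)$ an edge $(s,q)\xrightarrow{o_\psi(s,i)}(t_\psi(s,i),\tau(q,i))$ has label $\eps$ exactly when $o_\psi(s,i)=\eps$. Its label therefore depends only on the transducer coordinate. Epsilon-separatedness then says that every vertex is either ``$\eps$-entered'' (all incoming edges labelled $\eps$) or ``$+$-entered'' (no incoming $\eps$-edge). Moreover $\Gamma_0$ has no directed $\eps$-cycle, since such a cycle would project to an $\eps$-output cycle of $T^\psi$ and contradict nondegeneracy. I will check that an elementary step preserves three properties:
\begin{enumerate}[label=(\alph*)]
\item the graph is finite;
\item there are no $\eps$-cycles;
\item every vertex with an incoming $\eps$-edge has only $\eps$ incoming edges.
\end{enumerate}
A new edge $p_j\xrightarrow{\lambda}w$ has the same label and target as an old edge $v\xrightarrow{\lambda}w$. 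Hence condition (c) at $w$ is unchanged, and any $\eps$-path in the new graph lifts to one in the old graph by inserting $v$. So every vertex with an incoming $\eps$-edge is always eligible, and the process can only stop when no $\eps$-edges remain. A small caveat: a step may leave a vertex with no incoming edges. I would keep such vertices (or restrict to the part accessible from the root, which is never deleted because it is $+$-entered by Lemma~\ref{lem:no-epsilon-into-homeomorphism-state}(i)); this does not affect either argument.

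\emph{Termination.} I will use the potential
\[
\Phi(\Gamma)=\sum_{e\ \eps\text{-edge}}N(\mathrm{target}(e)),
\]
where $N(w)$ is the number of $\eps$-paths starting at $w$, counting the trivial path. $N(w)$ is finite because the graph is finite with no $\eps$-cycles. A step at $v$ removes the $k$ edges into $v$, contributing $kN(v)$. It adds, for each outgoing $\eps$-edge $v\to w$, $k$ copies contributing $kN(w)$ each. It never changes any $N(\cdot)$ of a surviving vertex, since $\eps$-paths from surviving vertices cannot pass through $v$ in a way that changes their count: a path through $p_j\to v\to w$ is replaced bijectively by $p_j\to w$. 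Since $N(v)=1+\sum_{v\to w\ \eps}N(w)$, the step lowers $\Phi$ by exactly $k\ge1$. So the process terminates. This step, controlling the multiplication of edges under cloning, is the main obstacle; if the exact bookkeeping fails, I would fall back to a multiset ordering on $\eps$-depths.

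\emph{Order independence.} I would describe the final automaton intrinsically, and I expect this to be routine. Its vertices are the $+$-entered vertices of $\Gamma_0$, since these are never deleted and every $\eps$-entered vertex eventually is. Its edges from $p$ correspond bijectively to paths in $\Gamma_0$ from $p$ of the form (some $\eps$-edges into $\eps$-entered vertices) followed by one non-$\eps$ edge. Each such edge carries that final label and target. By induction on the steps, the edges present at each stage from a surviving vertex $p$ correspond to $\Gamma_0$-paths through deleted vertices followed by one edge to a surviving vertex. At the end no $\eps$-edges remain, which yields the description above. The description does not refer to the order of steps, so the canonical isomorphism identifies vertices with themselves and edges with the corresponding $\Gamma_0$-paths.
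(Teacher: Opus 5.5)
Your proof is correct, but it differs from the paper's in two places.

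\textbf{Termination.} No potential is needed. An elementary step deletes $v$ and creates no new vertex; the clones of $v$ are absorbed into the $p_j$ by the contraction. So the vertex count drops by one at each step, and the process stops after at most as many steps as $\Fraw_\psi(\mathcal A)$ has vertices. This is the paper's argument. The growth in the number of edges, which you single out as the main obstacle, plays no role.

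Your bookkeeping for $\Phi$ is also off in two ways:
\begin{itemize}
\item $N$ of a surviving vertex can change, because $\eps$-paths that end at $v$ disappear.
\item The outgoing $\eps$-edges $v\to w$ are themselves deleted, and each contributed $N(w)$.
\end{itemize}
Both errors go in the harmless direction. Surviving vertices satisfy $N'\le N$, and $\Phi-\Phi'\ge k+\sum N(w)\ge k$, where the sum runs over the outgoing $\eps$-edges of $v$. So your conclusion stands, but the vertex count is the clean argument.

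\textbf{Order independence.} You describe the final graph combinatorially: the survivors are the vertices with no incoming $\eps$-edge in $\Gamma_0$, and the edges out of $p$ correspond to $\Gamma_0$-paths consisting of $\eps$-edges followed by one non-$\eps$ edge. In your induction you should note that such a path meets the newly deleted $v$ at most once, since a second visit would give an $\eps$-loop at $v$.

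The paper instead works directly in transducer terms:
\begin{itemize}
\item The surviving accessible states are exactly those in $B_\psi\times Q$. A state whose transducer coordinate lies outside $B_\psi$ has only $\eps$ incoming edges, by epsilon-separatedness, so it would still be eligible.
\item The edges out of $(s,q)$ are indexed by the first-output blocks $u\in\mathcal E(s)$.
\item Lemma~\ref{lem:no-epsilon-into-homeomorphism-state}(ii) shows that distinct blocks give distinct pairs (label, target).
\end{itemize}
Since a path from $(s,q)$ in $\Gamma_0$ is determined by its input word, your path-indexed description coincides with the paper's $\mathcal E(s)$-indexed one.

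Your version uses only the two invariants, so it applies to any finite word-labelled graph with those properties, and it handles multiplicities by indexing edges by paths. The paper's version directly yields the explicit first-output-block description of $\mathcal G^{\mathrm{raw}}_\psi(\mathcal A)$ used in Definition~\ref{def:contracted-raw-forward-product} and in the later computations, together with the absence of duplicate edges.
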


\begin{proof}
First, there are no directed \(\eps\)-cycles in
\(\Fraw_\psi(\mathcal A).\)
Indeed, a directed \(\eps\)-cycle in the raw forward automaton would project to
a directed cycle in \(T^\psi\) whose total output is \(\eps\).  Repeating this
cycle periodically would give an infinite input word whose output is finite,
contradicting nondegeneracy of the transducer.

We next note that, throughout the clone--contract process, the following two
properties are preserved:
\begin{enumerate}[label=(\roman*)]
    \item there are no directed \(\eps\)-cycles;
    \item every vertex with an incoming \(\eps\)-edge has only incoming
    \(\eps\)-edges.
\end{enumerate}
Both properties hold at the start.  The second follows from
epsilon-separatedness of \(T^\psi\): if a transducer state has an incoming
\(\eps\)-edge, then it has no incoming non-\(\eps\) edge.

Suppose the two properties hold before an elementary step at \(v\).  The step
replaces each two-edge path
\[
        p_j\xrightarrow{\ \eps\ }v\xrightarrow{\ \lambda\ }w
\]
by the single edge
\[
        p_j\xrightarrow{\ \lambda\ }w,
\]
and then deletes \(v\).  If a new directed \(\eps\)-cycle were created, it
would have to use one of the new copied \(\eps\)-edges
\[
        p_j\xrightarrow{\ \eps\ }w.
\]
Replacing that copied edge by the old two-edge path
\[
        p_j\xrightarrow{\ \eps\ }v\xrightarrow{\ \eps\ }w
\]
would give a directed \(\eps\)-cycle before the step, a contradiction.  Hence
no directed \(\eps\)-cycle is created.

The second property is also preserved.  The only new incoming edges are copied
edges.  If a copied edge has label \(\eps\), then its target already had an
incoming \(\eps\)-edge before the step, namely the edge copied from \(v\), and
therefore all its incoming edges were already \(\eps\)-edges.  If a copied edge
has nonempty label, then its target already had an incoming non-\(\eps\) edge
before the step, and hence, by the second property before the step, it had no
incoming \(\eps\)-edge.  Thus after the step, every vertex with an incoming
\(\eps\)-edge still has only incoming \(\eps\)-edges.

It follows that every vertex with an incoming \(\eps\)-edge is eligible for an
elementary clone--contract step.  Each step deletes one vertex and creates no
new vertices.  Since the raw automaton is finite, the process terminates.

It remains to identify the final graph.  Every edge appearing in any
intermediate graph represents a path in the original raw forward automaton with
the same total output label.  This is true at the start.  It is preserved by an
elementary step because the new edge
\[
        p_j\xrightarrow{\ \lambda\ }w
\]
represents the concatenation of the two old represented paths
\[
        p_j\xrightarrow{\ \eps\ }v
        \quad\text{and}\quad
        v\xrightarrow{\ \lambda\ }w.
\]

At the end there are no \(\eps\)-edges.  Let
\(p=(s,q)\)
be a remaining accessible state.  Then \(s\in B_\psi\).  Indeed, if
\(s\notin B_\psi\), then by epsilon-separatedness every incoming edge to a
state with transducer coordinate \(s\) is labelled \(\eps\); since \(p\) is
accessible and is not the initial state, it would have been eligible for a
clone--contract step.

An edge in the final contracted automaton starting at \(p=(s,q)\) is
represented by a path in the original raw forward automaton which starts at
\(p\), follows some number of \(\eps\)-edges, and then follows one edge with
nonempty output.  Let \(u\) be the input word read along this represented path.
Then every proper prefix of \(u\) produces empty output from \(s\), while \(u\)
itself produces nonempty output.  Therefore
\(u\in\mathcal E(s).\)
The represented edge has label
\(\lambda_s(u)=o_\psi(s,u)\)
and terminal state
\[
        \bigl(t_\psi(s,u),\tau(q,u)\bigr).
\]
Conversely, every first-output block \(u\in\mathcal E(s)\) determines exactly
such a path starting at \((s,q)\), and hence gives the corresponding edge in
the final graph.

There are no duplicate edges in this intrinsic description.  Indeed, suppose
that \(u,v\in\mathcal E(s)\) give the same edge out of \((s,q)\).  Then
\[
        o_\psi(s,u)=o_\psi(s,v)
        \qquad\text{and}\qquad
        t_\psi(s,u)=t_\psi(s,v).
\]
By Lemma~\ref{lem:no-epsilon-into-homeomorphism-state}(ii), we have
\(u=v.\)
Thus each edge in the intrinsic description occurs once.

Therefore the final graph is characterized intrinsically as follows: its states
are the accessible states in \(B_\psi\times Q\), and its edges are exactly
\[
        (s,q)
        \xrightarrow{\ \lambda_s(u)\ }
        \bigl(t_\psi(s,u),\tau(q,u)\bigr)
        \qquad
        (u\in\mathcal E(s)).
\]
This intrinsic description is independent of the order of the elementary
steps.  Hence the final word-labelled automaton is well defined up to canonical
isomorphism.
\end{proof}

The preceding proof motivates the following direct notation for the graph
obtained after cloning and contracting.

\begin{definition}[The contracted raw forward product]
\label{def:contracted-raw-forward-product}
Let \(\psi:\C_n\to\C_m\) be a rational order-preserving or order-reversing
homeomorphism with finite minimal transducer \(T^\psi\), and let
\[
        \mathcal A=(Q,\tau,q_0)
\]
be a finite full \(n\)-ary tree-automaton.  Assume that \(T^\psi\) is
epsilon-separated and has homeomorphic entry states.  The \emph{contracted raw
forward product}
\[
        \mathcal G^{\mathrm{raw}}_\psi(\mathcal A)
\]
is the word-labelled automaton whose accessible state set is contained in
\[
        B_\psi\times Q,
\]
with initial state
\[
        (s_\psi,q_0),
\]
and whose edges are
\[
        (s,q)
        \xrightarrow{\ \lambda_s(u)\ }
        \bigl(t_\psi(s,u),\tau(q,u)\bigr)
        \qquad
        (u\in\mathcal E(s)).
\]
By Lemma~\ref{lem:clone-contract-well-defined},
\[
        \mathcal G^{\mathrm{raw}}_\psi(\mathcal A)
        \cong
        \operatorname{Contr}_\eps\bigl(\Fraw_\psi(\mathcal A)\bigr).
\]
\end{definition}

By Lemma~\ref{lem:first-output-prefix-codes}, the outgoing labels at each
state of \(\mathcal G^{\mathrm{raw}}_\psi(\mathcal A)\) form a complete
prefix code.

\begin{definition}[The geometric forward automaton]
\label{def:geometric-forward-automaton}
Let \(\psi:\C_n\to\C_m\) be a rational order-preserving or order-reversing
homeomorphism with finite minimal transducer \(T^\psi\), and let
\[
        \mathcal A=(Q,\tau,q_0)
\]
be a finite full \(n\)-ary tree-automaton.  Assume that \(T^\psi\) is
epsilon-separated and has homeomorphic entry states.  The \emph{geometric
forward automaton}
\[
        \mathcal G_\psi(\mathcal A)
\]
is obtained from
\[
        \mathcal G^{\mathrm{raw}}_\psi(\mathcal A)
\]
as follows.  First subdivide every word-labelled edge into one-letter edges.
Then repeatedly fold pairs of one-letter edges with the same initial vertex and
the same label, identifying their terminal vertices, until no such pair
remains.  Since the outgoing labels at each principal state form a prefix code,
these foldings identify exactly the common initial subpaths of the paths
issuing from that principal state.

Equivalently, at each state \(p=(s,q)\), replace the outgoing word-labelled
edges by the prefix tree of the complete prefix code
\[
        \Lambda(s)=\{\lambda_s(u)\mid u\in\mathcal E(s)\}.
\]
Thus, if
\[
        p\xrightarrow{\ \lambda\ }p_\lambda
\]
is an edge of \(\mathcal G^{\mathrm{raw}}_\psi(\mathcal A)\), and
\[
        \lambda=b_1\cdots b_r,
        \qquad b_i\in X_m,
\]
then this edge contributes the path
\[
        p
        \xrightarrow{\ b_1\ }
        (p,b_1)
        \xrightarrow{\ b_2\ }
        (p,b_1b_2)
        \longrightarrow\cdots\longrightarrow
        (p,b_1\cdots b_{r-1})
        \xrightarrow{\ b_r\ }
        p_\lambda,
\]
with common initial subpaths identified.  In this definition, the notation
\((p,\xi)\) records the already-read prefix \(\xi\) in the prefix tree attached
at \(p\).  This is different from the subdivision notation \((e,\rho)\) used
earlier, where \(\rho\) records an unread suffix of the label of an edge.

Since each \(\Lambda(s)\) is a complete prefix code, every nonterminal vertex
of each prefix tree has one outgoing edge labelled \(b\) for every
\(b\in X_m\).  Hence \(\mathcal G_\psi(\mathcal A)\) is a deterministic full
\(m\)-ary tree-automaton.
\end{definition}

We now compare the geometric automaton with the forward automaton
\(\Ffor_\psi(\mathcal A),\)
which was defined by the subset construction on
\[
        \widehat{\Fraw}_\psi(\mathcal A).
\]

The main point is the following simple observation about forward subset states:
once such a subset contains an entry principal state, it is exactly the
\(\eps\)-closure of that state.

\begin{lemma}
\label{lem:forward-subset-containing-entry-state}
Let \(\psi:\C_n\to\C_m\) be a rational order-preserving or order-reversing
homeomorphism with finite minimal transducer \(T^\psi\), and let
\[
        \mathcal A=(Q,\tau,q_0)
\]
be a finite full \(n\)-ary tree-automaton.  Assume that \(T^\psi\) has
homeomorphic entry states.  Let \(x\in\W_m\), and suppose that \(R_x\) contains
a principal state
\[
        p=(s,q)\in B_\psi\times Q.
\]
Then
\[
        R_x=E(\{p\}).
\]
\end{lemma}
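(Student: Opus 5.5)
The plan is to prove the two inclusions separately. The inclusion \(E(\{p\})\subseteq R_x\) is formal. By Definition~\ref{def:forward-automaton}, \(R_x\) is the set of endpoints of paths labelled \(x\) starting at the initial state. Appending \(\eps\)-edges to such a path does not change its label, so \(R_x\) is \(\eps\)-closed. Since \(p\in R_x\), we get \(E(\{p\})\subseteq R_x\).

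For the reverse inclusion, the key input is Lemma~\ref{lem:forward-state-carried-data}. Fix a path \(\gamma\) labelled \(x\) from \((s_\psi,q_0)\) to \(p\), and let \(a=a(\gamma)\). Since \(p=(s,q)\) is principal, \(\rho_p=\eps\) and \(\chi_p=h_{T^\psi_s}\). Hence \(\psi(a\omega)=x\,h_{T^\psi_s}(\omega)\), and in particular \(O^\psi(a)=x\). By the homeomorphic entry states hypothesis, \(s\in B_\psi\) is a homeomorphism state, so \(h_{T^\psi_s}(\C_n)=\C_m\) and therefore
\[
        \psi(U_a)=U_x .
\]
Now let \(d\in R_x\) be arbitrary. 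Choose a path \(\gamma'\) labelled \(x\) ending at \(d\), and let \(a'=a(\gamma')\). The same lemma gives \(\psi(U_{a'})\subseteq U_x=\psi(U_a)\). Since \(\psi\) is injective, \(U_{a'}\subseteq U_a\), so \(a'=aw\) for some \(w\in\W_n\).

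Next I use Lemma~\ref{lem:raw-paths}. The subdivided path determined by \(a'\) is unique. It passes through the principal state reached by \(a\), which is \(p\) because \(\gamma\) is the subdivided path determined by \(a\). At that point the accumulated label is \(O^\psi(a)=x\). The path \(\gamma'\) is an initial segment of this subdivided path, namely the segment ending at the position where \(a'\) has been consumed (partway through the last edge if \(d\) is a subdivision state).

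I must check that \(\gamma'\) extends past \(p\). If \(w\neq\eps\) this is clear. If \(w=\eps\) and \(d\) is principal, then \(d=p\) and there is nothing to prove. The remaining case is \(w=\eps\) with \(d=(e,\rho)\) a subdivision state on the edge of the last letter \(i\) of \(a=a_0i\), where \(\rho\neq\eps\). Then the label of \(\gamma'\) is \(O^\psi(a_0)\mu\) with \(\mu\rho=o_\psi(t_\psi(s_\psi,a_0),i)\). This label is strictly shorter than \(O^\psi(a)=x\), which contradicts the fact that \(\gamma'\) is labelled \(x\).

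So in all cases \(\gamma'\) consists of \(\gamma\) followed by a path from \(p\) to \(d\). The total label is \(x\) and the prefix already has label \(x\), so the continuation from \(p\) to \(d\) has empty label. That is, it uses only \(\eps\)-edges, and \(d\in E(\{p\})\).

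The only delicate step is this bookkeeping: identifying \(\gamma'\) as an initial segment of the unique subdivided path of \(a'\), and excluding the degenerate subdivision-state case by comparing label lengths. The essential idea is the equality \(\psi(U_a)=U_x\), which is exactly where the homeomorphic entry states hypothesis is used.
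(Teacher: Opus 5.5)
Your proof is correct and follows the paper's argument. Both use the equality \(\psi(U_a)=U_x\) coming from the homeomorphic entry state, then injectivity to get \(a'=aw\), and then the observation that the part of the path after \(p\) has empty label. Your explicit exclusion of the case \(w=\eps\) with \(d\) a subdivision state on the last edge of \(a\), by comparing label lengths, covers a point the paper's proof passes over silently, so your version is slightly more careful.
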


\begin{proof}
Since \(p\in R_x\), there is a path in
\[
        \widehat{\Fraw}_\psi(\mathcal A)
\]
from the initial state to \(p\) labelled \(x\).  Let \(a\in\W_n\) be the input
word read along the corresponding raw path.  Then
\[
        O^\psi(a)=x,
        \qquad
        s^\psi_a=s,
        \qquad
        \tau(q_0,a)=q.
\]
Since \(s\in B_\psi\) and the entry states are homeomorphism states,
\(h_{T^\psi_s}\) is onto \(\C_m\).  Hence
\[
        \psi(U_a)=x\,h_{T^\psi_s}(\C_n)=U_x.
\]

Let \(c\in R_x\).  Choose a path \(\gamma\) in
\(\widehat{\Fraw}_\psi(\mathcal A)\) from the initial state to \(c\), labelled
\(x\).  Let \(b\in\W_n\) be the input word corresponding to the path
\(\gamma\).  The image of the cylinder \(U_b\) under \(\psi\) is contained in
\(U_x\).  Since
\(\psi(U_a)=U_x\)
and \(\psi\) is injective, we get
\(U_b\subseteq U_a.\)
Thus \(b=ar\) for some \(r\in\W_n\).  Therefore the raw path determined by
\(b\) factors through the principal state \(p\) after reading the prefix
\(a\).

The prefix path to \(p\) has already produced the whole output word \(x\), and
the full path \(\gamma\) also has label \(x\).  Hence the part of \(\gamma\)
after it passes through \(p\) has label \(\eps\).  Therefore \(c\) is reachable
from \(p\) by an \(\eps\)-labelled path, so
\(c\in E(\{p\}).\)
This proves
\(R_x\subseteq E(\{p\}).\)
The reverse inclusion follows from the fact that \(R_x\) is \(\eps\)-closed
and contains \(p\).  Hence
\(R_x=E(\{p\}).\)
\end{proof}

We next construct the natural map from the geometric automaton to the forward
automaton.

\begin{lemma}
\label{lem:geometric-projection-to-forward}
Let \(\psi:\C_n\to\C_m\) be a rational order-preserving or order-reversing
homeomorphism with finite minimal transducer \(T^\psi\), and let
\[
        \mathcal A=(Q,\tau,q_0)
\]
be a finite full \(n\)-ary tree-automaton.  Assume that \(T^\psi\) is
epsilon-separated and has homeomorphic entry states.  For \(x\in\W_m\), let
\(G_x\) be the state reached by \(x\) in
\[
        \mathcal G_\psi(\mathcal A),
\]
and let \(R_x\) be the state reached by \(x\) in
\[
        \Ffor_\psi(\mathcal A).
\]
If
\[
        G_x=G_y,
\]
then
\[
        R_x=R_y.
\]
Consequently, the rule
\[
        \Pi(G_x)=R_x
\]
defines a well-defined surjective morphism
\[
        \Pi:\mathcal G_\psi(\mathcal A)\longrightarrow
        \Ffor_\psi(\mathcal A).
\]
Moreover, if \(p\) is a principal state of \(\mathcal G_\psi(\mathcal A)\),
then
\[
        \Pi(p)=E(\{p\}).
\]
\end{lemma}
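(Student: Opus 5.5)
The plan is to describe every state of $\mathcal G_\psi(\mathcal A)$ by a canonical pair and to compute $R_x$ directly from that pair, so that $R_x$ visibly depends only on $G_x$.

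\textbf{Normal form for states of $\mathcal G_\psi(\mathcal A)$.} Every state of $\mathcal G_\psi(\mathcal A)$ is either a principal state $p=(s,q)$ with $s\in B_\psi$, or a tree vertex $(p,\xi)$, where $\xi$ is a proper nonempty prefix of some word of $\Lambda(s)$. By determinism and the prefix-tree structure, every $x\in\W_m$ factors uniquely as $x=\lambda_1\cdots\lambda_j\,\xi$. Here the $\lambda_i$ are labels of successive edges of $\mathcal G^{\mathrm{raw}}_\psi(\mathcal A)$, starting from $(s_\psi,q_0)$, and $\xi$ is a proper prefix of some word in $\Lambda(s)$ at the principal state $p$ reached after $\lambda_1\cdots\lambda_j$. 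Then $G_x=(p,\xi)$, with the convention $(p,\eps)=p$.

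\textbf{Principal states.} Suppose $G_x=p$ is principal. Write $u_i\in\mathcal E(\cdot)$ for the first-output blocks of the successive edges, and set $a=u_1\cdots u_j$. Then $O^\psi(a)=x$ and the raw path determined by $a$ ends at $p$. Its subdivided version has label $x$, so $p\in R_x$. Since $s\in B_\psi$, Lemma~\ref{lem:forward-subset-containing-entry-state} gives $R_x=E(\{p\})$. The right-hand side depends only on $p$, so $G_x=G_y=p$ implies $R_x=R_y$, and the formula $\Pi(p)=E(\{p\})$ holds.

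\textbf{Tree vertices.} Suppose $G_x=(p,\xi)$ with $x=z\xi$ and $G_z=p$, so that $R_z=E(\{p\})$ by the previous step. Then $R_x$ is obtained from $R_z$ by reading $\xi$ in the subset construction. Since $R_z$ is determined by $p$ and the subset construction is deterministic, $R_x$ is a function of $(p,\xi)$ alone. Hence $G_x=G_y$ implies $R_x=R_y$ in every case, and $\Pi$ is well defined.

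\textbf{Morphism and surjectivity.} $\Pi$ sends the root $G_\eps$ to $R_\eps$. It commutes with transitions because both automata are deterministic and full, and reading $b$ sends $x$ to $xb$ on both sides: $\Pi(G_x\cdot b)=\Pi(G_{xb})=R_{xb}=R_x\cdot b$. Every state $R_x$ of $\Ffor_\psi(\mathcal A)$ equals $\Pi(G_x)$, and every edge $R_x\xrightarrow{b}R_{xb}$ is the image of $G_x\xrightarrow{b}G_{xb}$, so $\Pi$ is onto on states and on edges.

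\textbf{Main obstacle.} The one point that needs care is the normal-form claim. It requires the labels at each principal state to form a complete prefix code, which is Lemma~\ref{lem:first-output-prefix-codes}, so that every word factors through principal states as described. Beyond that, the argument only uses Lemma~\ref{lem:forward-subset-containing-entry-state} and determinism of the subset construction. The identification of $E(\{p\})$ does not need to be computed explicitly.
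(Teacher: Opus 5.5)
Your proposal is correct and follows essentially the same route as the paper. For a principal state $G_x=p$ you show $p\in R_x$ and invoke Lemma~\ref{lem:forward-subset-containing-entry-state} to get $R_x=E(\{p\})$; for a delay state you write $x=x_0\xi$ with $G_{x_0}=p$ and use determinism of the subset construction; the morphism and surjectivity checks are the same as the paper's. Your normal-form factorization simply makes explicit the structural fact the paper uses implicitly, namely that a delay state determines its principal root $p$ and its prefix $\xi$.
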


\begin{proof}
We first prove the assertion for principal states.  Suppose
\(G_x=p=(s,q)\)
is a principal state.  By construction of \(\mathcal G_\psi(\mathcal A)\), the
path labelled \(x\) from the root to \(p\) expands to a path in the original
subdivided raw automaton
\[
        \widehat{\Fraw}_\psi(\mathcal A)
\]
with the same label \(x\), ending at the same principal state \(p\).  Hence
\(p\in R_x.\)
Since \(p\in B_\psi\times Q\), Lemma~\ref{lem:forward-subset-containing-entry-state}
gives
\(R_x=E(\{p\}).\)

Now suppose that
\(G_x=G_y.\)
If this common state is a principal state \(p\), then the principal-state case
gives
\(R_x=E(\{p\})=R_y.\)

It remains to consider the case where the common state is a non-principal
vertex in a pasted prefix tree.  Then there are a principal state \(p=(s,q)\),
a nonempty proper prefix \(\xi\) of some word in \(\Lambda(s)\), and
decompositions
\[
        x=x_0\xi,
        \qquad
        y=y_0\xi,
\]
such that
\(G_{x_0}=G_{y_0}=p.\)
Indeed, the common state is the vertex labelled by \(\xi\) in the prefix tree
rooted at \(p\), and the only way to reach it is to first reach \(p\) and then
read \(\xi\) inside that tree.  By the principal-state case,
\(R_{x_0}=E(\{p\})=R_{y_0}.\)
Since the forward automaton is deterministic,
\[
        R_x
        =
        R_{x_0\xi}
        =
        R_{y_0\xi}
        =
        R_y.
\]
Thus \(G_x=G_y\) always implies \(R_x=R_y\), so \(\Pi(G_x)=R_x\) is
well-defined.

The map \(\Pi\) is a morphism because, for every \(b\in X_m\),
\[
        \Pi(G_x\cdot b)
        =
        \Pi(G_{xb})
        =
        R_{xb}
        =
        R_x\cdot b
        =
        \Pi(G_x)\cdot b.
\]
It is surjective because every state of \(\Ffor_\psi(\mathcal A)\) is of the
form \(R_x\) for some \(x\in\W_m\).  The final assertion follows from the
principal-state case.
\end{proof}

We shall also need the following consequence, which says that a non-principal
state of the geometric automaton cannot map to a forward subset containing an
entry principal state.

\begin{lemma}
\label{lem:nonprincipal-does-not-map-to-entry-state}
Let \(\psi:\C_n\to\C_m\) be a rational order-preserving or order-reversing
homeomorphism with finite minimal transducer \(T^\psi\), and let
\[
        \mathcal A=(Q,\tau,q_0)
\]
be a finite full \(n\)-ary tree-automaton.  Assume that \(T^\psi\) is
epsilon-separated and has homeomorphic entry states.  Let \(g\) be a
non-principal state of
\[
        \mathcal G_\psi(\mathcal A).
\]
Then \(\Pi(g)\) contains no principal state in \(B_\psi\times Q\).
\end{lemma}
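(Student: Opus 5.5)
Suppose, for contradiction, that $\Pi(g)$ contains a principal state $p=(s,q)$ with $s\in B_\psi$. The plan is to use the geometric structure of $g$ and the cylinder images to force a strict inclusion that contradicts injectivity of $\psi$.

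\textbf{Step 1: describe $g$ and the image cylinder.} Since $g$ is non-principal, it lies inside the prefix tree pasted at some principal state $p'=(s',q')$ of $\mathcal G_\psi(\mathcal A)$. So $g=G_{x_0\xi}$, where $G_{x_0}=p'$ and $\xi$ is a nonempty proper prefix of some word of $\Lambda(s')$. Put $x=x_0\xi$, so that $\Pi(g)=R_x$. As in the proof of Lemma~\ref{lem:geometric-projection-to-forward}, the path to $p'$ expands to a path in $\widehat{\Fraw}_\psi(\mathcal A)$ with label $x_0$ ending at $p'$. Let $a'\in\W_n$ be its input word. Then $O^\psi(a')=x_0$ and $s^\psi_{a'}=s'\in B_\psi$. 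Since $s'$ is a homeomorphism state, $\psi(U_{a'})=U_{x_0}$.

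\textbf{Step 2: locate the principal state $p$ below $a'$.} By Lemma~\ref{lem:forward-subset-containing-entry-state}, $R_x=E(\{p\})$. Let $a$ be the input word of a path labelled $x$ ending at $p$. Then $O^\psi(a)=x$, and since $s$ is a homeomorphism state, $\psi(U_a)=U_x\subseteq U_{x_0}=\psi(U_{a'})$. Injectivity of $\psi$ gives $U_a\subseteq U_{a'}$, so $a=a'r$ with $r$ nonempty, because $x\neq x_0$. The raw path of $a$ therefore passes through $p'$ and then reads $r$ from $s'$, with $o_\psi(s',r)=\xi$ and $t_\psi(s',r)=s$.

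\textbf{Step 3: derive the contradiction (the main obstacle).} Since $\xi\ne\eps$, some prefix of $r$ has nonempty output. Let $u\in\mathcal E(s')$ be the unique prefix of $r$ in this first-output code; it exists because $\mathcal E(s')$ is a complete prefix code by Lemma~\ref{lem:first-output-prefix-codes}. Then $\lambda_{s'}(u)=o_\psi(s',u)$ is a prefix of $\xi$. On the other hand, $\xi$ is a proper prefix of some $\lambda\in\Lambda(s')$. So $\lambda_{s'}(u)$ and $\lambda$ are comparable elements of the prefix code $\Lambda(s')$, which forces $\lambda_{s'}(u)=\lambda$. This contradicts the fact that $\lambda_{s'}(u)$ is a prefix of $\xi$, which is strictly shorter than $\lambda$.

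An alternative route to the same contradiction runs through the cylinders. The state $t_\psi(s',u)$ is an entry homeomorphism state, so $\psi(U_{a'u})=U_{x_0\lambda_{s'}(u)}$. But $U_{x_0\xi}$ is a proper subset of this cylinder while $\psi(U_{a'r})\subseteq U_{a'u}$, and one must check that the strict containment is incompatible. The delicate point is exactly confirming that $\lambda_{s'}(u)$ is a prefix of $\xi$, i.e.\ that the first nonempty output along $r$ does not overshoot $\xi$. This holds because the whole output $o_\psi(s',r)=\xi$ begins with $o_\psi(s',u)$.
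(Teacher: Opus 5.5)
Your proof is correct, but it takes a different route from the paper's.

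\textbf{The paper's route.} It first observes that $R_{x_0\xi}$ contains a subdivision state: from $p'$, follow the edge labelled by a codeword of $\Lambda(s')$ that properly extends $\xi$, and stop after reading $\xi$. If $R_{x_0\xi}$ also contained an entry principal state $p$, Lemma~\ref{lem:forward-subset-containing-entry-state} would give $R_{x_0\xi}=E(\{p\})$. That set consists only of principal states, since all $\eps$-edges join principal states, which is a contradiction.

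\textbf{Your route.} You argue directly with input words. The homeomorphism property of $s'$ and injectivity of $\psi$ force the computation reaching $p$ to pass through $p'$. The remaining input $r$ then has output exactly $\xi$. So its first nonempty output block $\lambda_{s'}(u)$ is a prefix of $\xi$, hence a proper prefix of another codeword of the prefix code $\Lambda(s')$, which is impossible.

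\textbf{Comparison.} The paper's argument is shorter because it reuses Lemma~\ref{lem:forward-subset-containing-entry-state} as a black box. Yours is self-contained: it essentially reruns the injectivity argument of that lemma, anchored at $p'$ instead of at $p$. It also proves slightly more. You only need $O^\psi(a)=x$, which holds for the input word of any path labelled $x$ ending at a principal state. So the hypothesis $s\in B_\psi$ is never really used, and your argument shows that $R_{x_0\xi}$ contains no principal state at all.

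\textbf{Two small remarks.}
\begin{enumerate}
\item The existence of $u$ comes from $o_\psi(s',r)=\xi\neq\eps$ together with the definition of $\mathcal E(s')$. It does not come from completeness of $\mathcal E(s')$, which concerns infinite words.
\item Your ``alternative route'' paragraph is unnecessary and, as written, yields no contradiction on its own. The inclusion $U_{x_0\xi}\subseteq U_{x_0\lambda_{s'}(u)}$ is harmless; the contradiction genuinely comes from the prefix-code property used in your main argument.
\end{enumerate}
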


\begin{proof}
Choose \(x\in\W_m\) such that
\(G_x=g.\)
Since \(g\) is a non-principal state, it lies in a prefix tree rooted at some
principal state \(p=(s,q)\).  Thus there are a word \(x_0\in\W_m\) and a
nonempty proper prefix \(\xi\) of a word in \(\Lambda(s)\) such that
\[
        x=x_0\xi,
        \qquad
        G_{x_0}=p.
\]
By Lemma~\ref{lem:geometric-projection-to-forward},
\(R_{x_0}=E(\{p\}).\)

The word \(\xi\) is a proper prefix of some \(\lambda_s(u)\), with
\(u\in\mathcal E(s)\).  Following the corresponding raw path from \(p\), after
the output prefix \(\xi\) has been read one is at a subdivision state of
\[
        \widehat{\Fraw}_\psi(\mathcal A).
\]
Hence \(R_x=R_{x_0\xi}\) contains at least one subdivision state.

Suppose, toward a contradiction, that \(R_x\) also contains a principal state
\(p'\in B_\psi\times Q.\)
Then Lemma~\ref{lem:forward-subset-containing-entry-state} gives
\(R_x=E(\{p'\}).\)
But \(E(\{p'\})\) consists only of principal states, since \(\eps\)-edges in
\(\widehat{\Fraw}_\psi(\mathcal A)\) come from raw \(\eps\)-labelled edges and
therefore go between principal states.  This contradicts the fact that \(R_x\)
contains a subdivision state.  Thus \(R_x=\Pi(g)\) contains no principal state
in \(B_\psi\times Q\).
\end{proof}

The morphism \(\Pi\) need not be injective before folding.  The next lemma says
that its kernel is killed by the usual folding of tree-automata.

\begin{lemma}
\label{lem:kernel-of-geometric-projection-killed-by-folding}
Let \(\psi:\C_n\to\C_m\) be a rational order-preserving or order-reversing
homeomorphism with finite minimal transducer \(T^\psi\), and let
\[
        \mathcal A=(Q,\tau,q_0)
\]
be a finite full \(n\)-ary tree-automaton.  Assume that \(T^\psi\) is
epsilon-separated and has homeomorphic entry states.  Let \(g,h\) be states of
\[
        \mathcal G_\psi(\mathcal A).
\]
If
\[
        \Pi(g)=\Pi(h),
\]
then \(g\) and \(h\) have the same image in the folded quotient
\[
        \overline{\mathcal G_\psi(\mathcal A)}.
\]
\end{lemma}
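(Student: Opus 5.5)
The plan is to use the characterization of folding in Lemma~\ref{lem:folded-quotient-finite-depth}.  Since $\mathcal G_\psi(\mathcal A)$ is full, it suffices to find $k\ge0$ with $g\cdot w=h\cdot w$ for every $w\in X_m^k$.  Write $g=G_x$ and $h=G_y$, so that $R_x=R_y$ by hypothesis.

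The first step is to reduce to equality at principal states.  Suppose $G_{x'}$ and $G_{y'}$ are principal states with $R_{x'}=R_{y'}$.  By Lemma~\ref{lem:geometric-projection-to-forward} these subsets are $E(\{G_{x'}\})$ and $E(\{G_{y'}\})$.  A principal state of $\mathcal G_\psi(\mathcal A)$ lies in $B_\psi\times Q$.  Its $\eps$-closure consists of itself together with states whose transducer coordinate is reached by an $\eps$-edge, and such coordinates lie outside $B_\psi$.  So $G_{x'}$ is the unique element of $E(\{G_{x'}\})$ lying in $B_\psi\times Q$, and likewise for $G_{y'}$.  Hence $G_{x'}=G_{y'}$.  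Thus equality of forward subsets at principal states forces equality in $\mathcal G_\psi(\mathcal A)$.

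The second step handles a common ray.  Fix $\alpha\in\C_m$ and follow $x\alpha$ in $\mathcal G_\psi(\mathcal A)$.  Every edge of $\mathcal G^{\mathrm{raw}}_\psi(\mathcal A)$ carries a nonempty label from the finite prefix codes $\Lambda(s)$, so the path returns to a principal state within boundedly many letters.  Suppose $G_{x\alpha_1\cdots\alpha_j}$ is principal.  Then $R_{y\alpha_1\cdots\alpha_j}=R_{x\alpha_1\cdots\alpha_j}$ contains a principal entry state.  By Lemma~\ref{lem:nonprincipal-does-not-map-to-entry-state}, $G_{y\alpha_1\cdots\alpha_j}$ must also be principal.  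The first step then gives $G_{x\alpha_1\cdots\alpha_j}=G_{y\alpha_1\cdots\alpha_j}$.

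It remains to choose $j$ so that the $x$-path is principal there; the proof does not need the two paths to synchronize simultaneously by any separate argument.  The point is that the implication ``$G_{x\xi}$ principal $\Rightarrow G_{y\xi}$ principal'' follows from equality of the $R$'s alone.  Since $R_{x\xi}=R_{y\xi}$ for all $\xi$ by determinism, we may take any $j$ with $G_{x\alpha_1\cdots\alpha_j}$ principal.  Then Lemma~\ref{lem:eventual-equality-compactness}, applied to the deterministic automaton $\mathcal G_\psi(\mathcal A)$, produces a uniform $k$ with $G_{xw}=G_{yw}$ for all $w\in X_m^k$.  Lemma~\ref{lem:folded-quotient-finite-depth} then shows that $g$ and $h$ have the same image in $\overline{\mathcal G_\psi(\mathcal A)}$.

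The main obstacle is the uniqueness claim in the first step: that $E(\{p\})\cap(B_\psi\times Q)=\{p\}$ for an entry principal state $p$.  This uses epsilon-separatedness, namely that a target of an $\eps$-edge has an incoming $\eps$-edge and so is not in $B_\psi$.  It also requires checking that the principal states of $\mathcal G_\psi(\mathcal A)$ are genuinely the elements of $B_\psi\times Q$ with no identifications.  I would verify both points carefully before running the ray argument.
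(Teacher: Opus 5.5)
Your argument is correct and uses the same two ingredients as the paper. First, a principal state is the unique element of $B_\psi\times Q$ in its $\eps$-closure; incidentally, this needs only the definition of $B_\psi$, not epsilon-separatedness. Second, Lemma~\ref{lem:nonprincipal-does-not-map-to-entry-state} rules out a principal and a non-principal state having the same $\Pi$-image.

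The only difference is packaging. The paper inducts on $d(g)+d(h)$ to show that corresponding children have the same image in the folded quotient. You instead follow each ray to its first principal state and conclude via Lemma~\ref{lem:eventual-equality-compactness} and Lemma~\ref{lem:folded-quotient-finite-depth}. Compactness is not even needed there, since the finite codes $\Lambda(s)$ already give a uniform bound $k$.
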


\begin{proof}
We first consider principal states.  If \(p,p'\in B_\psi\times Q\) are
principal states and
\(\Pi(p)=\Pi(p'),\)
then, by Lemma~\ref{lem:geometric-projection-to-forward},
\(E(\{p\})=E(\{p'\}).\)
Since \(p,p'\in B_\psi\times Q\), neither \(p\) nor \(p'\) is the terminal
vertex of a nontrivial \(\eps\)-path.  Hence \(p\) belongs to \(E(\{p\})\) and
to no other entry state's \(\eps\)-closure.  Thus \(p=p'\).

Also, a principal state and a non-principal state cannot have the same image
under \(\Pi\).  Indeed, the image of a principal state contains an entry
principal state, while Lemma~\ref{lem:nonprincipal-does-not-map-to-entry-state}
shows that the image of a non-principal state contains no such state.

It remains to consider the case where both \(g\) and \(h\) are non-principal
states.  For such a state \(g\), let \(d(g)\) be the maximum distance from
\(g\) to a principal state below it in the pasted prefix tree.  This is a
positive finite integer.  We prove the claim by induction on
\(d(g)+d(h).\)

Let \(b\in X_m\).  Since \(\Pi\) is a morphism,
\[
        \Pi(g\cdot b)
        =
        \Pi(g)\cdot b
        =
        \Pi(h)\cdot b
        =
        \Pi(h\cdot b).
\]
By the preceding paragraphs, if \(g\cdot b\) and \(h\cdot b\) are principal
states, then they are equal; and it is impossible for one of them to be
principal while the other is non-principal.  If they are both non-principal,
then
\[
        d(g\cdot b)+d(h\cdot b)<d(g)+d(h),
\]
so the induction hypothesis gives that \(g\cdot b\) and \(h\cdot b\) have the
same image in
\[
        \overline{\mathcal G_\psi(\mathcal A)}.
\]
Thus, for every \(b\in X_m\), the \(b\)-children of \(g\) and \(h\) have the
same image in the folded quotient.  Therefore \(g\) and \(h\) have the same
ordered list of children after folding, and hence \(g\) and \(h\) have the same
image in
\[
        \overline{\mathcal G_\psi(\mathcal A)}.
\]
\end{proof}

Hence, we get the following corollary.

\begin{corollary}
\label{thm:geometric-determinization-up-to-folding}
Let \(\psi:\C_n\to\C_m\) be a rational order-preserving or order-reversing
homeomorphism with finite minimal transducer \(T^\psi\), and let
\[
        \mathcal A=(Q,\tau,q_0)
\]
be a finite full \(n\)-ary tree-automaton.  Assume that \(T^\psi\) is
epsilon-separated and has homeomorphic entry states.  Then the folded quotients
\[
        \overline{\mathcal G_\psi(\mathcal A)}
        \qquad\text{and}\qquad
        \overline{\Ffor_\psi(\mathcal A)}
\]
are canonically isomorphic.  Consequently,
\[
        \D(\mathcal G_\psi(\mathcal A))
        =
        \D(\Ffor_\psi(\mathcal A)).
\]
Hence, if \(H=\D(\mathcal A)\), then
\[
        \D(\mathcal G_\psi(\mathcal A))
        =
        H^\psi\cap F_m.
\]
If, in addition, \(H^\psi\le F_m\), then
\[
        \D(\mathcal G_\psi(\mathcal A))=H^\psi.
\]

Moreover, if \(\mathcal A=\C(H)\) is finite and full, has the existence
property, and \(H^\psi\le F_m\), then
\[
        \overline{\mathcal G_\psi(\mathcal A)}
\]
is the core automaton \(\C(H^\psi)\).
\end{corollary}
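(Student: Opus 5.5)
The plan is to assemble the corollary from the two lemmas immediately preceding it, together with the correctness results for the forward construction. First I would show that the surjective morphism \(\Pi:\mathcal G_\psi(\mathcal A)\to\Ffor_\psi(\mathcal A)\) of Lemma~\ref{lem:geometric-projection-to-forward} induces an isomorphism of folded quotients.

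The folding congruence on \(\mathcal G_\psi(\mathcal A)\) is the smallest congruence with folded quotient. Consider the pullback along \(\Pi\) of the folding congruence of \(\Ffor_\psi(\mathcal A)\), namely \(g\approx h\) iff \(\Pi(g),\Pi(h)\) have equal images in \(\overline{\Ffor_\psi(\mathcal A)}\).

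Using the finite-depth description of Lemma~\ref{lem:folded-quotient-finite-depth}, I would argue as follows. If \(\Pi(g)\) and \(\Pi(h)\) fold together, there is a level \(k\) with \(\Pi(g)\cdot w=\Pi(h)\cdot w\) for all \(w\in X_m^k\). Since \(\Pi\) is a morphism, this gives \(\Pi(g\cdot w)=\Pi(h\cdot w)\). Lemma~\ref{lem:kernel-of-geometric-projection-killed-by-folding} then makes each pair \(g\cdot w,h\cdot w\) equal in \(\overline{\mathcal G_\psi(\mathcal A)}\). Folding closure upward (a full father state whose children coincide in a folded quotient is identified) then identifies \(g\) and \(h\).

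Conversely, equality in \(\overline{\mathcal G_\psi(\mathcal A)}\) is witnessed by \(g\cdot w=h\cdot w\) at some level. Applying \(\Pi\) gives the same witness in \(\Ffor_\psi(\mathcal A)\). Hence the rule \(\overline{G_x}\mapsto\overline{R_x}\) is a well-defined bijection commuting with transitions and preserving the root, exactly as in Corollary~\ref{cor:folded-pullback-forward-agree}.

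The group statements then follow formally. By Lemma~\ref{lem:folding-preserves-D} applied twice,
\[
\D(\mathcal G_\psi(\mathcal A))=\D(\overline{\mathcal G_\psi(\mathcal A)})=\D(\overline{\Ffor_\psi(\mathcal A)})=\D(\Ffor_\psi(\mathcal A)).
\]
Corollary~\ref{cor:forward-construction-correct} identifies this with \(H^\psi\cap F_m\), and with \(H^\psi\) when \(H^\psi\le F_m\).

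For the final assertion, Corollary~\ref{cor:forward-precore-subgroup-case} says that \(\Ffor_\psi(\mathcal A)\) has the existence property. By Lemma~\ref{lem:existence-property-folding}, so does its folded quotient, and hence so does the isomorphic \(\overline{\mathcal G_\psi(\mathcal A)}\). This quotient is also folded. It is full, since \(\mathcal G_\psi(\mathcal A)\) is full by Definition~\ref{def:geometric-forward-automaton} and folding preserves fullness. It is finite, since \(B_\psi\times Q\) and the finitely many prefix trees \(\Lambda(s)\) are finite. Lemma~\ref{lem:finite-full-folded-precore-is-core} then identifies it with \(\C(H^\psi)\).

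The main obstacle is the first step. Lemma~\ref{lem:kernel-of-geometric-projection-killed-by-folding} only handles pairs with literally equal \(\Pi\)-images, while the forward automaton may itself fold further. The issue is resolved by pushing down to a uniform level \(k\) via fullness. I expect this descent argument to need care only in checking that folding in \(\mathcal G_\psi(\mathcal A)\) propagates upward, which is immediate from Lemma~\ref{lem:folded-quotient-finite-depth} by combining the level-\(k\) witnesses into one complete prefix code.
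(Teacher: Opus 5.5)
Your proposal is correct and follows essentially the same route as the paper. The paper also obtains $\overline{\Pi}$ from the surjective morphism $\Pi$, and proves injectivity by descending to a uniform level $k$ via Lemma~\ref{lem:folded-quotient-finite-depth}, applying Lemma~\ref{lem:kernel-of-geometric-projection-killed-by-folding} to the level-$k$ descendants, and propagating upward. It then deduces the group-theoretic and core statements from Corollaries~\ref{cor:forward-construction-correct} and~\ref{cor:forward-precore-subgroup-case} together with Lemma~\ref{lem:finite-full-folded-precore-is-core}. The differences are cosmetic: the paper gets well-definedness of $\overline{\Pi}$ from the universal property of the folded quotient rather than by pushing a level-$k$ witness through $\Pi$, and it applies the pre-core lemma to $\overline{\Ffor_\psi(\mathcal A)}$ before transporting along the isomorphism.
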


\begin{proof}
By Lemma~\ref{lem:geometric-projection-to-forward}, there is a surjective
morphism
\[
        \Pi:\mathcal G_\psi(\mathcal A)
        \longrightarrow
        \Ffor_\psi(\mathcal A).
\]
Composing \(\Pi\) with the quotient map
\[
        \Ffor_\psi(\mathcal A)
        \longrightarrow
        \overline{\Ffor_\psi(\mathcal A)}
\]
gives a morphism from \(\mathcal G_\psi(\mathcal A)\) to a folded automaton.
By the universal property of the folded quotient, this morphism factors through
\[
        \overline{\mathcal G_\psi(\mathcal A)}.
\]
Thus \(\Pi\) induces a surjective morphism
\[
        \overline{\Pi}:
        \overline{\mathcal G_\psi(\mathcal A)}
        \longrightarrow
        \overline{\Ffor_\psi(\mathcal A)}.
\]

We prove that \(\overline{\Pi}\) is injective.  Let \(g,h\) be states of
\[
        \mathcal G_\psi(\mathcal A)
\]
whose images under \(\Pi\) have the same image in
\[
        \overline{\Ffor_\psi(\mathcal A)}.
\]
Since the automata are finite, the finite-depth characterization of the folded
quotient gives \(k\ge0\) such that
\[
        \Pi(g\cdot w)=\Pi(h\cdot w)
        \qquad
        \text{for every }w\in X_m^k.
\]
By Lemma~\ref{lem:kernel-of-geometric-projection-killed-by-folding}, for every
\(w\in X_m^k\), the states \(g\cdot w\) and \(h\cdot w\) have the same image in
\[
        \overline{\mathcal G_\psi(\mathcal A)}.
\]
Working upward from level \(k\), it follows that \(g\) and \(h\) have the same
image in
\[
        \overline{\mathcal G_\psi(\mathcal A)}.
\]
Indeed, if the corresponding children of two states have the same images in
the folded quotient, then the two parent states have the same ordered list of
children in the quotient, and hence have the same image there.  Therefore
\(\overline{\Pi}\) is injective, and hence an isomorphism.

Folding preserves accepted diagram groups, so
\[
        \D(\mathcal G_\psi(\mathcal A))
        =
        \D(\overline{\mathcal G_\psi(\mathcal A)})
        =
        \D(\overline{\Ffor_\psi(\mathcal A)})
        =
        \D(\Ffor_\psi(\mathcal A)).
\]
Corollary~\ref{cor:forward-construction-correct} gives
\[
        \D(\Ffor_\psi(\mathcal A))
        =
        H^\psi\cap F_m.
\]
The subgroup case follows from the final assertion of
Corollary~\ref{cor:forward-construction-correct}.

Finally, assume that \(\mathcal A=\C(H)\) is finite and full, has the
existence property, and that \(H^\psi\le F_m\).  By
Corollary~\ref{cor:forward-precore-subgroup-case},
\(\Ffor_\psi(\mathcal A)\)
is a pre-core for \(H^\psi\).  Hence its folded quotient is a finite full
folded pre-core for \(H^\psi\).  By
Lemma~\ref{lem:finite-full-folded-precore-is-core}, it is the core
\(\C(H^\psi).\)
Since
\[
        \overline{\mathcal G_\psi(\mathcal A)}
        \cong
        \overline{\Ffor_\psi(\mathcal A)},
\]
the same holds for \(\overline{\mathcal G_\psi(\mathcal A)}\).
\end{proof}

\begin{remark}
\label{rem:geometric-determinization-computational}
Let \(\psi:\C_n\to\C_m\) be a rational order-preserving or order-reversing
homeomorphism with finite minimal transducer \(T^\psi\), and let
\[
        \mathcal A=(Q,\tau,q_0)
\]
be a finite full \(n\)-ary tree-automaton.  Assume that \(T^\psi\) is
epsilon-separated and has homeomorphic entry states.  By
Corollary~\ref{thm:geometric-determinization-up-to-folding}, one may compute an
automaton equal to the forward automaton up to folding without explicitly
writing down subset states.  The procedure is:

\begin{enumerate}[label=(\arabic*)]
    \item Start with the raw forward automaton
    \[
            \Fraw_\psi(\mathcal A).
    \]
    \item Repeatedly perform elementary clone--contract steps: choose any
    principal state with incoming \(\eps\)-edges, clone it once for each
    incoming \(\eps\)-edge, copy all outgoing edges to each clone, and contract
    the resulting private incoming \(\eps\)-edges.
    \item When no \(\eps\)-edges remain, subdivide the remaining word-labelled
    edges into one-letter edges.
    \item Fold pairs of one-letter edges with the same initial vertex and the
    same label until no such pair remains.
    \item Finally, if desired, pass to the folded quotient in the sense of
    Definition~\ref{def:folded-quotient}.
\end{enumerate}

There is an equivalent tree-pasting form of the same construction.  Once the
first-output blocks are known, one may skip the intermediate word-labelled
edges and paste the prefix trees directly.  The relevant accessible principal
states are the pairs
\[
        (s,q)\in B_\psi\times Q
\]
which are reached in the raw forward automaton, equivalently the pairs of the
form
\[
        (s,q)=\bigl(t_\psi(s_\psi,u),\tau(q_0,u)\bigr)
\]
for some \(u\in\W_n\), with \(s\in B_\psi\).

For each such state \((s,q)\), paste in the finite prefix tree whose branch set
is
\[
        \Lambda(s)=\{\lambda_s(u)\mid u\in\mathcal E(s)\}.
\]
The root of this prefix tree is identified with \((s,q)\), and the leaf
labelled \(\lambda_s(u)\) is identified with
\[
        \bigl(t_\psi(s,u),\tau(q,u)\bigr).
\]
Doing this at every accessible state gives the same automaton as first drawing
the word-labelled edges of
\[
        \mathcal G^{\mathrm{raw}}_\psi(\mathcal A),
\]
then subdividing them and folding common initial subpaths.
\end{remark}

\subsection{Example: The Brin--Navas subgroup}
\label{subsec:brin-navas-subgroup}

In \cite[Section~5]{BrinElementaryAmenable}, Brin defines an elementary
amenable group \(G_1\) of elementary class \(\omega+2\).  The same group was
defined independently, around the same time, by Navas
\cite[Example~6.3]{NavasQuelquesGroupes}.  We shall use the concrete copy of
this group inside Thompson's group \(F\) from
\cite[Theorem~9.1]{GolanGeneration}.  Namely, let
\[
        B=\langle x,y\rangle\le F,
        \qquad
        x=x_0x_1x_2^{-1}x_0^{-1},
        \qquad
        y=x_0x_1^{-2}.
\]
By \cite[Theorem~9.1]{GolanGeneration}, the subgroup \(B\) is closed in \(F\),
and it is maximal inside the normal subgroup
\(K=B[F,F]\triangleleft F,\)
where \(F/K\) is infinite cyclic.

The core of \(B\), computed by applying the core construction to the
generators \(x\) and \(y\) (see \cite[Section~9]{GolanGeneration}), is the
folded binary tree-automaton
\[
\mathcal B=
\begin{array}{c|cc}
        &0&1\\ \hline
\rho   &\ell&r\\
\ell   &\ell&a\\
r      &b&r\\
a      &a&c\\
b      &c&b\\
c      &a&b .
\end{array}
\]
It is drawn in Figure~\ref{fig:brin-navas-core}.  We shall realize this core
as the folded geometric forward automaton of \(F^\psi\cap F\), for a suitable
order-preserving homeomorphism
\(\psi:\C_2\to\C_2 .\)

\begin{figure}[htbp]
\centering
\begin{tikzpicture}[
    >=Stealth,
    line width=.9pt,
    state/.style={
        circle,
        draw,
        thick,
        minimum size=1cm,
        font=\small\bfseries,
        inner sep=1pt,
        fill=white
    },
    lab/.style={font=\scriptsize, inner sep=1pt, fill=white}
]
    \node[state] (rho) at (0,2.7) {$\rho$};
    \node[state] (ell) at (-2.8,1.25) {$\ell$};
    \node[state] (r)   at ( 2.8,1.25) {$r$};
    \node[state] (a)   at (-2.8,-1.25) {$a$};
    \node[state] (b)   at ( 2.8,-1.25) {$b$};
    \node[state] (c)   at (0,-2.7) {$c$};

    \draw[->] (-.75,2.7)--(rho);

    \path[->]
        (rho) edge[bend right=8] node[lab,above left] {$0$} (ell)
              edge[bend left=8]  node[lab,above right] {$1$} (r)

        (ell) edge[loop left, looseness=5] node[lab] {$0$} (ell)
              edge node[lab,left] {$1$} (a)

        (r)   edge node[lab,right] {$0$} (b)
              edge[loop right, looseness=5] node[lab] {$1$} (r)

        (a)   edge[loop left, looseness=5] node[lab] {$0$} (a)
              edge[bend right=12] node[lab,left] {$1$} (c)

        (b)   edge[bend left=12] node[lab,right] {$0$} (c)
              edge[loop right, looseness=5] node[lab] {$1$} (b)

        (c)   edge[bend right=14] node[lab,below left] {$0$} (a)
              edge[bend left=14]  node[lab,below right] {$1$} (b);
\end{tikzpicture}
\caption{The core \(\mathcal B=\C(B)\) of the Brin--Navas subgroup \(B\).}
\label{fig:brin-navas-core}
\end{figure}

Define the binary minimal initial transducer \(T^\psi\) with state set
\(\{\rho,\ell,r,a,b,c,d\},\)
initial state \(\rho\), and transition-output table
\[
\begin{array}{c|cc}
        &0&1\\ \hline
\rho    &0:\ell  &1:r\\
\ell    &0:\ell  &1:a\\
r       &0:b     &1:r\\
a       &0:a     &1:c\\
b       &0:c     &1:b\\
c       &00:a    &\eps:d\\
d       &01:c    &1:b .
\end{array}
\]
Here the entry \(w:t\) in the column labelled \(i\) means that, on input
\(i\), the transducer outputs \(w\) and moves to the state \(t\).  The
transducer is drawn in Figure~\ref{fig:brin-navas-transducer}.

\begin{figure}[htbp]
\centering
\begin{tikzpicture}[
    >=Stealth,
    line width=.9pt,
    state/.style={
        circle,
        draw,
        thick,
        minimum size=1cm,
        font=\small\bfseries,
        inner sep=1pt,
        fill=white
    },
    lab/.style={font=\scriptsize, inner sep=1pt, fill=white}
]
    \node[state] (rho) at (0,3.55) {$\rho$};
    \node[state] (ell) at (-3,2) {$\ell$};
    \node[state] (r)   at ( 3,2) {$r$};
    \node[state] (a)   at (-3,-.75) {$a$};
    \node[state] (b)   at ( 3,-.75) {$b$};
    \node[state] (c)   at (0,-2.35) {$c$};
    \node[state] (d)   at (0,-4.15) {$d$};

    \draw[->] (-.75,3.55)--(rho);

    \path[->]
        (rho) edge[bend right=8] node[lab,above left] {$0\mid0$} (ell)
              edge[bend left=8]  node[lab,above right] {$1\mid1$} (r)

        (ell) edge[loop left, looseness=5] node[lab] {$0\mid0$} (ell)
              edge node[lab,left] {$1\mid1$} (a)

        (r)   edge node[lab,right] {$0\mid0$} (b)
              edge[loop right, looseness=5] node[lab] {$1\mid1$} (r)

        (a)   edge[loop left, looseness=5] node[lab] {$0\mid0$} (a)
              edge[bend right=10] node[lab,left] {$1\mid1$} (c)

        (b)   edge[bend left=10] node[lab,right] {$0\mid0$} (c)
              edge[loop right, looseness=5] node[lab] {$1\mid1$} (b)

        (c)   edge[bend right=10] node[lab,left] {$0\mid00$} (a)
              edge[bend left=10] node[lab,right] {$1\mid\eps$} (d)

        (d)   edge[bend left=10] node[lab,left] {$0\mid01$} (c)
              edge[bend right=10] node[lab,right] {$1\mid1$} (b);
\end{tikzpicture}
\caption{The transducer \(T^\psi\).  Edge labels are input-output labels.}
\label{fig:brin-navas-transducer}
\end{figure}

The only output-free edge is
\[
        c\xrightarrow{\,1\mid\eps\,}d,
\]
and no non-\(\eps\)-edge enters \(d\).  Thus \(T^\psi\) is
epsilon-separated, with entry states
\[
        B_\psi=\{\rho,\ell,r,a,b,c\}.
\]
The entry states are homeomorphism states: the first-output partition is
\(\{0,1\}\) at \(\rho,\ell,r,a,b\), while at \(c\) the input code
$
        \{0,10,11\}
$
is sent order-preservingly to the output code
$
        \{00,01,1\}.
$
Hence \(T^\psi\) represents an order-preserving homeomorphism
$
        \psi:\C_2\to\C_2 .
$

Before running the construction, we record a small simplification which
allows us to replace the standard core \(\C(F)\) by the one-state full
automaton in the raw forward construction.  We state it for general \(n\),
as it also applies to one of the examples in the next subsection.

Let \(\mathbf 1_n\) denote the one-state full \(n\)-ary tree-automaton.  Thus
\(\mathbf 1_n\) has one state, denoted \(*\), and
\[
        *\cdot i=*
        \qquad(i\in X_n).
\]
Although \(\mathbf 1_n\) is not the core of \(F_n\), it accepts every
\(n\)-ary tree diagram, and hence
\(\D(\mathbf 1_n)=F_n .\)

\begin{lemma}
\label{lem:delete-forced-Kn-coordinate}
Let
\[
        \psi:\C_n\to\C_m
\]
be an order-preserving or order-reversing homeomorphism, and let
\[
        T^\psi=(S_\psi,t_\psi,o_\psi,s_\psi)
\]
be its minimal \((n,m)\)-transducer.  Let
\[
        \mathcal K_n=\C(F_n)
\]
be the standard core from Lemma~\ref{lem:explicit-core-of-Fn}.  Assume that,
after forgetting outputs, \(T^\psi\) admits a morphism of \(n\)-ary
tree-automata
\[
        \kappa:T^\psi\longrightarrow \mathcal K_n .
\]
Then
\[
        \Fraw_\psi(\mathcal K_n)
        \cong
        \Fraw_\psi(\mathbf 1_n)
\]
via the map
\[
        (s,\kappa(s))\longmapsto (s,*).
\]
Consequently
\[
        \Ffor_\psi(\mathcal K_n)
        \cong
        \Ffor_\psi(\mathbf 1_n),
\]
and their folded quotients are canonically isomorphic.

Under these assumptions, if \(F_n^\psi\le F_m\), then the core of
\(F_n^\psi\) can be computed from the folded quotient of
\[
        \Ffor_\psi(\mathbf 1_n).
\]
\end{lemma}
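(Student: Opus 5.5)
The key fact is that in the raw forward automaton $\Fraw_\psi(\mathcal K_n)$ the $\mathcal K_n$-coordinate is always determined by the transducer coordinate. Indeed, the principal state reached by $u\in\W_n$ is $(s^\psi_u,\tau_n(q_0,u))$. Since $\kappa$ is a morphism of tree-automata, it sends the initial state $s_\psi$ to the root $q_0$ and commutes with transitions. Hence
\[
\kappa(s^\psi_u)=\kappa(t_\psi(s_\psi,u))=\tau_n(q_0,u).
\]
So every accessible principal state has the form $(s,\kappa(s))$ with $s$ accessible in $T^\psi$. Since $T^\psi$ is minimal, all of its states are accessible, and so the accessible principal states are exactly the pairs $(s,\kappa(s))$, $s\in S_\psi$. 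Likewise, the accessible principal states of $\Fraw_\psi(\mathbf 1_n)$ are exactly the pairs $(s,*)$.

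Next I will check that the bijection $(s,\kappa(s))\mapsto(s,*)$ respects edges and labels. An edge of $\Fraw_\psi(\mathcal K_n)$ out of $(s,\kappa(s))$ on input $i$ has word label $o_\psi(s,i)$ and target
\[
\bigl(t_\psi(s,i),\tau_n(\kappa(s),i)\bigr)=\bigl(t_\psi(s,i),\kappa(t_\psi(s,i))\bigr),
\]
where the equality uses the morphism property of $\kappa$. The corresponding edge in $\Fraw_\psi(\mathbf 1_n)$ has the same label and target $(t_\psi(s,i),*)$. The map sends the initial state $(s_\psi,q_0)=(s_\psi,\kappa(s_\psi))$ to $(s_\psi,*)$. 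It is bijective on states, and also on edges, since in both automata the edges out of a state are indexed by the input letters $i\in X_n$. Therefore it is an isomorphism of word-labelled automata.

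The remaining claims are formal consequences. The subdivided automata $\widehat{\Fraw}$ are built functorially from the word-labelled graphs, so the isomorphism extends to them, with subdivision states $(e,\rho)$ matched along matched edges $e$. The subset construction is also functorial, so the sets $R_x$ correspond for every $x\in\W_m$. This gives $\Ffor_\psi(\mathcal K_n)\cong\Ffor_\psi(\mathbf 1_n)$, and hence an isomorphism of their folded quotients, because the folding congruence is defined intrinsically (Lemma~\ref{lem:folded-quotient-finite-depth}).

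For the final assertion, assume $F_n^\psi\le F_m$. By Lemma~\ref{lem:explicit-core-of-Fn}, $\mathcal K_n$ is the full core of $F_n$, so it has the existence property by Lemma~\ref{lem:core-existence-property}. Corollary~\ref{cor:forward-precore-subgroup-case} then shows that $\Ffor_\psi(\mathcal K_n)$ is a pre-core for $F_n^\psi$, and so folding and reducing it yields $\C(F_n^\psi)$. By the isomorphism above, the same result is obtained by folding and reducing $\Ffor_\psi(\mathbf 1_n)$.

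I expect no real obstacle here. The only point that needs care is that the accessible states match exactly, and this rests on two facts: $\kappa$ preserves the root, and minimality of $T^\psi$ makes every transducer state accessible. Note also that $\mathbf 1_n$ does not have the existence property itself, so the core statement has to be derived through $\mathcal K_n$ rather than by applying Corollary~\ref{cor:forward-precore-subgroup-case} to $\mathbf 1_n$ directly.
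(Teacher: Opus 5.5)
Your proposal is correct and follows the paper's proof. The morphism $\kappa$ forces the $\mathcal K_n$-coordinate of every accessible principal state to equal $\kappa(s)$, so replacing it by $*$ preserves all raw edge labels, and the isomorphism passes through subdivision and subset determinization. The core statement then follows from Corollary~\ref{cor:forward-precore-subgroup-case} applied to $\mathcal K_n$. Your explicit edge check, and your remark that the pre-core step must go through $\mathcal K_n$ because $\mathbf 1_n$ lacks the existence property, are correct elaborations of the same argument.
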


\begin{proof}
In \(\Fraw_\psi(\mathcal K_n)\), a principal state reached by an input word
\(u\) has the form
\[
        \bigl(t_\psi(s_\psi,u),q_0\cdot u\bigr).
\]
Since \(\kappa\) is a morphism and \(\kappa(s_\psi)=q_0\), the second
coordinate is forced:
\[
        q_0\cdot u=\kappa(t_\psi(s_\psi,u)).
\]
Thus every accessible principal state is of the form \((s,\kappa(s))\), and
replacing the second coordinate by \(*\) preserves all word labels on raw
edges.  Hence the raw automata, the subdivided \(\eps\)-NFAs, and the
determinized forward automata are canonically isomorphic.

If \(F_n^\psi\le F_m\), then
\(\Ffor_\psi(\mathcal K_n)\) is a pre-core for \(F_n^\psi\) by
 Corollary~\ref{cor:forward-precore-subgroup-case}.  The preceding isomorphism
 therefore gives the same folded quotient from
 \(\Ffor_\psi(\mathbf 1_n)\).
\end{proof}

The transducer \(T^\psi\) respects the root, left-boundary, right-boundary,
and inner partition of the standard core \(A_F=\C(F)\): after forgetting
outputs, the map sending \(\rho\), \(\ell\) and \(r\) to the root, left and
right states of \(A_F\), and the four states \(a,b,c,d\) to its unique inner
state, is a morphism \(T^\psi\to A_F\).  Hence, by
Lemma~\ref{lem:delete-forced-Kn-coordinate}, the raw forward automata built
from \(A_F\) and from the one-state full binary automaton \(\mathbf 1\) are
canonically isomorphic, and therefore so are all the automata obtained from
them by the geometric procedure of
Subsection~\ref{subsec:geometric-determinization}.  Thus the folded geometric
construction with \(A_F\) is the same as the folded geometric construction
with \(\mathbf 1\).

\begin{lemma}
\label{lem:brin-navas-core-intersection}
For the order-preserving homeomorphism \(\psi:\C_2\to\C_2\) represented by
\(T^\psi\), the core of \(F^\psi\cap F\) is \(\mathcal B\).
\end{lemma}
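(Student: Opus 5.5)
The plan is to compute the folded geometric forward automaton $\overline{\mathcal G_\psi(\mathbf 1)}$ and check that it is $\mathcal B$. Corollary~\ref{thm:geometric-determinization-up-to-folding} applies: $T^\psi$ is epsilon-separated with homeomorphic entry states, and by the remark preceding the lemma the construction with $A_F=\C(F)$ agrees with the one over $\mathbf 1$. So $\D(\overline{\mathcal G_\psi(\mathbf 1)})=F^\psi\cap F$. To identify the core, I would not invoke the pre-core statement, since it assumes $F^\psi\le F$, which fails here. Instead I would directly verify that the computed folded automaton is a core automaton and then apply Proposition~\ref{prop:core-automata-characterization}.

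\textbf{Computing the automaton.} Over $\mathbf 1$, the principal states are just the entry states $\rho,\ell,r,a,b,c$. At $\rho,\ell,r,a,b$ the first-output cell is the identity on $\{0,1\}$, so those transitions copy $T^\psi$:
\[
\rho\cdot 0=\ell,\quad \rho\cdot 1=r,\quad \ell\to(\ell,a),\quad r\to(b,r),\quad a\to(a,c),\quad b\to(c,b).
\]
At $c$ the first-output blocks are
\[
0\mid 00:a,\qquad 10\mid 01:c,\qquad 11\mid 1:b.
\]
Pasting the prefix tree of $\{00,01,1\}$ gives one new vertex $c'=(c,0)$, with $c\cdot 0=c'$, $c\cdot 1=b$, $c'\cdot 0=a$ and $c'\cdot 1=c$. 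The geometric automaton therefore has seven states. Its transitions coincide with those of $\mathcal B$ except at $c$, which now has children $(c',b)$, and at the extra state $c'$, which has children $(a,c)$. The children of $c'$ are exactly the children of $a$, so folding identifies $c'$ with $a$. After this, $c$ has children $(a,b)$, and the result is exactly $\mathcal B$. I would also confirm that no further folds occur: all six ordered pairs of children in $\mathcal B$ are distinct.

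\textbf{Core-automaton verification.} $\mathcal B$ is finite, full and folded, so it is reduced by the argument of Lemma~\ref{lem:finite-full-folded-precore-is-core}. What remains is the existence property with respect to $K=\D(\mathcal B)$. The cleanest route is to cite the paper's source: $\mathcal B$ is stated to be $\C(B)$, computed in \cite{GolanGeneration}, so it has the existence property by Lemma~\ref{lem:core-existence-property}, and $\D(\mathcal B)=\Cl(B)=B$ because $B$ is closed. Proposition~\ref{prop:core-automata-characterization} then identifies $\mathcal B$ with $\C(\D(\mathcal B))$. Since we have shown $\D(\mathcal B)=F^\psi\cap F$, the core of $F^\psi\cap F$ is $\mathcal B$. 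As a by-product this gives $F^\psi\cap F=B$.

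\textbf{Main obstacle.} The delicate step is justifying that the folded forward automaton is \emph{the core} and not merely an automaton accepting the right group, because the pre-core corollary is unavailable outside the case $F^\psi\le F$. I would first try the citation route above. If an independent check were wanted, I would instead verify the existence property state by state, using elements of $B$ (or the generators $x,y$) to realise branch pairs between representative words of each state. In that case the inner states $a,b,c$ are the ones needing work.
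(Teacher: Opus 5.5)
Your proposal is correct and follows essentially the same route as the paper. It computes the geometric automaton over \(\mathbf 1\), folds the delay state at \(c\) into \(a\) (its children are \((a,c)\)) to obtain \(\mathcal B\), and gets \(\D(\mathcal B)=F^\psi\cap F\) from Corollary~\ref{thm:geometric-determinization-up-to-folding}. It then concludes via Proposition~\ref{prop:core-automata-characterization}, using the known fact that \(\mathcal B=\C(B)\) is already a core automaton; like the paper, you rightly avoid the pre-core corollary, which is unavailable because \(F^\psi\not\le F\).
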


\begin{proof}
We run the geometric construction with \(\mathbf 1\).  Thus the raw
word-labelled automaton is obtained from \(T^\psi\) by keeping only the output
labels.  Contract the unique \(\eps\)-edge
\[
        c\xrightarrow{\eps}d
\]
by copying the two outgoing edges of \(d\) to \(c\) and deleting \(d\).  The
contracted output automaton has states
\(\rho,\ell,r,a,b,c\)
and word-labelled transitions
\[
\begin{array}{c|l}
\rho &0:\ell,\quad 1:r\\[1mm]
\ell &0:\ell,\quad 1:a\\[1mm]
r    &0:b,\quad 1:r\\[1mm]
a    &0:a,\quad 1:c\\[1mm]
b    &0:c,\quad 1:b\\[1mm]
c    &00:a,\quad 01:c,\quad 1:b .
\end{array}
\]
All labels have length one except the two labels \(00\) and \(01\) leaving
\(c\).  Subdividing these two edges and folding their common initial
\(0\)-edge creates a temporary state \(z\) with children
\[
        z\cdot0=a,
        \qquad
        z\cdot1=c.
\]
The state \(a\) has the same ordered pair of children, so \(z\) folds with
\(a\).  Therefore the final transitions out of \(c\) are
\[
        c\cdot0=a,
        \qquad
        c\cdot1=b.
\]
The local computation is shown in
Figure~\ref{fig:brin-navas-contraction-folding}.

\begin{figure}[htbp]
\centering
\resizebox{0.96\textwidth}{!}{%
\begin{tikzpicture}[
    >=Stealth,
    line width=.9pt,
    state/.style={
        circle,
        draw,
        thick,
        minimum size=1cm,
        font=\small\bfseries,
        inner sep=1pt,
        fill=white
    },
    delay/.style={
        circle,
        draw,
        thick,
        minimum size=1cm,
        font=\small\bfseries,
        inner sep=1pt,
        fill=gray!10
    },
    title/.style={font=\small\bfseries},
    lab/.style={font=\scriptsize, inner sep=1pt, fill=white}
]
     \node[title] at (0,1.35) {before contraction};
    \node[state] (c1) at (-1.35,0) {$c$};
    \node[state] (d1) at ( 1.35,0) {$d$};
    \node[state] (a1) at (-1.35,-1.7) {$a$};
    \node[state] (b1) at ( 1.35,-1.7) {$b$};

    \path[->]
        (c1) edge node[lab,left] {$00$} (a1)
        (c1) edge node[lab,above] {$\eps$} (d1)
        (d1) edge[bend left=17] node[lab,right] {$01$} (c1)
        (d1) edge node[lab,right] {$1$} (b1);

    \node at (2.75,-.75) {$\Longrightarrow$};
     \node[title] at (5.1,1.35) {after contracting \(\eps\)};
    \node[state] (c2) at (5.1,0) {$c$};
    \node[state] (a2) at (3.75,-1.7) {$a$};
    \node[state] (b2) at (6.45,-1.7) {$b$};

    \path[->]
        (c2) edge[bend right=12] node[lab,left] {$00$} (a2)
        (c2) edge[loop right, looseness=5] node[lab] {$01$} (c2)
        (c2) edge[bend left=12] node[lab,right] {$1$} (b2);

    \node at (7.95,-.75) {$\Longrightarrow$};
     \node[title] at (10.55,1.35) {after subdivision and folding};
    \node[state] (c3) at (10.55,0) {$c$};
    \node[delay] (z3) at (9.25,-1.7) {$z=a$};
    \node[state] (b3) at (11.85,-1.7) {$b$};
    \node[state] (a3) at (8.1,-3.15) {$a$};
    \node[state] (c4) at (10.35,-3.15) {$c$};

    \path[->]
        (c3) edge node[lab,left] {$0$} (z3)
        (c3) edge node[lab,right] {$1$} (b3)
        (z3) edge node[lab,left] {$0$} (a3)
        (z3) edge node[lab,right] {$1$} (c4);
\end{tikzpicture}%
}
\caption{The only nontrivial geometric step.  Contracting the unique
\(\eps\)-edge gives the word-labelled fan \(00,01,1\) at \(c\).  Subdivision
folds the common initial \(0\)-edge, and the temporary state has the same
children as \(a\).}
\label{fig:brin-navas-contraction-folding}
\end{figure}

All other transitions are read directly from the contracted automaton.  Hence
the folded geometric automaton is
\[
\begin{array}{c|cc}
        &0&1\\ \hline
\rho   &\ell&r\\
\ell   &\ell&a\\
r      &b&r\\
a      &a&c\\
b      &c&b\\
c      &a&b,
\end{array}
\]
which is exactly \(\mathcal B\).  By
Corollary~\ref{thm:geometric-determinization-up-to-folding},
\[
        \D(\mathcal B)=F^\psi\cap F.
\]
Since \(\mathcal B=\C(B)\) is a core automaton, this identifies
\(\mathcal B\) with the core of \(F^\psi\cap F\).
\end{proof}

\begin{corollary}
\label{cor:brin-navas-intersection}
\label{prop:brin-navas-intersection}
For the interval homeomorphism induced by \(\psi\), one has
\[
        B=F\cap F^\psi
\]
inside \(\Homeo_+(0,1)\).  In particular, the Brin--Navas group is the
intersection of two copies of Thompson's group \(F\).
\end{corollary}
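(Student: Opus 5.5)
The statement follows from Lemma~\ref{lem:brin-navas-core-intersection} together with the theory of closed subgroups. I will show that $F\cap F^\psi$ and $B$ are both closed subgroups of $F$ with the same core. Then I will translate this equality on the Cantor space into an equality inside $\Homeo_+(0,1)$.

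First, $F\cap F^\psi$ is closed. Apply Corollary~\ref{thm:geometric-determinization-up-to-folding}, or equivalently Corollary~\ref{cor:forward-construction-correct}, with $\mathcal A=\mathbf 1$, which is finite and full and has $\D(\mathbf 1)=F$. This gives
\[
\D(\mathcal G_\psi(\mathbf 1))=F^\psi\cap F.
\]
The hypotheses hold: $T^\psi$ is finite, epsilon-separated and has homeomorphic entry states, as verified just before the lemma, and $\psi$ is order-preserving. Folding preserves accepted groups (Lemma~\ref{lem:folding-preserves-D}), and the computation in Lemma~\ref{lem:brin-navas-core-intersection} shows the folded geometric automaton is exactly $\mathcal B$. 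Hence
\[
F^\psi\cap F=\D(\mathcal B).
\]

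Second, $B$ is closed in $F$ by \cite[Theorem~9.1]{GolanGeneration}, and its core is $\C(B)=\mathcal B$. Therefore
\[
B=\Cl(B)=\D(\C(B))=\D(\mathcal B).
\]
Combining the two displays gives $B=F\cap F^\psi$ as subgroups of the Cantor-space model of $F$.

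The only point needing care is the passage to $\Homeo_+(0,1)$. Since $\psi$ is order-preserving, Lemma~\ref{lem:lift-order-equivalence-prelim} shows that $\psi$ descends to an increasing interval homeomorphism $\bar\psi$ with $\psi\rho_2=\rho_2\bar\psi$. Conjugation commutes with this descent, so the image of $F^\psi$ in $\Homeo_+(0,1)$ is $F^{\bar\psi}$. Intersections are preserved because the Cantor-to-interval correspondence on the groups of order-preserving, $\sim_{\rho_2}$-compatible homeomorphisms is injective: an element is determined by its action on a dense set. Thus $B=F\cap F^{\bar\psi}$ in $\Homeo_+(0,1)$, which is the claim.

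The final sentence of the statement is then immediate: $F^{\bar\psi}$ is a second copy of Thompson's group, being a conjugate of $F$. I expect no real obstacle. The substantive computation is already contained in Lemma~\ref{lem:brin-navas-core-intersection}, so the only thing to check carefully is that the conjugation conventions (left-to-right, $H^\psi=\psi^{-1}H\psi$) match on the Cantor and interval sides.
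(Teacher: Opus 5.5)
Your argument is correct and is essentially the paper's. The corollary follows by combining $\D(\mathcal B)=F^\psi\cap F$, obtained from Corollary~\ref{thm:geometric-determinization-up-to-folding} with the one-state automaton $\mathbf 1$ as in the proof of Lemma~\ref{lem:brin-navas-core-intersection}, with the closedness of $B$ and $\C(B)=\mathcal B$, which give $B=\D(\C(B))=\D(\mathcal B)$. Your remark that descending from the Cantor space to the interval is an injective homomorphism on $\sim_{\rho_2}$-compatible homeomorphisms, since each is determined by its values on the dense set of points with singleton fibers, correctly justifies reading the equality inside $\Homeo_+(0,1)$; the paper leaves this step implicit.
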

                                                                                                                                                                                                                                                                                                                                                                                                                                                                                   \tikzset{
    autstate/.style={circle,draw,thick,minimum size=7.5mm,inner sep=1pt,
        font=\small\bfseries,fill=white},
    autlabel/.style={font=\scriptsize,inner sep=1.5pt,fill=white,
        text height=1.45ex,text depth=.25ex},
    every loop/.style={looseness=7}
}

\subsection{Known maximal subgroups of \(F\) acting minimally}
\label{subsec:known-minimal-maximal-subgroups}

\tikzset{
    >=Stealth,
    autstate/.style={
        circle,
        draw,
        thick,
        minimum size=8mm,
        inner sep=1pt,
        font=\small\bfseries,
        fill=white
    },
    autlabel/.style={
        font=\scriptsize,
        inner sep=1pt,
        fill=white
    },
    autedge/.style={->,thick}
}

Apart from the point stabilizers \(\Stab_F(t)\), \(t\in(0,1)\), the known
maximal subgroups of \(F\) of infinite index fall into two families of
examples.  The first consists of the maximal subgroups isomorphic to
\(F_{p+1}\), for prime \(p\), constructed in \cite{GolanMaximalF}; the case
\(p=2\) is the maximal subgroup arising from Jones's oriented subgroup
\(\vec F\), which was constructed in \cite{GolanSapirSubgroupsF} and shown to
be isomorphic to \(F_3\) in \cite{GolanSapirJones}.  The second consists of
six explicit additional examples: three examples from
\cite[Section~10.3]{GolanGeneration}, and three examples constructed by
Aiello--Nagnibeda from Jones's \(3\)-colorable subgroup \(\mathcal F\le F\)
\cite{AielloNagnibedaThreeColorable}.

Five of these six additional examples act minimally on \((0,1)\).  The
remaining example is the subgroup \(K_2\) from
\cite[Example~10.13]{GolanGeneration}; its core shows that its action is not
minimal.  In this subsection we record the cores of the five minimally acting
examples and give transducers realizing them by the forward construction with the appropriate core of $F_n$.
In each case the final deduction is the same, and we spell it out once, for
the whole family: the displayed transducer is minimal, order-preserving and
semi-synchronizing, so Theorem~\ref{thm:conjugator-characterization} gives
\(F_n^\psi\le F\) for the represented homeomorphism \(\psi\); the conjugate
\(F_n^\psi\) is closed in \(F\) (Lemma~\ref{lem:closure-conjugacy-prelim})
with core the folded quotient of the forward automaton
(Corollary~\ref{cor:forward-precore-subgroup-case}), which the computation
identifies with the core \(\C(K)\) of the maximal subgroup \(K\le F\) in
question; and \(K\) is closed, since it is maximal of infinite index, hence
\(K=\Cl(K)=\D(\C(K))=F_n^\psi\).
Consequently, all currently known maximal subgroups of \(F\) of infinite index
which act minimally on \((0,1)\) are isomorphic to Higman--Thompson groups.

\subsubsection*{The examples from the generation problem in $F$ paper}

We start with the two relevant examples from \cite[Section~10.3]
{GolanGeneration}, since  their cores
are easier to visualize.

\paragraph{The subgroup \(K_1\).}
The subgroup from \cite[Example~10.12]{GolanGeneration} is
\[
        K_1=
        \langle x_0,\ x_1x_2x_1^{-1},\ x_1^2x_2^{-1}\rangle .
\]
The core \(\mathcal L_1=\C(K_1)\) and an initial
\((3,2)\)-transducer \(T_{K_1}\) are displayed in
Figure~\ref{fig:golan-K1-core-transducer}.  In the tables, the core is given by its transition table, and an entry \(w:\mathsf S\) in a transducer table means that the transducer outputs \(w\) and moves to the state \(\mathsf S\).  In the transducer figure, an edge
labelled \(i\mid w\) means that on input \(i\), the transducer outputs \(w\)
and moves along that edge.

The transducer \(T_{K_1}\) is minimal, order-preserving and
semi-synchronizing.  The forward construction applies to
\((T_{K_1},\C(F_3))\), and its folded quotient is \(\mathcal L_1\).  Hence
\[
        K_1=F_3^{\psi_{K_1}}\cong F_3,
\]
where \(\psi_{K_1}:\C_3\to\C_2\) is the homeomorphism represented by
\(T_{K_1}\).

\begin{figure}[htbp]
\centering
 \begin{minipage}[t]{0.46\textwidth}
\centering
\scriptsize
\[
\mathcal L_1=
\begin{array}{c|cc}
        &0&1\\ \hline
 e&f&g\\
 f&f&h\\
 g&h&g\\
 h&k&\ell\\
 k&h&k\\
 \ell&\ell&h
\end{array}
\]
\end{minipage}
\hfill
\begin{minipage}[t]{0.50\textwidth}
\centering
\scriptsize
\[
T_{K_1}=\begin{array}{c|ccc|c}
        &0&1&2&\operatorname{im} \\ \hline
\mathsf E&0:\mathsf A&100:\mathsf K&1:\mathsf G&\C_2\\
\mathsf A&0:\mathsf A&10:\mathsf K&11:\mathsf A&\C_2\\
\mathsf K&00:\mathsf K&01:\mathsf A&1:\mathsf K&\C_2\\
\mathsf G&01:\mathsf A&100:\mathsf K&1:\mathsf G&U_{01}\sqcup U_1
\end{array}
\]
\end{minipage}

\medskip
\begin{minipage}[t]{0.46\textwidth}
\centering
\begin{tikzpicture}[scale=.96,transform shape]
\node[autstate] (e) at (0,3.35) {$e$};
\node[autstate] (f) at (-1.85,2.15) {$f$};
\node[autstate] (g) at (1.85,2.15) {$g$};
\node[autstate] (h) at (0,0.95) {$h$};
\node[autstate] (k) at (-1.65,-0.75) {$k$};
\node[autstate] (l) at (1.65,-0.75) {$\ell$};

\path[autedge]
(e) edge node[autlabel,left] {$0$} (f)
    edge node[autlabel,right] {$1$} (g)
(f) edge[loop left] node[autlabel] {$0$} (f)
    edge node[autlabel,above] {$1$} (h)
(g) edge node[autlabel,above] {$0$} (h)
    edge[loop right] node[autlabel] {$1$} (g)
(h) edge[bend left=20] node[autlabel,above left,pos=.42] {$0$} (k)
    edge[bend right=20] node[autlabel,above right,pos=.42] {$1$} (l)
(k) edge[bend left=20] node[autlabel,below right,pos=.62] {$0$} (h)
    edge[loop left] node[autlabel] {$1$} (k)
(l) edge[loop right] node[autlabel] {$0$} (l)
    edge[bend right=20] node[autlabel,below left,pos=.62] {$1$} (h);
\end{tikzpicture}

\medskip
\(\mathcal L_1=\C(K_1)\)
\end{minipage}
\hfill
\begin{minipage}[t]{0.50\textwidth}
\centering
\begin{tikzpicture}[scale=.93,transform shape]
\node[autstate] (E) at (0,3.25) {$\mathsf E$};
\node[autstate] (A) at (-2.65,1.25) {$\mathsf A$};
\node[autstate] (K) at (2.65,1.25) {$\mathsf K$};
\node[autstate] (G) at (0,-1.25) {$\mathsf G$};

\draw[->,thick] (-.75,3.25) -- (E);

\path[autedge]
(E) edge[bend right=10] node[autlabel,above left] {$0\mid0$} (A)
    edge[bend left=10] node[autlabel,above right] {$2\mid1$} (G)
    edge node[autlabel,right,pos=.55] {$1\mid100$} (K)

(A) edge[loop left] node[autlabel] {$0\mid0,\ 2\mid11$} (A)
    edge[bend left=18] node[autlabel,above] {$1\mid10$} (K)

(K) edge[loop right] node[autlabel] {$0\mid00,\ 2\mid1$} (K)
    edge[bend left=18] node[autlabel,below] {$1\mid01$} (A)

(G) edge[loop below] node[autlabel] {$2\mid1$} (G)
    edge[bend right=12] node[autlabel,left,pos=.55] {$0\mid01$} (A)
    edge[bend left=12] node[autlabel,right,pos=.55] {$1\mid100$} (K);
\end{tikzpicture}

\medskip
\(T_{K_1}\)
\end{minipage}
\label{fig:golan-K1-core-transducer}
\caption{Applying the forward construction to \((T_{K_1},\C(F_3))\), and then folding,
gives the core \(\mathcal L_1\).  Hence \(K_1=F_3^{\psi_{K_1}}\cong F_3\).}
\end{figure}

\paragraph{The transitive subgroup \(K_3\).}
The subgroup from \cite[Proposition~10.15]{GolanGeneration} is
\[
        K_3=
        \langle x_0,\ x_1x_2x_1^{-3},\
        x_1x_2x_3x_2^{-3}x_1^{-1}\rangle .
\]
It is a maximal subgroup of \(F\) of infinite index and acts transitively on
the set of finite dyadic fractions.  The core \(\mathcal L_3=\C(K_3)\) and a
binary transducer \(T_{K_3}\) are displayed in
Figure~\ref{fig:golan-K3-core-transducer}.

The transducer \(T_{K_3}\) is minimal, order-preserving and
semi-synchronizing.  In this case, by
Lemma~\ref{lem:delete-forced-Kn-coordinate}, the geometric construction can be
carried out using the one-state full binary automaton \(\mathbf 1_2\), and its folded
quotient is \(\mathcal L_3\).  Hence
\[
        K_3=F^{T_{K_3}}\cong F .
\]
That also explains why the action of $K_3$ on the dyadics is transitive. Indeed, $T_{K_3}$ is order preserving and as such maps the dyadics bijectively onto the dyadics. In fact, similarly, if $H$ is a maximal subgroup of $F$ isomorphic to $F_n$, its number of orbits on the dyadics must be $n-1$.

\begin{figure}[htbp]
\centering
 \begin{minipage}[t]{0.46\textwidth}
\centering
\scriptsize
\[
\mathcal L_3=
\begin{array}{c|cc}
        &0&1\\ \hline
 e&f&g\\
 f&f&h\\
 g&h&g\\
 h&k&h\\
 k&m&h\\
 m&k&k
\end{array}
\]
\end{minipage}
\hfill
\begin{minipage}[t]{0.50\textwidth}
\centering
\scriptsize
\[
T_{K_3}=\begin{array}{c|cc}
        &0&1\\ \hline
 e&0:f&1:g\\
 f&0:f&1:h\\
 g&0:h&1:g\\
 h&0:k&1:h\\
 k&00:k&\eps:m\\
 m&01:k&1:h
\end{array}
\]
\end{minipage}

\medskip
\begin{minipage}[t]{0.46\textwidth}
\centering
\begin{tikzpicture}[scale=.96,transform shape]
\node[autstate] (e) at (0,3.55) {$e$};
\node[autstate] (f) at (-1.85,2.35) {$f$};
\node[autstate] (g) at (1.85,2.35) {$g$};
\node[autstate] (h) at (0,1.10) {$h$};
\node[autstate] (k) at (-1.65,-0.60) {$k$};
\node[autstate] (m) at (1.65,-0.60) {$m$};

\path[autedge]
(e) edge node[autlabel,left] {$0$} (f)
    edge node[autlabel,right] {$1$} (g)
(f) edge[loop left] node[autlabel] {$0$} (f)
    edge node[autlabel,above] {$1$} (h)
(g) edge node[autlabel,above] {$0$} (h)
    edge[loop right] node[autlabel] {$1$} (g)
(h) edge[bend left=20] node[autlabel,above left,pos=.45] {$0$} (k)
    edge[loop right] node[autlabel] {$1$} (h)
(k) edge[bend left=20] node[autlabel,below right,pos=.62] {$1$} (h)
    edge[bend left=16] node[autlabel,above,pos=.50] {$0$} (m)
(m) edge[bend left=16] node[autlabel,below,pos=.50] {$0,1$} (k);
\end{tikzpicture}

\medskip
\(\mathcal L_3=\C(K_3)\)
\end{minipage}
\hfill
\begin{minipage}[t]{0.50\textwidth}
\centering
\begin{tikzpicture}[scale=.96,transform shape]
\node[autstate] (e) at (0,3.55) {$e$};
\node[autstate] (f) at (-1.85,2.35) {$f$};
\node[autstate] (g) at (1.85,2.35) {$g$};
\node[autstate] (h) at (0,1.10) {$h$};
\node[autstate] (k) at (-1.65,-0.60) {$k$};
\node[autstate] (m) at (1.65,-0.60) {$m$};

\draw[->,thick] (-.7,3.55) -- (e);

\path[autedge]
(e) edge node[autlabel,left] {$0\mid0$} (f)
    edge node[autlabel,right] {$1\mid1$} (g)
(f) edge[loop left] node[autlabel] {$0\mid0$} (f)
    edge node[autlabel,above] {$1\mid1$} (h)
(g) edge node[autlabel,above] {$0\mid0$} (h)
    edge[loop right] node[autlabel] {$1\mid1$} (g)
(h) edge[bend left=20] node[autlabel,above left,pos=.45] {$0\mid0$} (k)
    edge[loop right] node[autlabel] {$1\mid1$} (h)
(k) edge[loop left] node[autlabel] {$0\mid00$} (k)
    edge[bend left=16] node[autlabel,above,pos=.50] {$1\mid\varepsilon$} (m)
(m) edge[bend left=16] node[autlabel,below,pos=.50] {$0\mid01$} (k)
    edge[bend right=18] node[autlabel,right,pos=.56] {$1\mid1$} (h);
\end{tikzpicture}

\medskip
\(T_{K_3}\)
\end{minipage}
\caption{The core of \(K_3\) from \cite[Proposition~10.15]{GolanGeneration} and a binary transducer realizing it by the geometric construction with \(\mathbf 1_2\).}
\label{fig:golan-K3-core-transducer}
\end{figure}

\subsubsection*{The Aiello--Nagnibeda examples}

Aiello and Nagnibeda work inside the rectangular subgroup
\[
        K_{(2,2)}=
        \{f\in F\mid \log_2 f'(0),\log_2 f'(1)\in 2\Z\}\cong F .
\]
They define
\[
        M_0=\langle \mathcal F,x_0^2\rangle,\qquad
        M_1=\langle \mathcal F,x_1^2\rangle,\qquad
        M_2=\langle \mathcal F,\sigma(x_1)^2\rangle,
\]
where \(\sigma\) in their paper is the automorphism induced by the order-reversing linear
homeomorphism \(t\mapsto 1-t\).  They prove that these are maximal subgroups
of infinite index in \(K_{(2,2)}\).  Taking preimages under the isomorphism
\(F\cong K_{(2,2)}\) gives maximal subgroups of infinite index in \(F\)
\cite{AielloNagnibedaThreeColorable}.  We work with the isomorphic copies
\(M_i\le K_{(2,2)}\).

The subgroup \(M_2\) is the image of \(M_1\) under \(\sigma\), so \(M_1\) and
\(M_2\) are isomorphic.  Thus it is enough to display the data for \(M_0\) and
\(M_1\).

\paragraph{The subgroup \(M_0\).}
The core \(\mathcal A_0=\C(M_0)\), read from the labelled diagrams in
\cite[Figure~10]{AielloNagnibedaThreeColorable}, and a corresponding initial
\((3,2)\)-transducer \(T_{M_0}\) are displayed in
Figure~\ref{fig:AN-M0-core-transducer}.  The transducer \(T_{M_0}\) is
minimal, order-preserving and semi-synchronizing.  Applying the forward
construction to \((T_{M_0},\C(F_3))\), and then folding, gives
\(\mathcal A_0\).  Hence
\[
        M_0=F_3^{\psi_0}\cong F_3 .
\]

\begin{figure}[htbp]
\centering
 \begin{minipage}[t]{0.50\textwidth}
\centering
\scriptsize
\[
\mathcal A_0=
\begin{array}{c|cc}
        &0&1\\ \hline
1&2&3\\
2&4&5\\
3&6&7\\
4&2&6\\
5&8&5\\
6&6&8\\
7&5&3\\
8&5&6
\end{array}
\]
\end{minipage}
\hfill
\begin{minipage}[t]{0.46\textwidth}
\centering
\scriptsize
\[
T_{M_0}=\begin{array}{c|ccc|c}
        &0&1&2&\operatorname{im}\\ \hline
\mathsf X&00:\mathsf X&01:\mathsf Y&1:\mathsf Z&\C_2\\
\mathsf Y&00:\mathsf Y&01:\mathsf Z&1:\mathsf Y&\C_2\\
\mathsf Z&0:\mathsf Z&10:\mathsf Y&11:\mathsf Z&\C_2
\end{array}
\]
\end{minipage}

\medskip
\begin{minipage}[t]{0.58\textwidth}
\centering
\begin{tikzpicture}[scale=.92,transform shape]
\node[autstate] (s1) at (0,4.15) {$1$};
\node[autstate] (s2) at (-2.75,2.85) {$2$};
\node[autstate] (s3) at (2.75,2.85) {$3$};
\node[autstate] (s4) at (-4.75,1.15) {$4$};
\node[autstate] (s5) at (-1.70,1.15) {$5$};
\node[autstate] (s6) at (1.70,1.15) {$6$};
\node[autstate] (s7) at (4.75,1.15) {$7$};
\node[autstate] (s8) at (0,-1.05) {$8$};

\path[autedge]
(s1) edge node[autlabel,left] {$0$} (s2)
     edge node[autlabel,right] {$1$} (s3)
(s2) edge node[autlabel,left] {$0$} (s4)
     edge node[autlabel,above] {$1$} (s5)
(s3) edge node[autlabel,above] {$0$} (s6)
     edge node[autlabel,right] {$1$} (s7)
(s4) edge[bend left=18] node[autlabel,left,pos=.45] {$0$} (s2)
(s7) edge[bend left=18] node[autlabel,right,pos=.45] {$1$} (s3)
(s5) edge[loop left] node[autlabel] {$1$} (s5)
(s6) edge[loop right] node[autlabel] {$0$} (s6);

\draw[->,thick] (s5) .. controls (-2.05,.15) and (-1.00,-.75) .. node[autlabel,left,pos=.58] {$0$} (s8);
\draw[->,thick] (s8) .. controls (-.65,-.25) and (-1.45,.65) .. node[autlabel,right,pos=.50] {$0$} (s5);
\draw[->,thick] (s6) .. controls (2.05,.15) and (1.00,-.75) .. node[autlabel,right,pos=.58] {$1$} (s8);
\draw[->,thick] (s8) .. controls (.65,-.25) and (1.45,.65) .. node[autlabel,left,pos=.50] {$1$} (s6);
\draw[->,thick] (s4) .. controls (-3.95,.30) and (.20,.35) .. node[autlabel,above,pos=.55] {$1$} (s6);
\draw[->,thick] (s7) .. controls (3.95,-.55) and (-.25,-.55) .. node[autlabel,below,pos=.55] {$0$} (s5);
\end{tikzpicture}

\medskip
\(\mathcal A_0=\C(M_0)\)
\end{minipage}
\hfill
\begin{minipage}[t]{0.38\textwidth}
\centering
\begin{tikzpicture}[scale=.78,transform shape]
\node[autstate] (X) at (0,2.80) {$\mathsf X$};
\node[autstate] (Y) at (-2.45,0.35) {$\mathsf Y$};
\node[autstate] (Z) at (2.45,0.35) {$\mathsf Z$};

\draw[->,thick] (-.75,2.80) -- (X);

\path[autedge]
(X) edge[loop above] node[autlabel] {$0\mid00$} (X)
    edge[bend right=12] node[autlabel,left] {$1\mid01$} (Y)
    edge[bend left=12] node[autlabel,right] {$2\mid1$} (Z)
(Y) edge[loop left] node[autlabel] {$0\mid00,\ 2\mid1$} (Y)
    edge[bend right=14] node[autlabel,below] {$1\mid01$} (Z)
(Z) edge[loop right] node[autlabel] {$0\mid0,\ 2\mid11$} (Z)
    edge[bend right=14] node[autlabel,above] {$1\mid10$} (Y);
\end{tikzpicture}

\medskip
\(T_{M_0}\)
\end{minipage}
\caption{The core of \(M_0\) and a minimal \((3,2)\)-transducer realizing it by the forward construction with \(\C(F_3)\).}
\label{fig:AN-M0-core-transducer}
\end{figure}

\paragraph{The subgroup \(M_1\).}
The core \(\mathcal A_1=\C(M_1)\), read from
\cite[Figure~11]{AielloNagnibedaThreeColorable}, and a corresponding initial
\((3,2)\)-transducer \(T_{M_1}\) are displayed in
Figure~\ref{fig:AN-M1-core-transducer}.  The transducer \(T_{M_1}\) is
minimal, order-preserving and semi-synchronizing.  Applying the forward
construction to \((T_{M_1},\C(F_3))\), and then folding, gives
\(\mathcal A_1\).  Hence
\[
        M_1=F_3^{\psi_1}\cong F_3 .
\]
Since \(M_2=\sigma(M_1)\), we also have
\[
        M_2\cong F_3 .
\]

\begin{figure}[htbp]
\centering
 \begin{minipage}[t]{0.46\textwidth}
\centering
\scriptsize
\[
\mathcal A_1=
\begin{array}{c|cc}
        &0&1\\ \hline
1&2&3\\
2&4&5\\
3&6&7\\
4&2&5\\
5&6&8\\
6&5&6\\
7&8&3\\
8&8&5
\end{array}
\]
\end{minipage}
\hfill
\begin{minipage}[t]{0.50\textwidth}
\centering
\scriptsize
\[
T_{M_1}=\begin{array}{c|ccc|c}
        &0&1&2&\operatorname{im}\\ \hline
\mathsf E&0:\mathsf L&10:\mathsf A&11:\mathsf B&\C_2\\
\mathsf L&00:\mathsf L&010:\mathsf A&\eps:\mathsf O&\C_2\\
\mathsf A&00:\mathsf A&01:\mathsf B&1:\mathsf A&\C_2\\
\mathsf B&0:\mathsf B&10:\mathsf A&11:\mathsf B&\C_2\\
\mathsf O&011:\mathsf B&10:\mathsf A&11:\mathsf B&U_{011}\sqcup U_{10}\sqcup U_{11}
\end{array}
\]
\end{minipage}

\medskip
\begin{minipage}[t]{0.48\textwidth}
\centering
\begin{tikzpicture}[scale=.73,transform shape]
\node[autstate] (s1) at (0,4.15) {$1$};
\node[autstate] (s2) at (-2.75,2.85) {$2$};
\node[autstate] (s3) at (2.75,2.85) {$3$};
\node[autstate] (s4) at (-4.75,1.15) {$4$};
\node[autstate] (s5) at (-1.70,1.15) {$5$};
\node[autstate] (s6) at (1.70,1.15) {$6$};
\node[autstate] (s7) at (4.75,1.15) {$7$};
\node[autstate] (s8) at (0,-1.05) {$8$};

\path[autedge]
(s1) edge node[autlabel,left] {$0$} (s2)
     edge node[autlabel,right] {$1$} (s3)
(s2) edge node[autlabel,left] {$0$} (s4)
     edge node[autlabel,above] {$1$} (s5)
(s3) edge node[autlabel,above] {$0$} (s6)
     edge node[autlabel,right] {$1$} (s7)
(s4) edge[bend left=18] node[autlabel,left,pos=.45] {$0$} (s2)
     edge[bend left=8] node[autlabel,above,pos=.58] {$1$} (s5)
(s5) edge[bend left=10] node[autlabel,above,pos=.54] {$0$} (s6)
     edge[bend left=12] node[autlabel,left,pos=.55] {$1$} (s8)
(s6) edge[bend left=10] node[autlabel,below,pos=.54] {$0$} (s5)
     edge[loop right] node[autlabel] {$1$} (s6)
(s7) edge[bend left=18] node[autlabel,right,pos=.45] {$1$} (s3)
(s8) edge[loop below] node[autlabel] {$0$} (s8)
     edge[bend left=12] node[autlabel,below,pos=.52] {$1$} (s5);

\draw[->,thick] (s7) .. controls (3.55,-.15) and (1.25,-.55) .. node[autlabel,below,pos=.55] {$0$} (s8);
\end{tikzpicture}

\medskip
\(\mathcal A_1=\C(M_1)\)
\end{minipage}
\hfill
\begin{minipage}[t]{0.48\textwidth}
\centering
\begin{tikzpicture}[scale=.70,transform shape]
\node[autstate] (E) at (0,4.45) {$\mathsf E$};
\node[autstate] (L) at (-5.00,2.55) {$\mathsf L$};
\node[autstate] (B) at (5.00,2.55) {$\mathsf B$};
\node[autstate] (A) at (0,1.20) {$\mathsf A$};
\node[autstate] (O) at (-3.20,-1.55) {$\mathsf O$};

\draw[->,thick] (-.85,4.45) -- (E);

\path[autedge]
(E) edge[bend right=5] node[autlabel,above left] {$0\mid0$} (L)
    edge node[autlabel,right,pos=.55] {$1\mid10$} (A)
    edge[bend left=5] node[autlabel,above right] {$2\mid11$} (B)
(L) edge[loop left] node[autlabel] {$0\mid00$} (L)
    edge[bend left=7] node[autlabel,above,pos=.50] {$1\mid010$} (A)
    edge node[autlabel,left] {$2\mid\varepsilon$} (O)
(A) edge[loop below] node[autlabel] {$0\mid00,\ 2\mid1$} (A)
    edge[bend left=12] node[autlabel,below right,pos=.55] {$1\mid01$} (B)
(B) edge[loop right] node[autlabel] {$0\mid0,\ 2\mid11$} (B)
    edge[bend left=12] node[autlabel,above left,pos=.55] {$1\mid10$} (A)
(O) edge[bend right=8] node[autlabel,right,pos=.52] {$1\mid10$} (A);

\draw[->,thick] (O) .. controls (-.55,-2.60) and (4.10,.60) .. node[autlabel,below,pos=.58] {$0\mid011,\ 2\mid11$} (B);
\end{tikzpicture}

\medskip
\(T_{M_1}\)
\end{minipage}
\caption{The core of \(M_1\) and a minimal \((3,2)\)-transducer realizing it by the forward construction with \(\C(F_3)\).  The subgroup \(M_2\) is obtained from \(M_1\) by reflection.}
\label{fig:AN-M1-core-transducer}
\end{figure}

Combining the two examples from \cite{GolanGeneration}, the
Aiello--Nagnibeda examples, and the previously known Jones-oriented examples,
all currently known maximal subgroups of \(F\) of infinite index which act
minimally on \((0,1)\) are isomorphic to Higman--Thompson groups.

\begin{remark}
For higher Higman--Thompson groups, the only known maximal subgroups of infinite index which are not point-stabilizers are the maximal subgroup of $F_3$, constructed by Aiello and Nagnibeda that is isomorphic to Jones' subgroup $\vec{F}_3$ \cite{AielloNagnibedaFThree} and our recent construction with Eytan Sapir of a maximal subgroup of $F_n$ isomorphic to $F_{2n-1}$, for each $n$ \cite{GolanSapirGenerationFn}. Hence, all known examples of maximal subgroups of Higman--Thompson groups that act minimally on the interval $(0,1)$ are isomorphic to Higman--Thompson groups. Moreover, while the action of the subgroup \(K_2\) from \cite[Example~10.13]{GolanGeneration}  on \((0,1)\) is not minimal (so by
Lemma~\ref{lem:maximal-copy-minimal}  \(K_2\) cannot be
isomorphic to \(F_n\) for any \(n\)), extending the transducer methods in this paper to the case of semi-conjugacy  can be used to show that \(K_2\)
is isomorphic to a permutational wreath product of \(F\) with itself.  This
can also be proved more easily by diagram-group methods.  In fact, all the isomorphism
results in this subsection admit alternative diagram-group proofs.
We will expand on this method in future papers and we believe it will be very interesting to connect the diagram-groups methods to the transducer-methods in this paper. This section also raises the natural problem of whether all maximal subgroups of infinite index of Thompson's groups which are not point stabilizers are isomorphic to Higman--Thompson groups. 
  \end{remark}
  
\section{A descending chain of copies of \(F\) with trivial intersection}
\label{sec:binary-descending-chain}

In this section we apply the binary transducer
\(\varphi:\C_2\to\C_2\) from
Example~\ref{ex:binary-semisync-transducer}.  We use the left-to-right
composition convention fixed in the preliminaries, and put
\[
        \psi_i=\varphi^i,
        \qquad
        H_i=F^{\psi_i}
        \qquad(i\ge0).
\]
Thus \(H_0=F\).  Since the transducer \(\varphi\) is order-preserving and
semi-synchronizing, Theorem~\ref{thm:conjugator-characterization} gives
\(F^\varphi\le F.\)
It follows inductively that
\[
        H_{i+1}=(F^\varphi)^{\psi_i}\le F^{\psi_i}=H_i
        \qquad(i\ge0).
\]
Each \(H_i\) is isomorphic to \(F\), since it is conjugate to \(F\) by the
homeomorphism \(\psi_i\).

We shall also need that these subgroups are closed.

\begin{lemma}
\label{lem:Hi-closed}
For every \(i\ge0\), the subgroup \(H_i\) is closed in \(F\).
\end{lemma}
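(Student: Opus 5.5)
The plan is to show that each iterate $\psi_i=\varphi^i$ satisfies the hypotheses of Lemma~\ref{lem:closure-conjugacy-prelim} and conjugates $F$ into $F$. That lemma, applied to the closed subgroup $F\le F$, then gives directly that $H_i=F^{\psi_i}$ is closed. For $i=0$ there is nothing to prove, since $H_0=F$ is trivially closed, being $\D(\mathbf 1_2)$.

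For $i\ge1$, the map $\varphi$ is order-preserving: this is recorded in Example~\ref{ex:binary-semisync-transducer} and is used to obtain $F^\varphi\le F$. A composition of order-preserving homeomorphisms of $\C_2$ is order-preserving, so $\psi_i$ is order-preserving. Hence $\psi_i$ satisfies the equivalent conditions of Lemma~\ref{lem:lift-order-equivalence-prelim}.

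Moreover, $H_i=F^{\psi_i}\le F$ by the inductive inclusion $H_{i+1}\le H_i$ established just before the statement, starting from $H_0=F$. Lemma~\ref{lem:closure-conjugacy-prelim}, with $H=F$ and $\psi=\psi_i$, then gives
\[
        \Cl_F(H_i)=\Cl_F(F)^{\psi_i}=F^{\psi_i}=H_i,
\]
so $H_i$ is closed.

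Alternatively, one could invoke Lemma~\ref{lem:cantor-conjugator-descends} directly: any homeomorphism $\psi$ with $F^\psi\le F$ yields a closed subgroup. That lemma does not even require knowing a priori that $\psi_i$ is order-preserving.

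There is no real obstacle here. The only point to check is that the left-to-right conjugation convention makes $F^{\psi_i}$ coincide with iterated conjugation by $\varphi$, and this is exactly the convention used in the inclusion argument preceding the lemma.
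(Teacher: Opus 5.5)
Your proof is correct and matches the paper's argument: the paper also notes that $\varphi^i$ is order-preserving with $F^{\varphi^i}\le F$, and applies Lemma~\ref{lem:closure-conjugacy-prelim} to get $\Cl_F(H_i)=\Cl_F(F)^{\varphi^i}=F^{\varphi^i}=H_i$. Your alternative via Lemma~\ref{lem:cantor-conjugator-descends} is also valid, since that lemma already concludes that $F^\psi$ is closed whenever $F^\psi\le F$.
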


\begin{proof}
We have already observed that \(H_i=F^{\varphi^i}\le F\).  Since
\(\varphi^i\) is order-preserving, it satisfies the equivalent conditions of
Lemma~\ref{lem:lift-order-equivalence-prelim}.  Applying
Lemma~\ref{lem:closure-conjugacy-prelim} to \(F^{\varphi^i}\le F\), we get
\[
        \Cl_F(H_i)
        =
        \Cl_F(F^{\varphi^i})
        =
        \Cl_F(F)^{\varphi^i}
        =
        F^{\varphi^i}
        =
        H_i.
\]
Thus \(H_i\) is closed.
\end{proof}

We shall compute the cores of the subgroups \(H_i\).  We then use the quotient
description of closed overgroups, together with the generation theorem for
\(F\), to prove that for each \(i\), the only subgroups of \(F\) containing
\(H_i\) are
\(H_i,H_{i-1},\ldots,H_0=F.\)
Finally we prove that
\(\bigcap_{i\ge0}H_i=\{1\}.\)

\subsection{The core automata of the chain}
\label{subsec:cores-of-binary-chain}

Let \(\nu_2(N)\) denote the exponent of the largest power of \(2\) dividing
the positive integer \(N\).  For an odd integer \(t\ge1\), put
\(k(t)=2^{\nu_2(t+1)}-2 .\)
Thus, if \(0\le t<2^i\) and \(t\) is odd, then
\(1\le \nu_2(t+1)\le i,\)
and hence
\(0\le k(t)\le 2^i-2.\)

We define a sequence of finite full binary tree-automata
\[
        \mathcal A_i
        \qquad(i\ge0).
\]
The automaton \(\mathcal A_0\), the standard core of \(F\), has states
\(\rho,\ell,r,c\)
and transitions
\[
\begin{array}{c|cc}
        &0&1\\ \hline
\rho   &\ell&r\\
\ell   &\ell&c\\
r      &c&r\\
c      &c&c .
\end{array}
\]
For \(i\ge1\), the automaton \(\mathcal A_i\) has states
\[
        \rho,\ell,r,c,a_0,a_1,\ldots,a_{2^i-1},
\]
with transitions
\[
\begin{array}{c|cc}
        &0&1\\ \hline
\rho   &\ell&r\\
\ell   &\ell&c\\
r      &a_0&r\\
c      &a_0&c,
\end{array}
\]
and, for \(0\le t<2^i\),
\[
        a_t\cdot0=a_{t+1\pmod {2^i}},
\]
while
\[
        a_t\cdot1=
        \begin{cases}
        c, & t\text{ even},\\[2mm]
        a_{k(t)}, & t\text{ odd}.
        \end{cases}
\]
The state \(a_0\) is reached from the root by the word \(10\), and the
\(0\)-transitions cycle through all \(a_t\)'s.  Thus every state listed above
is reachable.  The displayed transition table also shows that every state has
two children and that no two father states have the same ordered pair of
children; hence each \(\mathcal A_i\) is full and folded.  We shall prove that
\[
        \C(H_i)\cong \mathcal A_i
        \qquad(i\ge0).
\]

\subsubsection{The first two cores: a worked geometric construction}
\label{subsubsec:first-two-cores-worked}

We begin by computing the first two cores explicitly.  The point of this
computation is not only to identify the automata, but also to show exactly how
the geometric construction of
Subsection~\ref{subsec:geometric-determinization} is being applied and to help
the reader follow the inductive proof for the general case.

The minimal transducer \(T^\varphi\) from
Example~\ref{ex:binary-semisync-transducer} satisfies the hypotheses of the
geometric construction.  It is epsilon-separated: the only output-free edge is
\[
        \mathsf A\xrightarrow{\ 1\mid\eps\ }\mathsf B,
\]
and the state \(\mathsf B\) has no incoming non-\(\eps\) edge.  Thus the
entry states are
\[
        B_\varphi=\{\mathsf L,\mathsf R,\mathsf A\}.
\]
The first-output input/output/state blocks from these entry states are:
\[
\begin{array}{c|c}
\mathsf L
    & 0\mid0:\mathsf L,\quad 1\mid1:\mathsf R\\[1mm]
\mathsf R
    & 0\mid0:\mathsf A,\quad 1\mid1:\mathsf R\\[1mm]
\mathsf A
    & 0\mid00:\mathsf A,\quad 10\mid01:\mathsf A,\quad
      11\mid1:\mathsf R .
\end{array}
\]
For each entry state, the output words in the corresponding row form a
complete binary prefix code.  Hence the entry states are homeomorphism states,
and Corollary~\ref{thm:geometric-determinization-up-to-folding} applies.  For
\(\mathsf L\) and \(\mathsf R\), the output code is the one-letter code
\(\{0,1\}\).  The state \(\mathsf A\) is the only entry state whose output
code is not made of one-letter words; its output code is
\(\{00,01,1\}.\)

We shall use the following concrete form of the contracted raw product.  If
\(\mathcal B=(Q,\tau,q_0)\) is the input automaton and
\[
        (s,q)\in B_\varphi\times Q
\]
is an accessible principal state, then every first-output block
\[
        u\mid\lambda:t_\varphi(s,u)
\]
gives a word-labelled edge
\[
        (s,q)
        \xrightarrow{\ \lambda\ }
        \bigl(t_\varphi(s,u),\,q\cdot u\bigr)
\]
in
\[
        \mathcal G^{\mathrm{raw}}_\varphi(\mathcal B).
\]
The geometric automaton \(\mathcal G_\varphi(\mathcal B)\) is then obtained by
subdividing these word-labelled edges into one-letter edges and folding common
initial subpaths.

At the entry state \(\mathsf A\), this local operation is shown in
Figure~\ref{fig:a-fan-subdivision}.  The left side is the word-labelled fan in
the contracted raw product.  The right side is what remains after subdividing
the labels \(00\) and \(01\) and folding their common first \(0\)-edge.  The
other entry states also contribute their one-letter \(0\)- and \(1\)-edges;
the point of the figure is that \(\mathsf A\) is the only place where an
intermediate state is created.

\begin{figure}[htbp]
\centering
\resizebox{0.86\textwidth}{!}{%
\begin{tikzpicture}[
    >=Stealth,
    line width=1.05pt,
    node distance=1cm,
    pstate/.style={
        circle, draw=blue!55!black, fill=blue!5,
        thick, minimum size=1.25cm, font=\large\bfseries, align=center
    },
    dstate/.style={
        circle, draw=orange!85!black, fill=orange!8,
        thick, minimum size=1.25cm, font=\large\bfseries, align=center
    },
    tstate/.style={
        circle, draw=green!45!black, fill=green!6,
        thick, minimum size=1.35cm, font=\large\bfseries, align=center
    },
    title/.style={font=\Large\bfseries},
    lab/.style={font=\large\bfseries, inner sep=1pt},
    smalllab/.style={font=\normalsize\bfseries, align=center, inner sep=1pt}
]
    \node[title] at (-4.2,1.1) {word-labelled fan};
    \node[pstate] (Lroot) at (-4.2,0) {$(\mathsf A,q)$};
    \node[tstate] (L00) at (-7,-2.2) {$(\mathsf A,q\!\cdot\!0)$};
    \node[tstate] (L01) at (-4.2,-2.75) {$(\mathsf A,q\!\cdot\!10)$};
    \node[tstate] (L1)  at (-1.35,-2.2) {$(\mathsf R,q\!\cdot\!11)$};

    \draw[->, blue!70!black]
        (Lroot) to[bend right=13]
        node[lab, above left] {$00$} (L00);
    \draw[->, orange!90!black]
        (Lroot) to
        node[lab, right] {$01$} (L01);
    \draw[->, violet!80!black]
        (Lroot) to[bend left=13]
        node[lab, above right] {$1$} (L1);

    \node[smalllab] at (-7,-3.25) {$0\mid00$};
    \node[smalllab] at (-4.2,-3.85) {$10\mid01$};
    \node[smalllab] at (-1.35,-3.25) {$11\mid1$};

    \draw[dashed, thick] (-0.05,1.0) -- (-0.05,-3.6);

    \node[title] at (4.2,1.1) {after subdivision and folding};
    \node[pstate] (Rroot) at (4.2,0) {$(\mathsf A,q)$};
    \node[dstate] (D) at (2.35,-1.55) {$D_q$\\[-1mm]\scriptsize$((\mathsf A,q),0)$};
    \node[tstate] (R00) at (0.55,-3.25) {$(\mathsf A,q\!\cdot\!0)$};
    \node[tstate] (R01) at (4.2,-3.25) {$(\mathsf A,q\!\cdot\!10)$};
    \node[tstate] (R1)  at (7.35,-1.55) {$(\mathsf R,q\!\cdot\!11)$};

    \draw[->, blue!70!black]
        (Rroot) to[bend right=8]
        node[lab, above left] {$0$} (D);
    \draw[->, violet!80!black]
        (Rroot) to[bend left=8]
        node[lab, above] {$1$} (R1);
    \draw[->, blue!70!black]
        (D) to[bend right=8]
        node[lab, left] {$0$} (R00);
    \draw[->, orange!90!black]
        (D) to[bend left=8]
        node[lab, right] {$1$} (R01);
\end{tikzpicture}%
}
\caption{The local operation at a principal state whose transducer coordinate
is \(\mathsf A\).  The two word-labelled edges with labels \(00\) and \(01\)
have the same first output letter, so after subdivision their initial
\(0\)-edges are folded into one delay state.}
\label{fig:a-fan-subdivision}
\end{figure}

\paragraph{Constructing \(\C(H_1)\).}

We begin with
\(\mathcal A_0=\C(F),\)
whose states are
\(\rho,\ell,r,c\)
and whose transition table is
\[
\begin{array}{c|cc}
        &0&1\\ \hline
\rho   &\ell&r\\
\ell   &\ell&c\\
r      &c&r\\
c      &c&c .
\end{array}
\]
We form
\[
        \mathcal G^{\mathrm{raw}}_\varphi(\mathcal A_0).
\]

The accessible principal states are exactly
\[
        \widehat\rho=(\mathsf L,\rho),\qquad
        \widehat\ell=(\mathsf L,\ell),\qquad
        \widehat r=(\mathsf R,r),\qquad
        \widehat c=(\mathsf R,c),\qquad
        P=(\mathsf A,c).
\]
Indeed, starting at \((\mathsf L,\rho)\), the two first-output blocks from
\(\mathsf L\) give
\[
        (\mathsf L,\rho)\xrightarrow{\ 0\ }(\mathsf L,\ell),
        \qquad
        (\mathsf L,\rho)\xrightarrow{\ 1\ }(\mathsf R,r).
\]
From \((\mathsf L,\ell)\), the first-output blocks from \(\mathsf L\) give
\[
        (\mathsf L,\ell)\xrightarrow{\ 0\ }(\mathsf L,\ell),
        \qquad
        (\mathsf L,\ell)\xrightarrow{\ 1\ }(\mathsf R,c).
\]
From \((\mathsf R,r)\), the first-output blocks from \(\mathsf R\) give
\[
        (\mathsf R,r)\xrightarrow{\ 0\ }(\mathsf A,r\cdot0)
        =
        (\mathsf A,c),
        \qquad
        (\mathsf R,r)\xrightarrow{\ 1\ }(\mathsf R,r).
\]
From \((\mathsf R,c)\), they give
\[
        (\mathsf R,c)\xrightarrow{\ 0\ }(\mathsf A,c\cdot0)
        =
        (\mathsf A,c),
        \qquad
        (\mathsf R,c)\xrightarrow{\ 1\ }(\mathsf R,c).
\]
Thus \(P=(\mathsf A,c)\) is accessible.  Finally, from
\(P=(\mathsf A,c)\), the three first-output blocks from \(\mathsf A\) give
terminal principal states
\[
        0\mid00:\mathsf A
        \quad\Longrightarrow\quad
        (\mathsf A,c\cdot0)=(\mathsf A,c)=P,
\]
\[
        10\mid01:\mathsf A
        \quad\Longrightarrow\quad
        (\mathsf A,c\cdot10)=(\mathsf A,c)=P,
\]
and
\[
        11\mid1:\mathsf R
        \quad\Longrightarrow\quad
        (\mathsf R,c\cdot11)=(\mathsf R,c)=\widehat c.
\]
No other principal state is therefore reachable.

Consequently, the contracted raw product has the word-labelled transition
table
\[
\begin{array}{c|c}
\widehat\rho
    & 0:\widehat\ell,\quad 1:\widehat r\\[1mm]
\widehat\ell
    & 0:\widehat\ell,\quad 1:\widehat c\\[1mm]
\widehat r
    & 0:P,\quad 1:\widehat r\\[1mm]
\widehat c
    & 0:P,\quad 1:\widehat c\\[1mm]
P
    & 00:P,\quad 01:P,\quad 1:\widehat c .
\end{array}
\]
This word-labelled product is drawn in
Figure~\ref{fig:A0-to-A1-raw-revised}.  The labels in this figure are output
words.

\begin{figure}[htbp]
\centering
\resizebox{0.82\textwidth}{!}{%
\begin{tikzpicture}[
    >=Stealth,
    line width=1.05pt,
    pstate/.style={
        circle, draw=black!75, fill=white,
        thick, minimum size=1.45cm, font=\large\bfseries, align=center
    },
    astate/.style={
        circle, draw=orange!85!black, fill=orange!8,
        thick, minimum size=1.55cm, font=\large\bfseries, align=center
    },
    lab/.style={font=\large\bfseries, inner sep=1pt}
]
    \node[pstate] (rho) at (0,4.5) {$\widehat\rho$\\[-1mm]{\scriptsize $(\mathsf L,\rho)$}};
    \node[pstate] (ell) at (-4.7,2.6) {$\widehat\ell$\\[-1mm]{\scriptsize $(\mathsf L,\ell)$}};
    \node[pstate] (r)   at (4.7,2.6) {$\widehat r$\\[-1mm]{\scriptsize $(\mathsf R,r)$}};
    \node[pstate] (c)   at (0,0.65) {$\widehat c$\\[-1mm]{\scriptsize $(\mathsf R,c)$}};
    \node[astate] (P)   at (0,-2.25) {$P$\\[-1mm]{\scriptsize $(\mathsf A,c)$}};

    \draw[->] (rho) to[bend right=8]
        node[lab, above left] {$0$} (ell);
    \draw[->] (rho) to[bend left=8]
        node[lab, above right] {$1$} (r);

    \draw[->] (ell) edge[loop left, looseness=5]
        node[lab] {$0$} (ell);
    \draw[->] (ell) to[bend right=10]
        node[lab, above] {$1$} (c);

    \draw[->] (r) edge[loop right, looseness=5]
        node[lab] {$1$} (r);
    \draw[->] (r) to[bend left=8]
        node[lab, right] {$0$} (P);

    \draw[->] (c) edge[loop right, looseness=5]
        node[lab] {$1$} (c);
    \draw[->] (c) to[bend left=8]
        node[lab, right] {$0$} (P);

    \draw[->, blue!70!black] (P) edge[loop left, looseness=5]
        node[lab] {$00$} (P);
    \draw[->, orange!90!black] (P) edge[loop right, looseness=5]
        node[lab] {$01$} (P);
    \draw[->, violet!80!black] (P) to[bend left=18]
        node[lab, left] {$1$} (c);
\end{tikzpicture}%
}
\caption{The contracted raw product
\(\mathcal G^{\mathrm{raw}}_\varphi(\mathcal A_0)\).  Edge labels are output
words.}
\label{fig:A0-to-A1-raw-revised}
\end{figure}

We now pass from the contracted raw product to the geometric automaton by
subdividing word-labelled edges and folding common initial subpaths.  At
\[
        \widehat\rho,\widehat\ell,\widehat r,\widehat c
\]
all output labels have length \(1\), so those edges remain unchanged.  At
\(P=(\mathsf A,c),\)
the two word-labelled edges
\[
        P\xrightarrow{\ 00\ }P,
        \qquad
        P\xrightarrow{\ 01\ }P
\]
are first subdivided into two paths of length \(2\).  Their first edges both
start at \(P\) and both have label \(0\), so these first edges are folded.  We
obtain one delay state
\[
        D=(P,0)=((\mathsf A,c),0),
\]
and the three word-labelled edges from \(P\) become
\[
        P\xrightarrow{\ 0\ }D,
        \qquad
        P\xrightarrow{\ 1\ }\widehat c,
\]
and
\[
        D\xrightarrow{\ 0\ }P,
        \qquad
        D\xrightarrow{\ 1\ }P.
\]
Thus the geometric automaton has states
\[
        \widehat\rho,\widehat\ell,\widehat r,\widehat c,P,D.
\]
Its ordered pairs of children are
\[
\begin{array}{c|c}
\widehat\rho & (\widehat\ell,\widehat r)\\
\widehat\ell & (\widehat\ell,\widehat c)\\
\widehat r & (P,\widehat r)\\
\widehat c & (P,\widehat c)\\
P & (D,\widehat c)\\
D & (P,P).
\end{array}
\]
These ordered pairs are all distinct, so no further folding occurs.

Finally we rename
\[
        a_0=P=(\mathsf A,c),
        \qquad
        a_1=D=((\mathsf A,c),0),
\]
and remove the hats from the four boundary states.  The resulting automaton is
\[
\begin{array}{c|cc}
        &0&1\\ \hline
\rho   &\ell&r\\
\ell   &\ell&c\\
r      &a_0&r\\
c      &a_0&c\\
a_0    &a_1&c\\
a_1    &a_0&a_0 .
\end{array}
\]
This is precisely \(\mathcal A_1\).  It is drawn in
Figure~\ref{fig:A1-final-revised}.

\begin{figure}[htbp]
\centering
\resizebox{0.76\textwidth}{!}{%
\begin{tikzpicture}[
    >=Stealth,
    line width=1.05pt,
    bstate/.style={
        circle, draw=black!75, fill=white,
        thick, minimum size=1.35cm, font=\Large\bfseries, align=center
    },
    astate/.style={
        circle, draw=orange!85!black, fill=orange!8,
        thick, minimum size=1.45cm, font=\Large\bfseries, align=center
    },
    dstate/.style={
        circle, draw=blue!65!black, fill=blue!5,
        thick, minimum size=1.45cm, font=\Large\bfseries, align=center
    },
    lab/.style={font=\large\bfseries, inner sep=1pt}
]
    \node[bstate] (rho) at (0,5.0) {$\rho$};
    \node[bstate] (ell) at (-4.7,3.05) {$\ell$};
    \node[bstate] (r)   at (4.7,3.05) {$r$};
    \node[bstate] (c)   at (0,1.0) {$c$};
    \node[astate] (a0) at (-1.2,-1.75) {$a_0$};
    \node[dstate] (a1) at (2.0,-3.95) {$a_1$};

    \draw[->] (rho) to[bend right=8]
        node[lab, above left] {$0$} (ell);
    \draw[->] (rho) to[bend left=8]
        node[lab, above right] {$1$} (r);

    \draw[->] (ell) edge[loop left, looseness=5]
        node[lab] {$0$} (ell);
    \draw[->] (ell) to[bend right=8]
        node[lab, above] {$1$} (c);

    \draw[->] (r) edge[loop right, looseness=5]
        node[lab] {$1$} (r);
    \draw[->] (r) to[out=245,in=35]
        node[lab, right] {$0$} (a0);

    \draw[->] (c) edge[loop right, looseness=5]
        node[lab] {$1$} (c);
    \draw[->] (c) to[bend right=8]
        node[lab, left, pos=.35] {$0$} (a0);

    \draw[->, blue!80!black] (a0) to[bend left=8]
        node[lab, above] {$0$} (a1);
    \draw[->, violet!80!black] (a0) to[bend right=16]
        node[lab, right, pos=.55] {$1$} (c);

    \draw[->, blue!80!black] (a1) to[bend left=18]
        node[lab, below] {$0$} (a0);
    \draw[->, orange!90!black] (a1) to[bend right=20]
        node[lab, right] {$1$} (a0);
\end{tikzpicture}%
}
\caption{The geometric automaton obtained from
\(\mathcal G^{\mathrm{raw}}_\varphi(\mathcal A_0)\) after subdivision and
folding common initial subpaths.  This is \(\mathcal A_1\).}
\label{fig:A1-final-revised}
\end{figure}

Since \(\mathcal A_0=\C(F)\), since \(F^\varphi=H_1\le F\), and since the
geometric automaton above is already folded, Corollary
\ref{thm:geometric-determinization-up-to-folding} gives
\[
        \C(H_1)=\C(F^\varphi)\cong\mathcal A_1.
\]

\paragraph{Constructing \(\C(H_2)\).}

We now repeat the same construction with input automaton
\(\C(H_1)\cong\mathcal A_1.\)
To distinguish the old \(a\)-states of the input automaton from the new states
we are about to construct, temporarily write them as
\[
        a^{(1)}_0,\qquad a^{(1)}_1.
\]
For reference, we display side by side the first-output blocks of the
transducer and the transition table of the input automaton \(\mathcal A_1\):
\[
\begin{array}{c|c}
\text{entry state of }T^\varphi & \text{first-output blocks}\\ \hline
\mathsf L
    & 0\mid0:\mathsf L,\quad 1\mid1:\mathsf R\\[1mm]
\mathsf R
    & 0\mid0:\mathsf A,\quad 1\mid1:\mathsf R\\[1mm]
\mathsf A
    & 0\mid00:\mathsf A,\quad 10\mid01:\mathsf A,\quad
      11\mid1:\mathsf R
\end{array}
\qquad
\begin{array}{c|cc}
\mathcal A_1 &0&1\\ \hline
\rho   &\ell&r\\
\ell   &\ell&c\\
r      &a^{(1)}_0&r\\
c      &a^{(1)}_0&c\\
a^{(1)}_0&a^{(1)}_1&c\\
a^{(1)}_1&a^{(1)}_0&a^{(1)}_0 .
\end{array}
\]
We form
\[
        \mathcal G^{\mathrm{raw}}_\varphi(\mathcal A_1).
\]
The accessible principal states are exactly
\[
        \widehat\rho=(\mathsf L,\rho),\qquad
        \widehat\ell=(\mathsf L,\ell),\qquad
        \widehat r=(\mathsf R,r),\qquad
        \widehat c=(\mathsf R,c),
\]
together with
\[
        P_0=(\mathsf A,a^{(1)}_0),
        \qquad
        P_1=(\mathsf A,a^{(1)}_1).
\]

The first four states are reached as follows:
\[
        \widehat\rho\xrightarrow{\ 0\ }\widehat\ell,
        \qquad
        \widehat\rho\xrightarrow{\ 1\ }\widehat r,
\]
\[
        \widehat\ell\xrightarrow{\ 0\ }\widehat\ell,
        \qquad
        \widehat\ell\xrightarrow{\ 1\ }\widehat c.
\]
Using the row for \(\mathsf R\) in the transducer table, the remaining
one-letter first-output blocks give
\[
        \widehat r\xrightarrow{\ 0\ }P_0,
        \qquad
        \widehat r\xrightarrow{\ 1\ }\widehat r,
\]
and
\[
        \widehat c\xrightarrow{\ 0\ }P_0,
        \qquad
        \widehat c\xrightarrow{\ 1\ }\widehat c.
\]
Thus \(P_0\) is accessible.

Now start at \(P_0=(\mathsf A,a^{(1)}_0)\).  The three first-output blocks
from the transducer state \(\mathsf A\), together with the table of
\(\mathcal A_1\), give
\[
        0\mid00:\mathsf A
        \quad\Longrightarrow\quad
        (\mathsf A,a^{(1)}_0\cdot0)
        =
        (\mathsf A,a^{(1)}_1)
        =
        P_1,
\]
\[
        10\mid01:\mathsf A
        \quad\Longrightarrow\quad
        (\mathsf A,a^{(1)}_0\cdot10)
        =
        (\mathsf A,a^{(1)}_0)
        =
        P_0,
\]
and
\[
        11\mid1:\mathsf R
        \quad\Longrightarrow\quad
        (\mathsf R,a^{(1)}_0\cdot11)
        =
        (\mathsf R,c)
        =
        \widehat c.
\]
Thus \(P_1\) is accessible.

Next start at \(P_1=(\mathsf A,a^{(1)}_1)\).  Again using the three
first-output blocks from \(\mathsf A\), we get
\[
        0\mid00:\mathsf A
        \quad\Longrightarrow\quad
        (\mathsf A,a^{(1)}_1\cdot0)
        =
        (\mathsf A,a^{(1)}_0)
        =
        P_0,
\]
\[
        10\mid01:\mathsf A
        \quad\Longrightarrow\quad
        (\mathsf A,a^{(1)}_1\cdot10)
        =
        (\mathsf A,a^{(1)}_1)
        =
        P_1,
\]
and
\[
        11\mid1:\mathsf R
        \quad\Longrightarrow\quad
        (\mathsf R,a^{(1)}_1\cdot11)
        =
        (\mathsf R,c)
        =
        \widehat c.
\]
Hence no further principal states appear.

Therefore the contracted raw product has word-labelled transition table
\[
\begin{array}{c|c}
\widehat\rho
    & 0:\widehat\ell,\quad 1:\widehat r\\[1mm]
\widehat\ell
    & 0:\widehat\ell,\quad 1:\widehat c\\[1mm]
\widehat r
    & 0:P_0,\quad 1:\widehat r\\[1mm]
\widehat c
    & 0:P_0,\quad 1:\widehat c\\[1mm]
P_0
    & 00:P_1,\quad 01:P_0,\quad 1:\widehat c\\[1mm]
P_1
    & 00:P_0,\quad 01:P_1,\quad 1:\widehat c .
\end{array}
\]
This word-labelled product is drawn in
Figure~\ref{fig:A1-to-A2-raw-revised}.  Edge labels are output words.

\begin{figure}[htbp]
\centering
\resizebox{0.90\textwidth}{!}{%
\begin{tikzpicture}[
    >=Stealth,
    line width=1.05pt,
    pstate/.style={
        circle, draw=black!75, fill=white,
        thick, minimum size=1.35cm, font=\large\bfseries, align=center
    },
    astate/.style={
        circle, draw=orange!85!black, fill=orange!8,
        thick, minimum size=1.7cm, font=\large\bfseries, align=center
    },
    lab/.style={font=\large\bfseries, inner sep=1pt}
]
    \node[pstate] (rho) at (0,5.15) {$\widehat\rho$\\[-1mm]{\scriptsize $(\mathsf L,\rho)$}};
    \node[pstate] (ell) at (-5.3,3.1) {$\widehat\ell$\\[-1mm]{\scriptsize $(\mathsf L,\ell)$}};
    \node[pstate] (r)   at (5.3,3.1) {$\widehat r$\\[-1mm]{\scriptsize $(\mathsf R,r)$}};
    \node[pstate] (c)   at (0,1.1) {$\widehat c$\\[-1mm]{\scriptsize $(\mathsf R,c)$}};

    \node[astate] (P0) at (-3.2,-2.15) {$P_0$\\[-1mm]{\scriptsize $(\mathsf A,a^{(1)}_0)$}};
    \node[astate] (P1) at (3.2,-2.15) {$P_1$\\[-1mm]{\scriptsize $(\mathsf A,a^{(1)}_1)$}};

    \draw[->] (rho) to[bend right=8]
        node[lab, above left] {$0$} (ell);
    \draw[->] (rho) to[bend left=8]
        node[lab, above right] {$1$} (r);

    \draw[->] (ell) edge[loop left, looseness=5]
        node[lab] {$0$} (ell);
    \draw[->] (ell) to[bend right=8]
        node[lab, above] {$1$} (c);

    \draw[->] (r) edge[loop right, looseness=5]
        node[lab] {$1$} (r);
    \draw[->] (r) to[out=250,in=35]
        node[lab, right] {$0$} (P0);

    \draw[->] (c) edge[loop right, looseness=5]
        node[lab] {$1$} (c);
    \draw[->] (c) to[out=205,in=95]
        node[lab, left] {$0$} (P0);

    \draw[->, blue!70!black] (P0) to[bend left=12]
        node[lab, above] {$00$} (P1);
    \draw[->, blue!70!black] (P1) to[bend left=12]
        node[lab, below] {$00$} (P0);

    \draw[->, orange!90!black] (P0) edge[loop below, looseness=5]
        node[lab] {$01$} (P0);
    \draw[->, orange!90!black] (P1) edge[loop below, looseness=5]
        node[lab] {$01$} (P1);

    \draw[->, violet!80!black] (P0) to[out=120,in=240]
        node[lab, left] {$1$} (c);
    \draw[->, violet!80!black] (P1) to[out=60,in=300]
        node[lab, right] {$1$} (c);
\end{tikzpicture}%
}
\caption{The contracted raw product
\(\mathcal G^{\mathrm{raw}}_\varphi(\mathcal A_1)\).  Edge labels are output
words.  The loops labelled \(01\) will share their initial \(0\)-edges with
the edges labelled \(00\) after subdivision.}
\label{fig:A1-to-A2-raw-revised}
\end{figure}

We now subdivide and fold the word-labelled edges.  At \(P_0\), the two
word-labelled edges
\[
        P_0\xrightarrow{\ 00\ }P_1,
        \qquad
        P_0\xrightarrow{\ 01\ }P_0
\]
share the first output letter \(0\).  After subdivision, their first
\(0\)-edges are folded into a single edge
\[
        P_0\xrightarrow{\ 0\ }D_0,
        \qquad
        D_0=(P_0,0).
\]
The two second letters give
\[
        D_0\xrightarrow{\ 0\ }P_1,
        \qquad
        D_0\xrightarrow{\ 1\ }P_0.
\]
The remaining edge from \(P_0\) is already one-letter:
\[
        P_0\xrightarrow{\ 1\ }\widehat c.
\]

Similarly, at \(P_1\), the two word-labelled edges
\[
        P_1\xrightarrow{\ 00\ }P_0,
        \qquad
        P_1\xrightarrow{\ 01\ }P_1
\]
produce one delay state
\(D_1=(P_1,0),\)
with
\[
        P_1\xrightarrow{\ 0\ }D_1,
        \qquad
        P_1\xrightarrow{\ 1\ }\widehat c,
\]
and
\[
        D_1\xrightarrow{\ 0\ }P_0,
        \qquad
        D_1\xrightarrow{\ 1\ }P_1.
\]
This local subdivision is shown in Figure~\ref{fig:A2-subdivision-local}.

\begin{figure}[htbp]
\centering
\resizebox{0.82\textwidth}{!}{%
\begin{tikzpicture}[
    >=Stealth,
    line width=1.05pt,
    astate/.style={
        circle, draw=orange!85!black, fill=orange!8,
        thick, minimum size=1.55cm, font=\Large\bfseries, align=center
    },
    delay/.style={
        circle, draw=blue!70!black, fill=blue!5,
        thick, minimum size=1.55cm, font=\Large\bfseries, align=center
    },
    cstate/.style={
        circle, draw=black!80, fill=white,
        thick, minimum size=1.35cm, font=\Large\bfseries, align=center
    },
    lab/.style={font=\large\bfseries, inner sep=1pt}
]
    \node[cstate] (c) at (0,2.4) {$\widehat c$};

    \node[astate] (P0) at (-4.4,0) {$P_0$};
    \node[delay]  (D0) at (-4.4,-2.45) {$D_0$};

    \node[astate] (P1) at (4.4,0) {$P_1$};
    \node[delay]  (D1) at (4.4,-2.45) {$D_1$};

    \draw[->, blue!70!black] (P0) to[bend right=8]
        node[lab, right] {$0$} (D0);
    \draw[->, violet!80!black] (P0) to[out=75,in=220]
        node[lab, above left] {$1$} (c);
    \draw[->, blue!70!black] (D0) to[out=330,in=240]
        node[lab, below] {$0$} (P1);
    \draw[->, orange!90!black] (D0) to[bend right=24]
        node[lab, left] {$1$} (P0);

    \draw[->, blue!70!black] (P1) to[bend left=8]
        node[lab, left] {$0$} (D1);
    \draw[->, violet!80!black] (P1) to[out=105,in=320]
        node[lab, above right] {$1$} (c);
    \draw[->, blue!70!black] (D1) to[out=210,in=300]
        node[lab, below] {$0$} (P0);
    \draw[->, orange!90!black] (D1) to[bend left=24]
        node[lab, right] {$1$} (P1);
\end{tikzpicture}%
}
\caption{The local subdivision step in the construction of \(\mathcal A_2\).
The pair of edges \(00,01\) out of \(P_0\) folds through \(D_0\), and the pair
of edges \(00,01\) out of \(P_1\) folds through \(D_1\).}
\label{fig:A2-subdivision-local}
\end{figure}

Thus the geometric automaton has states
\[
        \widehat\rho,\widehat\ell,\widehat r,\widehat c,
        P_0,D_0,P_1,D_1.
\]
The ordered pairs of children are
\[
\begin{array}{c|c}
\widehat\rho & (\widehat\ell,\widehat r)\\
\widehat\ell & (\widehat\ell,\widehat c)\\
\widehat r & (P_0,\widehat r)\\
\widehat c & (P_0,\widehat c)\\
P_0 & (D_0,\widehat c)\\
D_0 & (P_1,P_0)\\
P_1 & (D_1,\widehat c)\\
D_1 & (P_0,P_1).
\end{array}
\]
These ordered pairs are all distinct, so the geometric automaton is already
folded.

Finally rename
\[
        a_0=P_0,\qquad
        a_1=D_0,\qquad
        a_2=P_1,\qquad
        a_3=D_1,
\]
and remove the hats from the four boundary states.  We obtain
\[
\begin{array}{c|cc}
        &0&1\\ \hline
\rho   &\ell&r\\
\ell   &\ell&c\\
r      &a_0&r\\
c      &a_0&c\\
a_0    &a_1&c\\
a_1    &a_2&a_0\\
a_2    &a_3&c\\
a_3    &a_0&a_2 .
\end{array}
\]
This automaton is precisely \(\mathcal A_2\), drawn in
Figure~\ref{fig:A2-final-revised}.  The larger labels
\(P_0,D_0,P_1,D_1\) show the construction; the smaller labels inside the same
states show the final names \(a_0,a_1,a_2,a_3\).

\begin{figure}[htbp]
\centering
\resizebox{0.90\textwidth}{!}{%
\begin{tikzpicture}[
    >=Stealth,
    line width=1.05pt,
    bstate/.style={
        circle, draw=black!75, fill=white,
        thick, minimum size=1.3cm, font=\Large\bfseries, align=center
    },
    pstate/.style={
        circle, draw=orange!85!black, fill=orange!8,
        thick, minimum size=1.5cm, font=\Large\bfseries, align=center
    },
    dstate/.style={
        circle, draw=blue!65!black, fill=blue!5,
        thick, minimum size=1.5cm, font=\Large\bfseries, align=center
    },
    lab/.style={font=\large\bfseries, inner sep=1pt}
]
    \node[bstate] (rho) at (0,6.2) {$\rho$};
    \node[bstate] (ell) at (-5.2,4.15) {$\ell$};
    \node[bstate] (r)   at (5.2,4.15) {$r$};
    \node[bstate] (c)   at (0,2.05) {$c$};

    \node[pstate] (P0) at (0,-0.75) {$P_0$\\[-1mm]{\scriptsize $a_0$}};
    \node[dstate] (D0) at (-4.1,-3.25) {$D_0$\\[-1mm]{\scriptsize $a_1$}};
    \node[pstate] (P1) at (0,-5.75) {$P_1$\\[-1mm]{\scriptsize $a_2$}};
    \node[dstate] (D1) at (4.1,-3.25) {$D_1$\\[-1mm]{\scriptsize $a_3$}};

    \draw[->] (rho) to[bend right=8]
        node[lab, above left] {$0$} (ell);
    \draw[->] (rho) to[bend left=8]
        node[lab, above right] {$1$} (r);

    \draw[->] (ell) edge[loop left, looseness=5]
        node[lab] {$0$} (ell);
    \draw[->] (ell) to[bend right=8]
        node[lab, above] {$1$} (c);

    \draw[->] (r) edge[loop right, looseness=5]
        node[lab] {$1$} (r);
    \draw[->] (r) to[out=260,in=35]
        node[lab, right, pos=.58] {$0$} (P0);

    \draw[->] (c) edge[loop right, looseness=5]
        node[lab] {$1$} (c);
    \draw[->] (c) to[bend left=8]
        node[lab, right, pos=.35] {$0$} (P0);

    \draw[->, blue!70!black] (P0) to[bend right=8]
        node[lab, above left] {$0$} (D0);
    \draw[->, violet!80!black] (P0) to[bend left=8]
        node[lab, left, pos=.45] {$1$} (c);

    \draw[->, blue!70!black] (D0) to[bend right=8]
        node[lab, left] {$0$} (P1);
    \draw[->, orange!90!black] (D0) to[bend right=20]
        node[lab, above] {$1$} (P0);

    \draw[->, blue!70!black] (P1) to[bend right=8]
        node[lab, below right] {$0$} (D1);
    \draw[->, violet!80!black] (P1) to[out=25,in=300,looseness=1.0]
        node[lab, right, pos=.55] {$1$} (c);

    \draw[->, blue!70!black] (D1) to[bend right=8]
        node[lab, right] {$0$} (P0);
    \draw[->, orange!90!black] (D1) to[bend right=20]
        node[lab, below] {$1$} (P1);
\end{tikzpicture}%
}
\caption{The geometric automaton obtained from
\(\mathcal G^{\mathrm{raw}}_\varphi(\mathcal A_1)\) after subdividing the
word-labelled edges and folding common initial subpaths.  The small labels
record the final renaming
\(P_0=a_0\), \(D_0=a_1\), \(P_1=a_2\), \(D_1=a_3\).}
\label{fig:A2-final-revised}
\end{figure}

Since \(\mathcal A_1\cong\C(H_1)\), since \(H_1^\varphi=H_2\le F\), and
since the geometric automaton above is already folded, Corollary
\ref{thm:geometric-determinization-up-to-folding} gives
\(\C(H_2)\cong\mathcal A_2.\)

\subsubsection{The general core computation}
\label{subsubsec:general-core-computation}

The computations of
\[
        \C(H_1)\cong\mathcal A_1
        \qquad\text{and}\qquad
        \C(H_2)\cong\mathcal A_2
\]
are the first two instances of an inductive construction.  We now prove the
general statement in the same language used above.

\begin{proposition}
\label{prop:cores-of-Hi}
For every \(i\ge0\),
\[
        \C(H_i)\cong\mathcal A_i.
\]
\end{proposition}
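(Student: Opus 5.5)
We argue by induction on \(i\), with the cases \(i=0,1,2\) already in hand (Lemma~\ref{lem:explicit-core-of-Fn} and the worked computations above). For the inductive step we assume \(\C(H_i)\cong\mathcal A_i\). Since \(H_{i+1}=H_i^\varphi\le F\), and \(\mathcal A_i\) is finite, full, and a core (hence has the existence property), Corollary~\ref{thm:geometric-determinization-up-to-folding} applies to \(T^\varphi\), which is epsilon-separated with homeomorphic entry states. It identifies \(\C(H_{i+1})\) with the folded quotient of \(\mathcal G_\varphi(\mathcal A_i)\). The task is therefore to compute \(\mathcal G_\varphi(\mathcal A_i)\) explicitly, using the first-output blocks of \(\mathsf L,\mathsf R,\mathsf A\), and to show that it is already folded and isomorphic to \(\mathcal A_{i+1}\).

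First we determine the accessible principal states. Exactly as in the cases \(i=1,2\), these are \(\widehat\rho=(\mathsf L,\rho)\), \(\widehat\ell=(\mathsf L,\ell)\), \(\widehat r=(\mathsf R,r)\), \(\widehat c=(\mathsf R,c)\), together with \(P_t=(\mathsf A,a_t)\) for \(0\le t<2^i\). They are reached via \(\widehat r,\widehat c\xrightarrow{0}P_0\) and the block \(0\mid00\), since \(a_t\cdot0=a_{t+1}\) cycles through all the \(a_t\).

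The word-labelled edges out of \(P_t\) are read from the blocks of \(\mathsf A\):
\[
P_t\xrightarrow{00}P_{t+1},\qquad
P_t\xrightarrow{01}(\mathsf A,a_t\cdot10),\qquad
P_t\xrightarrow{1}(\mathsf R,a_t\cdot11).
\]
Using the table of \(\mathcal A_i\), \(a_t\cdot 1\) is either \(c\) or some \(a_s\). In every case \(a_t\cdot11\) lands in \(\{c\}\cup\{a_s\}\), and one must check that it is always a state whose \(\mathsf R\)-pair equals \(\widehat c\). Here there is a gap in the argument: an \(\mathsf R\)-principal state \((\mathsf R,a_s)\) may arise, and it would then be a new principal state. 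This has to be computed. Concretely, for \(t\) even we have \(a_t\cdot1=c\), so \(a_t\cdot11=c\) and \(a_t\cdot10=a_0\). For \(t\) odd, \(a_t\cdot1=a_{k(t)}\) with \(k(t)\) even, so \(a_t\cdot11=c\) and \(a_t\cdot10=a_{k(t)+1}\). Hence every \(1\)-edge goes to \(\widehat c\), and no new principal states appear.

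Next we subdivide and fold common initial \(0\)-edges, introducing delay states \(D_t\) with
\[
P_t\xrightarrow0 D_t,\quad P_t\xrightarrow1\widehat c,\quad D_t\xrightarrow0 P_{t+1},\quad D_t\xrightarrow1 P_{t\cdot10},
\]
where \(P_{t\cdot10}\) denotes the target computed above (\(P_0\) for \(t\) even, \(P_{k(t)+1}\) for \(t\) odd). We then rename \(P_t=a'_{2t}\) and \(D_t=a'_{2t+1}\) with indices mod \(2^{i+1}\). The \(0\)-edges then give \(a'_s\cdot0=a'_{s+1}\), and even states go to \(c\) under \(1\). For odd \(s=2t+1\) we get \(a'_s\cdot1=a'_0\) when \(t\) is even, and \(a'_{2k(t)+2}\) when \(t\) is odd.

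The main obstacle is checking that this matches the arithmetic \(k'(s)=2^{\nu_2(s+1)}-2\). For \(t\) even, \(\nu_2(2t+2)=1\), so \(k'(s)=0\). For \(t\) odd, \(2k(t)+2=2^{\nu_2(t+1)+1}-2=2^{\nu_2(2t+2)}-2\), as required.

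Finally, we verify foldedness by showing the ordered pairs of children are pairwise distinct. Even states have second child \(c\), and they are distinguished by their distinct \(0\)-children. Odd states have second child some \(a'\), never \(c\), and again have distinct \(0\)-children. The four boundary states differ from all of these and from each other, as in the base cases. Hence no folding occurs, and \(\C(H_{i+1})\cong\mathcal A_{i+1}\).
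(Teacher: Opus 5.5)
Your proof is correct and is essentially the paper's argument. It runs induction via Corollary~\ref{thm:geometric-determinization-up-to-folding} applied to \((T^\varphi,\mathcal A_i)\), with the same principal states \(P_t\), the same delay states \(D_t\), the same renaming \(a_{2t}=P_t\), \(a_{2t+1}=D_t\), and the same \(\nu_2\) check for the odd \(1\)-children. The ``gap'' you flag about a possible new principal state \((\mathsf R,a_s)\) is not a gap: your computation \(a_t\cdot11=c\) settles it, exactly as the paper's identities \(a_t\cdot11=c\), \(a_t\cdot10=a_{\mu_i(t)}\) do. Your direct foldedness check simply replaces the paper's appeal to its earlier observation that each \(\mathcal A_{i+1}\) is folded.
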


\begin{proof}
The case \(i=0\) is the standard core of \(F\), and the worked construction
above gives
\(\C(H_1)\cong\mathcal A_1.\)
We prove the induction step for \(i\ge1\).  Assume that
\(\C(H_i)\cong\mathcal A_i.\)
We shall prove that
\[
        \C(H_{i+1})\cong\mathcal A_{i+1}.
\]

Recall the first-output table of \(T^\varphi\):
\[
\begin{array}{c|c}
\mathsf L
    & 0\mid0:\mathsf L,\quad 1\mid1:\mathsf R\\[1mm]
\mathsf R
    & 0\mid0:\mathsf A,\quad 1\mid1:\mathsf R\\[1mm]
\mathsf A
    & 0\mid00:\mathsf A,\quad 10\mid01:\mathsf A,\quad
      11\mid1:\mathsf R .
\end{array}
\]
The transducer is epsilon-separated and all entry states are homeomorphism
states, as checked above.  Also
\[
        H_i^\varphi
        =
        F^{\varphi^i\varphi}
        =
        F^{\varphi^{i+1}}
        =
        H_{i+1}
        \le F.
\]
Therefore Corollary~\ref{thm:geometric-determinization-up-to-folding} applies
to the pair
\((T^\varphi,\mathcal A_i).\)
It says that \(\C(H_{i+1})\) is the folded quotient of the geometric forward
automaton obtained from this pair.  We shall compute that automaton and show
that it is already folded.

Recall that, for odd \(t\ge1\),
\(k(t)=2^{\nu_2(t+1)}-2 .\)
Let
\(M=2^i.\)
Since \(i\ge1\), write
\[
        a^{(i)}_0,\ldots,a^{(i)}_{M-1}
\]
for the \(a\)-states of \(\mathcal A_i\).  Thus the relevant part of
\(\mathcal A_i\) is
\[
\begin{array}{c|cc}
        &0&1\\ \hline
\rho   &\ell&r\\
\ell   &\ell&c\\
r      &a^{(i)}_0&r\\
c      &a^{(i)}_0&c\\
a^{(i)}_t
       &a^{(i)}_{t+1\pmod M}
       &
        \begin{cases}
        c, & t\text{ even},\\
        a^{(i)}_{k(t)}, & t\text{ odd}.
        \end{cases}
\end{array}
\]
We shall use two immediate consequences of this table.  Define
\[
        \mu_i(t)=
        \begin{cases}
        0, & t\text{ even},\\[2mm]
        k(t)+1, & t\text{ odd}.
        \end{cases}
\]
Then, for \(0\le t<M\),
\[
        a^{(i)}_t\cdot10=a^{(i)}_{\mu_i(t)}
        \qquad\text{and}\qquad
        a^{(i)}_t\cdot11=c.
\]
Indeed, if \(t\) is even, then
\[
        a^{(i)}_t\cdot1=c,
        \qquad
        c\cdot0=a^{(i)}_0=a^{(i)}_{\mu_i(t)},
        \qquad
        c\cdot1=c.
\]
If \(t\) is odd, then
\[
        a^{(i)}_t\cdot1=a^{(i)}_{k(t)}.
\]
The index \(k(t)\) is even, so
\[
        a^{(i)}_t\cdot10
        =
        a^{(i)}_{k(t)}\cdot0
        =
        a^{(i)}_{k(t)+1},
\]
and
\[
        a^{(i)}_t\cdot11
        =
        a^{(i)}_{k(t)}\cdot1
        =
        c.
\]

We now compute the contracted raw product
\[
        \mathcal G^{\mathrm{raw}}_\varphi(\mathcal A_i).
\]
Its accessible principal states are exactly
\[
        \widehat\rho=(\mathsf L,\rho),\qquad
        \widehat\ell=(\mathsf L,\ell),\qquad
        \widehat r=(\mathsf R,r),\qquad
        \widehat c=(\mathsf R,c),
\]
together with
\[
        P_t=(\mathsf A,a^{(i)}_t)
        \qquad(0\le t<M).
\]

Let us verify this.  Starting from the initial principal state
\((\mathsf L,\rho)\), the first-output blocks from \(\mathsf L\) give
\[
        (\mathsf L,\rho)\xrightarrow{\ 0\ }(\mathsf L,\ell),
        \qquad
        (\mathsf L,\rho)\xrightarrow{\ 1\ }(\mathsf R,r).
\]
From \((\mathsf L,\ell)\), the first-output blocks from \(\mathsf L\) give
\[
        (\mathsf L,\ell)\xrightarrow{\ 0\ }(\mathsf L,\ell),
        \qquad
        (\mathsf L,\ell)\xrightarrow{\ 1\ }(\mathsf R,c).
\]
From \((\mathsf R,r)\), the first-output blocks from \(\mathsf R\) give
\[
        (\mathsf R,r)\xrightarrow{\ 0\ }(\mathsf A,r\cdot0)
        =
        (\mathsf A,a^{(i)}_0)
        =
        P_0,
\]
and
\[
        (\mathsf R,r)\xrightarrow{\ 1\ }(\mathsf R,r).
\]
Similarly, from \((\mathsf R,c)\), the first-output blocks from \(\mathsf R\)
give
\[
        (\mathsf R,c)\xrightarrow{\ 0\ }(\mathsf A,c\cdot0)
        =
        (\mathsf A,a^{(i)}_0)
        =
        P_0,
\]
and
\[
        (\mathsf R,c)\xrightarrow{\ 1\ }(\mathsf R,c).
\]
Thus \(P_0\) is accessible.

Now suppose \(P_t=(\mathsf A,a^{(i)}_t)\) is accessible.  The three
first-output blocks from the transducer state \(\mathsf A\) are
\[
        0\mid00:\mathsf A,
        \qquad
        10\mid01:\mathsf A,
        \qquad
        11\mid1:\mathsf R.
\]
Therefore the terminal principal states are
\[
        (\mathsf A,a^{(i)}_t\cdot0)
        =
        (\mathsf A,a^{(i)}_{t+1\pmod M})
        =
        P_{t+1\pmod M},
\]
\[
        (\mathsf A,a^{(i)}_t\cdot10)
        =
        (\mathsf A,a^{(i)}_{\mu_i(t)})
        =
        P_{\mu_i(t)},
\]
and
\[
        (\mathsf R,a^{(i)}_t\cdot11)
        =
        (\mathsf R,c)
        =
        \widehat c.
\]
The first of these shows, by cycling through the \(0\)-edges, that all states
\(P_t\), \(0\le t<M\), are accessible.  The displayed list also shows that no
other principal states can appear.

Consequently, the contracted raw product has the following word-labelled
transition table:
\[
\begin{array}{c|c}
\widehat\rho
    & 0:\widehat\ell,\quad 1:\widehat r\\[1mm]
\widehat\ell
    & 0:\widehat\ell,\quad 1:\widehat c\\[1mm]
\widehat r
    & 0:P_0,\quad 1:\widehat r\\[1mm]
\widehat c
    & 0:P_0,\quad 1:\widehat c\\[1mm]
P_t
    & 00:P_{t+1\pmod M},\quad
      01:P_{\mu_i(t)},\quad
      1:\widehat c
      \qquad(0\le t<M).
\end{array}
\]

We now subdivide the word-labelled edges and fold common initial subpaths.  At
each principal state \(P_t\), the output labels \(00\) and \(01\) share their
first output letter \(0\).  After subdivision, their first edges are folded
into one edge
\[
        P_t\xrightarrow{\ 0\ }D_t.
\]
The output label \(1\) remains the edge
\[
        P_t\xrightarrow{\ 1\ }\widehat c.
\]
The second letters of \(00\) and \(01\) give
\[
        D_t\xrightarrow{\ 0\ }P_{t+1\pmod M},
        \qquad
        D_t\xrightarrow{\ 1\ }P_{\mu_i(t)}.
\]

Thus the geometric automaton has states
\[
        \widehat\rho,\widehat\ell,\widehat r,\widehat c,
        P_0,D_0,P_1,D_1,\ldots,P_{M-1},D_{M-1},
\]
and transitions
\[
        \widehat\rho\cdot0=\widehat\ell,
        \qquad
        \widehat\rho\cdot1=\widehat r,
\]
\[
        \widehat\ell\cdot0=\widehat\ell,
        \qquad
        \widehat\ell\cdot1=\widehat c,
\]
\[
        \widehat r\cdot0=P_0,
        \qquad
        \widehat r\cdot1=\widehat r,
\]
\[
        \widehat c\cdot0=P_0,
        \qquad
        \widehat c\cdot1=\widehat c,
\]
and, for \(0\le t<M\),
\[
        P_t\cdot0=D_t,
        \qquad
        P_t\cdot1=\widehat c,
\]
\[
        D_t\cdot0=P_{t+1\pmod M},
        \qquad
        D_t\cdot1=P_{\mu_i(t)}.
\]

Rename the states by
\[
        a_{2t}=P_t,
        \qquad
        a_{2t+1}=D_t
        \qquad(0\le t<M),
\]
and remove the hats from
\[
        \widehat\rho,\widehat\ell,\widehat r,\widehat c.
\]
After this renaming, the \(0\)-edges among the \(a\)-states form one cycle:
\[
        a_j\cdot0=a_{j+1\pmod {2M}}
        \qquad(0\le j<2M).
\]
The \(1\)-edges from the even-indexed states are
\(a_{2t}\cdot1=c.\)
For the odd-indexed states, write
\(j=2t+1.\)
If \(t\) is even, then
\(D_t\cdot1=P_0,\)
so
\(a_{2t+1}\cdot1=a_0.\)
Since \(t\) is even, \(t+1\) is odd, and therefore
\(\nu_2(2t+2)=1.\)
Hence
\[
        k(2t+1)=2^{\nu_2(2t+2)}-2=0,
\]
so this is exactly
\[
        a_{2t+1}\cdot1=a_{k(2t+1)}.
\]

If \(t\) is odd, then
\(D_t\cdot1=P_{k(t)+1}.\)
Writing \(d=\nu_2(t+1)\), this gives
\[
        a_{2t+1}\cdot1
        =
        a_{2(k(t)+1)}
        =
        a_{2(2^d-1)}
        =
        a_{2^{d+1}-2}.
\]
But
\(\nu_2(2t+2)=d+1,\)
so again
\[
        a_{2t+1}\cdot1=a_{k(2t+1)}.
\]
Thus, for every odd \(j<2M\),
\(a_j\cdot1=a_{k(j)}.\)

We have obtained exactly the transition table defining
\(\mathcal A_{i+1}.\)
Since the automata \(\mathcal A_{i+1}\) were observed above to be folded, no
further tree-automaton folding is needed.  Hence the folded geometric
automaton is \(\mathcal A_{i+1}\).  By
Corollary~\ref{thm:geometric-determinization-up-to-folding},
\[
        \C(H_{i+1})\cong\mathcal A_{i+1}.
\]
This completes the induction.
\end{proof}

\subsection{Closed overgroups}

For \(0\le j\le i\), the inclusion
\(H_i\le H_j\)
is already known from the construction of the descending chain.  Since
\(H_i\) and \(H_j\) are closed, and since
\(\C(H_i)\cong\mathcal A_i\)
is full, Lemma~\ref{lem:closed-overgroups-quotients-prelim} gives a unique
surjective morphism
\[
        \mathcal A_i\longrightarrow\mathcal A_j .
\]
We denote this morphism by
\[
        \pi_{i,j}:\mathcal A_i\to\mathcal A_j.
\]
By the construction in Lemma~\ref{lem:closed-overgroups-quotients-prelim}, it
is the map sending the state reached by a word \(u\) in \(\mathcal A_i\) to
the state reached by the same word \(u\) in \(\mathcal A_j\).

Explicitly, \(\pi_{i,j}\) fixes the states
\(\rho,\ell,r,c.\)
If \(j=0\), every \(a_t\) is sent to the inner state \(c\) of
\(\mathcal A_0\).  If \(j\ge1\), then
\[
        a_t\mapsto a_{t\pmod {2^j}}.
\]
Indeed, the words
\[
        \eps,\quad 0,\quad 1,\quad 01,\quad 10^{t+1}
\]
reach respectively
\[
        \rho,\quad \ell,\quad r,\quad c,\quad a_t
\]
in \(\mathcal A_i\), and they reach the corresponding states described above
in \(\mathcal A_j\).

We record the direct congruence check, since the same check will be used for
\(\mathcal A_\infty\).  The transitions from
\(\rho,\ell,r,c\) are immediate.  For \(j=0\), the image of every \(a_t\) is
\(c\), and
\(c\cdot0=c\cdot1=c\)
in \(\mathcal A_0\), so the transition relations are preserved.

Now assume \(j\ge1\).  The \(0\)-transitions are preserved because reduction
modulo \(2^j\) commutes with adding \(1\).  For the \(1\)-transitions, parity
is preserved modulo \(2^j\).  If \(t\) is even, both \(a_t\) and its image
have \(1\)-child \(c\).  If \(t\) is odd and \(\nu_2(t+1)<j\), then
\(k(t)=2^{\nu_2(t+1)}-2\)
is unchanged modulo \(2^j\).  If \(\nu_2(t+1)\ge j\), then
\(t\equiv 2^j-1\pmod {2^j},\)
and
\[
        k(t)=2^{\nu_2(t+1)}-2\equiv 2^j-2\pmod {2^j},
\]
which is exactly the \(1\)-child of \(a_{2^j-1}\) in \(\mathcal A_j\).
Therefore \(\pi_{i,j}\) is the surjective morphism induced by the inclusion
\(H_i\le H_j\).

We next prove that these are the only closed overgroups.

\begin{lemma}
\label{lem:quotients-of-Ai}
Let \(i\ge0\), and let \(\mathcal B\) be a tree automaton which projects onto $\mathcal A_0$. Assume that $\mathcal B$  is a quotient
of \(\mathcal A_i\).  Then
\[
        \mathcal B\cong \mathcal A_j
\]
for some \(j\) with \(0\le j\le i\).  Under this isomorphism the quotient map
is \(\pi_{i,j}\).
\end{lemma}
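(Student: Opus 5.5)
We will work with automaton congruences, as in Remark~\ref{rem:quotients-congruences}. Write the quotient map as $\mathcal A_i\to\mathcal B$ and let $\sim$ be the corresponding congruence on the states of $\mathcal A_i$. Morphisms between tree-automata are unique when they exist, so the composite $\mathcal A_i\to\mathcal B\to\mathcal A_0$ equals $\pi_{i,0}$, and hence $\sim$ is contained in the kernel of $\pi_{i,0}$. That kernel separates $\rho,\ell,r$ from each other and from the inner states $\{c,a_0,\dots,a_{M-1}\}$, where $M=2^i$. Consequently $\rho,\ell,r$ are singleton classes, and $\sim$ can only identify inner states. The case $i=0$ is trivial, so we assume $i\ge1$. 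The proof then reduces to classifying congruences on the inner part.

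\emph{Step 1: shift invariance.} Since $a_s\cdot0=a_{s+1\pmod M}$, the relation $a_s\sim a_{s'}$ implies $a_{s+1}\sim a_{s'+1}$. Put $D=\{\delta\in\Z/M\mid a_0\sim a_\delta\}$. Shift invariance and transitivity show that $D$ is a subgroup of $\Z/M$, so $D=2^j\Z/M$ for some $0\le j\le i$. Moreover $a_s\sim a_{s'}$ if and only if $s-s'\in D$.

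\emph{Step 2: $c$ equivalent to some $a_t$ forces $\mathcal B\cong\mathcal A_0$.} Suppose $c\sim a_t$. Applying $0^k$ gives $a_{k-1}\sim a_{t+k}$, so $t+1\in D$.
\begin{itemize}
\item If $t$ is even, then $t+1$ is odd, so $D=\Z/M$ and all $a$-states are identified with $a_t\sim c$.
\item If $t$ is odd, then applying $1$ gives $c=c\cdot1\sim a_t\cdot1=a_{k(t)}$, where $k(t)$ is even. We are then back in the previous case.
\end{itemize}
In either case all inner states collapse to one state, and $\mathcal B\cong\mathcal A_0$ via $\pi_{i,0}$.

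\emph{Step 3: $c$ equivalent to no $a_t$ gives $\mathcal B\cong\mathcal A_j$ with $j\ge1$.} Suppose now that $c$ is not equivalent to any $a_t$. If $j=0$, then $a_0\sim a_1$, and comparing $1$-children gives $c=a_0\cdot1\sim a_1\cdot1=a_{k(1)}=a_0$, a contradiction. Hence $j\ge1$. Then $\sim$ identifies exactly the pairs $a_s,a_{s'}$ with $s\equiv s'\pmod{2^j}$, and nothing else. This is precisely the kernel of $\pi_{i,j}$, and the congruence check recorded before the lemma shows that this kernel is indeed a congruence. It follows that $\mathcal B\cong\mathcal A_i/{\ker\pi_{i,j}}\cong\mathcal A_j$, with quotient map $\pi_{i,j}$ (the case $j=i$ being the identity).

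I expect the main obstacle to be Step 2, that is, ruling out every identification of $c$ with an $a$-state other than the total collapse. The parity and $k(t)$ bookkeeping above handles it, using that $k(t)$ is always even.
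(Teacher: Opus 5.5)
Your proof is correct and follows the paper's argument essentially step for step. You split on whether the image of $c$ coincides with some $a_t$: for odd $t$ you reduce to even $t$ via the $1$-transition, and then the odd shift $t+1$ forces total collapse. Otherwise you use invariance under the cyclic $0$-transition to get identification modulo $2^j$, and rule out $j=0$ by comparing the $1$-children of $a_0$ and $a_1$. Your use of uniqueness of morphisms, to see that the congruence refines $\ker\pi_{i,0}$, is a slightly more explicit justification of the paper's remark that $\rho,\ell,r$ keep distinct non-inner images.
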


\begin{proof}
The case \(i=0\) is immediate, so assume \(i\ge1\).  Let
\[
        \theta:\mathcal A_i\to\mathcal B
\]
be a surjective morphism.  Since $\mathcal B$ projects onto $\mathcal A_0=\mathcal C(F)$, 
it has a clear partition of states into: root, left boundary states, right boundary states, and inner  states.  Thus the images of
\(\rho,\ell,r\)
are distinct and are not inner states.  The states
\(c,a_0,\ldots,a_{2^i-1}\)
have inner images.  Write
\[
        C=\theta(c),
        \qquad
        A_t=\theta(a_t).
\]

Suppose first that
\(C=A_t\)
for some \(t\).  If \(t\) is odd, then applying the transition labelled \(1\)
gives
\(C=A_{k(t)},\)
and \(k(t)\) is even.  Thus we may assume that \(t\) is even.  Applying the
transition labelled \(0\) to \(C=A_t\) gives
\(A_0=A_{t+1}.\)
Since \(t+1\) is odd, it is relatively prime to \(2^i\).  The equality
\(A_0=A_{t+1}\), together with invariance under the cyclic \(0\)-transition,
therefore forces
\(A_0=A_1=\cdots=A_{2^i-1}.\)
Together with \(C=A_t\), all inner states have the same image.  The quotient
is therefore the automaton \(\mathcal A_0\).

Now assume that
\[
        C\ne A_t
        \qquad(0\le t<2^i).
\]
The relation
\(A_s=A_t\)
is invariant under adding \(1\) to the indices modulo \(2^i\), because
\[
        a_t\cdot0=a_{t+1\pmod {2^i}}.
\]
Hence the equivalence relation on
\(\{0,\ldots,2^i-1\}\)
is congruence modulo a divisor \(d\) of \(2^i\).  Thus
\(d=2^j\)
for some \(0\le j\le i\).  Since \(C\) is distinct from the \(A_t\)'s, we
cannot have \(d=1\).  Indeed, if all \(A_t\)'s were equal, then comparing the
\(1\)-children of \(A_0\) and \(A_1\) would force
\(C=A_0.\)
Hence \(j\ge1\).

The quotient therefore identifies the \(a\)-states exactly modulo \(2^j\), and
does not identify them with \(c\).  The quotient of \(\mathcal A_i\) by this
congruence is precisely the quotient already described by \(\pi_{i,j}\).  As
checked above, that quotient is \(\mathcal A_j\).  Hence
\[
        \mathcal B\cong\mathcal A_j.
\]
\end{proof}

\begin{theorem}
\label{thm:closed-overgroups-Hi}
Let \(i\ge0\).  If \(L\le F\) is closed and
\[
        H_i\le L\le F,
\]
then
\[
        L=H_j
\]
for some \(0\le j\le i\).
\end{theorem}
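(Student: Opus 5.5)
The plan is to pass from closed overgroups to quotients of cores and then apply Lemma~\ref{lem:quotients-of-Ai}. Let \(L\) be closed with \(H_i\le L\le F\). Since \(H_i\) is closed (Lemma~\ref{lem:Hi-closed}) and \(\C(H_i)\cong\mathcal A_i\) is full (Proposition~\ref{prop:cores-of-Hi}), Lemma~\ref{lem:closed-overgroups-quotients-prelim} gives a surjective morphism \(\mathcal A_i\to\C(L)\). Thus \(\C(L)\) is a quotient of \(\mathcal A_i\). In particular \(\C(L)\) is full, since every state of \(\mathcal A_i\) has two children and surjectivity on edges transfers this.

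Next I will check the hypothesis of Lemma~\ref{lem:quotients-of-Ai}, namely that \(\C(L)\) projects onto \(\mathcal A_0=\C(F)\). This again follows from Lemma~\ref{lem:closed-overgroups-quotients-prelim}, applied now to the closed pair \(L\le F\). Because \(\C(L)\) is full, that lemma gives a surjective morphism \(\C(L)\to\C(F)=\mathcal A_0\).

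Lemma~\ref{lem:quotients-of-Ai} then yields \(\C(L)\cong\mathcal A_j\) for some \(0\le j\le i\), with the quotient map equal to \(\pi_{i,j}\). Since \(L\) is closed, \(L=\D(\C(L))\). Likewise \(H_j\) is closed with \(\C(H_j)\cong\mathcal A_j\), so \(H_j=\D(\mathcal A_j)\). The remaining point is that the isomorphism \(\C(L)\cong\mathcal A_j\) is one of rooted tree-automata, so that the accepted diagram groups coincide. This holds because the isomorphism is induced by the unique morphism sending the state reached by \(u\) to the state reached by \(u\). Therefore \(L=\D(\mathcal A_j)=H_j\).

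I expect the main subtlety to be this last identification, together with confirming that Lemma~\ref{lem:quotients-of-Ai} really classifies \(\C(L)\) up to rooted isomorphism compatible with \(\pi_{i,j}\). This matters because equality of the accepted groups, not mere abstract isomorphism of automata, is what yields \(L=H_j\). Both points are essentially already provided by the uniqueness of morphisms between tree-automata.
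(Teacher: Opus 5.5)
Your proposal is correct and follows the paper's proof essentially step for step. Lemma~\ref{lem:closed-overgroups-quotients-prelim} gives the surjection $\mathcal A_i\to\C(L)$, the inclusion $L\le F$ gives the projection onto $\mathcal A_0$, Lemma~\ref{lem:quotients-of-Ai} gives $\C(L)\cong\mathcal A_j$, and then $L=\D(\C(L))=\D(\mathcal A_j)=H_j$. Your two extra remarks are accurate details that the paper leaves implicit in the binary case: fullness passes from $\mathcal A_i$ to $\C(L)$, so that $\C(L)\to\C(F)$ is surjective, and the isomorphism $\C(L)\cong\mathcal A_j$ is rooted.
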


\begin{proof}
By Proposition~\ref{prop:cores-of-Hi},
\(\C(H_i)\cong\mathcal A_i,\)
which is full.  Since \(H_i\le L\) and both groups are closed,
Lemma~\ref{lem:closed-overgroups-quotients-prelim} gives a surjective morphism
\[
        \mathcal A_i\longrightarrow \C(L).
\]
Since $L\leq F$, the core $\mathcal C(L)$ projects onto $\mathcal A_0=\mathcal C(F)$. Hence, by Lemma~\ref{lem:quotients-of-Ai}, the core \(\C(L)\) is isomorphic to
\(\mathcal A_j\) for some \(j\le i\).  Hence
\[
        L=\D(\C(L))=\D(\mathcal A_j)=H_j.
\]
\end{proof}

\subsection{Arbitrary overgroups}

We now remove the closedness assumption.  First we record the abelianization
calculation needed for the generation theorem.

\begin{lemma}
\label{lem:Hi-full-abelianization}
For every \(i\ge0\),
\[
        \pi_{\ab}(H_i)=\Z^2.
\]
\end{lemma}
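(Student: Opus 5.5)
The plan is to use the map $\pi_2\colon F\to\Z^2$, $f\mapsto(\lambda_0(f),\lambda_1(f))$, which for $F$ is the abelianization map. This is presumably what $\pi_{\ab}$ denotes. It then suffices to exhibit, for each $i$, two elements of $H_i$ whose images generate $\Z^2$. We will take the conjugates $x_0^{\varphi^i}$ and $x_1^{\varphi^i}$ and compute their slopes at $0$ and $1$. The point is that the boundary rays of $T^\varphi$ are very simple. The loop $\mathsf L\xrightarrow{0\mid0}\mathsf L$ at the initial state means that $\varphi(0^d\omega)=0^d\varphi_{0^d}(\omega)=0^d\varphi(\omega)$. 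The right boundary path is $\mathsf L\xrightarrow{1\mid1}\mathsf R$ followed by the loop $\mathsf R\xrightarrow{1\mid1}\mathsf R$. Hence $\varphi$ is ``the identity to first order'' along both boundary rays. Concretely, $\varphi(0^d\omega)=0^d\varphi(\omega)$ for all $d$, and $\varphi(1^d\omega)=1^d\varphi_{1}(\omega)$ for $d\ge1$.

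\textbf{Step 1: germs of a conjugate at the endpoints.} Let $f\in F$ have a branch pair $0^A\to0^B$ near $0$. Then $f^\varphi=\varphi^{-1}f\varphi$ (left-to-right composition) satisfies
\[
f^\varphi(0^A\eta)=\varphi(f(\varphi^{-1}(0^A\eta)))=\varphi(f(0^A\varphi^{-1}(\eta)))=0^B\eta .
\]
So $\lambda_0(f^\varphi)=\lambda_0(f)$. Similarly, $\lambda_1(f^\varphi)=\lambda_1(f)$, using $\varphi(1^d\omega)=1^d\varphi_1(\omega)$ for $d\ge1$: take branch pairs $1^A\to1^B$ with $A,B\ge1$ and cancel $\varphi_1$, which is injective. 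Thus conjugation by $\varphi$ preserves $\pi_2$. Example~\ref{ex:conjugating-generators} confirms this: $x_0^\varphi$ has branch pairs $00\to0$ and $1\to11$, exactly as $x_0$ does, and $x_1^\varphi$ has $0\to0$ and $11\to111$, exactly as $x_1$ does.

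\textbf{Step 2: induction on $i$.} By Step 1, $\pi_2(f^{\varphi^i})=\pi_2(f)$ for all $f\in F$ and all $i$, using $f^{\varphi^{i+1}}=(f^{\varphi^i})^\varphi$ and $f^{\varphi^i}\in F$. Hence
\[
\pi_2(H_i)=\pi_2(F^{\varphi^i})=\pi_2(F)=\Z^2 .
\]
Explicitly, $\pi_2(x_0)=(-1,1)$ and $\pi_2(x_1)=(0,1)$, and these generate $\Z^2$.

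The only point needing care is the boundary identity for $\varphi^{-1}$, namely $\varphi^{-1}(0^A\eta)=0^A\varphi^{-1}(\eta)$. This follows immediately from the forward identity and bijectivity. On the right side, the analogous statement is $\varphi^{-1}(1^A\eta)=1^A\varphi_1^{-1}(\eta)$, valid for $\eta$ in the image of $\varphi_1$. Since $\mathsf R$ is a homeomorphism state, $\varphi_1$ is onto, so there is no obstacle. I expect no real difficulty in this step; it is just bookkeeping of germs.
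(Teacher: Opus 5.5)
Your proof is correct and follows essentially the paper's route: the paper notes that $\varphi$, and hence every power $\varphi^i$, maps each boundary cylinder $U_{0^k}$ and $U_{1^k}$ onto itself, so conjugation preserves both endpoint slopes and $\pi_2(H_i)=\pi_2(F)=\Z^2$. Your pointwise identities $\varphi(0^d\omega)=0^d\varphi(\omega)$ and $\varphi(1^d\omega)=1^d\varphi_1(\omega)$ are the same fact, and the only difference is that you induct on $i$ rather than treating $\varphi^i$ directly. One harmless slip: with $\lambda_0(f)=\log_2 f'(0^+)$ one gets $\pi_2(x_0)=(1,-1)$ and $\pi_2(x_1)=(0,-1)$, not $(-1,1)$ and $(0,1)$, but these still generate $\Z^2$.
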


\begin{proof}
For every \(k\ge1\), the transducer \(\varphi\) maps the left and right
boundary cylinders onto themselves:
\[
        \varphi(U_{0^k})=U_{0^k},
        \qquad
        \varphi(U_{1^k})=U_{1^k}.
\]
The same is true for every power
\(\psi_i=\varphi^i.\)
Let \(f\in F\).  Suppose that near the left endpoint \(f\) has branch
replacement
\(0^p\eta\mapsto 0^q\eta .\)
Equivalently, for all sufficiently large \(n\),
\(f(U_{0^n})=U_{0^{n-p+q}}.\)
Since \(\psi_i\) preserves every cylinder \(U_{0^n}\), the conjugate
\(f^{\psi_i}\) also sends
\[
        U_{0^n}\quad\text{onto}\quad U_{0^{n-p+q}}
\]
for all sufficiently large \(n\).  Thus the left endpoint slope character is
unchanged by conjugation by \(\psi_i\).  The same argument at the right
endpoint, using the cylinders \(U_{1^n}\), shows that the right endpoint slope
is also unchanged.

Therefore the standard abelianization map of $F$ satisfies:
\[
        \pi_{2}(f^{\psi_i})=\pi_{2}(f)
        \qquad(f\in F).
\]
Since
\(H_i=F^{\psi_i},\)
it follows that
\[
        \pi_{2}(H_i)=\pi_{2}(F)=\Z^2.
\]
\end{proof}

\begin{theorem}
\label{thm:all-overgroups-Hi}
Let \(i\ge0\).  If \(K\le F\) and
\[
        H_i\le K\le F,
\]
then
\[
        K=H_j
\]
for some \(0\le j\le i\).
\end{theorem}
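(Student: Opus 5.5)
The plan is to reduce to the closed case via the generation theorem for \(F\). Let \(H_i\le K\le F\), and put \(L=\Cl(K)\). Then \(L\) is closed and \(H_i\le L\le F\), so Theorem~\ref{thm:closed-overgroups-Hi} gives \(L=H_j\) for some \(0\le j\le i\). In particular \(K\le H_j\) and \(\Cl_F(K)=H_j=F^{\psi_j}\). We therefore have
\[
        H_i\le K\le H_j,\qquad \Cl_F(K)=H_j .
\]
It remains to show that \(K=H_j\).

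To do this I will apply Lemma~\ref{lem:generation-conjugated-copy} with \(\xi=\psi_j=\varphi^j\) and \(L=K\). The hypotheses are checked as follows.

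\begin{enumerate}
\item \(\psi_j\) is order-preserving, so it satisfies the equivalent conditions of Lemma~\ref{lem:lift-order-equivalence-prelim}.
\item \(F^{\psi_j}=H_j\le F\).
\item \(K\le F^{\psi_j}\) and \(\Cl_F(K)=F^{\psi_j}\), as shown above.
\end{enumerate}

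The remaining hypothesis is that \(K^{\psi_j^{-1}}\le F\) has full image in the abelianization of \(F\). Since \(K\le H_j\), we have \(K^{\psi_j^{-1}}\le F\). Moreover \(K^{\psi_j^{-1}}\) contains \(H_i^{\psi_j^{-1}}=F^{\varphi^{i-j}}=H_{i-j}\). By Lemma~\ref{lem:Hi-full-abelianization}, \(\pi_2(H_{i-j})=\Z^2\). Hence \(\pi_2(K^{\psi_j^{-1}})=\Z^2\), and so \(K^{\psi_j^{-1}}[F,F]=F\), because \(\pi_2\) has kernel \([F,F]\).

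Lemma~\ref{lem:generation-conjugated-copy} then gives \(K=F^{\psi_j}=H_j\), which proves the theorem.

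The only step needing care is the identity \(H_i^{\psi_j^{-1}}=H_{i-j}\). With left-to-right composition, \(H_i=\psi_i^{-1}F\psi_i\) and \(\psi_i=\psi_j\varphi^{i-j}\). Conjugating by \(\psi_j^{-1}\) then leaves \(\varphi^{-(i-j)}F\varphi^{i-j}=H_{i-j}\).

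Everything else is a direct invocation of earlier results. The real work, namely classifying the closed overgroups and checking the abelianization, was already done in Theorem~\ref{thm:closed-overgroups-Hi} and Lemma~\ref{lem:Hi-full-abelianization}.
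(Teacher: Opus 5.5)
Your proposal is correct and follows essentially the same route as the paper. You pass to the closure $\Cl_F(K)$, identify it as $H_j$ via Theorem~\ref{thm:closed-overgroups-Hi}, and then apply Lemma~\ref{lem:generation-conjugated-copy}, using that $K^{\psi_j^{-1}}\ge H_{i-j}$ and that $H_{i-j}$ has full abelianization image by Lemma~\ref{lem:Hi-full-abelianization}.
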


\begin{proof}
Let
\(M=\Cl_F(K).\)
Since \(H_i\) is closed and \(H_i\le K\), we have
\(H_i\le M\le F.\)
By Theorem~\ref{thm:closed-overgroups-Hi}, there exists \(j\le i\) such that
\(M=H_j=F^{\psi_j}.\)
Thus
\[
        K\le F^{\psi_j}
        \qquad\text{and}\qquad
        \Cl_F(K)=F^{\psi_j}.
\]

It remains only to check the abelianization hypothesis in
Lemma~\ref{lem:generation-conjugated-copy}.  Since \(K\) contains \(H_i\), the
subgroup \(K^{\psi_j^{-1}}\le F\) contains
\[
        H_i^{\psi_j^{-1}}
        =
        F^{\psi_i\psi_j^{-1}}
        =
        F^{\varphi^{i-j}}
        =
        H_{i-j}.
\]
By Lemma~\ref{lem:Hi-full-abelianization}, \(H_{i-j}\) has full image in the
abelianization of \(F\).  Therefore \(K^{\psi_j^{-1}}\) also has full image in
the abelianization of \(F\).  Lemma~\ref{lem:generation-conjugated-copy} gives
\(K=F^{\psi_j}=H_j.\)
\end{proof}

In particular, \(H_i\) is contained in no subgroups of \(F\) except
\(H_i,H_{i-1},\ldots,H_0=F.\)
Since the cores \(\mathcal A_i\) have different numbers of states, the groups
\(H_i\) are pairwise distinct.  Hence each inclusion
\(H_{i+1}<H_i\)
is maximal.

\subsection{The intersection is trivial}

It remains to prove that the descending chain has trivial intersection.

Define an infinite binary tree-automaton \(\mathcal A_\infty\) with states
\[
        \rho,\ell,r,c,a_0,a_1,a_2,\ldots
\]
and transitions
\[
\begin{array}{c|cc}
        &0&1\\ \hline
\rho   &\ell&r\\
\ell   &\ell&c\\
r      &a_0&r\\
c      &a_0&c,
\end{array}
\]
and, for \(t\ge0\),
\(a_t\cdot0=a_{t+1},\)
while
\[
        a_t\cdot1=
        \begin{cases}
        c, & t\text{ even},\\[2mm]
        a_{k(t)}, & t\text{ odd}.
        \end{cases}
\]

For \(i\ge1\), let
\[
        \rho_i:\mathcal A_\infty\to \mathcal A_i
\]
be the map fixing
\(\rho,\ell,r,c\)
and sending
\[
        a_t\mapsto a_{t\pmod {2^i}}.
\]
Let
\[
        \rho_0:\mathcal A_\infty\to\mathcal A_0
\]
be the map fixing \(\rho,\ell,r\) and sending
\(c,a_0,a_1,\ldots\)
to the inner state \(c\) of \(\mathcal A_0\).  The same congruence check used
for the maps \(\pi_{i,j}\) shows that every \(\rho_i\) is a surjective
morphism, and for \(0\le j\le i\) we have
\(\rho_i\pi_{i,j}=\rho_j.\)
Here composition is left-to-right.

Thus \(\mathcal A_\infty\) is the inverse limit of the finite automata
\(\mathcal A_i\) in the following concrete sense.  For a finite word \(u\),
write \(u_\infty^+\) for the state reached by \(u\) in
\(\mathcal A_\infty\), and write \(u_i^+\) for the state reached by \(u\) in
\(\mathcal A_i\).  Since \(\rho_i\) is a morphism,
\[
        \rho_i(u_\infty^+)=u_i^+
        \qquad(i\ge0).
\]
Therefore
\[
        u_\infty^+=v_\infty^+
        \quad\Longrightarrow\quad
        u_i^+=v_i^+
        \quad\text{for every }i\ge0.
\]

Conversely, suppose that
\[
        u_i^+=v_i^+
        \qquad\text{for every }i\ge0.
\]
Put
\[
        p=u_\infty^+,
        \qquad
        q=v_\infty^+.
\]
If one of \(p,q\) is among
\(\rho,\ell,r,c,\)
then equality of their images in \(\mathcal A_1\) forces \(p=q\), since these
four states remain distinct from one another and from all \(a\)-states in
\(\mathcal A_1\).  If
\(p=a_s, \qquad q=a_t,\)
then equality of their images in every \(\mathcal A_i\), \(i\ge1\), gives
\[
        s\equiv t\pmod {2^i}
        \qquad(i\ge1),
\]
and hence \(s=t\).  Thus \(p=q\).  Hence, for finite words \(u,v\),
\[
        u_\infty^+=v_\infty^+
        \quad\Longleftrightarrow\quad
        u_i^+=v_i^+
        \text{ for every }i\ge0.
\]

\begin{lemma}
\label{lem:Ainfty-rigidity}
Let \((T_+,T_-)\) be a finite binary tree diagram with branch pairs
\[
        u_1\to v_1,\ldots,u_m\to v_m
\]
listed from left to right.  Suppose that
\[
        u_j^+=v_j^+
        \qquad\text{in }\mathcal A_\infty
        \qquad(1\le j\le m).
\]
Then
\[
        T_+=T_-.
\]
Equivalently,
\[
        u_j=v_j
        \qquad(1\le j\le m).
\]
\end{lemma}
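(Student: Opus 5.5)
The plan is to argue by a left-to-right sweep along the leaves, using the fact that in \(\mathcal A_\infty\) the index \(t\) of a state \(a_t\) is unbounded information: it is never reduced modulo anything. The finite automata \(\mathcal A_i\) only forget this index modulo \(2^i\). First I would record the elementary combinatorics of \(\mathcal A_\infty\).
\begin{enumerate}
\item Reading \(0\) raises the index: \(a_t\cdot 0=a_{t+1}\).
\item Reading \(1\) from an odd state jumps back to \(a_{k(t)}\), and \(k(t)\) is even with \(k(t)<t\).
\item Reading \(1\) from an even state exits to \(c\), and \(c\) re-enters the \(a\)-states only at \(a_0\).
\end{enumerate}
From this I would prove a \emph{reconstruction lemma}. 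If \(w\) is a word readable from \(a_s\) with \(a_s\cdot w=a_t\), then the pair \((s,t)\) together with the state sequence determines \(w\). The intended mechanism is a potential function, namely the index \(t\) itself, with the following properties. A run of \(0\)'s shows up as a precise increase of the index. A letter \(1\) either lands in \(c\) (from even \(t\)) or lands at \(a_{k(t)}\), where \(k(t)\) records \(\nu_2(t+1)\), the position of the lowest \(0\)-bit of \(t\). So the transitions behave like a binary counter.

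Second, I would organize the diagram by its left-to-right boundary points. Consecutive leaves \(u_j,u_{j+1}\) of \(T_+\) have the form \(p01^\alpha\) and \(p10^\beta\) for a common prefix \(p\), and similarly \(v_j,v_{j+1}=q01^{\alpha'},q10^{\beta'}\) in \(T_-\). The goal is to prove by induction on \(j\) that \(u_j=v_j\).

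For the first pair, \(u_1=0^a\) and \(v_1=0^b\), both reaching \(\ell\). I would treat it together with the second pair. Suppose \(a<b\). The second leaves begin \(0^{a-1}1\cdots\) and \(0^{b-1}1\cdots\), and both then pass through \(c\) and into the \(a\)-chain at \(a_0\). In the inner region the index counts the \(0\)'s read since the last reset, so I would show that equality of the states reached forces the two inner tails to agree. Then I would compare the \emph{sizes} of the intervals covered and derive a contradiction with the boundary bookkeeping: both trees must end with the rightmost leaves \(1^{\cdot}\) reaching \(r\), and the total number of leaves is the same. An alternative I would try first is to use the right boundary symmetrically and squeeze.

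For the inductive step, assume \(u_j=v_j\) and \(u_{j+1}^+=v_{j+1}^+\). Since the leaves start at the same point, \(u_{j+1}\) and \(v_{j+1}\) are both of the form \(p10^\beta\) with the same \(p\), up to the choice of the caret depth. I would then show, using the counter interpretation of the index, that the reached index determines \(\beta\) and the depth. This gives \(u_{j+1}=v_{j+1}\).

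The main obstacle is exactly this step: proving that a single state \(a_t\) of \(\mathcal A_\infty\), reached after a known prefix, pins down the next leaf uniquely. The difficulty comes from the loops at \(c\), \(\ell\) and \(r\), which do not carry index information. I expect to handle these by showing that a leaf ending at \(c\) or at a boundary state must be followed or preceded by a leaf entering the \(a\)-chain, whose index then recovers the missing length. If the direct counting fails, I would fall back on an equivalent formulation: interpret \(t\) as encoding the dyadic position of the endpoint of \([u]\) relative to the last boundary point, and show that acceptance in \(\mathcal A_\infty\) forces every branch pair \(u_j\to v_j\) to fix the left endpoint of \([u_j]\). Then the element is the identity, and since both trees are reduced expansions of the same identity partition, \(T_+=T_-\).
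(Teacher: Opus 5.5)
You are taking a genuinely different route from the paper's. The paper reconstructs each tree bottom-up from its ordered leaf-state list: a finite check on the transition table shows that the leftmost adjacent pair of the form $(p\cdot0,p\cdot1)$ is always the leftmost exposed caret, and that caret is collapsed. Your left-to-right sweep can be completed, but as written its only nontrivial step is missing, and you have located the difficulty in the wrong place.

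The inductive step you call the main obstacle is immediate. If $u_j=v_j=p01^\alpha$, then $u_{j+1}=p10^\beta$ and $v_{j+1}=p10^{\beta'}$. Since $p1$ ends in $1$, its state is $r$, $c$ or some $a_t$, and from each of these $\beta\mapsto(p1)^+\cdot0^\beta$ is injective in $\mathcal A_\infty$; the $1$-loops at $r$ and $c$ never intervene. So each tree is determined by its first leaf together with the state list.

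The whole content of the lemma is therefore showing $a=b$ for $u_1=0^a$, $v_1=0^b$. This is exactly where information is lost, because $\ell\cdot0=\ell$. For this step you offer only ``compare the sizes of the intervals'', ``boundary bookkeeping'' and ``squeeze'', which is not an argument. Moreover, ``both then pass through $c$'' is false when $a=1$, since the prefix $1$ reaches $r$. That $r$-versus-$c$ discrepancy is precisely what has to be exploited. Your fallback (acceptance forces each pair to fix the left endpoint of $[u_j]$) merely restates the conclusion.

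Here is the missing step. Suppose $a<b$. We claim $v_j=0^{b-a}u_j$ as long as $u_j$ is not the last leaf of $T_+$.
\begin{itemize}
\item The claim holds for $j=1$.
\item If $u_j=p01^\alpha$ and $v_j=0^{b-a}p01^\alpha$, then $u_{j+1}=p10^\beta$ and $v_{j+1}=0^{b-a}p10^{\beta'}$.
\item The states of $p1$ and $0^{b-a}p1$ coincide unless $p1\in1^*$, in which case they are $r$ and $c$. Moreover, $r\cdot0^\beta=c\cdot0^{\beta'}$ holds only when $\beta=\beta'\ge1$.
\item Hence $\beta=\beta'$ until $u_{j+1}=1^e$ is the last leaf of $T_+$, with state $r$. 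At that point $v_{j+1}=0^{b-a}1^e0^{\beta'}$ reaches $c$ or some $a_t$, a contradiction.
\end{itemize}
The case $b<a$ is symmetric.

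With this inserted, your proof uses the unbounded index more directly than the paper's, but it needs this separate argument at the $\ell$-loop. The paper's caret-collapsing argument is a uniform finite check, and it yields the stronger statement that $\Lambda_q(T)$ determines $T$ from every starting state $q$.
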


\begin{proof}
We prove the following stronger statement.  For every state \(q\) of
\(\mathcal A_\infty\), the ordered list of states reached by the leaves of a
finite full binary tree \(T\), when \(T\) is read starting at \(q\), determines
the tree \(T\).

For a state \(q\) and a finite full binary tree \(T\), denote this ordered
state list by
\(\Lambda_q(T).\)
If \(T\) is the one-leaf tree, then
\(\Lambda_q(T)=(q).\)
If \(T=T_0\vee T_1\), then
\[
        \Lambda_q(T)
        =
        \Lambda_{q\cdot0}(T_0)\,
        \Lambda_{q\cdot1}(T_1).
\]

Call an adjacent pair of states formally collapsible if it is equal to
\((p\cdot0,p\cdot1)\)
for some state \(p\) of \(\mathcal A_\infty\).  The formally collapsible pairs
are exactly
\[
        (\ell,r),\qquad
        (\ell,c),\qquad
        (a_0,r),\qquad
        (a_0,c),
\]
together with
\[
        (a_{t+1},c)
        \qquad(t\ge0,\ t\text{ even}),
\]
and
\[
        (a_{t+1},a_{k(t)})
        \qquad(t\ge0,\ t\text{ odd}).
\]
Each such pair has a unique parent, since \(\mathcal A_\infty\) is folded.

We do not collapse an arbitrary formally collapsible adjacent pair.  We first
identify the leftmost such pair.  Suppose \(T\) has more than one leaf.  The
two leaves of every exposed caret of \(T\) give a formally collapsible adjacent
pair in \(\Lambda_q(T)\).  We claim that the leftmost formally collapsible
adjacent pair is precisely the pair of leaves belonging to the leftmost
exposed caret of \(T\).

Let \(u<v\) be adjacent leaves of \(T\) which are not siblings.  Write
\[
        u=w0\,1^p,
        \qquad
        v=w1\,0^h,
\]
where \(w\) is their greatest common prefix.  Since \(u\) and \(v\) are not
siblings, at least one of \(p,h\) is positive.

If \(p>0\), then the subtree below \(w0\) has more than one leaf.  Hence it
contains an exposed caret, and that exposed caret occurs strictly to the left
of \(v\).  Thus \(u,v\) cannot give the leftmost formally collapsible adjacent
pair.

It remains to consider the case \(p=0\), so \(h>0\).  Let \(s\) be the state
reached by \(w\), starting at the state \(q\).  The states reached by \(u\) and
\(v\) are
\[
        s\cdot0
        \qquad\text{and}\qquad
        (s\cdot1)\cdot0^h.
\]
We check, using the displayed list of formally collapsible pairs, that this
ordered pair is never formally collapsible.

If \(s=\rho\) or \(s=\ell\), then \(s\cdot0=\ell\), while
\((s\cdot1)\cdot0^h\)
is an \(a\)-state.  The only formally collapsible pairs with first coordinate
\(\ell\) are
\[
        (\ell,r)
        \qquad\text{and}\qquad
        (\ell,c),
\]
so the pair is not formally collapsible.

If \(s=r\) or \(s=c\), then \(s\cdot0=a_0\), while
\((s\cdot1)\cdot0^h\)
is an \(a\)-state.  The only formally collapsible pairs with first coordinate
\(a_0\) are
\[
        (a_0,r)
        \qquad\text{and}\qquad
        (a_0,c).
\]
Thus the pair is not formally collapsible.

Finally suppose \(s=a_t\).  Then
\(s\cdot0=a_{t+1}.\)
If \(t\) is even, then \(s\cdot1=c\), so
\[
        (s\cdot1)\cdot0^h=a_{h-1},
\]
whereas the only formally collapsible pair with first coordinate \(a_{t+1}\)
is
\((a_{t+1},c).\)
Since \(a_{h-1}\ne c\), the pair is not formally collapsible.  If \(t\) is
odd, then
\(s\cdot1=a_{k(t)},\)
so
\[
        (s\cdot1)\cdot0^h=a_{k(t)+h},
\]
whereas the only formally collapsible pair with first coordinate \(a_{t+1}\)
is
\((a_{t+1},a_{k(t)}).\)
Since \(h>0\), the pair is not formally collapsible.

This proves the claim.  Therefore \(\Lambda_q(T)\) determines the leftmost
exposed caret of \(T\) and the state labeling its parent.  Collapsing that
actual exposed caret in \(T\) corresponds to replacing the corresponding
adjacent pair in \(\Lambda_q(T)\) by its unique parent.  Repeating the
procedure reconstructs \(T\) uniquely, because every finite full binary tree
can be reduced to the one-leaf tree by repeatedly collapsing exposed carets.

Thus \(\Lambda_q(T)\) determines \(T\) for every state \(q\).  Applying this
with \(q=\rho\), the hypothesis
\[
        u_j^+=v_j^+
        \qquad(1\le j\le m)
\]
says exactly that
\[
        \Lambda_\rho(T_+)=\Lambda_\rho(T_-).
\]
Hence \(T_+=T_-\).  Since the branch pairs are listed from left to right, we
get
\[
        u_j=v_j
        \qquad(1\le j\le m).
\]
\end{proof}

\begin{theorem}[Trivial intersection]
\label{thm:intersection-trivial}
The chain
\[
        F=H_0>H_1>H_2>\cdots
\]
has trivial intersection:
\[
        \bigcap_{i\ge0}H_i=\{1\}.
\]
\end{theorem}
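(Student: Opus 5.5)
Let $g\in\bigcap_{i\ge0}H_i$; the goal is to show that $g=1$. The plan is to pass from membership in every $H_i$ to acceptance by the inverse-limit automaton $\mathcal A_\infty$, and then to apply Lemma~\ref{lem:Ainfty-rigidity}.

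First, fix the reduced binary tree diagram $(T_+,T_-)$ of $g$, with branch pairs $u_1\to v_1,\ldots,u_m\to v_m$. Since $g\in H_i=\D(\mathcal A_i)$ and $\mathcal A_i\cong\C(H_i)$ is a folded core by Proposition~\ref{prop:cores-of-Hi}, Lemma~\ref{lem:accepted-diagrams-form-subgroup}(i) says that the reduced diagram of $g$ is accepted by $\mathcal A_i$. Here one has to be a little careful: an element of $\D(\mathcal A_i)$ admits some accepted diagram, and that lemma gives that the equivalent reduced diagram is also accepted, so no extra hypothesis is needed. Readability is automatic because every $\mathcal A_i$ is full. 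Hence $(u_j)_i^+=(v_j)_i^+$ in $\mathcal A_i$ for every $i\ge0$ and every $j$.

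Second, use the inverse-limit characterization established just before Lemma~\ref{lem:Ainfty-rigidity}: for finite words $u,v$, one has $u_\infty^+=v_\infty^+$ in $\mathcal A_\infty$ if and only if $u_i^+=v_i^+$ for all $i$. It follows that $(u_j)_\infty^+=(v_j)_\infty^+$ for every $j$. Lemma~\ref{lem:Ainfty-rigidity} then gives $T_+=T_-$, and therefore $g$ is the identity.

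The real content of the argument is already contained in the rigidity lemma and in the inverse-limit identification. The only point that needs care is that the same fixed diagram (the reduced one) is accepted simultaneously by all the $\mathcal A_i$. This is exactly why we work with the reduced diagram instead of an arbitrary accepted diagram, which could depend on $i$.
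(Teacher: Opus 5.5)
Your proof is correct and is essentially the paper's own argument. You fix the reduced diagram of $g$ and use Lemma~\ref{lem:accepted-diagrams-form-subgroup}(i), together with foldedness and fullness of each $\mathcal A_i$, to see that this single diagram is accepted by every $\mathcal A_i$; you then pass to $\mathcal A_\infty$ via the inverse-limit property and conclude with Lemma~\ref{lem:Ainfty-rigidity}, and your observation that working with the reduced diagram is what makes acceptance uniform in $i$ is exactly the point the paper's proof relies on.
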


\begin{proof}
Let
\(g\in\bigcap_{i\ge0}H_i.\)
Choose the reduced tree diagram of \(g\), with branch pairs
\[
        u_1\to v_1,\ldots,u_m\to v_m,
        \qquad
        u_1<\cdots<u_m,\quad v_1<\cdots<v_m.
\]
For every \(i\), the element \(g\) belongs to
\(H_i=\D(\mathcal A_i).\)
Since \(\mathcal A_i\) is folded and full, Lemma
\ref{lem:accepted-diagrams-form-subgroup} implies that the reduced diagram of
\(g\) itself is accepted by \(\mathcal A_i\).  Therefore, for each branch pair
\(u_k\to v_k,\)
the words \(u_k\) and \(v_k\) reach the same state in \(\mathcal A_i\), for
every \(i\).

By the inverse-limit property of \(\mathcal A_\infty\), the words \(u_k\) and
\(v_k\) reach the same state in \(\mathcal A_\infty\).  By
Lemma~\ref{lem:Ainfty-rigidity}, applied to the reduced tree diagram of
\(g\), we get
\[
        u_k=v_k
        \qquad(1\le k\le m).
\]
Thus the reduced diagram represents the identity.  Therefore
\(g=1.\)
\end{proof}

Combining Theorems~\ref{thm:all-overgroups-Hi} and
\ref{thm:intersection-trivial}, we obtain the desired chain.

\begin{corollary}
\label{cor:final-binary-chain}
There is a descending chain
\[
        F=H_0>H_1>H_2>\cdots
\]
such that every \(H_i\) is isomorphic to \(F\), every overgroup of \(H_i\) in
\(F\) is one of
\[
        H_i,H_{i-1},\ldots,H_0,
\]
and
\[
        \bigcap_{i\ge0}H_i=\{1\}.
\]
In particular, each inclusion
\[
        H_{i+1}<H_i
\]
is maximal.
\end{corollary}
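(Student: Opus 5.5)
This corollary simply collects what has already been proved, so the plan is to assemble the earlier results rather than add new ideas.

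\emph{Existence and isomorphism type.} Take $H_i=F^{\varphi^i}$, with $\varphi$ the homeomorphism of Example~\ref{ex:binary-semisync-transducer}. By Theorem~\ref{thm:conjugator-characterization}, since $\varphi$ is order-preserving and semi-synchronizing, $F^\varphi\le F$. Conjugating by $\varphi^i$ then gives $H_{i+1}\le H_i$, as noted at the start of Section~\ref{sec:binary-descending-chain}. Each $H_i$ is isomorphic to $F$ because it is conjugate to $F$ by a homeomorphism.

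\emph{Overgroups, distinctness and maximality.} Theorem~\ref{thm:all-overgroups-Hi} says directly that every subgroup $K$ with $H_i\le K\le F$ equals some $H_j$ with $j\le i$. By Proposition~\ref{prop:cores-of-Hi}, $\C(H_i)\cong\mathcal A_i$. The automaton $\mathcal A_i$ has $4+2^i$ states for $i\ge1$ and $\mathcal A_0$ has $4$, so the cores are pairwise non-isomorphic. Because each $H_i$ is closed (Lemma~\ref{lem:Hi-closed}) and a closed subgroup determines its core, the groups $H_i$ are pairwise distinct, and the inclusions are strict.

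Maximality of $H_{i+1}$ in $H_i$ follows. Suppose $H_{i+1}\le K\le H_i$. Then $K$ is also an overgroup of $H_{i+1}$ in $F$, so $K=H_j$ for some $j\le i+1$. Since $K\le H_i$ and the chain is strictly decreasing, $j\in\{i,i+1\}$. Hence there are no intermediate subgroups.

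\emph{Trivial intersection.} This is Theorem~\ref{thm:intersection-trivial}.

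There is no substantive obstacle. The only point worth stating explicitly is that distinct cores force distinct closed subgroups, which is what rules out collapses in the chain.
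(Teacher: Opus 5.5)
Your proposal is correct and matches the paper's argument: the paper also combines Theorem~\ref{thm:all-overgroups-Hi}, the fact that the cores $\mathcal A_i$ have different numbers of states (so the $H_i$ are pairwise distinct and the inclusions strict, giving maximality), and Theorem~\ref{thm:intersection-trivial}. One small remark: distinctness does not need closedness, since any subgroup, closed or not, determines its core.
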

   
\section{A chain of subgroups for $F_n$}
\label{sec:standard-chain-and-twisting}

In this section we construct the chain of subgroups for $F_n$, $n\geq 2$.  The construction has
two steps.  First, we construct a standard descending chain
\[
        F_n=S_0>S_1>S_2>\cdots
\]
such that every \(S_i\cong F_n\) and every subgroup of \(F_n\) containing \(S_i\) is one of
\[
        S_i,S_{i-1},\ldots,S_0=F_n.
\]
This standard chain is obtained from powers of a single transducer.  It is the natural
\(n\)-ary generalization of the binary transducer from
Figure~\ref{fig:transducer_semi_sync_corrected}: the binary transducer is inserted into the
residue-zero state of \(\C(F_n)\), while the other residue states only route by the invariant
\(\sigma_n\).

The standard chain need not have trivial intersection.  In the second step we apply a
compatible inner-twisting argument.  This preserves the overgroup structure and forces the
intersection to be trivial.

Throughout the section we let \(m=n-1\).
When \(m=1\), the constructions below reduce to the binary constructions of
Section~\ref{sec:binary-descending-chain}.  Thus the new material is only
needed for \(m\ge2\).

\subsection{The residue-inflated transducer}
\label{subsec:residue-inflated-transducer}

\subsubsection{Inflating binary trees and tree-pair cells}

We first record a simple way to turn binary trees and binary tree-pair cells
into \(n\)-ary ones.

\begin{definition}[\(n\)-inflation of binary trees]
\label{def:arity-inflation-of-binary-trees}
Let \(T\) be a finite full binary tree.  Its \(n\)-inflation
\[
        T^{[n]}
\]
is the finite full \(n\)-ary tree obtained by replacing each binary caret by
an \(n\)-caret, placing the binary left edge at the \(0\)-edge, placing the
binary right edge at the \(m\)-edge, and inserting the middle children
\[
        1,\ldots,m-1
\]
as leaves.
\end{definition}

If \(p\in\{0,1\}^*\), write
\[
        \widehat p\in\{0,m\}^*
\]
for the word obtained from \(p\) by replacing every binary letter \(1\) by
\(m\).  Conversely, for \(w\in\{0,m\}^*\), write
\[
        \check w\in\{0,1\}^*
\]
for the word obtained from \(w\) by replacing every letter \(m\) by \(1\).
Thus \(p\mapsto\widehat p\) is a bijection from \(\{0,1\}^*\) onto
\(\{0,m\}^*\), with inverse \(w\mapsto\check w\), and it preserves lengths,
concatenation, the prefix order, and the lexicographic order.  If
\(P\subseteq\{0,1\}^*\) is the branch set of a finite binary tree, write
\[
        P^{[n]}
\]
for the branch set of the \(n\)-inflated tree.

The leaves of \(P^{[n]}\) are of two types.  The old binary leaves are
\[
        \widehat p
        \qquad(p\in P).
\]
The remaining leaves are middle leaves.  Each middle leaf has the form
\[
        \widehat u d,
        \qquad
        u\in\{0,1\}^*,\quad 1\le d\le m-1,
\]
where \(u\) is a father vertex of the original binary tree.  Since
\(\widehat u\in\{0,m\}^*\), we have \(\sigma_n(\widehat u)=0\).  Thus old
binary leaves satisfy
\(\sigma_n(\widehat p)=0,\)
while the middle leaf \(\widehat u d\) satisfies
\(\sigma_n(\widehat u d)=d.\)
Here and below, nonzero residues modulo \(m\) are identified with their
representatives in \(\{1,\ldots,m-1\}\).

Figure~\ref{fig:binary-tree-3-inflation-example} shows the case \(n=3\).

\begin{figure}[htbp]
\centering
\begin{tikzpicture}[
    >=Stealth,
    line width=.9pt,
    leaf/.style={circle,draw,inner sep=1.5pt,fill=white},
    lab/.style={font=\small,inner sep=1pt},
    every node/.style={font=\small}
]
     \node at (0,1.05) {binary tree};
    \node[leaf] (br) at (0,0) {};
    \node[leaf] (b0) at (-.85,-.9) {};
    \node[leaf] (b1) at (.85,-.9) {};
    \node[leaf] (b00) at (-1.35,-1.8) {};
    \node[leaf] (b01) at (-.35,-1.8) {};

    \draw (br)--node[lab,left] {$0$}(b0);
    \draw (br)--node[lab,right] {$1$}(b1);
    \draw (b0)--node[lab,left] {$0$}(b00);
    \draw (b0)--node[lab,right] {$1$}(b01);

    \node[lab] at (-1.35,-2.18) {$00$};
    \node[lab] at (-.35,-2.18) {$01$};
    \node[lab] at (.85,-1.28) {$1$};

    \node at (2.4,-.75) {$\rightsquigarrow$};
     \node at (5.2,1.05) {3-inflation};
    \node[leaf] (tr) at (5.2,0) {};
    \node[leaf] (t0) at (4.0,-.9) {};
    \node[leaf] (t1) at (5.2,-.9) {};
    \node[leaf] (t2) at (6.4,-.9) {};
    \node[leaf] (t00) at (3.35,-1.8) {};
    \node[leaf] (t01) at (4.0,-1.8) {};
    \node[leaf] (t02) at (4.65,-1.8) {};

    \draw (tr)--node[lab,left] {$0$}(t0);
    \draw (tr)--node[lab,right] {$1$}(t1);
    \draw (tr)--node[lab,right] {$2$}(t2);

    \draw (t0)--node[lab,left] {$0$}(t00);
    \draw (t0)--node[lab,right] {$1$}(t01);
    \draw (t0)--node[lab,right] {$2$}(t02);

    \node[lab] at (3.35,-2.18) {$00$};
    \node[lab] at (4.0,-2.18) {$01$};
    \node[lab] at (4.65,-2.18) {$02$};
    \node[lab] at (5.2,-1.28) {$1$};
    \node[lab] at (6.4,-1.28) {$2$};
\end{tikzpicture}
\caption{The binary tree with branch set \(\{00,01,1\}\) inflates, for
\(n=3\), to the ternary tree with branch set \(\{00,01,02,1,2\}\).}
\label{fig:binary-tree-3-inflation-example}
\end{figure}

We also inflate cells.  By a \emph{binary cell} we mean an order-preserving
bijection
\[
        \theta:P\longrightarrow Q
\]
between two finite complete binary prefix codes.  Equivalently, \(\theta\) is
the increasing pairing of the leaves of a pair \((T_+,T_-)\) of finite full
binary trees with the same number of leaves.  Since
\[
        |P^{[n]}|=|P|+(m-1)(|P|-1),
\]
and similarly for \(Q\), the inflated branch sets \(P^{[n]}\) and
\(Q^{[n]}\) have the same number of elements.  The \emph{\(n\)-inflation} of
the cell \(\theta\) is the unique order-preserving bijection
\[
        \theta^{[n]}:P^{[n]}\longrightarrow Q^{[n]}.
\]
Equivalently, \(\theta^{[n]}\) is the increasing pairing of the leaves of the
inflated pair \((T_+^{[n]},T_-^{[n]})\).

\begin{remark}
\label{rem:residues-in-inflated-cells}
Let
\[
        (T_+,T_-)
\]
be a binary tree-pair cell, with leaves paired in increasing order.  Then
\[
        (T_+^{[n]},T_-^{[n]})
\]
is an \(n\)-ary tree diagram, and hence represents an element of \(F_n\).
Therefore corresponding branches
\[
        u\longrightarrow v
\]
in the inflated cell satisfy
\[
        \sigma_n(u)=\sigma_n(v)\pmod m.
\]
In other words, if \(\theta:P\to Q\) is a binary cell, then
\[
        \sigma_n\bigl(\theta^{[n]}(u)\bigr)=\sigma_n(u)
        \qquad(u\in P^{[n]}).
\]
\end{remark}

\begin{lemma}
\label{lem:inflated-cells-types-residues}
Let \(P\) and \(Q\) be finite complete binary prefix codes, and let
\(\theta:P\to Q\) be a binary cell.  Then the following hold.
\begin{enumerate}[label=(\roman*)]
    \item The father vertices of the prefix tree of \(P^{[n]}\) are exactly
    the words \(\widehat\xi\), where \(\xi\) is a father vertex of the prefix
    tree of \(P\).  For such \(\xi\), the \(0\)-child and the \(m\)-child of
    \(\widehat\xi\) are
    \[
        \widehat\xi\,0=\widehat{\xi0},
        \qquad
        \widehat\xi\,m=\widehat{\xi1},
    \]
    and, for \(1\le d\le m-1\), the \(d\)-child of \(\widehat\xi\) is the
    middle leaf \(\widehat\xi\,d\), of residue
    \(\sigma_n(\widehat\xi\,d)=d\).
    \item For every \(u\in P\),
    \[
        \theta^{[n]}(\widehat u)=\widehat{\theta(u)} .
    \]
    \item For every \(1\le d\le m-1\), the bijection \(\theta^{[n]}\) maps
    the middle leaves of \(P^{[n]}\) of residue \(d\) onto the middle leaves
    of \(Q^{[n]}\) of residue \(d\).
\end{enumerate}
\end{lemma}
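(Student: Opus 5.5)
The plan is to work directly from Definition~\ref{def:arity-inflation-of-binary-trees}, arguing by induction on the construction of the binary tree. For (i), I would build the prefix tree of $P^{[n]}$ caret by caret. Each binary caret at a father vertex $\xi$ of the tree of $P$ is replaced by an $n$-caret rooted at the image vertex. By induction on $|\xi|$, the image of the vertex $\xi$ is the word $\widehat\xi$: the binary left edge becomes the $0$-edge and the binary right edge becomes the $m$-edge. This gives $\widehat\xi 0=\widehat{\xi0}$ and $\widehat\xi m=\widehat{\xi 1}$. The inserted children $\widehat\xi d$ with $1\le d\le m-1$ are leaves by construction. No other father vertices arise, because every father vertex of the inflated tree comes from a replaced caret. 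The residue claim is the computation already recorded before the lemma: $\widehat\xi\in\{0,m\}^*$ has $\sigma_n(\widehat\xi)=0$, and $m=n-1\equiv 0$, so $\sigma_n(\widehat\xi d)=d$.

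For (ii) and (iii), the key step is to describe the left-to-right order of the leaves of $P^{[n]}$ in a form that is transparent in binary terms. I would list the leaves of $P^{[n]}$ in lexicographic order and show that the list interleaves as
\[
\widehat{p_1},\ (\text{$m-1$ middle leaves}),\ \widehat{p_2},\ \ldots,\ (\text{$m-1$ middle leaves}),\ \widehat{p_N},
\]
where $p_1<\cdots<p_N$ are the elements of $P$. Moreover, the block of middle leaves between $\widehat{p_j}$ and $\widehat{p_{j+1}}$ is exactly $\widehat\xi 1,\ldots,\widehat\xi(m-1)$, in increasing order of $d$, where $\xi$ is the greatest common prefix of $p_j$ and $p_{j+1}$. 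I would prove this by induction on the tree. Writing $P=0P_0\cup 1P_1$, the leaves of the inflation are the leaves of $0P_0^{[n]}$, followed by the middle children $1,\ldots,m-1$ of the root, followed by the leaves of $mP_1^{[n]}$. Here the root $\eps$ is exactly the greatest common prefix of the last leaf of $0P_0$ and the first leaf of $1P_1$.

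Given this description, the order-preserving bijection $\theta^{[n]}$ is forced, since $P^{[n]}$ and $Q^{[n]}$ have the same interleaved shape with the same block sizes. It sends the $j$-th old leaf $\widehat{p_j}$ to the $j$-th old leaf $\widehat{q_j}=\widehat{\theta(p_j)}$, which proves (ii). It sends the $d$-th middle leaf of the $j$-th gap, $\widehat\xi d$, to the $d$-th middle leaf of the $j$-th gap of $Q^{[n]}$, which is $\widehat\eta d$; both have residue $d$, so (iii) follows. Alternatively, (iii) follows from Remark~\ref{rem:residues-in-inflated-cells} once (ii) is known, because old leaves are mapped to old leaves and the residue of each leaf is preserved.

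The main obstacle is the ordering description. It requires checking carefully that the recursive decomposition matches the lexicographic order, which holds because $0<1<\cdots<m-1<m$. It also requires checking that the gap between consecutive old leaves consists precisely of the middle children of their common-prefix vertex, and of nothing else. This is routine but is where an indexing error could slip in.
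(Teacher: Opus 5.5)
Your proof is correct. Part (i) matches the paper, but for (ii) and (iii) you take a genuinely different route.

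The paper proves (iii) first, from Remark~\ref{rem:residues-in-inflated-cells}. Since $(T_+^{[n]},T_-^{[n]})$ is an $n$-ary tree diagram, corresponding leaves have equal $\sigma_n$. In an inflated code, the residue-$0$ leaves are exactly the old leaves, and residue $d\neq 0$ picks out exactly the middle leaves of residue $d$. Hence $\theta^{[n]}$ maps old leaves onto old leaves, and middle leaves of residue $d$ onto middle leaves of residue $d$. Part (ii) then follows because $\widehat u\mapsto\theta^{[n]}(\widehat u)$ and $\widehat u\mapsto\widehat{\theta(u)}$ are two order-preserving bijections between the same finite chains, so they coincide.

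You instead compute the lexicographic order of $P^{[n]}$ explicitly: an old leaf, then the $m-1$ middle children of the common-prefix vertex, then the next old leaf, and so on. You then read off $\theta^{[n]}$ position by position.

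What your route buys:
\begin{itemize}
\item It is self-contained. It does not use residue invariance of branch pairs in $F_n$, which in the paper enters through the Remark and ultimately through the branch-pair criterion (Lemma~\ref{lem:branch-pair-criterion-prelim}).
\item It gives finer information: $\theta^{[n]}$ sends $\widehat\xi\,d$ to $\widehat\eta\,d$, where $\xi$ and $\eta$ are the common-prefix vertices of corresponding consecutive leaves of $P$ and $Q$.
\end{itemize}

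What the paper's route buys: it is much shorter and avoids the inductive bookkeeping of the leaf order. You correctly flag that bookkeeping as the place where care is needed.
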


\begin{proof}
(i)  By Definition~\ref{def:arity-inflation-of-binary-trees}, the tree of
\(P^{[n]}\) is obtained from the tree of \(P\) by replacing every binary
caret by an \(n\)-caret, the former \(0\)- and \(1\)-edges becoming the
\(0\)- and \(m\)-edges, and the middle children being inserted as new leaves.
Hence the father vertices of the inflated tree are exactly the images
\(\widehat\xi\) of the father vertices \(\xi\) of the tree of \(P\), and the
children are as displayed.  Since \(\widehat\xi\in\{0,m\}^*\), we have
\(\sigma_n(\widehat\xi)=0\), and hence
\(\sigma_n(\widehat\xi\,d)=d\).

(ii) and (iii)  By Remark~\ref{rem:residues-in-inflated-cells},
corresponding leaves of the inflated cell \(\theta^{[n]}\) have equal
residues.  In an inflated code, the leaves of residue \(0\) are exactly the
old leaves, and the leaves of residue \(d\ne0\) are exactly the middle leaves
of residue \(d\).  This proves (iii), and shows that \(\theta^{[n]}\) maps
old leaves onto old leaves.  For (ii), note that \(\theta\) and the hatting
map are order-preserving, so both
\[
        \widehat u\longmapsto\theta^{[n]}(\widehat u)
        \qquad\text{and}\qquad
        \widehat u\longmapsto\widehat{\theta(u)}
\]
are order-preserving bijections from the old leaves of \(P^{[n]}\) onto the
old leaves of \(Q^{[n]}\).  Two order-preserving bijections between the same
finite linearly ordered sets coincide, which proves (ii).
\end{proof}

\subsubsection{Residue inflation of transducers}

We now define the residue inflation of the binary transducers needed below.
The definition uses the existing notions of entry states and first-output
blocks from Subsection~\ref{subsec:geometric-determinization}.

\begin{definition}
\label{def:inflatable-binary-transducer}
Let
\[
        T=(S,t,o,s_0)
\]
be a finite minimal binary transducer representing an order-preserving
homeomorphism of \(\C_2\).  Let
$B_T$
be its set of entry states, that is, the set of states with no incoming
\(\eps\)-edge.

We say that \(T\) is \emph{inflatable} if:
\begin{enumerate}
    \item every state outside \(B_T\) has a unique incoming edge, and this edge
    has output \(\eps\);
    \item every state in \(B_T\) is a homeomorphism state.
\end{enumerate}
In particular, an inflatable transducer is epsilon-separated and has
homeomorphic entry states.
\end{definition}

For an inflatable transducer, the incoming \(\eps\)-edges form a finite forest whose trees are 
rooted at the entry states.  Thus, for every state \(s\in S\), there are
well-defined data
\[
        e(s)\in B_T,
        \qquad
        u(s)\in\{0,1\}^*,
\]
where \(e(s)\) is the root of the \(\eps\)-component containing \(s\), and
\(u(s)\) is the input word labelling the path from \(e(s)\) to \(s\).  Thus
\[
        t(e(s),u(s))=s,
        \qquad
        o(e(s),u(s))=\eps.
\]
For \(s\in B_T\), this means
\[
        e(s)=s,
        \qquad
        u(s)=\eps.
\]

For \(e\in B_T\), we write
\[
        \mathcal E_T(e)
\]
for the first-output input code at \(e\), and
\[
        \lambda_e:\mathcal E_T(e)\to\W_2
\]
for the first-output label map.  By
Lemma~\ref{lem:first-output-prefix-codes}, \(\mathcal E_T(e)\) and
\[
        \Lambda_T(e)=\{\lambda_e(p)\mid p\in\mathcal E_T(e)\}
\]
are finite complete binary prefix codes, and
\[
        \lambda_e:\mathcal E_T(e)\longrightarrow\Lambda_T(e)
\]
is an order-preserving bijection, that is, a binary cell; we call it the
\emph{first-output cell} at \(e\).  Its \(n\)-inflation is denoted
\[
        \theta_e^{[n]}:
        \mathcal E_T(e)^{[n]}\longrightarrow \Lambda_T(e)^{[n]}.
\]

\begin{definition}[Residue inflation of an inflatable transducer]
\label{def:ordinary-residue-inflation-transducer}
Let \(T=(S,t,o,s_0)\) be an inflatable binary transducer, and let
\[
        z\in B_T
\]
be an entry state, called the residue-zero return state.

The residue-\(m\) inflation of \(T\), with return state \(z\), is the
\(n\)-ary transducer
\[
        \operatorname{Res}_m(T,z)
\]
defined as follows.  Its state set is
\[
        S\sqcup\{\mathsf O_1,\ldots,\mathsf O_{m-1}\},
\]
with initial state \(s_0\).  It will be convenient to denote \(z\) also by
\(\mathsf O_0\); subscripts of the states \(\mathsf O_r\) are then read
modulo \(m\).

For old states \(s\in S\), the extreme-letter transitions are copied from
\(T\):
\[
        s\xrightarrow{\ 0\mid\widehat{o(s,0)}\ }t(s,0),
        \qquad
        s\xrightarrow{\ m\mid\widehat{o(s,1)}\ }t(s,1).
\]

For middle letters \(1\le d\le m-1\), let
\[
        e=e(s),
        \qquad
        u=u(s).
\]
The word \(\widehat u d\) is a middle leaf of the \(n\)-inflated input tree
\(\mathcal E_T(e)^{[n]}\).  Define
\[
        v_{s,d}=\theta_e^{[n]}(\widehat u d),
\]
and set
\[
        s\xrightarrow{\ d\mid v_{s,d}\ }\mathsf O_d
        \qquad(1\le d\le m-1).
\]

Finally, the routing states copy letters and track the digit sum modulo
\(m\):
\[
        \mathsf O_r\xrightarrow{\ d\mid d\ }\mathsf O_{r+d}
        \qquad(1\le r\le m-1,\ d\in X_n),
\]
with subscripts modulo \(m\).  In particular, the machine returns to
\(\mathsf O_0=z\) exactly when the digit sum returns to \(0\) modulo \(m\).
If \(m=1\), there are no routing states and this construction is just \(T\).
\end{definition}

The following is immediate from the definition. 

\begin{lemma}
\label{lem:first-output-of-ordinary-residue-inflation}
Let \(T\) be an inflatable binary transducer, let \(z\in B_T\), and put
\[
        T'=\operatorname{Res}_m(T,z).
\]
Then \(T'\) is epsilon-separated.  Its entry states are
\[
        B_T\sqcup\{\mathsf O_1,\ldots,\mathsf O_{m-1}\}.
\]
For each old entry state \(e\in B_T\), the first-output cell of \(T'\) at
\(e\) is the \(n\)-inflation
\[
        \theta_e^{[n]}:
        \mathcal E_T(e)^{[n]}
        \longrightarrow
        \Lambda_T(e)^{[n]}
\]
of the first-output cell of \(T\) at \(e\).  The old leaf \(\widehat p\),
with \(p\in\mathcal E_T(e)\), terminates at \(t(e,p)\).  A middle leaf of
residue \(r\) terminates at \(\mathsf O_r\).

At a routing state \(\mathsf O_r\), the first-output cell is the one-letter
cell
\[
        d\mid d:\mathsf O_{r+d}
        \qquad(d\in X_n),
\]
with subscripts modulo \(m\).
\end{lemma}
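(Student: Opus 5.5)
The plan is to compute the first-output data of \(T'=\operatorname{Res}_m(T,z)\) directly from Definition~\ref{def:ordinary-residue-inflation-transducer}, state by state. There are three kinds of states: old states, routing states, and the identification \(\mathsf O_0=z\).

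\textbf{Step 1: \(\eps\)-edges and entry states.} Every \(\eps\)-edge of \(T'\) is an old extreme-letter edge whose \(T\)-output was \(\eps\). Indeed, \(\widehat{o(s,i)}=\eps\) iff \(o(s,i)=\eps\). Middle-letter edges have output \(v_{s,d}\), which is a leaf of an inflated complete code and hence nonempty. Routing edges output one letter. So the \(\eps\)-edges of \(T'\) are exactly those of \(T\), and they enter states outside \(B_T\).

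Next we check that no non-\(\eps\) edge enters a non-entry state. Each such state has a unique incoming edge in \(T\), and it is an \(\eps\)-edge. The new edges of \(T'\) enter only \(\mathsf O_r\) or \(z\), and \(z\in B_T\). Hence \(T'\) is epsilon-separated, and its entry states are \(B_T\sqcup\{\mathsf O_1,\dots,\mathsf O_{m-1}\}\).

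\textbf{Step 2: the first-output cell at an old entry state \(e\).} Fix an input word \(w\) read from \(e\) in \(T'\), and follow it until the first nonempty output. Two things can happen.

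If the path uses only extreme letters, then \(w=\widehat p\). It follows the \(T\)-path of \(p\) with hatted outputs. So \(w\) is a first-output block iff \(p\in\mathcal E_T(e)\), with output \(\widehat{\lambda_e(p)}\) and terminal state \(t(e,p)\). By Lemma~\ref{lem:inflated-cells-types-residues}(ii) this output is \(\theta_e^{[n]}(\widehat p)\).

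Otherwise the first middle letter \(d\) occurs at a state \(s\) reached from \(e\) by \(\widehat u\) with all outputs \(\eps\). Then \(u\) lies on the \(\eps\)-path from \(e\), so \(e(s)=e\) and \(u(s)=u\); here we use uniqueness of incoming \(\eps\)-edges, i.e.\ the forest structure. The middle edge then outputs \(v_{s,d}=\theta_e^{[n]}(\widehat u d)\), which is nonempty, and goes to \(\mathsf O_d\).

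The vertices \(\widehat u\) with \(u\) a proper prefix of some element of \(\mathcal E_T(e)\) are exactly the father vertices of the inflated tree, by Lemma~\ref{lem:inflated-cells-types-residues}(i). Therefore the collected blocks are precisely the leaves of \(\mathcal E_T(e)^{[n]}\), paired by \(\theta_e^{[n]}\). Old leaves terminate at \(t(e,p)\), and middle leaves of residue \(d\) terminate at \(\mathsf O_d\).

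\textbf{Step 3: routing states.} At \(\mathsf O_r\) with \(r\neq 0\), every letter \(d\) outputs \(d\) and moves to \(\mathsf O_{r+d}\). When \(r+d\equiv0\), this target is \(z\). So the first-output cell is the one-letter cell as stated.

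The only point needing care is Step 2: checking that \(e(s)=e\) and \(u(s)=u\) along \(\eps\)-paths from an entry state. This is exactly where the hypothesis that every non-entry state has a unique incoming edge is used.
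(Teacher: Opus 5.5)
Your proposal is correct. It is the direct unwinding of Definition~\ref{def:ordinary-residue-inflation-transducer} that the paper leaves to the reader (it records the lemma as immediate from the definition), and you rightly identify the forest structure of $\eps$-edges as what forces $e(s)=e$ and $u(s)=u$. The one step you leave implicit is that $o(e,u)=\eps$ holds exactly when $u$ is a proper prefix of an element of $\mathcal E_T(e)$, i.e.\ a father vertex; this follows at once from the definition of $\mathcal E_T(e)$ together with nondegeneracy.
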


The next lemma records the state reached by an arbitrary input word in the
residue inflation.  For \(w\in\W_n\), let
\(\operatorname{tail}(w)\)
denote the maximal suffix of \(w\) belonging to \(\{0,m\}^*\).

\begin{lemma}
\label{lem:states-in-residue-inflation}
Let \(T'=\operatorname{Res}_m(T,z)\), with transition function \(t'\).  Then
the following hold.
\begin{enumerate}[label=(\roman*)]
    \item For every old state \(s\in S\) and every \(x\in\{0,m\}^*\),
    \[
        t'(s,x)=t(s,\check x).
    \]
    \item For every \(w\in\W_n\) with \(\sigma_n(w)\ne0\),
    \[
        t'(s_0,w)=\mathsf O_{\sigma_n(w)}.
    \]
    \item For every \(w\in\W_n\) with \(\sigma_n(w)=0\),
    \[
        t'(s_0,w)=
        \begin{cases}
        t(s_0,\check w), & w\in\{0,m\}^*,\\[1mm]
        t\bigl(z,\check{\operatorname{tail}(w)}\bigr), & w\notin\{0,m\}^* .
        \end{cases}
    \]
\end{enumerate}
\end{lemma}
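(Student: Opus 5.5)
We argue directly from Definition~\ref{def:ordinary-residue-inflation-transducer}, by induction on the length of the input word, treating the three parts in order.

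For (i), induct on $|x|$. The case $x=\eps$ is trivial. If $x=x'a$ with $a\in\{0,m\}$, the induction hypothesis gives $t'(s,x')=t(s,\check{x'})$, which is an old state $s'\in S$. By definition, the $0$- and $m$-transitions of an old state copy the $0$- and $1$-transitions of $T$. Hence $t'(s',a)=t(s',\check a)$, and (i) follows.

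For (ii) and (iii) we first note a key observation. Suppose the machine is in some state $q$ and reads a letter $d$, and we are given the digit sum modulo $m$ of the input read so far. Then the next state is determined as follows.

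\begin{enumerate}[label=(\alph*)]
    \item If the current state is an old state and $1\le d\le m-1$, the next state is $\mathsf O_d$, and the running residue becomes $d$ (old states are reached only at residue $0$; see below).
    \item If the current state is $\mathsf O_r$ with $r\ne0$, the next state is $\mathsf O_{r+d}$. When $r+d\equiv0$, this is $\mathsf O_0=z$, an old state.
    \item If the current state is old and $d\in\{0,m\}$, the residue is unchanged and the next state is old.
\end{enumerate}

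We then prove, by induction on $|w|$, the combined invariant: if $\sigma_n(w)=r\ne0$ then $t'(s_0,w)=\mathsf O_r$, and if $\sigma_n(w)=0$ then $t'(s_0,w)$ is old and equals the formula in (iii).

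The inductive step splits on the last letter $a$ of $w=w'a$.

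If $a\in\{1,\ldots,m-1\}$ and $\sigma_n(w')=0$, then $w'$ led to an old state, so we move to $\mathsf O_a$. Here $\sigma_n(w)=a\ne0$, matching (ii).

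If $\sigma_n(w')=r\ne0$, then $w'$ led to $\mathsf O_r$, and we move to $\mathsf O_{r+a}$ for every letter $a$ (including $0$ and $m$, since $m\equiv0$). If $r+a\not\equiv0$, this is (ii). If $r+a\equiv0$, we are at $z$. In that case $\operatorname{tail}(w)=\eps$: the last letter is not in $\{0,m\}$, because $a\equiv -r\ne0$. Hence $t(z,\eps)=z$, matching (iii).

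If $a\in\{0,m\}$ and $\sigma_n(w')=0$, then $w'$ led to an old state $q$, and by (i) the next state is $t(q,\check a)$. There are two subcases.
\begin{itemize}
    \item If $w\in\{0,m\}^*$, then $q=t(s_0,\check{w'})$, giving $t(s_0,\check w)$.
    \item Otherwise $w'\notin\{0,m\}^*$, since a word ending in $a\in\{0,m\}$ lies in $\{0,m\}^*$ exactly when its prefix does. Then $q=t(z,\check{\operatorname{tail}(w')})$ and $\operatorname{tail}(w)=\operatorname{tail}(w')a$, giving the formula.
\end{itemize}

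The only delicate point is the bookkeeping of where the residue returns to zero, and the claim that $\operatorname{tail}$ resets exactly at that moment. Specifically, we need that the last letter not in $\{0,m\}$ is precisely the one that either enters some $\mathsf O_r$ or returns to $z$. This holds because every letter in $\{1,\ldots,m-1\}$ read from an old state leads to a routing state, and from routing states the machine returns to an old state only through $z$. So after the last middle letter, the machine sits at $z$ (if the residue is $0$), and only $\{0,m\}$-letters follow.
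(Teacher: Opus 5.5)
Your proof is correct and follows essentially the same route as the paper. Part (i) is an induction on $|x|$ using that extreme-letter transitions at old states copy those of $T$, and (ii)--(iii) are a simultaneous induction on $|w|$ that splits on whether $\sigma_n(w')$ vanishes and whether the last letter is extreme or middle, noting that $\operatorname{tail}(w)=\eps$ exactly when a middle letter returns the residue to $0$ (you only leave the trivial base case $w=\eps$ implicit).
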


\begin{proof}
(i)  The extreme-letter transitions of \(T'\) at old states are copied from
\(T\); in particular they lead again to old states.  The claim follows by
induction on \(|x|\).

We prove (ii) and (iii) by simultaneous induction on \(|w|\).  For
\(w=\eps\), we have \(\sigma_n(w)=0\) and \(w\in\{0,m\}^*\), and both sides
of (iii) equal \(s_0\).  Let \(w=w'a\) with \(a\in X_n\), and assume (ii)
and (iii) for \(w'\).

Suppose first that \(\sigma_n(w')=0\), so that \(t'(s_0,w')\) is the old
state given by (iii).  If \(a\in\{0,m\}\), then, by (i) applied to the one
letter \(a\), the state after \(w\) is obtained by reading \(\check a\) in
\(T\) from that old state.  When \(w'\in\{0,m\}^*\), also \(w\in\{0,m\}^*\)
and
\[
        t'(s_0,w)
        =
        t\bigl(t(s_0,\check{w'}),\check a\bigr)
        =
        t(s_0,\check w);
\]
when \(w'\notin\{0,m\}^*\), we have
\(\operatorname{tail}(w)=\operatorname{tail}(w')\,a\)
and
\[
        t'(s_0,w)
        =
        t\bigl(t(z,\check{\operatorname{tail}(w')}),\check a\bigr)
        =
        t\bigl(z,\check{\operatorname{tail}(w)}\bigr).
\]
Since \(\sigma_n(w)=0\), this proves (iii) for \(w\).  If \(a=d\) is a middle
letter, then reading \(d\) from an old state leads to \(\mathsf O_d\), and
\(\sigma_n(w)=d\ne0\), proving (ii) for \(w\).

Suppose now that \(\sigma_n(w')=s\ne0\), so that
\(t'(s_0,w')=\mathsf O_s\).  Reading \(a\) leads to \(\mathsf O_{s+a}\),
with subscripts modulo \(m\).  If \(s+a\not\equiv0\pmod m\), then
\(\sigma_n(w)=s+a\ne0\), proving (ii) for \(w\).  If \(s+a\equiv0\pmod m\),
then \(a\notin\{0,m\}\), since the letters \(0\) and \(m\) do not change the
digit sum modulo \(m\); hence \(a\) is a middle letter, so
\(w\notin\{0,m\}^*\) and \(\operatorname{tail}(w)=\eps\), while the new state
is
\[
        \mathsf O_0=z=t(z,\eps),
\]
proving (iii) for \(w\).
\end{proof}
                            
\begin{lemma}
\label{lem:ordinary-residue-inflation-homeomorphism}
Let \(T=(S,t,o,s_0)\) be an inflatable binary transducer, let
\(z\in B_T\), and put
\[
        T'=\operatorname{Res}_m(T,z).
\]
Then \(T'\) is accessible and has no states of incomplete response.  Moreover,
\(T'\) represents an order-preserving rational
homeomorphism of \(\C_n\).

Assume, in addition, that the binary state \(z\) is not the identity state,
that is,
\[
        h_{T_z}\ne \id_{\C_2}.
\]
Then \(T'\) is minimal.
\end{lemma}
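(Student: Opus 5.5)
The plan is to describe $T'$ through its first-output blocks. Lemma~\ref{lem:first-output-of-ordinary-residue-inflation} says $T'$ is epsilon-separated, and every entry state carries a complete first-output cell. At an old entry state $e$ this cell is $\theta_e^{[n]}$; at a routing state $\mathsf O_r$ it is the one-letter cell. So I would first show that $T'$ represents a homeomorphism of $\C_n$. The input codes $\mathcal E_T(e)^{[n]}$ and the output codes $\Lambda_T(e)^{[n]}$ are finite complete prefix codes, and $\theta_e^{[n]}$ is an order-preserving bijection between them. Every block ends at an entry state of $T'$: old leaves end at $t(e,p)\in B_T$, and middle leaves end at some $\mathsf O_r$. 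Hence reading an infinite input decomposes it uniquely into consecutive first-output blocks, and the output is the concatenation of the matched output blocks. The map from each entry state is therefore a bijection $\C_n\to\C_n$. It is continuous because the input codes are finite, and it is order-preserving because each cell is order-preserving. Compactness then makes it a homeomorphism. In particular every entry state is a homeomorphism state; nondegeneracy follows from the same decomposition.

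Accessibility comes from Lemma~\ref{lem:states-in-residue-inflation}(i): every old state $t(s_0,x)$ with $x\in\{0,1\}^*$ is reached as $t'(s_0,\widehat x)$. Each $\mathsf O_d$ is reached from $s_0$ by the one-letter word $d$.

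For the absence of incomplete response, entry states need no separate check. Their outputs at block ends are full cylinders $U_\lambda$, and the homeomorphism state reached afterwards has image $\C_n$, so its root is $\eps$. For a non-entry old state $s$, the image of $h_{T'_s}$ is the union of the inflated output cylinders of the blocks below $u(s)$ in the cell at $e(s)$. I would show that its root is empty by comparing with $T$: $T$ has no incomplete response, so the binary leaves below $u(s)$ already produce outputs beginning with both $0$ and $1$. After hatting, these become outputs beginning with $0$ and with $m$, and so the root is still empty. Since no incoming edge creates incomplete response at the target state, the check reduces to image roots, which completes this part.

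For minimality, assume $h_{T_z}\ne\id$ and separate distinct states by their local maps. Old states are separated by their restrictions to $\{0,m\}$-inputs, since these reproduce $T$ through hatting, and $T$ is minimal. A routing state $\mathsf O_r$ with $r\ne0$ outputs exactly its input on the first letter and then moves to $\mathsf O_{r+d}$. Two distinct routing states $\mathsf O_r$ and $\mathsf O_{r'}$ can be separated by the same input $d$ with $r+d\equiv0$: one of them is sent to $z$ while the other is not, and the difference becomes visible through $h_{T_z}\ne\id$ on a subsequent input. The main obstacle is separating an old state $s$ from a routing state $\mathsf O_r$, and the pairs among routing states involving $z$. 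Here I would argue that if $h_{T'_s}=h_{T'_{\mathsf O_r}}$, then reading $\{0,m\}$-words from both sides, possibly after first feeding a word that brings $\mathsf O_r$ to $z$, forces $h_{T_z}$ (or the map of $T$ at the state reached) to be identity-like. This uses the fact that routing states copy letters and $z$ sits among them, and it contradicts $h_{T_z}\ne\id$.
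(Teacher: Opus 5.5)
Your proposal follows essentially the same route as the paper: first-output cells ending at entry states give the homeomorphism property, and hatted words together with single middle letters give accessibility. Empty image roots, via hatting of $Q_s$, rule out incomplete response. For minimality you use restriction to $\{0,m\}^{\mathbb N}$, together with reading a middle letter $d\equiv -r \pmod m$ to move $\mathsf O_r$ to $z$.

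Your last step is only sketched, but it closes at once. Every old state reading a middle letter $d$ goes to $\mathsf O_d$. So $h_{T'_s}=h_{T'_{\mathsf O_r}}$ would force $h_{T'_{\mathsf O_d}}=h_{T'_z}$, since both targets are homeomorphism states and hence the output prefixes must agree. This is impossible, because routing states act as the identity on $\{0,m\}^{\mathbb N}$, while $z$ acts there as the hatted version of $h_{T_z}\ne\id_{\C_2}$. This argument even avoids the paper's case split on whether $h_{T_s}=\id_{\C_2}$. Also, the pair $z$, $\mathsf O_r$ is no obstacle at all: it is separated by the same restriction.
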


\begin{proof}
If \(m=1\), then \(T'=T\), and the assertions follow from the minimality of
\(T\).  Assume \(m\ge2\).

We first prove accessibility.  Since \(T\) is accessible, every old state
\(s\in S\) satisfies \(s=t(s_0,\xi)\) for some binary word \(\xi\), and then
\[
        s=t'(s_0,\widehat\xi)
\]
by Lemma~\ref{lem:states-in-residue-inflation}(i).  Each routing state is
accessible since
\(t'(s_0,d)=\mathsf O_d\)
for every middle letter \(d\).
         
We next prove that the entry states represent order-preserving homeomorphisms.
By Lemma~\ref{lem:first-output-of-ordinary-residue-inflation}, every entry
state of \(T'\) has a first-output cell which is an order-preserving bijection
between finite complete \(n\)-ary prefix codes.  The terminal states of these
cells are again entry states.  Hence, along every infinite input ray, the
successive first-output cells determine a nested sequence of domain cylinders
and a nested sequence of image cylinders.  That gives existence of an
image point for every input point and existence of a preimage for every output
point; the prefix-code bijections give uniqueness.  Therefore each entry state
of \(T'\) represents an order-preserving homeomorphism of \(\C_n\).  In
particular, the initial state represents an order-preserving homeomorphism.
Since \(T'\) is finite, this homeomorphism is rational.

We now prove that there is no state of incomplete response.  It is enough to show that
the image of every state of \(T'\) has empty root.  This is clear for the
entry states, because entry states are homeomorphism states.

Let \(s\in S\) be an old non-entry state.  Let
$e=e(s)$ and $u=u(s)$
so that
\[
        t(e,u)=s,\qquad o(e,u)=\eps.
\]
Consider the first-output cell at \(e\).  Let \(P_s\) be the collection of
leaves of \(\mathcal E_T(e)\) below the vertex \(u\), and let \(Q_s\) be the
corresponding collection of leaves in \(\Lambda_T(e)\).  Then
\[
        h_{T_s}(\C_2)=\bigsqcup_{q\in Q_s}U_q .
\]
Since \(T\) is minimal, it has no incomplete response.  Hence
\[
        \Root\bigl(h_{T_s}(\C_2)\bigr)=\eps,
\]
or equivalently \(\Root(Q_s)=\eps\).

In \(T'\), the corresponding part of the first-output cell is obtained by
\(n\)-inflating this binary cell.  Thus the image of the old state \(s\) is
the union of the \(n\)-ary cylinders determined by the \(n\)-inflation
\(Q_s^{[n]}\).  Since \(\Root(Q_s)=\eps\), also
\[
        \Root(Q_s^{[n]})=\eps.
\]
Therefore
\[
        \Root\bigl(h_{T'_s}(\C_n)\bigr)=\eps .
\]

Finally, each routing state \(\mathsf O_r\) is an entry state, hence a
homeomorphism state, so its image is all of \(\C_n\) and has empty root.
Thus every state of \(T'\) has image with empty root.  By the incomplete
response criterion from Subsection~\ref{subsec:transducers-prelim}, \(T'\) has
no states of incomplete response.

It remains to prove minimality under the additional assumption
\(h_{T_z}\ne\id_{\C_2}\).  We already proved accessibility and no incomplete
response, so it remains only to separate states.

Let
\[
        E=\{0,m\}^{\mathbb N}\subseteq \C_n
\]
be the extreme Cantor subset.  For \(\alpha\in\C_2\), write
\(\widehat\alpha\in E\) for the sequence obtained by replacing every binary
letter \(1\) by \(m\).  For old states \(s\in S\), the restriction of
\(h_{T'_s}\) to \(E\) is the inflated binary action:
\[
        h_{T'_s}(\widehat\alpha)=\widehat{h_{T_s}(\alpha)}
        \qquad(\alpha\in\C_2).
\]
For every routing state \(\mathsf O_r\), the restriction to \(E\) is the
identity:
\[
        h_{T'_{\mathsf O_r}}(\widehat\alpha)=\widehat\alpha .
\]

If \(s,t\in S\) are distinct old states, then they are not
\(\omega\)-equivalent in the minimal binary transducer \(T\).  Hence
\(h_{T_s}\ne h_{T_t}\).  Restricting to \(E\), the displayed formula gives
\[
        h_{T'_s}\ne h_{T'_t}.
\]
Thus old states remain distinct.

Next, \(z\) is not equivalent to any routing state.  Indeed, by assumption
\(h_{T_z}\ne\id_{\C_2}\), so the restriction of \(h_{T'_z}\) to \(E\) is not
the identity, while every routing state restricts to the identity on \(E\).

We now separate the routing states from one another.  Suppose
\(\mathsf O_r\) and \(\mathsf O_s\) are routing states with \(r\ne s\).  Choose
\[
        d\equiv -r\pmod m,
        \qquad
        1\le d\le m-1.
\]
Then
\[
        \mathsf O_r\cdot d=\mathsf O_0=z,
        \qquad
        \mathsf O_s\cdot d=\mathsf O_{s-r}.
\]
If \(\mathsf O_r\) and \(\mathsf O_s\) were \(\omega\)-equivalent, then after
reading the common first letter \(d\) the resulting local actions would also
be \(\omega\)-equivalent.  Thus \(z\) would be equivalent to
\(\mathsf O_{s-r}\), contradicting the preceding paragraph.  Hence the routing
states are pairwise distinct.

It remains to separate old states from routing states.  Let \(s\in S\).  If
\(h_{T_s}\ne \id_{\C_2}\), then the restriction of \(h_{T'_s}\) to \(E\) is
not the identity, while every routing state restricts to the identity on
\(E\).  Hence \(s\) is not equivalent to a routing state.

Suppose now that \(h_{T_s}=\id_{\C_2}\).  Then \(s\) is an entry state and its
binary first-output cell is the identity cell
\[
        0\mid0:s,\qquad 1\mid1:s.
\]
Therefore, in the residue-inflated transducer,
\[
        s\xrightarrow{\ d\mid d}\mathsf O_d
        \qquad(1\le d\le m-1).
\]
Let \(\mathsf O_r\) be a routing state and choose
\[
        d\equiv -r\pmod m,
        \qquad
        1\le d\le m-1.
\]
If \(s\) were equivalent to \(\mathsf O_r\), then after reading \(d\) we would
get
\[
        \mathsf O_d\sim z.
\]
But we already proved that \(z\) is not equivalent to any routing state.  This
is impossible.  Hence no old state is equivalent to a routing state.

Thus no two distinct states of \(T'\) are \(\omega\)-equivalent.  Since \(T'\)
is accessible and has no incomplete response, it is minimal.
\end{proof}

\begin{lemma}
\label{lem:ordinary-residue-inflation-semisync}
Let \(T=(S,t,o,s_0)\) be an inflatable binary transducer which is
semi-synchronizing.  Let \(z\in B_T\) be an inner state, and put
\[
        T'=\operatorname{Res}_m(T,z).
\]
Then \(T'\) is semi-synchronizing.
\end{lemma}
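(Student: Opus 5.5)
We verify the two conditions of Definition~\ref{def:semi-synchronizing} for \(T'\) directly, using the state formula of Lemma~\ref{lem:states-in-residue-inflation} as the main tool.

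\emph{Boundary-ray synchronization.} The left boundary ray of \(T'\) reads words \(0^r\), and the right boundary ray reads \(m^r\); both lie in \(\{0,m\}^*\). By Lemma~\ref{lem:states-in-residue-inflation}(i) we have
\[
t'(s_0,0^r)=t(s_0,0^r)\qquad\text{and}\qquad t'(s_0,m^r)=t(s_0,1^r).
\]
Since \(T\) is semi-synchronizing, there are states \(s_L=t(s_0,0^{r_0})\) and \(s_R=t(s_0,1^{r_1})\) with \(t(s_L,0)=s_L\) and \(t(s_R,1)=s_R\). The transitions of \(T'\) on the letters \(0\) and \(m\) at old states are copied from the transitions of \(T\) on \(0\) and \(1\). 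Hence \(t'(s_L,0)=s_L\) and \(t'(s_R,m)=s_R\), and condition (i) holds with the same states and depths.

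\emph{Inner synchronization.} Let \(k\) be an inner synchronization level for \(T\). We look for a level \(k'\) such that \(t'(s_0,uw)=t'(s_0,vw)\) for all inner \(u,v\in\W_n\) with \(\sigma_n(u)=\sigma_n(v)\) and all \(w\in X_n^{k'}\). We split according to the residue \(\rho=\sigma_n(uw)=\sigma_n(vw)\).

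If \(\rho\neq0\), Lemma~\ref{lem:states-in-residue-inflation}(ii) gives \(\mathsf O_\rho\) on both sides, so there is nothing to prove.

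If \(\rho=0\), we use Lemma~\ref{lem:states-in-residue-inflation}(iii). First suppose \(w\notin\{0,m\}^*\). Then \(\operatorname{tail}(uw)=\operatorname{tail}(w)=\operatorname{tail}(vw)\), and both states equal \(t(z,\check{\operatorname{tail}(w)})\).

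The remaining case is \(w\in\{0,m\}^*\). Then the tails are \(\operatorname{tail}(u)w\) and \(\operatorname{tail}(v)w\), or the whole words when \(u\) or \(v\) lies in \(\{0,m\}^*\). The states are therefore \(t(x,\check w)\) and \(t(y,\check w)\), where each of \(x,y\) is either \(t(z,\check{\operatorname{tail}(u)})\) or \(t(s_0,\check u)\), as appropriate. We must show these agree once \(|w|\ge k'\).

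The key point is to express each of \(x,y\) as \(t(s_0,\beta)\) for a binary inner word \(\beta\) of residue \(0\). For \(n=2\), \(\sigma_2\) is trivial, so inner synchronization in \(T\) says that all binary inner words synchronize after \(k\) further letters.

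\begin{itemize}
\item Since \(z\) is an inner state of \(T\), we can write \(z=t(s_0,\zeta)\) for some binary inner word \(\zeta\). Then
\[
t(z,\check{\operatorname{tail}(u)})=t\bigl(s_0,\zeta\,\check{\operatorname{tail}(u)}\bigr),
\]
and \(\zeta\,\check{\operatorname{tail}(u)}\) is inner because \(\zeta\) is inner.
\item If \(u\in\{0,m\}^*\) is inner, then \(\check u\) is a binary inner word, since inner is preserved by \(\check{\ }\) on \(\{0,m\}^*\).
\end{itemize}

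In all cases both states have the form \(t(s_0,\beta\check w)\) and \(t(s_0,\gamma\check w)\) with \(\beta,\gamma\) binary inner words. Binary inner synchronization at level \(k\) then gives equality whenever \(|w|\ge k\). So \(k'=k\) works.

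The main obstacle is bookkeeping: making sure the case \(w\in\{0,m\}^*\) with \(\sigma_n(uw)=0\) covers both subcases, namely \(u\) purely extreme and \(u\) containing a middle letter. This is exactly where the hypothesis that \(z\) is inner is used. Without it, \(z\) could be a boundary state and the words \(\zeta\,\check{\operatorname{tail}(u)}\) would not be inner.
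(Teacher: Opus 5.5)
Your proof is correct and follows essentially the same route as the paper. The boundary rays are copied from $T$, and inner synchronization comes from Lemma~\ref{lem:states-in-residue-inflation} together with writing $z=t(s_0,\zeta)$ for a binary inner word $\zeta$, so that binary inner synchronization at the same level $k$ applies. The only difference is bookkeeping: you split on $\sigma_n(uw)$ and apply the closed-form state formula to the whole words $uw,vw$, whereas the paper splits on $\sigma_n(u)$ and uses determinism (the states already agree after $u$ when its residue is nonzero, and both collapse to $\mathsf O_d$ at the first middle letter $d$ of $w$).
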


\begin{proof}
If \(m=1\), then \(T'=T\), so there is nothing to prove.  Assume \(m\ge2\).

The boundary-ray condition is immediate.  The \(0\)-ray in \(T'\) is copied
from the \(0\)-ray of \(T\), and the \(m\)-ray in \(T'\) is copied from the
\(1\)-ray of \(T\).  Hence the eventual stabilization of the two boundary
rays for \(T\) gives the eventual stabilization of the two boundary rays for
\(T'\).

It remains to check inner synchronization.  Let \(k\) be an inner
synchronization level for \(T\); note that for \(n=2\) the congruence
condition in the definition of inner synchronization is vacuous, so
\[
        t(s_0,aw')=t(s_0,bw')
\]
for all binary inner words \(a,b\) and all \(w'\in X_2^k\).  We prove that
the same \(k\) works for \(T'\).

Let \(u,v\in\W_n\) be inner words with
\[
        \sigma_n(u)=\sigma_n(v),
\]
and let \(w\in X_n^k\).  Write \(s\) for the common residue.

If \(s\ne0\), then by Lemma~\ref{lem:states-in-residue-inflation}(ii),
\[
        t'(s_0,u)=\mathsf O_s=t'(s_0,v),
\]
and hence
\(t'(s_0,uw)=t'(s_0,vw)\)
by determinism.

Assume now that \(s=0\).  By
Lemma~\ref{lem:states-in-residue-inflation}(iii), the states
\(t'(s_0,u)\) and \(t'(s_0,v)\) are old states, and each of them is reached
in \(T\) by a binary inner word.  Indeed, if \(u\in\{0,m\}^*\), then
\(t'(s_0,u)=t(s_0,\check u)\), and \(\check u\) is a binary inner word,
because \(u\) is inner and hatting matches the words \(0^j\) and \(1^j\) with
the words \(0^j\) and \(m^j\).  If \(u\notin\{0,m\}^*\), then
\(t'(s_0,u)=t(z,\check{\operatorname{tail}(u)})\); choosing a binary inner
word \(b\) with \(t(s_0,b)=z\), which is possible since \(z\) is an inner
state of \(T\), we get
\[
        t'(s_0,u)=t\bigl(s_0,b\,\check{\operatorname{tail}(u)}\bigr),
\]
and \(b\,\check{\operatorname{tail}(u)}\) is inner, being an extension of an
inner word.  Thus there are binary inner words \(a_u,a_v\) with
\[
        t'(s_0,u)=t(s_0,a_u),
        \qquad
        t'(s_0,v)=t(s_0,a_v).
\]

Now consider \(w\).  If \(w\in\{0,m\}^k\), then by
Lemma~\ref{lem:states-in-residue-inflation}(i),
\[
        t'(s_0,uw)=t(s_0,a_u\check w),
        \qquad
        t'(s_0,vw)=t(s_0,a_v\check w),
\]
and since \(|\check w|=k\), the inner synchronization of \(T\) gives
\(t'(s_0,uw)=t'(s_0,vw)\).  If \(w\) contains a middle letter, write
\(w=x\,d\,y\) with \(x\in\{0,m\}^*\) and \(d\) a middle letter.  By
Lemma~\ref{lem:states-in-residue-inflation}(i), reading \(x\) keeps both
computations at old states, and reading \(d\) from an old state leads to
\(\mathsf O_d\), independently of that state.  The remaining suffix \(y\) is
common, so
\(t'(s_0,uw)=t'(s_0,vw)\).
\end{proof}

\subsubsection{Residue inflation of binary automata}

We use the same notation for the corresponding operation on binary
tree-automata.

\begin{definition}[Residue inflation of pointed binary automata]
\label{def:residue-inflation-automata}
Let
\[
        \mathcal A=(Q,\tau,\rho)
\]
be a full binary tree-automaton, and let
\[
        c\in Q
\]
be a distinguished state.  Define
\[
        \operatorname{Res}_m(\mathcal A,c)
\]
to be the following \(n\)-ary tree-automaton.  Its state set is
\[
        Q\sqcup\{c_1,\ldots,c_{m-1}\},
\]
with root \(\rho\).  It will be convenient to denote \(c\) also by \(c_0\);
subscripts of the residue states \(c_s\) are then read modulo \(m\).

For \(q\in Q\),
\[
        q\cdot0=(q\cdot0)_{\mathcal A},
        \qquad
        q\cdot m=(q\cdot1)_{\mathcal A},
\]
and
\[
        q\cdot d=c_d
        \qquad(1\le d\le m-1).
\]
For the residue states,
\[
        c_s\cdot d=c_{s+d}
        \qquad(1\le s\le m-1,\ d\in X_n),
\]
with subscripts modulo \(m\); in particular
\(c_s\cdot d=c\)
when \(s+d\equiv0\pmod m\).  If \(m=1\), this is just \(\mathcal A\).
\end{definition}

\begin{lemma}
\label{lem:residue-inflation-preserves-full-folded}
If \(\mathcal A\) is full and folded, then
\[
        \operatorname{Res}_m(\mathcal A,c)
\]
is full and folded.
\end{lemma}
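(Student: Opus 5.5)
Fullness is immediate from the definition, so the work is foldedness; I will check it by an exhaustive case split on types of states. Every old state $q\in Q$ has a $0$-child and an $m$-child copied from $\mathcal A$, and its middle children are $c_1,\dots,c_{m-1}$. Every residue state $c_s$ with $1\le s\le m-1$ has all $n$ children prescribed, namely $c_s\cdot d=c_{s+d}$. So every state has exactly $n$ children, and $\operatorname{Res}_m(\mathcal A,c)$ is full. Reachability is also clear: old states are reached by hatted binary words, and $c_d=\rho\cdot d$. (If $m=1$ there is nothing to prove.)

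For foldedness, suppose two distinct father states $p\ne q$ have the same ordered list of children. I will split into three cases.

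\emph{Both states old.} Their $0$-children and $m$-children are $(p\cdot0)_{\mathcal A}$, $(p\cdot1)_{\mathcal A}$ and the same for $q$. These lie in $Q$, so equal children in the inflation means $p$ and $q$ have the same ordered pair of children in $\mathcal A$. Since $\mathcal A$ is folded, $p=q$.

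\emph{Both states residue states,} say $c_s$ and $c_{s'}$ with $1\le s,s'\le m-1$. Their $0$-children are $c_s$ and $c_{s'}$ themselves. These are equal only if $s=s'$, because the residue states are distinct by construction.

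\emph{Mixed case,} an old state $q$ and a residue state $c_s$ with $s\ne0$. I compare the $0$-children: $c_s\cdot0=c_s$, whereas $q\cdot0\in Q$. Since $c_s\notin Q$ for $s\ne0$, they differ. Here I use that $c=c_0$ is the only residue-indexed state lying in $Q$.

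The only step needing care is this mixed case, where one must remember that $c_0=c$ is an old state. The comparison via the $0$-child avoids that issue, since the $0$-child of $c_s$ is $c_s$ itself, not $c_0$. Hence no two distinct father states share an ordered list of children, and the automaton is folded.
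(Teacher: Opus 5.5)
Your proof is correct and matches the paper's argument. Two old states are separated using foldedness of $\mathcal A$ applied to their $0$- and $m$-children. Each new residue state $c_s$ is separated from every other state by its $0$-child, since $c_s\cdot 0=c_s$ while every old state has a $0$-child in $Q$.
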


\begin{proof}
Fullness is immediate.  If two old states have the same ordered \(n\)-tuple of
children after inflation, then in particular they have the same \(0\)-child
and the same \(m\)-child.  Thus they have the same \(0\)- and \(1\)-children
in \(\mathcal A\), and foldedness of \(\mathcal A\) makes them equal.

The new residue states are distinguished by their \(0\)-children:
\[
        c_s\cdot0=c_s.
\]
No old state has \(0\)-child equal to one of the new residue states.  Hence no
new residue state folds with an old state or with another residue state.
\end{proof}

\subsubsection{The raw product after inflation}
\label{subsubsec:raw-product-after-inflation}

We now combine the two inflations.  Throughout this subsubsection and the
next,
\[
        T=(S,t,o,s_0)
\]
is an inflatable binary transducer with set of entry states \(B_T\), the
state \(z\in B_T\) is a fixed entry state, and
\[
        \mathcal A=(Q,\tau,\rho)
\]
is a full binary tree-automaton with a distinguished state \(c\in Q\).  We
assume \(m\ge2\); for \(m=1\) every residue inflation is the identity
operation.  We put
\[
        T'=\operatorname{Res}_m(T,z),
        \qquad
        \mathcal A'=\operatorname{Res}_m(\mathcal A,c).
\]

By Lemmas~\ref{lem:ordinary-residue-inflation-homeomorphism}
and~\ref{lem:first-output-of-ordinary-residue-inflation}, the transducer
\(T'\) represents an order-preserving rational homeomorphism of \(\C_n\), it
is epsilon-separated, and its entry states
\[
        B_{T'}=B_T\sqcup\{\mathsf O_1,\ldots,\mathsf O_{m-1}\}
\]
are homeomorphism states.  Hence the geometric construction of
Subsection~\ref{subsec:geometric-determinization} applies to the pair
\((T',\mathcal A')\), and we write
\[
        \Gamma_n=\mathcal G^{\mathrm{raw}}_{T'}(\mathcal A').
\]
Every principal state of \(\Gamma_n\) is a pair in
\[
        \bigl(B_T\sqcup\{\mathsf O_1,\ldots,\mathsf O_{m-1}\}\bigr)
        \times
        \bigl(Q\sqcup\{c_1,\ldots,c_{m-1}\}\bigr).
\]
A principal state of \(\Gamma_n\) lying in \(B_T\times Q\) is called a
\emph{binary principal state}.  We put
\[
        C_r=(\mathsf O_r,c_r)
        \qquad(0\le r\le m-1),
\]
with subscripts read modulo \(m\), and call
\(C_1,\ldots,C_{m-1}\)
the \emph{routing principal states}; the state
\[
        C_0=(\mathsf O_0,c_0)=(z,c)
\]
is a binary principal state.

\begin{lemma}
\label{lem:raw-product-after-inflation}
The root of \(\Gamma_n\) is the binary principal state \((s_0,\rho)\).
Moreover, the following hold.
\begin{enumerate}[label=(\roman*)]
    \item Let \((e,q)\) be an accessible binary principal state of
    \(\Gamma_n\).  The edges of \(\Gamma_n\) at \((e,q)\) are:
    \[
        (e,q)
        \xrightarrow{\ \widehat{\lambda_e(u)}\ }
        \bigl(t(e,u),\,q\cdot u\bigr)
        \qquad(u\in\mathcal E_T(e)),
        \leqno{\mathrm{(E1)}}
    \]
    whose targets are again binary principal states, and
    \[
        (e,q)
        \xrightarrow{\ \theta_e^{[n]}(\widehat{u_0}\,d)\ }
        C_d
        \qquad
        \bigl(u_0\ \text{a father vertex of the tree of }
        \mathcal E_T(e),\ 1\le d\le m-1\bigr).
        \leqno{\mathrm{(E2)}}
    \]
    By Lemma~\ref{lem:inflated-cells-types-residues}, the labels of the
    edges \(\mathrm{(E1)}\) are exactly the old leaves of
    \(\Lambda_T(e)^{[n]}\), and, for each \(d\), the labels of the edges
    \(\mathrm{(E2)}\) with target \(C_d\) are exactly the middle leaves of
    \(\Lambda_T(e)^{[n]}\) of residue \(d\).
    \item Let \(C_r\), \(1\le r\le m-1\), be accessible in \(\Gamma_n\).  The
    edges of \(\Gamma_n\) at \(C_r\) are:
    \[
        C_r
        \xrightarrow{\ d\ }
        C_{r+d}
        \qquad(d\in X_n),
        \leqno{\mathrm{(E3)}}
    \]
    with subscripts modulo \(m\).
\end{enumerate}
\end{lemma}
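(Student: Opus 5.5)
The plan is to unwind Definition~\ref{def:contracted-raw-forward-product} for the pair \((T',\mathcal A')\) and compute everything from Lemma~\ref{lem:first-output-of-ordinary-residue-inflation} together with Definition~\ref{def:residue-inflation-automata}. The root of \(\Gamma_n\) is by definition \((s_{T'},\rho)\), and the initial state of \(T'\) is \(s_0\). Moreover \(s_0\in B_T\), since it is a homeomorphism state and therefore has no incoming \(\eps\)-edge by Lemma~\ref{lem:no-epsilon-into-homeomorphism-state}(i). So the root is a binary principal state.

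For (i), I fix an accessible \((e,q)\) with \(e\in B_T\) and \(q\in Q\). By Lemma~\ref{lem:first-output-of-ordinary-residue-inflation}, the first-output cell of \(T'\) at \(e\) is \(\theta_e^{[n]}:\mathcal E_T(e)^{[n]}\to\Lambda_T(e)^{[n]}\). So the edges at \((e,q)\) are indexed by the leaves \(x\) of \(\mathcal E_T(e)^{[n]}\): each such \(x\) gives an edge labelled \(\theta_e^{[n]}(x)\) with target \((t'(e,x),q\cdot x)\), where \(q\cdot x\) is computed in \(\mathcal A'\). By Lemma~\ref{lem:inflated-cells-types-residues}(i), every leaf is of one of two kinds.

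\emph{Old leaves.} Such a leaf is \(x=\widehat u\) with \(u\in\mathcal E_T(e)\). By Lemma~\ref{lem:inflated-cells-types-residues}(ii) its label is \(\widehat{\lambda_e(u)}\). For the target, Lemma~\ref{lem:states-in-residue-inflation}(i) gives \(t'(e,\widehat u)=t(e,u)\). In \(\mathcal A'\), the letters \(0\) and \(m\) act on old states as \(0\) and \(1\) act in \(\mathcal A\), so \(q\cdot\widehat u=q\cdot u\), computed in \(\mathcal A\). Finally, \(t(e,u)\in B_T\) by Lemma~\ref{lem:first-output-prefix-codes}. This gives (E1), with binary principal targets.

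\emph{Middle leaves.} Such a leaf is \(x=\widehat{u_0}d\), where \(u_0\) is a father vertex of the tree of \(\mathcal E_T(e)\) and \(1\le d\le m-1\). First, \(q\cdot\widehat{u_0}\) is an old state, and reading the middle letter \(d\) from it gives \(c_d\). Second, for the transducer coordinate I need the state of \(T'\) reached when the input \(\widehat{u_0}d\) is read from \(e\). Reading \(\widehat{u_0}\) from \(e\) gives no output before the letter \(d\), so by construction it follows the \(\eps\)-path to the state \(s=t(e,u_0)\). This \(s\) satisfies \(e(s)=e\) and \(u(s)=u_0\), and Definition~\ref{def:ordinary-residue-inflation-transducer} sends \(s\) on the letter \(d\) to \(\mathsf O_d\) with output \(v_{s,d}=\theta_e^{[n]}(\widehat{u_0}d)\). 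Hence the target is \((\mathsf O_d,c_d)=C_d\). The description of the labels then follows from Lemma~\ref{lem:inflated-cells-types-residues}(ii),(iii).

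For (ii), the state \(C_r=(\mathsf O_r,c_r)\) has first-output cell \(d\mid d:\mathsf O_{r+d}\) by Lemma~\ref{lem:first-output-of-ordinary-residue-inflation}. Also \(c_r\cdot d=c_{r+d}\), and when \(r+d\equiv0\) this is \(c_0=c\), matching \(\mathsf O_0=z\) and giving \(C_0=(z,c)\). This yields (E3).

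The main obstacle is the middle-leaf case: matching the transducer path through \(\eps\)-edges to the vertex \(u_0\), using the fact that \(\eps\)-components form a forest rooted at entry states (Definition~\ref{def:inflatable-binary-transducer}). One must also check that the prefixes \(\widehat{u_0}\) with \(u_0\) a proper prefix in the first-output tree really produce empty output. This holds because the letters \(0\) and \(m\) copy the binary outputs, which vanish along such prefixes.
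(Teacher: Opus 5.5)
Your proposal is correct and follows the paper's proof: you unwind Definition~\ref{def:contracted-raw-forward-product} for $(T',\mathcal A')$, read the edges off the inflated first-output cell of Lemma~\ref{lem:first-output-of-ordinary-residue-inflation}, and compute the $\mathcal A'$-coordinates from Definition~\ref{def:residue-inflation-automata}. The differences are cosmetic: you re-derive the terminal transducer states via Lemma~\ref{lem:states-in-residue-inflation}(i) and the $\eps$-forest, which Lemma~\ref{lem:first-output-of-ordinary-residue-inflation} already records, and you add the check $s_0\in B_T$, which the paper leaves implicit.
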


\begin{proof}
The root of \(\Gamma_n\) is the pair of initial states, that is,
\((s_0,\rho)\).

(i)  By Definition~\ref{def:contracted-raw-forward-product}, the edges of
\(\Gamma_n\) at \((e,q)\) are indexed by the leaves \(\ell\) of the
first-output input code of \(T'\) at \(e\); the edge indexed by \(\ell\) has
label equal to the first-output label of \(\ell\), transducer coordinate
equal to the terminal state of the first-output block, and
\(\mathcal A'\)-coordinate \(q\cdot\ell\).  By
Lemma~\ref{lem:first-output-of-ordinary-residue-inflation}, the first-output
cell of \(T'\) at \(e\) is the \(n\)-inflation
\[
        \theta_e^{[n]}:
        \mathcal E_T(e)^{[n]}\longrightarrow\Lambda_T(e)^{[n]}
\]
of the first-output cell of \(T\) at \(e\); the old leaf \(\widehat u\)
terminates at \(t(e,u)\in B_T\), and a middle leaf of residue \(d\)
terminates at \(\mathsf O_d\).  For an old leaf \(\widehat u\), the label is
\[
        \theta_e^{[n]}(\widehat u)=\widehat{\lambda_e(u)}
\]
by Lemma~\ref{lem:inflated-cells-types-residues}(ii), applied to the cell
\(\lambda_e\), and the \(\mathcal A'\)-coordinate is
\[
        q\cdot\widehat u=q\cdot u\in Q,
\]
because the transitions of \(\mathcal A'\) on the extreme letters \(0,m\)
copy the binary transitions of \(\mathcal A\).  This gives \(\mathrm{(E1)}\).
For a middle leaf \(\widehat{u_0}\,d\), the \(\mathcal A'\)-coordinate is
\[
        q\cdot(\widehat{u_0}\,d)
        =
        (q\cdot u_0)\cdot d
        =
        c_d,
\]
because every state of \(Q\) has \(d\)-child \(c_d\) in \(\mathcal A'\).
This gives \(\mathrm{(E2)}\).  The description of the labels follows from
Lemma~\ref{lem:inflated-cells-types-residues}(ii) and (iii).

(ii)  By Lemma~\ref{lem:first-output-of-ordinary-residue-inflation}, the
first-output cell of \(T'\) at \(\mathsf O_r\) is the one-letter routing cell
\[
        d\mid d:\mathsf O_{r+d}
        \qquad(d\in X_n),
\]
and in \(\mathcal A'\) we have
\(c_r\cdot d=c_{r+d}\),
with subscripts modulo \(m\).  This gives \(\mathrm{(E3)}\).
\end{proof}

\subsubsection{Inflation and the geometric construction}
\label{subsubsec:inflation-and-geometric}

The goal of this subsubsection is to prove that residue inflation commutes with
the geometric construction: inflating the transducer and the input automaton
and then applying the geometric construction yields the same automaton as
applying the binary geometric construction first and inflating the result
(Theorem~\ref{thm:ordinary-residue-inflation-commutes-with-geometric} below).
This is the mechanism behind the computation of the cores of the standard
chain in Subsection~\ref{subsec:standard-chain-cores}: it reduces every
\(n\)-ary core computation to the corresponding binary computation of
Section~\ref{sec:binary-descending-chain}.

One hypothesis deserves emphasis before we begin.  The right-hand side of the
theorem is the residue inflation of the binary geometric automaton at the pair
formed by the return state of the transducer and the distinguished state of
the input automaton.  Since the return state is an entry state, this pair is a
principal state of the binary contracted raw product, \emph{provided it is
accessible there}.  If the pair were not accessible, it would not appear in
the binary geometric automaton at all, and the inflation on the right-hand
side would not even be defined.  Thus the accessibility hypothesis in the
theorem is necessary.
In the application to the standard chain it is verified directly; see the
proof of Proposition~\ref{prop:standard-chain-cores}.

The idea of the proof is the following.  In both automata, the middle letters
\(1,\ldots,m-1\) are inert: reading a middle letter drops the computation into
a routing layer of \(m-1\) states, which records nothing but the digit sum
modulo \(m\), and the computation re-enters the binary layer, at the return
pair, exactly when the digit sum returns to \(0\); on the extreme letters
\(0\) and \(m\), the binary layer reproduces the binary geometric automaton,
with hatted words.  Consequently, in both automata every state is reached
either by the hatted version \(\widehat\xi\) of a binary word \(\xi\), or by a
single middle letter.  The isomorphism is then forced: it must send the state
reached by such a representative word in one automaton to the state reached by
the same word in the other.  Verifying that this assignment is well defined,
bijective, and compatible with the transitions amounts to verifying that two
representative words reach the same state in one automaton if and only if
they do so in the other, together with the three reading rules just described
(Lemma~\ref{lem:reading-inflated-geometric}).

We keep the setting and notation of
Subsubsection~\ref{subsubsec:raw-product-after-inflation}.  In addition, note
that by Definition~\ref{def:inflatable-binary-transducer} the transducer
\(T\) itself is epsilon-separated and its entry states are homeomorphism
states, so the geometric construction applies to the pair
\((T,\mathcal A)\) as well.  We write
\[
        \Gamma_2=\mathcal G^{\mathrm{raw}}_T(\mathcal A),
        \qquad
        \mathcal G=\mathcal G_T(\mathcal A),
        \qquad
        \mathcal G'=\mathcal G_{T'}(\mathcal A').
\]

We can now state the theorem.

\begin{theorem}[Residue inflation commutes with the geometric construction]
\label{thm:ordinary-residue-inflation-commutes-with-geometric}
Let \(T\) be an inflatable binary transducer, let \(z\in B_T\), and let
\((\mathcal A,c)\) be a pointed full binary tree-automaton.  Assume that
\[
        (z,c)
\]
is accessible in the contracted raw product
\[
        \mathcal G^{\mathrm{raw}}_T(\mathcal A);
\]
since \(z\) is an entry state, \((z,c)\) is then a principal state of
\(\mathcal G^{\mathrm{raw}}_T(\mathcal A)\), and hence a state of
\(\mathcal G_T(\mathcal A)\), so that the right-hand side below is defined.
Then there is a canonical isomorphism
\[
        \mathcal G_{\operatorname{Res}_m(T,z)}
        \bigl(\operatorname{Res}_m(\mathcal A,c)\bigr)
        \cong
        \operatorname{Res}_m
        \bigl(\mathcal G_T(\mathcal A),(z,c)\bigr).
\]
\end{theorem}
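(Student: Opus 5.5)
The plan is to define the isomorphism on representative words, following the outline given before the statement. Put \(\mathcal R=\operatorname{Res}_m(\mathcal G,(z,c))\). Its states are the states of \(\mathcal G\) together with \(m-1\) residue states \((z,c)_1,\dots,(z,c)_{m-1}\). Since every state of \(\mathcal G\) is reached by some binary word \(\xi\), every state of \(\mathcal R\) is reached either by \(\widehat\xi\) or by a single middle letter \(d\).

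The first step is a reading lemma for \(\mathcal G'\), which is the Lemma~\ref{lem:reading-inflated-geometric} referred to above. It has three parts.
\begin{enumerate}
\item[(a)] For every binary word \(\xi\), the word \(\widehat\xi\) reaches in \(\mathcal G'\) the same vertex as \(\xi\) reaches in \(\mathcal G\). This uses a canonical identification of the binary layer of \(\mathcal G'\) with \(\mathcal G\).
\item[(b)] Every vertex of the binary layer of \(\mathcal G'\) reads each middle letter \(d\) into \(C_d\).
\item[(c)] \(C_r\cdot d=C_{r+d}\), with \(C_0=(z,c)\).
\end{enumerate}

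To prove this, take the binary layer of \(\Gamma_n\) to be the subgraph spanned by the binary principal states and the (E1) edges. By Lemma~\ref{lem:raw-paths}, together with Lemma~\ref{lem:states-in-residue-inflation}(i), which says hatted input words stay among old states, the accessible binary principal states of \(\Gamma_n\) that are reached through hatted words coincide with the accessible principal states of \(\Gamma_2\). By (E1), their edge labels are the hatted binary labels. Every binary principal state reached only after some middle letter is reached through some \(C_r\). By (E3) and the return rule, it is then reached from \(C_0=(z,c)\), and the accessibility hypothesis puts this state in the first class, so the binary layer is exactly \(\Gamma_2\) with hatted labels.

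Next we compare the pasted prefix trees. At a binary principal state \((e,q)\), \(\mathcal G'\) pastes the prefix tree of \(\Lambda_T(e)^{[n]}\), and \(\mathcal G\) pastes that of \(\Lambda_T(e)\). By Lemma~\ref{lem:inflated-cells-types-residues}(i), the inflated prefix tree is the binary one with extreme edges hatted and with every middle child of an internal vertex being a middle leaf of residue \(d\). By (E2) and Lemma~\ref{lem:inflated-cells-types-residues}(iii), that leaf is glued to \(C_d\). This gives (a) and gives (b) both at principal and at delay vertices. Part (c) is (E3).

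The second step defines \(\Phi\colon\mathcal G'\to\mathcal R\) by sending the state of \(\mathcal G'\) reached by \(\widehat\xi\) to the state of \(\mathcal R\) reached by \(\widehat\xi\), and sending \(C_d\) to \((z,c)_d\). We then check four things.
\begin{itemize}
\item Every state of \(\mathcal G'\) is reached by such a word. This holds for vertices of the binary layer reached through hatted words, and for the \(C_d\). Any other vertex is reached after passing through \(C_0=(z,c)\), which is itself reached by a hatted word.
\item \(\Phi\) is well defined and injective. By (a), two hatted words reach the same state in \(\mathcal G'\) if and only if the binary words reach the same state in \(\mathcal G\), which in turn holds if and only if they reach the same state in \(\mathcal R\). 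The \(C_d\) are distinct from the binary layer, by the transducer coordinate \(\mathsf O_d\), and from one another.
\item \(\Phi\) is surjective, by the remark in the first paragraph.
\item \(\Phi\) preserves transitions. On the extreme letters this follows from (a), since \(\mathcal R\) copies \(\mathcal G\) on \(0\) and \(m\). On middle letters it follows from (b) and (c), compared with Definition~\ref{def:residue-inflation-automata}.
\end{itemize}

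The main obstacle is the first step, and specifically the claim that the binary layer of \(\mathcal G'\) contains no new principal states reached only after middle letters. This is exactly where the accessibility of \((z,c)\) enters. A secondary point is that delay vertices created by folding common prefixes in \(\mathcal G'\) must correspond exactly to those in \(\mathcal G\). Lemma~\ref{lem:inflated-cells-types-residues}(i) should settle this, because middle children of internal prefix-tree vertices are leaves and never share a nontrivial prefix with an old leaf.
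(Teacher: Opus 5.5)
Your proposal is correct and follows essentially the same route as the paper. Your reading lemma is Lemma~\ref{lem:reading-inflated-geometric}, except that the paper proves your part (a) by induction on $|\xi|$ through the explicit formula $\widehat\xi^{\,+}=\bigl((e,q),\widehat\eta\bigr)$ rather than by a global identification of the binary layer, and it uses the accessibility of $(z,c)$ exactly where you do, via a fixed word $\pi$ with $\pi^+=(z,c)$, to show that every state of $\mathcal G'$ is some $\widehat\xi^{\,+}$ or some $C_s$; the isomorphism is the same map, written in the direction $\mathcal R\to\mathcal G'$, with the same checks of well-definedness, bijectivity and compatibility with transitions.
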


Before turning to the proof, we record an observation about reading words in
an arbitrary geometric automaton.  It is clear from
Definition~\ref{def:geometric-forward-automaton}; we note it explicitly
because it will be used repeatedly below.

\begin{remark}
\label{rem:reading-geometric}
Let \(\psi\) and \(\mathcal B\) be as in
Definition~\ref{def:geometric-forward-automaton}, and put
\[
        \Gamma=\mathcal G^{\mathrm{raw}}_\psi(\mathcal B),
        \qquad
        \mathcal H=\mathcal G_\psi(\mathcal B).
\]
Recall that at each principal state \(p=(s,q)\) of \(\Gamma\), distinct
outgoing edges carry distinct labels, and the set of these labels is the
finite complete prefix code \(\Lambda(s)\) of nonempty words
(Lemma~\ref{lem:first-output-prefix-codes} and the proof of
Lemma~\ref{lem:clone-contract-well-defined}).  Then the following hold.
\begin{enumerate}[label=(\roman*)]
    \item The states of \(\mathcal H\) are the principal states \(p\) of
    \(\Gamma\), together with the delay states \((p,\xi)\), where \(p=(s,q)\)
    is a principal state and \(\xi\) is a nonempty father vertex of the
    prefix tree of \(\Lambda(s)\).  The foldings in the construction identify
    only common initial subpaths of edges with the same source, so vertices
    pasted at distinct principal states are never identified; hence, with the
    convention \((p,\eps)=p\), two such states are equal only if they have
    the same principal component and the same prefix component.
    \item Let \(p=(s,q)\) be a principal state, let \(\xi\) be a father
    vertex of the prefix tree of \(\Lambda(s)\), possibly empty, and let
    \(b\in X_m\).  Reading \(b\) from \((p,\xi)\) moves one step down the
    pasted prefix tree at \(p\):
    \[
        (p,\xi)\cdot b
        =
        \begin{cases}
        (p,\xi b),
        & \text{if }\xi b\text{ is a father vertex of the tree of }
          \Lambda(s),\\[1mm]
        \text{the target of the edge of }\Gamma\text{ at }p
        \text{ labelled }\xi b,
        & \text{if }\xi b\in\Lambda(s).
        \end{cases}
    \]
    Exactly one of the two cases occurs, since \(\xi b\) is comparable with
    some codeword of the complete prefix code \(\Lambda(s)\), and a codeword
    cannot be a father vertex.
    \item Applying (ii) repeatedly: if \(x\) is the concatenation of the
    labels along a path of \(\Gamma\) from the root to a principal state
    \(p\), then
    \[
        x^+=p
        \qquad\text{in }\mathcal H .
    \]
\end{enumerate}
\end{remark}

\paragraph{The two automata to be compared.}
Since \(\mathcal A\) is full, the binary geometric automaton \(\mathcal G\)
is a full binary tree-automaton
(Definition~\ref{def:geometric-forward-automaton}), and by the accessibility
hypothesis \((z,c)\) is one of its states.  Hence the residue inflation
\[
        \mathcal R
        =
        \operatorname{Res}_m\bigl(\mathcal G,(z,c)\bigr)
\]
is defined.  To keep the three residue layers apart, we denote the residue
states of \(\mathcal R\) by
\[
        \mathfrak o_1,\ldots,\mathfrak o_{m-1},
\]
and accordingly
\(\mathfrak o_0=(z,c)\).
Thus, by Definition~\ref{def:residue-inflation-automata}, the state set of
\(\mathcal R\) is the disjoint union of the state set of \(\mathcal G\) and
\(\{\mathfrak o_1,\ldots,\mathfrak o_{m-1}\}\), and the transitions of
\(\mathcal R\) are: for a state \(g\) of \(\mathcal G\),
\[
        g\cdot0=(g\cdot0)_{\mathcal G},
        \qquad
        g\cdot m=(g\cdot1)_{\mathcal G},
        \qquad
        g\cdot d=\mathfrak o_d
        \quad(1\le d\le m-1),
\]
and, for the residue states,
\[
        \mathfrak o_s\cdot a=\mathfrak o_{s+a}
        \qquad(a\in X_n),
\]
with subscripts modulo \(m\).

Since \((z,c)\) is accessible in \(\Gamma_2\), there is a path of
\(\Gamma_2\) from the root to \((z,c)\); we fix, once and for all, a binary
word
\[
        \pi\in\{0,1\}^*
\]
equal to the concatenation of the labels along such a path.  By
Remark~\ref{rem:reading-geometric}(iii),
\[
        \pi^+=(z,c)
        \qquad\text{in }\mathcal G .
\]

\begin{lemma}
\label{lem:reading-inflated-geometric}
The following hold in \(\mathcal G'\), where \(\xi,\zeta\) range over binary
words, \(d\) over middle letters, \(a\) over \(X_n\), and \(r,s\) over
\(\{1,\ldots,m-1\}\).
\begin{enumerate}[label=(\roman*)]
    \item If
    \(\xi^+=\bigl((e,q),\eta\bigr)\)
    in \(\mathcal G\), where \((e,q)\) is a principal state of \(\Gamma_2\)
    and \(\eta\) is a father vertex of the prefix tree of
    \(\Lambda_T(e)\), possibly empty, then
    \[
        \widehat\xi^{\,+}=\bigl((e,q),\widehat\eta\bigr)
        \qquad\text{in }\mathcal G' .
    \]
    Consequently,
    \[
        \widehat\xi^{\,+}=\widehat\zeta^{\,+}\ \text{in }\mathcal G'
        \quad\Longleftrightarrow\quad
        \xi^+=\zeta^+\ \text{in }\mathcal G .
    \]
    \item \(\widehat\xi^{\,+}\cdot d=C_d\).  In particular, taking
    \(\xi=\eps\), each \(C_d\) is accessible in \(\Gamma_n\) and
    \(d^+=C_d\).
    \item \(C_s\cdot a=C_{s+a}\), with subscripts modulo \(m\).
    \item The states \(C_1,\ldots,C_{m-1}\) are pairwise distinct, and
    \(C_s\ne\widehat\xi^{\,+}\) for every binary word \(\xi\).
    \item Every state of \(\mathcal G'\) is of the form
    \(\widehat\xi^{\,+}\) for some binary word \(\xi\), or \(C_s\) for some
    \(s\).
\end{enumerate}
\end{lemma}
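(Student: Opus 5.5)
We prove the five items in order, reading words in \(\mathcal G'\) by Remark~\ref{rem:reading-geometric}(ii) together with the explicit edge description of \(\Gamma_n\) in Lemma~\ref{lem:raw-paths-after-inflation}. First, note that (E1) and (E2) show that at a binary principal state \((e,q)\) of \(\Gamma_n\) the label code is \(\Lambda_T(e)^{[n]}\). By Lemma~\ref{lem:inflated-cells-types-residues}(i), the father vertices of this prefix tree are exactly the hatted father vertices \(\widehat\eta\) of the prefix tree of \(\Lambda_T(e)\). Hence the delay states of \(\mathcal G'\) pasted at \((e,q)\) are the states \(((e,q),\widehat\eta)\). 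At a routing principal state \(C_r\) the code is the one-letter code \(X_n\), so there are no delay states.

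For (i) we induct on \(|\xi|\). The case \(\xi=\eps\) is immediate, since both roots are \((s_0,\rho)\); this uses the first assertion of Lemma~\ref{lem:raw-paths-after-inflation}. Suppose \(\xi^+=((e,q),\eta)\) and consider a binary letter \(b\). If \(\eta b\) is a father vertex, then \(\widehat\eta\widehat b\) is a father vertex of the inflated tree, so both automata move to the delay state with prefix component \(\eta b\), respectively \(\widehat{\eta b}\). If instead \(\eta b=\lambda_e(u)\in\Lambda_T(e)\), then \(\widehat{\eta b}\) is an old leaf. By (E1) its target in \(\Gamma_n\) is \((t(e,u),q\cdot u)\), which is exactly the target of the corresponding edge of \(\Gamma_2\), so the induction continues. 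For the consequence, apply Remark~\ref{rem:reading-geometric}(i) in both automata: two states coincide if and only if their principal and prefix components coincide, and hatting is injective.

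For (ii), suppose \(\widehat\xi^{\,+}=((e,q),\widehat\eta)\). The word \(\widehat\eta d\) is a middle leaf of \(\Lambda_T(e)^{[n]}\) of residue \(d\). Its edge is one of the edges (E2), so its target is \(C_d\); here we use Lemma~\ref{lem:raw-paths-after-inflation}(i) together with Lemma~\ref{lem:inflated-cells-types-residues}(iii). Taking \(\xi=\eps\) shows that \(C_d\) is reached from the root by the single letter \(d\), hence is accessible. Item (iii) is read off directly from (E3).

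For (iv), the states \(C_s\) are principal states with transducer coordinates \(\mathsf O_s\), which are distinct states of \(T'\) with \(s\neq 0\). Every \(\widehat\xi^{\,+}\) has a principal component in \(B_T\times Q\), so it is never equal to any \(C_s\), and the \(C_s\) are pairwise distinct. Remark~\ref{rem:reading-geometric}(i) again separates the states.

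For (v), the set consisting of all \(\widehat\xi^{\,+}\) and all \(C_s\) contains the root. We check it is closed under reading any letter \(a\in X_n\). From \(\widehat\xi^{\,+}\), reading \(0\) or \(m\) gives \((\xi0)\hat{}\,{}^+\) or \((\xi1)\hat{}\,{}^+\), by the hatted version of (i), and reading a middle letter gives some \(C_d\) by (ii). From \(C_s\), reading \(a\) gives \(C_{s+a}\) by (iii). The only delicate case is \(s+a\equiv 0\pmod m\), where we must check that \(C_0=(z,c)=\widehat\pi^{\,+}\). This holds by (i) applied to the fixed word \(\pi\), which satisfies \(\pi^+=(z,c)\). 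Since every state of \(\mathcal G'\) is accessible, closure under all letters gives the claim.

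We expect the main obstacle to be the case analysis in (i): one must check that the delay and leaf structure of the inflated prefix trees matches the binary one exactly under hatting. This rests entirely on Lemma~\ref{lem:inflated-cells-types-residues}(i)–(ii).
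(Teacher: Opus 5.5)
Your proposal is correct and follows the paper's proof essentially step for step. It uses induction on $|\xi|$ for (i) via Remark~\ref{rem:reading-geometric} and Lemma~\ref{lem:inflated-cells-types-residues}, reads off the (E2) and (E3) edges for (ii)--(iii), separates states by principal components for (iv), and uses the closure argument (equivalent to the paper's induction on word length) with $\widehat\pi^{\,+}=C_0$ handling the return to residue zero for (v); the edge-description lemma you cite is Lemma~\ref{lem:raw-product-after-inflation}.
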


\begin{proof}
Throughout, we use Remark~\ref{rem:reading-geometric} for the pairs
\((T,\mathcal A)\) and \((T',\mathcal A')\).

(i)  We argue by induction on \(|\xi|\).  For \(\xi=\eps\), both sides are
the respective roots, which equal \((s_0,\rho)\) by
Lemma~\ref{lem:raw-product-after-inflation}.  Let \(\xi'=\xi a\) with
\(a\in\{0,1\}\), write \(\xi^+=\bigl((e,q),\eta\bigr)\) in \(\mathcal G\),
and assume
\(\widehat\xi^{\,+}=\bigl((e,q),\widehat\eta\bigr)\)
in \(\mathcal G'\).  By Remark~\ref{rem:reading-geometric}(ii), applied in
\(\mathcal G\), exactly one of the following occurs: either \(\eta a\) is a
father vertex of the tree of \(\Lambda_T(e)\), and
\[
        \xi'^{\,+}=\bigl((e,q),\eta a\bigr)
        \qquad\text{in }\mathcal G;
\]
or \(\eta a=\lambda_e(u)\) for a unique \(u\in\mathcal E_T(e)\), and
\[
        \xi'^{\,+}=\bigl(t(e,u),\,q\cdot u\bigr)
        \qquad\text{in }\mathcal G .
\]
Now read the letter \(\widehat a\in\{0,m\}\) from
\(\bigl((e,q),\widehat\eta\bigr)\) in \(\mathcal G'\).  By
Lemma~\ref{lem:inflated-cells-types-residues}(i), applied to the code
\(\Lambda_T(e)\), the \(\widehat a\)-child of
\(\widehat\eta\) in the prefix tree of \(\Lambda_T(e)^{[n]}\) is
\(\widehat{\eta a}\); it is a father vertex of that tree if and only if
\(\eta a\) is a father vertex of the tree of \(\Lambda_T(e)\), and in the
codeword case
\[
        \widehat{\eta a}
        =
        \widehat{\lambda_e(u)}
\]
is an old codeword, whose unique edge at \((e,q)\) is the edge
\(\mathrm{(E1)}\) determined by \(u\), with target
\(\bigl(t(e,u),\,q\cdot u\bigr)\), by the label description in
Lemma~\ref{lem:raw-product-after-inflation}(i).  By
Remark~\ref{rem:reading-geometric}(ii), applied in \(\mathcal G'\), in both
cases the claim holds for \(\xi'\).

For the consequence: by Remark~\ref{rem:reading-geometric}(i), applied in
\(\mathcal G\) and in \(\mathcal G'\), each of the two equalities holds if
and only if the two states have the same principal component and the same
prefix component; the displayed formula transforms the data by hatting the
prefix component, and hatting is injective.

(ii)  Write \(\widehat\xi^{\,+}=\bigl((e,q),\widehat\eta\bigr)\) as in (i).
By Lemma~\ref{lem:inflated-cells-types-residues}(i), the \(d\)-child of
\(\widehat\eta\) in the prefix tree of \(\Lambda_T(e)^{[n]}\) is the middle
codeword \(\widehat\eta\,d\), of residue \(d\).  By the label description in
Lemma~\ref{lem:raw-product-after-inflation}(i), its unique edge at
\((e,q)\) is an edge \(\mathrm{(E2)}\) with target \(C_d\).  By
Remark~\ref{rem:reading-geometric}(ii),
\(\widehat\xi^{\,+}\cdot d=C_d\).
Taking \(\xi=\eps\) gives \(d^+=C_d\); in particular \(C_d\) is a state of
\(\mathcal G'\), hence accessible in \(\Gamma_n\).

(iii)  By Lemma~\ref{lem:raw-product-after-inflation}(ii), the labels of the
edges of \(\Gamma_n\) at \(C_s\) form the one-letter code \(X_n\); reading
\(a\) completes the codeword \(a\), and by
Remark~\ref{rem:reading-geometric}(ii) arrives at the target \(C_{s+a}\) of
the edge \(\mathrm{(E3)}\) labelled \(a\).

(iv)  If \(C_r=C_s\), then comparing the first coordinates gives
\(\mathsf O_r=\mathsf O_s\), and hence \(r=s\).  For the second assertion, by
(i) the state \(\widehat\xi^{\,+}\) is either a binary principal state, whose
transducer coordinate lies in \(B_T\subseteq S\), or a delay state.  The
routing principal state \(C_s\) has transducer coordinate
\(\mathsf O_s\notin S\), so it is not a binary principal state, and by
Remark~\ref{rem:reading-geometric}(i) it is not a delay state.

(v)  Since \(\mathcal G'\) is a tree-automaton, every state is of the form
\(w^+\) for some \(w\in\W_n\).  We argue by induction on \(|w|\).  For
\(w=\eps\), the root is \(\widehat\eps^{\,+}\).  Let \(w=w'a\).  If
\(w'^+=\widehat\xi^{\,+}\), then for \(a\in\{0,m\}\) we get
\(w^+=(\widehat\xi\,a)^+\)
by determinism, and \(\widehat\xi\,a\in\{0,m\}^*\) is again a hatted word;
while for \(a=d\) a middle letter, \(w^+=C_d\) by (ii).  If
\(w'^+=C_s\), then \(w^+=C_{s+a}\) by (iii); if \(s+a\not\equiv0\pmod m\)
this is a routing principal state, and if \(s+a\equiv0\pmod m\) then
\[
        w^+
        =
        C_0
        =
        (z,c)
        =
        \widehat\pi^{\,+},
\]
where the last equality is (i) applied to \(\xi=\pi\), using
\(\pi^+=(z,c)\) in \(\mathcal G\).
\end{proof}

\begin{proof}[Proof of
Theorem~\ref{thm:ordinary-residue-inflation-commutes-with-geometric}]
We first record that \(\mathcal R\) is a full \(n\)-ary tree-automaton.  It
is full because \(\mathcal G\) is full and, by
Definition~\ref{def:residue-inflation-automata}, every state of
\(\mathcal R\) has all \(n\) children.  Every state is reachable from the
root: the transitions of \(\mathcal R\) on states of \(\mathcal G\) and
extreme letters copy the binary transitions of \(\mathcal G\), so
\[
        \widehat\xi^{\,+}=\xi^+_{\mathcal G}
        \qquad\text{in }\mathcal R
        \qquad(\xi\in\{0,1\}^*),
        \tag{\(\flat\)}
\]
and every state of \(\mathcal G\) is of the form \(\xi^+_{\mathcal G}\);
moreover, for \(1\le s\le m-1\),
\[
        s^+=\mathfrak o_s
        \qquad\text{in }\mathcal R,
        \tag{\(\sharp\)}
\]
since the root is a state of \(\mathcal G\) and its \(s\)-child is
\(\mathfrak o_s\).

Define a map \(\Phi\) from the states of \(\mathcal R\) to the states of
\(\mathcal G'\) as follows.  For a state \(g\) of \(\mathcal G\), choose a
binary word \(\xi\) with \(\xi^+=g\) in \(\mathcal G\), and set
\[
        \Phi(g)=\widehat\xi^{\,+}
        \qquad\text{in }\mathcal G' ;
\]
by Lemma~\ref{lem:reading-inflated-geometric}(i), this does not depend on the
choice of \(\xi\), and \(\Phi\) is injective on the states of
\(\mathcal G\).  For the residue states, set
\[
        \Phi(\mathfrak o_s)=C_s
        \qquad(1\le s\le m-1).
\]
By Lemma~\ref{lem:reading-inflated-geometric}(iv), \(\Phi\) is injective on
the residue states, and its values there are distinct from its values on the
states of \(\mathcal G\); hence \(\Phi\) is injective.  By
Lemma~\ref{lem:reading-inflated-geometric}(v), \(\Phi\) is surjective.
Taking \(\xi=\eps\) shows that \(\Phi\) sends the root to the root.  Note
also that
\[
        \Phi\bigl((z,c)\bigr)
        =
        \widehat\pi^{\,+}
        =
        (z,c)
        =
        C_0,
\]
so \(\Phi(\mathfrak o_s)=C_s\) holds for all \(0\le s\le m-1\).

It remains to check that \(\Phi\) commutes with the \(n\) labelled
transitions; we use the description of the transitions of \(\mathcal R\)
given above.

Let \(g=\xi^+_{\mathcal G}\) be a state of \(\mathcal G\).

For the extreme
letters, by determinism in \(\mathcal G\), in \(\mathcal G'\) and in
\(\mathcal R\), together with \((\flat)\),
\[
        \Phi(g\cdot0)
        =
        \Phi\bigl((\xi0)^+_{\mathcal G}\bigr)
        =
        \widehat{\xi0}^{\,+}
        =
        \bigl(\widehat\xi\,0\bigr)^+
        =
        \widehat\xi^{\,+}\cdot0
        =
        \Phi(g)\cdot0
        \qquad\text{in }\mathcal G',
\]
and likewise
\[
        \Phi(g\cdot m)=\Phi(g)\cdot m,
\]
using \(\widehat\xi\,m=\widehat{\xi1}\).  For a middle letter \(d\),
\[
        \Phi(g\cdot d)
        =
        \Phi(\mathfrak o_d)
        =
        C_d
        =
        \widehat\xi^{\,+}\cdot d
        =
        \Phi(g)\cdot d
\]
by Lemma~\ref{lem:reading-inflated-geometric}(ii).  Finally, for a residue
state \(\mathfrak o_s\), \(1\le s\le m-1\), and \(a\in X_n\),
\[
        \Phi(\mathfrak o_s\cdot a)
        =
        \Phi(\mathfrak o_{s+a})
        =
        C_{s+a}
        =
        C_s\cdot a
        =
        \Phi(\mathfrak o_s)\cdot a
\]
by Lemma~\ref{lem:reading-inflated-geometric}(iii), where the second equality
uses \(\Phi(\mathfrak o_0)=C_0\) in the case \(s+a\equiv0\pmod m\).

Thus \(\Phi\) is a bijective map sending the root to the root and commuting
with all labelled transitions, that is, an isomorphism
\[
        \operatorname{Res}_m\bigl(\mathcal G_T(\mathcal A),(z,c)\bigr)
        \cong
        \mathcal G_{\operatorname{Res}_m(T,z)}
        \bigl(\operatorname{Res}_m(\mathcal A,c)\bigr).
        \qedhere
\]
\end{proof}

\subsection{The standard chain}
\label{subsec:standard-chain}
 
Let \(T^\varphi\) be the binary transducer from
Section~\ref{sec:binary-descending-chain}.  Its states are
\[
        \mathsf L,\mathsf R,\mathsf A,\mathsf B,
\]
with initial state \(\mathsf L\), and transition-output table
\[
\begin{array}{c|cc}
        &0&1\\ \hline
\mathsf L
        &0:\mathsf L&1:\mathsf R\\[1mm]
\mathsf R
        &0:\mathsf A&1:\mathsf R\\[1mm]
\mathsf A
        &00:\mathsf A&\eps:\mathsf B\\[1mm]
\mathsf B
        &01:\mathsf A&1:\mathsf R .
\end{array}
\]
This transducer is inflatable.  The only state with an incoming
\(\eps\)-edge is \(\mathsf B\), and its unique incoming edge is
\[
        \mathsf A\xrightarrow{\ 1\mid\eps\ }\mathsf B.
\]
The entry states are
\[
        \mathsf L,\mathsf R,\mathsf A,
\]
and they are homeomorphism states.

Define the \(n\)-ary transducer
\[
         T^\zeta=\operatorname{Res}_m(T^\varphi,\mathsf R).
\]
Note that by Lemma \ref{lem:ordinary-residue-inflation-homeomorphism}, $T^\zeta$ is a minimal transducer representing an order-preserving homeomorphism. Since $T^\varphi$ is semi-synchronizing, the transducer $T^\zeta$ is semi-synchronizing as well.
Let
\[
        \zeta:\C_n\to\C_n
\]
be the homeomorphism represented by $T^\zeta$.  Explicitly,
\( T^\zeta\) has initial state \(\mathsf L\), old states
\[
        \mathsf L,\mathsf R,\mathsf A,\mathsf B,
\]
and, for \(m\ge2\), routing states
\[
        \mathsf O_1,\ldots,\mathsf O_{m-1}.
\]
As in Definition~\ref{def:ordinary-residue-inflation-transducer}, we denote
the return state \(\mathsf R\) also by \(\mathsf O_0\), and subscripts of the
routing states \(\mathsf O_r\) are read modulo \(m\).
Its transition-output table is
\[
\begin{array}{c|ccc}
        &0&1\le d\le m-1&m\\ \hline
\mathsf L
        &0:\mathsf L&d:\mathsf O_d&m:\mathsf R\\[1mm]
\mathsf R
        &0:\mathsf A&d:\mathsf O_d&m:\mathsf R\\[1mm]
\mathsf A
        &00:\mathsf A&0d:\mathsf O_d&\eps:\mathsf B\\[1mm]
\mathsf B
        &0m:\mathsf A&d:\mathsf O_d&m:\mathsf R .
\end{array}
\]
For routing states,
\[
        \mathsf O_s\xrightarrow{\ d\mid d\ }\mathsf O_{s+d}
        \qquad(1\le s\le m-1,\ d\in X_n),
\]
with subscripts modulo \(m\); in particular, the routing states return to
\(\mathsf O_0=\mathsf R\) exactly when the accumulated digit sum returns to
\(0\) modulo \(m\).

Now, define
\[
        S_i=F_n^{\zeta^i}
        \qquad(i\ge0).
\]
Then \(S_0=F_n\), and since $T^\zeta$ is semi-synchronizing,
\[
        S_{i+1}
        =
        F_n^{\zeta^{i+1}}
        =
        (F_n^\zeta)^{\zeta^i}
        \le
        F_n^{\zeta^i}
        =
        S_i.
\]
Each \(S_i\) is isomorphic to \(F_n\).  Moreover, each \(S_i\) is closed in
\(F_n\).  Indeed, \(\zeta^i\) is order-preserving and
\(S_i=F_n^{\zeta^i}\le F_n\), so Lemma~\ref{lem:closure-conjugacy-prelim}
gives
\[
        \Cl_{F_n}(S_i)
        =
        \Cl_{F_n}(F_n)^{\zeta^i}
        =
        F_n^{\zeta^i}
        =
        S_i.
\]

\subsection{The core automata}
\label{subsec:standard-chain-cores}

For every \(i\ge0\), define
\[
        \mathcal B_i=\operatorname{Res}_m(\mathcal A_i,c),
\]
where \(\mathcal A_i\) is the \(i\)-th binary chain automaton from
Section~\ref{sec:binary-descending-chain}, and \(c\) is its distinguished
 binary inner state. 
 Note that  \[
         \mathcal B_0\cong\C(F_n).
 \]
                                                                            
\begin{lemma}
\label{lem:standard-Bi-full-folded}
For every \(i\ge0\), the automaton \(\mathcal B_i\) is full and folded.
\end{lemma}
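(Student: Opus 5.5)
This is a direct application of Lemma~\ref{lem:residue-inflation-preserves-full-folded}. That lemma says that if a full binary tree-automaton $\mathcal A$ is folded, then so is $\operatorname{Res}_m(\mathcal A,c)$ for any distinguished state $c$. Since $\mathcal B_i=\operatorname{Res}_m(\mathcal A_i,c)$, it therefore suffices to know that each binary automaton $\mathcal A_i$ is full and folded.

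For this input we rely on what was recorded in Subsection~\ref{subsec:cores-of-binary-chain}. There, right after the definition of $\mathcal A_i$, it was observed that every state has two children and that no two father states share the same ordered pair of children. For $i=0$, $\mathcal A_0$ is the standard core of $F$, whose four states $\rho,\ell,r,c$ have the pairwise distinct child pairs $(\ell,r),(\ell,c),(c,r),(c,c)$. For $i\ge1$ the claim can be rechecked quickly from the transition table.
\begin{itemize}
\item The $a$-states are separated by their $0$-children, since $a_t\cdot0=a_{t+1}$ and $t\mapsto t+1$ is a bijection modulo $2^i$.
\item The states $r$ and $c$ have $0$-child $a_0$, but their $1$-children $r$ and $c$ differ.
\item Any $a_t$ with $0$-child $a_0$ is $a_{2^i-1}$, which is odd, so its $1$-child is an $a$-state and not $r$ or $c$.
\item The states $\rho$ and $\ell$ are the only ones with $0$-child $\ell$, and their $1$-children $r$ and $c$ differ.
\end{itemize}
Alternatively, for $i\ge 1$ foldedness follows because $\mathcal A_i\cong\C(H_i)$ by Proposition~\ref{prop:cores-of-Hi}, and cores are folded.

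Feeding this into Lemma~\ref{lem:residue-inflation-preserves-full-folded} gives that $\mathcal B_i$ is full and folded. In the degenerate case $m=1$, residue inflation is the identity, so $\mathcal B_i=\mathcal A_i$ and there is nothing more to prove. No step here is a real obstacle; the only point that needs care is the observation about $0$-children separating the $a$-states, which is already in the paper.
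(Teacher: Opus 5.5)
Your proof is correct and follows the paper's: reduce to $\mathcal A_i$ being full and folded, then apply Lemma~\ref{lem:residue-inflation-preserves-full-folded}. The paper handles $i=0$ by citing Lemma~\ref{lem:explicit-core-of-Fn} rather than checking $\mathcal A_0$ directly, but your check works equally well, and your verification of the child pairs of $\mathcal A_i$ is complete. Drop the ``alternatively'' remark, though: the proof of Proposition~\ref{prop:cores-of-Hi} already uses the foldedness of $\mathcal A_{i+1}$, so invoking that proposition here would be circular.
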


\begin{proof}
For \(i=0\), this is Lemma~\ref{lem:explicit-core-of-Fn}.  For \(i\ge1\),
it follows from Lemma~\ref{lem:residue-inflation-preserves-full-folded},
because
\[
        \mathcal B_i=\operatorname{Res}_m(\mathcal A_i,c)
\]
and \(\mathcal A_i\) is full and folded.
\end{proof}

\begin{proposition}
\label{prop:standard-chain-cores}
For every \(i\ge0\),
\[
        \C(S_i)\cong\mathcal B_i.
\]
\end{proposition}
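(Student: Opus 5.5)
We argue by induction on \(i\).  For \(i=0\), \(S_0=F_n\) and \(\mathcal B_0\cong\C(F_n)\) was already noted.  For the induction step, assume \(\C(S_i)\cong\mathcal B_i\).  Since \(S_i^\zeta=S_{i+1}\le F_n\), the transducer \(T^\zeta\) is epsilon-separated with homeomorphic entry states (Lemma~\ref{lem:first-output-of-ordinary-residue-inflation} and Lemma~\ref{lem:ordinary-residue-inflation-homeomorphism}), and \(\mathcal B_i\) is finite, full and folded (Lemma~\ref{lem:standard-Bi-full-folded}) with the existence property (being a core), Corollary~\ref{thm:geometric-determinization-up-to-folding} applies.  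It gives
\[
        \C(S_{i+1})\cong\overline{\mathcal G_{T^\zeta}(\mathcal B_i)} .
\]
Thus it suffices to show that \(\mathcal G_{T^\zeta}(\mathcal B_i)\cong\mathcal B_{i+1}\), since \(\mathcal B_{i+1}\) is already folded.

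Write \(T^\zeta=\operatorname{Res}_m(T^\varphi,\mathsf R)\) and \(\mathcal B_i=\operatorname{Res}_m(\mathcal A_i,c)\).  We then apply Theorem~\ref{thm:ordinary-residue-inflation-commutes-with-geometric} with \(T=T^\varphi\), \(z=\mathsf R\) and \((\mathcal A,c)=(\mathcal A_i,c)\).  This gives
\[
        \mathcal G_{T^\zeta}(\mathcal B_i)\cong
        \operatorname{Res}_m\bigl(\mathcal G_{T^\varphi}(\mathcal A_i),(\mathsf R,c)\bigr).
\]
The binary computation of Section~\ref{sec:binary-descending-chain} (the worked case \(i=0\) and the proof of Proposition~\ref{prop:cores-of-Hi}) shows that \(\mathcal G_{T^\varphi}(\mathcal A_i)\) is exactly \(\mathcal A_{i+1}\).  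Under this identification the principal state \(\widehat c=(\mathsf R,c)\) becomes the state \(c\) of \(\mathcal A_{i+1}\).  Hence the right-hand side is \(\operatorname{Res}_m(\mathcal A_{i+1},c)=\mathcal B_{i+1}\), and the canonical isomorphism carries the distinguished point correctly.  The case \(m=1\) is simply Proposition~\ref{prop:cores-of-Hi}.

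The main point to verify is the accessibility hypothesis of Theorem~\ref{thm:ordinary-residue-inflation-commutes-with-geometric}: the pair \((\mathsf R,c)\) must be accessible in \(\mathcal G^{\mathrm{raw}}_{T^\varphi}(\mathcal A_i)\).  This follows from the explicit computation in Section~\ref{sec:binary-descending-chain}.  There \((\mathsf L,\rho)\xrightarrow{0}(\mathsf L,\ell)\xrightarrow{1}(\mathsf R,c)=\widehat c\) holds for every \(i\), because \(\mathsf L\) has the first-output block \(1\mid1:\mathsf R\) and \(\ell\cdot1=c\) in \(\mathcal A_i\).  One must also check that the state named \(c\) in the binary geometric automaton is the same one singled out in \(\mathcal A_{i+1}\).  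This is the renaming that removes the hats in the binary proof, so the distinguished inner state matches.
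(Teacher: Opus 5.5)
Your proposal is correct and follows the paper's proof essentially step for step. You use Corollary~\ref{thm:geometric-determinization-up-to-folding} to get $\C(S_{i+1})\cong\overline{\mathcal G_\zeta(\mathcal B_i)}$, then Theorem~\ref{thm:ordinary-residue-inflation-commutes-with-geometric} with the accessibility of $(\mathsf R,c)$ witnessed by $(\mathsf L,\ell)\xrightarrow{1}(\mathsf R,c)$, and finally the binary identification $\mathcal G_\varphi(\mathcal A_i)\cong\mathcal A_{i+1}$ carrying $(\mathsf R,c)$ to $c$, exactly as the paper does.
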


\begin{proof}
The case \(i=0\) is Lemma~\ref{lem:explicit-core-of-Fn}, since
\[
        \mathcal B_0=\operatorname{Res}_m(\mathcal A_0,c)=\C(F_n).
\]

Assume
\[
        \C(S_i)\cong\mathcal B_i
        =
        \operatorname{Res}_m(\mathcal A_i,c).
\]
Since
\[
        S_{i+1}=S_i^\zeta\le F_n,
\]
Corollary~\ref{thm:geometric-determinization-up-to-folding} gives
\[
        \C(S_{i+1})
        \cong
        \overline{\mathcal G_\zeta(\mathcal B_i)}.
\]
Now
\[
        T^\zeta=\operatorname{Res}_m(T^\varphi,\mathsf R),
\]
and the state
\[
        (\mathsf R,c)
\]
is accessible in
\[
        \mathcal G^{\mathrm{raw}}_{T^\varphi}(\mathcal A_i).
\]
Indeed, \((\mathsf L,\ell)\) is accessible, and the first-output block \(1\)
from \(\mathsf L\) sends
\[
        (\mathsf L,\ell)
        \longmapsto
        (\mathsf R,c).
\]
Theorem~\ref{thm:ordinary-residue-inflation-commutes-with-geometric} gives
\[
        \mathcal G_\zeta(\mathcal B_i)
        \cong
        \operatorname{Res}_m
        \bigl(\mathcal G_\varphi(\mathcal A_i),(\mathsf R,c)\bigr).
\]
The binary core computation in Section~\ref{sec:binary-descending-chain}
shows that the geometric automaton \(\mathcal G_\varphi(\mathcal A_i)\) is
already full and folded, and that
\[
        \mathcal G_\varphi(\mathcal A_i)
        \cong
        \mathcal A_{i+1}.
\]
Under this isomorphism the principal state
\[
        (\mathsf R,c)
\]
is the distinguished state \(c\) of \(\mathcal A_{i+1}\).  Hence
\[
        \mathcal G_\zeta(\mathcal B_i)
        \cong
        \operatorname{Res}_m(\mathcal A_{i+1},c)
        =
        \mathcal B_{i+1}.
\]
By Lemma~\ref{lem:standard-Bi-full-folded}, \(\mathcal B_{i+1}\) is already
full and folded, so the folded quotient is unchanged.  Therefore
\[
        \C(S_{i+1})\cong\mathcal B_{i+1}.
\]
\end{proof}

In particular, for each $i$, $S_{i+1}$ is a strict subgroup of $S_i$ since their cores are not isomorphic. 

\subsection{Closed overgroups of the standard chain}
\label{subsec:standard-chain-closed-overgroups}

We now compute the closed overgroups of the subgroups \(S_i\).  The only
input from the binary construction is the corresponding quotient computation
for the binary cores \(\mathcal A_i\).

Recall that
\[
        \mathcal A_0=\C(F)
\]
is the standard binary core, and that \(c\) denotes its unique inner state.
We use the same letter \(c\) for the distinguished inner state of each
\(\mathcal A_i\).  The binary quotient computation from
Section~\ref{sec:binary-descending-chain} implies the following: if
\[
        \theta:\mathcal A_i\longrightarrow\mathcal D
\]
is a quotient of binary tree-automata such that the unique morphism from $\mathcal A_i$ to $\mathcal A_0$  factors through $\theta$ (i.e., such that $\theta$ preserves the partition of root, left state, right state and inner states from $\mathcal A_i$), then 
\[
        \mathcal D\cong \mathcal A_j
\]
for some \(0\le j\le i\).
Moreover, under this isomorphism the image
\(\theta(c)\) of the distinguished state is the distinguished state \(c\) of
\(\mathcal A_j\).

We first record that morphisms inflate.

\begin{lemma}
\label{lem:inflating-morphisms}
Let
\[
        \alpha:\mathcal A\longrightarrow\mathcal D
\]
be a morphism of full binary tree-automata, and let \(c\in\mathcal A\).  Put
\[
        \bar c=\alpha(c).
\]
Then \(\alpha\) induces a morphism
\[
        \alpha^{[n]}:
        \operatorname{Res}_m(\mathcal A,c)
        \longrightarrow
        \operatorname{Res}_m(\mathcal D,\bar c).
\]
It is defined by
\[
        \alpha^{[n]}(q)=\alpha(q)
\]
on old states, and by
\[
        \alpha^{[n]}(c_s)=\bar c_s
        \qquad(1\le s\le m-1)
\]
on the new residue states.  If \(\alpha\) is surjective, then
\(\alpha^{[n]}\) is surjective.
\end{lemma}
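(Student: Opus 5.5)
The check is local and case-by-case, following the description of the transitions of $\operatorname{Res}_m$ in Definition~\ref{def:residue-inflation-automata}. First, $\alpha^{[n]}$ sends the root to the root, because $\alpha$ sends $\rho_{\mathcal A}$ to $\rho_{\mathcal D}$ and both residue inflations keep their original roots. It remains to verify that every labelled edge $p\xrightarrow{\,a\,}p'$ of $\operatorname{Res}_m(\mathcal A,c)$ is carried to the edge $\alpha^{[n]}(p)\xrightarrow{\,a\,}\alpha^{[n]}(p')$ of $\operatorname{Res}_m(\mathcal D,\bar c)$. Both automata are full, so all transitions are defined, and there are three kinds of edges.

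\emph{Old state, extreme letter.} Let $q\in Q$ and $a\in\{0,m\}$. The transition $q\cdot a$ in the inflation is $(q\cdot\check a)_{\mathcal A}$. Since $\alpha$ is a morphism, $\alpha((q\cdot\check a)_{\mathcal A})=(\alpha(q)\cdot\check a)_{\mathcal D}$, and this is exactly $\alpha(q)\cdot a$ in $\operatorname{Res}_m(\mathcal D,\bar c)$.

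\emph{Old state, middle letter.} For $1\le d\le m-1$ we have $q\cdot d=c_d$. This maps to $\bar c_d$, which equals $\alpha(q)\cdot d$ because in $\operatorname{Res}_m(\mathcal D,\bar c)$ every old state has $d$-child $\bar c_d$.

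\emph{Residue state.} For $1\le s\le m-1$ and $a\in X_n$ we have $c_s\cdot a=c_{s+a}$, with indices modulo $m$. If $s+a\not\equiv0$, this maps to $\bar c_{s+a}=\bar c_s\cdot a$. If $s+a\equiv0$, the target is $c_0=c$. This is the only place where the choice $\bar c=\alpha(c)$ is needed: the old-state rule gives $\alpha^{[n]}(c)=\alpha(c)=\bar c=\bar c_0$, which is consistent with the residue-state rule $\bar c_s\cdot a=\bar c_0$. Hence $\alpha^{[n]}$ is well defined as a single map and is a morphism.

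For surjectivity, suppose $\alpha$ is surjective on states and edges. Every old state of $\operatorname{Res}_m(\mathcal D,\bar c)$ is $\alpha(q)$ for some $q$, and each residue state $\bar c_s$ is the image of $c_s$, so $\alpha^{[n]}$ is onto on states. For edges, take a target edge $g\xrightarrow{\,a\,}g'$:
\begin{itemize}
\item if $g=\alpha(q)$ is old and $a$ is extreme, lift it to an old edge of $\mathcal A$ at $q$;
\item if $g$ is old and $a=d$ is a middle letter, take the edge $q\xrightarrow{\,d\,}c_d$ for any preimage $q$;
\item if $g=\bar c_s$ is a residue state, take the edge $c_s\xrightarrow{\,a\,}c_{s+a}$.
\end{itemize}
The argument is entirely routine. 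The only delicate point is the bookkeeping identification $c_0=c$ versus $\bar c_0=\bar c$ in the residue-state case, which holds precisely because of the choice $\bar c=\alpha(c)$.
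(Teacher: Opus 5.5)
Your proposal is correct and follows essentially the same case-by-case check as the paper: extreme letters via the binary morphism $\alpha$, middle letters sending old states to the residue state of residue $d$, and the routing rule on residue states with $c_0=c\mapsto\bar c_0=\bar c$. You also verify surjectivity on edges, which the paper leaves implicit; since both automata are full, it follows from surjectivity on states.
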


\begin{proof}
On old states and on the extreme letters \(0\) and \(m\), this is exactly the
fact that \(\alpha\) preserves the binary \(0\)- and \(1\)-transitions.  On an
old state \(q\) and a middle letter \(1\le d\le m-1\), both sides go to the
new residue state of residue \(d\):
\[
        \alpha^{[n]}(q\cdot d)
        =
        \alpha^{[n]}(c_d)
        =
        \bar c_d
        =
        \alpha(q)\cdot d .
\]
Finally, on the residue states, both inflated automata use the same
residue-routing rule: writing \(c_0=c\) and \(\bar c_0=\bar c\), we have
\[
        c_s\cdot d=c_{s+d}
        \qquad(1\le s\le m-1,\ d\in X_n),
\]
with subscripts read modulo \(m\), and similarly for the states
\(\bar c_s\).  Thus \(\alpha^{[n]}\) is a
morphism.  If \(\alpha\) is onto, then every old state of
\(\operatorname{Res}_m(\mathcal D,\bar c)\) is hit, and every new residue
state \(\bar c_s\) is the image of \(c_s\).  Hence \(\alpha^{[n]}\) is onto.
\end{proof}

We shall use the canonical identification
\[
        \operatorname{Res}_m(\mathcal A_0,c)\cong \C(F_n).
\]
Indeed, under this identification the old root, left state and right state of
\(\mathcal A_0\) become respectively
\[
        q_0,\ q_L,\ q_R
\]
in \(\C(F_n)\), and the residue state \(c_s\), where \(c_0=c\) is the inner
state of \(\mathcal A_0\), becomes
\[
        q^{\mathrm{in}}_s
        \qquad(0\le s\le m-1).
\]

\begin{proposition}
\label{prop:quotients-of-Bi}
Let \(i\ge0\), and let
\[
        \Theta:\mathcal B_i\longrightarrow\mathcal E
\]
be a quotient of \(n\)-ary tree-automata.  Suppose that \(\mathcal E\) admits
a surjective morphism
\[
        \mathcal E\longrightarrow \C(F_n).
\]
Then
\[
        \mathcal E\cong\mathcal B_j
\]
for some \(0\le j\le i\).
\end{proposition}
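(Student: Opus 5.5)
The plan is to reduce the statement to the binary quotient computation (Lemma~\ref{lem:quotients-of-Ai}, in the form recalled before Lemma~\ref{lem:inflating-morphisms}) by restricting $\Theta$ to the ``old'' part of $\mathcal B_i$, namely the states of $\mathcal A_i$ and the letters $0,m$. Since $\mathcal B_i$ is full, there is a unique morphism $\mathcal B_i\to\C(F_n)$. Because morphisms are unique when they exist, composing $\Theta$ with the given surjection $\mathcal E\to\C(F_n)$ yields exactly this morphism. Under the identification $\operatorname{Res}_m(\mathcal A_0,c)\cong\C(F_n)$, this morphism is $\pi_{i,0}^{[n]}$ from Lemma~\ref{lem:inflating-morphisms}. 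Consequently $\Theta$ never identifies two states with different images in $\C(F_n)$. In particular, it preserves the root/left/right/inner partition, and it separates the residue classes: old states of $\mathcal B_i$ map to $q_0,q_L,q_R,q^{\mathrm{in}}_0$, while each $c_s$ with $s\neq0$ maps to $q^{\mathrm{in}}_s$.

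First consider the residue-$0$ layer. The residue states $c_1,\dots,c_{m-1}$ lie over distinct $q^{\mathrm{in}}_s$, so their images in $\mathcal E$ are pairwise distinct and distinct from all images of old states. Next, let $\mathcal D$ be the set of images of old states, equipped with the transitions on $0$ and $m$, which we rename $0$ and $1$. Since the old states are closed under $0,m$ in $\mathcal B_i$, and $\Theta$ is a morphism, $\mathcal D$ is a binary tree-automaton. The restriction $\theta=\Theta|_{\mathcal A_i}:\mathcal A_i\to\mathcal D$ is a surjective morphism: every edge of $\mathcal E$ labelled $0$ or $m$ out of an old image comes from an old edge, because $\Theta$ is onto on edges and new states lie over different cores. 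Moreover, composing $\theta$ with the old part of $\mathcal E\to\C(F_n)$, which is $\mathcal A_0$, shows that $\mathcal D$ projects onto $\mathcal A_0$ compatibly. Lemma~\ref{lem:quotients-of-Ai} then gives $\mathcal D\cong\mathcal A_j$ for some $0\le j\le i$, with $\theta=\pi_{i,j}$ and $\theta(c)=c$.

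Finally, $\mathcal E$ is reassembled. Any old state $q$ satisfies $q\cdot d=c_d$ for each middle letter $d$, and $c_s\cdot d=c_{s+d}$ with $c_0=c$. Hence in $\mathcal E$ the $d$-child of every old image is $\Theta(c_d)$, and the routing transitions among the $\Theta(c_s)$ copy the residue rule, returning to $\Theta(c)=c$. This is exactly the transition structure of $\operatorname{Res}_m(\mathcal A_j,c)$, so $\Theta=\pi_{i,j}^{[n]}$ in the sense of Lemma~\ref{lem:inflating-morphisms} and $\mathcal E\cong\mathcal B_j$.

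The main obstacle is the second step: showing that $\mathcal D$ really is a quotient of $\mathcal A_i$ in the sense required by Lemma~\ref{lem:quotients-of-Ai}. This needs two things. The congruence induced on old states must involve only old states; this follows from the separation by $\C(F_n)$, since an old image can never equal some $\Theta(c_s)$ with $s\ne0$. And $\mathcal D$ must carry a projection onto $\mathcal A_0$ respecting the partition. The case $j=0$ needs separate care: there all $a_t$ collapse onto $c$, and one must check that this is consistent with the identification $\mathcal B_0\cong\C(F_n)$. The case $m=1$ reduces directly to Lemma~\ref{lem:quotients-of-Ai}.
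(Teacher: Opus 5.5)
Your proposal is correct and follows essentially the same route as the paper. Uniqueness of morphisms forces $\Theta$ followed by the surjection onto $\C(F_n)$ to equal $\pi_i^{[n]}$, so the nonzero residue states $c_1,\dots,c_{m-1}$ are never identified with anything; the kernel of $\Theta$ then restricts to a binary congruence on $\mathcal A_i$ whose quotient still projects onto $\mathcal A_0$, Lemma~\ref{lem:quotients-of-Ai} identifies that quotient with some $\mathcal A_j$ (with $c\mapsto c$), and $\mathcal E$ reassembles as $\operatorname{Res}_m(\mathcal A_j,c)=\mathcal B_j$. The separate care you flag for $j=0$ is not needed, since $\mathcal B_0=\operatorname{Res}_m(\mathcal A_0,c)$ by definition.
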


\begin{proof}
Let
\[
        \pi_i:\mathcal A_i\longrightarrow\mathcal A_0=\C(F)
\]
be the natural binary morphism.  It sends the distinguished state \(c\) of
\(\mathcal A_i\) to the unique inner state \(c\) of \(\mathcal A_0\).
By Lemma~\ref{lem:inflating-morphisms}, it inflates to a morphism
\[
        \pi_i^{[n]}:
        \mathcal B_i
        =
        \operatorname{Res}_m(\mathcal A_i,c)
        \longrightarrow
        \operatorname{Res}_m(\mathcal A_0,c)
        =
        \C(F_n).
\]

Let
\[
        \mu:\mathcal E\longrightarrow\C(F_n)
\]
be a surjective morphism.  Since rooted deterministic tree-automata admit at
most one morphism to a fixed target, the two morphisms
\[
        \Theta\mu,\ \pi_i^{[n]}:
        \mathcal B_i\longrightarrow\C(F_n)
\]
are equal.  Hence the kernel of \(\Theta\) refines the fibers of
\(\pi_i^{[n]}\).

In particular, no nonzero residue state of \(\mathcal B_i\) can be identified
with any other state.  Indeed, for \(1\le s\le m-1\), the state \(c_s\) is the
unique state of \(\mathcal B_i\) mapped by \(\pi_i^{[n]}\) to
\(q^{\mathrm{in}}_s\).  Old states map only to
\[
        q_0,\ q_L,\ q_R,\ q^{\mathrm{in}}_0,
\]
and the nonzero residue states map to the pairwise distinct states
\[
        q^{\mathrm{in}}_s
        \qquad(1\le s\le m-1).
\]
Therefore all identifications made by \(\Theta\) occur among the old states of
\(\mathcal A_i\).

Restrict the kernel of \(\Theta\) to the old states.  This gives a binary
automaton congruence on \(\mathcal A_i\), because the old \(0\)- and
\(m\)-transitions in \(\mathcal B_i\) are precisely the binary \(0\)- and
\(1\)-transitions in \(\mathcal A_i\).  Let
        $\bar{\mathcal A}$
be the corresponding binary quotient of \(\mathcal A_i\), and let
$\bar c$
be the image of the distinguished state \(c\).

We claim that
\[
        \mathcal E
        \cong
        \operatorname{Res}_m(\bar{\mathcal A},\bar c).
        \tag{\(*\)}
\]
Indeed, the old states of \(\mathcal E\) are exactly the quotient classes of
old states of \(\mathcal A_i\), and the nonzero residue states remain
singleton states.  The transitions are the residue-inflated transitions: on
old states, the \(0\)- and \(m\)-edges are induced from the binary quotient,
all middle \(d\)-edges go to the residue state of residue \(d\), and the
residue states follow the usual residue-routing rule, returning to
\(\bar c_0=\bar c\) when the residue becomes zero.  This is precisely
\(\operatorname{Res}_m(\bar{\mathcal A},\bar c)\).

Moreover, since
\[
        \Theta\mu=\pi_i^{[n]},
\]
the binary morphism
\[
        \pi_i:\mathcal A_i\longrightarrow\mathcal A_0
\]
 factors through a surjective morphism
\[
        \bar{\mathcal A}\longrightarrow\mathcal A_0.
\]
By the binary quotient computation recalled at the beginning of this
subsection,
\[
        \bar{\mathcal A}\cong\mathcal A_j
\]
for some \(0\le j\le i\), and under this isomorphism \(\bar c\) corresponds to
the distinguished state \(c\) of \(\mathcal A_j\).  Therefore
\[
        \operatorname{Res}_m(\bar{\mathcal A},\bar c)
        \cong
        \operatorname{Res}_m(\mathcal A_j,c)
        =
        \mathcal B_j.
\]
Together with \((*)\), this gives
\[
        \mathcal E\cong\mathcal B_j.
\]
\end{proof}

\begin{theorem}
\label{thm:closed-overgroups-Si}
Let \(i\ge0\).  If \(L\le F_n\) is closed and
\[
        S_i\le L\le F_n,
\]
then
\[
        L=S_j
\]
for some \(0\le j\le i\).  Thus the closed overgroups of \(S_i\) in \(F_n\)
are exactly
\[
        S_0,S_1,\ldots,S_i.
\]
\end{theorem}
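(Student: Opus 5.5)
The plan is to copy the proof of Theorem~\ref{thm:closed-overgroups-Si}'s binary analogue, Theorem~\ref{thm:closed-overgroups-Hi}, with Proposition~\ref{prop:quotients-of-Bi} in place of Lemma~\ref{lem:quotients-of-Ai}.

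\emph{Step 1.} By Proposition~\ref{prop:standard-chain-cores}, $\C(S_i)\cong\mathcal B_i$. By Lemma~\ref{lem:standard-Bi-full-folded}, $\mathcal B_i$ is full. Both $S_i$ and $L$ are closed and $S_i\le L$, so Lemma~\ref{lem:closed-overgroups-quotients-prelim} gives a surjective morphism $\Theta:\mathcal B_i\to\C(L)$. Thus $\C(L)$ is a quotient of $\mathcal B_i$.

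\emph{Step 2.} We need the extra hypothesis of Proposition~\ref{prop:quotients-of-Bi}: a surjective morphism $\C(L)\to\C(F_n)$. Since $L\le F_n$ are both closed, Lemma~\ref{lem:closed-overgroups-quotients-prelim} gives a morphism $\C(L)\to\C(F_n)$. To see it is surjective, note that $\C(L)$ is full, being the image of the full automaton $\mathcal B_i$ under a surjective morphism. The surjectivity clause of the same lemma then applies.

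\emph{Step 3.} Proposition~\ref{prop:quotients-of-Bi} now gives $\C(L)\cong\mathcal B_j$ for some $0\le j\le i$. Hence
\[
L=\Cl(L)=\D(\C(L))=\D(\mathcal B_j)=\D(\C(S_j))=S_j,
\]
using closedness of $S_j$ and Proposition~\ref{prop:standard-chain-cores}. For the converse ("exactly"), the groups $S_0,\dots,S_i$ are closed (shown after the definition of the chain) and contain $S_i$ because the chain descends.

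\emph{Main obstacle.} The real work is in Proposition~\ref{prop:quotients-of-Bi} and in the binary quotient classification, both of which are assumed. The only delicate point here is checking that the quotient $\C(L)$ is full, so that its projection onto $\C(F_n)$ is surjective. Fullness holds because surjective morphisms preserve the property that every state has $n$ outgoing edges: each edge of the target lifts to an edge of $\mathcal B_i$, and every state of $\mathcal B_i$ has all $n$ edges.
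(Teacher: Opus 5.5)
Your proposal is correct and follows the paper's proof essentially step for step. The steps match: the surjection $\mathcal B_i\to\C(L)$ from Lemma~\ref{lem:closed-overgroups-quotients-prelim}, fullness of $\C(L)$ forcing $\C(L)\to\C(F_n)$ to be surjective, and then Proposition~\ref{prop:quotients-of-Bi} together with the fact that closed subgroups are determined by their cores.

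One small wording correction: fullness of $\C(L)$ comes from surjectivity on \emph{states}, not from edges of the target lifting to $\mathcal B_i$. Every state of $\C(L)$ is the image of a state of $\mathcal B_i$, and the morphism axiom carries that state's $n$ outgoing edges to $n$ outgoing edges.
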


\begin{proof}
Let \(L\le F_n\) be closed and suppose
\[
        S_i\le L\le F_n.
\]
By Proposition~\ref{prop:standard-chain-cores},
\[
        \C(S_i)\cong\mathcal B_i.
\]
The automaton \(\mathcal B_i\) is full by
Lemma~\ref{lem:standard-Bi-full-folded}.  Since \(S_i\le L\) and both
subgroups are closed, Lemma~\ref{lem:closed-overgroups-quotients-prelim} gives
a surjective morphism
\[
        \mathcal B_i\longrightarrow \C(L).
\]
Also \(L\le F_n\), so the same lemma gives a morphism
\[
        \C(L)\longrightarrow \C(F_n).
\]
The automaton \(\C(L)\) is a quotient of the full automaton \(\mathcal B_i\),
so it is full.  Therefore the morphism
\[
        \C(L)\longrightarrow\C(F_n)
\]
is surjective.

Thus \(\C(L)\) is a quotient of \(\mathcal B_i\) which admits a surjective
morphism onto \(\C(F_n)\).  By
Proposition~\ref{prop:quotients-of-Bi},
\[
        \C(L)\cong\mathcal B_j
\]
for some \(0\le j\le i\).  But
\[
        \C(S_j)\cong\mathcal B_j.
\]
Since closed subgroups are determined by their cores, we get
\[
        L=S_j.
\]
\end{proof}

\subsection{Abelianization and normal subgroups in the standard chain}
\label{subsec:standard-chain-abelianization-normal}

In this section we prove that the image of each group $S_i$ in the abelianization of $F_n$ is $F_n[F_n,F_n]$. 

We write
\[
        e_0,e_1,\ldots,e_m
\]
for the standard basis of
\[
        F_n/[F_n,F_n]\cong\mathbb Z^{m+1}
\]
from Subsection~\ref{subsec:abelianization}.  Thus
\[
        \ab_n(x_0)=e_0,
\]
and, for \(j\ge1\),
\[
        \ab_n(x_j)=e_r
        \quad\text{where}\quad
        r\in\{1,\ldots,m\}
        \quad\text{and}\quad
        j\equiv r\pmod m.
\]
In particular,
\[
        \ab_n(x_m)=e_m.
\]

For the binary group \(F=F_2\), we shall denote the Brown basis of
\[
        F/[F,F]\cong\mathbb Z^2
\]
by
\[
        e_0^F,e_1^F.
\]
Thus
\[
        e_0^F=\ab_2(x_0^F),
        \qquad
        e_1^F=\ab_2(x_1^F).
\]
We use the superscript \(F\) only to avoid confusing the generators of $F$ and of $F/[F,F]$
with those of \(F_n\) and $F_n/[F_n,F_n]$.

We shall use the following \(n\)-ary inflation of binary diagrams.  If
\(g\in F\) is represented by a binary tree diagram, let
\[
        \operatorname{Infl}_n(g)\in F_n
\]
be the element represented by the \(n\)-inflation of that diagram, in the
sense of Definition~\ref{def:arity-inflation-of-binary-trees}. 

This gives an injective homomorphism
\[
        \operatorname{Infl}_n:F\longrightarrow F_n.
\]
On the standard generators it satisfies
\[
        \operatorname{Infl}_n(x_0^F)=x_0,
        \qquad
        \operatorname{Infl}_n(x_1^F)=x_m.
\]
Consequently, on abelianizations,
\[
        e_0^F\longmapsto e_0,
        \qquad
        e_1^F\longmapsto e_m.
\]

\begin{lemma}
\label{lem:inflated-binary-copy-gives-end-generators}
For every \(i\ge0\), one has
\[
        e_0,e_m\in \ab_n(S_i).
\]
\end{lemma}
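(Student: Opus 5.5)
The idea is to find, inside $S_i=F_n^{\zeta^i}$, conjugates of the elements $x_0$ and $x_m$ that still map to $e_0$ and $e_m$. The natural candidate is the conjugate of the inflated binary copy. Since $\zeta$ is the residue inflation of $\varphi$, it should act on the extreme Cantor subset $E=\{0,m\}^{\mathbb N}$ as the hatted version of $\varphi$. Precisely, Lemma~\ref{lem:states-in-residue-inflation}(i) and the copied extreme-letter transitions give
\[
\zeta(\widehat\alpha)=\widehat{\varphi(\alpha)}\qquad(\alpha\in\C_2).
\]

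The simplest route, however, is the endpoint-slope argument already used in Lemma~\ref{lem:Hi-full-abelianization}. It does not need the structure of $\zeta$ beyond the boundary. I would proceed in three steps.

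\textbf{Step 1.} Show that $\zeta$, and hence every $\zeta^i$, maps $U_{0^k}$ onto $U_{0^k}$ and $U_{m^k}$ onto $U_{m^k}$ for all $k\ge1$. From the table for $T^\zeta$, the initial state $\mathsf L$ has $0\mid0:\mathsf L$ and $m\mid m:\mathsf R$. The state $\mathsf R$ has $m\mid m:\mathsf R$, and $\mathsf R$ is a homeomorphism (entry) state. Hence $\zeta(0^k\omega)=0^k\zeta(\omega)$, and $\zeta(m^k\omega)=m^k h_{\mathsf R}(\omega)$ with $h_{\mathsf R}$ onto $\C_n$.

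\textbf{Step 2.} Deduce that for $f\in F_n$ the germs of $f^{\zeta^i}$ at $0$ and at $1$ are the same as those of $f$. If $f$ has the branch pair $0^p\to0^q$, then the conjugate maps $U_{0^N}$ onto $U_{0^{N-p+q}}$ for large $N$. This only determines the slope at $0$, not the whole germ. To control the germ itself, I would instead pick a specific element. Take $g$ supported near $0$, namely a power-slope element with branch pairs $0^{p}w\to0^{q}w$ near $0$ only, and identity on an inner neighbourhood.

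\textbf{Step 3.} Pass from germs to abelianization, which is where the real work lies. The map $\ab_n$ is not determined by the two endpoint slopes when $n>2$, since the abelianization is $\mathbb Z^{n}$. So I will not rely on slopes alone. Instead I would compute $\ab_n$ of explicit conjugates:
\[
x_0^{\zeta^i}\qquad\text{and}\qquad x_m^{\zeta^i}=\operatorname{Infl}_n(x_1^F)^{\zeta^i}.
\]
By the displayed formula on $E$ and Lemma~\ref{lem:inflated-cells-types-residues}, $\operatorname{Infl}_n(g)^{\zeta}$ should equal $\operatorname{Infl}_n(g^{\varphi})$ for $g\in F$. The check is that both sides act on old leaves by the hatted binary map, and on middle leaves both send $\widehat u d$ to the corresponding middle leaf, by the residue-preserving cell inflation. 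Granting this, induction gives
\[
\operatorname{Infl}_n(F)^{\zeta^i}=\operatorname{Infl}_n(F^{\varphi^i})=\operatorname{Infl}_n(H_i)\le S_i.
\]

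\textbf{Conclusion.} By Lemma~\ref{lem:Hi-full-abelianization}, $\ab_2(H_i)=\mathbb Z^2$. Since $\operatorname{Infl}_n$ induces $e_0^F\mapsto e_0$ and $e_1^F\mapsto e_m$ on abelianizations, this gives $e_0,e_m\in\ab_n(S_i)$.

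The main obstacle is the identity $\operatorname{Infl}_n(g)^\zeta=\operatorname{Infl}_n(g^\varphi)$. It is delicate because the routing states and the return state $\mathsf R$ reset the binary computation after middle letters. The plan is to verify it using Lemma~\ref{lem:states-in-residue-inflation}(iii): after a middle block of residue $0$, the computation resumes at $\mathsf R$. In the binary model this corresponds to reading a $1$-turn; I would check compatibility with $\mathsf R$ being the state reached after an inner binary prefix.
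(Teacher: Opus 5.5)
\textbf{Gap: the key identity is assumed, not proved.} Your overall route works, but the step that carries all the content, $\operatorname{Infl}_n(g)^\zeta=\operatorname{Infl}_n(g^\varphi)$, is only granted. Your plan for proving it also looks in the wrong place. Nothing needs to be compared with the binary model after a middle letter, and Lemma~\ref{lem:states-in-residue-inflation}(iii) is not needed. The missing idea is that whatever $\zeta$ does after the first middle letter cancels under conjugation.

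Write a point of $\C_n\setminus E$ uniquely as $\widehat\xi d\omega$, where $d$ is its first middle letter. Reading $\widehat\xi$ from $\mathsf L$ stays among old states and outputs $\widehat{o(\mathsf L,\xi)}$. Reading $d$ from \emph{any} old state $s$ moves to $\mathsf O_d$ and outputs $v_{s,d}$, which has the form $\widehat\mu\,d$ by Lemma~\ref{lem:inflated-cells-types-residues}. Hence
\[
\zeta(\widehat\xi d\omega)=\widehat{\beta_d(\xi)}\,d\,h_{\mathsf O_d}(\omega).
\]
Here $h_{\mathsf O_d}$ is a single homeomorphism of $\C_n$, independent of $\xi$; the routing states and the return to $\mathsf R$ all live inside it. The map $\xi\mapsto\beta_d(\xi)$ is a bijection of $\{0,1\}^*$, because $\zeta$ is a bijection preserving $E$ and $h_{\mathsf O_d}$ is onto.

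Now $\operatorname{Infl}_n(g)$ sends $\widehat\xi d\omega$ to $\widehat{\xi'}d\omega$, leaving $d$ and the suffix untouched. So applying $\zeta^{-1}$, then $\operatorname{Infl}_n(g)$, then $\zeta$ gives a map of the same form, because $h_{\mathsf O_d}^{-1}$ and $h_{\mathsf O_d}$ cancel. Hence $\operatorname{Infl}_n(g)^\zeta$ and $\operatorname{Infl}_n(g^\varphi)$ are order-preserving homeomorphisms with two properties:
\begin{enumerate}
\item they agree on $E$, by your formula $\zeta(\widehat\alpha)=\widehat{\varphi(\alpha)}$;
\item off $E$, each only rewrites the binary prefix before the first middle letter.
\end{enumerate}
The gap $\bigcup_{d}U_{\widehat\xi d}$ is exactly the set of points strictly between the consecutive points $\widehat\xi 0m^\infty<\widehat\xi m0^\infty$ of $E$. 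Both maps send it onto the gap between the common images of these two points, which forces their prefix rewritings to coincide. So the two maps are equal. With this inserted, your induction, the inclusion $\operatorname{Infl}_n(H_i)\le S_i$, and the abelianization step all go through. Steps 1--2 can simply be deleted.

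\textbf{Comparison with the paper.} The paper does not touch the transducer here. It uses the already computed cores $\C(H_i)=\mathcal A_i$ and $\C(S_i)=\mathcal B_i=\operatorname{Res}_m(\mathcal A_i,c)$ (Proposition~\ref{prop:standard-chain-cores}). It then checks that the $n$-inflation of any diagram accepted by $\mathcal A_i$ is accepted by $\mathcal B_i$: old branches are read exactly as in $\mathcal A_i$, and paired middle leaves have the same residue $d$, so both end at $c_d$. This gives $\operatorname{Infl}_n(H_i)\le\D(\mathcal B_i)=S_i$, followed by the same abelianization step as yours. Given the core computation, that is a two-line check. Your route, once completed, is independent of the core machinery, including Theorem~\ref{thm:ordinary-residue-inflation-commutes-with-geometric}. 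It also yields the sharper statement $\operatorname{Infl}_n(F)^{\zeta^i}=\operatorname{Infl}_n(H_i)$.
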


\begin{proof}
Fix \(i\ge0\).  Recall that
\[
        H_i=F^{\varphi^i}\le F
\]
is the \(i\)-th subgroup in the binary chain.  By the binary computation, the
image of \(H_i\) in the abelianization of \(F\) is all of
\(F/[F,F]\cong\mathbb Z^2\).  Equivalently, there exist elements
\(h_0,h_1\in H_i\) such that
\[
        \ab_2(h_0)=e_0^F,
        \qquad
        \ab_2(h_1)=e_1^F.
\]

We claim that
\[
        \operatorname{Infl}_n(H_i)\le S_i.
\]
Indeed, the core of \(H_i\) is the binary automaton \(\mathcal A_i\), while
the core of \(S_i\) is
\[
        \mathcal B_i=\operatorname{Res}_m(\mathcal A_i,c)
\]
by Proposition~\ref{prop:standard-chain-cores}.  If a binary diagram is
accepted by \(\mathcal A_i\), then its \(n\)-inflation is accepted by
\(\operatorname{Res}_m(\mathcal A_i,c)\).  On the old binary branches this is
immediate from the definition of residue inflation.  On each newly inserted
middle branch both sides go to the same residue-routing state.  Hence every
inflated diagram of an element of \(H_i\) is accepted by \(\mathcal B_i\), and
therefore belongs to \(S_i=\D(\mathcal B_i)\).

Now apply \(\operatorname{Infl}_n\) to \(h_0\) and \(h_1\).  Since the induced
map on abelianizations sends
\[
        e_0^F\mapsto e_0,
        \qquad
        e_1^F\mapsto e_m,
\]
we get
\[
        \ab_n(\operatorname{Infl}_n(h_0))=e_0,
        \qquad
        \ab_n(\operatorname{Infl}_n(h_1))=e_m.
\]
    \end{proof}

We get the remaining generators of the abelianization from the following lemma.

\begin{lemma}
\label{lem:adjacent-differences-in-Si}
Assume \(m\ge2\).  For every \(i\ge0\) and every
\[
        0\le j\le m-2,
\]
the element
\[
        \delta_j=x_jx_{j+1}^{-1}
\]
belongs to \(S_i\).  Moreover,
\[
        \ab_n(\delta_j)=e_j-e_{j+1}.
\]
\end{lemma}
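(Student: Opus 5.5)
The plan is to prove membership by acceptance.  By Proposition~\ref{prop:standard-chain-cores} we have \(S_i=\D(\mathcal B_i)\), and \(S_i\) is closed.  So it suffices to exhibit one tree diagram of \(\delta_j=x_jx_{j+1}^{-1}\) all of whose branch pairs \(u\to v\) satisfy \(u^+=v^+\) in \(\mathcal B_i=\operatorname{Res}_m(\mathcal A_i,c)\).

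The abelianization formula needs no work.  Since \(\ab_n\) is a homomorphism to an abelian group, \(\ab_n(\delta_j)=\ab_n(x_j)-\ab_n(x_{j+1})\).  For \(0\le j\le m-2\) we have \(1\le j+1\le m-1\), so this equals \(e_j-e_{j+1}\) by the description in Subsection~\ref{subsec:abelianization}; for \(j=0\) we use \(\ab_n(x_0)=e_0\).

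For membership, I first write down explicit diagrams of \(x_j\) and \(x_{j+1}\) in Brown's normalization, where for \(0\le j\le n-2\) the generator \(x_j\) is supported on the root caret.  I then compute the product by the common-expansion procedure and reduce.  The expectation is that in the reduced diagram of \(\delta_j\) the branch pairs fall into two classes.
\begin{enumerate}
\item Identity pairs \(u\to u\), with \(u\) lying to the left of the child \(j\) of the root or to the right of the child \(j+2\) of the root.  In particular the two boundary pairs, at \(0^k\) and at \(m^k\), are identity pairs, because both generators have the same germs at the endpoints except where they cancel.
\item Nontrivial pairs \(u\to v\) in which both \(u\) and \(v\) contain a middle letter.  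The point is that the nontrivial action of \(\delta_j\) takes place under the children \(j,j+1,j+2\) of the root, and the middle child \(j+1\) sits among them.
\end{enumerate}

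Identity pairs are trivially accepted.  For a pair of the second kind, reading the word in \(\mathcal B_i\) enters the residue layer at the first middle letter.  From then on, by Definition~\ref{def:residue-inflation-automata}, the state is \(c_{\sigma_n(\cdot)}\) whenever the residue is nonzero.  If the residue is zero, the state is the old state reached from \(c\) by reading the hatted tail of the word in \(\mathcal A_i\).  Since \(\delta_j\in F_n\), Lemma~\ref{lem:branch-pair-criterion-prelim} gives \(\sigma_n(u)=\sigma_n(v)\), so the nonzero-residue pairs are accepted.

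The main obstacle is the residue-zero nontrivial pairs, and the case \(j=0\), where words beginning with \(0\) might lie on the left boundary ray.  For these pairs I need that after the last middle letter the two words have identical \(\{0,m\}\)-tails, or tails leading to the same state of \(\mathcal A_i\) from \(c\).  I would first try to refine the reduced diagram by attaching to each nontrivial pair \(u\to v\) an \(n\)-caret, and then the same small subtree on both sides.  Each refined pair then ends in a middle letter or has equal extreme tails, and acceptance follows from determinism of \(\mathcal B_i\) together with the residue-routing rule.  For \(j=0\) I would check directly that every branch pair of the form \(0^k\to 0^{k'}\) in the product has \(k=k'\).  This should hold because \(x_0\) and \(x_1\) contribute matching germs at \(0\) after the cancellation in \(x_0x_1^{-1}\); if it fails, I would instead handle \(j=0\) by replacing \(\delta_0\) with a product of \(\delta_0\) and an element already known to lie in \(S_i\) (from Lemma~\ref{lem:inflated-binary-copy-gives-end-generators}) having the same image in the abelianization.
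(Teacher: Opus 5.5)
Your overall strategy is the one the paper uses: exhibit a diagram of $\delta_j$ accepted by $\mathcal B_i=\operatorname{Res}_m(\mathcal A_i,c)$ and use $S_i=\D(\mathcal B_i)$. The abelianization computation is also fine. However, you never write the diagram down, and your predicted shape of it is wrong in exactly the case you flag, so the $j=0$ case has a genuine gap.

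It is not true that $x_0$ and $x_1$ have matching germs at $0$. The element $x_0$ maps $[00]$ linearly onto $[0]$, while $x_1$ is the identity on $[0]$. Hence $\delta_0$ has slope $n$ at $0$, and its reduced diagram contains the non-identity boundary pair $00\to0$. It also contains $0m\to1(m-1)$, whose left word has no middle letter, so that pair falls in neither of your two classes. Your direct check ``$k=k'$'' therefore fails. Your fallback does not rescue the statement: another element of $S_i$ with image $e_0-e_1$ would give only $e_0-e_1\in\ab_n(S_i)$, not $\delta_0\in S_i$. And showing $\delta_0 g\in S_i$ for some $g\in S_i$ is equivalent to the original claim.

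The missing observation is that boundary pairs need not be identity pairs. In $\mathcal B_i$ one has $\ell\cdot0=\ell$, so every $0^k$ with $k\ge1$ reaches $\ell$, and $00\to0$ is accepted. With the explicit reduced diagram everything is a one-line check. Its pairs are the identity pairs $k\to k$ for $k<j$ and $k\ge j+2$, together with $j0\to j$, $j(a+1)\to(j+1)a$ for $0\le a\le m-1$, and $j+1\to(j+1)m$.
\begin{itemize}
\item For $j0\to j$: both words reach $\ell$ if $j=0$, and $c_j$ if $j\ge1$.
\item For $j(a+1)\to(j+1)a$: both words end at $c_{j+a+1}$. In the case $j=0$, $a=m-1$, use $\ell\cdot m=c=c_0$, since $\ell\cdot1=c$ in $\mathcal A_i$.
\item For $j+1\to(j+1)m$: both words reach $c_{j+1}$.
\end{itemize}
So your ``main obstacle'' of residue-zero pairs disappears. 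Apart from the boundary pair $00\to0$, every such pair has both words landing directly at $c_0$.

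This matters, because the caret-refinement argument you proposed for those pairs is not valid in general. Appending the same extreme letter to two words with different $\{0,m\}$-tails does not make those tails equal.
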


\begin{proof}
The abelianization computation is immediate from the definition of
\(\ab_n\):
\[
        \ab_n(\delta_j)
        =
        \ab_n(x_j)-\ab_n(x_{j+1})
        =
        e_j-e_{j+1}.
\]

It remains to prove that \(\delta_j\in S_i\).  If \(i=0\), this is immediate
because
\[
        S_0=F_n.
\]
Assume from now on that \(i\ge1\).  We use the core
\[
        \C(S_i)=\mathcal B_i=\operatorname{Res}_m(\mathcal A_i,c).
\]

Since \(\mathcal B_i\) is the residue inflation of \(\mathcal A_i\), we shall
only need the following consequences of the definition.  The old states of
\(\mathcal A_i\) keep their names in \(\mathcal B_i\).  Thus the old root,
left state, right state and distinguished inner state are
\[
        \rho,\ell,r,c,
\]
and the residue states are
\[
        c_s
        \qquad(0\le s\le m-1),
\]
where, as in Definition~\ref{def:residue-inflation-automata}, we denote the
distinguished state \(c\) also by \(c_0\) and read subscripts modulo \(m\).
The relevant transitions are
\[
        \rho\cdot0=\ell,
        \qquad
        \rho\cdot m=r,
        \qquad
        \rho\cdot d=c_d
        \quad(1\le d\le m-1),
\]
\[
        \ell\cdot0=\ell,
        \qquad
        \ell\cdot m=c_0,
        \qquad
        \ell\cdot d=c_d
        \quad(1\le d\le m-1),
\]
and, for every old state \(q\) of \(\mathcal A_i\),
\[
        q\cdot d=c_d
        \qquad(1\le d\le m-1).
\]
Finally, for \(1\le s\le m-1\),
\[
        c_s\cdot d=c_{s+d}
        \qquad(d\in X_n),
\]
where subscripts are read modulo \(m\); in particular \(c_s\cdot d=c_0\) when
\(s+d\equiv0\pmod m\).

Fix
\[
        0\le j\le m-2.
\]
The standard reduced tree diagram of
\[
        \delta_j=x_jx_{j+1}^{-1}
\]
has branch pairs
\[
        \delta_j:\left\{
        \begin{array}{ll}
        k\to k, & 0\le k<j,\\[1mm]
        j0\to j, & \\[1mm]
        j(a+1)\to (j+1)a, & 0\le a\le m-1,\\[1mm]
        j+1\to (j+1)m, & \\[1mm]
        k\to k, & j+2\le k\le m.
        \end{array}
        \right.
\]
           
The first and last lines are identity branch pairs, hence are accepted
automatically.

We check the remaining branch pairs.  First consider
\[
        j0\to j.
\]
If \(j=0\), then
\[
        (00)^+=\ell\cdot0=\ell=0^+.
\]
If \(j\ge1\), then
\[
        (j0)^+=c_j\cdot0=c_j=j^+.
\]
Thus \(j0\to j\) is accepted in all cases, including the endpoint case
\(j=0\).

Next consider
\[
        j(a+1)\to (j+1)a,
        \qquad 0\le a\le m-1.
\]
If \(j=0\), then the left-hand branch ends at
\[
        (0(a+1))^+
        =\ell\cdot(a+1)
        =c_{(a+1)},
\]
using \(\ell\cdot d=c_d\) for \(1\le d\le m-1\) and \(\ell\cdot m=c_0\); the
right-hand branch ends at
\[
        (1a)^+
        =\rho\cdot1\cdot a
        =c_1\cdot a
        =c_{(1+a)}.
\]
Hence, the two terminal states agree.

If \(j\ge1\), then, since \(0\le j\le m-2\), both \(j\) and \(j+1\) satisfy
\(1\le j,\,j+1\le m-1\), so \(\rho\cdot j=c_j\) and \(\rho\cdot(j+1)=c_{j+1}\).
The left-hand branch ends at
\[
        (j(a+1))^+
        =c_j\cdot(a+1)
        =c_{(j+a+1)},
\]
and the right-hand branch ends at
\[
        ((j+1)a)^+
        =c_{j+1}\cdot a
        =c_{(j+1+a)}.
\]
Hence, the two terminal states agree in this case as well.

Thus the branch pair \(j(a+1)\to(j+1)a\) is accepted for every
\(0\le a\le m-1\).

For the branch pair
\[
        j+1\to (j+1)m,
\]
we have
\[
        (j+1)^+=c_{j+1},
\]
and
\[
        ((j+1)m)^+
        =c_{j+1}\cdot m
        =c_{(j+1+m)}
        =c_{j+1},
\]
since \(1\le j+1\le m-1\).  Thus this branch pair is accepted.

Every branch pair in the displayed diagram is accepted by \(\mathcal B_i\).
Since
\[
        S_i=\D(\mathcal B_i),
\]
we get that for all $j\in\{0,\dots,m-2\}$
\[
        \delta_j\in S_i.
\]
\end{proof}

Since $\{e_0,e_0-e_1,e_1-e_2,\dots,e_{m-2}-e_{m-1},e_m\}$ generates $\mathbb{Z}^{m+1}$ we get the following.

\begin{corollary}
\label{prop:Si-full-abelianization}
For every \(i\ge0\), the image of \(S_i\) in the abelianization of \(F_n\) is
all of
\[
        F_n/[F_n,F_n].
\]
Equivalently,
\[
        S_i[F_n,F_n]=F_n.
\]
\end{corollary}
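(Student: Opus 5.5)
The corollary follows from the two preceding lemmas together with one observation about the abelianization. By Lemma~\ref{lem:inflated-binary-copy-gives-end-generators}, both $e_0$ and $e_m$ lie in $\ab_n(S_i)$ for every $i\ge0$. When $m\ge2$, Lemma~\ref{lem:adjacent-differences-in-Si} adds the elements $e_j-e_{j+1}$ for $0\le j\le m-2$. Since $\ab_n(S_i)$ is a subgroup of $\mathbb Z^{m+1}$, it therefore contains the subgroup generated by
\[
        \{e_0,\ e_0-e_1,\ e_1-e_2,\ldots,\ e_{m-2}-e_{m-1},\ e_m\}.
\]
It remains to see that this set generates all of $\mathbb Z^{m+1}$.

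I will do this by telescoping. From $e_0$ and $e_0-e_1$ we obtain $e_1$. Inductively, from $e_j$ and $e_j-e_{j+1}$ we obtain $e_{j+1}$, for $j=0,\dots,m-2$. This produces $e_0,\dots,e_{m-1}$, and $e_m$ is already present, so we have the full standard basis. Hence $\ab_n(S_i)=F_n/[F_n,F_n]$.

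The case $m=1$ (i.e.\ $n=2$) is covered separately. There $\mathbb Z^2$ has basis $e_0,e_1=e_m$, both of which lie in $\ab_n(S_i)$ by Lemma~\ref{lem:inflated-binary-copy-gives-end-generators}. Alternatively, one can cite Lemma~\ref{lem:Hi-full-abelianization}, since in this case $S_i=H_i$.

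Finally, I will translate this into the equivalent form. Surjectivity of $\ab_n$ restricted to $S_i$ means that every $f\in F_n$ agrees modulo $[F_n,F_n]$ with some $s\in S_i$. That is, $f\in S_i[F_n,F_n]$, which gives $S_i[F_n,F_n]=F_n$. This is just the correspondence recorded in Subsection~\ref{subsec:abelianization}.

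No step here is a real obstacle. The substantive work, namely membership of $\delta_j$ and of the inflated binary elements in $S_i$, was done in the preceding lemmas; the remaining step is only checking the unimodular change of basis.
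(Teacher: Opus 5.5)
Your argument is correct and is the same as the paper's: combine Lemma~\ref{lem:inflated-binary-copy-gives-end-generators}, which gives $e_0,e_m$, with Lemma~\ref{lem:adjacent-differences-in-Si}, which gives $e_j-e_{j+1}$, and observe that these elements generate $\mathbb Z^{m+1}$. Your telescoping step and your explicit treatment of $m=1$ (where the list of differences is empty) only spell out what the paper leaves implicit.
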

                                                                                                                                        
\subsection{One-sided slopes in the standard chain}
\label{subsec:standard-one-sided-slopes}

\begin{lemma}
\label{lem:standard-chain-one-sided}
For every \(i\ge0\) and every \(s\in\Z/m\Z\), there exist
\[
        h_s\in S_i,
        \qquad
        \alpha_s\in D_{n,s},
\]
such that
\[
        h_s(\alpha_s)=\alpha_s,
        \qquad
        h_s'(\alpha_s^-)=n,
        \qquad
        h_s'(\alpha_s^+)=1.
\]
\end{lemma}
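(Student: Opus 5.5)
Since $S_i=\D(\mathcal B_i)$ with $\mathcal B_i=\operatorname{Res}_m(\mathcal A_i,c)$ (Proposition~\ref{prop:standard-chain-cores}), it suffices, for each residue $s$, to write down a single tree diagram accepted by $\mathcal B_i$ whose element fixes an $n$-adic point $\alpha_s$ with $\sigma_n(\alpha_s)=s$. The required slopes are slope $n$ on the left of $\alpha_s$ and slope $1$ on the right. The natural place to work is the residue layer $c_1,\ldots,c_{m-1}$ together with $c_0=c$. Outside $\{0,m\}$-words, states of $\mathcal B_i$ are determined by digit-sum residue only through the routing states. So I will look for branch pairs whose words pass through a residue state before reaching their last letters, and compare terminal states.

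\textbf{Step 1: the case $m\ge2$ and $s\neq0$.} Take the point $\alpha=\rho_n(s0^\infty)$, the left endpoint of $[s]$, where $s\in\{1,\ldots,m-1\}$ is a middle letter; then $\sigma_n(\alpha)=s$. Let $h$ be the identity outside $[(s-1)]\cup[s]$, and the identity on $[s]$. On $[(s-1)]$, take $h$ to be a copy of an element $g\in F_n$ with a fixed right endpoint and right-endpoint slope $n$. Concretely, $g$ should have the branch pair $(n-1)(n-1)\to(n-1)$ near $1$; for instance take $g=x_0^{-1}$ up to reflection conventions. Then $h$ has branch pairs $(s-1)u\to(s-1)v$ with $u\to v$ running over the branch pairs of $g$, together with identities elsewhere. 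Since $(s-1)$ reaches either $\ell$ (if $s=1$) or $c_{s-1}$, the pairs to check have the form $(s-1)u\to(s-1)v$.
\begin{itemize}
\item If $s\ge2$, then from $c_{s-1}$ every word $u$ reaches $c_{s-1+\sigma_n(u)}$. Since $g\in F_n$ satisfies $\sigma_n(u)=\sigma_n(v)$ on inner pairs, and boundary pairs such as $(n-1)^a\to(n-1)^b$ also have residue $0$, all pairs are accepted.
\item If $s=1$, the prefix is $0$, which reaches $\ell$. Here I should instead choose $g$ so that its branch pairs avoid the left boundary except for an identity pair. A pair like $0m^am\to 0m^b$ reaches $\ell\cdot m=c_0$ followed by letters $m$, which lands in $\mathcal A_i$'s old states $c\cdot 1^k=c$. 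So both sides reach $c$, and the pair is accepted.
\end{itemize}
Applying the same residue argument to the inner pairs completes this case.

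\textbf{Step 2: the case $s=0$, which includes $m=1$.} Here the point must have residue $0$, so the computation stays in the binary layer $\mathcal A_i$, and I cannot avoid the binary chain. By the inflation homomorphism $\operatorname{Infl}_n$, shown in the proof of Lemma~\ref{lem:inflated-binary-copy-gives-end-generators} to map $H_i$ into $S_i$, it suffices to find $h\in H_i\le F$ fixing a dyadic point $\beta$ with left slope $2$ and right slope $1$. Inflation preserves hatted points, so $\widehat\beta$ has residue $0$; the slopes become $n$ and $1$ because each binary caret on the left becomes one $n$-caret.

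To find $h$ in $H_i=F^{\varphi^i}$, I will conjugate. Take $f\in F$ fixing a dyadic point $\gamma$ with left slope $2$ and right slope $1$, and put $h=f^{\varphi^i}$. Because $\varphi^i$ preserves the cylinders $U_{0^k}$ and $U_{1^k}$, I will choose $\gamma$ near the endpoint $1$. The slope computation in Lemma~\ref{lem:Hi-full-abelianization} then shows that germs at points whose image lies in a right-boundary ray are transported with unchanged slopes.

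More cleanly, take $\gamma=\rho_2(1^k0^\infty)$ for $k$ beyond the stabilization depth. Near such points, $\varphi$ acts by $1^k\omega\mapsto 1^k\varphi_{1^k}(\omega)$. The left and right neighbourhoods of $\gamma$ are $U_{1^{k-1}01^\infty}$-side and $U_{1^k0^\infty}$-side, and $\varphi$ maps these into a state reached through $\mathsf R\cdot 0=\mathsf A$. On that side the state is $\mathsf A$, which acts as $0\mapsto00$, so the right germ need not be preserved. I therefore expect the main obstacle to be controlling the germ of $\varphi^i$ at $\gamma$.

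\textbf{Plan for the obstacle.} Choose $\gamma$ to be a point whose two one-sided rays are the $0$-ray and $1$-ray read from the state $\mathsf L$. The only such point is the interior point $\rho_2(10^\infty)=\rho_2(01^\infty)$. Along $\mathsf L\to\mathsf R\to\mathsf A\to\mathsf A\cdots$ and $\mathsf L\to\mathsf L\to\mathsf R\cdots$, the transducer outputs $1\,0\,00\,00\cdots$ and $0\,1\,1\cdots$. Hence $\varphi$ fixes this point, doubling lengths on the right and preserving them on the left. Consequently the conjugate of an element $f$ with left slope $2$ and right slope $1$ at $\tfrac12$ has left slope $2$ and right slope $1$ again, since a right slope of $1$ is unaffected by rescaling. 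Iterating over $\varphi^i$ gives $h\in H_i$ as required. If this fails I would fall back on verifying acceptance directly in $\mathcal A_i$, using the explicit $a_t$ cycle.
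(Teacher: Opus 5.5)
Your proposal has genuine gaps: as written it does not establish residues $1$ and $0$ when $m\ge2$.

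\textbf{Step~1.} The residue rule it rests on, that from $c_{s-1}$ every word $u$ reaches $c_{s-1+\sigma_n(u)}$, is false for $i\ge1$. In $\mathcal B_i$ the states $c_1,\ldots,c_{m-1}$ only route until the digit sum returns to $0$. At that point the computation is at the old state $c_0=c$ of $\mathcal A_i$, where $c\cdot 0=a_0\neq c$. For example, with $p=s-1\ge1$ and $d=m-p$, the words $pd0$ and $p0d$ have equal residue but end at $a_0$ and at $c$ respectively. So $F_n[s-1]\not\le S_i$, and acceptance has to be checked for the specific $g$. For $g=x_0^{-1}$ and $2\le s\le m-1$ acceptance does hold. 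The reason is that every word of $x_0^{-1}[s-1]$ visits no state other than $c_{s-1}$ before its last letter, and you should verify this directly rather than cite the rule.

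For $s=1$ the false rule is your whole argument, and it fails. Take the $g$ that is the identity on $[0]$ and has branch pairs $j\to1(j-1)$ for $1\le j\le m-1$, $m0\to1(m-1)$, $m1\to1m$, and $mj\to j$ for $2\le j\le m$. In $g[0]$, the word $0m0$ ends at $a_0$ while $01(m-1)$ ends at $c$. Since $\mathcal B_i$ is full and folded, every diagram of an element of $\D(\mathcal B_i)$ is accepted, so $g[0]\notin S_i$. Thus residue $1$ is not established.

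\textbf{Step~2.} This step fails for $m\ge2$. Take a binary element with left slope $2$ and right slope $1$ at $\beta=\rho_2(w01^\infty)=\rho_2(w10^\infty)$. Its inflation has the branch pair $\widehat w0m^{a+1}\to\widehat w0m^{a}$, so the slope-$n$ germ sits at $\rho_n(\widehat w0m^\infty)=\rho_n(\widehat w10^\infty)$. That point has residue $1$, not $0$. The two binary expansions of $\beta$ hat to two different $n$-adic points. At the residue-$0$ one, $\rho_n(\widehat wm0^\infty)$, the inflated element is the identity on the middle leaf $[\widehat w(m-1)]$ to its left, so its left slope there is $1$. Your conjugation argument at $1/2$ is correct, so the case $m=1$ is fine, but residue $0$ is missing for $m\ge2$.

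\textbf{How to repair it.} The paper avoids explicit diagrams. The state $c_{s-1}$ (with $c_0=c$) has an $m$-loop in $\mathcal B_i$, because $m\equiv0$ and $c\cdot1=c$ in $\mathcal A_i$. Choose an inner word $u_s$ reaching $c_{s-1}$. The existence property of the core gives an element of $S_i$ with branch pair $u_sm\to u_s$. This element fixes $\alpha_s=\rho_n(u_sm^\infty)$, which has residue $\sigma_n(u_s)+1=s$, and has left slope $n$ there. Closedness then lets you make it the identity to the right of $\alpha_s$.

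Alternatively, your own pieces work if reassigned. With $w=\eps$, your Step~2 gives exactly residue $1$ at $1/n$. The element $x_0^{-1}[m-1]$, whose words only pass through $c_{m-1}$ before their last letter, gives residue $0$ at $\rho_n(m0^\infty)$.
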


\begin{proof}
In the core \(\mathcal B_i=\C(S_i)\), for every residue \(s\) there is a state
over residue \(s-1\) with an \(m\)-loop, namely the state \(c_{s-1}\), with
subscripts read modulo \(m\) and \(c_0=c\).  Indeed, if
\(s-1\not\equiv0\pmod m\), the residue-routing rule gives
\(c_{s-1}\cdot m=c_{s-1},\)
while the residue-zero state \(c_0=c\) satisfies
\(c_0\cdot m=c_0,\)
since \(c\cdot1=c\) in \(\mathcal A_i\).
Choose an inner word \(u_s\) ending at this state.  Then
\[
        (u_sm)^+=u_s^+
        \qquad
        \text{in }\mathcal B_i.
\]
Since \(S_i\) is closed, the core-existence property gives an element of \(S_i\) with branch
pair
\(u_sm\to u_s.\)
Let
\[
        \alpha_s=\rho_n(u_sm^\infty).
\]
This is the right endpoint of \([u_s]\).  Its terminating expansion has residue
\(\sigma_n(u_s)+1=s,\)
so
\(\alpha_s\in D_{n,s}.\)
The branch pair
\(u_sm\to u_s\)
fixes \(\alpha_s\) and has left derivative \(n\) at \(\alpha_s\).  Taking the component on
the right of \(\alpha_s\) to be the identity, using closedness of \(S_i\), gives an element
\(h_s\in S_i\) with
\[
        h_s(\alpha_s)=\alpha_s,
        \qquad
        h_s'(\alpha_s^-)=n,
        \qquad
        h_s'(\alpha_s^+)=1.
\]
\end{proof}

\subsection{Arbitrary overgroups of the standard chain}
\label{subsec:standard-arbitrary-overgroups}

\begin{lemma}
\label{lem:standard-dense-overgroups}
Let \(i\ge1\), and let
\[
        S_i\le K\le F_n.
\]
If
\[
        \Cl(K)=F_n,
\]
then
\[
        K=F_n.
\]
\end{lemma}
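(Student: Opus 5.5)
We will apply the generation theorem for \(F_n\) (Theorem~\ref{thm:generation-Fn-prelim}) to a suitable finitely generated subgroup \(H=\langle X\rangle\le K\). Once we show \(H=F_n\), we get \(K=F_n\). Choose \(X\) to be a finite subset of \(K\) that contains a finite generating set of \(S_i\). This is possible because \(S_i\cong F_n\) is finitely generated. Add to \(X\) the finitely many elements of \(K\) needed to witness the conditions of the theorem. We then verify the four conditions in turn.

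Conditions (2) and (3) already hold inside \(S_i\le H\). Condition (2) holds because \(S_i[F_n,F_n]=F_n\) by Corollary~\ref{prop:Si-full-abelianization}. Condition (3) is exactly Lemma~\ref{lem:standard-chain-one-sided}, and we put the elements \(h_s\) it provides into \(X\). For condition (1), we use the core criterion from \cite[Corollary~3.2]{GolanSapirGenerationFn}: \([F_n,F_n]\le\Cl(H)\) if and only if \(\C(H)\) is full and has exactly \(n-1\) inner states. Since \(\Cl(K)=F_n\), the core \(\C(K)\) is \(\C(F_n)\). Now \(\C(K)\) is the quotient of the graph built from all elements of \(K\) by the determinization and folding identifications. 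Each identification that produces \(\C(F_n)\) from \(\C(S_i)=\mathcal B_i\) is a finite event; this relies on \(\mathcal B_i\) being finite and full. So finitely many elements of \(K\) already force every identification collapsing \(\mathcal B_i\) onto \(\C(F_n)\). Adding these elements to \(X\) gives \(\C(H)\cong\C(F_n)\). Hence (1) holds, and indeed \(\Cl(H)=F_n\).

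The main obstacle is condition (4), because it concerns the semi-core \(L_{\mathrm{sem}}(X)\), where only determinization is allowed and folding is not. We must find an inner word \(u\) such that \((uv_1)^+=(uv_2)^+\) in \(L_{\mathrm{sem}}(X)\) whenever \(\sigma_n(v_1)=\sigma_n(v_2)\). The plan has four steps.

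First, \(S_i\) is closed with full core \(\mathcal B_i\), so any two words reaching the same state of \(\mathcal B_i\) are joined by a branch pair of some element of \(S_i\) (Lemma~\ref{lem:core-existence-property}). Adding such elements to \(X\) makes those identifications occur already in the semi-core.

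Second, we choose \(u\) to end at the residue-zero state \(c=c_0\). Below a residue state \(c_s\) with \(s\ne0\), the automaton \(\mathcal B_i\) depends only on residues, so only the binary part below \(c_0\) remains to be merged. In \(\C(F_n)\) that part has collapsed: the states \(c,a_0,\dots,a_{2^i-1}\) are all identified with \(q^{\mathrm{in}}_0\).

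Third, by Lemma~\ref{lem:quotients-of-Ai}, once one identification such as \(c=a_t\) is imposed, the cyclic \(0\)-transitions force all the \(a_t\) to merge. So it suffices to add to \(X\) a single element \(g\in K\), and, using the existence property in \(\C(K)=\C(F_n)\), an element with a branch pair \(w_1\to w_2\) where \(w_1\) reaches \(c\) and \(w_2\) reaches some \(a_t\). Such an element lies in \(\Cl(K)\); the delicate point is to obtain one actually in \(K\). We expect to handle this by taking a product of an element of \(K\setminus S_i\) with elements of \(S_i\), chosen so that it has the required branch pair on a small inner interval.

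Fourth, with this identification in the semi-core, repeated application of the \(0\)-cycle and the residue routing gives the required uniform identification below a suitable inner \(u\).

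We regard making this last step rigorous, that is, producing the identification inside \(L_{\mathrm{sem}}(X)\) rather than only in the folded core, as the real difficulty. After that, Theorem~\ref{thm:generation-Fn-prelim} gives \(H=F_n\), and therefore \(K=F_n\).
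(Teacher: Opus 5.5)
There is a genuine gap, and it is at condition (4) of Theorem~\ref{thm:generation-Fn-prelim}, which you leave open. Your plan is to force an identification such as $c=a_t$ into the semi-core by hand, using an element of $K$ (not merely of $\Cl(K)$) with a prescribed branch pair. You do not produce such an element. The existence property is available only for $\Cl(K)=F_n$, and the suggested product of an element of $K\setminus S_i$ with elements of $S_i$ is not an argument.

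The missing idea is that no such element is needed, because quotients of $\mathcal B_i$ are rigid. Suppose the semi-core is a quotient of $\mathcal B_i$. Your first step essentially arranges this. With $X=K$, as in the paper, it is automatic: $S_i$ is closed and $\mathcal B_i=\C(S_i)$ has the existence property, so $L_{\mathrm{sem}}(S_i)=\mathcal B_i$.

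Its folded quotient is $\C(\langle X\rangle)=\C(F_n)$, so the folding map is a surjective morphism onto $\C(F_n)$. Proposition~\ref{prop:quotients-of-Bi} then gives that the semi-core is isomorphic to some $\mathcal B_j$ with $0\le j\le i$. Each $\mathcal B_j$ is already folded (Lemma~\ref{lem:standard-Bi-full-folded}), so the semi-core equals its own folded quotient $\C(F_n)=\mathcal B_0$, which forces $j=0$. In $\mathcal B_0=\C(F_n)$ the state reached by an inner word depends only on its $\sigma_n$-value, so condition (4) holds for every inner word $u$. The identifications you wanted to manufacture are therefore already present in the semi-core.

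Your reduction to a finite generating set is also unnecessary. The paper applies Theorem~\ref{thm:generation-Fn-prelim} with $X=K$ itself. This makes condition (1) immediate from $\Cl(K)=F_n$ and avoids your informal argument that finitely many elements of $K$ already force the collapse of $\mathcal B_i$ onto $\C(F_n)$. Your treatment of conditions (2) and (3) matches the paper's.

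Your appeal to Lemma~\ref{lem:quotients-of-Ai} is in the right spirit, since its congruence propagation is exactly what underlies Proposition~\ref{prop:quotients-of-Bi}. However, you use it only after imposing one specific identification. Its real force is that every quotient of $\mathcal B_i$ lying over $\C(F_n)$ is one of the folded automata $\mathcal B_j$, and that is what closes your gap.
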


\begin{proof}
We apply Theorem~\ref{thm:generation-Fn-prelim} to the generating set \(X=K\).

The closure condition holds because
\(\Cl(K)=F_n.\)
Thus
\([F_n,F_n]\le \Cl(K).\)

The abelianization condition holds because
\(S_i\le K\)
and \(S_i\) has full image in the abelianization of $F_n$.

By Lemma~\ref{lem:standard-chain-one-sided}, the one-sided
\(D_{n,s}\)-condition holds for the subgroup \(S_i\), and hence for \(K\).

It remains to verify the semi-core condition.  Since \(S_i\) is closed, the semi-core built
from the full generating set \(S_i\) is \(\mathcal B_i\).  
Indeed, since \(S_i\) is accepted by \(\mathcal B_i=\C(S_i)\), every
branch-pair identification imposed in the semi-core construction is already
present in \(\mathcal B_i\).  Conversely, suppose that two words \(u,v\) end
at the same state of \(\mathcal B_i\).  By the existence property of the core,
and since \(S_i=\Cl(S_i)\), some element of \(S_i\) has the branch pair
\(u\to v\).  This branch pair need not occur in the reduced diagram, but it is
obtained from a reduced branch pair \(p\to q\) by appending a common suffix
\(w\), so that \(u=pw\) and \(v=qw\).  The reduced pair \(p\to q\) is glued in
the semi-core, and reading \(w\) from the identified state \(p^+=q^+\) implies that
 \(u^+\) and \(v^+\) are identified.  Hence the semi-core from the full generating
set \(S_i\) makes exactly the identifications present in \(\mathcal B_i\), that
is,
\[
        L_{\mathrm{sem}}(S_i)=\mathcal B_i .
\]

Since \(S_i\le K\), the semi-core
\(L_{\mathrm{sem}}(K)\) is a quotient of \(\mathcal B_i\).  Since
\(\Cl(K)=F_n,\)
the folded quotient of this semi-core is
\(\C(F_n)=\mathcal B_0.\)
In particular, \(L_{\mathrm{sem}}(K)\) admits a surjective morphism onto \(\mathcal B_0\).
By Proposition~\ref{prop:quotients-of-Bi},
\[
        L_{\mathrm{sem}}(K)\cong\mathcal B_j
\]
for some \(0\le j\le i\).  If \(j>0\), then the folded quotient would still be
\(\mathcal B_j\), since \(\mathcal B_j\) is already folded.  This contradicts
\(\Cl(K)=F_n\).  Hence
\[
        L_{\mathrm{sem}}(K)\cong\mathcal B_0=\C(F_n).
\]
In \(\C(F_n)\), the terminal state of an inner word depends only on its
\(\sigma_n\)-value.  Therefore the semi-core condition in
Theorem~\ref{thm:generation-Fn-prelim} holds.

All hypotheses of Theorem~\ref{thm:generation-Fn-prelim} are satisfied, and hence
\(K=F_n.\)
\end{proof}

\begin{theorem}[Only predecessors occur in the standard chain]
\label{thm:standard-only-predecessors}
Let
\[
        S_i< K\le F_n.
\]
Then
\[
        K=S_j
\]
for some
\[
        0\le j< i.
\]
\end{theorem}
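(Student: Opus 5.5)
Since \(K\) contains \(S_i\), which is closed, the closure \(M=\Cl(K)\) is a closed subgroup with \(S_i\le M\le F_n\). By Theorem~\ref{thm:closed-overgroups-Si}, \(M=S_j\) for some \(0\le j\le i\). I then conjugate by \(\zeta^{-j}\) so that the closure becomes all of \(F_n\), apply Lemma~\ref{lem:standard-dense-overgroups}, and conjugate back.

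\emph{Case \(j=i\).} Then \(S_i\le K\le\Cl(K)=S_i\), so \(K=S_i\). This contradicts the strict inclusion \(S_i<K\). Hence \(j<i\).

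\emph{Case \(j<i\).} Put \(K'=K^{\zeta^{-j}}\). Since \(K\le S_j=F_n^{\zeta^j}\), we get \(K'\le F_n\). Moreover
\[
        S_i^{\zeta^{-j}}=F_n^{\zeta^i\zeta^{-j}}=F_n^{\zeta^{i-j}}=S_{i-j}\le K'.
\]
Apply Lemma~\ref{lem:closure-conjugacy-prelim} to \(K\le S_j\le F_n\), using that \(\zeta^{-j}\) is order-preserving with \(K^{\zeta^{-j}}\le F_n\). This gives
\[
        \Cl_{F_n}(K')=\Cl_{F_n}(K)^{\zeta^{-j}}=S_j^{\zeta^{-j}}=F_n.
\]
Now \(i-j\ge1\), so Lemma~\ref{lem:standard-dense-overgroups} applies to \(S_{i-j}\le K'\le F_n\) and yields \(K'=F_n\). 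Conjugating back by \(\zeta^j\) gives \(K=F_n^{\zeta^j}=S_j\) with \(j<i\).

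The one delicate point is the use of Lemma~\ref{lem:closure-conjugacy-prelim}. That lemma assumes \(H^\psi\le F_m\) for the subgroup \(H\) being conjugated. In our application \(H=K\le F_n\) and \(\psi=\zeta^{-j}\), and we need \(K^{\zeta^{-j}}\le F_n\). This holds because \(K\le S_j\): conjugating gives \(K^{\zeta^{-j}}\le S_j^{\zeta^{-j}}=F_n\).

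The real difficulty, the generation argument, is already contained in Lemma~\ref{lem:standard-dense-overgroups}. That lemma needs \(i\ge1\), and this is why the case split on \(j\) is required.
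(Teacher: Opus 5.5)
Your proof is correct and follows the same route as the paper: you pass to $\Cl(K)=S_j$ via Theorem~\ref{thm:closed-overgroups-Si}, conjugate by $\zeta^{-j}$ to get a subgroup above $S_{i-j}$ with $i-j\ge1$, and invoke Lemma~\ref{lem:standard-dense-overgroups}. The only difference is how you show $\Cl(K')=F_n$: you get it directly from Lemma~\ref{lem:closure-conjugacy-prelim}, correctly checking $K^{\zeta^{-j}}\le F_n$, whereas the paper re-applies Theorem~\ref{thm:closed-overgroups-Si} to $K'$ and rules out $\Cl(K')=S_r$ with $r>0$ because that would force $K\le S_{j+r}<S_j=\Cl(K)$. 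Your version of this step is slightly more direct.
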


\begin{proof}
Let
\(L=\Cl(K).\)
Since \(S_i\) is closed and \(S_i< K\), we have
\(S_i< L\le F_n.\)
By Theorem~\ref{thm:closed-overgroups-Si},
\(L=S_j\)
for some \(0\le j<i\).

Thus
\(K\le S_j=F_n^{\zeta^j}.\)
Conjugate by \(\zeta^{-j}\), and put
\(K'=K^{\zeta^{-j}}.\)
Then
\(S_{i-j}< K'\le F_n.\)
We claim that
\(\Cl(K')=F_n.\)
Indeed, since \(K'\) strictly contains \(S_{i-j}\), Theorem~\ref{thm:closed-overgroups-Si}
gives
\(\Cl(K')=S_r\)
for some \(0\le r< i-j\).  If \(r>0\), then conjugating back gives
\(K\le S_{j+r}<S_j,\)
contradicting
\(\Cl(K)=S_j.\)
Therefore
\(\Cl(K')=S_0=F_n.\)
Hence, Lemma~\ref{lem:standard-dense-overgroups} gives
\(K'=F_n.\)
Therefore
\(K=S_j.\)
\end{proof}

\begin{corollary}
\label{cor:standard-chain}
The subgroups
\[
        S_i=F_n^{\zeta^i}
\]
form a strictly descending chain
\[
        F_n=S_0>S_1>S_2>\cdots
\]
such that every \(S_i\cong F_n\), and every subgroup of \(F_n\) containing \(S_i\) is one
of
\[
        S_i,S_{i-1},\ldots,S_0.
\]
In particular, every adjacent inclusion
\[
        S_{i+1}<S_i
\]
is maximal.
\end{corollary}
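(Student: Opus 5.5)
The corollary collects results already proved, so the plan is simply to assemble them in the right order.

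First, the chain structure. Since $T^\zeta=\operatorname{Res}_m(T^\varphi,\mathsf R)$ is minimal, order-preserving and semi-synchronizing (Lemmas~\ref{lem:ordinary-residue-inflation-homeomorphism} and~\ref{lem:ordinary-residue-inflation-semisync}, with $\mathsf R$ an inner entry state of $T^\varphi$), Theorem~\ref{thm:conjugator-characterization} gives $F_n^\zeta\le F_n$. Conjugating this inclusion by $\zeta^i$ yields $S_{i+1}\le S_i$. Each $S_i$ is isomorphic to $F_n$, because it is conjugate to the standard copy by the homeomorphism $\zeta^i$.

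Next, strictness. By Proposition~\ref{prop:standard-chain-cores} we have $\C(S_i)\cong\mathcal B_i=\operatorname{Res}_m(\mathcal A_i,c)$. This automaton has $4+2^i+(m-1)$ states for $i\ge1$ and $3+m$ states for $i=0$. So the cores are pairwise non-isomorphic, and since the $S_i$ are closed and a closed subgroup is determined by its core, the $S_i$ are pairwise distinct. Hence the chain is strictly descending.

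Then the overgroup statement. Let $S_i\le K\le F_n$. Either $K=S_i$, or $S_i<K$, and in the latter case Theorem~\ref{thm:standard-only-predecessors} gives $K=S_j$ for some $j<i$. Either way $K$ is one of $S_i,\dots,S_0$.

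Finally, maximality of $S_{i+1}<S_i$. Apply the previous step with $i+1$ in place of $i$: any $K$ with $S_{i+1}\le K\le S_i$ is some $S_j$ with $j\le i+1$. Since the chain is strictly descending, $S_j\le S_i$ forces $j\ge i$, so $K\in\{S_i,S_{i+1}\}$.

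No step here is a real obstacle. The only point needing care is the distinctness argument, which relies on the state counts of the $\mathcal B_i$ being different.
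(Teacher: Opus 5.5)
Your proof is correct and follows the paper's route: $F_n^\zeta\le F_n$ comes from Theorem~\ref{thm:conjugator-characterization}, strictness comes from the pairwise non-isomorphic cores $\mathcal B_i$ of Proposition~\ref{prop:standard-chain-cores}, and the overgroup statement comes from Theorem~\ref{thm:standard-only-predecessors}. One small correction to your distinctness step: you only need that the core is an invariant of the subgroup (equal subgroups have equal cores), not that closed subgroups are determined by their cores, which is the converse implication.
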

       
\subsection{Compatible twisting}
\label{subsec:compatible-twisting}

We now modify the standard chain by inner conjugations.  The modification preserves the
overgroup property but kills the intersection. We will use the following lemma and proposition.

\begin{lemma}
\label{lem:avoid-one-element}
Let \(G\) be a group, and let
\[
        L< H\le G.
\]
Assume that \(L\) contains no nontrivial normal subgroup of \(H\). Then
for every \(g\in G\setminus\{1\}\) there exists \(h\in H\) such that
\[
        g\notin L^h.
\]
\end{lemma}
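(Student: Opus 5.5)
The plan is to argue by contradiction using the normal core. Suppose that $g\in G\setminus\{1\}$ satisfies $g\in L^h$ for every $h\in H$. Then
\[
        g\in\bigcap_{h\in H}L^h=:N,
\]
so $N$ contains the nontrivial element $g$. The set $N$ is the intersection of all $H$-conjugates of $L$. It is a subgroup of $L$ (take $h=1$), and it is normal in $H$: for $k\in H$ we have $N^k=\bigcap_{h\in H}L^{hk}=N$, since $h\mapsto hk$ permutes $H$.

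Since $L\le H$, the subgroup $N$ is a subgroup of $H$ that is normal in $H$ and contained in $L$. It is nontrivial because $g\in N$. This contradicts the hypothesis that $L$ contains no nontrivial normal subgroup of $H$. Hence some $h\in H$ satisfies $g\notin L^h$.

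The only point requiring care is the conjugation convention. The paper writes $L^h=h^{-1}Lh$, and the composition convention does not affect the argument, because the intersection over all $h\in H$ is invariant under either convention.

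The elements $g$ lying outside $H$ need no special treatment. If $g\notin H$, then $g\notin L^h\subseteq H$ for every $h\in H$, and any $h$ works, in particular $h=1$. The core argument above covers this case as well.

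I see no real obstacle here. The strictness $L<H$ is not used; it is presumably included for later application.
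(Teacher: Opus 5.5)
Your proof is correct and is essentially the paper's argument. The paper splits into the cases $g\notin H$ (take $h=1$) and $g\in H$; in the latter it notes that if $hgh^{-1}\in L$ for all $h\in H$, then the normal closure of $g$ in $H$ is a nontrivial normal subgroup of $H$ contained in $L$. You use the dual object, the normal core $\bigcap_{h\in H}L^h$, which contains $g$ and covers both cases at once, and your remark that the strictness $L<H$ is not needed is also right.
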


\begin{proof}
If \(g\notin H\), then we may take \(h=1\), since \(L< H\), and hence
\(g\notin L\).

Now suppose that \(g\in H\). We claim that there exists \(h\in H\) such that
\(hgh^{-1}\notin L.\)
Indeed, if this were false, then
\[
        hgh^{-1}\in L
        \qquad\text{for every }h\in H.
\]
Therefore the normal closure of \(g\) in \(H\)
   would be contained in \(L\). This normal closure is nontrivial, since it contains
\(g\neq 1\), and it is normal in \(H\). Thus \(L\) would contain a nontrivial
normal subgroup of \(H\), contradicting the hypothesis.
   \end{proof}

The construction in the following proposition can also be viewed in the language of coset trees:
a compatible choice of cosets \(S_i k_i\) gives a ray in the associated coset
tree, and the corresponding stabilizers are the conjugates
\(S_i^{k_i}=k_i^{-1}S_i k_i\). Thus the intersection of these conjugates is
the stabilizer of the ray. For a related use of this viewpoint, compare
Grigorchuk--Kravchenko~\cite[Definition~2.11 and proof of Theorem~4.2]{GK14}.

\begin{proposition}
\label{prop:compatible-twisting-kills-intersection-general}
Let \(G\) be a countable group, and suppose that
\[
        G=S_0>S_1>S_2>\cdots
\]
is a strictly descending chain of subgroups satisfying the following two
conditions.

\begin{enumerate}
    \item For every \(i\ge 0\), the interval of subgroups between \(S_i\) and
    \(G\) is exactly
    \[
        \{K\mid S_i\le K\le G\}
        =
        \{S_i,S_{i-1},\ldots,S_0\}.
    \]

    \item For every \(i\ge 0\), the subgroup \(S_{i+1}\) contains no nontrivial
    normal subgroup of \(S_i\).
\end{enumerate}

Then there exists a strictly descending chain
\[
        G=H_0>H_1>H_2>\cdots
\]
such that:

\begin{enumerate}
    \item for every \(i\ge 0\), \(H_i\) is conjugate to \(S_i\). In particular,
    \(H_i\cong S_i\). 

    \item for every \(i\ge 0\), the interval of subgroups between \(H_i\) and
    \(G\) is exactly
    \[
        \{K\mid H_i\le K\le G\}
        =
        \{H_i,H_{i-1},\ldots,H_0\};
    \]

    \item
    \[
        \bigcap_{i\ge 0}H_i=\{1\}.
    \]
\end{enumerate}
\end{proposition}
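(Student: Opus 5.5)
We construct inductively elements $k_0=1,k_1,k_2,\ldots\in G$ with $k_{i+1}\in S_ik_i$ for every $i$, and put $H_i=S_i^{k_i}=k_i^{-1}S_ik_i$. The compatibility $k_{i+1}=s_ik_i$ with $s_i\in S_i$ gives
\[
        H_{i+1}=k_i^{-1}s_i^{-1}S_{i+1}s_ik_i\le k_i^{-1}S_ik_i=H_i ,
\]
and the inclusion is strict since $S_{i+1}<S_i$. Moreover, for $j\le i$ we have $k_i\in S_jk_j$, because $S_{j'}\le S_j$ for $j'\ge j$. Hence $k_ik_j^{-1}\in S_j$, so $S_j^{k_i}=S_j^{k_j}=H_j$.

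The overgroup property then transfers by conjugation. If $H_i\le K\le G$, then $S_i\le K^{k_i^{-1}}\le G$. By hypothesis~1, $K^{k_i^{-1}}=S_j$ for some $j\le i$, so $K=S_j^{k_i}=H_j$. Conversely, each $H_j$ with $j\le i$ contains $H_i$. This gives properties~1 and~2.

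For the trivial intersection we use countability. Enumerate $G\setminus\{1\}=\{g_1,g_2,\ldots\}$; if $G$ is finite the chain cannot be infinite and strictly descending, so this case does not arise. At step $i$, having chosen $k_i$, we ensure that $g_{i+1}\notin H_{i+1}$. Equivalently, we need $s\in S_i$ with
\[
        g_{i+1}\notin (S_{i+1}^{s})^{k_i},
        \qquad\text{that is,}\qquad
        k_ig_{i+1}k_i^{-1}\notin S_{i+1}^{s}.
\]
Apply Lemma~\ref{lem:avoid-one-element} with $L=S_{i+1}<H=S_i$ and the nontrivial element $g=k_ig_{i+1}k_i^{-1}$; this is possible by hypothesis~2. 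The lemma supplies $h\in S_i$ with $g\notin S_{i+1}^h$, and we set $s_i=h$, $k_{i+1}=hk_i$. Then $g_{i+1}\notin H_{i+1}$, and since the chain descends, $g_{i+1}\notin H_j$ for every $j\ge i+1$. Every nontrivial element is eventually excluded, so $\bigcap_i H_i=\{1\}$.

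The main point needing care is the bookkeeping of the conjugation conventions. The paper uses $H^\psi=\psi^{-1}H\psi$, while Lemma~\ref{lem:avoid-one-element} is phrased with $hgh^{-1}$. One must check that the lemma's conclusion $g\notin L^h$ is exactly what is required here, possibly after replacing $h$ by $h^{-1}$, which is harmless since $h^{-1}\in S_i$. The rest of the argument is routine.
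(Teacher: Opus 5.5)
Your proof is correct and is essentially the paper's argument. Your update $k_{i+1}=s_ik_i$ with $s_i\in S_i$ is the paper's $k_{i+1}=k_ih_i$ with $h_i=k_i^{-1}s_ik_i\in H_i$. Likewise, applying Lemma~\ref{lem:avoid-one-element} to $S_{i+1}<S_i$ with the element $k_ig_{i+1}k_i^{-1}$ is equivalent to the paper's application to $S_{i+1}^{k_i}<H_i$ with $g_i$. Your convention worry is unfounded: with $L^h=h^{-1}Lh$, the lemma yields exactly $k_ig_{i+1}k_i^{-1}\notin S_{i+1}^{s_i}$, so no replacement of $h$ by $h^{-1}$ is needed.
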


\begin{proof}
The idea is to enumerate the nontrivial elements of \(G\), and at the \(i\)-th
step choose the next subgroup so that it misses the \(i\)-th element. The point
is that we do this by conjugating inside the current subgroup. This keeps all
previously constructed levels unchanged, and therefore preserves the interval
structure.

Since \(G\) is countable, choose a sequence
\(g_0,g_1,g_2,\ldots\)
of all elements in \(G\setminus\{1\}\). 

We shall construct elements \(k_i\in G\) and subgroups
\(H_i=S_i^{k_i}\)
with the following compatibility  property:
\[
        H_j=S_j^{k_i}
        \qquad\text{for every }0\le j\le i.
        \tag{\(\ast_i\)}
\]
This means that, at stage \(i\), the finite initial segment
\(H_0,H_1,\ldots,H_i\)
is obtained from the corresponding initial segment
\(S_0,S_1,\ldots,S_i\)
by one common conjugation.

Set
$k_0=1,$  $H_0=S_0=G.$
Then the compatibility condition \((\ast_0)\) holds.

Now suppose that \(k_i\) and
\(H_0>H_1>\cdots>H_i\)
have been constructed and satisfy \((\ast_i)\). Define
\(L_i=S_{i+1}^{k_i}.\)
By \((\ast_i)\), we have
\(H_i=S_i^{k_i},\)
and therefore
\[
        L_i=S_{i+1}^{k_i}< S_i^{k_i}=H_i.
\]

We first observe that \(L_i\) contains no nontrivial normal subgroup of \(H_i\).
Indeed, suppose \(N\le L_i\) is normal in \(H_i\). Conjugating by \(k_i\), we get
\(k_iNk_i^{-1}\le S_{i+1},\)
and
\(k_iNk_i^{-1}\unlhd S_i\),  contradicting the second assumption in the theorem.

Applying Lemma~\ref{lem:avoid-one-element} to
\(L_i< H_i\le G\)
and to the element \(g_i\), we may choose \(h_i\in H_i\) such that
\(g_i\notin L_i^{h_i}.\)

Now define
\(k_{i+1}=k_i h_i\)
and
\(H_{i+1}=L_i^{h_i}.\)
Then
\[
        H_{i+1}
        =
        L_i^{h_i}
        =
        \left(S_{i+1}^{k_i}\right)^{h_i}
        =
        S_{i+1}^{k_i h_i}
        =
        S_{i+1}^{k_{i+1}}.
\]
Moreover,
\[
        H_{i+1}=L_i^{h_i}<H_i^{h_i}=H_i,
\]
because \(h_i\in H_i\). 

By our choice of \(h_i\), we also have
\(g_i\notin H_{i+1}.\)

It remains to check that the compatibility  condition is preserved. Let
\(0\le j\le i\). Since the chain is descending, we have
\(H_i\le H_j.\)
Because \(h_i\in H_i\), it follows that \(h_i\in H_j\). Hence conjugation by
\(h_i\) preserves \(H_j\) as a subgroup:
\(H_j^{h_i}=H_j.\)
Using \((\ast_i)\), we get
\[
        S_j^{k_{i+1}}
        =
        S_j^{k_i h_i}
        =
        \left(S_j^{k_i}\right)^{h_i}
        =
        H_j^{h_i}
        =
        H_j.
\]
For \(j=i+1\), we have already shown that
\[
        H_{i+1}=S_{i+1}^{k_{i+1}}.
\]
Therefore the compatibility condition \((\ast_{i+1})\) holds.

This completes the inductive construction of a strictly descending chain
\(G=H_0>H_1>H_2>\cdots.\)
By construction,
\(H_i=S_i^{k_i}\)
for every \(i\). 

Note that the intersection of the groups \(H_i\), \(i\ge0\), is trivial.
Indeed, for any \(g\in G\setminus\{1\}\) there exists \(i\ge 0\) with
\(g=g_i\), and by construction
\(g_i\notin H_{i+1}.\)

Finally, the interval property for the chain \((H_i)\) follows from the
compatibility condition and the interval property for the chain \((S_i)\).
Indeed, for each \(i\ge 0\), conjugating by \(k_i\) and using \((\ast_i)\)
gives
\[
        \{K\mid H_i\le K\le G\}
           =
          \{M^{k_i}\mid S_i\le M\le G\}
          =
          \{S_i^{k_i},S_{i-1}^{k_i},\ldots,S_0^{k_i}\}
          =
        \{H_i,H_{i-1},\ldots,H_0\}.
\]
\end{proof}

Now, we can prove the main theorem.

\begin{theorem}[Higman--Thompson groups all the way down]
\label{thm:Fn-all-the-way-down-final}
For every \(n\ge2\), there exists a descending chain
\[
        F_n=H_0>H_1>H_2>\cdots
\]
such that:
\begin{enumerate}
    \item \(H_i\cong F_n\) for every \(i\);
    \item every subgroup of \(F_n\) containing \(H_i\) is one of
    \[
            H_i,H_{i-1},\ldots,H_0=F_n;
    \]
    \item every adjacent inclusion
    \[
            H_{i+1}<H_i
    \]
    is maximal;
    \item
    \[
            \bigcap_{i\ge0}H_i=\{1\}.
    \]
\end{enumerate}
\end{theorem}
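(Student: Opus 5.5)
The plan is to apply Proposition~\ref{prop:compatible-twisting-kills-intersection-general} with $G=F_n$ and the standard chain $S_i=F_n^{\zeta^i}$ of Subsection~\ref{subsec:standard-chain}. The group $F_n$ is countable. By Corollary~\ref{cor:standard-chain}, the chain $(S_i)$ is strictly descending, and it satisfies the interval hypothesis (1): every subgroup of $F_n$ containing $S_i$ is one of $S_i,\ldots,S_0$. Once hypothesis (2) is checked, the proposition produces conjugates $H_i=S_i^{k_i}$ with the interval property and trivial intersection. This gives items (1), (2) and (4) of the theorem; isomorphism $H_i\cong S_i\cong F_n$ is immediate. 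Maximality of $H_{i+1}$ in $H_i$ (item (3)) follows from (2), since any $K$ with $H_{i+1}\le K\le H_i$ lies in $\{H_{i+1},H_i,\ldots,H_0\}$, and strictness of the chain rules out $K=H_{j}$ for $j<i$. If the statement also requires infinite index, note that $S_{i+1}$ has infinite index in $S_i$: conjugating by $\zeta^{-i}$, this reduces to $S_1$ having infinite index in $F_n$. That holds because a finite-index subgroup contains $[F_n,F_n]$ and is therefore not closed with a proper core. More concretely, its core would have exactly $n-1$ inner states by the closure criterion, while $\mathcal B_1$ has more inner states.

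The remaining task, and the only real one, is hypothesis (2): $S_{i+1}$ contains no nontrivial normal subgroup of $S_i$. Conjugating by $\zeta^{-i}$ (an abstract isomorphism carrying $S_i$ to $F_n$ and $S_{i+1}$ to $S_1$), it suffices to show that $S_1$ contains no nontrivial normal subgroup of $F_n$. Every nontrivial normal subgroup of $F_n$ contains $[F_n,F_n]$ (Subsection~\ref{subsec:abelianization}). So I must show $[F_n,F_n]\not\le S_1$.

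Suppose instead that $[F_n,F_n]\le S_1$. Then $[F_n,F_n]\le \Cl(S_1)=S_1$, and by the core criterion \cite[Corollary~3.2]{GolanSapirGenerationFn} the core $\C(S_1)$ would have exactly $n-1$ inner states. By Proposition~\ref{prop:standard-chain-cores}, however, $\C(S_1)\cong\mathcal B_1=\operatorname{Res}_m(\mathcal A_1,c)$. Its inner states include $c,a_0,a_1$ together with the residue states $c_1,\ldots,c_{m-1}$, so there are at least $m+2>n-1$ of them, a contradiction. An alternative route is to use the generation fact directly: $[F_n,F_n]\le S_1$ together with $S_1[F_n,F_n]=F_n$ (Corollary~\ref{prop:Si-full-abelianization}) would force $S_1=F_n$, contradicting strictness. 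This second argument is cleaner and avoids counting states, so I will use it as the primary one.

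I expect no serious obstacle here. The only point needing care is the transport of hypothesis (2) along the abstract isomorphism given by conjugation by $\zeta^{-i}$. Normality and containment are preserved because $S_i^{\zeta^{-i}}=F_n$ and $S_{i+1}^{\zeta^{-i}}=S_1$, so the reduction goes through.
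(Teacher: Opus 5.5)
Your proposal is correct and follows the paper's proof. Like the paper, you apply Proposition~\ref{prop:compatible-twisting-kills-intersection-general} to the standard chain $S_i=F_n^{\zeta^i}$, with Corollary~\ref{cor:standard-chain} supplying the interval hypothesis, and you verify hypothesis (2) by conjugating by $\zeta^{-i}$ and observing that $[F_n,F_n]\le S_1$ together with $S_1[F_n,F_n]=F_n$ would force $S_1=F_n$; the paper also mentions your inner-state count as a parenthetical alternative, and your explicit derivation of maximality (and infinite index) from the interval property and strictness fills in a step the paper leaves implicit.
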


\begin{proof}
Let
\(S_i=F_n^{\zeta^i}\)
be the standard chain constructed above.  By
Corollary~\ref{cor:standard-chain}, it is a strictly descending chain of copies of \(F_n\),
and each \(S_i\) is contained only in
\(S_i,S_{i-1},\ldots,S_0\). In addition, for each $i$, the subgroup $S_{i+1}$ does not contain a non-trivial normal subgroup of $S_i$. Indeed, assume that $S_{i+1}$ contains a non-trivial normal subgroup of $S_{i}$. Conjugation by $\zeta^{-i}$ would imply that $S_1$ contains a non-trivial normal subgroup of $F_n$, and as such $S_1$ contains $[F_n,F_n]$. That is impossible, since otherwise, by Corollary \ref{prop:Si-full-abelianization} $S_1=S_1[F_n,F_n]=F_n$, (or simply because the core of $S_1$ has more than $n-1$ distinct inner vertices). Now,
apply Proposition~\ref{prop:compatible-twisting-kills-intersection-general}.  The resulting chain
\(H_i=S_i^{k_i}\)
has trivial intersection and preserves the same overgroup property. 
\end{proof}

\end{document}